\newif\ifarxiv
\newtheorem{theorem}{Theorem}[section]
\newtheorem{corollary}[theorem]{Corollary}
\newenvironment{extraremark}[1]{\color{red}\textbf{Remark #1} \it }{ \color{black} }
\newenvironment{extraexample}[1]{\color{red} \textbf{Example #1} \it }{\color{black}}
\newtheorem{definition}[theorem]{Definition}
\newtheorem{example}[theorem]{Example}
\newtheorem{assumption}[theorem]{Assumption}
\newtheorem{lemma}[theorem]{Lemma}
\newtheorem{proposition}[theorem]{Proposition}
\newtheorem{remark}[theorem]{Remark}
\newenvironment{proof}[1][Proof]{\textbf{#1.} }{\ \hfill \rule{0.5em}{0.5em}}
\newcommand{\ned}{\boldsymbol{\mathcal N}^I}
\newcommand{\poly}{{\mathcal P}}
\newcommand{\boldpoly}{\boldsymbol{\mathcal{P}}}
\newcommand{\eremk}{\hbox{}\hfill\rule{0.8ex}{0.8ex}}
\newcommand{\hatPicurlcom}{\widehat \Pi^{curl,c}_p}
\newcommand{\hatPigradcom}{\widehat\Pi^{grad,c}_{p+1}}
\newcommand\innerprod[1]{{ \left(\!\left(\kern -0.05em #1\kern -0.1em \right)\!\right) }_k} 
\newcommand\binnerprod[1]{{ \left(\kern -0.2em \left(\kern -0.05em #1\kern -0.1em \right)\kern -0.2em\right)_k }} 
\newcommand\bdinnerprod[1]{{ \left(\kern -0.25em \left(\kern -0.05em #1\kern -0.1em \right)\kern -0.25em\right)_k }} 
\makeatletter\@addtoreset{equation}{section}\makeatother
\begin{document}

\title{Wavenumber-explicit $hp$-FEM analysis for
Maxwell's equations with impedance boundary conditions}
\author{J.M. Melenk\thanks{(melenk@tuwien.ac.at), Institut f\"{u}r Analysis und
Scientific Computing, Technische Universit\"{a}t Wien, Wiedner Hauptstrasse
8-10, A--1040 Wien, Austria.}
\and S.A. Sauter\thanks{(stas@math.uzh.ch), Institut f\"{u}r Mathematik,
Universit\"{a}t Z\"{u}rich, Winterthurerstr.~{190}, CH--8057 Z\"{u}rich,
Switzerland}}
\maketitle

\begin{abstract}
The time-harmonic Maxwell equations at high wavenumber $k$ in domains with an
analytic boundary and impedance boundary conditions are considered. A
wavenumber-explicit stability and regularity theory is developed that
decomposes the solution into a part with finite Sobolev regularity that is
controlled uniformly in $k$ and an analytic part. Using this regularity,
quasi-optimality of the Galerkin discretization based on N\'{e}d\'{e}lec
elements of order $p$ on a mesh with mesh size $h$ is shown under the
$k$-explicit scale resolution condition that a) $kh/p$ is sufficient small and
b) $p/\ln k$ is bounded from below.

\end{abstract}
\tableofcontents

\ifarxiv

\pagenumbering{roman} 
\section*{Symbols and Notation}
\addcontentsline{toc}{section}{\protect\numberline{}Symbols and Notation}%
\newcommand{\fs}{8mm}
\begin{longtable}[c]{lp{0.8\textwidth}}
{\bf general} & \\ \hline \\
$k \ge 1 > 0$
& wavenumber  \\
$\operatorname*{i} $
& imaginary unit $\sqrt{-1}$ \\
$A\lesssim B $
& there exists $C$ independent of $k$, $h$, $p$, and
independent of \\
& functions that possibly appear in $A$ and $B$ so that
$A\leq CB$ holds,  \\
$a_+$ & $a_+:= \max\{a,0\}$ for $a \in \mathbb{R}$ \\
$\mathbb{N}$ & positive integers: $\mathbb{N} = \{1,2,3\ldots\}$ \\
             & \emph{caveat:} in Appendix~\ref{AppAnalyticity}, we \\
             & follow the ``French'' convention 
               $\mathbb{N} = \{0,1,\ldots\}$ \\
$\mathbb{N}_0$ & $\mathbb{N}= \{0,1,\ldots\}$\\
${\mathbb{N}}_{\ge p}$ & $\{ n \in \mathbb{N}\,|\, n \ge p\}$ \\
${\mathbb{N}}_{\leq p}$ & $\{ n \in \mathbb{N}\,|\, n \leq p\}$ \\[\fs]
%
{\bf geometry} & \\ \hline \\
$B_1(0)$
& unit ball in ${\mathbb R}^3$ \\
$B_r^+$
& half-balls in ${\mathbb R}^3$ \\
$\Omega$
& domain in ${\mathbb R}^3$  \\
$\Gamma = \partial\Omega$
& boundary of $\Omega$ \\
$\mathbf{n}$
& unit normal vector on $\Gamma$ pointing into $\Omega^+$ \\
$\mathbf{n}^{\ast}$
&constant extension of ${\mathbf n}$ to tubular neighborhood of $\Gamma$\\[\fs]
{\bf spaces} & \\ \hline \\
${\mathbf X}:= {\mathbf H}(\Omega,\operatorname{curl})$
& (\ref{defXHcurl}) \\
${\mathbf X}_{\operatorname{imp}}$ & 
${\mathbf X}_{\operatorname{imp}}:= \{ \mathbf{u} \in {\mathbf H}_0(\Omega,\operatorname{curl})\,|\, \Pi_T \mathbf{u} \in \mathbf{L}^2(\Gamma)\}$; 
 (\ref{eq:Ximp}) \\
${\mathbf X}_{\operatorname{imp},0}$ & 
${\mathbf X}_{\operatorname{imp},0}:= \{ \mathbf{u} \in \mathbf{X}_{\operatorname{imp}}\,|\, \operatorname{curl} \mathbf{u} = 0\} = 
\{\nabla \varphi\,|\, \varphi \in H^1_{\operatorname{imp}}(\Omega)\}$;  \\
${\mathbf X}_{\operatorname{imp}}(\Omega)$ & 
${\mathbf X}_{\operatorname{imp}}(\Omega):= \left({\mathbf H}^1(\Omega)\right)^\prime \cap {\mathbf X}^\prime_{\operatorname{imp},0}$; 
 (\ref{XimpprimeOmega}) \\
${\mathbf X}_{\operatorname{imp}}(\Gamma)$ & 
${\mathbf X}_{\operatorname{imp}}(\Gamma):= \left({\mathbf H}^{-1/2}_T(\Gamma)\right)^\prime \cap {\mathbf H}^{-1}_T(\Gamma,\operatorname{div}_\Gamma)$ 
(\ref{XimpprimeGamma}) \\
${\mathbf H}(\Omega,\operatorname{curl})$, ${\mathbf H}(\Omega,\operatorname{div})$
& (\ref{defXHcurl}), (\ref{DefHOmegadiv}) \\
${\mathbf H}_0(\Omega,\operatorname{curl})$ 
& 
${\mathbf H}_0(\Omega,\operatorname{curl}) = \{
{\mathbf u} \in {\mathbf H}(\Omega,\operatorname{curl})\,|\, \gamma_T {\mathbf u} = 0 \quad \mbox{ on $\partial\Omega$}\}$  \\
${\mathbf{H}}(\Omega,\operatorname{div}_0)$ & divergence-free functions \\
${\mathbf L}^2(\Omega)$
& space of vector-valued $L^2$-functions \\ 
$H^s(\Omega)$, $H^s(\Gamma)$ 
& scalar-valued Sobolev spaces on $\Omega$, $\Gamma$, Sec.~\ref{SecGeometry} \\
${\mathbf H}^s(\Omega)$
& vector-valued Sobolev spaces on $\Omega$ \\
$H^1_{\operatorname{imp}}(\Omega)$ & $H^1_{\operatorname{imp}}(\Omega) = 
\{\varphi \in H^1(\Omega)\,|\, 
\varphi|_{\partial\Omega} \in H^1(\partial\Omega)\}$, Def.~\ref{def:Ximp} \\
${\mathbf H}(\operatorname{curl},\Omega)$ & norm $\|\operatorname{curl} {\mathbf u}\|_{\mathbf{L}^2(\Omega)}  + \|\mathbf{u}\|_{\mathbf{L}^2(\Omega)}$ \\
${\mathbf H}(\operatorname{div},\Omega)$ & norm $\|\operatorname{div} {\mathbf u}\|_{{L}^2(\Omega)}  + \|\mathbf{u}\|_{\mathbf{L}^2(\Omega)}$ \\
${\mathbf L}^2_T(\Gamma)$, ${\mathbf H}^s_T(\Gamma)$
& Sobolev space of tangential fields on $\Gamma$, (\ref{DefL2t}), (\ref{DefHsGammaTNorm})\\
${\mathbf H}_{\operatorname{div}}^{-1/2}(\Gamma)$ 
& (\ref{m1/2curldiv}) \\
${\mathbf H}_{\operatorname{curl}}^{-1/2}(\Gamma)$ 
& (\ref{m1/2curldiv}) \\
${\mathbf H}^{-1}(\Gamma,\operatorname{div}_\Gamma)$ 
& (\ref{XimpprimeGamma}) \\
%
${\mathbf V}_{k,0} $ 
& $\mathbf{u} \in \mathbf{V}_{k,0}$ $\iff$
$((\mathbf{u},\nabla\varphi))_{k} = 0$ for all $\varphi \in H^1_{\operatorname{imp}}(\Omega)$; (\ref{DefV0}) \\
${\mathbf V}_{k,0,h} $ 
& $\mathbf{u} \in \mathbf{V}_{k,0,h}$ $\iff$
$((\mathbf{u},\nabla\varphi_h))_{k} = 0$ for all $\varphi \in S_h$; 
(\ref{eq:Vk0h}) \\
${\mathcal A}(C_1,\gamma_1,\omega)$, & 
class of analytic fcts., Def.~\ref{DefClAnFct}; $C_1$, $\gamma_1$ are independent of $k$ \\[\fs]
{\bf functions} & \\ \hline \\
%
${\mathbf E}$, 
${\mathbf H}$, 
${\mathbf E}^+$, 
${\mathbf H}^+$ 
& electric and magnetic fields in $\Omega$ and in $\Omega^+$ \\
$Y^m_\ell$, $\lambda_\ell$ 
& eigenfunctions of Laplace-Beltrami, Remark~\ref{rem:Yml} \\
$\iota_\ell$ & index set of indices for eigenvalue 
$\lambda_\ell$, Rem.~\ref{rem:Yml} \\
$g_{k}$
& Helmholtz fundamental solution, (\ref{MaxwellFullFullc}) \\[\fs]
%
{\bf sesquilinear forms, norms} & \\ \hline \\
%
$\|\cdot\|_{\operatorname{imp},k}$ & 
$\|\mathbf{u}\|^2_{\operatorname{imp},k} = 
\|\operatorname{curl} \mathbf{u}\|^2 + 
k^2 \|\mathbf{u}\|^2 + |k| \|\mathbf{u}\|^2_{\mathbf{L}^2(\Gamma)}$, 
(\ref{eq:Himp}) \\
$\|\cdot\|_{k,+}$ & 
$\|\mathbf{u}\|^2_{k,+} = 
k^2 \|\mathbf{u}\|^2 + |k| \|\mathbf{u}\|^2_{\mathbf{L}^2(\Gamma)}$, 
(\ref{eq:Himp}) \\
$\|\cdot\|_{{\mathbf X}^\prime_{\operatorname{imp}}(\Omega),k}$ & 
$\sup_{{\mathbf v} \in {\mathbf X}_{\operatorname{imp}}} \frac{|({\mathbf f},{\mathbf v})|}{\|\mathbf{v}\|_{\operatorname{imp},k}}$; (\ref{eq:H-1div}); 
Lemma~\ref{lemma:norm-equivalence} \\
$\|\cdot\|_{{\mathbf X}^\prime_{\operatorname{imp}}(\Gamma),k}$ & 
$\sup_{{\mathbf v} \in {\mathbf X}_{\operatorname{imp}}} \frac{|({\mathbf g}_T,{\mathbf v}_T)_{{\mathbf L}^2(\Gamma)}|}{\|\mathbf{v}\|_{\operatorname{imp},k}}$; (\ref{eq:H-1/2diva}); Lemma~\ref{lemma:norm-equivalence} \\
$(\cdot,\cdot)$ 
& $(u,v) = \int_\Omega u \overline{v}$ is the $L^2(\Omega)$ innerproduct/duality pairing   \\
$\|\cdot\| = \|\cdot\|_{L^2(\Omega)}$ & $L^2(\Omega)$-norm; Sec.~\ref{SecGeometry} \\
$(\cdot,\cdot)_{\mathbf{L}^2(\Gamma)}$ & 
$L^2(\Gamma)$-inner prod. (or duality pairing) \\
$A_k$, 
&  sesquilinear form associated with \\
 &Maxwell's equations, (\ref{defAksesqui}), (\ref{eq:Ak-alternative}) \\
$A_k^+$, 
&  sesquilinear form associated with \\
 & Maxwell's equations with the ``good'' sign, (\ref{eq:Ak+}) \\
$ \left(\kern-.1em \left( \kern-.1em \cdot,\cdot \kern-.1em \right) \kern-.1em \right)_k$& 
$ \left(\kern-.1em \left( \kern-.1em \cdot,\cdot \kern-.1em \right) \kern-.1em \right)_k =
k^2 (\cdot,\cdot)_{L^2(\Omega)} + \operatorname*{i} k (\Pi_T (\cdot),\Pi_T (\cdot))_{\mathbf{L}^2(\Gamma)}$ ; 
see (\ref{eq:def:(())}) \\
$\|\cdot\|_{-1/2,\operatorname{curl}_\Gamma}$, & \\
$\|\cdot\|_{-1/2,\operatorname{div}_\Gamma}$ 
&  norms on ${\mathbf H}^{-1/2}_{\operatorname{curl}}(\Gamma)$, 
on ${\mathbf H}^{-1/2}_{\operatorname{div}}(\Gamma)$, (\ref{m1/2curldiv}) \\
$|{\mathbf v}|_{\mathbf{H}^\ell(\Omega),k}$& 
$|{\mathbf v}|_{\mathbf{H}^\ell(\Omega),k}:= k^{-\ell} |{\mathbf v}|_{\mathbf{H}^\ell(\Omega)}$;  see (\ref{eq:Hl-rho}) \\
$\|{\mathbf v}\|_{\mathbf{H}^m(\Omega),k}$& 
$\|{\mathbf v}\|_{\mathbf{H}^m(\Omega),k}:= \bigl(\sum_{\ell=0}^m |{\mathbf v}|^2_{\mathbf{H}^\ell(\Omega),k}\bigr)^{1/2}$;  see (\ref{eq:Hl-rho});  \\
& 
$\|{\mathbf v}\|^2_{\mathbf{H}^m(\Omega),k} = \|{\mathbf v}\|^2_{\mathbf{H}^0(\Omega)} + k^{-2} |\mathbf{v}|^2_{\mathbf{H}^1(\Omega)} + \cdots + k^{-2m} |{\mathbf v}|^2_{\mathbf{H}^m(\Omega)}$ \\ 
$\|{\mathbf v}\|_{\mathbf{H}^{-m}(\Omega),k}$&  
$\|{\mathbf v}\|_{\mathbf{H}^{-m}(\Omega),k}:= k^{m} \|{\mathbf v}\|_{\mathbf{H}^{-m}(\Omega)}$;  see (\ref{eq:Hl-rho-dual}) \\
%
%
%
$\|{\mathbf v}\|_{\mathbf{H}^{-1/2}(\Gamma,\operatorname{div}_\Gamma),k}$&  
$|k| \|\operatorname{div}_\Gamma {\mathbf g}_T \|_{{\mathbf H}^{-1/2}(\Gamma)} + k^2 \|{\mathbf g}_T\|_{{\mathbf X}^\prime_{\operatorname{imp}}(\Gamma),k}$; 
see (\ref{eq:H-1/2div}) and Lemma~\ref{lemma:H-1/2-div}
\\
$\|\mathbf{v} \|_{\mathbf{H}(\Omega,\operatorname{div}),k}$ &  
$\|\mathbf{v} \|_{\mathbf{H}(\Omega,\operatorname{div}),k} := 
 \bigl(k^{-2m} |\operatorname{div}{\mathbf v}|^2_{{H}^m(\Omega)} + k^2 \|\mathbf{v}\|^2_{\mathbf{H}^{m}(\Omega),k}
\bigr)^{1/2}$; 
 (\ref{eq:HD-norm})  \\
& $\|\mathbf{v} \|^2_{\mathbf{H}(\Omega,\operatorname{div}),k}  = k^{-2m} |\operatorname{div} \mathbf{v}|^2_{{H}^m(\Omega)}  
+ k^2 \|\mathbf{v}\|^2_{\mathbf{L}^2(\Omega)} + \cdots+k^{2-2m} |\mathbf{v}|^2_{\mathbf{H}^m(\Omega)}$ 
\\ 
$\|\mathbf{v} \|_{\mathbf{H}(\Omega,\operatorname{curl}),k}$ &  
$\|\mathbf{v} \|_{\mathbf{H}(\Omega,\operatorname{curl}),k} := 
 \bigl(k^{-2m} |\operatorname{curl}{\mathbf v}|^2_{\mathbf{H}^m(\Omega)} + k^2 \|\mathbf{v}\|^2_{\mathbf{H}^{m}(\Omega),k}
\bigr)^{1/2}$; 
 (\ref{eq:HD-norm})  \\
& 
$\|\mathbf{v} \|^2_{\mathbf{H}(\Omega,\operatorname{curl}),k}  = k^{-2m} |\operatorname{curl} \mathbf{v}|^2_{\mathbf{H}^m(\Omega)}  
+ k^2 \|\mathbf{v}\|^2_{\mathbf{L}^2(\Omega)} + \cdots+k^{2-2m} |\mathbf{v}|^2_{\mathbf{H}^m(\Omega)}$ 
\\ 
$\|\mathbf{g}_{T}\| _{\mathbf{H}^{\nu}(\Gamma),k}$ & 
$\|\mathbf{g}_{T}\| _{\mathbf{H}^{\nu}(\Gamma),k}
:=\left(
{\displaystyle\sum\limits_{\ell=0}^{2\nu}}
\left\vert k\right\vert ^{1-\ell}\left\vert \mathbf{g}_{T}\right\vert
_{\mathbf{H}^{\ell/2}\left(  \Gamma\right)  }^{2}\right)  ^{1/2}$;%
see (\ref{defgtGammaind}) \\
& 
$\|\mathbf{g}_{T}\| _{\mathbf{H}^{\nu}(\Gamma),k} \sim k^{1/2} \|\mathbf{g}_T\|_{\mathbf{L}^2(\Gamma)} + |\mathbf{g}_T|_{\mathbf{H}^{1/2}(\Gamma)} 
+ k^{1/2} |\mathbf{g}_T|_{\mathbf{H}^{1}(\Gamma)}$ \\
&\phantom{  
$\|\mathbf{g}_{T}\| _{\mathbf{H}^{\nu}(\Gamma),k} \sim $} $+ \cdots + 
k^{1/2-\nu} |\mathbf{g}_T|_{\mathbf{H}^{\nu}(\Gamma)}$ \\
$\|\mathbf{g}_{T}\| _{\mathbf{H}%
^{-1/2}(\Gamma)  ,k}$
& 
$\|\mathbf{g}_{T}\| _{\mathbf{H}%
^{-1/2}(\Gamma)  ,k}
:= k \| \mathbf{g}_{T}\| _{\mathbf{H}^{-1/2}(\Gamma)}$; see (\ref{defgtGammaind})
\\
$\|\mathbf{g}_{T}\| _{\mathbf{H}^{\nu}(\Gamma,\operatorname{div}_\Gamma),k}$ & 
$\|\mathbf{g}_{T}\| _{\mathbf{H}^{\nu}(\Gamma,\operatorname{div}),k}^2 = 
|\operatorname{div} \mathbf{g}_{T}|^2_{\mathbf{H}^{\nu}(\Gamma),k}+  
k |\mathbf{g}_{T}|^2 _{\mathbf{H}^{\nu}(\Gamma),k}$; see (\ref{eq:Hgamma-D}) \\
$\|\mathbf{g}_{T}\| _{\mathbf{H}^{\nu}(\Gamma,\operatorname{curl}_\Gamma),k}$ & 
$\|\mathbf{g}_{T}\| _{\mathbf{H}^{\nu}(\Gamma,\operatorname{curl}),k}^2 = 
|\operatorname{curl} \mathbf{g}_{T}|^2_{\mathbf{H}^{\nu}(\Gamma),k}+  
k |\mathbf{g}_{T}|^2 _{\mathbf{H}^{\nu}(\Gamma),k}$; see (\ref{eq:Hgamma-D}) \\
$\|\cdot\|_{{\mathcal H},\omega}$ & 
$\|\cdot\|^2_{{\mathcal H},\omega} = \|\nabla \cdot \|^2_{L^2(\omega)} + k^2 \|\cdot\|^2_{L^2(\omega)}$ \\
$\left\vert \cdot\right\vert $
& Euclidean norm \\
$\langle \cdot,\cdot \rangle$ & \emph{bilinear} form  on ${\mathbb C}^n$: 
$\langle {\mathbf a}, {\mathbf b}\rangle = \sum_{i=1}^n {\mathbf a}_i {\mathbf b}_i$; Sec.~\ref{SecGeometry} \\[\fs] 
$|\cdot|_{p,q,B_R^+}$, 
$[\![\cdot]\!]_{p,q,B_R^+}$,  & \\
$\rho^2_\ast[\![\cdot]\!]_{p,q,B_R^+}$, 
$\rho^{\frac{3}{2}}_\ast[\![\cdot]\!]_{p,\frac{1}{2},\Gamma_R}$, & \\
$\rho^{\frac{1}{2}}_\ast[\![\cdot]\!]_{p,\frac{3}{2},\Gamma_R}$, 
& seminorms to control high order derivatives, Appendix~\ref{AppAnalyticity}, p.~\pageref{eq:nicaise-seminorms}
\\[\fs]
{\bf discrete spaces, meshes} & \\ \hline \\
$\widehat K$ 
& reference tetrahedron \\
${\mathcal T}_h$, $F_K$, $F_K$, $A_K$
& triangulation, element maps, Sec.~\ref{sec:nedelec-elements}, Ass.~\ref{def:element-maps} \\
$S_h $
&  (discrete) subspace of $H^1(\Omega)$; \\
& we require $\nabla S_h \subset {\mathbf X}_h$
and exact seq. property (\ref{exdiscseq_rm}) \\
${\mathbf X}_h $
& (discrete) subspace of ${\mathbf H}(\Omega,\operatorname{curl})$ \\
$h$, $h_{K}$, $p$
& global and local meshwidth (Assumption~\ref{def:element-maps},  (\ref{defhkloc})),
polyn. deg. $p$ \\
$\poly_p$, $\boldpoly_p$
& space of ${\mathbb R}$-valued and ${\mathbb R}^3$-valued polynomials of degree $p$, (\ref{eq:Pp}) \\
$\ned_p(\widehat K)$
& N\'ed\'elec type I space on reference tetrahedron $\widehat K$, (\ref{eq:Np})  \\
$ S_{p+1}({\mathcal T}_h)$, 
$\ned_p({\mathcal T}_h)$,  
& polyn. spaces on ${\mathcal T}_h$: $H^1(\Omega)$-, ${\mathbf H}(\operatorname{curl},\Omega)$-conforming \\[\fs]
{\bf operators} & \\ \hline \\
$\operatorname*{curl}$, $\operatorname*{div}$
& 3D curl and divergence operators \\
$\operatorname*{curl}_\Gamma$, $\operatorname*{div}_\Gamma$
& 2D scalar curl and divergence operators on the surface $\Gamma$, (\ref{sccounttangcurl}) \\
$\overrightarrow{\operatorname*{curl}_\Gamma}$,
$\nabla_\Gamma$,
& 2D vectorial curl and surface gradient operators on $\Gamma$, (\ref{curlvec}) \\
$\Delta_\Gamma$
& surface Laplace-Beltrami operator, (\ref{defLaplBelt}) \\
${\mathcal E}_{\operatorname{curl}}$, ${\mathcal E}_{\operatorname{div}}$,
&  lifting operators (see Thm.~\ref{traceTHM1}) \\
${\mathcal F}$ & Fourier transformation, (\ref{eq:fourier-transform})  \\
$\gamma$ & standard trace operator: 
$\gamma u = u|_{\Gamma}$ and 
$\gamma {\mathbf u} = {\mathbf u}|_{\Gamma}$ \\
$\Pi_T$, $\gamma_T$ 
& trace operators (\ref{eq:trace-operators}); Thm.~\ref{traceTHM1};  \\
& 
$\Pi_T \mathbf{u} = \mathbf{n} \times (\mathbf{u} \times \mathbf{n})$,  
$\gamma_T \mathbf{u} = \mathbf{u} \times \mathbf{n}$,  \\
$(\cdot)_T$
& subscript $T$ indicates tangential trace: ${\mathbf u}_T = \Pi_\tau {\mathbf u}$ \\
$(\cdot)^{\operatorname{high}}$, 
$(\cdot)^{\operatorname{low}}$ 
& ${\mathbf v}^{\operatorname{high}} = H_\Omega {\mathbf v}$, 
 ${\mathbf v}^{\operatorname{low}} = L_\Omega {\mathbf v}$,  \\
$(\cdot)^\nabla$
& gradient part of Hodge decomp.\ of functions on $\Gamma$, (\ref{vFExp}), (\ref{defHodge})\\
$(\cdot)^{\operatorname{curl}}$
& curl part of Hodge decomp.\ of functions on $\Gamma$, (\ref{vFExp}), (\ref{defHodge})\\
${R}_1$, ${\mathbf R}_2$ & operators of order $-1$ of the regular decomp.\ in Lemma~\ref{LemRs};  \\
& by (\ref{RSrelation}), ${\mathbf R}_2$ is (up to a smoothing operator) \\
& a right inverse inverse of $\operatorname{curl}$ for divergence-free functions \\
${\mathbf K}$ & operator of order $-\infty$ of the regular decomp.\ in Lemma~\ref{LemRs} \\
${\mathbf S}$ & $\mathbf{f} = H^0_\Omega \mathbf{f} + L^0_\Omega \mathbf{f} + \mathbf{S}\mathbf{f}$; \\
              & for $\operatorname{div} \mathbf{f} = 0$ we have $\mathbf{S} \mathbf{f} \in \mathbf{C}^\infty(\overline{\Omega})$; see (\ref{eq:S}) \\
$\mathcal{L}_{\Omega,k}\mathbf{u}$ & $\mathcal{L}_{\Omega,k}\mathbf{u} = \operatorname{curl} \operatorname{curl} \mathbf{u} - k^2 \mathbf{u}$ \\
$\mathcal{B}_{\partial\Omega,k}\mathbf{u}$ & $\mathcal{B}_{\partial\Omega,k} \mathbf{u} = \gamma_T \operatorname{curl} \mathbf{u} - \operatorname{i} k \mathbf{u}_T$ \\
$g_k$ & Helmholtz Green's function: $g_k(\mathbf{r}) = e^{\operatorname{i} k |\mathbf{r}|}/(4\pi|{\mathbf r}|)$; see (\ref{MaxwellFullFullc})  \\
$\mathcal{N}_{\operatorname*{MW},k}^{\operatorname*{curl}}
$ &   
$\mathcal{N}_{\operatorname*{MW},k}^{\operatorname*{curl}}\left(
\mathbf{J}\right)
:=\int_{\mathbb{R}^{3}}g_{k}\left(  \left\Vert
\mathbf{\cdot}-\mathbf{y}\right\Vert \right)  \mathbf{J}\left(  \mathbf{y}%
\right)  d\mathbf{y}
$; see (\ref{MaxwellFullFullb}) 
\\
$\mathcal{N}_{-k}$ 
& solution operator for a dual problem, (\ref{eq:dual-problem-N})\\
$\mathcal{N}_{\operatorname*{MW},k}^{\nabla}
$ &   
$\mathcal{N}_{\operatorname*{MW},k}^{\nabla}\left(
\mathbf{J}\right)
:=k^{-2} \nabla \int_{\mathbb{R}^{3}}g_{k}\left(  \left\Vert
\mathbf{\cdot}-\mathbf{y}\right\Vert \right) \operatorname{div} \mathbf{J}\left(  \mathbf{y}%
\right)  d\mathbf{y}
$; see (\ref{MaxwellFullFullb}) 
\\
$\mathcal{N}_{\operatorname*{MW},k} $ &   
$\mathcal{N}_{\operatorname*{MW},k}\left(
\mathbf{J}\right)
:=\mathcal{N}_{\operatorname{MW},k}^{\operatorname{curl}} ( \mathbf{J}) + 
\mathcal{N}_{\operatorname{MW},k}^{\nabla} ( \mathbf{J})$  
; see (\ref{MaxwellFullFulla}) \\
$\mathcal{S}_{\operatorname*{MW},k}^{\operatorname*{curl}}
$ &   
$\mathcal{S}_{\operatorname*{MW},k}^{\operatorname*{curl}}\left(
\mathbf{J}\right)
:=\int_{\Gamma}g_{k}\left(  \left\Vert
\mathbf{\cdot}-\mathbf{y}\right\Vert \right)  \mathbf{J}\left(  \mathbf{y}%
\right)  d\mathbf{y}
$; see (\ref{eq:S-potential}) 
\\
$\mathcal{S}_{\operatorname*{MW},k}^{\nabla}
$ &   
$\mathcal{S}_{\operatorname*{MW},k}^{\nabla}\left(
\mathbf{J}\right)
:=k^{-2} \nabla \int_{\Gamma}g_{k}\left(  \left\Vert
\mathbf{\cdot}-\mathbf{y}\right\Vert \right) \operatorname{div} \mathbf{J}\left(  \mathbf{y}%
\right)  d\mathbf{y}
$; see (\ref{eq:S-potential}) 
\\
$\mathcal{S}_{\mathbb{R}^3,k}^{\operatorname*{MW}} $ &   
$\mathcal{S}_{\mathbb{R}^3,k}^{\operatorname*{MW}}\left(
\mathbf{J}\right)
:=\mathcal{S}_{\operatorname{MW},k}^{\operatorname{curl}} ( \mathbf{J}) + 
\mathcal{S}_{\operatorname{MW},k}^{\nabla} ( \mathbf{J})$  
; see (\ref{eq:S-potential}) 
\\
$\mathcal{S}_{\Omega,k}^+ $ & solution operator with the ``good'' sign; (\ref{weak_plus})  \\
 & $\mathcal{L}_{\Omega,\operatorname{i}k} \mathcal{S}_{\Omega,k}^+ \mathbf{g}_T = 0$ in $\Omega$, 
  $\mathcal{B}_{\partial\Omega,k} \mathcal{S}_{\Omega,k}^+ \mathbf{g}_T = \mathbf{g}_G$ on $\partial\Omega$,  \\
$\mathcal{S}^{\operatorname{MW}}_{\Omega,k} $ & solution operator for the Maxwell problem on $\Omega$; Section~\ref{SecMW_Imp} \\
$\mathcal{E}_{\operatorname{Stein}}$ & Stein's extension operator for $\Omega$ \\
$\mathcal{E}_{\operatorname{div}}$ & $H(\operatorname{div})$-stable extension operator, (\ref{DefBRR}) \\ 
%
%
%
$L_{\mathbb{R}^3}$, $H_{\mathbb{R}^3}$ & frequency splitting on ${\mathbb{R}}^3$ with parameter $\lambda > 1$; see (\ref{deffreqslitR3}) \\ 
$H_\Omega$, $L_\Omega$ & $H_\Omega = H_{\mathbb{R}^3} \mathcal{E}_{\operatorname{stein}}$, (\ref{DefLOmegaHOmega}) \\
$H^0_\Omega$, $L^0_\Omega$ & 
$H^0_\Omega \mathbf{f} = \operatorname{curl} H_\Omega \mathbf{R}_2 \mathbf{f}$, (\ref{DefHOmega0}) \\
${\mathcal{L}}^\nabla_{\operatorname{imp}}$, 
${\mathcal{L}}^{\operatorname{curl}}_{\operatorname{imp}}$ & 
part of the Hodge decomposition on $\Gamma$, (\ref{nablaliftings}) \\ 
$\Pi^\nabla_{\operatorname{imp}}$, 
$\Pi^{\operatorname{curl}}_{\operatorname{imp}}$ & 
part of the Hodge decomposition on $\Gamma$, (\ref{nablaliftings}), (\ref{defHodge}) \\ 
$\mathcal{E}^\Delta_\Omega$ & harmonic lifting from the boundary, (\ref{LaplaceDiriProbl}) \\
$L_{\partial\Omega}$, $H_{\partial\Omega}$ & 
$L_{\partial\Omega} = (L_\Omega \mathcal{E}^\Delta_\Omega)|_{\partial\Omega}$ 
$H_{\partial\Omega} = (H_\Omega \mathcal{E}^\Delta_\Omega)|_{\partial\Omega}$, (\ref{DefDiriLift}) \\
$\mathbf{H}^\nabla_\Gamma$, 
$\mathbf{L}^\nabla_\Gamma$ & $\mathbf{H}^\nabla_\Gamma = \nabla_\Gamma (H_{\partial\Omega} \mathcal{L}^\nabla_{\operatorname{imp}})$, 
                             $\mathbf{L}^\nabla_\Gamma = \nabla_\Gamma (L_{\partial\Omega} \mathcal{L}^\nabla_{\operatorname{imp}})$, 
(\ref{gt4}) \\ %
$\mathbf{H}^{\operatorname{curl}}_\Gamma$, 
$\mathbf{L}^{\operatorname{curl}}_\Gamma$ & $\mathbf{H}^{\operatorname{curl}}_\Gamma = \overrightarrow{\operatorname{curl}}_\Gamma  H_{\partial\Omega} \mathcal{L}^{\operatorname{curl}}_{\operatorname{imp}}$, 
$\mathbf{L}^{\operatorname{curl}}_\Gamma = \overrightarrow{\operatorname{curl}}_\Gamma  L_{\partial\Omega} \mathcal{L}^{\operatorname{curl}}_{\operatorname{imp}}$, (\ref{gt4}) \\
$\mathbf{H}_\Gamma$ & $\mathbf{H}_\Gamma = \mathbf{H}^\nabla_\Gamma + \mathbf{H}^{\operatorname{curl}}_\Gamma$, (\ref{DefHGammaLGamma2}) \\
$\mathbf{L}_\Gamma$ & $\mathbf{L}_\Gamma = \mathbf{L}^\nabla_\Gamma + \mathbf{L}^{\operatorname{curl}}_\Gamma$, (\ref{DefHGammaLGamma2}) \\
%
%
%
%
$\Pi^E_h$
& abstract form of a commuting diagram operator acting on 
$\mathbf{X}_{\operatorname{imp}}$  \\
$\Pi^F_h$
& abstract form of a commuting diagram operator \\
 & acting on a subspace of $\mathbf{H}(\operatorname{div})$ \\
%
$\Pi_p^{\operatorname{curl},s}$ & approximation operator 
of \cite[Lem.~{8.5}(i)]{melenk2018wavenumber} \\
& ${\mathbf H}(\operatorname{curl})$-conforming approx. operator, \\
& optimal $p$-rates
\emph{simultaneously} in ${\mathbf L}^2$ and ${\mathbf H}(\operatorname{curl})$ \\
%
$\widehat\Pi_p^{\operatorname{curl}, 3d}$, 
$\widehat\Pi_p^{\operatorname{grad}, 3d}$
& commuting diagram operators on reference tetrahedron $\widehat K$ 
from \cite{melenk_rojik_2018}.  \\
$\Pi^{\nabla}_k$, $\Pi^{\nabla}_{k,h}$, 
& projection onto $\nabla H^1(\Omega)$
or $S_h$ w.r.t. $((\cdot,\cdot))_k$ (Definition~\ref{DefHelmsplit})  \\
$\Pi^{\operatorname*{curl}}_k$, 
$\Pi^{\operatorname*{curl}}_{k,h}$, 
& $\operatorname{I} - \Pi^{\nabla}_k$ and $\operatorname{I} - \Pi^{\nabla}_{k,h}$,
see Def.~\ref{DefHelmsplit}\\[\fs]
%
{\bf constants} & \\ \hline \\
$C_{\operatorname{stab}}$, $\theta$ & constants characterizing 
assumed stability of the Maxwell problem, 
(\ref{DefStabConst}) \\
$C_{\operatorname{cont}}$ & $C_{\operatorname{cont}} = 1$ is the continuity 
constant of $A_k$ \\
$C^\Omega_{\operatorname{stab}}(k)$, 
$C^{\operatorname{imp}}_{\operatorname{stab}}(k)$
& stability constants for continuous problem, (\ref{eq:Cstab}), (\ref{AssumptionAlgGrowth}) \\
$C_{\operatorname*{affine}}$, $C_{\operatorname{metric}}$
&constants measuring the quality of the mesh (Assumption \ref{def:element-maps}) \\
$C_{\mathbf{V}_0}$ 
& an embedding constant ${\mathbf V}_0 \subset {\mathbf H}^1(\Omega)$, 
see Prop.~\ref{PropEmbVk0};  \\
$C_{tr,R}$, $C^\prime_{tr,R}$ & trace constants for $\Gamma_R$; Lemma~\ref{lemma:A6} and its proof \\[\fs] 
%
%
%
%
{\bf dual problems and} & \\ 
{\bf approximation } & \\ 
{\bf properties} & \\ \hline \\
${\mathcal N}_{-k}$ & solution operator for an adjoint problem; (\ref{eq:dual-problem-N}), \\
${\eta}_6^{\operatorname{alg}}$ & (\ref{eq:eta-6}) \\
$\tilde{\eta}_2^{\operatorname{alg}}$
& (\ref{eq:eta2-alg}) \\
%
 & a tilde indicates that an adjoint sol. operator ${\mathcal N}$ is involved; \\
& $\eta$ indicates a pure approximation property, \\
& superscript ``$\operatorname{alg}$'' indicates that algebraic convergence \\
& of $hp$-FEM is expected 
\end{longtable}

 \setcounter{page}{-1} \pagenumbering{arabic}

\fi


\section{Introduction}

The time-harmonic Maxwell equations at high wavenumber $k$ are a fundamental
component of high-frequency computational electromagnetics. Computationally,
these equations are challenging for several reasons. The solutions are highly
oscillatory so that fine discretizations are necessary and correspondingly
large computational resources are required. While conditions to resolve the
oscillatory nature of the solution appear unavoidable, even more stringent
conditions on the discretizations have to be imposed for stability reasons: In
many numerical methods based on the variational formulation of Maxwell's
equations, the gap between the actual error and the best approximation error
widens as the wavenumber $k$ becomes large. This \textquotedblleft pollution
effect\textquotedblright\ is a manifestation of a lack of coercivity of the
problem, as is typical in time-harmonic wave propagation problems.
Mathematically understanding this \textquotedblleft pollution
effect\textquotedblright\ in terms of the wavenumber $k$ and the
discretization parameters for the model problem (\ref{eq:model}) is the
purpose of the present work.

The \textquotedblleft pollution effect\textquotedblright, i.e., the fact that
discretizations of time-harmonic wave propagation problems are prone to
dispersion errors, is probably best studied for the Helmholtz equation at
large wavenumbers. The beneficial effect of using high order methods was
numerically observed very early and substantiated for translation-invariant
meshes \cite{Ainsworth2004,ainsworth04b}; a rigorous mathematical analysis for
unstructured meshes was developed in the last decade only in
\cite{MelenkSauterMathComp, mm_stas_helm2, MelenkHelmStab2010}. These works
analyze high order FEM ($hp$-FEM) for the Helmholtz equation in a G{\aa }rding
setting using duality techniques. This technique, often called
\textquotedblleft Schatz argument\textquotedblright, crucially hinges on the
regularity of the dual problem, which is again a Helmholtz problem. The key
new insight of the line of work \cite{MelenkSauterMathComp, mm_stas_helm2,
MelenkHelmStab2010} is a refined wavenumber-explicit regularity theory for
Helmholtz problems that takes the following form (\textquotedblleft regularity
by decomposition\textquotedblright): given data, the solution $u$ is written
as $u_{H^{2}}+u_{\mathcal{A}}$ where $u_{H^{2}}$ has the regularity expected
of elliptic problems and is controlled in terms of the data with constants
\emph{independent} of $k$. The part $u_{\mathcal{A}}$ is a (piecewise)
analytic function whose regularity is described explicitly in terms of $k$.
Employing \textquotedblleft regularity by decomposition\textquotedblright\ for
the analysis of discretizations has been successfully applied to other
Helmholtz problems and discretizations such DG methods \cite{MPS13}, BEM
\cite{MelenkLoehndorf}, FEM-BEM coupling
\cite{mascotto-melenk-perugia-rieder20}, and heterogeneous Helmholtz problems
\cite{chaumont-frelet-nicaise20,bernkopf-chaumont-melenk21,lafontaine2021wavenumberexplicit,lafontaine2021decompositions}%
.

In this paper, we consider the following time-harmonic Maxwell equations with
impedance boundary conditions as our model problem:%
\begin{subequations}
\label{eq:model}
\begin{align}
\operatorname{curl}\operatorname{curl}{\mathbf{E}}-k^{2}{\mathbf{E}}  &
={\mathbf{f}}\quad\mbox {in $\Omega$},
\label{eq:modela}\\
(\operatorname{curl}{\mathbf{E}})\times{\mathbf{n}}-\operatorname{i}%
k{\mathbf{E}}_{T}  &  ={\mathbf{g}}_{T}\quad\mbox {on $\partial\Omega$} 
\label{eq:modelb}%
\end{align}
\end{subequations}
on a bounded Lipschitz domain $\Omega\subset\mathbb{R}^{3}$ with simply
connected boundary $\partial\Omega$. We study an ${\mathbf{H}}%
(\operatorname{curl})$-conforming Galerkin method with elements of degree $p$
on a mesh of size $h$ and show quasi-optimality of the method under
the
\emph{scale resolution condition}
\begin{equation}
\frac{\left\vert k\right\vert h}{p}\leq c_{1}\qquad\mbox{ and }\qquad p\geq
c_{2}\ln\left\vert k\right\vert , \label{eq:scale-resolution}%
\end{equation}
where $c_{2}$%
$>0$
is arbitrary and $c_{1}$%
$>0$
is sufficiently small (Theorem~\ref{thm:quasi-optimality}). The resolution
condition $\left\vert k\right\vert h/p\leq c_{1}$ is a natural condition to
resolve the oscillatory behavior of the solution, and the side constraint
$p\geq c_{2}\ln\left\vert k\right\vert $ is a rather weak condition that
suppresses the \textquotedblleft pollution effect\textquotedblright.

Compared to the scalar Helmholtz case, where the compact 
embedding $H^1 \subset L^2$ underlies the success of the duality argument, 
the convergence analysis of discretizations of Maxwell's equations is 
hampered by the fact that the embedding 
$\mathbf{H}(\operatorname{curl}) \subset \mathbf{L}^2$ is not compact so 
that a duality argument is not immediately applicable. This issue arises
even in the context of convergence analyses that are not explicit in the 
wavenumber $k$. An analysis can be based on the observation that 
$\mathbf{H}(\operatorname{curl}) \cap 
\mathbf{H}(\operatorname{div})$ endowed with appropriate boundary conditions
is compactly embedded in  $\mathbf{L}^2$. This approach, which 
is structurally described in \cite[Sec.~{1.2}]{MelenkSauterMaxwell_I}, 
involves as a first ingredient the ability to decompose discrete functions 
into gradient parts and (discrete) solenoidal parts in two ways, namely, 
on the continuous level and the discrete level. The solenoidal part of 
the decomposition on the continuous level 
is in 
$\mathbf{H}(\operatorname{curl}) \cap 
\mathbf{H}(\operatorname{div})$ 
and admits a duality argument. 
Galerkin orthogonalities are invoked to then reduce the analysis to that 
of the difference between the solenoidal parts of the continuous 
and the discrete level. For the analysis of this difference, 
a second ingredient is vital, namely, special interpolation operators 
with a commuting diagram property. These two ingredients underlie 
many duality arguments for Maxwell problems in the literature, see, 
e.g.,  \cite[Sec.~{7.2}]{Monkbook}, 
\cite{zhong-shu-wittum-xu09,chaumont-frelet-nicaise-pardo18,ern-guermond18,chaumont19} and references therein. The present work 
follows \cite[Sec.~{7.2}]{Monkbook} and the path outlined in
\cite[Sec.~{1.1--1.3}]{MelenkSauterMaxwell_I}. 

At the heart of the $k$-explicit convergence analysis for 
(\ref{eq:model}) is a $k$-explicit regularity theory for the above 
mentioned dual problem. Similarly to the Helmholtz case discussed 
above, it takes the form of a ``regularity by decomposition'' 
(Theorem~\ref{TheoMainIt}). 
Such a regularity theory was developed for
Maxwell's equations in full space in
the recent
paper \cite{MelenkSauterMaxwell_I}, where the decomposition is directly
accessible in terms of the Newton potential and layer potentials. 
For the present
bounded domain case, however, an explicit construction of the decomposition is
not available, and the iterative construction as in the Helmholtz case of
\cite{mm_stas_helm2} has to be brought to bear. For this, a significant
complication in the Maxwell case compared to the Helmholtz case arises from
the requirement that the frequency filters used in the construction be such
that they produce solenoidal fields if the argument is solenoidal.

While our wavenumber-explicit regularity result Theorem~\ref{TheoMainIt}
underlies our proof of quasioptimal convergence of the high order Galerkin
method (cf.\ Theorem~\ref{thm:quasi-optimality}), it also proves useful for
wavenumber-explicit interpolation error estimates as worked out in
Corollary~\ref{cor:convergence}.

The present paper analyzes an ${\mathbf{H}}(\operatorname{curl})$-conforming
discretization based on high order N{\'{e}}d{\'{e}}lec elements. Various other
high order methods for Maxwell's equations that are explicit in the wavenumber
can be found in the literature. Closest to our work are
\cite{nicaise-tomezyk19,chaumont-frelet-vega21}. The work
\cite{nicaise-tomezyk19} studies the same problem (\ref{eq:model}) but uses an
${\mathbf{H}}^{1}$-based instead of an ${\mathbf{H}}(\operatorname{curl}%
)$-based variational formulation involving both the electric and the magnetic
field. The proof of quasi-optimality in \cite{nicaise-tomezyk19} is based on a
\textquotedblleft regularity by decomposition\textquotedblright\ technique
similar to the present one. \cite{nicaise-tomezyk17} studies the same
${\mathbf{H}}^{1}$-based variational formulation and ${\mathbf{H}}^{1}%
$-conforming discretizations for (\ref{eq:model}) on certain polyhedral
domains and obtains $k$-explicit conditions on the discretization for
quasi-optimality. Key to this is a description of the solution regularity in
\cite{nicaise-tomezyk17} in terms
of corner and edge singularities. {The work \cite{chaumont-frelet-vega21}
studies fixed (but arbitrary) 
order ${\mathbf{H}}(\operatorname{curl})$-conforming discretizations of
heterogeneous Maxwell problems and shows a similar quasi-optimality result by
generalizing the corresponding Helmholtz result
\cite{chaumont-frelet-nicaise20}; the restriction to finite order methods
compared to the present work appears to be due to the difference in which the
decomposition of solutions of Maxwell problems is obtained.} High order
Discontinuous Galerkin (DG) and Hybridizable DG (HDG) methods for
(\ref{eq:model}) have been presented in \cite{feng-wu14} and
\cite{lu-chen-qiu17} together with a stability analysis that is explicit in
$h$, $k$, and $p$. {A dispersion analysis of high order methods on
tensor-product meshes is given in \cite{ainsworth04b}.}

The outline of the paper is as follows. Section~\ref{sec:setting} introduces
the notation and tools such as regular decompositions (see
Section~\ref{sec:regular-decomposition}) that are indispensable for the
analysis of Maxwell problems. Section~\ref{sec:stability}
(Theorem~\ref{TheoNonStarShaped}) shows that the solution of (\ref{eq:model})
depends only polynomially on the wavenumber $k$. This stability result is
obtained using layer potential techniques in the spirit of earlier work
\cite[Thm.~{2.4}]{MelenkHelmStab2010} for the analogous Helmholtz equation.
While earlier stability estimates for (\ref{eq:model}) in
\cite{hiptmair-moiola-perugia11, feng-wu14,verfuerth19}, and \cite[Thm.~{5.2}%
]{nicaise-tomezyk17} are obtained by judicious choices of test functions and
rely on star-shapedness of the geometry, Theorem~\ref{TheoNonStarShaped} does
not require star-shapedness. It is worth mentioning that at least 
in the analogous case of the Helmholtz equation, alternatives to the 
use of suitable test functions or layer potential exist, which can lead
to better $k$-dependencies; we refer to \cite{spence14} for results and 
a discussion. 
Section~\ref{sec:good-sign} analyzes a
\textquotedblleft sign definite\textquotedblright\ Maxwell problem and
presents $k$-explicit regularity assertions for it
(Theorem~\ref{lemma:apriori-with-good-sign}). The motivation for studying this
particular boundary value problem is that, since the principal parts of our
sign-definite Maxwell operator and that of (\ref{eq:model}) coincide, a
contraction argument can be brought to bear in the proof of
Theorem~\ref{TheoMainIt}. A similar technique has recently been used for
heterogeneous Helmholtz problems in \cite{bernkopf-chaumont-melenk21}.
Section~\ref{SecHRP} collects $k$-explicit regularity assertions for
(\ref{eq:model}) (Lemma~\ref{lemma:MW-regularity} for finite regularity data
and Theorem~\ref{ThmAnaRegSum} for analytic data). The contraction argument in
the proof of Theorem~\ref{TheoMainIt} relies on certain frequency splitting
operators (both in the volume and on the boundary), which are provided in
Section~\ref{SecFreqSplit}. Section~\ref{SecReg} presents the main analytical
result, Theorem~\ref{TheoMainIt}, where the solution of (\ref{eq:model}) with
finite regularity data $\mathbf{f}$, $\mathbf{g}$ is decomposed into a part
with finite regularity but $k$-uniform bounds, a gradient field, and an
analytic part. Section~\ref{sec:discretization} presents the discretization of
(\ref{eq:model}) based on high order N\'{e}d\'{e}lec elements 
and presents $hp$-approximation operators that map into 
N\'{e}d\'{e}lec spaces. These operators are the same ones as used 
in \cite{MelenkSauterMaxwell_I} but we work out their approximation properties 
on the skeleton of the mesh since stronger approximation properties on the boundary 
$\partial\Omega$ are required in the present case of impedance boundary 
conditions. 
Section~\ref{SecStabConv} shows quasi-optimality
(Theorem~\ref{thm:quasi-optimality}) under the scale resolution condition
(\ref{eq:scale-resolution}). Section~\ref{sec:numerics} concludes the paper
with numerical results.


\section{Setting}

\label{sec:setting}
\subsection{Geometric setting and Sobolev spaces on Lipschitz
domains\label{SecGeometry}}


Let $\Omega\subset\mathbb{R}^{3}$ be a bounded Lipschitz domain which we
assume throughout the paper to have a simply connected and sufficiently smooth
boundary $\Gamma:=\partial\Omega$; if less regularity is required, we will
specify this. {We flag already at this point that the main quasi-optimal
convergence result, Theorem~\ref{thm:quasi-optimality} will require
analyticity of $\Gamma$.} The outward unit normal vector field is denoted by
$\mathbf{n}:\Gamma\rightarrow\mathbb{S}_{2}$.

The Maxwell problem in the frequency domain involves the wavenumber (denoted
by $k$) and we assume that\footnote{We exclude here a neighborhood of $0$
since we are interested in the high-frequency behavior -- to simplify notation
we have fixed $k_{0}=1$ while any other positive choice $k_{0}\in\left(
0,1\right)  $ leads to qualitatively the same results while constants then
depend continuously on $k_{0}\in\left(  0,1\right)  $ and, possibly,
deteriorate as $k_{0}\rightarrow0$.}%
\begin{equation}
k\in\mathbb{R}\backslash\left(  -k_{0},k_{0}\right)  \quad\text{for }k_{0}=1.
\label{loweromega}%
\end{equation}

Let $L^{2}(\Omega)$ denote the usual Lebesgue space on $\Omega$ with scalar
product $\left(  \cdot,\cdot\right)  _{L^{2}(\Omega)}$ and norm $\left\Vert
\cdot\right\Vert _{L^{2}(\Omega)}:=\left(  \cdot,\cdot\right)  _{L^{2}%
(\Omega)}^{1/2}$.
Recall that the complex conjugation is applied to the second argument in $\left(
\cdot,\cdot\right)  _{L^{2}(\Omega)}$. If the domain $\Omega$ is clear from
the context we write short $\left(  \cdot,\cdot\right)  $, $\left\Vert
\cdot\right\Vert $ for $\left(  \cdot,\cdot\right)  _{L^{2}\left(
\Omega\right)  }$, $\left\Vert \cdot\right\Vert _{L^{2}(\Omega)}$. 
For Sobolev spaces, we follow the notation of \cite{Mclean00}.
For $s \ge 0$ we denote 
by $H^{s}(\Omega)$ the usual Sobolev spaces of index $s$ with norm
$\left\Vert \cdot\right\Vert _{H^{s}\left(  \Omega\right)  }$ 
and by $\widetilde{H}^s(\Omega) = H^s_{\overline{\Omega}}({\mathbb R}^3)$ 
the space of Sobolev functions on ${\mathbb R}^3$ with support in $\overline{\Omega}$.  
For $s \ge 0$, $H^{-s}(\Omega)$ denotes the dual of $\widetilde{H}^s(\Omega)$. 
The space $\mathbf{H}^s(\Omega)$ of vector-valued functions is characterized by 
componentwise membership in $H^s(\Omega)$. 
We write $(\cdot,\cdot)$ also for the vectorial $\mathbf{L}^{2}(\Omega)$ inner
product given by $(\mathbf{f},\mathbf{g})=\int_{\Omega}\langle\mathbf{f}%
,\overline{\mathbf{g}}\rangle$. 
Here, we introduce for vectors $\mathbf{a,b}\in\mathbb{C}^{3}$ with
$\mathbf{a}=(a_{j})_{j=1}^{3}$, $\mathbf{b=}(b_{j})_{j=1}^{3}$ the bilinear
form $\langle\cdot,\cdot\rangle$ by $\langle\mathbf{a},\mathbf{b}\rangle
:=\sum_{j=1}^{3}a_{j}b_{j}$. For $m\in\mathbb{N}_{0}$, we introduce the
seminorms
\begin{equation}
\left\vert \mathbf{f}\right\vert _{\mathbf{H}^{m}(\Omega)}:=\left(
\sum_{\alpha\in\mathbb{N}_{0}^{3}\colon|\alpha|=m}\frac{|\alpha|!}{\alpha
!}\left(  \partial^{\alpha}\mathbf{f},\partial^{\alpha}\mathbf{f}\right)
\right)  ^{1/2}%
\end{equation}
and the full norms $\Vert\mathbf{f}\Vert_{\mathbf{H}^{m}(\Omega)}^{2}%
:=\sum_{n=0}^{m}|\mathbf{f}|_{\mathbf{H}^{n}(\Omega)}^{2}$. For the Maxwell
problem the space $\mathbf{H}(\operatorname*{curl})$ is
the key to describe the energy of the electric field. For $m\in\mathbb{N}_{0}$ we set%

\begin{subequations}
\label{defXHcurl}%
\begin{align}
\mathbf{H}^{m}\left(  \operatorname{curl},\Omega\right)  & :=\left\{
\mathbf{u}\in\mathbf{H}^{m}(  \Omega)  \mid\operatorname{curl}%
\mathbf{u}\in\mathbf{H}^{m}(  \Omega)  \right\}  \quad\text{and} \\ 
\mathbf{X} & :=\mathbf{H}(  \operatorname*{curl},\Omega)  :=\mathbf{H}%
^{0}(  \operatorname*{curl},\Omega)  . 
\end{align}
\end{subequations}
The
space $\mathbf{H}^{m}(\operatorname*{div},\Omega)$  
is given for $m\in\mathbb{N}_{0}$ by%
\begin{equation}
\mathbf{H}^{m}(  \operatorname{div},\Omega)  :=\left\{
\mathbf{u}\in\mathbf{H}^{m}(  \Omega)  \mid\operatorname{div}%
\mathbf{u}\in H^{m}(  \Omega)  \right\}  \label{DefHOmegadiv}%
\end{equation}
with $\mathbf{H}(\operatorname{div},\Omega):=\mathbf{H}^{0}(\operatorname{div}%
,\Omega)$. We introduce
\begin{equation}
\mathbf{H}(\operatorname*{div}\nolimits_{0},\Omega):=\left\{  \mathbf{u}%
\in\mathbf{H}(  \operatorname*{div},\Omega)  \mid\operatorname{div}%
\mathbf{u}=0\right\}  . \label{Hdiv0}%
\end{equation}
For $\rho\in\mathbb{R}\setminus \{  0\}  $ and $m$, $\ell
\in\mathbb{N}_{0}$ we define the indexed norms and seminorms by%
\begin{equation}
\left\vert \mathbf{v}\right\vert _{\mathbf{H}^{\ell}(\Omega),\rho}:=\left\vert
\rho\right\vert ^{-\ell}\left\vert \mathbf{v}\right\vert _{\mathbf{H}^{\ell
}(\Omega)}\quad\text{and\quad}\left\Vert \mathbf{v}\right\Vert _{\mathbf{H}%
^{m}(\Omega),\rho}:=\left(  \sum_{\ell=0}^{m}\left\vert \mathbf{v}\right\vert
_{\mathbf{H}^{\ell}(\Omega),\rho}^{2}\right)  ^{1/2} \label{eq:Hl-rho}%
\end{equation}
and corresponding dual norms%
\begin{equation}
\left\Vert \mathbf{v}\right\Vert _{\mathbf{H}^{-m}(\Omega),\rho}:=\left\vert
\rho\right\vert ^{m}\left\Vert \mathbf{v}\right\Vert _{\mathbf{H}^{-m}%
(\Omega)}. \label{eq:Hl-rho-dual}%
\end{equation}
We define for $\operatorname*{D}\in\left\{  \operatorname*{curl}%
,\operatorname*{div}\right\}  $%
\begin{align*}
\left\Vert \mathbf{f}\right\Vert _{\mathbf{H}^{m}(\operatorname*{D}%
,\Omega),\rho}  &  :=\left(  \rho^{-2m}\left\vert \operatorname*{D}%
\mathbf{f}\right\vert _{\mathbf{H}^{m}(\Omega)}^{2}+\rho^{2}\left\Vert
\mathbf{f}\right\Vert _{\mathbf{H}^{m}(\Omega),\rho}^{2}\right)  ^{1/2}\\
&  =\left(  \rho^{-2m}\left\vert \operatorname*{D}\mathbf{f}\right\vert
_{\mathbf{H}^{m}\left(  \Omega\right)  }^{2}+\sum_{\ell=0}^{m}\rho^{2-2\ell
}\left\vert \mathbf{f}\right\vert _{\mathbf{H}^{\ell}(\Omega)}^{2}\right)
^{1/2}%
\end{align*}
and introduce the shorthands:%
\begin{align}
\left\Vert \mathbf{f}\right\Vert _{\mathbf{H}^{m}\left(  \operatorname*{D}%
,\Omega\right)  }  &  :=\left\Vert \mathbf{f}\right\Vert _{\mathbf{H}%
^{m}\left(  \operatorname*{D},\Omega\right)  ,1},\nonumber\label{eq:HD-norm}\\
\left\Vert \mathbf{f}\right\Vert _{\mathbf{H}\left(  \operatorname*{D}%
,\Omega\right)  ,\rho}  &  :=\left\Vert \mathbf{f}\right\Vert _{\mathbf{H}%
^{0}\left(  \operatorname*{D},\Omega\right)  ,\rho}=\left(  \left\Vert
\operatorname*{D}\mathbf{f}\right\Vert ^{2}+\rho^{2}\left\Vert \mathbf{f}%
\right\Vert ^{2}\right)  ^{1/2},\\
\left\Vert \mathbf{f}\right\Vert _{\mathbf{H}\left(  \operatorname*{D}%
,\Omega\right)  }  &  :=\left\Vert \mathbf{f}\right\Vert _{\mathbf{H}%
^{0}\left(  \operatorname*{D},\Omega\right)  }.
\end{align}
We close this section with the introduction of the spaces of analytic functions:

\begin{definition}
\label{DefClAnFct}For an open set $\omega\subset\mathbb{R}^{3}$, constants
$C_{1}$, $\gamma_{1}>0$, and wavenumber $\left\vert k\right\vert \geq1$, we
set
\[
\mathcal{A}(C_{1},\gamma_{1},\omega):=\left\{  {\mathbf{u}}\in(C^{\infty
}(\omega))^{3}\mid\left\vert {\mathbf{u}}\right\vert _{\mathbf{H}^{n}(\omega
)}\leq C_{1}\gamma_{1}^{n}\max\left\{  n+1,\left\vert k\right\vert \right\}
^{n}\;\forall n\in\mathbb{N}_{0}\right\}  .
\]

\end{definition}


\subsection{Sobolev spaces on a sufficiently smooth surface $\Gamma$}


The Sobolev spaces on the boundary $\Gamma$ are denoted by $H^{s}( \Gamma) $
for scalar-valued functions and by $\mathbf{H}^{s}( \Gamma) $ for
vector-valued functions with norms $\left\Vert \cdot\right\Vert _{H^{s}(
\Gamma) }$, $\left\Vert \cdot\right\Vert _{\mathbf{H}^{s}( \Gamma) }$ (see,
e.g., \cite[p.~{98}]{Mclean00}). Note that the range of $s$ for which $H^{s}(
\Gamma) $ is defined may be limited, depending on the global smoothness of the
surface $\Gamma$; for Lipschitz surfaces, $s$ can be chosen in the range
$\left[  0,1\right]  $. For $s<0$, the space $H^{s}( \Gamma) $ is the dual of
$H^{-s}( \Gamma) $.

For a scalar-valued function $u$ and vector-valued function $\mathbf{v}$ on
$\Gamma$, both sufficiently smooth, 
the constant extensions (along the normal direction) into a sufficiently
small three-dimensional neighborhood ${\mathcal{U}}$ of $\Gamma$ is denoted by
$u^{\ast}$ and $\mathbf{v}^{\ast}$. The \textit{surface gradient}
$\nabla_{\Gamma}$, the \textit{tangential curl} $\overrightarrow
{\operatorname*{curl}\nolimits_{\Gamma}}$, and the \textit{surface divergence}
$\operatorname*{div}_{\Gamma}$ are defined by (cf., e.g., \cite{Nedelec01},
\cite{BuffaCostabelSheen})%
\begin{equation}
\nabla_{\Gamma}u:=\left.  \left(  \nabla u^{\star}\right)  \right\vert
_{\Gamma}\text{,\quad}\overrightarrow{\operatorname*{curl}\nolimits_{\Gamma}%
}u:=\nabla_{\Gamma}u\times\mathbf{n}\text{,\quad and\quad}\operatorname*{div}%
\nolimits_{\Gamma}\mathbf{v}=\left.  \left(  \operatorname*{div}%
\mathbf{v}^{\ast}\right)  \right\vert _{\Gamma}\qquad\text{on }\Gamma\text{.}
\label{curlvec}%
\end{equation}
The scalar counterpart of the tangential curl is the \textit{surface curl}%
\begin{equation}
\operatorname*{curl}\nolimits_{\Gamma}\mathbf{v}:=\left\langle \left.  \left(
\operatorname*{curl}\mathbf{v}^{\ast}\right)  \right\vert _{\Gamma}%
,\mathbf{n}\right\rangle \qquad\text{on }\Gamma. 
\label{sccounttangcurl}%
\end{equation}
The composition of the surface divergence and surface gradient leads to the
\textit{scalar Laplace-Beltrami operator} (see \cite[(2.5.191)]{Nedelec01})%
\begin{equation}
\Delta_{\Gamma}u=\operatorname*{div}\nolimits_{\Gamma}\nabla_{\Gamma
}u=-\operatorname*{curl}\nolimits_{\Gamma}\overrightarrow{\operatorname*{curl}%
\nolimits_{\Gamma}}u. 
\label{defLaplBelt}%
\end{equation}
{}From \cite[(2.5.197)]{Nedelec01} it follows that 
\begin{equation*}
\operatorname*{div}\nolimits_{\Gamma}\left(  \mathbf{v}\times\mathbf{n}%
\right)  =\operatorname*{curl}\nolimits_{\Gamma}\mathbf{v}. 
\end{equation*}

Next, we introduce Hilbert spaces of tangential fields on the compact and
\textit{simply connected} manifold $\Gamma$ and corresponding norms and refer
for their definitions and properties to \cite[Sec.~{5.4.1}]{Nedelec01}. We start
with the definition of the space $\mathbf{L}_{T}^{2}(\Gamma)$ of tangential
vector fields given by
\begin{equation}
\mathbf{L}_{T}^{2}(\Gamma):=\left\{  \mathbf{v}\in\mathbf{L}^{2}(\Gamma
)\mid\left\langle \mathbf{n},\mathbf{v}\right\rangle =0\text{ on }%
\Gamma\right\}  . \label{DefL2t}%
\end{equation}
Any tangential field $\mathbf{v}_{T}$ on $\Gamma$ then can be represented in
terms of the \textit{Hodge decomposition}\footnote{Throughout the paper we use
the convention that if $\mathbf{v}_{T}$, $\mathbf{v}_{T}^{\nabla}$,
$\mathbf{v}_{T}^{\operatorname*{curl}}$, $V^{\nabla}$,
$V^{\operatorname*{curl}}$ appear in the same context they are related by
(\ref{vFExp}).}as
\begin{equation}
\mathbf{v}_{T}=\mathbf{v}_{T}^{\nabla}+\mathbf{v}_{T}^{\operatorname*{curl}%
}\quad\text{with\quad}\mathbf{v}_{T}^{\nabla}:=\nabla_{\Gamma}V^{\nabla}%
\quad\text{and\quad}\mathbf{v}_{T}^{\operatorname*{curl}}:=\overrightarrow
{\operatorname*{curl}\nolimits_{\Gamma}}V^{\operatorname*{curl}} \label{vFExp}%
\end{equation}
for some scalar potentials%
\[
V^{\nabla}\in H^{1}(\Gamma)\quad\text{and\quad}V^{\operatorname*{curl}}\in
H(\overrightarrow{\operatorname*{curl}\nolimits_{\Gamma}},\Gamma):=\left\{
\phi\in L^{2}(\Gamma)\mid\overrightarrow{\operatorname*{curl}\nolimits_{\Gamma
}}\phi\in\mathbf{L}_{T}^{2}(\Gamma)\right\}  .
\]
In particular, this decomposition is $\mathbf{L}_{T}^{2}$-orthogonal:
\[
\left(  \mathbf{v}_{T}^{\nabla},\mathbf{v}_{T}^{\operatorname*{curl}}\right)
_{\mathbf{L}^{2}(\Gamma)}=\left(  \nabla_{\Gamma}V^{\nabla},\overrightarrow
{\operatorname*{curl}\nolimits_{\Gamma}}V^{\operatorname*{curl}}\right)
_{\mathbf{L}^{2}(\Gamma)}=0\quad\forall\,\mathbf{v}_{T} = \mathbf{v}_{T}^{\nabla} + \mathbf{v}_{T}^{\operatorname{curl}} \text{ as in
(\ref{vFExp}).}%
\]
Hence, the splitting (\ref{vFExp}) is stable:
\begin{align*}
\left\Vert \mathbf{v}_{T}\right\Vert _{\mathbf{L}^{2}(\Gamma)}  &  =\left(
\left\Vert \nabla_{\Gamma}V^{\nabla}\right\Vert _{\mathbf{L}^{2}(\Gamma)}%
^{2}+\left\Vert \overrightarrow{\operatorname*{curl}\nolimits_{\Gamma}%
}V^{\operatorname*{curl}}\right\Vert _{\mathbf{L}^{2}(\Gamma)}^{2}\right)
^{1/2},\\
\left\Vert \nabla_{\Gamma}V^{\nabla}\right\Vert _{\mathbf{L}^{2}(\Gamma)}  &
\leq\left\Vert \mathbf{v}_{T}\right\Vert _{\mathbf{L}^{2}\left(
\Gamma\right)  }\quad\text{and\quad}\left\Vert \overrightarrow
{\operatorname*{curl}\nolimits_{\Gamma}}V^{\operatorname*{curl}}\right\Vert
_{\mathbf{L}^{2}(\Gamma)}\leq\left\Vert \mathbf{v}_{T}\right\Vert
_{\mathbf{L}^{2}(\Gamma)}.
\end{align*}
Higher order spaces are defined for $s>0$ by
\begin{equation}
\mathbf{H}_{T}^{s}(\Gamma):=\left\{  \mathbf{v}_{T}\in\mathbf{L}_{T}%
^{2}(\Gamma)\mid\left\Vert \mathbf{v}_{T}\right\Vert _{\mathbf{H}^{s}(
\Gamma)  }<\infty\right\}  \label{DefHsGammaTNorm}%
\end{equation}
and for negative $s$ by duality.

The $H^{s}(\Gamma)$-norm of $\operatorname*{curl}\nolimits_{\Gamma}\left(
\cdot\right)  $ and $\operatorname*{div}_{\Gamma}\left(  \cdot\right)  $ can
be expressed by using the Hodge decomposition: 
\begin{align*}
\left\Vert \operatorname*{curl}\nolimits_{\Gamma}\mathbf{v}_{T}\right\Vert
_{H^{s}(\Gamma)}  &  =\left\Vert \operatorname*{curl}\nolimits_{\Gamma
}\mathbf{v}_{T}^{\operatorname*{curl}}\right\Vert _{H^{s}(\Gamma)}\!=\!\left\Vert
\operatorname*{curl}\nolimits_{\Gamma}\overrightarrow{\operatorname*{curl}%
\nolimits_{\Gamma}}V^{\operatorname*{curl}}\right\Vert _{H^{s}(\Gamma
)}\!\!=\!\left\Vert \Delta_{\Gamma}V^{\operatorname*{curl}}\right\Vert
_{H^{s}(\Gamma)},
\\
\left\Vert \operatorname*{div}\nolimits_{\Gamma}\mathbf{v}_{T}\right\Vert
_{H^{s}(\Gamma)}  &  =\left\Vert \operatorname*{div}\nolimits_{\Gamma
}\mathbf{v}_{T}^{\nabla}\right\Vert _{H^{s}(\Gamma)}=\left\Vert
\operatorname*{div}\nolimits_{\Gamma}\nabla_{\Gamma}V^{\nabla}\right\Vert
_{H^{s}(\Gamma)}=\left\Vert \Delta_{\Gamma}V^{\nabla}\right\Vert
_{H^{s}(\Gamma)}.
\end{align*}
{We define}%
\begin{subequations}
\label{m1/2curldiv}
\begin{align}
\nonumber 
\left\Vert \mathbf{v}_{T}\right\Vert _{\mathbf{H}^{-1/2}(
\operatorname*{curl}\nolimits_{\Gamma},\Gamma)  }  &  :=\left(
\left\Vert \operatorname*{curl}\nolimits_{\Gamma}\mathbf{v}_{T}%
^{\operatorname*{curl}}\right\Vert _{H^{-1/2}(  \Gamma)  }%
^{2}+\left\Vert \mathbf{v}_{T}\right\Vert _{\mathbf{H}^{-1/2}(
\Gamma)  }^{2}\right)  ^{1/2} \\
& =\left(  \left\Vert \Delta_{\Gamma
}V^{\operatorname*{curl}}\right\Vert _{H^{-1/2}(  \Gamma)  }%
^{2}+\left\Vert \mathbf{v}_{T}\right\Vert _{\mathbf{H}^{-1/2}(
\Gamma)  }^{2}\right)  ^{1/2},
\label{m1/2curldiva}\\
\nonumber 
\left\Vert \mathbf{v}_{T}\right\Vert _{\mathbf{H}^{-1/2}(
\operatorname{div}_{\Gamma},\Gamma)  }  &  :=\left(  \left\Vert
\operatorname*{div}\nolimits_{\Gamma}\mathbf{v}_{T}^{\nabla}\right\Vert
_{H^{-1/2}(  \Gamma)  }^{2}+\left\Vert \mathbf{v}_{T}\right\Vert
_{\mathbf{H}^{-1/2}(  \Gamma)  }^{2}\right)  ^{1/2}
\\
& =\left(
\left\Vert \Delta_{\Gamma}V^{\nabla}\right\Vert _{H^{-1/2}\left(
\Gamma\right)  }^{2}+\left\Vert \mathbf{v}_{T}\right\Vert _{\mathbf{H}%
^{-1/2}(  \Gamma)  }^{2}\right)  ^{1/2}. 
\label{m1/2curldivb}%
\end{align}
\end{subequations}

The corresponding spaces $\mathbf{H}_{T}^{-1/2}(\operatorname*{curl}%
\nolimits_{\Gamma},\Gamma)$ and $\mathbf{H}_{T}^{-1/2}(\operatorname*{div}%
\nolimits_{\Gamma},\Gamma)$ are characterized by%
\begin{equation}%
\begin{array}
[c]{ccc}%
\mathbf{v}_{T}\in\mathbf{H}_{T}^{-1/2}(\operatorname*{div}\nolimits_{\Gamma
},\Gamma) & \iff & \mathbf{v}_{T}\text{ has form (\ref{vFExp}) and
}\left\Vert \mathbf{v}_{T}\right\Vert _{\mathbf{H}^{-1/2}\left(
\operatorname{div}_{\Gamma},\Gamma\right)  }<\infty,\\
\mathbf{v}_{T}\in\mathbf{H}_{T}^{-1/2}(\operatorname*{curl}\nolimits_{\Gamma
},\Gamma) & \iff & \mathbf{v}_{T}\text{ has form (\ref{vFExp}) and
}\left\Vert \mathbf{v}_{T}\right\Vert _{\mathbf{H}^{-1/2}\left(
\operatorname*{curl}\nolimits_{\Gamma},\Gamma\right)  }<\infty.
\end{array}
\label{Hm1/2divspace}%
\end{equation}
We also introduce indexed norms for functions in Sobolev spaces on the
boundary: for $\nu\in\mathbb{R}$ with $2\nu\in\mathbb{N}_{0}$, we formally set%
\begin{subequations}
\label{defgtGammaind}%
\begin{align}
\left\Vert \mathbf{g}_{T}\right\Vert _{\mathbf{H}^{\nu}(\Gamma),k}
& :=\left(
\sum\limits_{\ell=0}^{2\nu}
\left\vert k\right\vert ^{1-\ell}\left\Vert \mathbf{g}_{T}\right\Vert
_{\mathbf{H}^{\ell/2}(\Gamma)}^{2}\right)  ^{1/2}\quad\text{and}  \\
\left\Vert \mathbf{g}_{T}\right\Vert _{\mathbf{H}^{-\nu}(\Gamma),k} 
& :=\left\vert k\right\vert ^{\nu+1/2}\left\Vert \mathbf{g}_{T}\right\Vert
_{\mathbf{H}^{-\nu}(\Gamma)}.
\end{align}
\end{subequations}
For $\operatorname*{D}_{\Gamma}\in\left\{  \operatorname*{curl}%
\nolimits_{\Gamma},\operatorname*{div}_{\Gamma}\right\}  $, we
introduce\footnote{%
We always write $\left\vert k\right\vert $ in the estimates also if the
exponent is even for the sake of clarity.}%
\begin{equation}
\left\Vert \mathbf{g}_{T}\right\Vert _{\mathbf{H}^{\nu}(\operatorname*{D}%
_{\Gamma},\Gamma),k}:=\left(  \left\Vert \operatorname*{D}\nolimits_{\Gamma
}\mathbf{g}_{T}\right\Vert _{\mathbf{H}^{\nu}(\Gamma),k}^{2}+\left\vert
k\right\vert ^{2}\left\Vert \mathbf{g}_{T}\right\Vert _{\mathbf{H}^{\nu
}(\Gamma),k}^{2}\right)  ^{1/2}.\label{eq:Hgamma-D}%
\end{equation}
In particular, we have
\begin{equation*}
\left\Vert \mathbf{g}_{T}\right\Vert _{\mathbf{H}^{0}(  \Gamma)
,k}=\left\vert k\right\vert ^{1/2}\left\Vert \mathbf{g}_{T}\right\Vert
_{\mathbf{L}^2(\Gamma)}\quad\text{and\quad}\left\Vert \mathbf{g}_{T}\right\Vert
_{\mathbf{H}^{\nu}\left(  \Gamma\right)  }\leq C\left\vert k\right\vert
^{-1/2+\nu}\left\Vert \mathbf{g}_{T}\right\Vert _{\mathbf{H}^{\nu}\left(
\Gamma\right)  ,k}.
\end{equation*}
We remark that the special dual norms $\Vert\cdot\Vert_{\mathbf{H}%
^{-1/2}(\operatorname{div}_{\Gamma},\Gamma),k}$ and $\Vert\cdot\Vert
_{\mathbf{X}_{\operatorname*{imp}}^{\prime}(  \Gamma)  ,k}$ on the
boundary will be defined later in (\ref{eq:H-1/2diva}) and in
(\ref{eq:H-1/2div}). By using standard interpolation inequalities for Sobolev
spaces we obtain the following lemma.

\begin{lemma}
\label{LemBoundNorm}For $m\in\mathbb{N}_{0}$, there holds%
\begin{align}
\nonumber 
\left\Vert \mathbf{g}_{T}\right\Vert _{\mathbf{H}^{m+1/2}(\Gamma),k}%
&  
\leq C\left(  \left\vert k\right\vert \left\Vert \mathbf{g}_{T}\right\Vert
_{\mathbf{L}^{2}(\Gamma)}^{2}+%
\sum\limits_{r=1}^{m+1}
\left\vert k\right\vert ^{2-2r}\left\Vert \mathbf{g}_{T}\right\Vert
_{\mathbf{H}^{r-1/2}(\Gamma)}^{2}\right)  ^{1/2} 
\\%
\nonumber 
&  
\leq C\left(
\displaystyle\sum\limits_{r=0}^{m+1}
\left\vert k\right\vert ^{2-2r}\left\Vert \mathbf{g}_{T}\right\Vert
_{\mathbf{H}^{r-1/2}(\Gamma)}^{2}\right)  ^{1/2},%
\\
\left\Vert \mathbf{g}_{T}\right\Vert _{\mathbf{H}^{m+1/2}(\Gamma),k}  &  \leq
C\left(  \left\vert k\right\vert \left\Vert \mathbf{g}_{T}\right\Vert
_{\mathbf{L}^{2}(\Gamma)}^{2}+\left\vert k\right\vert ^{-2m}\left\Vert
\mathbf{g}_{T}\right\Vert _{\mathbf{H}^{m+1/2}(\Gamma)}^{2}\right)
^{1/2}\label{bnormests}\\
&  \leq C\left(  \left\vert k\right\vert ^{2}\left\Vert \mathbf{g}%
_{T}\right\Vert _{\mathbf{H}^{-1/2}(\Gamma)}^{2}+\left\vert k\right\vert
^{-2m}\left\Vert \mathbf{g}_{T}\right\Vert _{\mathbf{H}^{m+1/2}(\Gamma)}%
^{2}\right)  ^{1/2}.\nonumber
\end{align}

\end{lemma}


\subsection{Trace operators and energy spaces for Maxwell's equations}


We introduce tangential trace operators $\Pi_{T}$ and $\gamma_{T}$, which map
sufficiently smooth functions $\mathbf{u}$ in $\overline{\Omega}$ to
tangential fields on $\Gamma$, by 
\begin{equation}
\Pi_{T}:\mathbf{u}\mapsto\mathbf{n}\times\left(  \mathbf{u}|_{\Gamma}%
\times\mathbf{n}\right)  ,\quad\gamma_{T}:\mathbf{u}\mapsto\mathbf{u}%
|_{\Gamma}\times\mathbf{n}. \label{eq:trace-operators}%
\end{equation}

The following theorem shows that $\mathbf{H}_{T}^{-1/2}(\operatorname*{div}%
\nolimits_{\Gamma},\Gamma)$ and $\mathbf{H}_{T}^{-1/2}(\operatorname*{curl}%
\nolimits_{\Gamma},\Gamma)$ are the correct spaces for the continuous
extension of the tangential trace operators to Hilbert spaces.

\begin{proposition}
[{{\cite{Cessenat_book}, \cite[Thm.~{5.4.2}]{Nedelec01}}}]\label{traceTHM1}The
trace mappings $\Pi_{T}$ and $\gamma_{T}$ in (\ref{eq:trace-operators}) extend
to continuous and surjective operators%
\[
\Pi_{T}:\mathbf{X}\rightarrow\mathbf{H}_{T}^{-1/2}(\operatorname*{curl}%
\nolimits_{\Gamma},\Gamma),\qquad\gamma_{T}:\mathbf{X}\rightarrow
\mathbf{H}_{T}^{-1/2}(\operatorname*{div}\nolimits_{\Gamma},\Gamma).
\]
Moreover, for theses trace spaces there exist continuous divergence-free
liftings $\mathcal{E}_{\operatorname*{curl}}^{\Gamma}:\mathbf{H}_{T}%
^{-1/2}(  \operatorname*{curl}\nolimits_{\Gamma},\Gamma)
\rightarrow\mathbf{X}$ and $\mathcal{E}_{\operatorname*{div}}^{\Gamma
}:\mathbf{H}_{T}^{-1/2}(  \operatorname*{div}\nolimits_{\Gamma}%
,\Gamma)  \rightarrow\mathbf{X}$.
\end{proposition}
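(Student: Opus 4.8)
The plan is to prove all assertions first for smooth fields $\mathbf{u}\in\mathbf{C}^{\infty}(\overline{\Omega})$, for which the tangential traces (\ref{eq:trace-operators}) and the liftings are defined classically, and then to extend everything by density of $\mathbf{C}^{\infty}(\overline{\Omega})$ in $\mathbf{X}$ (see (\ref{defXHcurl})). The basic tools are Green's identity $(\operatorname{curl}\mathbf{u},\mathbf{v})-(\mathbf{u},\operatorname{curl}\mathbf{v})=(\gamma_{T}\mathbf{u},\Pi_{T}\mathbf{v})_{\mathbf{L}^{2}(\Gamma)}$ (valid for smooth fields, up to a fixed sign), the surface identities (\ref{divcurlrel}), (\ref{sccounttangcurl}), (\ref{defLaplBelt}), and the Hodge decomposition (\ref{vFExp}), which is available because $\Gamma$ is smooth and simply connected. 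A preliminary reduction halves the work: the rotation $\mathbf{v}_{T}\mapsto\mathbf{v}_{T}\times\mathbf{n}$ is an isometric isomorphism that, by (\ref{divcurlrel}), maps $\mathbf{H}_{T}^{-1/2}(\Gamma,\operatorname{div}_{\Gamma})$ onto $\mathbf{H}_{T}^{-1/2}(\Gamma,\operatorname{curl}_{\Gamma})$ and satisfies $\Pi_{T}\mathbf{u}\times\mathbf{n}=\gamma_{T}\mathbf{u}$; hence it suffices to treat $\gamma_{T}$, and one sets $\mathcal{E}_{\operatorname{curl}}^{\Gamma}(\cdot):=\mathcal{E}_{\operatorname{div}}^{\Gamma}(\cdot\times\mathbf{n})$.

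For continuity of $\gamma_{T}$ I would bound the two contributions to the norm in (\ref{m1/2curldivb}) separately, for smooth $\mathbf{u}$. Writing $\|\gamma_{T}\mathbf{u}\|_{\mathbf{H}^{-1/2}(\Gamma)}$ as a supremum over tangential $\boldsymbol{\varphi}\in\mathbf{H}^{1/2}(\Gamma)$ and lifting $\boldsymbol{\varphi}$ to $\mathbf{v}\in\mathbf{H}^{1}(\Omega)$ with $\mathbf{v}|_{\Gamma}=\boldsymbol{\varphi}$ and $\|\mathbf{v}\|_{\mathbf{H}^{1}(\Omega)}\lesssim\|\boldsymbol{\varphi}\|_{\mathbf{H}^{1/2}(\Gamma)}$, Green's identity gives $|(\gamma_{T}\mathbf{u},\boldsymbol{\varphi})_{\mathbf{L}^{2}(\Gamma)}|=|(\operatorname{curl}\mathbf{u},\mathbf{v})-(\mathbf{u},\operatorname{curl}\mathbf{v})|\lesssim\|\mathbf{u}\|_{\mathbf{H}(\Omega,\operatorname{curl})}\|\boldsymbol{\varphi}\|_{\mathbf{H}^{1/2}(\Gamma)}$. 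For the surface-divergence term, (\ref{divcurlrel}) and (\ref{sccounttangcurl}) (using that $(\operatorname{curl}\mathbf{w})\cdot\mathbf{n}|_{\Gamma}$ depends only on $\mathbf{w}|_{\Gamma}$) give $\operatorname{div}_{\Gamma}\gamma_{T}\mathbf{u}=\langle\operatorname{curl}\mathbf{u},\mathbf{n}\rangle|_{\Gamma}$, i.e.\ the normal trace of $\operatorname{curl}\mathbf{u}$; since $\operatorname{div}\operatorname{curl}\mathbf{u}=0$, continuity of the normal trace on $\mathbf{H}(\Omega,\operatorname{div})$ yields $\|\operatorname{div}_{\Gamma}\gamma_{T}\mathbf{u}\|_{H^{-1/2}(\Gamma)}\lesssim\|\operatorname{curl}\mathbf{u}\|_{\mathbf{L}^{2}(\Omega)}$. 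Density extends the bound to all of $\mathbf{X}$, and the corresponding bound for $\Pi_{T}$ follows from the rotation.

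For surjectivity with a divergence-free lifting, given $\mathbf{g}_{T}\in\mathbf{H}_{T}^{-1/2}(\Gamma,\operatorname{div}_{\Gamma})$ I would Hodge-decompose $\mathbf{g}_{T}=\nabla_{\Gamma}G+\overrightarrow{\operatorname{curl}}_{\Gamma}\psi$ as in (\ref{vFExp}). The $\mathbf{L}_{T}^{2}$-orthogonal Hodge projections are bounded on $\mathbf{H}^{-1/2}(\Gamma)$, so both summands lie in $\mathbf{H}^{-1/2}(\Gamma)$ with norm $\lesssim\|\mathbf{g}_{T}\|$; since $\Delta_{\Gamma}G=\operatorname{div}_{\Gamma}\mathbf{g}_{T}\in H^{-1/2}(\Gamma)$ and, by (\ref{defLaplBelt}), $\Delta_{\Gamma}\psi=-\operatorname{curl}_{\Gamma}\overrightarrow{\operatorname{curl}}_{\Gamma}\psi\in H^{-3/2}(\Gamma)$, elliptic regularity of the Laplace--Beltrami operator on the smooth surface gives $G\in H^{3/2}(\Gamma)$ and $\psi\in H^{1/2}(\Gamma)$ with $\|G\|_{H^{3/2}(\Gamma)}+\|\psi\|_{H^{1/2}(\Gamma)}\lesssim\|\mathbf{g}_{T}\|_{\mathbf{H}^{-1/2}(\Gamma,\operatorname{div}_{\Gamma})}$. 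I then lift the two parts. First, let $\Psi$ be the harmonic extension of $\psi$ ($\Delta\Psi=0$ in $\Omega$, $\Psi|_{\Gamma}=\psi$) and put $\mathbf{u}_{1}:=\nabla\Psi$; then $\operatorname{curl}\mathbf{u}_{1}=0$ and $\operatorname{div}\mathbf{u}_{1}=\Delta\Psi=0$, so $\mathbf{u}_{1}\in\mathbf{X}$ is solenoidal, and from $(\nabla\Psi)|_{\Gamma}=\nabla_{\Gamma}\psi+(\partial_{n}\Psi)\mathbf{n}$ one reads off $\gamma_{T}\mathbf{u}_{1}=\nabla_{\Gamma}\psi\times\mathbf{n}=\overrightarrow{\operatorname{curl}}_{\Gamma}\psi$. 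Second, lift $G$ to $\widetilde{G}\in H^{2}(\Omega)$ with $\widetilde{G}|_{\Gamma}=G$, fix a cut-off $\chi$ equal to $1$ near $\Gamma$ and supported in the tubular neighborhood where the smooth normal extension $\mathbf{n}^{\ast}$ lives, and put $\mathbf{u}_{2}:=-\operatorname{curl}(\chi\widetilde{G}\,\mathbf{n}^{\ast})\in\mathbf{H}^{1}(\Omega)\subset\mathbf{X}$, solenoidal as a curl; since $\mathbf{n}^{\ast}=\nabla d$ for the signed distance $d$ near $\Gamma$, one has $\operatorname{curl}\mathbf{n}^{\ast}=0$ there, hence $\operatorname{curl}(\chi\widetilde{G}\mathbf{n}^{\ast})|_{\Gamma}=\nabla\widetilde{G}|_{\Gamma}\times\mathbf{n}=\overrightarrow{\operatorname{curl}}_{\Gamma}G$ and $\gamma_{T}\mathbf{u}_{2}=-\overrightarrow{\operatorname{curl}}_{\Gamma}G\times\mathbf{n}=\nabla_{\Gamma}G$. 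Then $\mathcal{E}_{\operatorname{div}}^{\Gamma}\mathbf{g}_{T}:=\mathbf{u}_{1}+\mathbf{u}_{2}$ is solenoidal, lies in $\mathbf{X}$ with $\|\mathcal{E}_{\operatorname{div}}^{\Gamma}\mathbf{g}_{T}\|_{\mathbf{H}(\Omega,\operatorname{curl})}\lesssim\|\mathbf{g}_{T}\|_{\mathbf{H}^{-1/2}(\Gamma,\operatorname{div}_{\Gamma})}$, and $\gamma_{T}\mathcal{E}_{\operatorname{div}}^{\Gamma}\mathbf{g}_{T}=\mathbf{g}_{T}$; this gives both surjectivity of $\gamma_{T}$ and the lifting. (Alternatively, once any bounded lifting $\mathbf{u}$ is available, subtracting $\nabla\varphi$ with $\varphi\in H^{1}_{0}(\Omega)$, $\Delta\varphi=\operatorname{div}\mathbf{u}$, makes it solenoidal without changing $\gamma_{T}\mathbf{u}$.) The statements for $\Pi_{T}$ and $\mathcal{E}_{\operatorname{curl}}^{\Gamma}$ follow from the rotation isomorphism of the first paragraph.

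\textbf{Main obstacle.} The genuinely nontrivial part is the surjectivity onto the ``$\operatorname{div}_{\Gamma}$''-component of the trace space, i.e.\ lifting $\nabla_{\Gamma}G$: here one cannot stay in $\mathbf{H}^{1}(\Omega)$, whose tangential-trace image is only the strictly smaller $\mathbf{H}^{1/2}_{T}(\Gamma)$; the smoothness of $\Gamma$ enters essentially, both through the smooth normal extension $\mathbf{n}^{\ast}$ and through the elliptic regularity of $\Delta_{\Gamma}$ needed to pass from $\operatorname{div}_{\Gamma}\mathbf{g}_{T}\in H^{-1/2}(\Gamma)$ to $G\in H^{3/2}(\Gamma)$; and the surface-calculus identities for $\gamma_{T}\operatorname{curl}(\cdot)$ must be verified with care. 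The remaining work --- Green's identity and the trace identities at the stated regularity --- is routine once the density of $\mathbf{C}^{\infty}(\overline{\Omega})$ in $\mathbf{X}$ is invoked.
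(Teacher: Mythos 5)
The paper does not prove Proposition~\ref{traceTHM1}; it is quoted from \cite{Cessenat_book} and \cite[Thm.~{5.4.2}]{Nedelec01}, so there is no in-paper argument to compare against. Your reconstruction is the standard proof and is essentially correct. The rotation reduction is sound: $\Pi_T\mathbf{u}\times\mathbf{n}=\gamma_T\mathbf{u}$ and, by (\ref{divcurlrel}), the rotation exchanges the two trace spaces together with their norms (\ref{m1/2curldiv}), so treating $\gamma_T$ suffices. The continuity argument (Green's identity with an $\mathbf{H}^1$-lift of a tangential test field, plus $\operatorname{div}_\Gamma\gamma_T\mathbf{u}=\langle\operatorname{curl}\mathbf{u},\mathbf{n}\rangle|_\Gamma$ and the normal-trace bound for the divergence-free field $\operatorname{curl}\mathbf{u}$) is exactly the classical one, and since the curl part of the Hodge decomposition is surface-divergence-free, the quantity you bound is precisely the norm (\ref{m1/2curldivb}). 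The lifting is also correct: the regularity bookkeeping $G\in H^{3/2}(\Gamma)$, $\psi\in H^{1/2}(\Gamma)$ (after zero-mean normalization, which does not affect $\nabla_\Gamma G$ or $\overrightarrow{\operatorname{curl}_\Gamma}\psi$) is right, $\nabla\Psi$ lifts the curl part via (\ref{gmmaTgradProp}), and the computation $\gamma_T\operatorname{curl}(\chi\widetilde G\,\mathbf{n}^\ast)=-\nabla_\Gamma G$ on $\Gamma$ is correct because $\operatorname{curl}\mathbf{n}^\ast=0$ near $\Gamma$ and $(\mathbf{t}\times\mathbf{n})\times\mathbf{n}=-\mathbf{t}$ for tangential $\mathbf{t}$. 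Two points deserve explicit mention rather than being left implicit: (i) the density of $\mathbf{C}^\infty(\overline\Omega)$ in $\mathbf{H}(\Omega,\operatorname{curl})$, which you invoke to transfer the identities $\gamma_T\nabla u=\nabla_\Gamma(u|_\Gamma)\times\mathbf{n}$ and $\operatorname{div}_\Gamma\gamma_T\mathbf{u}=\langle\operatorname{curl}\mathbf{u},\mathbf{n}\rangle$ to the weak setting, holds on Lipschitz domains (e.g.\ \cite[Thm.~{3.26}]{Monkbook}) and should be cited; (ii) your construction proves surjectivity only because you exhibit an explicit continuous right inverse, so the "Moreover" clause is not an add-on but the substance of the surjectivity claim — it is worth saying so. Note also that your argument uses smoothness of $\Gamma$ (elliptic regularity of $\Delta_\Gamma$, the extension $\mathbf{n}^\ast=\nabla d$), which is consistent with the paper's standing assumption but is stronger than what the cited references require for the Lipschitz case.
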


For a vector field $\mathbf{u}\in\mathbf{X}$, we will employ frequently the
notation%
\[
\mathbf{u}_{T}:=\Pi_{T}\mathbf{u}.
\]
From \cite[(2.5.161), (2.5.208)]{Nedelec01} and the relation $\Pi_{T}\nabla
u\mathbf{=n\times}\left(  \left.  \nabla u\right\vert _{\Gamma}\times
\mathbf{n}\right)  =\left.  \nabla u\right\vert _{\Gamma}-\left(
\partial_{\mathbf{n}}u\right)  \mathbf{n}$ we conclude%
\begin{align}
\Pi_{T}\nabla u  &  =\nabla_{\Gamma}(  u\vert _{\Gamma
}), \label{PiTgradProp}\\
\gamma_{T}\nabla u  &  =(  \Pi_{T}\nabla u)  \times\mathbf{n}%
=\nabla_{\Gamma}(  u\vert _{\Gamma})  \times
\mathbf{n}. \label{gmmaTgradProp}%
\end{align}

\begin{remark}
\label{rem:decomposition-of-nabla-phi}{For gradient fields $\nabla\varphi$ we
have $(\nabla\varphi)_{T}^{\operatorname*{curl}}=0$ and $(\nabla\varphi
)_{T}^{\nabla}=\nabla_{\Gamma}\varphi$.} \hbox{}\hfill%
\eremk
\end{remark}

\begin{definition}
\label{def:Ximp} Let $\Omega\subset\mathbb{R}^{3}$ be a bounded domain with
sufficiently smooth Lipschitz boundary $\Gamma$ as described in 
Section~\ref{SecGeometry}. The \emph{energy space} for Maxwell's equations with
\emph{impedance boundary conditions} on $\Gamma$
and
real wavenumber $k\in\mathbb{R}\backslash\left(  -k_{0},k_{0}\right)  $ is%
\begin{equation}
\mathbf{X}_{\operatorname*{imp}}:=\left\{  \mathbf{u}\in\mathbf{X}\colon
\Pi_{T}\mathbf{u\in L}_{T}^{2}(\Gamma)\right\}  \label{eq:Ximp}%
\end{equation}
with corresponding norm%
\begin{equation*}
\left\Vert \mathbf{u}\right\Vert _{\operatorname*{imp},k}:=\left[  \left\Vert
\operatorname*{curl}\mathbf{u}\right\Vert ^{2}+\left\Vert \mathbf{u}%
\right\Vert _{k,+}^{2}\right]  ^{1/2}\text{ with }\left\Vert
\mathbf{u}\right\Vert _{k,+}:=\left[  k^{2}\left\Vert \mathbf{u}\right\Vert
^{2}+\left\vert k\right\vert \left\Vert \mathbf{u}_{T}\right\Vert
_{\mathbf{L}^{2}(\Gamma)}^{2}\right]  ^{1/2}. 
\label{eq:Himp}%
\end{equation*}
Its \emph{companion space} of scalar potentials is%
\begin{equation}
H_{\operatorname*{imp}}^{1}(\Omega):=\left\{  \varphi\in H^{1}(\Omega
)\mid\left.  \varphi\right\vert _{\Gamma}\in H^{1}(\Gamma)\right\}  .
\label{H1imp}%
\end{equation}

\end{definition}


\subsection{Regular decompositions}

\label{sec:regular-decomposition}
We will rely on various decompositions of functions into regular parts and
gradient parts.
The decompositions may not be orthogonal but must be stable. We refer to
\cite[\S 4.4]{hiptmair-acta} and the bibliographic notes therein for some
early contributions. Many variants have been introduced since then, and the
results in this section are essentially taken from the literature: Lemma~\ref{LemRs} 
is a consequence of \cite[Thm.~{4.6}]{costabel-mcintosh10}; 
Lemma~\ref{lemma:helmholtz-a-la-schoeberl} relies on {\cite[Thm.~{3.38}]{Monkbook}
and Lemma~\ref{LemRs}; Lemma~\ref{lemma:schoeberl-apriori} is based on
}\cite{costabel90} while closely related results can be found in
\cite{amrouche-bernardi-dauge-girault98}. Finally, 
Lemma~\ref{LemHelmDecompVar1} is a consequence of \cite[Thm.~{4.2(2)}]%
{Schweizer-Friedrichs-2016} and \cite{costabel90}. For newer overview
articles, we refer to, e.g., \cite{hipt_reg_decomp}, \cite{hipt_pech}.%
The following Lemma~\ref{LemRs} collects a key result from the seminal paper
\cite{costabel-mcintosh10}. The operator $\mathbf{R}_{2}$, which is
essentially a right inverse of the curl operator, will frequently be employed
in the present paper.

\begin{lemma}
\label{LemRs}Let $\Omega$ be a bounded Lipschitz domain. There exist
pseudodifferential operators $R_{1}$,
$\mathbf{R}_{2}$ 
of order $-1$ and $\mathbf{K}$, $\mathbf{K}_{2}$ of order $-\infty$ on
${\mathbb{R}}^{3}$ with the following properties: For each $m\in{\mathbb{Z}}$
they have the mapping properties $R_{1}:\mathbf{H}^{-m}(\Omega)\rightarrow
H^{1-m}(  \Omega)  $, $\mathbf{R}_{2}:\mathbf{H}^{-m}%
(\Omega)\rightarrow\mathbf{H}^{1-m}(  \Omega)  $, and $\mathbf{K}$,
$\mathbf{K}_{2}:\mathbf{H}^{m}(  \Omega)  \rightarrow({C}^{\infty
}(\overline{\Omega}))^{3}$, and for any $\mathbf{u}\in\mathbf{H}^{m}(
\operatorname*{curl},\Omega)  $ there holds%
\begin{equation}
\mathbf{u}=\nabla R_{1}(  \mathbf{u}-\mathbf{R}_{2}(
\operatorname*{curl}\mathbf{u})  )  +\mathbf{R}_{2}(
\operatorname*{curl}\mathbf{u})  +\mathbf{Ku}. \label{repu}%
\end{equation}
For $\mathbf{u}$ with $\operatorname{div}\mathbf{u}=0$ {on $\Omega$} there
holds
\begin{equation}
\operatorname{curl}\mathbf{R}_{2}\mathbf{u}=\mathbf{u}-\mathbf{K}%
_{2}\mathbf{u}. \label{eq:S=K2}%
\end{equation}
\end{lemma}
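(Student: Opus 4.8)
The statement is, as indicated, a repackaging of the regularized Poincaré integral operators constructed by Costabel and McIntosh \cite{costabel-mcintosh10} for the de~Rham complex, and the plan is to recall their construction and then read off (\ref{repu}) and (\ref{eq:S=K2}) from the associated homotopy relations. I would first recall from \cite{costabel-mcintosh10} the following. Fix a ball $B$ with $\overline{\Omega}\subset B$; the classical cone homotopy for the de~Rham complex on $B$, averaged over the base point against a smooth cut-off, is a classical pseudodifferential operator of order $-1$ on $\mathbb{R}^{3}$ and is an exact homotopy operator on $B$. Composing it with Stein's extension operator (bounded $H^{s}(\Omega)\to H^{s}(\mathbb{R}^{3})$) and with restriction to $\Omega$ produces, at the levels relevant for
\[
H^{1}(\Omega)\xrightarrow{\nabla}\mathbf{H}(\Omega,\operatorname{curl})\xrightarrow{\operatorname{curl}}\mathbf{H}(\Omega,\operatorname{div})\xrightarrow{\operatorname{div}}L^{2}(\Omega),
\]
a scalar-valued operator $R_{1}$ and vector-valued operators $\mathbf{R}_{2}$, $\mathbf{R}_{3}$ that are pseudodifferential of order $-1$ on $\mathbb{R}^{3}$, together with operators $\mathbf{K}^{(1)}$, $\mathbf{K}_{2}$ of order $-\infty$ on $\mathbb{R}^{3}$ such that on $\Omega$
\[
\nabla R_{1}+\mathbf{R}_{2}\operatorname{curl}=\operatorname{I}-\mathbf{K}^{(1)},\qquad \operatorname{curl}\mathbf{R}_{2}+\mathbf{R}_{3}\operatorname{div}=\operatorname{I}-\mathbf{K}_{2}.
\]
The smoothing remainders appear because Stein's extension does not commute with the exterior derivative; the resulting discrepancy is supported off $\Omega$ and, after restriction, contributes an operator of order $-\infty$. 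The mapping properties $R_{1}\colon\mathbf{H}^{-m}(\Omega)\to H^{1-m}(\Omega)$ and $\mathbf{R}_{2}\colon\mathbf{H}^{-m}(\Omega)\to\mathbf{H}^{1-m}(\Omega)$ for $m\in\mathbb{Z}$ follow from the Sobolev mapping of order $-1$ operators on $\mathbb{R}^{3}$ together with boundedness of extension and restriction, and an operator of order $-\infty$ maps $\mathbf{H}^{m}(\Omega)$ into $\mathbf{H}^{N}(\Omega)$ for every $N$, hence into $(C^{\infty}(\overline{\Omega}))^{3}$.

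Granting this, (\ref{eq:S=K2}) is just the second relation evaluated on fields with $\operatorname{div}\mathbf{u}=0$, with $\mathbf{K}_{2}$ the operator above. For (\ref{repu}), I would take $\mathbf{u}\in\mathbf{H}^{m}(\Omega,\operatorname{curl})$ and set $\mathbf{w}:=\mathbf{u}-\mathbf{R}_{2}(\operatorname{curl}\mathbf{u})$. Since $\operatorname{curl}\mathbf{u}$ is solenoidal, (\ref{eq:S=K2}) gives $\operatorname{curl}\mathbf{w}=\operatorname{curl}\mathbf{u}-\operatorname{curl}\mathbf{R}_{2}(\operatorname{curl}\mathbf{u})=\mathbf{K}_{2}(\operatorname{curl}\mathbf{u})$, and applying the first relation to $\mathbf{w}$ yields
\[
\mathbf{w}=\nabla R_{1}\mathbf{w}+\mathbf{R}_{2}(\operatorname{curl}\mathbf{w})+\mathbf{K}^{(1)}\mathbf{w}=\nabla R_{1}\mathbf{w}+\mathbf{R}_{2}\mathbf{K}_{2}(\operatorname{curl}\mathbf{u})+\mathbf{K}^{(1)}\mathbf{w},
\]
which is (\ref{repu}) with $\mathbf{K}\mathbf{u}:=\mathbf{R}_{2}\mathbf{K}_{2}(\operatorname{curl}\mathbf{u})+\mathbf{K}^{(1)}\bigl(\mathbf{u}-\mathbf{R}_{2}(\operatorname{curl}\mathbf{u})\bigr)$. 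A simple order count shows $\mathbf{K}$ is of order $-\infty$: $\operatorname{curl}$ and $\mathbf{R}_{2}$ are of order $+1$ and $-1$ while $\mathbf{K}^{(1)}$, $\mathbf{K}_{2}$ are of order $-\infty$, and composition with a finite-order operator as well as finite sums preserve order $-\infty$; since $\operatorname{curl}$ maps $\mathbf{H}^{m}(\Omega)$ into $\mathbf{H}^{m-1}(\Omega)$ and the remaining factors recover the lost derivative and then smooth infinitely, $\mathbf{K}$ extends to a bounded map $\mathbf{H}^{m}(\Omega)\to(C^{\infty}(\overline{\Omega}))^{3}$, and the same count carried out on $\mathbb{R}^{3}$ shows $\mathbf{K}$, $\mathbf{K}_{2}$ are pseudodifferential of order $-\infty$.

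The genuinely substantial input is entirely contained in \cite{costabel-mcintosh10}: that the averaged Poincaré operator is a \emph{classical} pseudodifferential operator of order $-1$ (a symbol/kernel computation) and that the extension--restriction procedure leaves only a smoothing remainder on a Lipschitz domain. In the reduction above, the points requiring some care are that the two homotopy relations hold on the $\mathbf{H}^{m}(\Omega)$-scale for every $m\in\mathbb{Z}$, so that (\ref{repu}) and (\ref{eq:S=K2}) hold on the same scale with all terms interpreted distributionally when $m\le0$; the bookkeeping of scalar- versus vector-valued operators; and the order count establishing the smoothing of $\mathbf{K}$. I expect the latter two to be routine, with the real work — as is standard when these operators are invoked — being the reference to \cite{costabel-mcintosh10}.
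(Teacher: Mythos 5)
Your proposal is correct and follows essentially the same route as the paper: both quote the two homotopy relations of \cite[Thm.~4.6]{costabel-mcintosh10}, read off (\ref{eq:S=K2}) from the second one, and obtain (\ref{repu}) by applying the first relation to $\mathbf{u}-\mathbf{R}_{2}(\operatorname{curl}\mathbf{u})$, using that $\operatorname{curl}\mathbf{u}$ is solenoidal to turn $\mathbf{R}_{2}(\operatorname{curl}(\mathbf{u}-\mathbf{R}_{2}\operatorname{curl}\mathbf{u}))$ into the smoothing term $\mathbf{R}_{2}\mathbf{K}_{2}(\operatorname{curl}\mathbf{u})$, so that your $\mathbf{K}$ coincides with the paper's $\mathbf{K}_{1}(\mathbf{u}-\mathbf{R}_{2}\operatorname{curl}\mathbf{u})+\mathbf{K}_{3}\mathbf{u}$.
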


\begin{proof}
In \cite[Thm.~{4.6}]{costabel-mcintosh10}, operators $R_{1}$, $\mathbf{R}_{2}%
$, $\mathbf{R}_{3}$, $\mathbf{K}_{1}$, $\mathbf{K}_{2}$ with the mapping
properties%
\begin{equation}
\label{RSrelation-neu}%
\begin{array}
[c]{cl}%
R_{1}:\mathbf{H}^{-m}(\Omega)\rightarrow H^{1-m}\left(  \Omega\right)  , & \\
\mathbf{R}_{2}:\mathbf{H}^{-m}(\Omega)\rightarrow\mathbf{H}^{1-m}\left(
\Omega\right)  , & \\
\mathbf{R}_{3}:H^{-m}(\Omega)\rightarrow\mathbf{H}^{1-m}\left(  \Omega\right)
, & \\
\mathbf{K}_{\ell}:\mathbf{H}^{m}\left(  \Omega\right)  \rightarrow({C}%
^{\infty}(\overline{\Omega}))^{3}, & \ell=1,2, 
\end{array}
\end{equation}
are constructed with
\begin{subequations}
\label{RSrelation}%
\begin{align}
\label{RSrelation-a}%
\nabla R_{1}\mathbf{v}+\mathbf{R}_{2}(\operatorname*{curl}\mathbf{v})  &
=\mathbf{v}-\mathbf{K}_{1}\mathbf{v},
\\
\label{RSrelation-b}%
\operatorname*{curl}\mathbf{R}_{2}\mathbf{v}+\mathbf{R}_{3}\left(
\operatorname*{div}\mathbf{v}\right)   
&  =\mathbf{v}-\mathbf{K}_{2}%
\mathbf{v}. 
\end{align}
\end{subequations}
We note that (\ref{RSrelation}) implies (\ref{eq:S=K2}). It is worth stressing
that the mapping properties given in (\ref{RSrelation-neu}) express a locality of
the operators, which are pseudodifferential operators on $\mathbb{R}^{3}$: on
$\Omega$, the operators depend only on the argument restricted to $\Omega$ and
not on the values on $\mathbb{R}^{3}\setminus\overline{\Omega}$.

Selecting $\mathbf{v}=\mathbf{u}-\mathbf{R}_{2}(\operatorname*{curl}%
\mathbf{u})$ in (\ref{RSrelation-a}) we obtain%
\begin{equation}
\begin{split}
\nabla R_{1}(\mathbf{u}-\mathbf{R}_{2}(\operatorname*{curl}\mathbf{u}%
))+\mathbf{R}_{2}(\operatorname*{curl}(\mathbf{u}-\mathbf{R}_{2}%
(\operatorname*{curl}\mathbf{u})))= 
\\
\mathbf{u}-\mathbf{R}_{2}%
(\operatorname*{curl}\mathbf{u})-\mathbf{K}_{1}(\mathbf{u}-\mathbf{R}%
_{2}(\operatorname*{curl}\mathbf{u})). \label{gradR1}%
\end{split}
\end{equation}
Since $\operatorname*{curl}\mathbf{u}$ is divergence free, we obtain from 
(\ref{RSrelation-b}) 
\begin{align*}
\mathbf{R}_{2}(\operatorname*{curl}(\mathbf{u}-\mathbf{R}_{2}\left(
\operatorname*{curl}\mathbf{u}))\right)   &  =\mathbf{R}_{2}\left(
\operatorname*{curl}\mathbf{u}\right)  -\mathbf{R}_{2}(\operatorname*{curl}%
\mathbf{u}-\mathbf{K}_{2}\operatorname*{curl}\mathbf{u})\\
&  =\mathbf{R}_{2}(\mathbf{K}_{2}(\operatorname*{curl}\mathbf{u}%
))=:\mathbf{K}_{3}\mathbf{u},
\end{align*}
where, again, $\mathbf{K}_{3}$ is a smoothing operator of order $-\infty$.
Inserting this into (\ref{gradR1}) leads to%
\[
\nabla R_{1}\left(  \mathbf{u}-\mathbf{R}_{2}\left(  \operatorname*{curl}%
\mathbf{u}\right)  \right)  +\mathbf{R}_{2}\left(  \operatorname*{curl}%
\mathbf{u}\right)  =\mathbf{u}-\mathbf{K}_{1}\left(  \mathbf{u}-\mathbf{R}%
_{2}\left(  \operatorname*{curl}\mathbf{u}\right)  \right)  -\mathbf{K}%
_{3}\mathbf{u}.
\]
By choosing $\mathbf{K}\mathbf{u}:=\mathbf{K}_{1}\left(  \mathbf{u}%
-\mathbf{R}_{2}\operatorname*{curl}\mathbf{u}\right)  +\mathbf{K}%
_{3}\mathbf{u}$ the representation (\ref{repu}) is proved.%
\end{proof}

\begin{lemma}
\label{lemma:helmholtz-a-la-schoeberl} Let $\Omega$ be a bounded, connected
Lipschitz domain.

\begin{enumerate}
[(i)]

\item \label{item:lemma:helmholtz-a-la-schoeberl-i} There is $C>0$ such that
for every $\mathbf{u}\in\mathbf{X}$ there is a decomposition $\mathbf{u}%
=\nabla\varphi+\mathbf{z}$ with%
\begin{equation}
\operatorname{div}\mathbf{z}=0,\quad\left\Vert \mathbf{z}\right\Vert
_{\mathbf{H}^{1}(\Omega)}\leq C\left\Vert \operatorname*{curl}\mathbf{u}%
\right\Vert ,\quad\left\Vert \varphi\right\Vert _{H^{1}(\Omega)}\leq
C\left\Vert \mathbf{u}\right\Vert _{\mathbf{H}\left(  \operatorname*{curl}%
,\Omega\right)  }. \label{eq:lemma:helmholtz-a-la-schoeberl-i}%
\end{equation}

\item \label{item:lemma:helmholtz-a-la-schoeberl-ii} Let $m\in\mathbb{Z}$. For
each $\mathbf{u}\in\mathbf{H}^{m}(  \operatorname*{curl},\Omega)  $
there is a splitting independent of $m$ of the form $\mathbf{u}=\nabla
\varphi+\mathbf{z}$ with $\varphi\in H^{m+1}(  \Omega)  $,
$\mathbf{z}\in\mathbf{H}^{m+1}(  \Omega)  $ satisfying%
\begin{subequations}
\label{eq:lemma:helmholtz-a-la-schoeberl-ii}%
\begin{align}
\label{eq:lemma:helmholtz-a-la-schoeberl-ii-a}%
\left\Vert \mathbf{z}\right\Vert _{\mathbf{H}^{m+1}(\Omega)}& \leq C\left\Vert
\mathbf{u}\right\Vert _{\mathbf{H}^{m}(  \operatorname*{curl}%
,\Omega)  }\quad\text{and\quad}
\\
\label{eq:lemma:helmholtz-a-la-schoeberl-ii-b}%
\left\Vert \mathbf{z}\right\Vert
_{\mathbf{H}^{m}(\Omega)}+\left\Vert \varphi\right\Vert _{H^{m+1}(\Omega)}
& \leq
C\left\Vert \mathbf{u}\right\Vert _{\mathbf{H}^{m}(  \Omega)  }.
\end{align}
\end{subequations}

\item \label{item:lemma:helmholtz-a-la-schoeberl-iii} There is $C > 0$
depending only on $\Omega$ such that each $\mathbf{u} \in\mathbf{X}%
_{\operatorname{imp}}$ can be written as $\mathbf{u} = \nabla\varphi+
\mathbf{z}$ with $\varphi\in H^{1}_{\operatorname{imp}}(\Omega)$, $\mathbf{z}
\in\mathbf{H}^{1}(\Omega)$ and 
\begin{equation}
\|\nabla\varphi\|_{\operatorname{imp},k} + \|\mathbf{z}\|_{\mathbf{H}%
^{1}(\Omega)} + |k| \|\mathbf{z}\|_{\mathbf{L}^{2}(\Omega)} + |k|^{1/2}
\|\mathbf{z}\|_{\mathbf{L}^{2}(\Gamma)} \leq C \|\mathbf{u}%
\|_{\operatorname{imp},k}.
\end{equation}

\end{enumerate}
\end{lemma}

\begin{proof}
\emph{Proof of (\ref{item:lemma:helmholtz-a-la-schoeberl-i}):} Let
$\mathbf{u}\in\mathbf{X}$. The point is to choose $\mathbf{z}$ in the
splitting $\mathbf{u}=\nabla\varphi+\mathbf{z}$ such that it can be controlled
by $\operatorname*{curl}\mathbf{u}$. To this end, we set $\mathbf{v}%
=\operatorname*{curl}\mathbf{u}\in\mathbf{L}^{2}(\Omega)$ and observe that
$\operatorname*{div}\mathbf{v}=0$ {and therefore $(1,\langle{\mathbf{n}%
},{\mathbf{v}}\rangle)_{L^{2}(\Gamma)}=0$. By \cite[Thm.~{3.38}]{Monkbook},}
this allows\ us to conclude the existence of $\mathbf{z}\in\mathbf{H}%
^{1}(\Omega)$ with $\operatorname*{div}\mathbf{z}=0$, $\mathbf{v}%
=\operatorname*{curl}\mathbf{z}$ and%
\[
\left\Vert \mathbf{z}\right\Vert _{\mathbf{H}^{1}(\Omega)}\leq C\left\Vert
\mathbf{v}\right\Vert .
\]
Since $\operatorname*{curl}(\mathbf{u}-\mathbf{z})=0$, we have $\mathbf{u}%
-\mathbf{z}=\nabla\varphi$ for a $\varphi\in H^{1}(\Omega)$, which trivially
satisfies
\[
\left(  \nabla\varphi,\nabla\psi\right)  =\left(  \mathbf{u}-\mathbf{z}%
,\nabla\psi\right)  \qquad\forall\psi\in H^{1}(\Omega).
\]
By fixing $\varphi$ such that $\int_{\Omega}\varphi=0$, the estimate of
$\varphi$ follows by a Poincar\'{e} inequality.

\emph{Proof of (\ref{item:lemma:helmholtz-a-la-schoeberl-ii}):} With the
operators of Lemma~\ref{LemRs}, we define
\[
\mathbf{z}:=\mathbf{R}_{2}(\operatorname*{curl}\mathbf{u})+\mathbf{K}%
\mathbf{u},\qquad\nabla\varphi:=\nabla R_{1}(\mathbf{u}-\mathbf{R}%
_{2}(\operatorname{curl}\mathbf{u})).
\]
Lemma~\ref{LemRs} implies $\mathbf{u}=\mathbf{z}+\nabla\varphi$ as well as the
bounds by the mapping properties given in Lemma~\ref{LemRs}.

\emph{Proof of (\ref{item:lemma:helmholtz-a-la-schoeberl-iii}):} Multiplying
estimate (\ref{eq:lemma:helmholtz-a-la-schoeberl-ii-b}) for the
decomposition of (\ref{item:lemma:helmholtz-a-la-schoeberl-ii}) and $m=0$ by
$\left\vert k\right\vert $ leads to $\left\vert k\right\vert \Vert
\mathbf{z}\Vert_{\mathbf{L}^{2}(\Omega)}+\left\vert k\right\vert \Vert
\nabla\varphi\Vert_{\mathbf{L}^{2}(\Omega)}\leq C\left\vert k\right\vert
\Vert\mathbf{u}\Vert_{\mathbf{L}^{2}(\Omega)}$. (\ref{eq:lemma:helmholtz-a-la-schoeberl-ii-a}) gives 
$\Vert\mathbf{z} \Vert_{\mathbf{H}^{1}(\Omega)}\leq C\Vert\mathbf{u}\Vert_{\mathbf{H}%
(\operatorname{curl},\Omega)}$. 
The multiplicative trace inequality gives
$|k|\Vert\mathbf{z}\Vert_{\mathbf{L}^{2}(\Gamma)}^{2}\leq C\left\vert
k\right\vert \Vert\mathbf{z}\Vert_{\mathbf{L}^{2}(\Omega)}\Vert\mathbf{z}%
\Vert_{\mathbf{H}^{1}(\Omega)}\leq C\left\vert k\right\vert ^{2}%
\Vert\mathbf{z}\Vert_{\mathbf{L}^{2}(\Omega)}^{2}+C\Vert\mathbf{z}%
\Vert_{\mathbf{H}^{1}(\Omega)}^{2}$. Hence follows $\Vert\mathbf{z}\Vert
_{\operatorname{imp},k}\leq C\Vert\mathbf{u}\Vert_{\mathbf{H}%
(\operatorname{curl},\Omega),k}\leq C\Vert\mathbf{u}\Vert_{\operatorname{imp}%
,k}$. The triangle inequality then provides the bound $\Vert\nabla\varphi
\Vert_{\operatorname{imp},k}\leq\Vert\mathbf{u}\Vert_{\operatorname{imp}%
,k}+\Vert\mathbf{z}\Vert_{\operatorname{imp},k}\leq C\Vert\mathbf{u}%
\Vert_{\operatorname{imp},k}$.
\end{proof}

The following result relates the space $\mathbf{H}(\operatorname{curl}%
,\Omega)\cap\mathbf{H}(\operatorname{div},\Omega)$ to classical Sobolev
spaces. The statement (\ref{ucurldivrandest2}) is from \cite{costabel90};
closely related results can be found in
\cite{amrouche-bernardi-dauge-girault98}.

\begin{lemma}
\label{lemma:schoeberl-apriori} Let $\partial\Omega$ be smooth and simply
connected. Then there is $C>0$ such that for every $\mathbf{u}\in
\mathbf{H}\left(  \operatorname*{curl},\Omega\right)  \cap\mathbf{H}\left(
\operatorname{div},\Omega\right)  $ there holds%
\begin{subequations}
\label{ucurldivrandest}
\begin{align}
\left\Vert \mathbf{u}\right\Vert  &  \leq C\left(  \left\Vert
\operatorname*{curl}\mathbf{u}\right\Vert +\left\Vert \operatorname{div}%
\mathbf{u}\right\Vert +\left\Vert \left\langle \mathbf{u},\mathbf{n}%
\right\rangle \right\Vert _{H^{-1/2}(  \Gamma)  }\right)  ,
\label{ucurldivrandesta}\\
\left\Vert \mathbf{u}\right\Vert  &  \leq C\left(  \left\Vert
\operatorname*{curl}\mathbf{u}\right\Vert +\left\Vert \operatorname{div}%
\mathbf{u}\right\Vert +\left\Vert \gamma_{T}\mathbf{u}\right\Vert
_{\mathbf{H}^{-1/2}(  \Gamma)  }\right)  . 
\label{ucurldivrandestb}%
\end{align}%
\end{subequations}
Under the assumption\footnote{In \cite{costabel90}, it is shown that these
conditions are equivalent for ${\mathbf{u}}$ with ${\mathbf{u}}\in{\mathbf{H}%
}(\Omega,\operatorname{curl})\cap H(\Omega,\operatorname{div})$.} that
$\left\langle \mathbf{u},\mathbf{n}\right\rangle \in L^{2}(
\Gamma)  $ or $\gamma_{T}\mathbf{u\in L}_{T}^{2}(  \Gamma)
$, there holds%
\begin{subequations}
\label{ucurldivrandest2}
\begin{align}
\left\Vert \mathbf{u}\right\Vert _{\mathbf{H}^{1/2}(\Omega)}  &  \leq C\left(
\left\Vert \operatorname*{curl}\mathbf{u}\right\Vert +\left\Vert
\operatorname{div}\mathbf{u}\right\Vert +\left\Vert \left\langle
\mathbf{u},\mathbf{n}\right\rangle \right\Vert _{L^{2}\left(  \Gamma\right)
}\right)  ,
\label{ucurldivrandest2a}\\
\Vert\mathbf{u}\Vert_{\mathbf{H}^{1/2}(\Omega)}  &  \leq C\left(  \left\Vert
\operatorname*{curl}\mathbf{u}\right\Vert +\left\Vert \operatorname{div}%
\mathbf{u}\right\Vert +\left\Vert \gamma_{T}\mathbf{u}\right\Vert
_{\mathbf{L}^{2}\left(  \Gamma\right)  }\right)  . 
\label{ucurldivrandest2b}%
\end{align}
\end{subequations}

\end{lemma}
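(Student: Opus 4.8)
The plan is to derive Lemma~\ref{lemma:schoeberl-apriori} from the classical regularity theory for the div-curl system combined with the Helmholtz-type decomposition of Lemma~\ref{lemma:helmholtz-a-la-schoeberl}. The estimates (\ref{ucurldivrandesta})--(\ref{ucurldivrandestb}) are quoted directly from \cite{costabel90} (or can be read off from \cite{amrouche-bernardi-dauge-girault98}), so the real work is in the $\mathbf{H}^{1/2}$-bounds (\ref{ucurldivrandest2a})--(\ref{ucurldivrandest2b}), which are stated in \cite{costabel90} under the hypothesis that the relevant trace lies in $L^2(\Gamma)$. Since the statement explicitly attributes (\ref{ucurldivrandest2}) to \cite{costabel90}, a legitimate proof is essentially a pointer to that reference together with a reduction of the $\gamma_T$-version to the $\langle\mathbf{u},\mathbf{n}\rangle$-version (or vice versa). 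I would organize it as: first recall the smooth-boundary, simply-connected setting that makes the cohomology spaces trivial; then invoke the a priori estimate of \cite{costabel90} in the form (\ref{ucurldivrandest2a}); finally handle (\ref{ucurldivrandest2b}) by the duality/rotation trick described below.

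For the reduction between the normal-trace and tangential-trace versions, the key observation is the Hodge-type splitting: given $\mathbf{u}\in\mathbf{H}(\Omega,\operatorname{curl})\cap\mathbf{H}(\Omega,\operatorname{div})$ with $\gamma_T\mathbf{u}\in\mathbf{L}^2_T(\Gamma)$, write $\mathbf{u}=\nabla\varphi+\mathbf{z}$ using Lemma~\ref{lemma:helmholtz-a-la-schoeberl}(\ref{item:lemma:helmholtz-a-la-schoeberl-i}), so that $\operatorname{div}\mathbf{z}=0$, $\|\mathbf{z}\|_{\mathbf{H}^1(\Omega)}\le C\|\operatorname{curl}\mathbf{u}\|$, and $\mathbf{z}$ already has $\mathbf{H}^1$-regularity, hence certainly $\mathbf{H}^{1/2}$ with the right bound (note $\gamma_T\mathbf{z}=\mathbf{z}|_\Gamma\times\mathbf{n}\in\mathbf{L}^2(\Gamma)$ by the trace theorem, so $\gamma_T\nabla\varphi=\gamma_T\mathbf{u}-\gamma_T\mathbf{z}\in\mathbf{L}^2_T(\Gamma)$ too). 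It then remains to bound $\|\nabla\varphi\|_{\mathbf{H}^{1/2}(\Omega)}$. Since $\operatorname{curl}\nabla\varphi=0$ and $\operatorname{div}\nabla\varphi=\operatorname{div}\mathbf{u}-\operatorname{div}\mathbf{z}=\operatorname{div}\mathbf{u}$, while by (\ref{gmmaTgradProp}) $\gamma_T\nabla\varphi=\nabla_\Gamma(\varphi|_\Gamma)\times\mathbf{n}$ so that $\nabla_\Gamma(\varphi|_\Gamma)\in\mathbf{L}^2_T(\Gamma)$, i.e.\ $\varphi|_\Gamma\in H^1(\Gamma)$, one can close the argument by applying the scalar elliptic shift estimate: $\varphi$ solves $\Delta\varphi=\operatorname{div}\mathbf{u}\in L^2(\Omega)$ with Dirichlet datum $\varphi|_\Gamma\in H^1(\Gamma)$, hence $\varphi\in H^{3/2}(\Omega)$ with $\|\varphi\|_{H^{3/2}(\Omega)}\lesssim \|\operatorname{div}\mathbf{u}\|_{L^2(\Omega)}+\|\varphi|_\Gamma\|_{H^1(\Gamma)}$, and the $H^1(\Gamma)$-seminorm of $\varphi|_\Gamma$ is exactly $\|\nabla_\Gamma(\varphi|_\Gamma)\|=\|\gamma_T\nabla\varphi\|_{\mathbf{L}^2(\Gamma)}\le \|\gamma_T\mathbf{u}\|_{\mathbf{L}^2(\Gamma)}+\|\gamma_T\mathbf{z}\|_{\mathbf{L}^2(\Gamma)}\lesssim \|\gamma_T\mathbf{u}\|_{\mathbf{L}^2(\Gamma)}+\|\operatorname{curl}\mathbf{u}\|$. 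Taking the gradient loses one derivative, giving $\nabla\varphi\in\mathbf{H}^{1/2}(\Omega)$ with the stated bound; adding $\mathbf{z}$ back yields (\ref{ucurldivrandest2b}). The version (\ref{ucurldivrandest2a}) is obtained analogously, now using the decomposition so that $\langle\mathbf{z},\mathbf{n}\rangle$ is under control and $\varphi$ solves a Neumann-type problem, or simply by citing \cite{costabel90} directly.

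The main obstacle is making the elliptic shift $\varphi|_\Gamma\in H^1(\Gamma)\Rightarrow\varphi\in H^{3/2}(\Omega)$ rigorous on a general smooth simply-connected domain: this is the half-integer elliptic regularity for the Dirichlet Laplacian, which requires $\Gamma$ to be $C^{1,1}$ (amply covered by our smoothness hypothesis) and the well-known fact that the harmonic extension maps $H^1(\Gamma)\to H^{3/2}(\Omega)$ continuously. An alternative, cleaner route that avoids fractional elliptic regularity altogether is to quote (\ref{ucurldivrandest2}) verbatim from \cite[Thm.~2 and Cor.~1]{costabel90}, where both the normal-trace and the tangential-trace versions are proved simultaneously via the observation that $\mathbf{u}\mapsto(\operatorname{curl}\mathbf{u},\operatorname{div}\mathbf{u},\langle\mathbf{u},\mathbf{n}\rangle)$ and $\mathbf{u}\mapsto(\operatorname{curl}\mathbf{u},\operatorname{div}\mathbf{u},\gamma_T\mathbf{u})$ define isomorphisms onto their ranges; simple connectedness of $\Gamma$ guarantees that no finite-dimensional cohomology correction is needed. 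I would present the proof in that concise form: state the reduction via Lemma~\ref{lemma:helmholtz-a-la-schoeberl}(\ref{item:lemma:helmholtz-a-la-schoeberl-i}) for the curl-controlled part, and invoke \cite{costabel90} for the residual gradient part, since the latter is precisely the content of the cited theorem.
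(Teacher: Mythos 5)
Your proposal is correct and follows essentially the same route as the paper: the regular decomposition $\mathbf{u}=\nabla\varphi+\mathbf{z}$ of Lemma~\ref{lemma:helmholtz-a-la-schoeberl}(\ref{item:lemma:helmholtz-a-la-schoeberl-i}) to reduce everything to elliptic regularity for $\varphi$ (a Neumann problem for the normal-trace version, a Dirichlet problem with $H^{1}(\Gamma)$ datum for the tangential-trace version), with the $H^{3/2}(\Omega)$ shift giving $\nabla\varphi\in\mathbf{H}^{1/2}(\Omega)$. The only differences are cosmetic: the paper also proves the $L^{2}$-bounds (\ref{ucurldivrandesta})--(\ref{ucurldivrandestb}) by the same decomposition (via an energy estimate, resp.\ a negative-order Laplace--Beltrami shift) rather than citing \cite{costabel90}, and one should note explicitly that $\varphi$ is only determined up to a constant, which must be fixed (e.g.\ zero mean on $\Gamma$) before using the full $H^{1}(\Gamma)$-norm in the shift estimate.
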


\begin{proof}
We use the regular decomposition
$\displaystyle\mathbf{u}=\nabla\varphi+\mathbf{z},
$ of Lemma~\ref{lemma:helmholtz-a-la-schoeberl}%
(\ref{item:lemma:helmholtz-a-la-schoeberl-i})
where $\mathbf{z}\in\mathbf{H}^{1}( \Omega) $ satisfies
\begin{equation}
\left\Vert \mathbf{z}\right\Vert _{\mathbf{H}^{1}( \Omega) }\leq C\left\Vert
\operatorname*{curl}\mathbf{u}\right\Vert . \label{uHelmSchoeberl2}%
\end{equation}
Since $\operatorname{div}\mathbf{z}=0$ we have $\displaystyle\Delta
\varphi=\operatorname{div}\mathbf{u}.$ Concerning the boundary conditions for
$\varphi$, we consider two cases corresponding to (\ref{ucurldivrandesta}),
(\ref{ucurldivrandest2a}) and (\ref{ucurldivrandestb}),
(\ref{ucurldivrandest2b}) separately as Case 1 and Case 2. 

\textbf{Case 1.}The function $\varphi$ satisfies the Neumann problem
\[
\Delta\varphi=\operatorname{div}\mathbf{u},\qquad\partial_{n}\varphi
=\left\langle \mathbf{n},\nabla\varphi\right\rangle =\left\langle
\mathbf{n},\mathbf{u}-\mathbf{z}\right\rangle ,
\]
and we note that the condition $\operatorname{div}\mathbf{z}=0$ implies that
the solvability condition for this Neumann problem is satisfied. We estimate%
\begin{align*}
\left\Vert \left\langle \mathbf{n},\mathbf{u}-\mathbf{z}\right\rangle
\right\Vert _{H^{-1/2}(  \Gamma)  }
& \leq\Vert\left\langle
\mathbf{n},\mathbf{u}\right\rangle \Vert_{H^{-1/2}(  \Gamma)
}+\left\Vert \left\langle \mathbf{n},\mathbf{z}\right\rangle \right\Vert
_{H^{-1/2}(  \Gamma)  }
\\
&\leq\Vert\left\langle \mathbf{n}%
,\mathbf{u}\right\rangle \Vert_{H^{-1/2}\left(  \Gamma\right)  }+
C \left\Vert
\mathbf{z}\right\Vert _{\mathbf{H}^{1}(\Omega)}.
\end{align*}
An energy estimate for $\varphi$ provides
$\displaystyle 
\left\Vert \nabla\varphi\right\Vert \leq C(  \left\Vert
\operatorname*{div}\mathbf{u}\right\Vert +\left\Vert \partial_{n}%
\varphi\right\Vert _{H^{-1/2}\left(  \Gamma\right)  })  .
$
The combination of these estimates lead to (\ref{ucurldivrandesta}). We also
note that if $\left\langle \mathbf{u},\mathbf{n}\right\rangle \in L^{2}(
\Gamma)  $, then we get by the smoothness of $\Gamma$ that $\varphi\in
H^{3/2}(\Omega)$ with $\Vert\varphi\Vert_{H^{3/2}(\Omega)}\leq C(\Vert
\operatorname{div}{\mathbf{u}}\Vert+\Vert\partial_{n}\varphi\Vert
_{L^{2}(\Gamma)})$, which shows (\ref{ucurldivrandest2a}).

\textbf{Case 2.} We obtain regularity assertions for $\varphi$ by using that $\varphi$ satisfies 
$\Delta \varphi = \operatorname{div}\mathbf{u}$ and determine the boundary regularity $\varphi|_\Gamma$.
We observe
\[
\overrightarrow{\operatorname*{curl}\nolimits_{\Gamma}}\varphi=\gamma
_{T}\nabla\varphi=\gamma_{T}\left(  \mathbf{u}-\mathbf{z}\right)
\]
and therefore%
\[
\Delta_{\Gamma}\varphi=-\operatorname{curl}_{\Gamma}\overrightarrow
{\operatorname{curl}_{\Gamma}}\varphi=-\operatorname{curl}_{\Gamma}\left(  \gamma
_{T}\left(  \mathbf{u}-\mathbf{z}\right)  \right)  .
\]
Hence, by smoothness of $\partial\Omega$ {(and the fact that $\partial\Omega$
is connected)} we get
\begin{align*}
\left\Vert \varphi\right\Vert _{H^{1/2}(\Gamma)}  &  \leq C\left\Vert
\Delta_{\Gamma}\varphi\right\Vert _{H^{-3/2}(\Gamma)}=C\left\Vert
\operatorname{curl}_{\Gamma}\left(  \gamma_{T}\left(  \mathbf{u}%
-\mathbf{z}\right)  \right)  \right\Vert _{H^{-3/2}(\Gamma)}
\\
& \leq C\left\Vert
\gamma_{T}\left(  \mathbf{u}-\mathbf{z}\right)  \right\Vert _{\mathbf{H}%
^{-1/2}(\Gamma)}
  \leq C\left(  \left\Vert \gamma_{T}\mathbf{u}\right\Vert _{\mathbf{H}%
^{-1/2}(\Gamma)}+\left\Vert \mathbf{z}\right\Vert _{\mathbf{H}^{1}(\Omega
)}\right)  .
\end{align*}
Since $\Vert\varphi\Vert_{H^{1}(\Omega)}\leq C(\Vert\operatorname{div}%
\mathbf{u}\Vert+\Vert\varphi\Vert_{H^{1/2}(\Gamma)})$ we get 
(\ref{ucurldivrandestb}). By similar reasoning, $\gamma_{T}\mathbf{u}%
\in\mathbf{L}^{2}(\Gamma)$ implies $\varphi|_{%
 \Gamma 
}\in H^{1}(%
 \Gamma 
)$ with $\Vert\varphi\Vert_{H^{1}(\Gamma)}\leq C(\Vert\gamma_{T}%
\mathbf{u}\Vert_{L^{2}(\Gamma)}+\Vert\mathbf{z}\Vert_{\mathbf{H}^{1}(\Omega
)})$ so that $\varphi\in H^{3/2}(\Omega)$ and thus (\ref{ucurldivrandest2b}).
\end{proof}

The following lemma introduces some variants of Helmholtz decompositions.

\begin{lemma}
\label{LemHelmDecompVar1}Let $\Omega$ be a bounded sufficiently smooth
Lipschitz domain with simply connected boundary. For any $\mathbf{u}%
\in\mathbf{X}_{\operatorname*{imp}}\cap\mathbf{H}(  \operatorname*{div}%
,\Omega)  $, there exist $\varphi\in H_{0}^{1}(\Omega)%
\cap H^{3/2}(  \Omega)  $ 
and $\mathbf{z}\in\mathbf{H}^{1}(  \Omega)  $ with
$\operatorname*{div}\mathbf{z}=0$ such that $\mathbf{u}=\nabla\varphi
+\operatorname*{curl}\mathbf{z}$. The function $\mathbf{u}$ belongs to
$\mathbf{H}^{1/2}(\Omega)$, and we have the estimates%
\begin{subequations}
\label{HelmDecompVar1est}
\begin{align}
\left\Vert \nabla\varphi\right\Vert _{\mathbf{H}^{1/2}(\Omega)} &  \leq
C\left\Vert \mathbf{u}\right\Vert _{\mathbf{H}^{1/2}(\Omega)},
\label{HelmDecompVar1esta}\\
\left\Vert \operatorname*{curl}\mathbf{z}\right\Vert _{\mathbf{H}^{1/2}%
(\Omega)} &  \leq C\left(  \left\Vert \operatorname*{curl}\mathbf{u}%
\right\Vert +\left\Vert \gamma_{T}\mathbf{u}\right\Vert _{\mathbf{L}%
^{2}(\Gamma)}\right)  .
\label{HelmDecompVar1estb}%
\end{align}%
If $\mathbf{u}$ admits a decomposition of the form $\mathbf{u}=\mathbf{r}%
+\operatorname*{curl} {\boldsymbol \psi} $ with $\mathbf{r}\in\mathbf{H}^{1/2}(  \Omega)  $, then
the decomposition $\mathbf{u} = \nabla \varphi + \operatorname{curl} \mathbf{z}$ satisfies 
\begin{equation}
\left\Vert \nabla\varphi\right\Vert _{\mathbf{H}^{1/2}(\Omega)}\leq
C\left\Vert \mathbf{r}\right\Vert _{\mathbf{H}^{1/2}(\Omega)}.
\label{HelmDecompVar1estc}%
\end{equation}
\end{subequations}
\end{lemma}%

\begin{proof}
The Helmholtz decomposition was considered in \cite[Thm.~{4.2(2)}%
]{Schweizer-Friedrichs-2016}, \cite[Thm.~{28(i)}]{schoeberlscript}. Since
$\operatorname*{div}\operatorname*{curl}=0$ and we require $\varphi\in H_{0}^{1}(\Omega
)$, we have
\begin{equation}
\Delta\varphi=\operatorname{div}\mathbf{u}\quad\text{and\quad}\varphi
|_{\partial\Omega}=0.\label{difphiu0SF}%
\end{equation}
Lemma~\ref{lemma:schoeberl-apriori} implies for 
$\mathbf{u} \in\mathbf{X}_{\operatorname*{imp}}\cap\mathbf{H}(  \operatorname*{div},\Omega)$
that $\mathbf{u}\in\mathbf{H}^{1/2}(\Omega)$. A standard shift theorem for the
Poisson equation leads to%
\begin{equation}
\label{difphiu0SF-foo}%
\left\Vert \varphi\right\Vert _{H^{3/2}(\Omega)}\leq C\left\Vert
\operatorname{div}{\mathbf{u}}\right\Vert _{H^{-1/2}(\Omega)}\leq C\left\Vert
{\mathbf{u}}\right\Vert _{\mathbf{H}^{1/2}(\Omega)}.
\end{equation}
Next, we estimate $\mathbf{z}$. Note that $\varphi\in H_{0}^{1}(\Omega)$
implies $\nabla_{\Gamma}\varphi=0$ so that also $\gamma_{T}\nabla\varphi=0$ on
$\Gamma$. Lemma~\ref{lemma:schoeberl-apriori} then implies
\begin{align}
\Vert\operatorname*{curl}\mathbf{z}\Vert_{\mathbf{H}^{1/2}(\Omega)} &  \leq
C\left(  \left\Vert \operatorname*{curl}\operatorname*{curl}\mathbf{z}%
\right\Vert +\left\Vert \gamma_{T}\operatorname*{curl}\mathbf{z}\right\Vert
_{\mathbf{L}^{2}(\Gamma)}\right)  \nonumber\\
&  \leq C\left(  \left\Vert \operatorname*{curl}\mathbf{u}\right\Vert
+\left\Vert \gamma_{T}\mathbf{u}\right\Vert _{\mathbf{L}^{2}(\Gamma
)}+\left\Vert \gamma_{T}\nabla\varphi\right\Vert _{\mathbf{L}^{2}(\Gamma
)}\right)  \nonumber\\
&  \overset{\gamma_{T}\nabla\varphi=0}{=}C\left(  \left\Vert
\operatorname*{curl}\mathbf{u}\right\Vert +\left\Vert \gamma_{T}%
\mathbf{u}\right\Vert _{\mathbf{L}^{2}(\Gamma)}\right)  .\nonumber
\end{align}%
The estimate (\ref{HelmDecompVar1estc}) follows from (\ref{difphiu0SF-foo}) via
$\operatorname{div}\mathbf{u}=\operatorname*{div}\mathbf{r}$.
This finishes the proof of (\ref{HelmDecompVar1est}).

\end{proof}


\subsection{Maxwell's equations with impedance boundary
conditions\label{SecMW_Imp}}

%

We have introduced all basic ingredients to formulate the electric Maxwell
equations for constant wavenumber $k\in\mathbb{R}\backslash\left(  -k_{0}%
,k_{0}\right)  $ with impedance boundary conditions on $\Gamma$. We define the
sesquilinear form $A_{k}:\mathbf{X}_{\operatorname*{imp}}\times\mathbf{X}%
_{\operatorname*{imp}}\rightarrow\mathbb{C}$ by%
\begin{equation}
A_{k}(\mathbf{u},\mathbf{v}):=\left(  \operatorname*{curl}\mathbf{u,}%
\operatorname*{curl}\mathbf{v}\right)  -k^{2}\left(  \mathbf{u,v}\right)
-\operatorname*{i}k\left(  \mathbf{u}_{T},\mathbf{v}_{T}\right)
_{\mathbf{L}^{2}(  \Gamma)  }. \label{defAksesqui}%
\end{equation}
The variational formulation is: Given an electric current density $\mathbf{j}$ 
and boundary data $\mathbf{g}_T$ with 
\begin{equation}
\mathbf{j}\in\mathbf{L}^{2}(\Omega),\qquad\mathbf{g}_{T}\in\mathbf{H}%
_{T}^{-1/2}\left(  \operatorname*{div}\nolimits_{\Gamma},\Gamma\right)
\cap\mathbf{L}_{T}^{2}(\Gamma) \label{eq:condition-on-j-g}%
\end{equation}
find $\mathbf{E}\in\mathbf{X}_{\operatorname*{imp}}$ such that%
\begin{equation}
A_{k}(\mathbf{E},\mathbf{v})=\left(  \mathbf{j,v}\right)  +\left(
\mathbf{g}_{T},\mathbf{v}_{T}\right)  _{\mathbf{L}^{2}(  \Gamma)
}\quad\forall\mathbf{v}\in\mathbf{X}_{\operatorname*{imp}}.
\label{weakformulation}%
\end{equation}
Note that the assumptions (\ref{eq:condition-on-j-g}) on the data are not the
most general ones (see (\ref{XimpprimeOmega}), (\ref{XimpprimeGamma}) and
(\ref{eq:item:lemma:apriori-with-good-sign-iia}) below) but they reduce
technicalities in some places. By integration by parts it is easy to see that
the classical strong form of this equation is given by%

\begin{equation}%
\begin{array}
[c]{ll}%
\mathcal{L}_{\Omega,k}\mathbf{E}=\mathbf{j} & \text{in }\Omega,\\
\mathcal{B}_{%
 \Gamma 
,k}\mathbf{E}=\mathbf{g}_{T} & \text{on }\Gamma
\end{array}
\label{MWEq}%
\end{equation}
with the volume and boundary differential operators $\mathcal{L}_{\Omega,k}$
and $\mathcal{B}_{\Omega,k}$, defined by%
\[
\mathcal{L}_{\Omega,k}\mathbf{v}:=\operatorname*{curl}\operatorname*{curl}%
\mathbf{v}-k^{2}\mathbf{v}\text{ in }\Omega\quad\text{and\quad}\mathcal{B}_{%
 \Gamma 
,k}\mathbf{v}:=\gamma_{T}%
   \operatorname*{curl}\mathbf{v} 
-\operatorname*{i}k\Pi_{T}\mathbf{v}\text{ on }\Gamma\mathbf{.}%
\]
We denote by
\begin{equation}
\mathcal{S}_{\Omega,k}^{\operatorname{MW}}:\mathbf{X}_{\operatorname{imp}%
}^{\prime}\rightarrow\mathbf{X}_{\operatorname{imp}} \label{defMaxSolOp}%
\end{equation}
the solution operator that maps the linear functional $\mathbf{X}%
_{\operatorname*{imp}}\ni\mathbf{v}\rightarrow\left(  \mathbf{j,v}\right)
+\left(  \mathbf{g}_{T},\mathbf{v}\right)  _{\mathbf{L}^{2}\left(
\Gamma\right)  }$ to the solution $\mathbf{E}$ of (\ref{MWEq}) and whose
existence follows from Proposition~\ref{lemma:apriori-homogeneous-rhs} below.

In our analysis, the sesquilinear form
\begin{equation}
\innerprod{
\mathbf{u},\mathbf{v}%
}
:=k^{2}\left(  \mathbf{u},\mathbf{v}\right)  +\operatorname*{i}%
k\left(  \mathbf{u}_{T},\mathbf{v}_{T}\right)  _{\mathbf{L}^{2}(
\Gamma)  } \label{eq:def:(())}%
\end{equation}
will play an important role. We note
\begin{align}
A_{k}( \mathbf{u},\mathbf{v})  &  =\left(  \operatorname*{curl}\mathbf{u,}%
\operatorname*{curl}\mathbf{v}\right)  -
\innerprod{\mathbf{u},\mathbf{v}}, 
\label{eq:Ak-alternative}\\
A_{k}( \mathbf{u},\nabla\varphi)  &  =-
\innerprod{ \mathbf{u},\nabla\varphi}
\qquad\forall\mathbf{u}\in\mathbf{X}_{\operatorname*{imp}}%
,\quad\forall\varphi\in H_{\operatorname*{imp}}^{1}( \Omega) ,\\
\innerprod{ \mathbf{u},\mathbf{v} }
  &  =\overline{
\innerprod{ \mathbf{v},\mathbf{u}}}
. \label{Akcommute}%
\end{align}


\section{Stability analysis of the continuous Maxwell problem}

\label{sec:stability}
In this section we show that the model problem (\ref{weakformulation}) is
well-posed and that the norm of the solution operator is $O(|k|^{\theta})$ for
suitable choices of norms and some $\theta%
\geq0 $%
.


\subsection{Well-posedness}


The continuity of the sesquilinear form $A_{k}( \cdot,\cdot) $ is obvious: it
holds%
\[
\left\vert A_{k}( \mathbf{u},\mathbf{v}) \right\vert \leq
C_{\operatorname*{cont}}\left\Vert \mathbf{u}\right\Vert _{\operatorname*{imp}%
,k}\left\Vert \mathbf{v}\right\Vert _{\operatorname*{imp},k}\qquad
\text{with\qquad}C_{\operatorname*{cont}}:=1\text{.}%
\]

Well-posedness of the Maxwell problem with impedance condition is proved in
\cite[Thm.~{4.17}]{Monkbook}. Here we recall the statement and give a sketch
of the proof.

\begin{proposition}
\label{lemma:apriori-homogeneous-rhs}Let $\Omega\subset\mathbb{R}^{3}$ be a
bounded Lipschitz domain with simply connected and sufficiently smooth
boundary. Then there exists $\gamma_{k}>0$ such that
\[
\gamma_{k}\leq\inf_{\mathbf{u}\in\mathbf{X}_{\operatorname*{imp}}%
\backslash\left\{
\mathbf{0}%
\right\}  }\sup_{\mathbf{v}\in\mathbf{X}_{\operatorname*{imp}}\backslash
\left\{
\mathbf{0}%
\right\}  }\frac{\left\vert A_{k}(\mathbf{u},\mathbf{v})\right\vert
}{\left\Vert \mathbf{u}\right\Vert _{\operatorname*{imp},k}\left\Vert
\mathbf{v}\right\Vert _{\operatorname*{imp},k}}.
\]

\end{proposition}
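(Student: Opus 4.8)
The plan is to combine the $((\cdot,\cdot))_{k}$-induced Helmholtz-type splitting of $\mathbf{X}_{\operatorname*{imp}}$ with a Fredholm argument on the solenoidal part. Write $\mathbf{X}_{\operatorname*{imp}}=\nabla H^{1}_{\operatorname*{imp}}(\Omega)\oplus\mathbf{V}_{k,0}$; this decomposition is well defined and $\|\cdot\|_{\operatorname*{imp},k}$-stable since $((\nabla\varphi,\nabla\varphi))_{k}=k^{2}\|\nabla\varphi\|^{2}+\operatorname*{i}k\|\nabla_{\Gamma}\varphi\|^{2}_{\mathbf{L}^{2}(\Gamma)}$ has modulus $\geq\tfrac{1}{\sqrt{2}}\|\nabla\varphi\|^{2}_{\operatorname*{imp},k}$ (so the defining variational problem for the gradient component is solvable by Lax--Milgram after a complex rotation). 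By (\ref{eq:Ak-alternative}) the form $A_{k}$ is \emph{block upper triangular} with respect to this splitting: for $\mathbf{z}\in\mathbf{V}_{k,0}$ one has $A_{k}(\mathbf{z},\nabla\psi)=-((\mathbf{z},\nabla\psi))_{k}=0$; the gradient--gradient block is $-((\nabla\varphi,\nabla\psi))_{k}$; the $\mathbf{V}_{k,0}$--$\mathbf{V}_{k,0}$ block is $(\operatorname*{curl}\mathbf{z},\operatorname*{curl}\mathbf{w})-((\mathbf{z},\mathbf{w}))_{k}$; and the only coupling is the boundary term $A_{k}(\nabla\varphi,\mathbf{w})=-((\nabla\varphi,\mathbf{w}))_{k}$, which for $\mathbf{w}\in\mathbf{V}_{k,0}$ reduces to $-2\operatorname*{i}k(\nabla_{\Gamma}\varphi,\mathbf{w}_{T})_{\mathbf{L}^{2}(\Gamma)}$ and is therefore $\lesssim\|\nabla\varphi\|_{\operatorname*{imp},k}\|\mathbf{w}\|_{\operatorname*{imp},k}$.

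The core step is the inf--sup estimate for $A_{k}$ restricted to $\mathbf{V}_{k,0}\times\mathbf{V}_{k,0}$. Introducing the ``good sign'' form $A_{k}^{+}(\mathbf{u},\mathbf{v}):=(\operatorname*{curl}\mathbf{u},\operatorname*{curl}\mathbf{v})+k^{2}(\mathbf{u},\mathbf{v})-\operatorname*{i}k(\mathbf{u}_{T},\mathbf{v}_{T})_{\mathbf{L}^{2}(\Gamma)}$, we have $A_{k}=A_{k}^{+}-2k^{2}(\cdot,\cdot)$ and $|A_{k}^{+}(\mathbf{u},\mathbf{u})|\geq\tfrac{1}{2}\|\mathbf{u}\|^{2}_{\operatorname*{imp},k}$, so $A_{k}^{+}$ is rotationally coercive on $\mathbf{X}_{\operatorname*{imp}}$. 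The decisive point is that, although $\mathbf{X}_{\operatorname*{imp}}\hookrightarrow\mathbf{L}^{2}(\Omega)$ is \emph{not} compact (gradient fields obstruct it), the embedding $\mathbf{V}_{k,0}\hookrightarrow\mathbf{L}^{2}(\Omega)$ \emph{is} compact: every $\mathbf{u}\in\mathbf{V}_{k,0}$ is divergence free and has $\gamma_{T}\mathbf{u}\in\mathbf{L}^{2}_{T}(\Gamma)$, hence by Lemma~\ref{lemma:schoeberl-apriori} (or Lemma~\ref{LemHelmDecompVar1}) $\mathbf{u}\in\mathbf{H}^{1/2}(\Omega)$ with $\|\mathbf{u}\|_{\mathbf{H}^{1/2}(\Omega)}\lesssim\|\operatorname*{curl}\mathbf{u}\|+\|\gamma_{T}\mathbf{u}\|_{\mathbf{L}^{2}(\Gamma)}\leq\|\mathbf{u}\|_{\operatorname*{imp},k}$, and Rellich applies. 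Thus $A_{k}|_{\mathbf{V}_{k,0}\times\mathbf{V}_{k,0}}$ is a compact perturbation of a coercive form, so the associated operator on $\mathbf{V}_{k,0}$ is Fredholm of index zero; the inf--sup constant on $\mathbf{V}_{k,0}$ is positive \emph{iff} this operator is injective.

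Injectivity reduces to uniqueness for the homogeneous problem (\ref{MWEq}) with $\mathbf{j}=0$, $\mathbf{g}_{T}=0$. Indeed, if $\mathbf{z}\in\mathbf{V}_{k,0}$ lies in the kernel, then also $A_{k}(\mathbf{z},\nabla\psi)=-((\mathbf{z},\nabla\psi))_{k}=0$, so $A_{k}(\mathbf{z},\mathbf{v})=0$ for all $\mathbf{v}\in\mathbf{X}_{\operatorname*{imp}}$, i.e.\ $\mathbf{z}$ solves (\ref{MWEq}) homogeneously. Testing with $\mathbf{v}=\mathbf{z}$ and reading off the imaginary part of $A_{k}(\mathbf{z},\mathbf{z})$ gives $k\|\mathbf{z}_{T}\|^{2}_{\mathbf{L}^{2}(\Gamma)}=0$, hence $\mathbf{z}_{T}=0$ on $\Gamma$ and then, from the impedance condition, $\gamma_{T}\operatorname*{curl}\mathbf{z}=\operatorname*{i}k\mathbf{z}_{T}=0$. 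Extending $\mathbf{z}$ (and $\operatorname*{curl}\mathbf{z}$) by zero across $\Gamma$ produces a compactly supported distributional solution of $\operatorname*{curl}\operatorname*{curl}\mathbf{z}-k^{2}\mathbf{z}=0$ on $\mathbb{R}^{3}$ which, being divergence free, satisfies $\Delta\mathbf{z}+k^{2}\mathbf{z}=0$ componentwise on $\mathbb{R}^{3}$; by analyticity of Helmholtz solutions (equivalently Rellich's lemma) a compactly supported such function vanishes, so $\mathbf{z}=0$. (The adjoint problem is of the same impedance type with the sign of the boundary term reversed, cf.\ (\ref{Akcommute}), and is treated identically.)

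Finally, the full inf--sup estimate is assembled from the three ingredients by an explicit test function. Given $\mathbf{u}=\nabla\varphi+\mathbf{z}$, choose a unimodular $c$ with $-\bar{c}((\nabla\varphi,\nabla\varphi))_{k}=|((\nabla\varphi,\nabla\varphi))_{k}|\geq\tfrac{1}{\sqrt{2}}\|\nabla\varphi\|^{2}_{\operatorname*{imp},k}$, and, using the inf--sup just established on $\mathbf{V}_{k,0}$ together with a further unimodular rotation, choose $\mathbf{w}_{0}\in\mathbf{V}_{k,0}$ with $\|\mathbf{w}_{0}\|_{\operatorname*{imp},k}\leq\|\mathbf{z}\|_{\operatorname*{imp},k}$ and $A_{k}(\mathbf{z},\mathbf{w}_{0})=|A_{k}(\mathbf{z},\mathbf{w}_{0})|\gtrsim\|\mathbf{z}\|^{2}_{\operatorname*{imp},k}$. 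Testing with $\mathbf{v}:=\alpha c\,\nabla\varphi+\mathbf{w}_{0}$ and using $A_{k}(\mathbf{z},\nabla\varphi)=0$ together with $|A_{k}(\nabla\varphi,\mathbf{w}_{0})|\lesssim\|\nabla\varphi\|_{\operatorname*{imp},k}\|\mathbf{z}\|_{\operatorname*{imp},k}$, we obtain
\[
\operatorname{Re}A_{k}(\mathbf{u},\mathbf{v})\;\geq\;\tfrac{\alpha}{\sqrt{2}}\|\nabla\varphi\|^{2}_{\operatorname*{imp},k}+c_{0}\|\mathbf{z}\|^{2}_{\operatorname*{imp},k}-C_{0}\|\nabla\varphi\|_{\operatorname*{imp},k}\|\mathbf{z}\|_{\operatorname*{imp},k};
\]
absorbing the cross term by Young's inequality and then taking $\alpha$ sufficiently large makes the right-hand side $\gtrsim\|\nabla\varphi\|^{2}_{\operatorname*{imp},k}+\|\mathbf{z}\|^{2}_{\operatorname*{imp},k}\gtrsim\|\mathbf{u}\|^{2}_{\operatorname*{imp},k}$ by stability of the splitting, while $\|\mathbf{v}\|_{\operatorname*{imp},k}\lesssim\|\mathbf{u}\|_{\operatorname*{imp},k}$; dividing yields $\gamma_{k}>0$. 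The only genuinely non-formal step is the uniqueness of the homogeneous impedance problem (the zero-extension plus unique-continuation argument); structurally, the essential obstacle — and the reason the $((\cdot,\cdot))_{k}$-splitting cannot be circumvented — is that the compactness needed for the Fredholm argument is available only after restricting to $\mathbf{V}_{k,0}$, where Lemma~\ref{lemma:schoeberl-apriori} provides the $\mathbf{H}^{1/2}$-gain.
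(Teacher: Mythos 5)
Your proof is correct and follows essentially the same route as the paper: the uniqueness step (imaginary part gives $\mathbf{u}_T=0$, zero extension, divergence-free hence componentwise Helmholtz, unique continuation) is identical, and the remainder is the Fredholm alternative applied to a compact perturbation of a coercive form. The only difference is that the paper delegates the "compact perturbation of an isomorphism" structure to the cited references (Buffa, Gatica), whereas you work it out explicitly via the $((\cdot,\cdot))_k$-orthogonal splitting $\mathbf{X}_{\operatorname{imp}}=\nabla H^1_{\operatorname{imp}}(\Omega)\oplus\mathbf{V}_{k,0}$ and the compact embedding $\mathbf{V}_{k,0}\hookrightarrow\mathbf{L}^2(\Omega)$ — a correct and self-contained filling-in of that step.
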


\begin{proof}
\textbf{Step 1.} We show uniqueness. If $A_{k}(\mathbf{u},\mathbf{v})=0$ for
all $\mathbf{v}\in\mathbf{X}_{\operatorname*{imp}}$ then
\[
0=\operatorname{Im}A_{k}(\mathbf{u},\mathbf{u})=-k\left\Vert \mathbf{u}%
_{T}\right\Vert _{\mathbf{L}^{2}\left(  \Gamma\right)  }^{2}.
\]
Hence, $\mathbf{u}_{T}=\mathbf{0}$ on $\Gamma$ and the extension of
$\mathbf{u}$ by zero outside of $\Omega$ (denoted $\widetilde{\mathbf{u}}$) is
in $\mathbf{H}(\operatorname{curl},\widetilde{\Omega})$ for any bounded domain
$\widetilde{\Omega}\subset\mathbb{R}^{3}$. This zero extension $\widetilde
{\mathbf{u}}$ solves the homogeneous Maxwell equations on $\mathbb{R}^{3}$. An
application of the operator \textquotedblleft$\operatorname{div}%
$\textquotedblright\ shows that $\operatorname{div}\widetilde{\mathbf{u}}=0$
and thus $\widetilde{\mathbf{u}}\in\mathbf{H}^{1}(\mathbb{R}^{3})$. Using
$\operatorname{curl}\operatorname{curl}=-\Delta+\nabla\operatorname{div}$ we
see that each component of $\widetilde{\mathbf{u}}$ solves the homogeneous
Helmholtz equation. Since $\widetilde{\mathbf{u}}$ vanishes outside $\Omega$,
the unique continuation principle asserts $\widetilde{\mathbf{u}}=0$.
%

\textbf{Step 2.} From \cite[Thm.~{4.8}]{gatica_MW_imped} or the technique developed in \cite{Buffa2005}
it follows that the operator induced by $A_{k}$ is a compact perturbation of
an isomorphism and the Fredholm alternative shows well-posedness of the problem.
\end{proof}


\subsection{Wavenumber-explicit stability estimates}


Proposition~\ref{lemma:apriori-homogeneous-rhs} does not give any insight how
the (positive) inf-sup constant $\gamma_{k}$ depends on the wavenumber $k$. In
this section, we introduce the stability constant $C_{\operatorname*{stab}}(k)$ 
and estimate its dependence on $k$ under certain assumptions.

\begin{definition}
\label{DefStabConst}Let $\Omega\subset\mathbb{R}^{3}$ be a bounded Lipschitz
domain with simply connected and sufficiently smooth boundary. 
Let $C_{\operatorname*{stab}}(k)$ denote the smallest constant such that
for each $\mathbf{j}\in\mathbf{L}^{2}(\Omega)$ and $\mathbf{g}_{T}\in
\mathbf{L}_{T}^{2}(\Gamma)$ the solution $\mathbf{E} = 
\mathcal{S}_{\Omega,k}^{\operatorname{MW}} ({\mathbf j}, {\mathbf{g}}_T)$ of 
(\ref{weakformulation}) satisfies%
\begin{align}
\Vert\mathbf{E}\Vert_{\operatorname{imp},k}
 \leq C_{\operatorname*{stab}}\left(  k\right)  \left(  \left\Vert
\mathbf{j}\right\Vert _{\mathbf{L}^{2}(\Omega)}+\left\Vert \mathbf{g}%
_{T}\right\Vert _{\mathbf{L}^{2}(\Gamma)}\right)  . 
\label{eq:Cstab}%
\end{align}
\end{definition}%
The behavior of the constant
$C_{\operatorname*{stab}}\left(  k\right)  $ 
with respect to the wavenumber typically depends on the geometry of the domain
$\Omega$. Our stability and convergence theory for conforming Galerkin finite
element discretization as presented in Sections~\ref{SecStabConv} requires
that this constant grow at most algebraically in $k$, i.e.,%
\begin{equation}
\exists\theta%
 \geq0 
,C_{\operatorname{stab}}>0\quad\text{such that }%
 C_{\operatorname*{stab}}\left(  k\right) 
\leq C_{\operatorname{stab}}|k|^{\theta}\qquad\forall k\in\mathbb{R}%
\backslash\left(  -k_{0},k_{0}\right)  . \label{AssumptionAlgGrowth}%
\end{equation}
\begin{remark}%
 For the 
$hp$-FEM application below, the term $\left\vert k\right\vert ^{\theta}$ will
be mitigated by an exponentially converging approximation term so that
any finite value $\theta\geq0$ leads to an exponential convergence of the
discretization. 
\eremk
\end{remark}%

Next we present an example\footnote{We thank an anonymous referee for suggesting 
this example.} which shows that in general the exponent $\theta$ in
(\ref{AssumptionAlgGrowth}) cannot be negative.

\begin{example}
Let $\Omega=\left(  -2,2\right)  ^{3}$, $\omega:=\left(  -1,1\right)  ^{2}%
$. Define the cutoff function $\chi:\Omega\rightarrow\mathbb{C}$ by%
\[
\chi\left(  \mathbf{x}\right)  :=\left\{
\begin{array}
[c]{ll}%
\left(  1-x_{1}^{2}\right)  ^{2}\left(  1-x_{2}^{2}\right)  ^{2}\left(
1-x_{3}^{2}\right)  ^{2} & \mathbf{x}=\left(  x_{j}\right)  _{j=1}^{3}%
\in\omega,\\
0 & \text{otherwise.}%
\end{array}
\right.
\]
Note that $\chi\in H_{0}^{2}\left(  \Omega\right)  $. With 
$\mathbf{e}_{j} \in \mathbb{R}^3$, $1\leq j\leq3$, denoting the\ $j$-th canonical unit vector 
we define%
\[
\mathbf{j}:=\operatorname*{e}\nolimits^{\operatorname*{i}kx_{1}}\left(
-\left(  \Delta\chi+\operatorname*{i}k\partial_{1}\chi\right)  \mathbf{e}%
_{2}+\nabla\partial_{2}\chi+\operatorname*{i}k\operatorname*{curl}\left(
\chi\mathbf{e}_{3}\right)  \right)  .
\]
Then, $\mathbf{E:}=\chi\operatorname*{e}^{\operatorname*{i}kx_{1}}%
\mathbf{e}_{2}$ is the unique weak solution of%
\begin{align*}
\mathcal{L}_{\Omega,k}\mathbf{E}=\mathbf{j} & \text{ in }\Omega, 
& 
\mathcal{B}_{\Gamma,k}\mathbf{E}=\mathbf{0} & \text{ on }\Gamma.
\end{align*}
Using the symbolic computer algebra program {\sc MATHEMATICA} we obtain explicitly%
\begin{align*}
\left\Vert \mathbf{j}\right\Vert _{\mathbf{L}^{2}\left(  \Omega\right)  }%
^{2}& =\frac{16777216\left(  5k^{2}+33\right)  }{10418625}, \\
\left\Vert \mathbf{E}\right\Vert _{\mathbf{H}(\operatorname*{curl},\Omega
),k}^{2}& =2\frac{16777216\left(  k^{2}+3\right)  }{31255875},\quad\left\Vert
\mathbf{E}_{T}\right\Vert _{\mathbf{L}^{2}(\Gamma)}^{2}=0,
\end{align*}
which shows that in general $\theta \ge 0$ in (\ref{eq:Cstab}). 
\eremk 
\end{example}
\begin{remark}%
Let $\Omega\subset\mathbb{R}^{3}$ be a bounded Lipschitz domain with simply
connected and sufficiently smooth boundary. The sesquilinear form $A_{k}$
satisfies the inf-sup condition
\begin{equation}
\inf_{\mathbf{u}\in\mathbf{H}(\operatorname{curl},\Omega)}\sup_{\mathbf{u}%
\in\mathbf{H}(\operatorname{curl},\Omega)}\frac{|A_{k}(\mathbf{u}%
,\mathbf{v})|}{\Vert\mathbf{u}\Vert_{\operatorname{imp},k}\Vert\mathbf{v}%
\Vert_{\operatorname{imp},k}}\geq\frac{1}{1+|k|%
C_{\operatorname{stab}}%
\left(  k\right)  }. \label{eq:inf-sup}%
\end{equation}
This result is shown in the same way as in the Helmholtz case, see, e.g.,
\cite[Thm.~{2.5}]{MelenkHelmStab2010}, \cite[Prop.~{8.2.7}]{MelenkDiss}.
If assumption~(\ref{AssumptionAlgGrowth}) holds, then
\[
\inf_{\mathbf{u}\in\mathbf{H}(\operatorname{curl},\Omega)}\sup_{\mathbf{u}%
\in\mathbf{H}(\operatorname{curl},\Omega)}\frac{|A_{k}(\mathbf{u}%
,\mathbf{v})|}{\Vert\mathbf{u}\Vert_{\operatorname{imp},k}\Vert\mathbf{v}%
\Vert_{\operatorname{imp},k}}\geq\frac{1}{1+C_{\operatorname{stab}}%
|k|^{\theta+1}}.
\]
\hbox{}\hfill%
\eremk
\end{remark}%

\color{black}%
In the remaining part of this section, we prove estimate
(\ref{AssumptionAlgGrowth}) for certain classes of domains. The following
result removes the assumption in \cite{hiptmair-moiola-perugia11} for the
right-hand side to be solenoidal.

\begin{proposition}
\label{PropSmoothStarshaped}Let $\Omega\subset\mathbb{R}^{3}$ be a bounded
$C^{2}$ domain that is star-shaped with respect to a ball. Then, assumption
(\ref{AssumptionAlgGrowth}) holds with $\theta=0$.
\end{proposition}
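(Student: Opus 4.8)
The plan is to establish the wavenumber-explicit a priori bound by a test-function (Rellich/Morawetz-type) argument, following the template of \cite{hiptmair-moiola-perugia11} but handling a non-solenoidal right-hand side $\mathbf{j}$ via a Helmholtz-type splitting. First I would use Lemma~\ref{LemHelmDecompVar1} (or Lemma~\ref{lemma:helmholtz-a-la-schoeberl}) to write $\mathbf{E} = \nabla\varphi + \mathbf{z}$ with $\operatorname{div}\mathbf{z}=0$ and, separately, decompose the data: writing $\mathbf{j} = \mathbf{j}_0 + \nabla\psi$ with $\operatorname{div}\mathbf{j}_0 = 0$, the gradient part $\nabla\psi$ only feeds into the scalar potential $\varphi$ through $-k^2\nabla\varphi = \nabla\psi$ plus lower-order terms, so $\|\nabla\varphi\|$ and $\|\varphi\|_{H^1}$ are controlled by $k^{-1}\|\mathbf{j}\|$ directly. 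The essential work is then the estimate of the solenoidal part $\mathbf{z}$, which satisfies a Maxwell system with solenoidal data, i.e., the situation already covered by the Rellich-identity argument of \cite[Sec.~4]{hiptmair-moiola-perugia11} (there the right-hand side is assumed solenoidal); star-shapedness of $\Omega$ with respect to a ball is exactly what is needed to run that multiplier argument.

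Next I would carry out the Rellich/Morawetz multiplier estimate in detail for the solenoidal component. Testing the variational equation with a multiplier of the form $(\mathbf{x}-\mathbf{x}_0)\cdot\nabla$-type vector field (more precisely the Maxwell analogues $\nabla(\mathbf{x}\cdot\mathbf{E})$, $(\mathbf{x}\cdot\nabla)\mathbf{E}$ used in \cite{hiptmair-moiola-perugia11}), taking real and imaginary parts, and using the star-shapedness so that the boundary term $\int_\Gamma (\mathbf{x}-\mathbf{x}_0)\cdot\mathbf{n}\,|\cdots|^2$ has a sign, produces
\begin{equation}
\|\operatorname{curl}\mathbf{E}\|^2 + k^2\|\mathbf{E}\|^2 \lesssim \|\mathbf{j}\|^2 + \|\mathbf{g}_T\|_{\mathbf{L}^2(\Gamma)}^2 + \text{(controlled boundary traces)},
\end{equation}
where the trace terms $\|\mathbf{E}_T\|_{\mathbf{L}^2(\Gamma)}$ and $\|\gamma_T\operatorname{curl}\mathbf{E}\|_{\mathbf{L}^2(\Gamma)}$ are absorbed using the impedance boundary condition $\gamma_T\operatorname{curl}\mathbf{E} - \operatorname{i}k\mathbf{E}_T = \mathbf{g}_T$ together with the standard bound $k\|\mathbf{E}_T\|_{\mathbf{L}^2(\Gamma)}^2 = -\operatorname{Im} A_k(\mathbf{E},\mathbf{E}) + \operatorname{Im}(\cdots) \lesssim \|\mathbf{j}\|\,\|\mathbf{E}\| + \|\mathbf{g}_T\|_{\mathbf{L}^2(\Gamma)}\|\mathbf{E}_T\|_{\mathbf{L}^2(\Gamma)}$. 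Since the multiplier argument is local in the spatial variable and the $C^2$-regularity of $\Gamma$ suffices to make the surface-gradient manipulations of the boundary term rigorous, one obtains all constants independent of $k$, i.e. $\theta = 0$. Finally I would reassemble: the bound on $\mathbf{z}$ plus the elementary bound on $\nabla\varphi$ gives $\|\mathbf{E}\|_{\operatorname{imp},k} \lesssim \|\mathbf{j}\| + \|\mathbf{g}_T\|_{\mathbf{L}^2(\Gamma)}$, which is precisely (\ref{AssumptionAlgGrowth}) with $\theta = 0$ (and also identifies $C^\Omega_{\operatorname{stab}}$, $C^{\operatorname{imp}}_{\operatorname{stab}}$ as $O(1)$).

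The main obstacle I expect is the treatment of the non-solenoidal part of $\mathbf{j}$: the clean Rellich identity of \cite{hiptmair-moiola-perugia11} is written for $\operatorname{div}\mathbf{j}=0$, so one must either carefully track the extra volume terms $\int_\Omega (\operatorname{div}\mathbf{j})\,(\mathbf{x}\cdot\overline{\mathbf{E}})$ that appear when the divergence-free assumption is dropped, or preprocess by the splitting $\mathbf{j} = \mathbf{j}_0 + \nabla\psi$ and handle $\nabla\psi$ by testing with $\nabla\varphi$ (using $A_k(\mathbf{E},\nabla\varphi) = -((\mathbf{E},\nabla\varphi))_k$ from (\ref{eq:Ak-alternative})) to get $k^2\|\nabla\varphi\|^2 + k\|\nabla_\Gamma\varphi\|_{\mathbf{L}^2(\Gamma)}^2$ directly in terms of the data. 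A secondary technical point is ensuring the boundary terms produced by the multiplier are genuinely of lower order after using the impedance condition; this is where the trace inequality and the $|k|^{1/2}$-weighting in $\|\cdot\|_{\operatorname{imp},k}$ must be bookkept carefully, but no new idea beyond \cite{hiptmair-moiola-perugia11, MelenkHelmStab2010} is needed.
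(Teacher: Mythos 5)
Your proposal is correct and follows essentially the same route as the paper: reduce to a solenoidal right-hand side by subtracting a gradient field $k^{-2}\nabla\varphi$ with $\varphi\in H^1_0(\Omega)$ solving $-\Delta\varphi=\operatorname{div}\mathbf{j}$ (so the impedance data are unchanged and $\|k^{-2}\nabla\varphi\|_{\operatorname{imp},k}\lesssim |k|^{-1}\|\mathbf{j}\|$), and then invoke the star-shapedness-based stability estimate of \cite[Thm.~3.1]{hiptmair-moiola-perugia11} for the corrected problem. The only difference is that you propose to re-derive the Rellich/Morawetz multiplier estimate in detail, whereas the paper simply cites that theorem as a black box; no new idea is involved either way.
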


\begin{proof}
Let $\varphi\in H_{0}^{1}(\Omega)$ satisfy
\[
-\Delta\varphi=\operatorname{div}\mathbf{j}\qquad\mbox{ in $\Omega$}\text{.}%
\]
Then,
\begin{align*}
\Vert k^{-2}\nabla\varphi\Vert_{\operatorname{imp},k}  &  =|k|^{-1}\Vert
\nabla\varphi\Vert_{\mathbf{L}^{2}(\Omega)}\leq C|k|^{-1}\Vert
\operatorname{div}\mathbf{j}\Vert_{H^{-1}(\Omega)}\leq C|k|^{-1}%
\Vert\mathbf{j}\Vert_{\mathbf{L}^{2}(\Omega)},\\
\Vert\mathbf{j}+\nabla\varphi\Vert_{\mathbf{L}^{2}(\Omega)}  &  \leq
\Vert\mathbf{j}\Vert_{\mathbf{L}^{2}(\Omega)}+C\Vert\operatorname{div}%
\mathbf{j}\Vert_{H^{-1}(\Omega)}\leq C\Vert\mathbf{j}\Vert_{\mathbf{L}%
^{2}(\Omega)}.
\end{align*}
Noting that $\varphi$ vanishes on $\Gamma$, the difference $\mathcal{S}%
_{\Omega,k}^{\operatorname{MW}}(\mathbf{j},\mathbf{g}_{T})-\left\vert
k\right\vert ^{-2}\nabla\varphi$ satisfies
\[
\mathcal{L}_{\Omega,k}\left(  \mathcal{S}_{\Omega,k}^{\operatorname{MW}%
}(\mathbf{j},\mathbf{g}_{T})-\left\vert k\right\vert ^{-2}\nabla
\varphi\right)  =\mathbf{j}+\nabla\varphi,\quad\mathcal{B}_{%
\Gamma 
,k}\!\left(  \mathcal{S}_{\Omega,k}^{\operatorname{MW}}(\mathbf{j}%
,\mathbf{g}_{T})-\left\vert k\right\vert ^{-2}\nabla\varphi\right)
=\mathbf{g}_{T},
\]
and $\operatorname{div}(\mathbf{j}+\nabla\varphi)=0$. \cite[Thm.~{3.1}%
]{hiptmair-moiola-perugia11} implies
\begin{align*}
\Vert\mathcal{S}_{\Omega,k}^{\operatorname{MW}}(\mathbf{j},\mathbf{g}%
_{T})-\left\vert k\right\vert ^{-2}\nabla\varphi\Vert_{\operatorname{imp}%
,k}& \leq C\left(  \Vert\mathbf{j}+\nabla\varphi\Vert_{\mathbf{L}^{2}(\Omega
)}+\Vert\mathbf{g}_{T}\Vert_{\mathbf{L}^{2}(\Gamma)}\right)  
\\
& \leq C\left(
\left\Vert \mathbf{j}\right\Vert _{\mathbf{L}^{2}(  \Gamma)
}+\Vert\mathbf{g}_{T}\Vert_{\mathbf{L}^{2}(\Gamma)}\right)  .
\end{align*}
The estimate for $\mathcal{S}_{\Omega,k}^{\operatorname{MW}}(\mathbf{j}%
,\mathbf{g}_{T})$ follows from a triangle inequality. 
\end{proof}

For the more general situation of domains that are not necessarily star-shaped we
require some preliminaries. A bounded domain $\Omega$ with smooth boundary
admits, e.g., by \cite[Cor.~{4.1}]{hiptmair-li-zou12} a 
continuous extension operator $\mathcal{E}_{\operatorname*{div}}%
:\mathbf{H}^{m}(  \operatorname*{div},\Omega)  \rightarrow
\mathbf{H}^{m}(  \operatorname*{div},\mathbb{R}^{3})  $ for any
$m\in\mathbb{N}_{0}$. In particular this extension can be chosen such that for
a ball $B_{R}$ of radius $R$ with $\Omega\subset B_{R}$ there holds
\begin{equation}
\operatorname*{supp}\left(  \mathcal{E}_{\operatorname*{div}}\mathbf{h}%
\right)  \subset B_{R}\quad\forall\,\mathbf{h}\in\mathbf{H}%
(\operatorname*{div},\Omega).\label{DefBRR}%
\end{equation}%
Since the right-hand side $\mathbf{j}$ in (\ref{MWEq}), in general, does not
belong to $\mathbf{H}(\operatorname*{div},\Omega)$ we subtract an appropriate
gradient field: Let $\psi\in H_{0}^{1}(\Omega)$ be the weak solution of
$-\Delta\psi=\operatorname{div}\mathbf{j}$. As in the proof of
Proposition~\ref{PropSmoothStarshaped}, we write $\mathcal{S}_{\Omega
,k}^{\operatorname{MW}}(\mathbf{j},\mathbf{g}_{T})=\mathcal{S}_{\Omega
,k}^{\operatorname{MW}}(\widetilde{\mathbf{j}},\mathbf{g}_{T})-k^{-2}%
\nabla\psi$ with
\begin{equation}
\widetilde{\mathbf{j}}:=\mathbf{j}+\nabla\psi\in\mathbf{H}(\operatorname*{div}%
,\Omega).\label{Defjtilde}%
\end{equation}
The operator $\mathcal{E}_{\operatorname*{div}}$ allows us to extend
$\widetilde{\mathbf{j}}$ to a compactly supported function $\mathbf{J}%
:=\mathcal{E}_{\operatorname*{div}}(\widetilde{\mathbf{j}})%
\in\mathbf{H}(\operatorname*{div},\mathbb{R}^{3})$. 
Next we introduce the solution operator for the full space problem%
\begin{equation}%
\begin{array}
[c]{cl}%
\operatorname*{curl}\operatorname*{curl}\mathbf{Z}-k^{2}\mathbf{Z}=\mathbf{J}
& \text{in }\mathbb{R}^{3},\\
\left\vert \partial_{r}\mathbf{Z}\left(  \mathbf{x}\right)  -\operatorname*{i}%
k\mathbf{Z}\left(  \mathbf{x}\right)  \right\vert \leq c/r^{2} & \text{as
}r=\left\Vert \mathbf{x}\right\Vert \rightarrow\infty, 
\end{array}
\label{FullHelmProbl}%
\end{equation}
via the Maxwell potential%
\begin{subequations}
\label{MaxwellFullFull}
\begin{equation}
\mathbf{Z}=\mathcal{N}_{\operatorname*{MW},k}\left(  \mathbf{J}\right)
:=\mathcal{N}_{\operatorname*{MW},k}^{\nabla}\left(  \mathbf{J}\right)
+\mathcal{N}_{\operatorname*{MW},k}^{\operatorname*{curl}}\left(
\mathbf{J}\right)  ,
\label{MaxwellFullFulla}%
\end{equation}
where\footnote{With a slight abuse of notation we write $\mathcal{N}%
_{\operatorname*{MW},k}^{\operatorname*{curl}}\left(  v\right)  :=\int
_{\mathbb{R}^{3}}g_{k}\left(  \left\Vert \mathbf{\cdot}-\mathbf{y}\right\Vert
\right)  v\left(  \mathbf{y}\right)  d\mathbf{y}$ also for scalar functions
$v$. This is the classical acoustic Newton potential.}%
\begin{equation}
\left.
\begin{array}
[c]{l}%
\mathcal{N}_{\operatorname*{MW},k}^{\operatorname*{curl}}\left(
\mathbf{J}\right)  :=%
{\displaystyle \int_{\mathbb{R}^{3}}} 
g_{k}\left(  \left\Vert \mathbf{\cdot}-\mathbf{y}\right\Vert \right)
\mathbf{J}\left(  \mathbf{y}\right)  d\mathbf{y}\\
\\
\mathcal{N}_{\operatorname*{MW},k}^{\nabla}\left(  \mathbf{J}\right)
:=k^{-2}\nabla\mathcal{N}_{\operatorname*{MW},k}^{\operatorname*{curl}}\left(
\operatorname*{div}\mathbf{J}\right)
\end{array}
\right\}  \quad\text{in }\mathbb{R}^{3}
\label{MaxwellFullFullb}%
\end{equation}
with the fundamental solution of the Helmholtz equation in $\mathbb{R}^{3}$%
\begin{equation}
g_{k}\left(  r\right)  :=\frac{\operatorname*{e}\nolimits^{\operatorname*{i}%
kr}}{4\pi r}\text{.}
\label{MaxwellFullFullc}%
\end{equation}
\end{subequations}
Note that the adjoint full space problem is given by replacing $k$ in
(\ref{FullHelmProbl}) by $-k$ with solution operator $\mathcal{N}%
_{\operatorname*{MW},-k}\left(  \mathbf{J}\right)  $.

The layer operators $\mathcal{S}_{\operatorname*{MW},k}^{\nabla}$,
$\mathcal{S}_{\operatorname*{MW},k}^{\operatorname*{curl}}$ map densities defined on
the boundary $\Gamma$ to functions defined in $\Omega$ by%
\begin{equation}
\left.
\begin{array}
[c]{l}%
\mathcal{S}_{\operatorname*{MW},k}^{\operatorname*{curl}}\left(
{\boldsymbol \mu}%
\right)  :=%
{\displaystyle \int_{\Gamma}}
g_{k}\left(  \left\Vert \mathbf{\cdot}-\mathbf{y}\right\Vert \right)
{\boldsymbol \mu}%
\left(  \mathbf{y}\right)  d%
\Gamma_{\mathbf{y}}\\
\\%
\mathcal{S}_{\operatorname*{MW},k}^{\nabla}\left(
{\boldsymbol \mu}%
\right)  :=k^{-2}\nabla\mathcal{S}_{\operatorname*{MW},k}%
^{\operatorname*{curl}}\left(  \operatorname*{div}_{\Gamma}%
{\boldsymbol \mu}%
\right)
\end{array}%
\right\}  \quad\text{in }\mathbb{R}^{3}\backslash\Gamma.
\label{eq:S-potential}%
\end{equation}
We set $\mathcal{S}_{\mathbb{R}^{3},k}^{\operatorname*{MW}}:=\mathcal{S}%
_{\operatorname*{MW},k}^{\nabla}+\mathcal{S}_{\operatorname*{MW}%
,k}^{\operatorname*{curl}}$.

\begin{theorem}
\label{TheoNonStarShaped}Let $\Omega\subset\mathbb{R}^{3}$ be a bounded
Lipschitz domain with simply connected,{ analytic boundary}. Then, there is $C > 0$ 
depending only on $\Omega$ such that 
\[
C_{\operatorname*{stab}}\left(  k\right)  \leq C \left\vert k\right\vert
^{7/2}\sqrt{1+\ln\left\vert k\right\vert }.
\]%
\end{theorem}
\begin{remark}
The analyticity requirement of $\partial\Omega$ can be relaxed. It is due to
our citing \cite{MelenkStab}, which assumes analyticity. 
\eremk
\end{remark}

\begin{proof}
We estimate $\mathcal{S}_{\Omega,k}^{\operatorname{MW}}(\mathbf{j}%
,\mathbf{g}_{T})$ (see (\ref{defMaxSolOp})) for given $(\mathbf{j}%
,\mathbf{g}_{T})\in\mathbf{L}^{2}(\Omega)\times\mathbf{L}_{T}^{2}(\Gamma)$.

\textbf{Step 1.} \textbf{(reduction to solenoidal right-hand side)}
Let $\psi$, $\widetilde{\mathbf{j}}$ be as in (\ref{Defjtilde}) so that
$\mathcal{S}_{\Omega,k}^{\operatorname{MW}}(\mathbf{j},\mathbf{g}%
_{T})=\mathcal{S}_{\Omega,k}^{\operatorname{MW}}(\widetilde{\mathbf{j}%
},\mathbf{g}_{T})-k^{-2}\nabla\psi$ and $\operatorname*{div}\widetilde
{\mathbf{j}}=0$.
As in the proof of Proposition~\ref{PropSmoothStarshaped}, we have%
\begin{equation}
\Vert k^{-2}\nabla\psi\Vert_{\operatorname{imp},k}\leq C%
 \left\vert k\right\vert ^{-1} 
\Vert\mathbf{j}\Vert_{\mathbf{L}^{2}(\Omega)},
\quad 
\Vert\widetilde{\mathbf{j}}%
\Vert_{\mathbf{L}^{2}(\Omega)}\leq C\Vert\mathbf{j}\Vert_{\mathbf{L}%
^{2}(\Omega)},
\quad 
\operatorname{div}\widetilde{\mathbf{j}}%
=0.\label{eq:stability-estimate-psi}%
\end{equation}
In particular,
\[
\Vert\widetilde{\mathbf{j}}\Vert_{\mathbf{H}(\operatorname{div},\Omega)}\leq
C\Vert\mathbf{j}\Vert_{\mathbf{L}^{2}(\Omega)}.
\]

\textbf{Step 2.} \textbf{(reduction to homogeneous volume right-hand side)} We set%

\begin{equation}
\mathbf{\tilde{g}}_{T}:=\mathbf{g}_{T}-\mathcal{B}_{%
 \Gamma 
,k}\mathbf{u}_{\mathbf{j}}\quad\text{with\quad}\mathbf{u}_{\mathbf{j}%
}:=\left.  \left(  \mathcal{N}_{\operatorname*{MW},k}\mathcal{E}%
_{\operatorname*{div}}\mathbf{\tilde{j}}\right)  \right\vert _{\Omega
}\label{defgtildeT}%
\end{equation}
so that $\mathbf{\tilde{E}}%
:=\mathcal{S}_{\Omega,k}^{\operatorname{MW}}(\widetilde{\mathbf{j}%
},\mathbf{\tilde{g}}_{T})%
=\mathbf{u}_{0}+\mathbf{u}_{\mathbf{j}}$ with $\mathbf{u}_{0}$ being the
solution of the homogeneous problem%
\begin{equation}%
\begin{array}
[c]{ll}%
\operatorname*{curl}\operatorname*{curl}\mathbf{u}_{0}-k^{2}\mathbf{u}%
_{0}=\mathbf{0} & \text{in }\Omega,\\
\gamma_{T}\left(  \operatorname*{curl}\mathbf{u}_{0}\right)
-\operatorname*{i}k\left(  \mathbf{u}_{0}\right)  _{T}=\mathbf{\tilde{g}}_{T}
& \text{on }\Gamma.
\end{array}
\label{u02}%
\end{equation}
To estimate $\mathbf{u}_{\mathbf{j}}$, we rely on the following estimate from
\cite[Lem.~{3.5}]{MelenkSauterMathComp}
\begin{equation}
|k|\Vert\mathcal{N}_{\operatorname{MW},k}^{\operatorname{curl}}(f)\Vert
_{L^{2}(\Omega)}+\Vert\mathcal{N}_{\operatorname{MW},k}^{\operatorname{curl}%
}(f)\Vert_{H^{1}(\Omega)}+|k|^{-1}\Vert\mathcal{N}_{\operatorname{MW}%
,k}^{\operatorname{curl}}(f)\Vert_{H^{2}(\Omega)}\leq C\Vert f\Vert
_{L^{2}(\mathbb{R}^{3})}
\label{eq:lemma3.5}%
\end{equation}
for all $f\in L^{2}(\mathbb{R}^3)$.  Abbreviate $\mathbf{N}^{\operatorname{curl}}:=\mathcal{N}_{\operatorname{MW}%
,k}^{\operatorname{curl}}(\mathcal{E}_{\operatorname{div}}\widetilde
{\mathbf{j}})$ and $N^{\nabla}:=\mathcal{N}_{\operatorname{MW},k}%
^{\operatorname*{curl}}(\operatorname{div}\mathcal{E}_{\operatorname{div}%
}\widetilde{\mathbf{j}})$. Estimates (\ref{eq:lemma3.5})
and (\ref{eq:stability-estimate-psi})  
imply%
\begin{align*}
& |k|\Vert\mathbf{N}^{\operatorname{curl}}\Vert_{\mathbf{L}^{2}(\Omega)}%
+\Vert\mathbf{N}^{\operatorname{curl}}\Vert_{\mathbf{H}^{1}(\Omega)}%
+|k|^{-1}\Vert\mathbf{N}^{\operatorname{curl}}\Vert_{\mathbf{H}^{2}(\Omega)}
  \leq C\Vert\mathbf{j}\Vert_{\mathbf{L}^{2}(\Omega)},\\
& |k|\Vert N^{\nabla}\Vert_{L^{2}(\Omega)}+\Vert N^{\nabla}\Vert_{H^{1}(\Omega
)}+|k|^{-1}\Vert N^{\nabla}\Vert_{H^{2}(\Omega)}   \leq C\Vert
\operatorname{div}\mathcal{E}_{\operatorname{div}}\mathbf{\tilde{j}}%
\Vert_{L^{2}(\mathbb{R}^{3})}\leq C\Vert\mathbf{j}\Vert
_{\mathbf{L}^{2}(\Omega)}.
\end{align*}
For $\mathbf{u}_{\mathbf{j}}=\mathbf{N}^{\operatorname{curl}}+k^{-2}\nabla
N^{\nabla}$ we get by a multiplicative trace inequality:%
\begin{align}
\Vert\mathbf{u}_{\mathbf{j}}\Vert_{\operatorname{imp},k} &  \leq
C\bigl(|k|\Vert\mathbf{N}^{\operatorname{curl}}\Vert_{\mathbf{L}^{2}(\Omega
)}+\Vert\mathbf{N}^{\operatorname{curl}}\Vert_{\mathbf{H}^{1}(\Omega
)}+|k|^{1/2}\Vert\mathbf{N}^{\operatorname{curl}}\Vert_{\mathbf{L}^{2}%
(\Omega)}^{1/2}\Vert\mathbf{N}^{\operatorname{curl}}\Vert_{\mathbf{H}%
^{1}(\Omega)}^{1/2}\nonumber\\
&  \quad\mbox{}+|k|^{-1}\Vert{N}^{\nabla}\Vert_{{H}^{1}(\Omega)}%
+|k|^{-3/2}\Vert\nabla_{\Gamma}{N}^{\nabla}\Vert_{\mathbf{L}^{2}(\Gamma
)}\bigr)\nonumber\\
&  \leq C\left(  \Vert\mathbf{j}\Vert_{\mathbf{L}^{2}(\Omega)}+|k|^{-1}%
\Vert\mathbf{j}\Vert_{\mathbf{L}^{2}(\Omega)}+|k|^{-3/2}\Vert N^{\nabla}%
\Vert_{H^{1}(\Omega)}^{1/2}\Vert N^{\nabla}\Vert_{H^{2}(\Omega)}^{1/2}\right)
\nonumber\\
&  \leq C\Vert\mathbf{j}\Vert_{\mathbf{L}^{2}(\Omega)}%
.\label{eq:stability-estimate-uj}%
\end{align}
Arguing similarly, we get for $\widetilde{\mathbf{g}}_{T}=\mathbf{g}%
_{T}-\mathcal{B}_{%
 \Gamma 
,k}\mathbf{u}_{\mathbf{j}}$%
\begin{align}
\nonumber 
\Vert\widetilde{\mathbf{g}}_{T}\Vert_{\mathbf{L}^{2}(\Gamma)}& \leq
\Vert{\mathbf{g}}_{T}\Vert_{\mathbf{L}^{2}(\Gamma)}+\Vert\mathcal{B}_{\Gamma,k}\mathbf{u}_{\mathbf{j}}\Vert_{\mathbf{L}^{2}(\Gamma)} \\
& \leq C\left(
\Vert\mathbf{g}_{T}\Vert_{\mathbf{L}^{2}(\Gamma)}+|k|^{1/2}\Vert
\mathbf{j}\Vert_{\mathbf{L}^{2}(\Omega)}\right)
.\label{eq:stability-estimate-gtilde}%
\end{align}
\textbf{Step 3}. \textbf{(Estimate of} $\gamma_{T}\operatorname*{curl}%
\mathbf{u}_{0}$, $\gamma_{T}\mathbf{u}_{0}$\textbf{.)} To estimate the
function $\mathbf{u}_{0}$, we employ the Stratton-Chu formula (see, e.g.,
\cite[Thm.~{6.2}]{coltonkress_inverse}, \cite[(5.5.3)-(5.5.6)]{Nedelec01})%
\[
\mathbf{u}_{0}=\operatorname*{curl}\mathcal{S}_{\operatorname*{MW}%
,k}^{\operatorname*{curl}}\left(  \gamma_{T}\mathbf{u}_{0}\right)
+\mathcal{S}_{{\mathbb{R}}^{3},k}^{\operatorname*{MW}}\left(  \gamma
_{T}\operatorname*{curl}\mathbf{u}_{0}\right)  \quad\text{in }\Omega.
\]

The weak formulation (\ref{weakformulation}) implies%
\[
\left\Vert \operatorname*{curl}\mathbf{u}_{0}\right\Vert ^{2}-k^{2}\left\Vert
\mathbf{u}_{0}\right\Vert ^{2}-\operatorname*{i}k\left\Vert \left(
\mathbf{u}_{0}\right)  _{T}\right\Vert _{\mathbf{L}^{2}(\Gamma)}^{2}=\left(
\mathbf{\tilde{g}}_{T},\left(  \mathbf{u}_{0}\right)  _{T}\right)
_{\mathbf{L}^{2}(\Gamma)}%
\]
from which we obtain by considering the imaginary part
\begin{equation}
\left\vert k\right\vert \left\Vert \left(  \mathbf{u}_{0}\right)
_{T}\right\Vert _{\mathbf{L}^{2}\left(  \Gamma\right)  }\leq\left\Vert
\mathbf{\tilde{g}}_{T}\right\Vert _{\mathbf{L}^{2}(\Gamma)}. \label{estu0T}%
\end{equation}
{}From the real part, we then obtain by a Cauchy-Schwarz inequality%
\begin{equation}
\left\vert \Vert\operatorname*{curl}\mathbf{u}_{0}\Vert^{2}-\left\vert
k\right\vert ^{2}\Vert\mathbf{u}_{0}\Vert^{2}\right\vert \leq C\left\vert
k\right\vert ^{-1}\Vert\mathbf{\tilde{g}}_{T}\Vert_{\mathbf{L}^{2}(\Gamma
)}^{2}. \label{eq:apriori-from-real}%
\end{equation}
Next, we estimate the traces $\gamma_{T}\operatorname*{curl}\mathbf{u}_{0}$
and $\gamma_{T}\mathbf{u}_{0}$. Since $\Pi_{T}\mathbf{u}_{0}\in\mathbf{L}%
_{T}^{2}(\Gamma)$ we may employ $\gamma_{T}\mathbf{u}_{0}=\left(  \Pi
_{T}\mathbf{u}_{0}\right)  \times\mathbf{n}$ and (\ref{estu0T}) to obtain%
\begin{equation}
\Vert\gamma_{T}\mathbf{u}_{0}\Vert_{\mathbf{L}^{2}(\Gamma)}=\left\Vert \Pi
_{T}\mathbf{u}_{0}\right\Vert _{\mathbf{L}^{2}(  \Gamma)  }%
\leq\frac{1}{\left\vert k\right\vert }\Vert\mathbf{\tilde{g}}_{T}%
\Vert_{\mathbf{L}^{2}(\Gamma)}. \label{eq:estimate-j}%
\end{equation}%
The boundary conditions (second equation in (\ref{u02})) lead to%
\begin{equation}
\left\Vert \gamma_{T}\operatorname*{curl}\mathbf{u}_{0}\right\Vert
_{\mathbf{L}^{2}(  \Gamma)  }\leq\left\Vert \mathbf{\tilde{g}}%
_{T}\right\Vert _{\mathbf{L}^{2}\left(  \Gamma\right)  }+\left\vert
k\right\vert \left\Vert \left(  \mathbf{u}_{0}\right)  _{T}\right\Vert
_{\mathbf{L}^{2}(  \Gamma)  }\leq2\left\Vert \mathbf{\tilde{g}}%
_{T}\right\Vert _{\mathbf{L}^{2}(  \Gamma)  }. \label{gammatu0}%
\end{equation}
The estimate (\ref{gammatu0}) also implies%
\begin{equation}
\left\Vert \operatorname*{div}\nolimits_{\Gamma}\gamma_{T}\operatorname*{curl}%
\mathbf{u}_{0}\right\Vert _{\mathbf{H}^{-1}(  \Gamma)  }%
\leq\left\Vert \gamma_{T}\operatorname*{curl}\mathbf{u}_{0}\right\Vert
_{\mathbf{L}^{2}(  \Gamma)  }\leq2\left\Vert \mathbf{\tilde{g}}%
_{T}\right\Vert _{\mathbf{L}^{2}(  \Gamma)  }.
\label{eq:estimate-div-j}%
\end{equation}

\textbf{Step 4. (Mapping properties of Maxwell Layer Potentials.)}

The mapping properties of $\operatorname*{curl}\mathcal{S}_{\operatorname*{MW}%
,k}^{\operatorname*{curl}}$, $\mathcal{S}_{\operatorname*{MW},k}%
^{\operatorname*{curl}}$, and $\mathcal{S}_{\operatorname*{MW},k}^{\nabla}$
are well understood due to their relation with the acoustic single layer
potential. We conclude from \cite[Lem.~{3.4}, Thm.~{5.3}]{MelenkStab}:%
\[
\left\Vert \mathcal{S}_{\operatorname*{MW},k}^{\operatorname*{curl}}%
{\boldsymbol \mu}%
\right\Vert _{\mathbf{H}^{s}(  \Omega)  }\leq C_{s}\left\vert
k\right\vert ^{s+1}\left\Vert
{\boldsymbol \mu}%
\right\Vert _{\mathbf{H}^{s-3/2}(  \Gamma)  }\quad\text{for }%
s\geq0.
\]
Hence, the Stratton-Chu formula leads to the estimate%
\begin{align}
\label{u0H-1/2}
&  \left\Vert \mathbf{u}_{0}\right\Vert _{\mathbf{H}^{-1/2}(\Omega)} \\
& \leq \left\Vert \operatorname*{curl}\mathcal{S}_{\operatorname*{MW}%
,k}^{\operatorname*{curl}}(\gamma_{T}\mathbf{u}_{0})\right\Vert _{\mathbf{H}%
^{-1/2}(\Omega)}+\left\Vert \mathcal{S}_{\operatorname*{MW},k}%
^{\operatorname*{curl}}(\gamma_{T}\operatorname*{curl}\mathbf{u}%
_{0})\right\Vert _{\mathbf{H}^{-1/2}(\Omega)}
+\left\Vert \mathcal{S}_{\operatorname*{MW},k}^{\nabla}%
(\gamma_{T}\operatorname*{curl}\mathbf{u}_{0})\right\Vert _{\mathbf{H}%
^{-1/2}(\Omega)}
\nonumber\\
&  \leq\left\Vert \mathcal{S}_{\operatorname*{MW},k}%
^{\operatorname*{curl}}(\gamma_{T}\mathbf{u}_{0})\right\Vert _{\mathbf{H}%
^{1/2}(\Omega)}+\left\Vert \mathcal{S}_{\operatorname*{MW},k}%
^{\operatorname*{curl}}(\gamma_{T}\operatorname*{curl}\mathbf{u}%
_{0})\right\Vert +\left\vert k\right\vert ^{-2}\left\Vert \mathcal{S}%
_{\operatorname*{MW},k}^{\operatorname*{curl}}(\operatorname*{div}%
\nolimits_{\Gamma}\gamma_{T}\operatorname*{curl}\mathbf{u}_{0})\right\Vert
_{\mathbf{H}^{1/2}(\Omega)}\nonumber\\
&  \leq C\left(  \left\vert k\right\vert ^{3/2}\left\Vert \gamma
_{T}\mathbf{u}_{0}\right\Vert _{\mathbf{L}^{2}(\Gamma)}+C\left\vert
k\right\vert \left\Vert \gamma_{T}\operatorname*{curl}\mathbf{u}%
_{0}\right\Vert _{\mathbf{H}^{-3/2}(\Gamma)}+\left\vert k\right\vert
^{-1/2}\left\Vert \operatorname*{div}\nolimits_{\Gamma}\gamma_{T}%
\operatorname*{curl}\mathbf{u}_{0}\right\Vert _{\mathbf{H}^{-1}(\Gamma
)}\right)  .\nonumber
\end{align}
Inserting (\ref{eq:estimate-j}), (\ref{gammatu0}), (\ref{eq:estimate-div-j})
in (\ref{u0H-1/2}), we get%
\begin{equation}
\left\Vert \mathbf{u}_{0}\right\Vert _{\mathbf{H}^{-1/2}\left(  \Omega\right)
}\leq C\left\vert k\right\vert \left\Vert \mathbf{\tilde{g}}_{T}\right\Vert
_{\mathbf{L}^{2}(\Gamma)}. \label{u0est1/2}%
\end{equation}

\textbf{Step 5}. Let $\mathbf{R}_{2}$ and $\mathbf{K}_{2}$ be as in
Lemma~\ref{LemRs} and consider
\begin{equation}
\widetilde{\mathbf{u}}:=\mathbf{r}_{0}-\operatorname*{curl}\mathbf{u}_{0}%
\quad\text{for\quad}\mathbf{r}_{0}:={\mathbf{R}}_{2}(
\operatorname*{curl}\operatorname*{curl}\mathbf{u}_{0})
.\label{defutilde}%
\end{equation}
Since $\mathbf{u}_{0}\in\mathbf{X}_{\operatorname*{imp}}$, we have
$\mathbf{u}_{0}\in\mathbf{L}^{2}(\Omega)$, and the relation
$\operatorname*{curl}\operatorname*{curl}\mathbf{u}_{0}-k^{2}\mathbf{u}%
_{0}=\mathbf{0}$ implies $\operatorname*{curl}\operatorname*{curl}%
\mathbf{u}_{0}\in\mathbf{L}^{2}(\Omega)$. Hence, ${\mathbf{r}}_{0}%
\in\mathbf{H}^{1}(\Omega)$ together with the $k$-explicit bound
\begin{align}
\nonumber 
\left\Vert {\mathbf{r}}_{0}\right\Vert _{\mathbf{H}^{1/2}(\Omega)}%
& =k^{2}\left\Vert {\mathbf{R}}_{2}(\mathbf{u}_{0})\right\Vert _{\mathbf{H}%
^{1/2}(\Omega)}\leq C\left\vert k\right\vert ^{2}\left\Vert \mathbf{u}%
_{0}\right\Vert _{\mathbf{H}^{-1/2}(\Omega)}
\\
& \overset{\text{(\ref{u0est1/2})}%
}{\leq}C\left\vert k\right\vert ^{3}%
\left\Vert \mathbf{\tilde{g}}_{T}\right\Vert _{\mathbf{L}^{2}(\Gamma
)}.\label{Rcurlcurlu0}%
\end{align}
By the same reasoning and the mapping properties of ${\mathbf{R}}_{2}$, we
obtain
\begin{equation}
\left\Vert {\mathbf{r}}_{0}\right\Vert _{\mathbf{H}^{1}(\Omega)}\leq
C\left\vert k\right\vert ^{2}\left\Vert \mathbf{u}_{0}\right\Vert
.\label{H1estr0u0}%
\end{equation}
Furthermore, we compute with Lemma~\ref{LemRs}
\begin{equation}
\operatorname{curl}\widetilde{\mathbf{u}}\overset{(\ref{defutilde})}%
{=}\operatorname{curl}\mathbf{R}_{2}(\operatorname{curl}\operatorname{curl}%
{\mathbf{u}}_{0})-\operatorname{curl}\operatorname{curl}\mathbf{u}_{0}%
\overset{\text{Lem.~\ref{LemRs}}}{=}-\operatorname{curl}{\mathbf{K}}%
_{2}\operatorname{curl}{\mathbf{u}}_{0}.\label{eq:rep-curlutilde}%
\end{equation}
We employ the Helmholtz decomposition of $\widetilde{\mathbf{u}}$ in the form
$\widetilde{\mathbf{u}}=\nabla\varphi+\operatorname*{curl}\mathbf{z}$ given in
Lemma~\ref{LemHelmDecompVar1} with $\varphi\in H_{0}^{1}\left(  \Omega\right)
\cap H^{3/2}(  \Omega)   $%
, $\mathbf{z\in H}^{1}(\Omega)$, and $\operatorname*{div}\mathbf{z}=0$. Since
$\operatorname*{div}\widetilde{\mathbf{u}}=\operatorname*{div}\mathbf{r}_{0}$
the function $\varphi$ does not depend on $\operatorname*{curl}\mathbf{u}_{0}$
(see (\ref{difphiu0SF})), and we obtain from (\ref{HelmDecompVar1esta})%
\begin{equation}
\left\Vert \varphi\right\Vert _{H^{3/2}(\Omega)}\overset
{\text{(\ref{HelmDecompVar1estc})}}{\leq}C\left\Vert {\mathbf{r}}%
_{0}\right\Vert _{\mathbf{H}^{1/2}\left(  \Omega\right)  }\overset
{\text{(\ref{Rcurlcurlu0})}}{\leq}C\left\vert k\right\vert^{3}
\left\Vert \mathbf{\tilde{g}}_{T}\right\Vert _{\mathbf{L}^{2}(\Gamma
)}.\label{phiest3/2}%
\end{equation}
Next, we estimate $\mathbf{z}$. The definition of $\mathbf{r}_{0}$ in
(\ref{defutilde}) gives
\begin{equation}
\gamma_{T}\tilde{\mathbf{u}}=\gamma_{T}{\mathbf{r}}_{0}-\gamma_{T}%
\operatorname*{curl}\mathbf{u}_{0}=\gamma_{T}\mathbf{r}_{0}-\mathbf{\tilde{g}%
}_{T}+\operatorname*{i}k(\mathbf{u}_{0})_{T}\in\mathbf{L}_{T}^{2}%
(\Gamma).\label{gammaTutilderep}%
\end{equation}
Lemma~\ref{LemHelmDecompVar1} then implies
\begin{align}
& \Vert\operatorname*{curl}\mathbf{z}\Vert_{\mathbf{H}^{1/2}( \Omega)  }   \leq C\left(  \left\Vert \operatorname*{curl}%
\widetilde{\mathbf{u}}\right\Vert +\left\Vert \gamma_{T}\widetilde{\mathbf{u}%
}\right\Vert _{\mathbf{L}^{2}(  \Gamma)  }\right)  \nonumber\\
&  \overset{(\ref{eq:rep-curlutilde})}{=}C\left(  \left\Vert
\operatorname{curl}{\mathbf{K}}_{2}\left(  \operatorname*{curl}\mathbf{u}%
_{0}\right)  \right\Vert +\left\Vert \gamma_{T}\widetilde{\mathbf{u}%
}\right\Vert _{\mathbf{L}^{2}(  \Gamma)  }\right)  \nonumber\\
&  \overset{\text{(\ref{gammaTutilderep}), \text{Lem.~\ref{LemRs}}}}{\leq
}C\left(  \left\Vert \mathbf{u}_{0}\right\Vert _{\mathbf{H}^{-1/2}(\Omega
)}+\left\Vert \gamma_{T}\mathbf{r}_{0}\right\Vert _{\mathbf{L}^{2}(
\Gamma)  }+\left\vert k\right\vert \left\Vert \left(  \mathbf{u}%
_{0}\right)  _{T}\right\Vert _{\mathbf{L}^{2}\left(  \Gamma\right)
}+\left\Vert \mathbf{\tilde{g}}_{T}\right\Vert _{\mathbf{L}^{2}(
\Gamma)  }\right)  \nonumber\\
&  \overset{\text{(\ref{u0est1/2}), (\ref{estu0T})}}{\leq}C\left(  \left\vert
k\right\vert \left\Vert {\mathbf{\tilde{g}}}_{T}\right\Vert _{\mathbf{L}%
^{2}(\Gamma)}+\Vert\gamma_{T}\mathbf{r}_{0}\Vert_{\mathbf{L}^{2}(
\Gamma)  }+\left\Vert \mathbf{\tilde{g}}_{T}\right\Vert _{\mathbf{L}%
^{2}(  \Gamma)  }\right)  .\label{curlz1/2}%
\end{align}

\textbf{Step 6. }The combination of Step~5 with a trace inequality leads to%
\begin{align}
\nonumber 
\left\Vert \widetilde{\mathbf{u}}\right\Vert _{\mathbf{H}^{1/2}(
\Omega)  } &\leq\left\Vert \nabla\varphi\right\Vert _{\mathbf{H}%
^{1/2}(  \Omega)  }+\left\Vert \operatorname*{curl}\mathbf{z}%
\right\Vert _{\mathbf{H}^{1/2}(  \Omega)  } 
\\ & \overset
{\text{(\ref{phiest3/2}), (\ref{curlz1/2})}}{\leq}C\left(  \left\vert
k\right\vert ^{ 3 }%
\left\Vert \mathbf{\tilde{g}}_{T}\right\Vert _{\mathbf{L}^{2}\left(
\Gamma\right)  }+\left\Vert \gamma_{T}\mathbf{r}_{0}\right\Vert _{\mathbf{L}%
^{2}(  \Gamma)  }\right)  . 
\label{eq:utilde-10}%
\end{align}
Let $\mathbf{B}_{2,1}^{1/2}(\Omega)$ denote the Besov space as defined, e.g.,
in \cite{triebel95}. Then the trace map $\displaystyle\gamma_{T}%
:\mathbf{B}_{2,1}^{1/2}(\Omega)\rightarrow\mathbf{L}_{T}^{2}(\Gamma)$ is a
continuous mapping (see \cite[Thm.~{2.9.3}]{triebel95}), and we obtain from
(\ref{eq:utilde-10})
\begin{equation}
\left\Vert \widetilde{\mathbf{u}}\right\Vert _{\mathbf{H}^{1/2}(
\Omega)  }\leq C\left(  \left\vert k\right\vert ^{ 3 }%
\left\Vert \mathbf{\tilde{g}}_{T}\right\Vert _{\mathbf{L}^{2}(
\Gamma)  }+\left\Vert \mathbf{r}_{0}\right\Vert _{\mathbf{B}_{2,1}%
^{1/2}(  \Omega)  }\right)  . 
\label{eq:utilde-20}%
\end{equation}
This allows us to estimate%
\begin{align}
\nonumber 
\left\Vert \operatorname*{curl}\mathbf{u}_{0}\right\Vert _{\mathbf{H}%
^{1/2}(  \Omega)  } &\overset{\text{(\ref{defutilde})}}{\leq}C\left(
\left\Vert \mathbf{r}_{0}\right\Vert _{\mathbf{H}^{1/2}(  \Omega)
}+\left\Vert \widetilde{\mathbf{u}}\right\Vert _{\mathbf{H}^{1/2}(
\Omega)  }\right) 
\\
&  \overset{\text{(\ref{Rcurlcurlu0}),
(\ref{eq:utilde-20})}}{\leq}C\left(  \left\vert k\right\vert ^{ 3 }%
\Vert\mathbf{\tilde{g}}_{T}\Vert_{\mathbf{L}^{2}(  \Gamma)
}+\left\Vert \mathbf{r}_{0}\right\Vert _{\mathbf{B}_{2,1}^{1/2}(
\Omega)  }\right)  . 
\label{curlu0est1/2}%
\end{align}
To estimate $\left\Vert \mathbf{r}_{0}\right\Vert _{\mathbf{B}_{2,1}%
^{1/2}\left(  \Omega\right)  }$ we use the fact (see \cite{triebel95}) that
the Besov space is an interpolation space $B_{2,1}^{1/2}(  \Omega)
=\left(  L^{2}(  \Omega)  ,H^{1}(  \Omega)  \right)
_{1/2,1}$ (via the so-called real method of interpolation). For $t\in(0,1]$
select $(\mathbf{r}_{0})_{t}\in{\mathbf{H}}^{1}(\Omega)$ as given by
Lemma~\ref{lemma:interpolation-inequality}
and estimate with the interpolation inequality (by using the notation as in
Lemma~\ref{lemma:interpolation-inequality})%
\begin{align*}
& \left\Vert \mathbf{r}_{0}\right\Vert _{\mathbf{B}_{2,1}^{1/2}(
\Omega)  }    \leq\left\Vert \mathbf{r}_{0}-\left(  \mathbf{r}%
_{0}\right)  _{t}\right\Vert _{\mathbf{B}_{2,1}^{1/2}(  \Omega)
}+\left\Vert \left(  \mathbf{r}_{0}\right)  _{t}\right\Vert _{\mathbf{B}%
_{2,1}^{1/2}(  \Omega)  }\\
&  \leq C\left(  \left\Vert \mathbf{r}_{0}-\left(  \mathbf{r}_{0}\right)
_{t}\right\Vert ^{1/2}\left\Vert \mathbf{r}_{0}-\left(  \mathbf{r}_{0}\right)
_{t}\right\Vert _{\mathbf{H}^{1}\left(  \Omega\right)  }^{1/2}+\left\Vert
\left(  \mathbf{r}_{0}\right)  _{t}\right\Vert _{\mathbf{B}_{2,1}^{1/2}(
\Omega)  }\right) \\
&  \overset{\text{Lem.~\ref{lemma:interpolation-inequality}}}{\leq}C\left(
t^{1/4}\left\Vert \mathbf{r}_{0}\right\Vert _{\mathbf{H}^{1/2}(
\Omega)  }^{1/2}\left(  \left\Vert \mathbf{r}_{0}\right\Vert
_{\mathbf{H}^{1}(  \Omega)  }^{1/2}+t^{-1/4}\left\Vert
\mathbf{r}_{0}\right\Vert _{\mathbf{H}^{1/2}(  \Omega)  }%
^{1/2}\right)  +\left\Vert \left(  \mathbf{r}_{0}\right)  _{t}\right\Vert
_{\mathbf{B}_{2,1}^{1/2}(  \Omega)  }\right) \\
&  \overset{\text{(\ref{Lemintineqa}), (\ref{Lemintineqb})}}{\leq}C\left(
\left\Vert \mathbf{r}_{0}\right\Vert _{\mathbf{H}^{1/2}(  \Omega)
}+t^{1/2}\Vert\mathbf{r}_{0}\Vert_{\mathbf{H}^{1}(  \Omega)
}+\sqrt{1+|\ln t|}\Vert\mathbf{r}_{0}\Vert_{\mathbf{H}^{1/2}(
\Omega)  }\right) \\
&  \leq C\left(  t^{1/2}\left\Vert \mathbf{r}_{0}\right\Vert _{\mathbf{H}%
^{1}(  \Omega)  }+\sqrt{1+\left\vert \ln t\right\vert }\left\Vert
\mathbf{r}_{0}\right\Vert _{\mathbf{H}^{1/2}\left(  \Omega\right)  }\right) \\
&  \overset{\text{(\ref{Rcurlcurlu0}), (\ref{H1estr0u0})}}{\leq}C\left(
t^{1/2}\left\vert k\right\vert ^{2}\left\Vert \mathbf{u}_{0}\right\Vert
+\sqrt{1+\left\vert \ln t\right\vert }\left\vert k\right\vert ^{2}%
\Vert\mathbf{u}_{0}\Vert_{\mathbf{H}^{-1/2}(  \Omega)  }\right)  .
\end{align*}%
Using (\ref{eq:apriori-from-real}) we get%
\begin{align*}
& \left\vert k\right\vert \left\Vert \mathbf{u}_{0}\right\Vert    \leq C\left(
\left\Vert \operatorname*{curl}\mathbf{u}_{0}\right\Vert ^{2}+\left\vert
\left(  \left\vert k\right\vert \left\Vert \mathbf{u}_{0}\right\Vert \right)
^{2}-\left\Vert \operatorname*{curl}\mathbf{u}_{0}\right\Vert ^{2}\right\vert
\right)  ^{1/2}\\
&  \leq C\left(  \left\vert k\right\vert ^{-1/2}\left\Vert \mathbf{\tilde{g}%
}_{T}\right\Vert _{\mathbf{L}^{2}\left(  \Gamma\right)  }+\left\Vert
\operatorname*{curl}\mathbf{u}_{0}\right\Vert \right)  \overset
{\text{(\ref{curlu0est1/2})}}{\leq}C\left(  \left\vert k\right\vert ^{ 3 }%
\left\Vert \mathbf{\tilde{g}}_{T}\right\Vert _{\mathbf{L}^{2}\left(
\Gamma\right)  }+\left\Vert \mathbf{r}_{0}\right\Vert _{\mathbf{B}_{2,1}%
^{1/2}\left(  \Omega\right)  }\right) \\
&  \leq C\left(  \left\vert k\right\vert ^{ 3 }%
\left\Vert \mathbf{\tilde{g}}_{T}\right\Vert _{\mathbf{L}^{2}\left(
\Gamma\right)  }+t^{1/2}\left\vert k\right\vert ^{2}\left\Vert \mathbf{u}%
_{0}\right\Vert +\sqrt{1+\left\vert \ln t\right\vert }\left\vert k\right\vert
^{2}\left\Vert \mathbf{u}_{0}\right\Vert _{\mathbf{H}^{-1/2}\left(
\Omega\right)  }\right) \\
&  \overset{\text{(\ref{u0est1/2})}}{\leq}C\left(  \sqrt{1+\left\vert \ln
t\right\vert }\left\vert k\right\vert ^{ 3 }  
\left\Vert \mathbf{\tilde{g}}_{T}\right\Vert _{\mathbf{L}^{2}\left(
\Gamma\right)  }+t^{1/2}\left\vert k\right\vert ^{2}\left\Vert \mathbf{u}%
_{0}\right\Vert \right)  .
\end{align*}
Selecting $t\sim1/\left\vert k\right\vert ^{2}$ sufficiently small implies%
\[
\left\vert k\right\vert \left\Vert \mathbf{u}_{0}\right\Vert \leq C\left\vert
k\right\vert ^{ 3 }%
\sqrt{1+\ln\left\vert k\right\vert }\left\Vert \mathbf{\tilde{g}}%
_{T}\right\Vert _{\mathbf{L}^{2}(  \Gamma)  }.
\]
We conclude from this and (\ref{curlu0est1/2})
\begin{equation}
\left\Vert \operatorname*{curl}\mathbf{u}_{0}\right\Vert _{\mathbf{H}%
^{1/2}\left(  \Omega\right)  }+\left\vert k\right\vert \left\Vert
\mathbf{u}_{0}\right\Vert \leq C\left\vert k\right\vert ^{3 }%
\sqrt{1+\ln\left\vert k\right\vert }\left\Vert \mathbf{\tilde{g}}%
_{T}\right\Vert _{\mathbf{L}^{2}(  \Gamma)  }.
\label{kOmegacurlest}%
\end{equation}
Combining (\ref{kOmegacurlest}) and (\ref{estu0T}) yields
\begin{align}
\Vert\mathbf{u}_{0}\Vert_{\operatorname{imp},k}  &  \leq C\left\vert
k\right\vert ^{ 3 }%
\sqrt{1+\ln\left\vert k\right\vert }\left\Vert \mathbf{\tilde{g}}%
_{T}\right\Vert _{\mathbf{L}^{2}(  \Gamma)  }\nonumber\\
&  \overset{(\ref{eq:stability-estimate-gtilde})}{\leq}C\left\vert
k\right\vert ^{ 3 }%
\sqrt{1+\ln\left\vert k\right\vert }\left(  \left\Vert \mathbf{g}%
_{T}\right\Vert _{\mathbf{L}^{2}(  \Gamma)  }+|k|^{1/2}%
\Vert\mathbf{j}\Vert_{\mathbf{L}^{2}(\Omega)}\right)  .
\label{eq:stability-estimate-u0}%
\end{align}

\textbf{Step 7. }Combining (\ref{eq:stability-estimate-psi}),
(\ref{eq:stability-estimate-uj}), and (\ref{eq:stability-estimate-u0}), we
have arrived at
\begin{align*}%
\left\Vert \mathcal{S}_{\Omega,k}^{\operatorname{MW}}(\mathbf{j}%
,\mathbf{g}_{T})\right\Vert _{\operatorname{imp},k}%
&  \leq\Vert k^{-2}\nabla\psi\Vert_{\operatorname{imp},k}+\Vert\mathbf{u}%
_{\mathbf{j}}\Vert_{\operatorname{imp},k}+\Vert\mathbf{u}_{0}\Vert
_{\operatorname{imp},k}\\
&  \leq C\left\vert k\right\vert ^{ 3 }%
\sqrt{1+\ln\left\vert k\right\vert }\left(  \left\Vert \mathbf{g}%
_{T}\right\Vert _{\mathbf{L}^{2}(  \Gamma)  }+|k|^{1/2}%
\Vert\mathbf{j}\Vert_{\mathbf{L}^{2}(\Omega)}\right)  ,
\end{align*}
which is the claimed estimate.
\end{proof}

\begin{lemma}
[{{\cite[Prop.~{4.14}]{melenk-rieder21}}}]%
\label{lemma:interpolation-inequality}Let $\Omega\subset\mathbb{R}^{3}$ be a
bounded Lipschitz domain. Then there is $C>0$ such that for every $w\in
H^{1/2}(\Omega)$ and every $t\in\left(  0,1\right]  $ there exists some
$w_{t}\in H^{1}(\Omega)$ such that
\begin{align}
\left\Vert w-w_{t}\right\Vert +t\left\Vert w_{t}\right\Vert _{H^{1}(\Omega)}
&  \leq Ct^{1/2}\left\Vert w\right\Vert _{H^{1/2}\left(  \Omega\right)
},\label{Lemintineqa}\\
\Vert w_{t}\Vert_{B_{2,1}^{1/2}(\Omega)}  &  \leq C\sqrt{1+\left\vert \ln
t\right\vert }\left\Vert w\right\Vert _{H^{1/2}(\Omega)}. \label{Lemintineqb}%
\end{align}

\end{lemma}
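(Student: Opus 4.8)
The plan is to push the statement to all of $\mathbb{R}^3$ by a bounded extension operator, perform a frequency cut-off at scale $1/t$, and extract the logarithmic factor from a Cauchy--Schwarz estimate over the $O(|\ln t|)$ dyadic frequency bands retained by the cut-off; the heart of the matter is the loss in the embedding $\ell^2\hookrightarrow\ell^1$ on that many indices.

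First I would fix an extension operator $\mathcal{E}$ for the bounded Lipschitz domain $\Omega$ (a Stein-type extension) that is bounded $L^2(\Omega)\to L^2(\mathbb{R}^3)$ and $H^1(\Omega)\to H^1(\mathbb{R}^3)$, hence by interpolation $H^{1/2}(\Omega)\to H^{1/2}(\mathbb{R}^3)$ and $B^{1/2}_{2,1}(\Omega)\to B^{1/2}_{2,1}(\mathbb{R}^3)$, and set $W:=\mathcal{E}w$. Using the Fourier transform $\mathcal{F}$, I define $W_t:=\mathcal{F}^{-1}\bigl(\mathbf{1}_{\{|\xi|\le 1/t\}}\,\mathcal{F}W\bigr)$ and $w_t:=W_t|_\Omega\in H^1(\Omega)$. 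Since $t\in(0,1]$ we have $1/t\ge1$, so $1\le t|\xi|$ on $\{|\xi|>1/t\}$ and $\langle\xi\rangle\le\sqrt2/t$ on $\{|\xi|\le1/t\}$; Plancherel then gives
\[
\|w-w_t\|_{L^2(\Omega)}^2\le\int_{|\xi|>1/t}|\mathcal{F}W|^2\le t\,\|W\|_{H^{1/2}(\mathbb{R}^3)}^2,\qquad
\|w_t\|_{H^1(\Omega)}^2\le\tfrac{\sqrt2}{t}\,\|W\|_{H^{1/2}(\mathbb{R}^3)}^2 .
\]
Applying boundedness of $\mathcal{E}$ and combining, this is precisely the first estimate $\|w-w_t\|+t\|w_t\|_{H^1(\Omega)}\le Ct^{1/2}\|w\|_{H^{1/2}(\Omega)}$.

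For the second estimate I would use the Littlewood--Paley characterisations $\|u\|_{B^{1/2}_{2,1}(\mathbb{R}^3)}\sim\sum_{j\ge0}2^{j/2}\|\Delta_j u\|_{L^2}$ and $\|u\|_{H^{1/2}(\mathbb{R}^3)}^2\sim\sum_{j\ge0}2^{j}\|\Delta_j u\|_{L^2}^2$ for a dyadic partition $\{\Delta_j\}$ localising to $|\xi|\sim2^j$. Because $\mathcal{F}W_t$ is supported in $\{|\xi|\le1/t\}$, one has $\Delta_j W_t=0$ for $j>J:=\lfloor1+\log_2(1/t)\rfloor$, and $J+1\le C(1+|\ln t|)$; for $j\le J$ the pointwise bound $|\mathcal{F}\Delta_j W_t|\le|\mathcal{F}\Delta_j W|$ yields $\|\Delta_j W_t\|_{L^2}\le\|\Delta_j W\|_{L^2}$. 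With boundedness of restriction $B^{1/2}_{2,1}(\mathbb{R}^3)\to B^{1/2}_{2,1}(\Omega)$ and Cauchy--Schwarz over the retained bands,
\[
\|w_t\|_{B^{1/2}_{2,1}(\Omega)}\le C\sum_{j=0}^{J}2^{j/2}\|\Delta_j W\|_{L^2}\le C\,(J+1)^{1/2}\Bigl(\sum_{j\ge0}2^{j}\|\Delta_j W\|_{L^2}^2\Bigr)^{1/2}\le C\sqrt{1+|\ln t|}\,\|w\|_{H^{1/2}(\Omega)} ,
\]
which is the claim; the $\ell^2\hookrightarrow\ell^1$ loss on $O(|\ln t|)$ indices is exactly the origin of, and is sharp for, the factor $\sqrt{1+|\ln t|}$.

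I expect the only real obstacle to be making the harmonic-analysis tools available over the bounded Lipschitz domain $\Omega$: this is what the extension operator $\mathcal{E}$ buys, together with the standard facts that $B^{1/2}_{2,1}(\Omega)=(L^2(\Omega),H^1(\Omega))_{1/2,1}$ and $H^{1/2}(\Omega)=(L^2(\Omega),H^1(\Omega))_{1/2,2}$ with equivalent norms, and that these interpolation spaces are compatible with extension and restriction. An essentially equivalent, extension-free alternative would replace the Fourier cut-off by the discrete $J$-method representation $w=\sum_{j\ge0}u_j$ of $w\in(L^2,H^1)_{1/2,2}$, take $w_t:=\sum_{j\le J}u_j$ with $2^{-J}\sim t$, and run the same $\ell^2$-to-$\ell^1$ Cauchy--Schwarz argument on the $J$-functionals $J(2^{-j},u_j)=\max\{\|u_j\|_{L^2},2^{-j}\|u_j\|_{H^1}\}$; this sidesteps the geometry of $\Omega$ but requires the $J$-method machinery instead.
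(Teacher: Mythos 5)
Your proof is correct. Note that the paper does not actually prove this lemma — it is quoted verbatim from \cite[Prop.~{4.14}]{melenk-rieder21} — so there is no in-paper argument to compare against; your construction (Stein extension, sharp Fourier low-pass cut-off at frequency $1/t$, Plancherel for (\ref{Lemintineqa}), and a Littlewood--Paley/Cauchy--Schwarz count of the $O(1+|\ln t|)$ retained dyadic bands for (\ref{Lemintineqb})) is the standard route to such $K$-functional/Besov estimates and all the individual steps check out, including the compatibility of the interpolation-space definitions of $H^{1/2}(\Omega)$ and $B^{1/2}_{2,1}(\Omega)$ with extension and restriction on a Lipschitz domain, which you correctly flag as the only point requiring care.
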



\section{Maxwell's equations with the \textquotedblleft
good\textquotedblright\label{sec:good-sign} sign}


\subsection{Norms}

We consider Maxwell's equations with the \textquotedblleft
good\textquotedblright\ sign and first describe the spaces for the given
data.
Since the sesquilinear form $A_{k}\left(  \cdot,\cdot\right)  $ is considered
in the space $\mathbf{X}_{\operatorname*{imp}}$, the natural space for the
right-hand side is its dual $\mathbf{X}_{\operatorname*{imp}}^{\prime}$. In
our setting, the right-hand side is given in $\Omega$ via the volume data
$\mathbf{j}$ and on $\Gamma$ via the boundary data $\mathbf{g}_{T}$. 
In order to view $\mathbf{j}$ and $\mathbf{g}_T$ as elements of 
$\mathbf{X}_{\operatorname*{imp}}^\prime$, we introduce the spaces 
$\mathbf{X}_{\operatorname*{imp}}^{\prime}(  \Omega)$
and $\mathbf{X}_{\operatorname*{imp}}^{\prime}(  \Gamma)  $.
By using the usual notation $V^{\prime}$ for the dual space of a normed vector
space $V$ we define%
\begin{align}
\mathbf{X}_{\operatorname*{imp},0}  &  :=\left\{  \mathbf{w}\in\mathbf{X}%
_{\operatorname*{imp}}:\operatorname*{curl}\mathbf{w}=0\right\}
={\{\nabla\varphi\,|\,\varphi\in H_{\operatorname{imp}}^{1}(\Omega
)\}},\nonumber\\
\mathbf{X}_{\operatorname*{imp}}^{\prime}(\Omega)  &  :=\left(  \mathbf{H}%
^{1}(\Omega)\right)  ^{\prime}\cap\mathbf{X}_{\operatorname*{imp},0}^{\prime
},\label{XimpprimeOmega}\\
\mathbf{H}_{T}^{-1}(\operatorname*{div}\nolimits_{\Gamma},\Gamma)  &
:=\left\{  \mathbf{w}\in\mathbf{H}_{T}^{-1}(\Gamma)\mid\operatorname*{div}%
\nolimits_{\Gamma}\mathbf{w}\in\mathbf{H}_{T}^{-1}(\Gamma)\right\}
,\nonumber\\
\mathbf{X}_{\operatorname*{imp}}^{\prime}(\Gamma)  &  :=\mathbf{H}_{T}%
^{-1/2}(\Gamma)\cap\mathbf{H}_{T}^{-1}\left(  \operatorname*{div}%
\nolimits_{\Gamma},\Gamma\right)  . \label{XimpprimeGamma}%
\end{align}
and equip the spaces $\mathbf{X}_{\operatorname{imp}}^{\prime}(\Omega)$
and $\mathbf{X}_{\operatorname{imp}}^{\prime}(\Gamma)$ with the norms
(cf.\ also Lemma~\ref{lemma:norm-equivalence} below)%
\begin{align}
\Vert\mathbf{f}\Vert_{\mathbf{X}_{\operatorname{imp}}^{\prime}(\Omega),k}  &
:=\sup_{{\mathbf{v}}\in{\mathbf{X}}_{\operatorname{imp}}\setminus\{0\}}%
\frac{|({\mathbf{f}},{\mathbf{v}})|}{\Vert{\mathbf{v}}\Vert
_{\operatorname{imp},k}},\label{eq:H-1div}\\
\Vert\mathbf{g}_{T}\Vert_{\mathbf{X}_{\operatorname{imp}}^{\prime}(\Gamma),k}
&  :=\sup_{{\mathbf{v}}\in{\mathbf{X}}_{\operatorname{imp}}}\frac
{|({\mathbf{g}}_{T},{\mathbf{v}}_{T})_{{\mathbf{L}}^{2}(\Gamma)}|}%
{\Vert{\mathbf{v}}\Vert_{\operatorname{imp},k}}. \label{eq:H-1/2diva}%
\end{align}
We also introduce for $\mathbf{g}_{T}\in\mathbf{H}_{T}^{-1/2}\left(
\operatorname*{div}\nolimits_{\Gamma},\Gamma\right)  $ (cf.
(\ref{Hm1/2divspace}))%
\begin{equation}
\Vert\mathbf{g}_{T}\Vert_{\mathbf{H}^{-1/2}(\operatorname{div}_{\Gamma}%
,\Gamma),k}:=|k|\Vert\operatorname{div}_{\Gamma}\mathbf{g}_{T}\Vert
_{H^{-1/2}(\Gamma)}+\left\vert k\right\vert ^{2}\Vert\mathbf{g}_{T}%
\Vert_{\mathbf{X}_{\operatorname*{imp}}^{\prime}\left(  \Gamma\right)  ,k}.
\label{eq:H-1/2div}%
\end{equation}
An equivalent norm that is more naturally associated with the intersection
spaces $\mathbf{X}_{\operatorname{imp}}^{\prime}(\Omega)$ and $\mathbf{X}%
_{\operatorname{imp}}^{\prime}(\Gamma)$ is given in the following lemma.

\begin{lemma}
\label{lemma:norm-equivalence}The spaces $\mathbf{X}_{\operatorname{imp}%
}^{\prime}(\Omega)$ and $\mathbf{X}_{\operatorname{imp}}^{\prime}(\Gamma)$ can
be viewed in a canonical way as subspaces of $\mathbf{X}_{\operatorname{imp}%
}^{\prime}$, and there holds the norm equivalences
\begin{align}
\left\Vert \mathbf{f}\right\Vert _{\mathbf{X}_{\operatorname*{imp}}^{\prime
}\left(  \Omega\right)  ,k} &  \sim\!\!\sup_{\varphi\in H_{\operatorname*{imp}%
}^{1}\left(  \Omega\right)  :\nabla\varphi\neq\mathbf{0}}\frac{|(\mathbf{f}%
,\nabla\varphi)|}{\Vert\nabla\varphi\Vert_{\operatorname{imp},k}}%
+\sup_{\mathbf{z}\in\mathbf{H}^{1}\left(  \Omega\right)  \backslash\left\{
\mathbf{0}\right\}  }\frac{|(\mathbf{f},\mathbf{z})|}{%
\left\vert k\right\vert \left\Vert \mathbf{z}\right\Vert _{\mathbf{H}%
^{1}(\Omega),k}}%
,\label{eq:lemma:norm-equivalence-10}\\
\left\Vert \mathbf{g}_{T}\right\Vert _{\mathbf{X}_{\operatorname*{imp}%
}^{\prime}\left(  \Gamma\right)  ,k} &  \sim
\sup_{\varphi\in
H_{\operatorname*{imp}}^{1}\left(  \Omega\right)  :\nabla\varphi\neq
\mathbf{0}}\frac{|(\mathbf{g}_{T},\nabla_{\Gamma}\varphi)_{\mathbf{L}%
^{2}(\Gamma)}|}{\Vert\nabla\varphi\Vert_{\operatorname{imp},k}}+\sup
_{\mathbf{z}\in\mathbf{H}^{1}\left(  \Omega\right)  \backslash\left\{
\mathbf{0}\right\}  }\frac{|(\mathbf{g}_{T},\mathbf{z}_{T})_{\mathbf{L}%
^{2}(\Gamma)}|}{%
\left\vert k\right\vert \left\Vert \mathbf{z}\right\Vert _{\mathbf{H}%
^{1}(\Omega),k}}\label{eq:lemma:norm-equivalence-20}%
\end{align}
with constants implied in $\sim$ that are independent of $\left\vert
k\right\vert \geq k_{0}$.
\end{lemma}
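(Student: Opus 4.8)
The proof rests entirely on the regular decomposition of Lemma~\ref{lemma:helmholtz-a-la-schoeberl}(\ref{item:lemma:helmholtz-a-la-schoeberl-iii}). That lemma shows in particular that every $\mathbf{u}\in\mathbf{X}_{\operatorname{imp}}$ can be written as $\mathbf{u}=\nabla\varphi+\mathbf{z}$ with $\varphi\in H^1_{\operatorname{imp}}(\Omega)$ and $\mathbf{z}\in\mathbf{H}^1(\Omega)$, i.e.\ $\mathbf{X}_{\operatorname{imp}}=\mathbf{X}_{\operatorname{imp},0}+\mathbf{H}^1(\Omega)$ (a non-direct sum), and moreover with the $k$-explicit bound $\|\nabla\varphi\|_{\operatorname{imp},k}+\|\mathbf{z}\|_{\mathbf{H}^1(\Omega)}+|k|\,\|\mathbf{z}\|_{\mathbf{L}^2(\Omega)}\le C\|\mathbf{u}\|_{\operatorname{imp},k}$. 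I will carry out the argument for $\mathbf{X}_{\operatorname{imp}}^{\prime}(\Omega)$ and (\ref{eq:lemma:norm-equivalence-10}); the proof of (\ref{eq:lemma:norm-equivalence-20}) is \emph{verbatim} the same, with the pairing $(\mathbf{f},\cdot)$ replaced by $(\mathbf{g}_T,(\cdot)_T)_{\mathbf{L}^2(\Gamma)}$ and (\ref{PiTgradProp}) used to write $(\nabla\varphi)_T=\nabla_\Gamma(\varphi|_\Gamma)$.

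First, the \emph{identification as a subspace}. By the definition (\ref{XimpprimeOmega}), an element $\mathbf{f}\in\mathbf{X}_{\operatorname{imp}}^{\prime}(\Omega)=(\mathbf{H}^1(\Omega))^{\prime}\cap\mathbf{X}_{\operatorname{imp},0}^{\prime}$ is a single linear functional bounded on $\mathbf{H}^1(\Omega)$ \emph{and} on $\mathbf{X}_{\operatorname{imp},0}$, the two actions being consistent on $\mathbf{H}^1(\Omega)\cap\mathbf{X}_{\operatorname{imp},0}$. Using $\mathbf{v}=\nabla\varphi+\mathbf{z}$ one sets $\langle\mathbf{f},\mathbf{v}\rangle:=\langle\mathbf{f},\nabla\varphi\rangle+\langle\mathbf{f},\mathbf{z}\rangle$; this does not depend on the chosen decomposition because the difference of two admissible decompositions lies in $\mathbf{H}^1(\Omega)\cap\mathbf{X}_{\operatorname{imp},0}$, where the two actions of $\mathbf{f}$ agree. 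The quantitative estimate below then shows this functional is continuous on $(\mathbf{X}_{\operatorname{imp}},\|\cdot\|_{\operatorname{imp},k})$, so $\mathbf{f}\in\mathbf{X}_{\operatorname{imp}}^{\prime}$.

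Second, the estimate \textbf{RHS $\lesssim$ LHS}. Both families of test functions in the right-hand side of (\ref{eq:lemma:norm-equivalence-10}) are admissible in (\ref{eq:H-1div}). For $\varphi\in H^1_{\operatorname{imp}}(\Omega)$ we have $\nabla\varphi\in\mathbf{X}_{\operatorname{imp},0}\subset\mathbf{X}_{\operatorname{imp}}$ with $\operatorname{curl}\nabla\varphi=0$, so by (\ref{eq:Himp}) the denominator $\|\nabla\varphi\|_{\operatorname{imp},k}$ is exactly the norm used in (\ref{eq:H-1div}); hence the first supremum in (\ref{eq:lemma:norm-equivalence-10}) is $\le\|\mathbf{f}\|_{\mathbf{X}_{\operatorname{imp}}^{\prime}(\Omega),k}$. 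For $\mathbf{z}\in\mathbf{H}^1(\Omega)\subset\mathbf{X}_{\operatorname{imp}}$ the multiplicative trace inequality gives $|k|\,\|\mathbf{z}_T\|_{\mathbf{L}^2(\Gamma)}^2\le C|k|\,\|\mathbf{z}\|_{\mathbf{L}^2(\Omega)}\|\mathbf{z}\|_{\mathbf{H}^1(\Omega)}\le C\big(k^2\|\mathbf{z}\|_{\mathbf{L}^2(\Omega)}^2+\|\mathbf{z}\|_{\mathbf{H}^1(\Omega)}^2\big)$, hence $\|\mathbf{z}\|_{\operatorname{imp},k}\le C\big(|k|\,\|\mathbf{z}\|_{\mathbf{L}^2(\Omega)}+\|\mathbf{z}\|_{\mathbf{H}^1(\Omega)}\big)$, so the second supremum is $\le C\|\mathbf{f}\|_{\mathbf{X}_{\operatorname{imp}}^{\prime}(\Omega),k}$. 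Adding the two bounds gives the claim.

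Third, the estimate \textbf{LHS $\lesssim$ RHS}. Fix $\mathbf{v}\in\mathbf{X}_{\operatorname{imp}}\setminus\{0\}$, write $\mathbf{v}=\nabla\varphi+\mathbf{z}$ as in Lemma~\ref{lemma:helmholtz-a-la-schoeberl}(\ref{item:lemma:helmholtz-a-la-schoeberl-iii}), and let $R_\Omega$ denote the right-hand side of (\ref{eq:lemma:norm-equivalence-10}). Then
\[ |(\mathbf{f},\mathbf{v})|\le|(\mathbf{f},\nabla\varphi)|+|(\mathbf{f},\mathbf{z})|\le R_\Omega\Big(\|\nabla\varphi\|_{\operatorname{imp},k}+|k|\,\|\mathbf{z}\|_{\mathbf{L}^2(\Omega)}+\|\mathbf{z}\|_{\mathbf{H}^1(\Omega)}\Big)\le C\,R_\Omega\,\|\mathbf{v}\|_{\operatorname{imp},k}, \]
using the bound from Lemma~\ref{lemma:helmholtz-a-la-schoeberl}(\ref{item:lemma:helmholtz-a-la-schoeberl-iii}) in the last step. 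Dividing by $\|\mathbf{v}\|_{\operatorname{imp},k}$ and taking the supremum over $\mathbf{v}$ yields $\|\mathbf{f}\|_{\mathbf{X}_{\operatorname{imp}}^{\prime}(\Omega),k}\le C\,R_\Omega$, which together with the previous step proves (\ref{eq:lemma:norm-equivalence-10}); (\ref{eq:lemma:norm-equivalence-20}) follows identically. The proof is thus short and structural; the only points needing (mild) care are the consistency of the two duality actions that defines the intersection spaces (\ref{XimpprimeOmega})--(\ref{XimpprimeGamma}) — which makes the extended functional well-defined on $\mathbf{X}_{\operatorname{imp}}$ — and the bookkeeping of the $k$-weights, where the multiplicative trace inequality is what lets one pass between $\|\cdot\|_{\operatorname{imp},k}$ and the unweighted $\mathbf{H}^1(\Omega)$-norm on the regular part $\mathbf{z}$.
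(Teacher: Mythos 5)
Your proof is correct and follows essentially the same route as the paper: the upper bound for the right-hand side comes from the admissibility of gradients and $\mathbf{H}^1$-fields as test functions plus the multiplicative trace inequality, and the reverse bound comes from the regular decomposition of Lemma~\ref{lemma:helmholtz-a-la-schoeberl} with its $k$-explicit stability (you invoke part (\ref{item:lemma:helmholtz-a-la-schoeberl-iii}) directly, while the paper re-derives that bound from part (\ref{item:lemma:helmholtz-a-la-schoeberl-ii}) in-line; this is the same argument). Your extra remark on the consistency of the two duality actions, which makes the canonical embedding well defined, is a point the paper leaves implicit.
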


\begin{proof}
\emph{Proof of (\ref{eq:lemma:norm-equivalence-10}):} Since $\nabla\varphi
\in\mathbf{X}_{\operatorname{imp}}$ for $\varphi\in H_{\operatorname{imp}}%
^{1}(\Omega)$ and ${\mathbf{H}}^{1}(\Omega)\subset{\mathbf{X}}%
_{\operatorname{imp}}$, the right-hand side of
(\ref{eq:lemma:norm-equivalence-10}) is easily bounded by the left-hand side.
For the reverse estimate, we decompose any element $\mathbf{v}\in
\mathbf{X}_{\operatorname{imp}}$ with the aid of
Lemma~\ref{lemma:helmholtz-a-la-schoeberl} as $\mathbf{v}=\nabla
\varphi+\mathbf{z}$ with $\Vert\nabla\varphi\Vert_{\mathbf{L}^{2}(\Omega
)}+\Vert\mathbf{z}\Vert_{\mathbf{L}^{2}(\Omega)}\leq C\Vert\mathbf{v}%
\Vert_{\mathbf{L}^{2}(\Omega)}$ and $\Vert\mathbf{z}\Vert_{\mathbf{H}%
^{1}(\Omega)}\leq C\Vert\mathbf{v}\Vert_{\mathbf{H}(\operatorname{curl}%
,\Omega)}$. Hence,
\begin{align}
\nonumber 
\Vert\nabla\varphi\Vert_{\operatorname{imp},k}+|k|\Vert\mathbf{z}%
\Vert_{\mathbf{H}^{1}(\Omega),k}
& \leq C\left(  |k|^{1/2}(\Vert\mathbf{v}%
_{T}\Vert_{\mathbf{L}^{2}(\Gamma)}+\Vert\mathbf{z}_{T}\Vert_{\mathbf{L}%
^{2}(\Gamma)})+\Vert\mathbf{v}\Vert_{\operatorname{imp},k}\right)  
\\
& \leq
C\Vert\mathbf{v}\Vert_{\operatorname{imp},k},
\label{eq:k-norm-stable-decomposition-10}%
\end{align}
where, in the last step we used the multiplicative trace estimate
$\Vert\mathbf{z}_{T}\Vert_{\mathbf{L}^{2}(\Gamma)}^{2}\leq C\Vert
\mathbf{z}\Vert_{\mathbf{L}^{2}(\Omega)}\Vert\mathbf{z}\Vert_{\mathbf{H}%
^{1}(\Omega)}$. This implies that the left-hand side of
(\ref{eq:lemma:norm-equivalence-10}) can be bounded by the right-hand side.

\emph{Proof of (\ref{eq:lemma:norm-equivalence-20}):} The proof is analogous
to that of (\ref{eq:lemma:norm-equivalence-10}).
\end{proof}

Note that $\mathbf{L}^{2}(\Omega)\subset\mathbf{X}_{\operatorname*{imp}%
}^{\prime}(\Omega)$ and ${\mathbf{L}}_{T}^{2}(\Gamma)\subset\mathbf{X}%
_{\operatorname*{imp}}^{\prime}(\Gamma)$ with continuous embeddings as can be
seen from the following reasoning. For $m\in\mathbb{N}_{0}$ and $\mathbf{f}%
\in\mathbf{L}^{2}(\Omega)$ or $\mathbf{f}\in\mathbf{H}^{m}(\Omega)$ or
$\mathbf{f}\in\mathbf{H}^{m}(\operatorname{div},\Omega)$ and for
$\mathbf{g}_{T}\in\mathbf{L}_{T}^{2}(\Gamma)$ or $\mathbf{g}_{T}\in
\mathbf{H}_{T}^{m+1/2}(\Gamma)$ or $\mathbf{g}_{T}\in\mathbf{H}_{T}%
^{m+1/2}(\operatorname*{div}\nolimits_{\Gamma},\Gamma)$, we have by direct
estimations%
\begin{align}
& \Vert\mathbf{f}\Vert_{\mathbf{X}_{\operatorname*{imp}}^{\prime}(
\Omega)  ,k}    \!\leq \! C|k|^{-1}\Vert\mathbf{f}\Vert_{\mathbf{L}%
^{2}(\Omega)}\!\leq \! C|k|^{-1}\Vert\mathbf{f}\Vert_{\mathbf{H}^{m}(\Omega),k}\!\leq\!
C|k|^{-2}\Vert\mathbf{f}\Vert_{\mathbf{H}^{m}(\operatorname{div},\Omega
),k},\!\!
\label{eq:H-1div-vs-L2}\\
\nonumber 
& \Vert\mathbf{g}_{T}\Vert_{\mathbf{X}_{\operatorname*{imp}}^{\prime}(
\Gamma)  ,k}    \leq C|k|^{-1/2}\Vert\mathbf{g}_{T}\Vert_{\mathbf{L}%
^{2}(\Gamma)}\leq C|k|^{-1}\Vert\mathbf{g}_{T}\Vert_{\mathbf{H}^{m+1/2}%
(\Gamma),k} \\
& \phantom{ 
\Vert\mathbf{g}_{T}\Vert_{\mathbf{X}_{\operatorname*{imp}}^{\prime}( \Gamma)  ,k}
          } 
\leq C\left\vert k\right\vert ^{-2}\Vert\mathbf{g}_{T}%
\Vert_{\mathbf{H}^{m+1/2}(\operatorname{div}_{\Gamma},\Gamma),k}%
,\label{eq:H-1/2div-vs-L2}\\
& \Vert\mathbf{g}_{T}\Vert_{\mathbf{H}^{-1/2}(\operatorname{div}_{\Gamma}%
,\Gamma),k}    \leq C|k|\Vert\mathbf{g}_{T}\Vert_{\mathbf{H}^{1/2}(\Gamma
),k}. \label{eq:H-1div-vs-L2-b}%
\end{align}
We also have the following result for $\Vert\mathbf{g}_{T}\Vert_{\mathbf{H}%
^{-1/2}(\operatorname{div}_{\Gamma},\Gamma),k}$:

\begin{lemma}
\label{lemma:H-1/2-div} There is $C>0$ depending only on $\Omega$ such that%
\[
\Vert\mathbf{g}_{T}\Vert_{\mathbf{H}^{-1/2}(\operatorname{div}_{\Gamma}%
,\Gamma),k}\leq C\Vert\operatorname{div}_{\Gamma}\mathbf{g}_{T}\Vert
_{H^{-1/2}(\Gamma),k}+|k|\Vert\mathbf{g}_{T}\Vert_{%
 \mathbf{H}^{-1/2}(\Gamma),k}%
.
\]

\end{lemma}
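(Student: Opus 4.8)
The plan is to unravel the definitions, reduce everything to a single estimate for $\Vert\mathbf{g}_{T}\Vert_{\mathbf{X}_{\operatorname{imp}}^{\prime}(\Gamma),k}$, and obtain that estimate from the norm equivalence (\ref{eq:lemma:norm-equivalence-20}). Recalling the indexed norms in (\ref{defgtGammaind}) we have $\Vert\operatorname{div}_{\Gamma}\mathbf{g}_{T}\Vert_{H^{-1/2}(\Gamma),k}=|k|\,\Vert\operatorname{div}_{\Gamma}\mathbf{g}_{T}\Vert_{H^{-1/2}(\Gamma)}$ and $\Vert\mathbf{g}_{T}\Vert_{\mathbf{H}^{-1/2}(\Gamma),k}=|k|\,\Vert\mathbf{g}_{T}\Vert_{\mathbf{H}^{-1/2}(\Gamma)}$, so in view of the definition (\ref{eq:H-1/2div}) of the left-hand side it suffices to prove $\Vert\mathbf{g}_{T}\Vert_{\mathbf{X}_{\operatorname{imp}}^{\prime}(\Gamma),k}\leq C|k|^{-1}\Vert\operatorname{div}_{\Gamma}\mathbf{g}_{T}\Vert_{H^{-1/2}(\Gamma)}+C\Vert\mathbf{g}_{T}\Vert_{\mathbf{H}^{-1/2}(\Gamma)}$; the $|k|\,\Vert\operatorname{div}_{\Gamma}\mathbf{g}_{T}\Vert_{H^{-1/2}(\Gamma)}$-term in the claimed inequality is then absorbed directly into the first term on the right, and the factor $k^{2}$ multiplying $\Vert\mathbf{g}_{T}\Vert_{\mathbf{X}_{\operatorname{imp}}^{\prime}(\Gamma),k}$ turns $|k|^{-1}$ into $|k|$ and $\Vert\mathbf{g}_{T}\Vert_{\mathbf{H}^{-1/2}(\Gamma)}$ into $|k|\,\Vert\mathbf{g}_{T}\Vert_{\mathbf{H}^{-1/2}(\Gamma),k}$.

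To prove the remaining estimate I would bound the two suprema in (\ref{eq:lemma:norm-equivalence-20}) separately. For the gradient part: given $\varphi\in H_{\operatorname{imp}}^{1}(\Omega)$, the quotient $|(\mathbf{g}_{T},\nabla_{\Gamma}\varphi)_{\mathbf{L}^{2}(\Gamma)}|/\Vert\nabla\varphi\Vert_{\operatorname{imp},k}$ is invariant under $\varphi\mapsto\varphi+c$ (the pairing is unchanged since $\nabla_{\Gamma}c=0$, and so is $\Vert\nabla\varphi\Vert_{\operatorname{imp},k}$), so we may normalize $\int_{\Omega}\varphi=0$. Integration by parts on the closed surface $\Gamma$ gives $(\mathbf{g}_{T},\nabla_{\Gamma}\varphi)_{\mathbf{L}^{2}(\Gamma)}=-\langle\operatorname{div}_{\Gamma}\mathbf{g}_{T},\varphi\rangle_{H^{-1/2}(\Gamma)\times H^{1/2}(\Gamma)}$, which is meaningful since $\varphi|_{\Gamma}\in H^{1}(\Gamma)\subset H^{1/2}(\Gamma)$, whence $|(\mathbf{g}_{T},\nabla_{\Gamma}\varphi)_{\mathbf{L}^{2}(\Gamma)}|\leq\Vert\operatorname{div}_{\Gamma}\mathbf{g}_{T}\Vert_{H^{-1/2}(\Gamma)}\,\Vert\varphi\Vert_{H^{1/2}(\Gamma)}$. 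By the trace theorem and the Poincaré inequality on the connected domain $\Omega$ (where the zero-mean normalization enters), together with $\operatorname{curl}\nabla\varphi=0$ and $(\nabla\varphi)_{T}=\nabla_{\Gamma}\varphi$ from (\ref{PiTgradProp}), one gets $\Vert\varphi\Vert_{H^{1/2}(\Gamma)}\leq C\Vert\varphi\Vert_{H^{1}(\Omega)}\leq C\Vert\nabla\varphi\Vert_{\mathbf{L}^{2}(\Omega)}\leq C|k|^{-1}\Vert\nabla\varphi\Vert_{\operatorname{imp},k}$, so the first supremum is $\leq C|k|^{-1}\Vert\operatorname{div}_{\Gamma}\mathbf{g}_{T}\Vert_{H^{-1/2}(\Gamma)}$. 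For the remainder part: for $\mathbf{z}\in\mathbf{H}^{1}(\Omega)$ estimate $|(\mathbf{g}_{T},\mathbf{z}_{T})_{\mathbf{L}^{2}(\Gamma)}|\leq\Vert\mathbf{g}_{T}\Vert_{\mathbf{H}^{-1/2}(\Gamma)}\,\Vert\mathbf{z}_{T}\Vert_{\mathbf{H}^{1/2}(\Gamma)}$, and since $\Pi_{T}$ is bounded on $\mathbf{H}^{1/2}(\Gamma)$ (multiplication by a smooth matrix field, as $\Gamma$ is smooth) and the trace $\mathbf{H}^{1}(\Omega)\to\mathbf{H}^{1/2}(\Gamma)$ is continuous, $\Vert\mathbf{z}_{T}\Vert_{\mathbf{H}^{1/2}(\Gamma)}\leq C\Vert\mathbf{z}\Vert_{\mathbf{H}^{1}(\Omega)}$, so the second supremum is $\leq C\Vert\mathbf{g}_{T}\Vert_{\mathbf{H}^{-1/2}(\Gamma)}$. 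Adding the two bounds and invoking (\ref{eq:lemma:norm-equivalence-20}) gives the desired estimate for $\Vert\mathbf{g}_{T}\Vert_{\mathbf{X}_{\operatorname{imp}}^{\prime}(\Gamma),k}$.

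I do not expect a serious obstacle here; the proof is essentially bookkeeping. The two points that require a little care are the correct interpretation of the pairing $(\mathbf{g}_{T},\nabla_{\Gamma}\varphi)_{\mathbf{L}^{2}(\Gamma)}$ as the $H^{-1/2}(\Gamma)\times H^{1/2}(\Gamma)$ surface duality pairing (so that integration by parts is legitimate for $\mathbf{g}_{T}\in\mathbf{H}_{T}^{-1/2}(\Gamma,\operatorname{div}_{\Gamma})$), and tracking the powers of $|k|$ — in particular, extracting the factor $|k|^{-1}$ from the $k^{2}\Vert\nabla\varphi\Vert_{\mathbf{L}^{2}(\Omega)}^{2}$ contribution to $\Vert\nabla\varphi\Vert_{\operatorname{imp},k}^{2}$, which is what makes the gradient term scale like $|k|^{-1}\Vert\operatorname{div}_{\Gamma}\mathbf{g}_{T}\Vert_{H^{-1/2}(\Gamma)}$ rather than $|k|^{-1/2}\Vert\operatorname{div}_{\Gamma}\mathbf{g}_{T}\Vert_{H^{-1/2}(\Gamma)}$.
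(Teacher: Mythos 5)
Your proposal is correct and follows essentially the same route as the paper: both reduce the claim, via the definitions (\ref{defgtGammaind}) and (\ref{eq:H-1/2div}), to the bound $\Vert\mathbf{g}_{T}\Vert_{\mathbf{X}_{\operatorname*{imp}}^{\prime}(\Gamma),k}\leq C\bigl(|k|^{-1}\Vert\operatorname{div}_{\Gamma}\mathbf{g}_{T}\Vert_{H^{-1/2}(\Gamma)}+\Vert\mathbf{g}_{T}\Vert_{\mathbf{H}^{-1/2}(\Gamma)}\bigr)$ and establish it by estimating the two suprema of (\ref{eq:lemma:norm-equivalence-20}), using surface integration by parts for the gradient part and the trace theorem for the $\mathbf{H}^{1}(\Omega)$ part. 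The only (immaterial) difference is that you fix the additive constant by normalizing $\int_{\Omega}\varphi=0$ and using a volume Poincar\'e inequality plus the trace theorem, whereas the paper takes $\inf_{c}\Vert\varphi-c\Vert_{H^{1/2}(\Gamma)}$ and controls it through the minimum-norm harmonic lifting $\mathcal{E}_{\Omega}^{\Delta}$.
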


\begin{proof}
We use the minimum norm lifting $\mathcal{E}_{\Omega}^{\Delta}$ from
(\ref{LaplaceDiriProbl}) with the property that $\Vert\nabla\varphi\Vert
_{\mathbf{L}^{2}(\Omega)}\geq\Vert\nabla\mathcal{E}_{\Omega}^{\Delta}%
(\varphi|_{\Gamma})\Vert_{\mathbf{L}^{2}(\Omega)}$ for arbitrary $\varphi\in
H_{\operatorname*{imp}}^{1}(\Omega)$. By continuity of the trace mapping, we
get $\inf_{c\in\mathbb{R}}\Vert\varphi-c\Vert_{H^{1/2}(\Gamma)}\leq
C\Vert\nabla\mathcal{E}_{\Omega}^{\Delta}(\varphi|_{\Gamma})\Vert
_{\mathbf{L}^{2}(\Omega)}\leq C\Vert\nabla\varphi\Vert_{\mathbf{L}^{2}%
(\Omega)}$. An integration by parts shows for arbitrary $\varphi\in
H_{\operatorname*{imp}}^{1}(\Omega)$ and arbitrary $c\in\mathbb{R}$
\[
|(\mathbf{g}_{T},\nabla_{\Gamma}\varphi)_{\mathbf{L}^{2}(\Gamma)}%
|=|(\operatorname{div}_{\Gamma}\mathbf{g}_{T},\varphi-c)_{\mathbf{L}%
^{2}(\Gamma)}|\leq\Vert\operatorname{div}_{\Gamma}\mathbf{g}_{T}%
\Vert_{H^{-1/2}(\Gamma)}\Vert\varphi-c\Vert_{H^{1/2}(\Gamma)}.
\]
Taking the infimum over all $c\in\mathbb{R}$ yields, for arbitrary $\varphi\in
H_{\operatorname*{imp}}^{1}(\Omega)$,
\[
|(\mathbf{g}_{T},\nabla_{\Gamma}\varphi)_{\mathbf{L}^{2}(\Gamma)}|\leq
C\Vert\operatorname{div}_{\Gamma}\mathbf{g}_{T}\Vert_{H^{-1/2}(\Gamma)}%
\Vert\nabla\varphi\Vert_{\mathbf{L}^{2}(\Omega)},
\]
and we conclude
\[
\sup_{\varphi\in H_{\operatorname*{imp}}^{1}(\Omega):\nabla_{\Gamma}%
\varphi\neq\mathbf{0}}\frac{|(\mathbf{g}_{T},\nabla_{\Gamma}\varphi
)_{\mathbf{L}^{2}(\Gamma)}|}{|k|^{1/2}\Vert\nabla_{\Gamma}\varphi
\Vert_{\mathbf{L}^{2}(\Gamma)}+|k|\Vert\nabla\varphi\Vert_{\mathbf{L}%
^{2}(\Omega)}}\leq C|k|^{-1}\Vert\operatorname{div}_{\Gamma}\mathbf{g}%
_{T}\Vert_{H^{-1/2}(\Gamma)}.
\]
Similarly, for $\mathbf{z}\in\mathbf{H}^{1}(\Omega)$ we estimate
$|(\mathbf{g}_{T},\mathbf{z}_{T})_{\mathbf{L}^{2}(\Gamma)}|\leq C\Vert
\mathbf{g}_{T}\Vert_{\mathbf{H}^{-1/2}(\Gamma)}\Vert\mathbf{z}\Vert
_{\mathbf{H}^{1}(\Omega)}$. Hence,
\begin{equation}
\Vert\mathbf{g}_{T}\Vert_{\mathbf{X}_{\operatorname*{imp}}^{\prime}\left(
\Gamma\right)  ,k}\leq C\left(  \left\vert k\right\vert ^{-1}\Vert
\operatorname{div}_{\Gamma}\mathbf{g}_{T}\Vert_{H^{-1/2}(\Gamma)}%
+\Vert\mathbf{g}_{T}\Vert_{\mathbf{H}^{-1/2}(\Gamma)}\right)  .
\label{estgTprime}%
\end{equation}
The result follows.
\end{proof}


\subsection{The Maxwell problem with the good sign}

\textbf{The Maxwell problem with the good sign reads:} Given $\displaystyle\mathbf{f}%
\in\mathbf{X}_{\operatorname*{imp}}^{\prime}(\Omega)$ and $\mathbf{g}_{T}%
\in\mathbf{X}_{\operatorname*{imp}}^{\prime}(\Gamma),$ find $\mathbf{v}%
\in\mathbf{X}_{\operatorname{imp}}$ such that
\begin{equation}
\mathcal{L}_{\Omega,\operatorname*{i}k}\mathbf{v}=\mathbf{f}\text{ in }%
\Omega\quad\text{and\quad}\mathcal{B}_{ \Gamma ,k}\mathbf{v}=\mathbf{g}_{T}\text{ on }\Gamma. \label{weak_plus}%
\end{equation}
The weak formulation is:%
\begin{equation}
\text{find }\mathbf{z}\in\mathbf{X}_{\operatorname*{imp}}\quad\text{s.t.\quad
}A_{k}^{+}(\mathbf{z}, \mathbf{v} )=\left(  \mathbf{f},\mathbf{v}\right)  +\left(  \mathbf{g}_{T},\mathbf{v}%
_{T}\right)  _{\mathbf{L}^{2}\left(  \Gamma\right)  }\quad\forall\mathbf{v}%
\in\mathbf{X}_{\operatorname*{imp}}, \label{Maxwellgoodsign-weak}%
\end{equation}
where the sesquilinear form $A_{k}^{+}$ is given by
\begin{equation}
A_{k}^{+}(\mathbf{u},\mathbf{v}):=\left(  \operatorname*{curl}\mathbf{u,}%
\operatorname*{curl}\mathbf{v}\right)  +k^{2}\left(  \mathbf{u,v}\right)
-\operatorname*{i}k\left(  \mathbf{u}_{T},\mathbf{v}_{T}\right)
_{\mathbf{L}^{2}(\Gamma)}. \label{eq:Ak+}%
\end{equation}

The solution operator is denoted $(\mathbf{f},\mathbf{g}_{T})\mapsto
\mathcal{S}_{\Omega,k}^{+}(\mathbf{f},\mathbf{g}_{T}$). In this section, we
develop the regularity theory for problem (\ref{weak_plus}). Indeed, as the
following Theorem~\ref{lemma:apriori-with-good-sign} shows,
(\ref{Maxwellgoodsign-weak}) is uniquely solvable.

\begin{theorem}
\label{lemma:apriori-with-good-sign} Let $\Omega$ be a bounded Lipschitz
domain with simply connected boundary. Then there is $C>0$ independent of $k$
such that the following holds:

\begin{enumerate}
[(i)]

\item \label{item:lemma:apriori-with-good-sign-0} The sesquilinear form
$A_{k}^{+}$ satisfies $\operatorname{Re} A_{k}^{+}(\mathbf{v},\sigma
\mathbf{v}) = 2^{-1/2} \|\mathbf{v}\|^{2}_{\operatorname{imp},k}$ for all
$\mathbf{v} \in\mathbf{X}_{\operatorname{imp}}$, where $\sigma=\exp\left(
\frac{\pi\operatorname*{i}}{4}\operatorname*{sign}k\right)  $.

\item \label{item:lemma:apriori-with-good-sign-00} The sesquilinear form is
continuous: $|A_{k}^{+}(\mathbf{u},\mathbf{v})| \leq\|\mathbf{u}%
\|_{\operatorname{imp},k} \|\mathbf{v}\|_{\operatorname{imp},k}$ for all
$\mathbf{u}$, $\mathbf{v} \in\mathbf{X}_{\operatorname{imp}}$.

\item \label{item:lemma:apriori-with-good-sign-i} The solution $\mathbf{u}%
\in\mathbf{X}_{\operatorname{imp}}$ of (\ref{weak_plus})
satisfies%
\begin{align}
\left\Vert \mathbf{u}\right\Vert _{\operatorname*{imp},k}  &  \leq C\left(
|k|^{-1}\left\Vert \mathbf{f}\right\Vert _{\mathbf{L}^{2}(\Omega)}+\left\vert
k\right\vert ^{-1/2}\left\Vert \mathbf{g}_{T}\right\Vert _{\mathbf{L}%
^{2}(\Gamma)}\right)  ,\label{eq:item:lemma:apriori-with-good-sign-i}\\
\left\Vert \mathbf{u}\right\Vert _{\operatorname{imp},k}  &  \leq C\left(
\left\Vert \mathbf{f}\right\Vert _{\mathbf{X}_{\operatorname*{imp}}^{\prime
}(  \Omega)  ,k}+\left\Vert \mathbf{g}_{T}\right\Vert
_{\mathbf{X}_{\operatorname*{imp}}^{\prime}(  \Gamma)  ,k}\right)
, \label{eq:item:lemma:apriori-with-good-sign-iia}%
\end{align}
provided $\left(  \mathbf{f},\mathbf{g}_{T}\right)  \in\mathbf{L}^{2}(
\Omega)  \times\mathbf{L}_{T}^{2}(  \Gamma)  $ for
(\ref{eq:item:lemma:apriori-with-good-sign-i}) and $\left(  \mathbf{f}%
,\mathbf{g}_{T}\right)  \in\mathbf{X}_{\operatorname*{imp}}^{\prime}(
\Omega)  \times\mathbf{X}_{\operatorname*{imp}}^{\prime}(
\Gamma)  $ for (\ref{eq:item:lemma:apriori-with-good-sign-iia}).

\item \label{item:lemma:apriori-with-good-sign-iibegin}Let $m\in\mathbb{N}%
_{0}$. If $\Gamma$ is sufficiently smooth and $\mathbf{f}\in\mathbf{H}%
^{m}(\operatorname{div},\Omega)$, $\mathbf{g}_{T}\in\mathbf{H}_{T}%
^{m+1/2}(  \Gamma)  $, then
\begin{subequations}
\label{item:lemma:apriori-with-good-sign-ii}
\begin{align}
\left\Vert \mathbf{u}\right\Vert _{\mathbf{H}^{m+1}\left(  \Omega\right)  ,k}
&  \leq C\left\vert k\right\vert ^{-3}\left(  \left\Vert \mathbf{f}\right\Vert
_{\mathbf{H}^{m}(\operatorname{div},\Omega),k}+\left\Vert \mathbf{g}%
_{T}\right\Vert _{\mathbf{H}^{m-1/2}(\operatorname{div}_{\Gamma},\Gamma
),k}\right)  ,
\label{eq:lemma:apriori-with-good-sign-25b}\\
\left\Vert \mathbf{u}\right\Vert _{\mathbf{H}^{m+1}\left(
\operatorname*{curl},\Omega\right)  ,k}  &  \leq C\left\vert k\right\vert
^{-2}\left(  \left\Vert \mathbf{f}\right\Vert _{\mathbf{H}^{m}%
(\operatorname{div},\Omega),k}+|k|\left\Vert \mathbf{g}_{T}\right\Vert
_{\mathbf{H}^{m+1/2}\left(  \Gamma\right)  ,k}\right)  . 
\label{eq:lemma:apriori-with-good-sign-25}%
\end{align}
\end{subequations}
\end{enumerate}
\end{theorem}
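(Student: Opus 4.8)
The plan is to treat the four items in turn, the last being the substantial one. Items (i) and (ii) I would simply verify by direct computation. For (i), expanding $A_k^+(\mathbf v,\sigma\mathbf v)$ by sesquilinearity and using $|\sigma|=1$ together with $\operatorname{curl}(\sigma\mathbf v)=\sigma\operatorname{curl}\mathbf v$ gives $A_k^+(\mathbf v,\sigma\mathbf v)=\overline\sigma\bigl(\|\operatorname{curl}\mathbf v\|^2+k^2\|\mathbf v\|^2-\operatorname{i}k\|\mathbf v_T\|^2_{\mathbf L^2(\Gamma)}\bigr)$; the particular choice of the argument of $\sigma$ is made precisely so that the real part of this quantity equals $2^{-1/2}\bigl(\|\operatorname{curl}\mathbf v\|^2+k^2\|\mathbf v\|^2+|k|\,\|\mathbf v_T\|^2_{\mathbf L^2(\Gamma)}\bigr)=2^{-1/2}\|\mathbf v\|_{\operatorname{imp},k}^2$. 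For (ii), a termwise Cauchy--Schwarz inequality in $L^2(\Omega)$ and $\mathbf L^2(\Gamma)$ followed by a discrete Cauchy--Schwarz inequality applied to the triples $(\|\operatorname{curl}\mathbf u\|,|k|\|\mathbf u\|,|k|^{1/2}\|\mathbf u_T\|_{\mathbf L^2(\Gamma)})$ and $(\|\operatorname{curl}\mathbf v\|,|k|\|\mathbf v\|,|k|^{1/2}\|\mathbf v_T\|_{\mathbf L^2(\Gamma)})$ yields continuity with constant $1$.

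For (iii): since $|\sigma|=1$, item (i) shows that $\mathbf v\mapsto\sigma\mathbf v$ realizes an inf--sup constant $\ge 2^{-1/2}$ for $A_k^+$ on $(\mathbf X_{\operatorname{imp}},\|\cdot\|_{\operatorname{imp},k})$ (the transposed estimate being identical), so together with (ii) the Banach--Ne\v{c}as theorem gives unique solvability of (\ref{weak_plus}). For the bounds, insert $\mathbf v=\sigma\mathbf u$ into the variational equation and use (i): $2^{-1/2}\|\mathbf u\|_{\operatorname{imp},k}^2=\operatorname{Re}\bigl(\overline\sigma[(\mathbf f,\mathbf u)+(\mathbf g_T,\mathbf u_T)_{\mathbf L^2(\Gamma)}]\bigr)\le|(\mathbf f,\mathbf u)|+|(\mathbf g_T,\mathbf u_T)_{\mathbf L^2(\Gamma)}|$. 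Bounding the two terms by the very definitions (\ref{eq:H-1div}), (\ref{eq:H-1/2diva}) of $\|\mathbf f\|_{\mathbf X_{\operatorname{imp}}^{\prime}(\Omega),k}$ and $\|\mathbf g_T\|_{\mathbf X_{\operatorname{imp}}^{\prime}(\Gamma),k}$ and dividing by $\|\mathbf u\|_{\operatorname{imp},k}$ yields (\ref{eq:item:lemma:apriori-with-good-sign-iia}); then (\ref{eq:item:lemma:apriori-with-good-sign-i}) follows from the continuous embeddings (\ref{eq:H-1div-vs-L2}), (\ref{eq:H-1/2div-vs-L2}).

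For (iv) I would combine a regular/Helmholtz decomposition with scalar and div--curl shift theorems. Taking the divergence of the interior equation and using $\operatorname{div}\operatorname{curl}=0$ gives $\operatorname{div}\mathbf u=k^{-2}\operatorname{div}\mathbf f$, so $\operatorname{div}\mathbf u$ is controlled in $H^m(\Omega)$ $k$-explicitly. Write $\mathbf u=\nabla\varphi+\mathbf z$ with $\operatorname{div}\mathbf z=0$ as in Lemma~\ref{lemma:helmholtz-a-la-schoeberl}; being divergence free, $\mathbf z$ enjoys full elliptic regularity, quantified through the higher-order analogues of the a priori estimate of Lemma~\ref{lemma:schoeberl-apriori} (valid for smooth, simply connected $\Omega$ with connected $\partial\Omega$, so that no Dirichlet/Neumann fields intervene). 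Testing the variational equation against gradients $\nabla\psi$, $\psi\in H^1_{\operatorname{imp}}(\Omega)$, integrating by parts in $\Omega$ and on $\Gamma$, and using $k^2\operatorname{div}\mathbf u=\operatorname{div}\mathbf f$ so that the volume contributions cancel, one finds that $\varphi$ solves the scalar, sign-definite problem $\Delta\varphi=k^{-2}\operatorname{div}\mathbf f$ in $\Omega$ with the boundary condition $k^2\partial_n\varphi+\operatorname{i}k\Delta_\Gamma(\varphi|_\Gamma)=\langle\mathbf f,\mathbf n\rangle|_\Gamma-\operatorname{div}_\Gamma\mathbf g_T-k^2\langle\mathbf z,\mathbf n\rangle-\operatorname{i}k\operatorname{div}_\Gamma\Pi_T\mathbf z$ on $\Gamma$; after inserting the bound on $\mathbf z$ into the right-hand side, $k$-explicit elliptic shift estimates for this coercive scalar problem give the regularity of $\varphi$, and hence of $\mathbf u=\nabla\varphi+\mathbf z$. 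All wavenumber powers are carried along in the weighted norms $\|\cdot\|_{\mathbf H^{m}(\Omega),k}$, $\|\cdot\|_{\mathbf H^{m}(\Omega,\operatorname{div}),k}$, etc.; since in these norms each additional derivative costs exactly one factor $|k|$, this produces the prefactors $|k|^{-3}$ and $k^{-2}$ in (\ref{eq:lemma:apriori-with-good-sign-25b}) and (\ref{eq:lemma:apriori-with-good-sign-25}), the two differing because the boundary datum $\mathbf g_T$ enters once through $\operatorname{div}_\Gamma\mathbf g_T$ (paired with $\operatorname{div}\mathbf u$) and once through $\mathbf g_T$ itself (paired with $\operatorname{curl}\mathbf u$).

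I expect (iv) to be the main obstacle. In contrast to the scalar Helmholtz situation, the impedance condition $\gamma_T\operatorname{curl}\mathbf u-\operatorname{i}k\,\Pi_T\mathbf u=\mathbf g_T$ couples the two traces $\gamma_T\operatorname{curl}\mathbf u$ and $\Pi_T\mathbf u$, and the scalar boundary value problem that governs the gradient part $\varphi$ has a nonstandard, Wentzell-type boundary condition (containing $\Delta_\Gamma$ of the trace) coupled to $\mathbf z$; consequently the decomposition $\mathbf u=\nabla\varphi+\mathbf z$, the scalar shift estimates for $\varphi$, and the div--curl a priori estimates for $\mathbf z$ must be iterated consistently, with the divergence constraint propagated in parallel, so that no powers of $k$ are lost at any level of the bootstrap.
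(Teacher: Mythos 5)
Your treatment of (\ref{item:lemma:apriori-with-good-sign-0})--(\ref{item:lemma:apriori-with-good-sign-i}) is correct and coincides with the paper's: direct computation for the rotated coercivity and continuity, then testing with $\sigma\mathbf{u}$ and invoking the definitions (\ref{eq:H-1div}), (\ref{eq:H-1/2diva}) and the embeddings (\ref{eq:H-1div-vs-L2}), (\ref{eq:H-1/2div-vs-L2}). The substance of the theorem is item (\ref{item:lemma:apriori-with-good-sign-iibegin}), and there your proposal has two concrete gaps. First, the entire weight of your argument rests on a ``$k$-explicit elliptic shift estimate'' for the scalar problem $\Delta\varphi=k^{-2}\operatorname{div}\mathbf{f}$ with the Wentzell-type boundary condition $k^{2}\partial_{n}\varphi+\operatorname{i}k\Delta_{\Gamma}(\varphi|_{\Gamma})=\dots$, which you assert but do not prove; this is a nonstandard boundary value problem and its $k$-uniform shift theorem is essentially equivalent to the statement you are trying to establish. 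The paper avoids it entirely: it takes the surface divergence of the impedance condition, uses $\operatorname{div}_\Gamma\gamma_T\operatorname{curl}\mathbf{u}=-\langle\operatorname{curl}\operatorname{curl}\mathbf{u},\mathbf{n}\rangle$ together with the normal-trace estimate (\ref{eq:div-trace-estimate}) for the \emph{divergence-free} field $\mathbf{f}-k^{2}\mathbf{u}$ to control $\operatorname{div}_{\Gamma}\mathbf{u}_{T}$ in $H^{m-1/2}(\Gamma)$ directly (this is the mechanism that prevents the loss of half a derivative in $\langle\mathbf{f},\mathbf{n}\rangle$), and then only needs two standard shift theorems: the Laplace--Beltrami equation $\Delta_{\Gamma}(\varphi|_{\Gamma})=\operatorname{div}_{\Gamma}\mathbf{u}_{T}-\operatorname{div}_{\Gamma}\mathbf{z}_{T}$ on the closed manifold $\Gamma$, followed by a Dirichlet problem in $\Omega$. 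You would need to either prove your Wentzell shift estimate or decouple it in this way.

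Second, estimate (\ref{eq:lemma:apriori-with-good-sign-25}) requires $\operatorname{curl}\mathbf{u}\in\mathbf{H}^{m+1}(\Omega)$, which is one derivative more than your decomposition delivers: from $\mathbf{u}=\nabla\varphi+\mathbf{z}$ with $\varphi\in H^{m+2}$, $\mathbf{z}\in\mathbf{H}^{m+1}$ you only get $\operatorname{curl}\mathbf{u}=\operatorname{curl}\mathbf{z}\in\mathbf{H}^{m}(\Omega)$. A second pass is needed: decompose $\mathbf{w}:=\operatorname{curl}\mathbf{u}=\nabla\tilde\varphi+\tilde{\mathbf{z}}$, control $\tilde{\mathbf{z}}$ through $\operatorname{curl}\mathbf{w}=\mathbf{f}-k^{2}\mathbf{u}\in\mathbf{H}^{m}$, and determine $\nabla_{\Gamma}\tilde\varphi$ from the impedance condition $\gamma_{T}\operatorname{curl}\mathbf{u}=\mathbf{g}_{T}+\operatorname{i}k\mathbf{u}_{T}$ — this is precisely where the stronger norm $\|\mathbf{g}_{T}\|_{\mathbf{H}^{m+1/2}(\Gamma),k}$ in (\ref{eq:lemma:apriori-with-good-sign-25}) enters, as opposed to the $\|\mathbf{g}_{T}\|_{\mathbf{H}^{m-1/2}(\Gamma,\operatorname{div}_{\Gamma}),k}$ in (\ref{eq:lemma:apriori-with-good-sign-25b}). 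Your sketch never constructs this second step. Finally, the claim that ``each additional derivative costs exactly one factor $|k|$'' does not by itself produce the prefactors $|k|^{-3}$ and $k^{-2}$: at each regularity level the right-hand side contains the lower-order term $k^{2}\|\mathbf{u}\|_{\mathbf{H}^{m}(\Omega)}$, so a genuine induction on $m$ with careful bookkeeping (the paper's quantity $R_{m}$ in (\ref{eq:Rell}) and Steps 4--5) is required to close the argument without losing powers of $k$.
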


\begin{proof}
\emph{Proof of (\ref{item:lemma:apriori-with-good-sign-0}),
(\ref{item:lemma:apriori-with-good-sign-00}):} For
(\ref{item:lemma:apriori-with-good-sign-0}) we compute
\[
\operatorname{Re}\left(  A_{k}^{+}(\mathbf{v},\sigma\mathbf{v})\right)
=\operatorname{Re}\left(  \bar{\sigma}\left\Vert \mathbf{v}\right\Vert
_{\mathbf{H}\left(  \operatorname*{curl},\Omega\right)  ,k}^{2}%
+\operatorname*{i}\bar{\sigma}k\left\Vert \mathbf{v}_{T}\right\Vert
_{\mathbf{L}^{2}\left(  \Gamma\right)  }^{2}\right)  =\frac{\sqrt{2}}%
{2}\left\Vert \mathbf{v}\right\Vert _{\operatorname*{imp},k}^{2}.
\]
The continuity assertion (\ref{item:lemma:apriori-with-good-sign-00}) follows
by the Cauchy-Schwarz inequality.

\emph{Proof of (\ref{item:lemma:apriori-with-good-sign-i}):} The estimate
(\ref{item:lemma:apriori-with-good-sign-i}) follows directly from a variant of
the Lax-Milgram lemma: We choose $\mathbf{v}=\mathbf{u}$ in the weak form
(\ref{Maxwellgoodsign-weak}) and estimate
\begin{align}
\frac{\sqrt{2}}{2}\left\Vert \mathbf{u}\right\Vert _{\operatorname{imp}%
,k}^{2}  &  =\operatorname{Re}A^{+}(\mathbf{u},\sigma\mathbf{u}%
)=\operatorname{Re}\left(  \left(  \mathbf{f},\sigma\mathbf{u}\right)
+\left(  \mathbf{g}_{T},\sigma\mathbf{u}_{T}\right)  _{\mathbf{L}^{2}(\Gamma
)}\right) \nonumber\\
&  \leq\left(  \Vert\mathbf{f}\Vert_{{\mathbf{X}}_{\operatorname{imp}}%
^{\prime}(\Omega),k}+\Vert\mathbf{g}_{T}\Vert_{{\mathbf{X}}%
_{\operatorname{imp}}^{\prime}(\Gamma),k}\right)  \Vert{\mathbf{u}}%
\Vert_{\operatorname{imp},k} \label{LaxMilg1}%
\end{align}
from which (\ref{eq:item:lemma:apriori-with-good-sign-iia}) follows. Estimate
(\ref{eq:item:lemma:apriori-with-good-sign-i}) is then obtained from
(\ref{eq:item:lemma:apriori-with-good-sign-iia}) and (\ref{eq:H-1div-vs-L2}),
(\ref{eq:H-1/2div-vs-L2}).

\emph{Proof of (\ref{item:lemma:apriori-with-good-sign-iibegin}):} From now
on, we assume that $\Gamma$ is sufficiently smooth. We proceed by induction on
$m\in\mathbb{N}_{0}$ and show that if the solution $\mathbf{u}\in
\mathbf{H}^{m}(\operatorname{curl},\Omega)$, then $\mathbf{u}\in
\mathbf{H}^{m+1}(\operatorname{curl},\Omega)$. Specifically, after the
preparatory Step~1, we will show $\mathbf{u}\in\mathbf{H}^{m+1}(\Omega)$ in
Step~2 and $\operatorname{curl}\mathbf{u}\in\mathbf{H}^{m+1}(\Omega)$ in
Step~3. Step~4 shows the induction hypothesis for $m=0$ including the norm
bounds. Step~5 completes the induction argument for the norm bounds.

\textbf{Step 1.} Taking the surface divergence of the boundary conditions we
get by using the differential equation
\begin{align}
-\operatorname*{i}k\operatorname{div}_{\Gamma}\mathbf{u}_{T}  &
=\operatorname{div}_{\Gamma}\mathbf{g}_{T}-\operatorname{div}_{\Gamma}\left(
\gamma_{T}\operatorname*{curl}\mathbf{u}\right)  \overset
{\text{\cite[(2.5.197)]{Nedelec01}}}{=}\operatorname{div}_{\Gamma}%
\mathbf{g}_{T}+\operatorname{curl}_{\Gamma}\operatorname*{curl}\mathbf{u}%
\nonumber\\
&  =\operatorname{div}_{\Gamma}\mathbf{g}_{T}+\langle
\operatorname*{curl}\operatorname*{curl}\mathbf{u},\mathbf{n}\rangle
=\operatorname{div}_{\Gamma}\mathbf{g}_{T}+\langle \mathbf{f}%
-k^{2}\mathbf{u},\mathbf{n}\rangle .
\label{eq:lemma:apriori-with-good-sign-12}%
\end{align}
We note that $\operatorname{div}(\mathbf{f}-k^{2}\mathbf{u})=0$ so that
\begin{equation}
\Vert\langle\mathbf{f}-k^{2}\mathbf{u},\mathbf{n}\rangle\Vert_{\mathbf{H}%
^{m-1/2}(\Gamma)}\leq C\Vert\mathbf{f}-k^{2}\mathbf{u}\Vert_{\mathbf{H}%
^{m}(\Omega)}. \label{eq:div-trace-estimate}%
\end{equation}
Inserting this in (\ref{eq:lemma:apriori-with-good-sign-12}) yields
\begin{equation}
\left\Vert \operatorname{div}_{\Gamma}\mathbf{u}_{T}\right\Vert _{\mathbf{H}%
^{m-1/2}(  \Gamma)  }\leq 
C|k|^{-1}\left[  \left\Vert
\operatorname{div}_{\Gamma}\mathbf{g}_{T}\right\Vert _{\mathbf{H}%
^{m-1/2}(  \Gamma)  }+\left\Vert \mathbf{f}\right\Vert
_{\mathbf{H}^{m}(\Omega)}+\left\vert k\right\vert ^{2}\left\Vert
\mathbf{u}\right\Vert _{\mathbf{H}^{m}\left(  \Omega\right)  }\right]  .
\label{difuT_divgT}%
\end{equation}
It will be convenient to abbreviate
\begin{equation}
\begin{split} 
R_{m}:=|k|^{-1}\Bigl[  \left\Vert \operatorname{div}_{\Gamma}\mathbf{g}%
_{T}\right\Vert _{\mathbf{H}^{m-1/2}\left(  \Gamma\right)  }+\left\Vert
\mathbf{f}\right\Vert _{\mathbf{H}^{m}(\Omega)}+|k|^{-1}\left\Vert
\mathbf{f}\right\Vert _{\mathbf{H}^{m}(\operatorname{div},\Omega)} 
\\
 + \left\vert
k\right\vert ^{2}\left\Vert \mathbf{u}\right\Vert _{\mathbf{H}^{m}\left(
\Omega\right)  }+|k|\left\Vert \mathbf{u}\right\Vert _{\mathbf{H}^{m}\left(
\operatorname{curl},\Omega\right)  }\Bigr]  . \label{eq:Rell}%
\end{split} 
\end{equation}

\textbf{Step 2. } ($\mathbf{H}^{m+1}(\Omega)$-estimate) With the aid of
Lemma~\ref{lemma:helmholtz-a-la-schoeberl}%
(\ref{item:lemma:helmholtz-a-la-schoeberl-ii}), we write $\mathbf{u}%
=\nabla\varphi+\mathbf{z}$ with $\varphi\in H^{m+1}(\Omega)$ and
$\mathbf{z}\in\mathbf{H}^{m+1}(\Omega)$ and
\begin{align}
\Vert\varphi\Vert_{\mathbf{H}^{m+1}(\Omega)}+\Vert\mathbf{z}\Vert
_{\mathbf{H}^{m}(\Omega)}  &  \leq C\Vert\mathbf{u}\Vert_{\mathbf{H}%
^{m}(\Omega)}\overset{%
 \text{(\ref{eq:Rell})} 
}{\leq}C|k|^{-1}R_{m},\label{eq:good-sign-100}\\
\Vert\mathbf{z}\Vert_{\mathbf{H}^{m+1}(\Omega)}  &  \leq C\Vert\mathbf{u}%
\Vert_{\mathbf{H}^{m}(\operatorname{curl},\Omega)}\overset{%
 \text{(\ref{eq:Rell})}  
}{\leq}CR_{m}. \label{eq:good-sign-101}%
\end{align}
\textbf{Step 2a:} We bound
\begin{equation}
\left\Vert \operatorname*{div}\nolimits_{\Gamma}\mathbf{z}_{T}\right\Vert
_{\mathbf{H}^{m-1/2}\left(  \Gamma\right)  }\leq C\Vert\mathbf{z}_{T}%
\Vert_{\mathbf{H}^{m+1/2}(\Gamma)}\leq C\Vert\mathbf{z}\Vert_{\mathbf{H}%
^{m+1}(\Omega)}\overset{(\ref{eq:good-sign-101})}{\leq}CR_{m}.
\label{traceestzt}%
\end{equation}

\textbf{Step 2b:} Applying $\operatorname*{div}_{\Gamma}\Pi_{T}$ to the
decomposition of $\mathbf{u}$ leads to%
\begin{equation}
\Delta_{\Gamma}\varphi|_{\Gamma}=\operatorname{div}_{\Gamma}\nabla_{\Gamma
}\varphi=\operatorname{div}_{\Gamma}\mathbf{u}_{T}-\operatorname{div}_{\Gamma
}\mathbf{z}_{T} \label{eq:lemma:apriori-with-good-sign-100}%
\end{equation}
with
\begin{equation}
\left\Vert \operatorname{div}_{\Gamma}\mathbf{u}_{T}-\operatorname{div}%
_{\Gamma}\mathbf{z}_{T}\right\Vert _{\mathbf{H}^{m-1/2}\left(  \Gamma\right)
}\overset{(\ref{difuT_divgT}),(\ref{traceestzt}),(\ref{eq:Rell})}{\leq}CR_{m}.
\label{eq:apriori-with-good-sign-220}%
\end{equation}
Together with (\ref{eq:lemma:apriori-with-good-sign-100}), we infer
$\varphi|_{\Gamma}\in H^{m+3/2}(\Gamma)$. Since $\Gamma$ is connected,
$\varphi|_{\Gamma}$ is unique up to a constant. We select this constant such
that $\varphi|_{\Gamma}$ has zero mean. Elliptic regularity implies
\[
\left\Vert \varphi\right\Vert _{H^{3/2+m}\left(  \Gamma\right)  }\leq
C\left\Vert \operatorname{div}_{\Gamma}\mathbf{u}_{T}-\operatorname{div}%
_{\Gamma}\mathbf{z}_{T}\right\Vert _{H^{-1/2+m}\left(  \Gamma\right)
}\overset{(\ref{eq:apriori-with-good-sign-220})}{\leq}CR_{m}.
\]
The function $\varphi$ satisfies the following Dirichlet problem:
\[
\Delta\varphi=\operatorname{div}\mathbf{u}-\operatorname{div}\mathbf{z}%
=k^{-2}\operatorname{div}\mathbf{f}-\operatorname{div}\mathbf{z}\in
H^{m}(\Omega),\qquad\varphi|_{\Gamma}\in H^{3/2+m}(  \Gamma), 
\]
from which we get by elliptic regularity
\[
\left\Vert \varphi\right\Vert _{H^{2+m}(  \Omega)  }\leq C\left(
\Vert\varphi\Vert_{H^{3/2+m}(  \Gamma)  }+\left\vert k\right\vert
^{-2}\Vert\operatorname{div}\mathbf{f}\Vert_{\mathbf{H}^{m}(\Omega
)}+\left\Vert \operatorname{div}\mathbf{z}\right\Vert _{H^{m}(\Omega)}\right)
\leq CR_{m}.
\]
We conclude
\begin{equation}
\Vert\mathbf{u}\Vert_{\mathbf{H}^{m+1}(\Omega)}\leq CR_{m}. \label{eq:uinHm+1}%
\end{equation}

\textbf{Step 3.} ($\mathbf{H}^{m+1}(  \operatorname*{curl},\Omega)
$-estimate) We set $\mathbf{w}:=\operatorname*{curl}\mathbf{u}$. Since
$\mathbf{u}\in\mathbf{H}^{m+1}(  \Omega)  $ (cf.\ (\ref{eq:uinHm+1}%
)) we know that $\mathbf{w}\in\mathbf{H}^{m}(  \Omega)  $. As in
Step~2 we write $\mathbf{w}=\nabla\tilde{\varphi}+\mathbf{\tilde{z}}$ and
obtain%
\begin{align*}
\Vert\tilde{\varphi}\Vert_{H^{m+1}(\Omega)}+\Vert\mathbf{\tilde{z}}%
\Vert_{\mathbf{H}^{m}(\Omega)} &  \leq C\Vert\mathbf{w}\Vert_{\mathbf{H}%
^{m}(\Omega)}\leq C\Vert\mathbf{u}\Vert_{\mathbf{H}^{m+1}(\Omega)}%
\overset{(\ref{eq:uinHm+1})}{\leq}CR_{m},\\
\Vert\mathbf{\tilde{z}}\Vert_{\mathbf{H}^{m+1}(\Omega)} &  \leq C\Vert
\mathbf{w}\Vert_{\mathbf{H}^{m}(\operatorname{curl},\Omega)}\leq C\left(
\Vert\operatorname{curl}\mathbf{w}\Vert_{\mathbf{H}^{m}(\Omega)}%
+\Vert\mathbf{w}\Vert_{\mathbf{H}^{m}(\Omega)}\right)  \\
&  \leq C\left(  \Vert\operatorname{curl}\operatorname{curl}\mathbf{u}%
\Vert_{\mathbf{H}^{m}(\Omega)}+\Vert\mathbf{u}\Vert_{\mathbf{H}^{m+1}(\Omega
)}\right)  \\
& \leq C\left(  \Vert\mathbf{f}-k^{2}\mathbf{u}\Vert_{\mathbf{H}%
^{m}(\Omega)}+R_{m}\right)  .
\end{align*}
To estimate the norm of  $\tilde{\varphi}$, we employ the boundary condition satisfied by $\mathbf{u}$,
i.e.,%
\[
\nabla_{\Gamma}\tilde{\varphi}=\mathbf{n}\times\gamma_{T}\nabla\tilde{\varphi
}=\mathbf{n}\times\left(  \gamma_{T}\mathbf{w}-\gamma_{T}\mathbf{\tilde{z}%
}\right)  =\mathbf{n}\times\left(  \mathbf{g}_{T}+\operatorname*{i}%
k\mathbf{u}_{T}-\gamma_{T}\mathbf{\tilde{z}}\right)  .
\]
In view of $\mathbf{g}_{T}\in\mathbf{H}_{T}^{m+1/2}\left(  \Gamma\right)  $,
this implies $\tilde{\varphi}|_{\Gamma}\in H^{m+3/2}\left(  \Gamma\right)  $
with%
\begin{align*}
\left\Vert \nabla_{\Gamma}\tilde{\varphi}\right\Vert _{\mathbf{H}%
^{m+1/2}(  \Gamma)  } &  \leq C\left(  \Vert\mathbf{g}_{T}%
\Vert_{\mathbf{H}^{m+1/2}(  \Gamma)  }+\left\vert k\right\vert
\left\Vert \mathbf{u}_{T}\right\Vert _{\mathbf{H}^{m+1/2}(
\Gamma)  }+\left\Vert \gamma_{T}\mathbf{\tilde{z}}\right\Vert
_{\mathbf{H}^{m+1/2}(  \Gamma)  }\right)  \\
&  \leq C\left(  \Vert\mathbf{g}_{T}\Vert_{\mathbf{H}^{m+1/2}(\Gamma
)}+|k|\Vert\mathbf{u}\Vert_{\mathbf{H}^{m+1}(\Omega)}+\Vert\mathbf{\tilde{z}%
}\Vert_{\mathbf{H}^{m+1}(\Omega)}\right)  \\
&  \leq C\left(  \Vert\mathbf{g}_{T}\Vert_{\mathbf{H}^{m+1/2}(\Gamma
)}+|k|R_{m}\right)  .
\end{align*}
The function $\tilde{\varphi}$ solves the Dirichlet problem%
\begin{equation}
\Delta\tilde{\varphi}=\operatorname{div}(\mathbf{w}-\mathbf{\tilde{z}%
})=-\operatorname{div}\mathbf{\tilde{z}}\quad\text{in }\Omega,\qquad
\tilde{\varphi}|_{\Gamma}\in H^{m+3/2}\left(  \Gamma\right)
.\label{eq:tildevarphi-dirichlet-problem}%
\end{equation}
Since $\tilde{\varphi}|_{\Gamma}$ is determined up to a constant, we may
assume that $\tilde{\varphi}|_{\Gamma}$ has vanishing mean. Elliptic
regularity theory for (\ref{eq:tildevarphi-dirichlet-problem}) tells us that%
\begin{align*}
\left\Vert \nabla\tilde{\varphi}\right\Vert _{\mathbf{H}^{m+1}(\Omega)}
& \leq
C\left(  \left\Vert \nabla_{\Gamma}\tilde{\varphi}\right\Vert _{\mathbf{H}%
^{m+1/2}(\Gamma)}^{2}+\left\Vert \operatorname*{div}\mathbf{\tilde{z}%
}\right\Vert _{H^{m}(\Omega)}^{2}\right)  ^{1/2}
\\
& \leq C\left(  \Vert
\mathbf{g}_{T}\Vert_{\mathbf{H}^{m+1/2}(\Gamma)}+|k|R_{m}\right)  .
\end{align*}
We obtain $\mathbf{w}\in\mathbf{H}^{m+1}(\Omega)$ with%
\begin{align}
\nonumber 
\left\vert \operatorname*{curl}\mathbf{u}\right\vert _{\mathbf{H}^{m+1}%
(\Omega)}& =\left\vert \mathbf{w}\right\vert _{\mathbf{H}^{m+1}(\Omega)}%
\leq C\left(  \left\vert \nabla\tilde{\varphi}\right\vert _{\mathbf{H}%
^{m+1}(\Omega)}+\left\vert \nabla\mathbf{\tilde{z}}\right\vert
_{\mathbf{H}^{m}(\Omega)}\right)  
\\
& \leq C\left(  \Vert\mathbf{g}_{T}%
\Vert_{\mathbf{H}^{m+1/2}(\Gamma)}+|k|R_{m}\right)  
.\label{eq:uinHm+1curl}%
\end{align}

\textbf{Step 4:} We ascertain the bounds
(\ref{eq:lemma:apriori-with-good-sign-25b}),
(\ref{eq:lemma:apriori-with-good-sign-25}) for $m=0$. We have%
\begin{align*}
\Vert\mathbf{u}\Vert_{\operatorname{imp},k}  &  \overset
{\text{(\ref{eq:item:lemma:apriori-with-good-sign-iia}),
(\ref{eq:H-1div-vs-L2}), (\ref{eq:H-1/2div})}}{\leq}C\left\vert k\right\vert
^{-2}\left(  \Vert\mathbf{f}\Vert_{\mathbf{H}(\operatorname{div},\Omega
),k}+\Vert\mathbf{g}_{T}\Vert_{\mathbf{H}^{-1/2}(\operatorname{div}_{\Gamma
},\Gamma),k}\right)  ,\\
\Vert\operatorname{div}_{\Gamma}\mathbf{g}_{T}\Vert_{\mathbf{H}^{-1/2}%
(\Gamma)}  &  \overset{(\ref{eq:H-1/2div})}{\leq}|k|^{-1}\Vert\mathbf{g}%
_{T}\Vert_{\mathbf{H}^{-1/2}\left(  \operatorname{div}_{\Gamma},\Gamma\right)
,k}.
\end{align*}

This implies for $R_{0}$ from (\ref{eq:Rell})%
\begin{equation}
R_{0}\leq C\left\vert k\right\vert ^{-2}\left(  \Vert\mathbf{f}\Vert
_{\mathbf{H}(\operatorname{div},\Omega),k}+\Vert\mathbf{g}_{T}\Vert
_{\mathbf{H}^{-1/2}(\operatorname{div}_{\Gamma},\Gamma),k}\right)
\label{eq:R0}%
\end{equation}
and in turn from (\ref{eq:uinHm+1})
\begin{align*}
\Vert\mathbf{u}\Vert_{\mathbf{H}^{1}(\Omega),k}  &  \leq C\left(
\Vert\mathbf{u}\Vert_{\mathbf{L}^{2}(\Omega)}+|k|^{-1}\Vert\mathbf{u}%
\Vert_{\mathbf{H}^{1}(\Omega)}\right)  \overset{(\ref{eq:uinHm+1}%
),(\ref{eq:Rell})}{\leq}C|k|^{-1}R_{0}\\
&  \leq C|k|^{-3}\left(  \Vert\mathbf{f}\Vert_{\mathbf{H}(\operatorname{div}%
,\Omega),k}+\Vert\mathbf{g}_{T}\Vert_{\mathbf{H}^{-1/2}(\operatorname{div}%
_{\Gamma},\Gamma),k}\right)  ,
\end{align*}
which is formula (\ref{eq:lemma:apriori-with-good-sign-25b}) for $m=0$. Next,
\begin{align*}
\Vert\mathbf{u}\Vert_{\mathbf{H}^{1}(\operatorname{curl},\Omega),k}  &  \leq
C\left(  |k|^{-1}\Vert\operatorname{curl}\mathbf{u}\Vert_{\mathbf{H}%
^{1}(\Omega)}+|k|\Vert\mathbf{u}\Vert_{\mathbf{H}^{1}(\Omega),k}\right) \\
& 
\overset{(\ref{eq:uinHm+1curl})}{\leq}C\left(  |k|^{-1}\Vert\mathbf{g}%
_{T}\Vert_{\mathbf{H}^{1/2}(\Gamma)}+R_{0}\right) \\
&  \overset{(\ref{eq:R0}), (\ref{eq:H-1div-vs-L2-b})}{\leq}C\left(  |k|^{-1}\Vert\mathbf{g}_{T}%
\Vert_{\mathbf{H}^{1/2}(\Gamma),k}+\left\vert k\right\vert ^{-2}%
\Vert\mathbf{f}\Vert_{\mathbf{H}(\operatorname{div},\Omega),k}\right)  ,
\end{align*}
which is formula (\ref{eq:lemma:apriori-with-good-sign-25}) for $m=0$.

We now assume that the estimates (\ref{eq:lemma:apriori-with-good-sign-25b}),
(\ref{eq:lemma:apriori-with-good-sign-25}) holds up to $m$ and show that they
hold for $m+1$. Introduce the abbreviations
\begin{align*}
T_{1}(m)  &  :=\Vert\mathbf{f}\Vert_{\mathbf{H}^{m}(\operatorname{div}%
,\Omega),k}+\Vert\mathbf{g}_{T}\Vert_{\mathbf{H}^{m-1/2}(\operatorname{div}%
_{\Gamma},\Gamma),k},\\
T_{2}(m)  &  :=\Vert\mathbf{f}\Vert_{\mathbf{H}^{m}(\operatorname{div}%
,\Omega),k}+\left\vert k\right\vert \Vert\mathbf{g}_{T}\Vert_{\mathbf{H}%
^{m+1/2}(\Gamma),k}.
\end{align*}
It is easy to verify that (using (\ref{eq:H-1div-vs-L2-b}) for the case $m = 0$)
\begin{equation}
T_{1}\left(  m\right)  \leq CT_{2}\left(  m\right)  \leq CT_{1}\left(
m+1\right)  . \label{T1T2est}%
\end{equation}
By the induction hypothesis, we have
\begin{align}
& |k|\Vert\mathbf{u}\Vert_{\mathbf{H}^{m+1}(\Omega)}+\Vert\mathbf{u}%
\Vert_{\mathbf{H}^{m+1}(\operatorname{curl},\Omega)}    \leq C|k|^{m+1}%
\Vert\mathbf{u}\Vert_{\mathbf{H}^{m+1}(\operatorname{curl},\Omega
),k}\nonumber\\
& \qquad \qquad  \stackrel{\text{Ind.\ hyp.}}{\leq} C|k|^{m-1}T_{2}(m)\stackrel{(\ref{T1T2est})}{\leq} C|k|^{m-1}T_{1}(m+1). \label{kuT2}%
\end{align}
Hence,
\begin{align}
\nonumber 
|k|^{-(m+2)}R_{m+1}  &  = |k|^{-(m+2)}\left(  |k|^{-1}\Vert
\operatorname{div}_{\Gamma}\mathbf{g}_{T}\Vert_{\mathbf{H}^{m+1/2}(\Gamma
)}
+|k|^{-1}\Vert\mathbf{f}\Vert_{\mathbf{H}^{m+1}(\Omega)}\right. \\
&  \quad\left.  
+|k|^{-2}\Vert\mathbf{f}\Vert_{\mathbf{H}^{m+1}%
(\operatorname{div},\Omega)}+|k|\Vert\mathbf{u}\Vert_{\mathbf{H}^{m+1}%
(\Omega)}+\Vert\mathbf{u}\Vert_{\mathbf{H}^{m+1}(\operatorname{curl},\Omega
)}\right) \nonumber\\
\nonumber 
&  \stackrel{(\ref{kuT2})}{\leq} C|k|^{-3}\left(  \Vert\operatorname{div}_{\Gamma}\mathbf{g}_{T}%
\Vert_{\mathbf{H}^{m+1/2}(\Gamma),k}+\Vert\mathbf{f}\Vert_{\mathbf{H}%
^{m+1}(\operatorname{div},\Omega),k}+T_{1}(m+1)\right)  \\
& \leq C|k|^{-3} T_{1}(m+1)
\label{estkRm+1}
\end{align}
and therefore {by the induction hypothesis and (\ref{eq:uinHm+1})}
\begin{align}
\Vert\mathbf{u}\Vert_{\mathbf{H}^{m+2}(\Omega),k}  &  \leq C\left(
\Vert\mathbf{u}\Vert_{\mathbf{H}^{m+1}(\Omega),k}+|k|^{-(m+2)}|\mathbf{u}%
|_{\mathbf{H}^{m+2}(\Omega)}\right) \nonumber\\
&  \overset{\text{ind.\ hyp., (\ref{eq:uinHm+1})}}{\leq}{C\left(
|k|^{-3}T_{1}(m)+|k|^{-(m+2)}R_{m+1}\right)  }\nonumber\\
&  \overset{(\ref{estkRm+1})}{\leq}C|k|^{-3}T_{1}(m+1), \label{esthHm+2_k}%
\end{align}
which completes the induction step for formula
(\ref{eq:lemma:apriori-with-good-sign-25b}).

Again from the definition of $R_{m+1}$, the induction hypothesis, and
(\ref{T1T2est}), we have
\begin{align*}
|k|^{-(m+2)}R_{m+1}& \leq C\left[  |k|^{-2}\Vert\mathbf{g}_{T}\Vert
_{\mathbf{H}^{m+3/2}(\Gamma),k}+|k|^{-3}\Vert\mathbf{f}\Vert_{\mathbf{H}%
^{m+1}(\operatorname{div},\Omega),k}+|k|^{-3}T_{2}(m)\right]  \\
& \leq
C|k|^{-3}T_{2}(m+1).
\end{align*}
The combination of this with (\ref{esthHm+2_k}) and (\ref{T1T2est}) leads to%
\begin{align*}
& \Vert\mathbf{u}\Vert_{\mathbf{H}^{m+2}(\operatorname{curl},\Omega),k}    \leq
C\left(  \left\vert k\right\vert \Vert\mathbf{u}\Vert_{\mathbf{H}^{m+2}%
(\Omega),k}+|k|^{-(m+2)}|\operatorname{curl}\mathbf{u}|_{\mathbf{H}%
^{m+2}(\Omega)}\right) \\
&  \qquad \overset{(\ref{esthHm+2_k}),(\ref{eq:uinHm+1curl})}{\leq}C\left(
\left\vert k\right\vert ^{-2}T_{2}(m+1)+|k|^{-(m+2)}\left(  \Vert
\mathbf{g}_{T}\Vert_{\mathbf{H}^{m+3/2}(\Gamma)}+|k|R_{m+1}\right)  \right) \\
&  \qquad \leq C\left(  \left\vert k\right\vert ^{-2}T_{2}(m+1)+|k|^{-1}%
\Vert\mathbf{g}_{T}\Vert_{\mathbf{H}^{m+3/2}(\Gamma),k}+\left\vert
k\right\vert ^{-2}T_{2}(m+1)\right)  \\
& \qquad \leq C\left\vert k\right\vert ^{-2}%
T_{2}(m+1),
\end{align*}
which completes the induction argument for
(\ref{eq:lemma:apriori-with-good-sign-25}).%
\end{proof}


\section{Regularity theory for Maxwell's equations\label{SecHRP}}

In this section, we collect regularity assertions for the Maxwell model
problem (\ref{MWEq}). In particular, the case of analytic data studied in
Section~\ref{sec:analytic-regularity} will be a building block for the
regularity by decomposition studied in Section~\ref{SecReg}.

\subsection{Finite regularity theory}

\label{sec:finite-regularity}
The difference between Maxwell's equations with the ``good'' sign and the
time-harmonic Maxwell equations lies in a lower order term. Therefore, higher
regularity statements for the solution of Maxwell's equations can be inferred
from those for with the ``good'' sign, i.e., from
Theorem~\ref{lemma:apriori-with-good-sign}. The following result makes this precise.

\begin{lemma}
\label{lemma:MW-regularity}Let $\Omega$ be a bounded Lipschitz domain with
simply connected, sufficiently smooth boundary $\Gamma$. Let $m\in
\mathbb{N}_{0}$. Then there is $C>0$ (depending only on $m$ and $\Omega$) such
that for $\mathbf{f}\in\mathbf{H}^{m}(\operatorname{div},\Omega)$,
$\mathbf{g}_{T}\in\mathbf{H}_{T}^{m+1/2}(\Gamma)$ the solution $\mathbf{u}$ of
(\ref{MWEq}) (for $\mathbf{j}:=\mathbf{f}$) satisfies $\mathbf{u}\in
\mathbf{H}^{m+1}(\operatorname{curl},\Omega)$ and%
\begin{align}
\Vert\mathbf{u}\Vert_{\mathbf{H}^{m+1}(\Omega),k}  & \! \leq C\left[ |k|^{-3}\left(
\Vert\mathbf{f}\Vert_{\mathbf{H}^{m}(\operatorname{div},\Omega),k}%
+\Vert\mathbf{g}_{T}\Vert_{\mathbf{H}^{m-1/2}(\operatorname{div}_{\Gamma
},\Gamma),k}\right)  +\Vert\mathbf{u}\Vert_{\mathbf{L}^{2}(\Omega
)}\!\right],\label{eq:lemma:MW-regularity-10}\\
\Vert\mathbf{u}\Vert_{\mathbf{H}^{m+1}(\operatorname{curl},\Omega),k}  &  \leq\!
C\left[ \left\vert k\right\vert ^{-2}\left(  \Vert\mathbf{f}\Vert_{\mathbf{H}%
^{m}(\operatorname{div},\Omega),k}+|k|\Vert\mathbf{g}_{T}\Vert_{\mathbf{H}%
^{m+1/2}(\Gamma),k}\right)  +\left\vert k\right\vert \Vert\mathbf{u}%
\Vert_{\mathbf{L}^{2}(\Omega)}\!\right].\! \label{eq:lemma:MW-regularity-20}%
\end{align}
If assumption~(\ref{AssumptionAlgGrowth}) holds, then $\Vert\mathbf{u}%
\Vert_{\mathbf{L}^{2}(\Omega)}\leq C|k|^{\theta-1}\left(  \Vert\mathbf{f}%
\Vert_{\mathbf{L}^{2}(\Omega)}+\Vert\mathbf{g}_{T}\Vert_{\mathbf{L}^{2}%
(\Gamma)}\right)  $. In particular,%
\begin{align}
\Vert\mathbf{u}\Vert_{\mathbf{H}^{1}(\operatorname{curl},\Omega),k}  &  \leq
C\left\vert k\right\vert ^{-2}\Bigl\{\Vert\operatorname{div}\mathbf{f}%
\Vert_{\mathbf{L}^{2}(\Omega)}+|k|^{%
 \theta+2 }%
\Vert\mathbf{f}\Vert_{\mathbf{L}^{2}(\Omega)}
\label{eq:lemma:MW-regularity-30}\\
&  \qquad\qquad\mbox{}+|k|\Vert\mathbf{g}_{T}\Vert_{\mathbf{H}^{1/2}(\Gamma
)}+|k|^{%
 \theta+2 }%
\Vert\mathbf{g}_{T}\Vert_{\mathbf{L}^{2}(\Gamma)}\Bigr\}.\nonumber
\end{align}

\end{lemma}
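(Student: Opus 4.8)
The idea is to regard $\mathbf{u}$ as the solution of a Maxwell problem with the ``good'' sign and then to invoke Theorem~\ref{lemma:apriori-with-good-sign}, part~(iv). Since $\mathcal{L}_{\Omega,\operatorname*{i}k}\mathbf{v}=\operatorname{curl}\operatorname{curl}\mathbf{v}+k^{2}\mathbf{v}=\mathcal{L}_{\Omega,k}\mathbf{v}+2k^{2}\mathbf{v}$, while the two problems share the boundary operator $\mathcal{B}_{\partial\Omega,k}$, the solution $\mathbf{u}$ of (\ref{MWEq}) (with $\mathbf{j}=\mathbf{f}$) solves the good-sign problem (\ref{weak_plus}) with data $\widetilde{\mathbf{f}}:=\mathbf{f}+2k^{2}\mathbf{u}$ and $\mathbf{g}_{T}$. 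Taking the divergence of (\ref{MWEq}) gives $\operatorname{div}\mathbf{u}=-k^{-2}\operatorname{div}\mathbf{f}$, hence $\operatorname{div}\widetilde{\mathbf{f}}=-\operatorname{div}\mathbf{f}\in H^{m}(\Omega)$.

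First I would upgrade the regularity of $\mathbf{u}$ by a bootstrap on $\ell=0,1,\dots,m$. A priori only $\mathbf{u}\in\mathbf{X}_{\operatorname{imp}}\subset\mathbf{L}^{2}(\Omega)$, so $\widetilde{\mathbf{f}}\in\mathbf{H}^{0}(\Omega,\operatorname{div})$ and Theorem~\ref{lemma:apriori-with-good-sign}, part~(iv), with $m=0$ yields $\mathbf{u}\in\mathbf{H}^{1}(\Omega,\operatorname{curl})$; iterating, whenever $\mathbf{u}\in\mathbf{H}^{\ell}(\Omega,\operatorname{curl})$ and $\mathbf{f}\in\mathbf{H}^{\ell}(\Omega,\operatorname{div})$ we get $\widetilde{\mathbf{f}}\in\mathbf{H}^{\ell}(\Omega,\operatorname{div})$ and the theorem with $m=\ell$ gives $\mathbf{u}\in\mathbf{H}^{\ell+1}(\Omega,\operatorname{curl})$. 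After $m+1$ steps, $\mathbf{u}\in\mathbf{H}^{m+1}(\Omega,\operatorname{curl})$.

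For the norm bounds I would run an induction on $m$, applying the good-sign estimates (\ref{eq:lemma:apriori-with-good-sign-25b}), (\ref{eq:lemma:apriori-with-good-sign-25}) with $\widetilde{\mathbf{f}}$ in place of $\mathbf{f}$. Using $\operatorname{div}\widetilde{\mathbf{f}}=-\operatorname{div}\mathbf{f}$, the triangle inequality, and the $k$-weighting of the norms one checks $\|\widetilde{\mathbf{f}}\|_{\mathbf{H}^{m}(\Omega,\operatorname{div}),k}\leq C\|\mathbf{f}\|_{\mathbf{H}^{m}(\Omega,\operatorname{div}),k}+C|k|^{3}\|\mathbf{u}\|_{\mathbf{H}^{m}(\Omega),k}$ (the factor $|k|^{3}$ arising from the weight $2k^{2}$ inside $\widetilde{\mathbf{f}}$ and the normalization of $\|\cdot\|_{\mathbf{H}^{m}(\Omega),k}$). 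Then (\ref{eq:lemma:apriori-with-good-sign-25b}) gives
\[
\|\mathbf{u}\|_{\mathbf{H}^{m+1}(\Omega),k}\leq C|k|^{-3}\bigl(\|\mathbf{f}\|_{\mathbf{H}^{m}(\Omega,\operatorname{div}),k}+\|\mathbf{g}_{T}\|_{\mathbf{H}^{m-1/2}(\Gamma,\operatorname{div}_{\Gamma}),k}\bigr)+C\|\mathbf{u}\|_{\mathbf{H}^{m}(\Omega),k}.
\]
For $m=0$ the last term is $C\|\mathbf{u}\|_{\mathbf{L}^{2}(\Omega)}$, so (\ref{eq:lemma:MW-regularity-10}) follows; for $m\geq1$ one inserts the induction hypothesis (\ref{eq:lemma:MW-regularity-10}) at level $m-1$ for $\|\mathbf{u}\|_{\mathbf{H}^{m}(\Omega),k}$ and absorbs the resulting lower-order data norms into the $m$-th order ones. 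Estimate (\ref{eq:lemma:MW-regularity-20}) is obtained identically from (\ref{eq:lemma:apriori-with-good-sign-25}). Finally, the $\mathbf{L}^{2}$-bound under assumption (\ref{AssumptionAlgGrowth}) is immediate from $\|\mathbf{u}\|_{\operatorname{imp},k}\geq|k|\,\|\mathbf{u}\|_{\mathbf{L}^{2}(\Omega)}$ together with (\ref{eq:Cstab}), and (\ref{eq:lemma:MW-regularity-30}) results by taking $m=0$ in (\ref{eq:lemma:MW-regularity-20}), expanding $\|\mathbf{f}\|_{\mathbf{H}(\Omega,\operatorname{div}),k}\sim\|\operatorname{div}\mathbf{f}\|_{\mathbf{L}^{2}(\Omega)}+|k|\,\|\mathbf{f}\|_{\mathbf{L}^{2}(\Omega)}$ and $|k|\,\|\mathbf{g}_{T}\|_{\mathbf{H}^{1/2}(\Gamma),k}\sim|k|^{3/2}\|\mathbf{g}_{T}\|_{\mathbf{L}^{2}(\Gamma)}+|k|\,\|\mathbf{g}_{T}\|_{\mathbf{H}^{1/2}(\Gamma)}$, inserting the $\mathbf{L}^{2}$-bound, and collecting powers of $|k|$.

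There is no conceptual obstacle here, as Theorem~\ref{lemma:apriori-with-good-sign} carries the analytic load; the only care needed is (a) the bootstrap that ensures $\widetilde{\mathbf{f}}$ lies in the space required before that theorem can be applied, and (b) tracking powers of $|k|$ so that the lower-order perturbation $2k^{2}\mathbf{u}$ does not spoil the estimate — which succeeds precisely because of the $|k|^{-3}$ prefactor in (\ref{eq:lemma:apriori-with-good-sign-25b}) (respectively $k^{-2}$ in (\ref{eq:lemma:apriori-with-good-sign-25})) and the $k$-weighting of the norms involved.
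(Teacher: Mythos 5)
Your proposal is correct and follows essentially the same route as the paper: rewrite (\ref{MWEq}) as the good-sign problem with data $\mathbf{f}+2k^{2}\mathbf{u}$, use $\operatorname{div}\mathbf{u}=-k^{-2}\operatorname{div}\mathbf{f}$ to control the divergence, bootstrap the regularity via Theorem~\ref{lemma:apriori-with-good-sign}(\ref{item:lemma:apriori-with-good-sign-iibegin}), and iterate the resulting estimate (the paper's (\ref{eq:induction-u})) down to the $\mathbf{L}^{2}$-level, with the final bounds obtained exactly as you describe from (\ref{eq:Cstab}) and the expansion of the $k$-weighted norms.
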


\begin{proof}
The weak solution $\mathbf{u}$ of (\ref{MWEq}) exists by
Proposition~\ref{lemma:apriori-homogeneous-rhs} and depends continuously on
the data. In particular, $\mathbf{u}\in\mathbf{L}^{2}(\Omega)$. From the
equation $\mathcal{L}_{\Omega,k}\mathbf{u}=\mathbf{f}$, we have $-k^{2}%
\operatorname{div}\mathbf{u}=\operatorname{div}\mathbf{f}$ so that
$\mathbf{u}\in\mathbf{H}(\operatorname{div},\Omega)$. The function
$\mathbf{u}$ solves%
\begin{equation}
\mathcal{L}_{\Omega,\operatorname*{i}k}\mathbf{u}=\mathbf{f}+2k^{2}%
\mathbf{u},\qquad\mathcal{B}_{ \Gamma ,k}\mathbf{u}=\mathbf{g}_{T}.
\end{equation}
It is easy to see that Theorem~\ref{lemma:apriori-with-good-sign} is
inductively applicable. We get
\begin{align}
& \Vert\mathbf{u}\Vert_{\mathbf{H}^{m+1}(\Omega),k}    \leq C\left(
|k|^{-3}\Vert\mathbf{f}+2k^{2}\mathbf{u}\Vert_{\mathbf{H}^{m}%
(\operatorname{div},\Omega),k}+|k|^{-3}\Vert\mathbf{g}_{T}\Vert_{\mathbf{H}%
^{m-1/2}(\operatorname{div}_{\Gamma},\Gamma),k}\right) \nonumber\\
& \quad  \leq C\left(  |k|^{-3}\Vert\mathbf{f}\Vert_{\mathbf{H}^{m}%
(\operatorname{div},\Omega),k}+|k|^{-3}\Vert\mathbf{g}_{T}\Vert_{\mathbf{H}%
^{m-1/2}(\operatorname{div}_{\Gamma},\Gamma),k}+\Vert\mathbf{u}\Vert
_{\mathbf{H}^{m}(\Omega),k}\right)  . \label{eq:induction-u}%
\end{align}
We may successively insert (\ref{eq:induction-u}) into itself to
arrive at (\ref{eq:lemma:MW-regularity-10}). The statement
(\ref{eq:lemma:MW-regularity-20}) follows from
(\ref{eq:lemma:MW-regularity-10}) and
Theorem~\ref{lemma:apriori-with-good-sign} and the observation $\Vert
\mathbf{g}_{T}\Vert_{\mathbf{H}^{m-1/2}(\operatorname{div}_{\Gamma},\Gamma
),k}\leq C|k|\Vert\mathbf{g}_{T}\Vert_{\mathbf{H}^{m+1/2}(\Gamma),k}$.
\end{proof}


\subsection{Analytic regularity theory}

\label{sec:analytic-regularity}
In this section, we consider the Maxwell problem (\ref{MWEq}), i.e.,
\begin{equation}
\mathcal{L}_{\Omega,k}\mathbf{E}=\mathbf{f}\text{ in }\Omega,\qquad
\qquad\mathcal{B}_{ \Gamma ,k}\mathbf{E}=\mathbf{g}_{T}\text{ on }\Gamma\label{eq:problem-analytic-data}%
\end{equation}
with \emph{analytic} data $\mathbf{f}$ and $\mathbf{g}_{T}$ and analytic
boundary $\Gamma$. We show in Theorem~\ref{ThmAnaRegSum} that the solution is
analytic, making  the dependence on $k$ explicit.
\ifarxiv In Appendix~\ref{AppAnalyticity} we generalize the theory in
\cite{nicaise-tomezyk19,tomezyk19} to the case of inhomogeneous boundary data.
\else In \cite[Appendix~A]{MelenkSauterMaxwell_II} we generalize the theory in
\cite{nicaise-tomezyk19,tomezyk19} to the case of inhomogeneous boundary data.
\fi The key idea there is to reformulate the problem
(\ref{eq:problem-analytic-data}) as an elliptic system and then to apply the
regularity theory for elliptic systems with analytic data to this problem (see
\cite{CoDaNi}). Here, we summarize the main results.

Problem (\ref{eq:problem-analytic-data}) can be formulated as an elliptic
system for $\mathbf{U}=\left(  \mathbf{E},\mathbf{H}\right)  $, where
$\mathbf{E}$ is the electric and $\mathbf{H}:=-\frac{\operatorname*{i}}%
{k}\operatorname*{curl}\mathbf{E}$ the magnetic field \ifarxiv
(see Appendix~\ref{AppAnalyticity}):\else
: \fi
\begin{equation}%
\begin{array}
[c]{cclc}%
L\left(  \mathbf{U}\right)  := & \left(
\begin{array}
[c]{c}%
\operatorname*{curl}\operatorname*{curl}\mathbf{E}-\nabla\operatorname*{div}%
\mathbf{E}\\
\operatorname*{curl}\operatorname*{curl}\mathbf{H}-\nabla\operatorname*{div}%
\mathbf{H}%
\end{array}
\right)  & =\mathbf{F}+k^{2}\mathbf{U} & \text{in }\Omega,\\
T\left(  \mathbf{U}\right)  := & \mathbf{H}\times\mathbf{n}-\mathbf{E}_{T} &
=-\frac{\operatorname*{i}}{k}\mathbf{g}_{T} & \text{on }\Gamma,\\
B\left(  \mathbf{U}\right)  := & \left(
\begin{array}
[c]{c}%
\operatorname*{div}\mathbf{E}\\
\operatorname*{div}\mathbf{H}\\
\gamma_{T}\operatorname*{curl}\mathbf{H}+\left(  \operatorname*{curl}%
\mathbf{E}\right)  _{T}%
\end{array}
\right)  & =kG\mathbf{U}+\mathbf{G}_{\Gamma} & \text{on }\Gamma
\end{array}
\label{MwEllSys}%
\end{equation}
for
\[
\mathbf{F}:=\left(
\begin{array}
[c]{c}%
\mathbf{f}+\frac{1}{k^{2}}\nabla\operatorname*{div}\mathbf{f}\\
-\frac{\operatorname*{i}}{k}\operatorname*{curl}\mathbf{f}%
\end{array}
\right)  ,\text{\quad}G\mathbf{U}:=\left(
\begin{array}
[c]{c}%
0\\
0\\
\operatorname*{i}\left(  \mathbf{H}_{T}-\gamma_{T}\mathbf{E}\right)
\end{array}
\right)  \text{,\quad}\mathbf{G}_{\Gamma}:=\left(
\begin{array}
[c]{c}%
-\frac{1}{k^{2}}\left.  \left(  \operatorname*{div}\mathbf{f}\right)
\right\vert _{\Gamma}\\
0\\
-\frac{\operatorname*{i}}{k}\gamma_{T}\mathbf{f}%
\end{array}
\right)  .
\]

\ifarxiv
In Appendix~\ref{AppAnalyticity}, \else
In \cite[Appendix~A]{MelenkSauterMaxwell_II} \fi
we show that this system is elliptic in the sense of \cite{CoDaNi}. For the
special case $\mathbf{G}_{\Gamma}=\mathbf{0}$ and $\mathbf{g}_{T}=\mathbf{0}$,
the analytic regularity theory for this problem has been developed in
\cite{nicaise-tomezyk19,tomezyk19}. The following Theorem~\ref{ThmAnaRegSum}
generalizes their result to the case of inhomogeneous boundary data
$\mathbf{g}_{T}$, $\mathbf{G}_{\Gamma}$. To describe the analyticity of
$\mathbf{g}_{T}$ and $\mathbf{G}_{\Gamma}$, we assume that these functions are
restrictions of analytic functions $\mathbf{g}^{\ast}$ and $\mathbf{G}^{\ast}$
on an open neighborhood $\mathcal{U}_{\Gamma}$ of $\Gamma$ and satisfy
$\mathbf{g}_{T}=\gamma\mathbf{g}^{\ast}$ and $\mathbf{G}_{\Gamma}%
=\gamma\mathbf{G}^{\ast}$ for the standard trace operator $\gamma$ \ifarxiv
(see (\ref{eq:A1a}), (\ref{eq:A1b})). \else
, i.e., the restriction to $\Gamma$. \fi
We write $\mathbf{g}_{T}\in\mathcal{A}(C_{\mathbf{g}},\lambda_{\mathbf{g}%
},\mathcal{U}_{\Gamma}\cap\Omega)$ if{ }$\mathbf{g}^{\ast}\in\mathcal{A}%
(C_{\mathbf{g}},\lambda_{\mathbf{g}},\mathcal{U}_{\Gamma}\cap\Omega)$.

\begin{theorem}
\label{ThmAnaRegSum}Let $\Omega\subset{\mathbb{R}}^{3}$ be a bounded Lipschitz
domain with a simply connected, analytic boundary. Let $\mathcal{U}_{\Gamma}$
be an open neighborhood of $\Gamma$. Let $\mathbf{f}\in\mathcal{A}%
(C_{\mathbf{f}},\lambda_{\mathbf{f}},\Omega)$ and $\mathbf{g}_{T}%
\in\mathcal{A}(C_{\mathbf{g}},\lambda_{\mathbf{g}},\mathcal{U}_{\Gamma}%
\cap\Omega)$. Then there are constants $B$, $C>0$ (depending only on $\Omega$,
$\mathcal{U}_{\Gamma}$, and $\lambda_{\mathbf{f}}$, $\lambda_{\mathbf{g}}$)
such that the solution $\mathbf{E}$ of (\ref{eq:problem-analytic-data})
satisfies
\begin{equation}
\mathbf{E}\in\mathcal{A}(CC_{\mathbf{E}},B,\Omega), \label{eq:ThmAnaRegSum-10}%
\end{equation}
where $C_{\mathbf{E}}=C_{\mathbf{f}}\left\vert k\right\vert ^{-2}%
+C_{{\mathbf{g}}}\left\vert k\right\vert ^{-1}+\frac{1}{\left\vert
k\right\vert }\left\Vert {\mathbf{E}}\right\Vert _{\mathbf{H}^{1}\left(
\operatorname*{curl},\Omega\right)  ,k}$. If
assumption~(\ref{AssumptionAlgGrowth}) holds, then
\begin{equation}
C_{\mathbf{E}}\leq C\left(  C_{\mathbf{f}}|k|^{ \theta-1 }%
+C_{\mathbf{g}}|k|^{ \theta-1/2} 
\right)  . \label{eq:ThmAnaRegSum-20}%
\end{equation}

\end{theorem}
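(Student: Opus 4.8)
The plan is to reduce the statement to an analytic a~priori estimate for the elliptic system (\ref{MwEllSys}). Recall that, writing $\mathbf{H}=-\tfrac{\operatorname{i}}{k}\operatorname{curl}\mathbf{E}$ and $\mathbf{U}=(\mathbf{E},\mathbf{H})$, the boundary value problem (\ref{eq:problem-analytic-data}) is equivalent to (\ref{MwEllSys}), whose principal part is a block-diagonal (vector) Laplacian and which is shown to be elliptic in the sense of \cite{CoDaNi} in Appendix~\ref{AppAnalyticity}. Once this is in place, the proof consists of: (a) verifying that the data $\mathbf{F}$, $\mathbf{G}_\Gamma$ and the right-hand side $-\tfrac{\operatorname{i}}{k}\mathbf{g}_T$ of the operator $T$ inherit analyticity from $\mathbf{f}$ and $\mathbf{g}_T$ with $k$-explicit constants; (b) feeding this into the wavenumber-explicit analytic regularity estimate for (\ref{MwEllSys}); and (c) for the refined bound (\ref{eq:ThmAnaRegSum-20}), controlling the low-order term by the stability result of Lemma~\ref{lemma:MW-regularity}.

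For step (a) I would first record the (routine) mapping properties of the classes $\mathcal{A}(C_1,\gamma_1,\omega)$ of Definition~\ref{DefClAnFct}: each application of a first-order differential operator costs a fixed enlargement of the rate $\gamma_1$ and a factor $\max\{n+1,|k|\}$ in the bound for $|\cdot|_{\mathbf{H}^n}$, while multiplication by $k^{-1}$ or $k^{-2}$ divides the constant $C_1$ correspondingly; the polynomial-in-$n$ overhead produced when passing from $\max\{n+j+1,|k|\}^{n+j}$ to $\max\{n+1,|k|\}^n$ is absorbed into the enlarged rate. Restriction to the analytic boundary $\Gamma$ behaves analogously. Applying this to the explicit formulas for $\mathbf{F}$, $G\mathbf{U}$, $\mathbf{G}_\Gamma$ gives $\mathbf{F}\in\mathcal{A}(CC_{\mathbf{f}},B',\Omega)$, $\mathbf{G}_\Gamma\in\mathcal{A}(C|k|^{-1}C_{\mathbf{f}},B',\mathcal{U}_\Gamma\cap\Omega)$, and $-\tfrac{\operatorname{i}}{k}\mathbf{g}_T\in\mathcal{A}(C|k|^{-1}C_{\mathbf{g}},B',\mathcal{U}_\Gamma\cap\Omega)$ with a $k$-independent rate $B'$ (the two derivatives in $k^{-2}\nabla\operatorname{div}\mathbf{f}$ are cancelled by the factor $k^{-2}$, so this term is still controlled by $CC_{\mathbf{f}}$, while the single derivative in $k^{-2}\operatorname{div}\mathbf{f}|_\Gamma$ leaves a surplus $|k|^{-1}$).

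For step (b) I would invoke the $k$-explicit analytic regularity estimate for the elliptic system (\ref{MwEllSys}) that is established in Appendix~\ref{AppAnalyticity}; this generalizes the results of \cite{nicaise-tomezyk19,tomezyk19} (which treat the homogeneous case $\mathbf{g}_T=\mathbf{0}$, $\mathbf{G}_\Gamma=\mathbf{0}$) to inhomogeneous boundary data $\mathbf{g}_T$, $\mathbf{G}_\Gamma$. Combined with the constants from step (a), it yields $\mathbf{U}\in\mathcal{A}(CC_{\mathbf{E}},B,\Omega)$ with $C_{\mathbf{E}}=C_{\mathbf{f}}k^{-2}+C_{\mathbf{g}}|k|^{-1}+\tfrac{1}{|k|}\|\mathbf{E}\|_{\mathbf{H}^1(\Omega,\operatorname{curl}),k}$, the last summand being the low-order $\mathbf{L}^2$- and $\mathbf{H}^1$-content of $\mathbf{U}=(\mathbf{E},-\tfrac{\operatorname{i}}{k}\operatorname{curl}\mathbf{E})$ that the analytic bootstrap cannot eliminate; since $\mathbf{E}$ is a component of $\mathbf{U}$ this is exactly (\ref{eq:ThmAnaRegSum-10}). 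This step is also where the main difficulty lies: one has to run the Cordes--Dauge--Nicaise analytic regularity machinery (nested neighbourhoods, Cauchy-type estimates for high-order derivatives, induction on the order of differentiation) for (\ref{MwEllSys}) while keeping \emph{every} constant explicit in $k$, in particular tracking how the lower-order terms $k^2\mathbf{U}$ in $L(\mathbf{U})$ and $kG\mathbf{U}$ in $B(\mathbf{U})$, together with the now non-vanishing boundary data, propagate through the iteration — this is the content of Appendix~\ref{AppAnalyticity}.

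Finally, for step (c) I would assume (\ref{AssumptionAlgGrowth}) and estimate $\|\mathbf{E}\|_{\mathbf{H}^1(\Omega,\operatorname{curl}),k}$ via the finite-regularity bound (\ref{eq:lemma:MW-regularity-30}) of Lemma~\ref{lemma:MW-regularity}. Evaluating Definition~\ref{DefClAnFct} at $n=0$ and $n=1$ and using the trace theorem gives $\|\mathbf{f}\|_{\mathbf{L}^2(\Omega)}\le C_{\mathbf{f}}$, $\|\operatorname{div}\mathbf{f}\|_{\mathbf{L}^2(\Omega)}\le C|k|C_{\mathbf{f}}$, $\|\mathbf{g}_T\|_{\mathbf{L}^2(\Gamma)}\le C|k|^{1/2}C_{\mathbf{g}}$, $\|\mathbf{g}_T\|_{\mathbf{H}^{1/2}(\Gamma)}\le C|k|C_{\mathbf{g}}$ (constants depending on $\lambda_{\mathbf{f}}$, $\lambda_{\mathbf{g}}$). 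Inserting these into (\ref{eq:lemma:MW-regularity-30}) bounds $\tfrac{1}{|k|}\|\mathbf{E}\|_{\mathbf{H}^1(\Omega,\operatorname{curl}),k}$ by $C\bigl(C_{\mathbf{f}}k^{-2}|k|^{\max\{0,\theta+1\}}+C_{\mathbf{g}}|k|^{-1}|k|^{\max\{0,\theta+1/2\}}\bigr)$, which absorbs the two explicit terms of $C_{\mathbf{E}}$ and yields (\ref{eq:ThmAnaRegSum-20}).
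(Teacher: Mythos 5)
Your proposal is correct and follows essentially the same route as the paper: rewrite the problem as the elliptic system (\ref{MwEllSys}), track the analyticity constants of the transformed data $\mathbf{F}$, $\mathbf{G}_\Gamma$, $-\tfrac{\operatorname{i}}{k}\mathbf{g}_T$, invoke the $k$-explicit analytic regularity of Appendix~\ref{AppAnalyticity} (Theorem~\ref{thm:A1}/Corollary~\ref{CorA.2}), and control the low-order term via (\ref{eq:lemma:MW-regularity-30}) and the trace estimates for $\mathbf{g}_T$. The only (minor) points the paper makes explicit that you gloss over are that Lemma~\ref{lemma:MW-regularity} is first needed to guarantee $\mathbf{E}\in\mathbf{H}^{2}(\Omega,\operatorname{curl})$ so that Corollary~\ref{CorA.2} is applicable, and that the bound for $n=0,1$ in the definition of $\mathcal{A}(CC_{\mathbf{E}},B,\Omega)$ follows from a direct calculation since the appendix estimate only covers $n\geq 2$.
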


\begin{proof}
\ifarxiv The statement of the theorem follows from Corollary~\ref{CorA.2} and more details can be found there. 
\else 
The statement of the theorem follows from \cite[Cor.~{A.2}]{MelenkSauterMaxwell_II} and more details can be found
there. 
\fi 
The existence $\mathbf{u}\in\mathbf{X}_{\operatorname*{imp}}$ is
implied by Proposition~\ref{lemma:apriori-homogeneous-rhs}, and finite
regularity assertions for $\mathbf{E}$ are provided in
Lemma~\ref{lemma:MW-regularity}. In particular, $\mathbf{E}\in\mathbf{H}%
^{2}(\Omega)$. In turn, $\mathbf{U}=(\mathbf{E},\mathbf{H})\in\mathbf{H}%
^{1}(\operatorname{curl},\Omega)$ solves the elliptic system (\ref{MwEllSys}).
This makes \ifarxiv Theorem~\ref{thm:A1} \else\cite[Thm.~{A.1}]%
{MelenkSauterMaxwell_II} \fi applicable, which shows the corresponding result
for $\mathbf{U}$ by a boot-strapping argument and an explicit tracking of the
wavenumber $k$ to arrive at the result of \ifarxiv Corollary~\ref{CorA.2},
which reads \else\cite[Cor.~{A.2}]{MelenkSauterMaxwell_II} \fi%
\begin{equation}
|{\mathbf{E}}|_{\mathbf{H}^{p}\left(  \Omega\right)  }\leq CC_{\mathbf{E}%
}B^{p}\max(p,|k|)^{p},\quad\forall p\in\mathbb{N}_{\geq2}
\label{eq:ThmAnaRegSum-100}%
\end{equation}
with $C_{\mathbf{E}}$ as given in the statement. A direct calculation shows
$\Vert\mathbf{E}\Vert_{\mathbf{H}^{1}(\Omega)}\leq C_{\mathbf{E}}$ and
$\Vert\mathbf{E}\Vert_{\mathbf{L}^{2}(\Omega)}\leq|k|C_{\mathbf{E}}$ so that
(\ref{eq:ThmAnaRegSum-100}) also holds for $p=0$ and $p=1$. This shows
(\ref{eq:ThmAnaRegSum-10}).

The estimate (\ref{eq:ThmAnaRegSum-20}) follows from
(\ref{eq:lemma:MW-regularity-30}) of Lemma~\ref{lemma:MW-regularity} and the
definition of the analyticity classes together with the trace estimates
$\|\mathbf{g}_{T}\|_{\mathbf{H}^{1/2}(\Gamma)} \leq C C_{\mathbf{g}} |k|$ and
$\|\mathbf{g}_{T}\|_{\mathbf{L}^{2}(\Gamma)} \leq C C_{\mathbf{g}} |k|^{1/2}%
$.
\end{proof}


\section{Frequency splittings\label{SecFreqSplit}}


As in \cite{MelenkSauterMathComp, mm_stas_helm2, MPS13, MelenkStab,
MelenkHelmStab2010, MelenkSauterMaxwell_I} we analyze the regularity of 
Maxwell's equations (\ref{MWEq}) via a decomposition of the right-hand side 
into high and low frequency parts.

\subsection{Frequency splittings in $\Omega$: $H_{\mathbb{R}^{3}}$,
$L_{\mathbb{R}^{3}}$, $H_{\Omega}$, $L_{\Omega}$, ${H}_{\Omega}^{0}$,
${L}_{\Omega}^{0}$}

In order to construct the splitting, we start by recalling the definition of
the Fourier transform for {sufficiently smooth} functions with compact
support
\begin{equation}
\hat{u}( {\boldsymbol \xi})  =\mathcal{F}(  u)  (
{\boldsymbol \xi}%
)  =\left(  2\pi\right)  ^{-3/2}%
{\displaystyle\int_{\mathbb{R}^{3}}}
\operatorname{e}^{-\operatorname*{i}\langle
{\boldsymbol \xi}%
,\mathbf{x}\rangle }u(  \mathbf{x})  d\mathbf{x}\qquad\forall%
{\boldsymbol \xi}%
\in\mathbb{R}^{3} \label{eq:fourier-transform}%
\end{equation}
and the inversion formula
\[
u(  \mathbf{x})  =\mathcal{F}^{-1}(  u)(
\mathbf{x})  =\left(  2\pi\right)  ^{-3/2}%
{\displaystyle\int_{\mathbb{R}^{3}}}
\operatorname{e}^{\operatorname*{i}\langle \mathbf{x},%
{\boldsymbol \xi}%
\rangle }\hat{u}(
{\boldsymbol \xi})  d%
{\boldsymbol \xi}%
\qquad\forall\mathbf{x}\in\mathbb{R}^{3}.
\]
These formulas extend to tempered distributions and in particular to functions
in $L^{2}(\mathbb{R}^{3})$. Next, we introduce a frequency splitting for
functions in $\mathbb{R}^{3}$ that depends on $k$ and a parameter $\lambda>1$
by using the Fourier transformation. The low and high frequency part is given by%
\begin{equation}
L_{\mathbb{R}^{3}}u:=\mathcal{F}^{-1}\left(  \chi_{\lambda\left\vert
k\right\vert }\mathcal{F}(  u)  \right)  \quad\text{and\quad
}H_{\mathbb{R}^{3}}u:=\mathcal{F}^{-1}\left(  \left(  1-\chi_{\lambda
\left\vert k\right\vert }\right)  \mathcal{F}(  u)  \right)  ,
\label{deffreqslitR3}%
\end{equation}
where $\chi_{\delta}$ is the characteristic function of the open ball with
radius $\delta>0$ centered at the origin. We note the splitting $H_{\mathbb{R}%
^{3}}+L_{\mathbb{R}^{3}}=I$. By using Stein's extension operator
$\mathcal{E}_{\operatorname*{Stein}}$, \cite[Chap.~{VI}]{emstein} this
splitting induces a frequency splitting for functions in Sobolev spaces in
$\Omega$ via%
\begin{equation}
L_{\Omega}\mathbf{f}:=\left.  \left(  L_{\mathbb{R}^{3}}\mathcal{E}%
_{\operatorname*{Stein}}\mathbf{f}\right)  \right\vert _{\Omega}%
\quad\text{and\quad}H_{\Omega}\mathbf{f}:=\left.  \left(  H_{\mathbb{R}^{3}%
}\mathcal{E}_{\operatorname*{Stein}}\mathbf{f}\right)  \right\vert _{\Omega},
\label{DefLOmegaHOmega}%
\end{equation}
where, again, $L_{\Omega}\mathbf{f}+H_{\Omega}\mathbf{f}=\mathbf{f}$ in
$\Omega$.

In general, the condition $\operatorname{div}\mathbf{f}=0$ neither implies
$\operatorname*{div}L_{\Omega}\mathbf{f}=0$ nor $\operatorname*{div}H_{\Omega
}\mathbf{f}=0$. We therefore introduce another lifting (instead of
$\mathcal{E}_{\operatorname*{Stein}}$) for functions in Sobolev spaces on
$\Omega$ that passes on the divergence-free property to the lifting to the full space and
allows for alternative frequency splittings $L_{\Omega}^{0}$, $H_{\Omega}^{0}$
at the expense that $L_{\Omega}^{0}+H_{\Omega}^{0}$ is not the identity but
the \textit{identity plus a smoothing operator}.
With the operator $\mathbf{R}_{2}$ of Lemma~\ref{LemRs}, which has been
constructed in \cite{costabel-mcintosh10}, we set%
\begin{equation}
H_{\Omega}^{0}\mathbf{f}:=\operatorname*{curl}H_{\mathbb{R}^{3}}%
\mathcal{E}_{\operatorname*{Stein}}\mathbf{R}_{2}\mathbf{f}\quad
\text{and\quad}L_{\Omega}^{0}\mathbf{f}:=\operatorname*{curl}L_{\mathbb{R}%
^{3}}\mathcal{E}_{\operatorname*{Stein}}\mathbf{R}_{2}\mathbf{f}%
\label{DefHOmega0}%
\end{equation}
and define the operator $\mathbf{S}$ by
\begin{equation}
\mathbf{S}\mathbf{f}:\mathbf{=f}-\left.  \left(  H_{\Omega}^{0}\mathbf{f}%
+L_{\Omega}^{0}\mathbf{f}\right)  \right\vert _{\Omega}\mathbf{.}\label{eq:S}%
\end{equation}
In view of (\ref{eq:S=K2}), we have for $\mathbf{f}$ with $\operatorname{div}%
\mathbf{f}=0$ that $\mathbf{S}\mathbf{f}=\mathbf{K}_{2}\mathbf{f}|_{\Omega}$
so that in particular {for all $s$, $s^{\prime}$}
\begin{equation}
\Vert\mathbf{S}\mathbf{f}\Vert_{\mathbf{H}^{s}(\Omega)}\leq C_{s,s^{\prime}%
}\Vert\mathbf{f}\Vert_{\mathbf{H}^{s^{\prime}}(\Omega)}\qquad\forall
\mathbf{f}\in\mathbf{H}^{s^{\prime}}(\Omega)\colon\operatorname{div}%
\mathbf{f}=0.\label{eq:mapping-property-S}%
\end{equation}

\subsection{Frequency splittings on $\Gamma$}

For the definition of the Hodge decompositions and frequency splittings of
this section, we recall that $\Omega$ has a simply connected, analytic boundary.

\begin{remark}
\label{rem:Yml} The Laplace-Beltrami operator $\Delta_{\Gamma}$ is
self-adjoint with respect to the $L^{2}(\Gamma)$ scalar product $\left(
\cdot,\cdot\right)  _{L^{2}(\Gamma)}$ and positive semidefinite. It admits a
countable sequence of eigenfunctions in $L^{2}(\Gamma)$ denoted by $Y_{\ell
}^{m}$ such that%
\begin{equation}
-\Delta_{\Gamma}Y_{\ell}^{m}=\lambda_{\ell}Y_{\ell}^{m}\qquad\text{\ for }%
\ell=0,1,\ldots\text{and }m\in\iota_{\ell}. \label{eigvalabsLapBelt}%
\end{equation}
Here, $\iota_{\ell}$ is a finite index set whose cardinality equals the
multiplicity of the eigenvalue $\lambda_{\ell}$, and we always assume that the
eigenvalues $\lambda_{\ell}$ are distinct and ordered increasingly. We have
$\lambda_{0}=0$ and for $\ell\geq1$, they are real and positive and accumulate
at infinity. Since we assumed that $\Gamma$ is simply connected we know that
$\lambda_{0}=0$ is a simple eigenvalue. 
\eremk
\end{remark}

According to \cite[Sec.~{5.4.1}]{Nedelec01}, any tangential field
$\mathbf{h}_{T}\in\mathbf{L}_{T}^{2}(\Gamma)$ on the bounded, simply connected
manifold $\Gamma$ admits an expansion%
\begin{equation}
\mathbf{h}_{T}=\sum_{\ell=1}^{\infty}\sum_{m\in\iota_{\ell}}
\alpha_{\ell}^{m}\nabla_{\Gamma}Y_{\ell}^{m}+\beta_{\ell}^{m}\left(
\overrightarrow{\operatorname*{curl}\nolimits_{\Gamma}}Y_{\ell}^{m} \right)  .
\label{orthosurface_exp}%
\end{equation}
The functions $\left\{  \nabla_{\Gamma}Y_{\ell}^{m},\overrightarrow
{\operatorname*{curl}\nolimits_{\Gamma}}Y_{\ell}^{m}:\ell\in\mathbb{N}_{\geq
1},\,m\in\iota_{\ell}\right\}  $ constitute an orthogonal basis of 
$\mathbf{L}_{T}^{2}(\Gamma)$ and hence the coefficients $\alpha_{\ell}^{m}$,
$\beta_{\ell}^{m}$ are uniquely determined via (\ref{orthosurface_exp}). We
set%
\begin{equation}%
\begin{array}
[c]{ll}%
\mathcal{L}_{\operatorname*{imp}}^{\nabla}\mathbf{h}_{T}:=%
{\displaystyle\sum\limits_{\ell=1}^{\infty}}
{\displaystyle\sum\limits_{m\in\iota_{\ell}}}
\alpha_{\ell}^{m}Y_{\ell}^{m}, & \qquad \mathcal{L}_{\operatorname*{imp}%
}^{\operatorname*{curl}}\mathbf{h}_{T}:=%
{\displaystyle\sum\limits_{\ell=1}^{\infty}}
{\displaystyle\sum\limits_{m\in\iota_{\ell}}}
\beta_{\ell}^{m}Y_{\ell}^{m},\\
\Pi_{\operatorname*{imp}}^{\nabla}:=\nabla_{\Gamma}\mathcal{L}%
_{\operatorname*{imp}}^{\nabla}, &\qquad  \Pi_{\operatorname*{imp}}%
^{\operatorname*{curl}}:=I-\Pi_{\operatorname*{imp}}^{\nabla}=\overrightarrow
{\operatorname*{curl}\nolimits_{\Gamma}}\mathcal{L}_{\operatorname*{imp}%
}^{\operatorname*{curl}},
\end{array}
\label{nablaliftings}%
\end{equation}
where $I$ denotes the identity operator.

\begin{remark}
\label{remk:characterize-L} $\mathcal{L}_{\operatorname*{imp}}^{\nabla
}\mathbf{h}_{T}$ and $\mathcal{L}_{\operatorname*{imp}}^{\operatorname{curl}%
}\mathbf{h}_{T}$ are characterized by%
\begin{align}
\left(  \nabla_{\Gamma}\mathcal{L}_{\operatorname{imp}}^{\nabla}\mathbf{h}%
_{T},\nabla_{\Gamma}\psi\right)  _{\mathbf{L}^{2}(  \Gamma)  }  &
=\left(  \mathbf{h}_{T},\nabla_{\Gamma}\psi\right)  _{\mathbf{L}^{2}(
\Gamma)  }\quad\forall\psi\in C^{\infty}(\Gamma),\label{defphhigT}\\
\left(  \overrightarrow{\operatorname{curl}_{\Gamma}}\mathcal{L}%
_{\operatorname{imp}}^{\operatorname{curl}}\mathbf{h}_{T},\overrightarrow
{\operatorname{curl}_{\Gamma}}\psi\right)  _{\mathbf{L}^{2}(
\Gamma)  }  &  =\left(  \mathbf{h}_{T},\overrightarrow
{\operatorname{curl}_{\Gamma}}\psi\right)  _{\mathbf{L}^{2}(
\Gamma)  }\quad\forall\psi\in C^{\infty}(\Gamma), \label{defphhicT}%
\end{align}
and the conditions $(\mathcal{L}_{\operatorname{imp}}^{\nabla}\mathbf{h}%
_{T},1)_{ L^{2}  (\Gamma)}=0$ and $(\mathcal{L}_{\operatorname{imp}}^{\operatorname{curl}%
}\mathbf{h}_{T},1)_{ L^{2}  (\Gamma)}=0$. In strong form, we have in view of $\operatorname{curl}_{\Gamma
}\overrightarrow{\operatorname{curl}_{\Gamma}}=-\Delta_{\Gamma}$ that
$\Delta_{\Gamma}\mathcal{L}_{\operatorname{imp}}^{\nabla}\mathbf{h}%
_{T}=\operatorname{div}_{\Gamma}\mathbf{h}_{T}$ and $\Delta_{\Gamma
}\mathcal{L}_{\operatorname{imp}}^{\operatorname*{curl}}\mathbf{h}%
_{T}=-\operatorname{curl}_{\Gamma}\mathbf{h}_{T}$. 
\eremk
\end{remark}

In summary, we have introduced a Hodge decomposition:%
\begin{equation}
\mathbf{h}_{T}=\Pi_{\operatorname*{imp}}^{\nabla}\mathbf{h}_{T}+\Pi
_{\operatorname*{imp}}^{\operatorname*{curl}}\mathbf{h}_{T}=\nabla_{\Gamma
}\varphi+\overrightarrow{\operatorname*{curl}\nolimits_{\Gamma}}\psi
\quad\text{for }\varphi=\mathcal{L}_{\operatorname*{imp}}^{\nabla}%
\mathbf{h}_{T}\text{ and }\psi=\mathcal{L}_{\operatorname*{imp}}%
^{\operatorname*{curl}}\mathbf{h}_{T} \label{defHodge}%
\end{equation}
(for further details see \cite[Sec.~{5.4.1}]{Nedelec01}).


Next, we introduce the harmonic extension $\mathcal{E}_{\Omega}^{\Delta
}:H^{1/2}\left(  \Gamma\right)  \rightarrow H^{1}\left(  \Omega\right)  $ of
Dirichlet boundary data defined by%
\begin{equation}
\Delta\left(  \mathcal{E}_{\Omega}^{\Delta}\varphi\right)  =0\quad\text{in
}\Omega\text{, \qquad}\left.  \mathcal{E}_{\Omega}^{\Delta}\varphi\right\vert
_{\Gamma}=\varphi.\label{LaplaceDiriProbl}%
\end{equation}
(Later, we will use that $\mathcal{E}_{\Omega}^{\Delta}$ extends to a
continuous operator $H^{s}(\Gamma)\rightarrow H^{1/2+s}(\Omega)$ for $s\geq
0$.) This allows us to define boundary frequency filters $L_{ \Gamma }$ and $H_{ \Gamma  }$ 
based on this Dirichlet lifting by
\begin{equation}
L_{ \Gamma }\varphi:=\left.  \left(  L_{\Omega}\mathcal{E}_{\Omega}^{\Delta}%
\varphi\right)  \right\vert _{\Gamma}\quad\text{and\quad}H_{ \Gamma }\varphi:=\left.  
\left(  H_{\Omega}\mathcal{E}_{\Omega}^{\Delta}%
\varphi\right)  \right\vert _{\Gamma}.\label{DefDiriLift}%
\end{equation}
The vector-valued versions for tangential fields on the surface are used to define%

\begin{equation}%
\begin{array}
[c]{ll}%
\mathbf{H}_{\Gamma}^{\nabla}\left(  \mathbf{h}_{T}\right)  :=\nabla_{\Gamma
}\left(  H_{ \Gamma }\mathcal{L}_{\operatorname{imp}}^{\nabla}\mathbf{h}_{T}\right)  , &
\mathbf{L}_{\Gamma}^{\nabla}\left(  \mathbf{h}_{T}\right)  :=\nabla_{\Gamma
}\left(  L_{ \Gamma }\mathcal{L}_{\operatorname{imp}}^{\nabla}\mathbf{h}_{T}\right)  ,\\
\mathbf{H}_{\Gamma}^{\operatorname*{curl}}\left(  \mathbf{h}_{T}\right)
:=\overrightarrow{\operatorname*{curl}\nolimits_{\Gamma}}\left(  H_{ \Gamma }\mathcal{L}_{\operatorname{imp}}^{\operatorname{curl}}\mathbf{h}_{T}\right)
, & \mathbf{L}_{\Gamma}^{\operatorname*{curl}}\left(  \mathbf{h}_{T}\right)
:=\overrightarrow{\operatorname*{curl}\nolimits_{\Gamma}}\left(  L_{ \Gamma }\mathcal{L}_{\operatorname{imp}}^{\operatorname{curl}}\mathbf{h}_{T}\right)
,
\end{array}
\label{gt4}%
\end{equation}
and we set
\begin{equation}
\mathbf{H}_{\Gamma}:=\mathbf{H}_{\Gamma}^{\nabla}+\mathbf{H}_{\Gamma
}^{\operatorname*{curl}}\quad\text{and\quad}\mathbf{L}_{\Gamma}:=\mathbf{L}%
_{\Gamma}^{\nabla}+\mathbf{L}_{\Gamma}^{\operatorname*{curl}}%
.\label{DefHGammaLGamma2}%
\end{equation}


\subsection{Estimates for the frequency splittings}


\begin{lemma}
\label{LemScLift}Let $\Omega$ be a bounded Lipschitz domain with simply
connected, analytic boundary. The operators $\mathcal{L}_{\operatorname{imp}%
}^{\nabla}$ and $\mathcal{L}_{\operatorname{imp}}^{\operatorname{curl}}$ can
be extended (uniquely) to bounded linear operators $\mathbf{H}_{T}^{s}%
(\Gamma)\rightarrow H^{s+1}(\Gamma)$ for any $s\in\mathbb{R}$ and
\begin{subequations}
\label{eq:LemScLift}%
\begin{align}
\Vert\mathcal{L}_{\operatorname{imp}}^{\nabla}\mathbf{h}_{T}\Vert
_{H^{s+1}(\Gamma)}&\leq C_{s}\Vert\operatorname{div}_{\Gamma}\mathbf{h}%
_{T}\Vert_{H^{s-1}(\Gamma)}, \\
\Vert\mathcal{L}_{\operatorname{imp}%
}^{\operatorname{curl}}\mathbf{h}_{T}\Vert_{H^{s+1}(\Gamma)}& \leq C_{s}%
\Vert\operatorname{curl}_{\Gamma}\mathbf{h}_{T}\Vert_{H^{s-1}(\Gamma
)}.
\end{align}
\end{subequations}
For every $s>-1$, there is $C_{s}>0$ independent of $\lambda>1$ such that for
any $\mathbf{h}_{T}\in\mathbf{H}_{T}^{s}(\Gamma)$ there holds
 $\mathbf{H}_{\Gamma}  
\mathbf{h}_{T}=\mathbf{H}_{\Gamma}^{\nabla}\mathbf{h}_{T}+\mathbf{H}_{\Gamma
}^{\operatorname{curl}}\mathbf{h}_{T}$ together with
\[
\left\Vert \mathbf{H}_{\Gamma}^{\nabla}\mathbf{h}_{T}\right\Vert
_{\mathbf{H}^{s}(\Gamma)}+\left\Vert \mathbf{H}_{\Gamma}^{\operatorname*{curl}%
}\mathbf{h}_{T}\right\Vert _{\mathbf{H}^{s}\left(  \Gamma\right)  }\leq C_{ s  }\left\Vert \mathbf{h}_{T}\right\Vert _{\mathbf{H}^{s}\left(  \Gamma\right)
}.
\]

\end{lemma}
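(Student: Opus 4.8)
The plan is to treat the two assertions separately and to reduce everything to mapping properties of the Laplace--Beltrami operator on the analytic manifold $\Gamma$ together with the mapping properties of the Dirichlet lifting $\mathcal{E}^{\Delta}_\Omega$ and of Stein's extension/Fourier filter pair already recorded in Section~\ref{SecFreqSplit}. For the first assertion, I would use the characterization of $\mathcal{L}^{\nabla}_{\operatorname{imp}}$ and $\mathcal{L}^{\operatorname{curl}}_{\operatorname{imp}}$ from Remark~\ref{remk:characterize-L}: in strong form $\Delta_\Gamma \mathcal{L}^{\nabla}_{\operatorname{imp}}\mathbf{h}_T = \operatorname{div}_\Gamma \mathbf{h}_T$ and $\Delta_\Gamma \mathcal{L}^{\operatorname{curl}}_{\operatorname{imp}}\mathbf{h}_T = -\operatorname{curl}_\Gamma \mathbf{h}_T$, both subject to the zero-mean normalization. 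Since $\Gamma$ is analytic (in particular $C^\infty$) and simply connected, $\lambda_0 = 0$ is a simple eigenvalue of $-\Delta_\Gamma$, so $\Delta_\Gamma$ is an isomorphism from $\{\,v\in H^{s+1}(\Gamma)\mid (v,1)_{L^2(\Gamma)}=0\,\}$ onto $\{\,w\in H^{s-1}(\Gamma)\mid (w,1)_{L^2(\Gamma)}=0\,\}$ for every $s\in\mathbb{R}$, by standard elliptic regularity / spectral theory on a closed smooth manifold (this is exactly the statement underlying (\ref{divscalednorm})). Applying this isomorphism to $\operatorname{div}_\Gamma\mathbf{h}_T$ (resp.\ $\operatorname{curl}_\Gamma\mathbf{h}_T$), which has vanishing mean and lies in $H^{s-1}(\Gamma)$ whenever $\mathbf{h}_T\in\mathbf{H}^{s}_T(\Gamma)$ by the Hodge-decomposition bound, gives the unique bounded extension and the estimate (\ref{eq:LemScLift}). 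Uniqueness of the extension follows from density of $\mathbf{H}^{s'}_T(\Gamma)$ in $\mathbf{H}^{s}_T(\Gamma)$ for $s'>s$.

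For the second assertion I would first note the decomposition identity: by (\ref{defHodge}) we have $\mathbf{h}_T = \nabla_\Gamma \varphi + \overrightarrow{\operatorname{curl}_\Gamma}\psi$ with $\varphi = \mathcal{L}^{\nabla}_{\operatorname{imp}}\mathbf{h}_T$, $\psi = \mathcal{L}^{\operatorname{curl}}_{\operatorname{imp}}\mathbf{h}_T$, and since $L_{\partial\Omega}+H_{\partial\Omega} = \operatorname{id}$ on $H^{1/2}(\Gamma)$ (because $L_\Omega + H_\Omega = \operatorname{id}$ on $\Omega$ and then one restricts to $\Gamma$), applying $\nabla_\Gamma$ and $\overrightarrow{\operatorname{curl}_\Gamma}$ to $\varphi = (L_{\partial\Omega}+H_{\partial\Omega})\varphi$ and $\psi = (L_{\partial\Omega}+H_{\partial\Omega})\psi$ yields $\mathbf{h}_T = \mathbf{H}^{\nabla}_\Gamma\mathbf{h}_T + \mathbf{L}^{\nabla}_\Gamma\mathbf{h}_T + \mathbf{H}^{\operatorname{curl}}_\Gamma\mathbf{h}_T + \mathbf{L}^{\operatorname{curl}}_\Gamma\mathbf{h}_T$. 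Rearranging gives $\mathbf{h}_T = \mathbf{H}^{\nabla}_\Gamma\mathbf{h}_T + \mathbf{H}^{\operatorname{curl}}_\Gamma\mathbf{h}_T + (\mathbf{L}_\Gamma\mathbf{h}_T)$; to get the cleaner statement $\mathbf{h}_T = \mathbf{H}^{\nabla}_\Gamma\mathbf{h}_T + \mathbf{H}^{\operatorname{curl}}_\Gamma\mathbf{h}_T$ as written, I would actually read it — consistently with the notational convention around (\ref{vFExp}) — componentwise on the two parts of the Hodge decomposition and combine with the boundedness estimate; in any case the sum formula is immediate from $L_{\partial\Omega}+H_{\partial\Omega}=\operatorname{id}$.

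The estimate $\|\mathbf{H}^{\nabla}_\Gamma\mathbf{h}_T\|_{\mathbf{H}^s(\Gamma)} + \|\mathbf{H}^{\operatorname{curl}}_\Gamma\mathbf{h}_T\|_{\mathbf{H}^s(\Gamma)} \le C\|\mathbf{h}_T\|_{\mathbf{H}^s(\Gamma)}$, uniformly in $\lambda>1$, is then obtained by chaining bounded operators. Concretely, for the $\nabla$-part: $\mathbf{h}_T \mapsto \operatorname{div}_\Gamma\mathbf{h}_T \in H^{s-1}(\Gamma)$ (bounded, norm $\le \|\mathbf{h}_T\|_{\mathbf{H}^s}$), then $\Delta_\Gamma^{-1}$ lands in $H^{s+1}(\Gamma)$, i.e.\ $\varphi = \mathcal{L}^{\nabla}_{\operatorname{imp}}\mathbf{h}_T \in H^{s+1}(\Gamma)$ by the first part; then $\mathcal{E}^{\Delta}_\Omega \varphi \in H^{s+3/2}(\Omega)$ by the stated mapping property of the harmonic lifting (for $s+1\ge 0$, i.e.\ $s\ge -1$, which is where the hypothesis $s>-1$ enters); then $H_{\mathbb{R}^3}\mathcal{E}_{\operatorname{Stein}}(\cdot)$ is bounded on every Sobolev space with norm bounded independently of the cutoff radius $\lambda|k|$ (the Fourier multiplier $1-\chi_{\lambda|k|}$ has $L^\infty$-norm $1$), so $H_\Omega \mathcal{E}^{\Delta}_\Omega\varphi \in H^{s+3/2}(\Omega)$ with $\lambda$-uniform bound; restriction to $\Gamma$ via the trace theorem gives $H_{\partial\Omega}\varphi \in H^{s+1}(\Gamma)$; finally $\nabla_\Gamma$ maps $H^{s+1}(\Gamma)\to \mathbf{H}^s(\Gamma)$ boundedly. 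The $\operatorname{curl}$-part is identical with $\operatorname{div}_\Gamma$ replaced by $\operatorname{curl}_\Gamma$ and $\nabla_\Gamma$ by $\overrightarrow{\operatorname{curl}_\Gamma}$. The one point deserving care — and the main (minor) obstacle — is the $\lambda$-uniformity: it must be stressed that although $H_{\mathbb{R}^3}$ depends on $\lambda|k|$, its operator norm on each $H^t$ is $\le 1$ uniformly, so no constant depending on $\lambda$ (or $k$) is picked up; all $\lambda$-dependence is deferred to the \emph{smoothing} gain in later lemmas where one exploits $1-\chi_{\lambda|k|}$ being supported at high frequencies, which is not needed here.
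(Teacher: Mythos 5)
Your proposal is correct and follows essentially the same route as the paper: the mapping bounds (\ref{eq:LemScLift}) from the strong-form characterization $\Delta_\Gamma\mathcal{L}^{\nabla}_{\operatorname{imp}}\mathbf{h}_T=\operatorname{div}_\Gamma\mathbf{h}_T$ (Remark~\ref{remk:characterize-L}) plus elliptic regularity for $\Delta_\Gamma$ on the closed analytic surface, and the $\lambda$-uniform stability of $\mathbf{H}^\nabla_\Gamma$, $\mathbf{H}^{\operatorname{curl}}_\Gamma$ from the same composition $\nabla_\Gamma\,\gamma\, H_\Omega\, \mathcal{E}^\Delta_\Omega\,\mathcal{L}^{\nabla}_{\operatorname{imp}}$ (with $\varepsilon=s+1>0$ entering through the trace and lifting) that the paper obtains by citing the stability of $H_\Omega$ and the trace operator; your explicit multiplier argument for the $\lambda$-uniformity is exactly what those citations encode.
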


\begin{proof}
The mapping properties for $\mathcal{L}_{\operatorname{imp}}^{\nabla}$,
$\mathcal{L}_{\operatorname{imp}}^{\operatorname{curl}}$, follow directly from
elliptic regularity theory on smooth manifolds in view of
Remark~\ref{remk:characterize-L}. For the stability of the operators
$\mathbf{H}_{\Gamma}^{\nabla}$, $\mathbf{H}_{\Gamma}^{\operatorname{curl}}$ we
use the stability of the operator $H_{\Omega}:H^{s^{\prime}}(\Omega
)\rightarrow H^{s^{\prime}}(\Omega)$ for $s^{\prime}\geq0$ and the stability
of the trace operator $\gamma:H^{1/2+s^{\prime}}(\Omega)\rightarrow
H^{s^{\prime}}(\Gamma)$ for $s^{\prime}>0$ as in \cite[Lem.~{4.2}%
]{mm_stas_helm2} to get that $\mathbf{h}_{T}\mapsto\gamma H_{\Omega
}\mathcal{E}_{\Omega}^{\Delta}\mathcal{L}_{\operatorname{imp}}^{\nabla
}\mathbf{h}_{T}$ maps continuously $H^{-1+\varepsilon}(\Gamma)\rightarrow
H^{\varepsilon}(\Gamma)$ for any $\varepsilon>0$ with continuity constant
independent of $\lambda>1$. Since $\nabla_{\Gamma}:H^{\varepsilon}%
(\Gamma)\rightarrow\mathbf{H}_{T}^{-1+\varepsilon}(\Gamma)$, the result
follows. The case of $\mathbf{H}_{\Gamma}^{\operatorname{curl}}$ is handled analogously.
\end{proof}

We recall some properties of the high frequency splittings that are proved in
\cite[Lem.~{4.2}]{mm_stas_helm2}.

\begin{proposition}
\label{lemma:properties-of-H-via-lifting} Let $\lambda>1$ be the parameter
appearing in the definition of $H_{\mathbb{R}^{3}}$ in (\ref{deffreqslitR3}).
Let $H_{\Omega}$, $H_{\Omega}^{0}$, and $\mathbf{H}_{\Gamma}$ be the operators
of (\ref{DefLOmegaHOmega}), (\ref{DefHOmega0}), and (\ref{DefHGammaLGamma2}).
There are constants $C_{s^{\prime},s}$ independent of $\lambda>1$ such that
the following holds.

\begin{enumerate}
[(i)]

\item \label{item:lemma:properties-of-H-via-lifting-a} The frequency splitting
(\ref{deffreqslitR3}) satisfies for all $0\leq s^{\prime}\leq s$ the
estimates
\begin{align}
\left\Vert H_{\mathbb{R}^{3}}f\right\Vert _{H^{s^{\prime}}(\mathbb{R}^{3})}
&  \leq C_{s^{\prime},s}\left(  \lambda\left\vert k\right\vert \right)
^{s^{\prime}-s}\left\Vert f\right\Vert _{H^{s}(\mathbb{R}^{3})}\qquad\forall
f\in H^{s}(\mathbb{R}^{3}),\label{hfsplitdomain}\\
\left\Vert H_{\Omega}f\right\Vert _{H^{s^{\prime}}(\Omega)}  &  \leq
C_{s^{\prime},s}\left(  \lambda\left\vert k\right\vert \right)  ^{s^{\prime
}-s}\left\Vert f\right\Vert _{H^{s}(\Omega)}\qquad\forall f\in H^{s}%
(\Omega),\label{4.3}\\
\left\Vert H_{\Omega}^{0}\mathbf{f}\right\Vert _{\mathbf{H}^{s^{\prime}%
}(\Omega)}  &  \leq C_{s^{\prime},s}\left(  \lambda\left\vert k\right\vert
\right)  ^{s^{\prime}-s}\left\Vert \mathbf{f}\right\Vert _{\mathbf{H}%
^{s}(\Omega)}\qquad\forall\mathbf{f}\in\mathbf{H}^{s}(\Omega).
\label{kurbelHOmega0}%
\end{align}
These estimates hold also for Lipschitz domains.

\item \label{item:lemma:properties-of-H-via-lifting-b} Let $0\leq s^{\prime
}<s$ or $0<s^{\prime}\leq s$. Then the operator $H_{ \Gamma }$ satisfies
\begin{equation}
\left\Vert H_{ \Gamma }g\right\Vert _{H^{s^{\prime}}( \Gamma  )}\leq C_{s^{\prime},s}\left(  \lambda\left\vert k\right\vert \right)
^{s^{\prime}-s}\Vert g\Vert_{H^{s}( \Gamma )}. 
\label{eq:lemma:properties-of-H-via-lifting-1000}%
\end{equation}

\item \label{item:lemma:properties-of-H-via-lifting-c} Let $-1\leq s^{\prime
}<s$ or $-1<s^{\prime}\leq s$. Then the operator $\mathbf{H}_{\Gamma}$
satisfies for $\mathbf{g}_{T}\in\mathbf{H}_{T}^{s}(\Gamma)$%
\begin{subequations}
\label{Hboldfilter}
\begin{align}
\left\Vert \mathbf{H}_{\Gamma}\mathbf{g}_{T}\right\Vert _{\mathbf{H}%
^{s^{\prime}}(\Gamma)}  &  \leq C_{s^{\prime},s}\left(  \lambda\left\vert
k\right\vert \right)  ^{s^{\prime}-s}\Vert\mathbf{g}_{T}\Vert_{\mathbf{H}%
^{s}(\Gamma)},
\label{Hboldfiltera}\\
\left\Vert \operatorname*{div}\nolimits_{\Gamma}\mathbf{H}_{\Gamma}%
\mathbf{g}_{T}\right\Vert _{H^{s^{\prime}-1}(\Gamma)}  &  \leq C_{s^{\prime
},s}\left(  \lambda\left\vert k\right\vert \right)  ^{s^{\prime}-s}%
\Vert\operatorname*{div}\nolimits_{\Gamma}\mathbf{g}_{T}\Vert_{\mathbf{H}%
^{s-1}(\Gamma)},
\label{Hboldfilterb}\\
\left\Vert \operatorname*{curl}\nolimits_{\Gamma}\mathbf{H}_{\Gamma}%
\mathbf{g}_{T}\right\Vert _{H^{s^{\prime}-1}(\Gamma)}  &  \leq C_{s^{\prime
},s}\left(  \lambda\left\vert k\right\vert \right)  ^{s^{\prime}-s}%
\Vert\operatorname*{curl}\nolimits_{\Gamma}\mathbf{g}_{T}\Vert_{\mathbf{H}%
^{s-1}(\Gamma)}. 
\label{Hboldfilterc}%
\end{align}
\end{subequations}
\end{enumerate}

\end{proposition}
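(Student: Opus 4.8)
The plan is to prove all three parts by chaining elementary mapping properties of the constituent operators --- the Fourier multiplier $1-\chi_{\lambda|k|}$, Stein's extension $\mathcal{E}_{\operatorname{Stein}}$, the order-$(-1)$ operator $\mathbf{R}_2$ of Lemma~\ref{LemRs}, the harmonic Dirichlet lifting $\mathcal{E}_\Omega^\Delta$, and the scalar Hodge potentials $\mathcal{L}^\nabla_{\operatorname{imp}}$, $\mathcal{L}^{\operatorname{curl}}_{\operatorname{imp}}$ of Lemma~\ref{LemScLift} --- while carefully tracking the single gain factor $(\lambda|k|)^{s'-s}$ contributed by the high-pass cut-off. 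The overall argument is that of \cite[Lem.~{4.2}]{mm_stas_helm2}, to which I would defer for the technical borderline cases.

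For part~(i), I would first prove (\ref{hfsplitdomain}) on the Fourier side: $H_{\mathbb{R}^3}$ is multiplication by $1-\chi_{\lambda|k|}$, supported in $\{|\xi|\ge\lambda|k|\ge\lambda>1\}$, and on that set $\langle\xi\rangle\sim|\xi|$, whence $\langle\xi\rangle^{s'-s}\le C_{s',s}(\lambda|k|)^{s'-s}$ for $s'\le s$; Plancherel then gives (\ref{hfsplitdomain}) with a constant depending only on $s'-s$, in particular independent of $\lambda$ and $k$. Estimate (\ref{4.3}) follows by pre-composing with $\mathcal{E}_{\operatorname{Stein}}\colon H^s(\Omega)\to H^s(\mathbb{R}^3)$ (bounded for all $s\ge0$, also on Lipschitz domains) and post-composing with the restriction to $\Omega$. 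For (\ref{kurbelHOmega0}) I would note that $\operatorname{curl}$ costs one derivative and commutes with the filter, apply (\ref{hfsplitdomain}) with the shifted pair $s'+1\le s+1$, and absorb the lost derivative through the order-$(-1)$ mapping of $\mathbf{R}_2$:
\begin{align*}
\|H^0_\Omega\mathbf{f}\|_{\mathbf{H}^{s'}(\Omega)}
&\le\|H_{\mathbb{R}^3}\mathcal{E}_{\operatorname{Stein}}\mathbf{R}_2\mathbf{f}\|_{\mathbf{H}^{s'+1}(\mathbb{R}^3)}\\
&\le C(\lambda|k|)^{s'-s}\|\mathbf{R}_2\mathbf{f}\|_{\mathbf{H}^{s+1}(\Omega)}
\le C(\lambda|k|)^{s'-s}\|\mathbf{f}\|_{\mathbf{H}^s(\Omega)}.
\end{align*}

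For part~(ii), I would write $H_{\partial\Omega}=\gamma\circ H_\Omega\circ\mathcal{E}^\Delta_\Omega$ and concatenate $\mathcal{E}^\Delta_\Omega\colon H^t(\Gamma)\to H^{t+1/2}(\Omega)$, estimate (\ref{4.3}) with the pair $s+1/2\ge s'+1/2$, and the trace $\gamma\colon H^{t+1/2}(\Omega)\to H^t(\Gamma)$; the only delicate point is that this trace needs $t=s'>0$, so for the admissible case $s'=0<s$ one instead routes $H_\Omega\mathcal{E}^\Delta_\Omega g$ through the Besov space $B^{1/2}_{2,1}(\Omega)$, whose trace into $L^2(\Gamma)$ is bounded (cf.\ the use of $\mathbf{B}^{1/2}_{2,1}(\Omega)$ in the proof of Theorem~\ref{TheoNonStarShaped}), exactly as in \cite[Lem.~{4.2}]{mm_stas_helm2}. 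Part~(iii) then reduces to (ii): by Lemma~\ref{LemScLift}, $\mathcal{L}^\nabla_{\operatorname{imp}},\mathcal{L}^{\operatorname{curl}}_{\operatorname{imp}}\colon\mathbf{H}^s_T(\Gamma)\to H^{s+1}(\Gamma)$; applying (ii) with the pair $s+1\ge s'+1$ (where $s'>-1$ makes $s'+1>0$, so no Besov detour is needed) gains $(\lambda|k|)^{s'-s}$; and finally $\nabla_\Gamma,\overrightarrow{\operatorname{curl}}_\Gamma\colon H^{s'+1}(\Gamma)\to\mathbf{H}^{s'}_T(\Gamma)$, so summing the two Hodge pieces gives (\ref{Hboldfiltera}). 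For (\ref{Hboldfilterb}) and (\ref{Hboldfilterc}) I would use $\operatorname{div}_\Gamma\overrightarrow{\operatorname{curl}}_\Gamma=0$ and $\operatorname{curl}_\Gamma\nabla_\Gamma=0$ to get $\operatorname{div}_\Gamma\mathbf{H}_\Gamma\mathbf{g}_T=\Delta_\Gamma H_{\partial\Omega}\mathcal{L}^\nabla_{\operatorname{imp}}\mathbf{g}_T$ and $\operatorname{curl}_\Gamma\mathbf{H}_\Gamma\mathbf{g}_T=-\Delta_\Gamma H_{\partial\Omega}\mathcal{L}^{\operatorname{curl}}_{\operatorname{imp}}\mathbf{g}_T$, bound $\Delta_\Gamma\colon H^{s'+1}(\Gamma)\to H^{s'-1}(\Gamma)$, apply (ii) with the pair $s+1\ge s'+1$, and crucially invoke the \emph{refined} bounds of Lemma~\ref{LemScLift}, namely $\|\mathcal{L}^\nabla_{\operatorname{imp}}\mathbf{g}_T\|_{H^{s+1}(\Gamma)}\le C\|\operatorname{div}_\Gamma\mathbf{g}_T\|_{H^{s-1}(\Gamma)}$ and its $\operatorname{curl}_\Gamma$ analogue, rather than attempting to commute $\Delta_\Gamma$ through the non-spectral filter $H_{\partial\Omega}$.

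I do not expect any genuinely hard estimate here; the work is in the bookkeeping of Sobolev indices. The one place to be careful is the endpoint trace regularity (the case $s'=0$ in (ii) and the induced $s'=-1$ in (iii)), which forces the Besov-space detour of \cite[Lem.~{4.2}]{mm_stas_helm2}, and the observation in (\ref{Hboldfilterb})/(\ref{Hboldfilterc}) that the surface-differential estimates must be channeled through the refined mapping properties of $\mathcal{L}^\nabla_{\operatorname{imp}}$ and $\mathcal{L}^{\operatorname{curl}}_{\operatorname{imp}}$ rather than through a (false) commutation of $\Delta_\Gamma$ with $H_{\partial\Omega}$.
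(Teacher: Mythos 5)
Your proposal is correct and follows essentially the same route as the paper's proof: part~(i) by combining the Fourier-side bound with $\mathcal{E}_{\operatorname{Stein}}$ and the order-$(-1)$ mapping of $\mathbf{R}_2$ (commuting $\operatorname{curl}$ through the filter), part~(ii) by concatenating $\mathcal{E}_{\Omega}^{\Delta}$, $H_\Omega$ and the trace, and part~(iii) by reducing to~(ii) through the Hodge potentials together with $\operatorname{div}_\Gamma\mathbf{H}_\Gamma\mathbf{g}_T=\Delta_\Gamma\bigl(H_{\partial\Omega}\mathcal{L}_{\operatorname{imp}}^{\nabla}\mathbf{g}_T\bigr)$ and $\Delta_\Gamma\mathcal{L}_{\operatorname{imp}}^{\nabla}\mathbf{g}_T=\operatorname{div}_\Gamma\mathbf{g}_T$. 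The only cosmetic difference is at the endpoint $s'=0$ of~(ii), where the paper uses a multiplicative trace inequality in place of your $B^{1/2}_{2,1}$ detour; these are the same interpolation estimate in different notation.
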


\begin{proof}
\emph{Proof of (\ref{item:lemma:properties-of-H-via-lifting-a}):} Estimates
(\ref{hfsplitdomain}) and (\ref{4.3}) are shown in \cite[Lem.~{4.2}%
]{mm_stas_helm2}. To see (\ref{kurbelHOmega0}), we bound $H_{\Omega}^{0}$ as
follows%
\begin{align*}
\left\Vert H_{\Omega}^{0}\mathbf{f}\right\Vert _{\mathbf{H}^{s^{\prime}%
}(\Omega)} &  =\left\Vert \operatorname*{curl}H_{\mathbb{R}^{3}}%
\mathcal{E}_{\operatorname*{Stein}}\mathbf{R}_{2}\mathbf{f}\right\Vert
_{\mathbf{H}^{s^{\prime}}(\Omega)}\leq \left\Vert H_{\mathbb{R}^{3}}%
\operatorname*{curl}\mathcal{E}_{\operatorname*{Stein}}\mathbf{R}%
_{2}\mathbf{f}\right\Vert _{\mathbf{H}^{s^{\prime}}(%
\mathbb{R}^{3}  )}\\
&  \overset{\text{(\ref{hfsplitdomain})}}{\leq}C_{s^{\prime},s}\left(
\lambda\left\vert k\right\vert \right)  ^{s^{\prime}-s}\left\Vert
\operatorname*{curl}\mathcal{E}_{\operatorname*{Stein}}\mathbf{R}%
_{2}\mathbf{f}\right\Vert _{\mathbf{H}^{s}(\mathbb{R}^{3})}\\
&  \leq C_{s^{\prime},s}\left(  \lambda\left\vert k\right\vert \right)
^{s^{\prime}-s}\left\Vert \mathcal{E}_{\operatorname*{Stein}}\mathbf{R}%
_{2}\mathbf{f}\right\Vert _{\mathbf{H}^{s+1}(\mathbb{R}^{3})} \\
& \leq
C_{s^{\prime},s}\left(  \lambda\left\vert k\right\vert \right)  ^{s^{\prime
}-s}\left\Vert \mathbf{R}_{2}\mathbf{f}\right\Vert _{\mathbf{H}^{s+1}(\Omega
)}
  \leq C_{s^{\prime},s}\left(  \lambda\left\vert k\right\vert \right)
^{s^{\prime}-s}\left\Vert \mathbf{f}\right\Vert _{\mathbf{H}^{s}(\Omega)}.
\end{align*}

\emph{Proof of (\ref{item:lemma:properties-of-H-via-lifting-b}):} For
$s^{\prime}>0$ the definition of $H_{ \Gamma }$ in (\ref{DefDiriLift}) implies%
\begin{align*}
\left\Vert H_{ \Gamma  }g\right\Vert _{H^{s^{\prime}}( \Gamma  )}
& =\left\Vert H_{\Omega}\mathcal{E}_{\Omega}^{\Delta}g\right\Vert
_{H^{s^{\prime}}( \Gamma  )}\leq C\left\Vert H_{\Omega}\mathcal{E}_{\Omega}^{\Delta}g\right\Vert
_{H^{s^{\prime}+1/2}(\Omega)} \\
& \overset{\text{(\ref{4.3})}}{\leq}\tilde
{C}_{s^{\prime},s}\left(  \lambda\left\vert k\right\vert \right)  ^{s^{\prime
}-s}\left\Vert \mathcal{E}_{\Omega}^{\Delta}g\right\Vert _{H^{s+1/2}(\Omega
)}.
\end{align*}
The regularity theory for the Laplace problem (\ref{LaplaceDiriProbl}) leads
to (\ref{eq:lemma:properties-of-H-via-lifting-1000}). For the case $s^{\prime
}=0$, we have $s>0$, and the multiplicative trace inequality
\[
\Vert H_{ \Gamma  
}g\Vert_{L^{2}( \Gamma )}\leq C\Vert H_{\Omega}\mathcal{E}_{\Omega}^{\Delta}g\Vert_{L^{2}(\Omega
)}^{1-1/(2s+1)}\Vert H_{\Omega}\mathcal{E}_{\Omega}^{\Delta}g\Vert
_{H^{s+1/2}(\Omega)}^{1/(2s+1)},
\]
together with the properties of $H_{\Omega}$ lead to the result.

\emph{Proof of (\ref{item:lemma:properties-of-H-via-lifting-c}):} We have
$\displaystyle 
\mathbf{H}_{\Gamma}\mathbf{g}_{T}=\nabla_{\Gamma}(  H_{ \Gamma  
}\mathcal{L}_{\operatorname*{imp}}^{\nabla}\mathbf{g}_{T})
+\overrightarrow{\operatorname*{curl}\nolimits_{\Gamma}}(  H_{ \Gamma  
}\mathcal{L}_{\operatorname*{imp}}^{\operatorname*{curl}}\mathbf{g}%
_{T})  .
$
A triangle inequality leads to%
\begin{align*}
\left\Vert \mathbf{H}_{\Gamma}\mathbf{g}_{T}\right\Vert _{\mathbf{H}%
^{s^{\prime}}\left(  \Gamma\right)  }  &  \leq\left\Vert H_{ \Gamma  }\mathcal{L}_{\operatorname*{imp}}^{\nabla}\mathbf{g}_{T}\right\Vert
_{\mathbf{H}^{s^{\prime}+1}\left(  \Gamma\right)  }+\left\Vert H_{ \Gamma  }\mathcal{L}_{\operatorname*{imp}}^{\operatorname*{curl}}\mathbf{g}%
_{T}\right\Vert _{\mathbf{H}^{s^{\prime}+1}\left(  \Gamma\right)  }\\
&  \overset{\text{(\ref{eq:lemma:properties-of-H-via-lifting-1000})}}{\leq
}C_{s^{\prime},s}\left(  \lambda\left\vert k\right\vert \right)  ^{s^{\prime
}-s}\left(  \left\Vert \mathcal{L}_{\operatorname*{imp}}^{\nabla}%
\mathbf{g}_{T}\right\Vert _{\mathbf{H}^{s+1}\left(  \Gamma\right)
}+\left\Vert \mathcal{L}_{\operatorname*{imp}}^{\operatorname*{curl}%
}\mathbf{g}_{T}\right\Vert _{\mathbf{H}^{s+1}\left(  \Gamma\right)  }\right)
\\
&  \overset{\text{(\ref{eq:LemScLift})}}{\leq}C_{s^{\prime},s}\left(
\lambda\left\vert k\right\vert \right)  ^{s^{\prime}-s}\left\Vert
\mathbf{g}_{T}\right\Vert _{\mathbf{H}^{s}\left(  \Gamma\right)  },
\end{align*}
which shows (\ref{Hboldfiltera}). For (\ref{Hboldfilterb}) we start from
\[
\operatorname*{div}\nolimits_{\Gamma}\mathbf{H}_{\Gamma}\mathbf{g}%
_{T}=\operatorname*{div}\nolimits_{\Gamma}\left(  \nabla_{\Gamma}(  H_{ \Gamma  
}\mathcal{L}_{\operatorname*{imp}}^{\nabla}\mathbf{g}_{T})
+\overrightarrow{\operatorname*{curl}\nolimits_{\Gamma}}(  H_{ \Gamma }\mathcal{L}_{\operatorname*{imp}}^{\operatorname*{curl}}\mathbf{g}%
_{T})  \right)  =\Delta_{\Gamma}(  H_{ \Gamma  
}\mathcal{L}_{\operatorname*{imp}}^{\nabla}\mathbf{g}_{T})  .
\]
We apply the previous estimate
(\ref{eq:lemma:properties-of-H-via-lifting-1000}) to get
\begin{align*}
\left\Vert \operatorname*{div}\nolimits_{\Gamma}\mathbf{H}_{\Gamma}%
\mathbf{g}_{T}\right\Vert _{H^{s^{\prime}-1}\left(  \Gamma\right)  }  &
=\left\Vert \Delta_{\Gamma}(  H_{ \Gamma  }\mathcal{L}_{\operatorname*{imp}}^{\nabla}\mathbf{g}_{T})  \right\Vert
_{H^{s^{\prime}-1}\left(  \Gamma\right)  }\leq C\left\Vert H_{ \Gamma  }\mathcal{L}_{\operatorname*{imp}}^{\nabla}\mathbf{g}_{T}\right\Vert
_{H^{s^{\prime}+1}\left(  \Gamma\right)  }\\
&  \leq C_{s^{\prime},s}\left(  \lambda\left\vert k\right\vert \right)
^{s^{\prime}-s}\left\Vert \mathcal{L}_{\operatorname*{imp}}^{\nabla}%
\mathbf{g}_{T}\right\Vert _{H^{s+1}\left(  \Gamma\right)  }.
\end{align*}
From (\ref{defphhigT}), we obtain $\Delta_{\Gamma}\mathcal{L}%
_{\operatorname*{imp}}^{\nabla}\mathbf{g}_{T}=\operatorname*{div}%
\nolimits_{\Gamma}\mathbf{g}_{T}$ so that%
\begin{align*}
\left\Vert \operatorname*{div}\nolimits_{\Gamma}\mathbf{H}_{\Gamma}%
\mathbf{g}_{T}\right\Vert _{H^{s^{\prime}-1}\left(  \Gamma\right)  } 
&\leq
\tilde{C}_{s^{\prime},s}\left(  \lambda\left\vert k\right\vert \right)
^{s^{\prime}-s}\left\Vert \Delta_{\Gamma}\mathcal{L}_{\operatorname*{imp}%
}^{\nabla}\mathbf{g}_{T}\right\Vert _{H^{s-1}\left(  \Gamma\right)  } \\
& =\tilde{C}_{s^{\prime},s}\left(  \lambda\left\vert k\right\vert \right)
^{s^{\prime}-s}\left\Vert \operatorname*{div}\nolimits_{\Gamma}\mathbf{g}%
_{T}\right\Vert _{H^{s-1}\left(  \Gamma\right)  }.
\end{align*}
This shows (\ref{Hboldfilterb}). The proof of (\ref{Hboldfilterc}) follows
along the same lines by using $\operatorname*{curl}\nolimits_{\Gamma
}\overrightarrow{\operatorname*{curl}\nolimits_{\Gamma}}=-\Delta_{\Gamma}$ and
$\operatorname*{curl}\nolimits_{\Gamma}\nabla_{\Gamma}=0$.%
\end{proof}

The following lemma concerns the parameter-explicit bounds for the low
frequency operators.

\begin{lemma}
\label{LemLowFreqEst}Let $\lambda>1$ be fixed in the definition
(\ref{deffreqslitR3}) of $L_{\mathbb{R}^{3}}$. There exists a constant $C>0$
independent of $\lambda$ such that for all $p\in\mathbb{N}_{0}$,
$\mathbf{v}\in\mathbf{L}^{2}(\mathbb{R}^{3})$, $\mathbf{w}\in\mathbf{L}%
^{2}(\Omega)$, there holds%
\begin{subequations}
\label{freqfiltest}
\begin{align}
\bigl\vert L_{\mathbb{R}^{3}}\mathbf{v}\bigr\vert _{\mathbf{H}^{p}%
(\mathbb{R}^{3})}  &  \leq C\left(  \lambda|k|\right)  ^{p}\left\Vert
\mathbf{v}\right\Vert _{\mathbf{L}^{2}(\mathbb{R}^{3})},
\label{freqfiltesta}\\
\bigl\vert L_{\Omega}\mathbf{w}\bigr\vert _{\mathbf{H}^{p}(\Omega
)}+\bigl\vert L_{\Omega}^{0}\mathbf{w}\bigr\vert _{\mathbf{H}^{p}(\Omega)}
&  \leq C\left(  \lambda|k|\right)  ^{p}\left\Vert \mathbf{w}\right\Vert .
\label{freqfiltestb}%
\end{align}
\end{subequations}%
For the boundary frequency filter we have, due to the analyticity of $\Gamma$,
the existence of $C>0$ and a neighborhood $\mathcal{U}_{\Gamma}\subset
\mathbb{R}^{3}$ of $\Gamma$ (depending only on $\Omega$) and some $\gamma>0$
(depending additionally on $\lambda$) such that for each $\mathbf{z}_{T}%
\in\mathbf{H}_{T}^{-1/2}(\Gamma)$ there exists a function
\[
\mathbf{Z}\in\mathcal{A}\bigl(  C\left\Vert \mathbf{z}_{T}\right\Vert
_{\mathbf{H}^{-1/2}(\Gamma)},\gamma,\mathcal{U}_{\Gamma}\bigr)
\]
such that $\mathbf{Z}|_{\Gamma}=L_{\Gamma}\mathbf{z}_{T}$.
\end{lemma}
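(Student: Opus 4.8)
The estimates (\ref{freqfiltesta}) and (\ref{freqfiltestb}) are the standard "Bernstein-type" bounds for a low-pass Fourier cut-off, and I would dispatch them first. For (\ref{freqfiltesta}), note that $\widehat{L_{\mathbb{R}^3}\mathbf{v}}$ is supported in the ball of radius $\lambda|k|$, so $\vert L_{\mathbb{R}^3}\mathbf{v}\vert_{\mathbf{H}^p(\mathbb{R}^3)}^2 = \int_{|\xi|\le\lambda|k|}|\xi|^{2p}|\widehat{\mathbf{v}}(\xi)|^2\,d\xi \le (\lambda|k|)^{2p}\|\mathbf{v}\|^2_{\mathbf{L}^2(\mathbb{R}^3)}$ by Plancherel. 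The domain versions in (\ref{freqfiltestb}) then follow by composing with Stein's extension operator $\mathcal{E}_{\operatorname{Stein}}$ (respectively $\mathcal{E}_{\operatorname{Stein}}\mathbf{R}_2$ followed by $\operatorname{curl}$, as in the proof of (\ref{kurbelHOmega0}) in Proposition~\ref{lemma:properties-of-H-via-lifting}), using the boundedness of these operators on the relevant Sobolev scales; the constant remains independent of $\lambda$ because the $\lambda$-dependence is entirely explicit in $(\lambda|k|)^p$.

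The substantial part is the boundary statement: producing an analytic extension $\mathbf{Z}$ of $L_\Gamma \mathbf{z}_T$ to a fixed neighborhood $\mathcal{U}_\Gamma$ with analyticity constants controlled by $\|\mathbf{z}_T\|_{\mathbf{H}^{-1/2}(\Gamma)}$. The plan is to trace through the definition $L_\Gamma = \mathbf{L}_\Gamma^\nabla + \mathbf{L}_\Gamma^{\operatorname{curl}}$ from (\ref{DefHGammaLGamma2}), (\ref{gt4}): we have $\mathbf{L}_\Gamma^\nabla \mathbf{z}_T = \nabla_\Gamma(L_{\partial\Omega}\mathcal{L}^\nabla_{\operatorname{imp}}\mathbf{z}_T)$ where $L_{\partial\Omega}\varphi = (L_\Omega \mathcal{E}^\Delta_\Omega \varphi)|_\Gamma$. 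The key observation is that $L_\Omega \mathcal{E}^\Delta_\Omega \varphi = (L_{\mathbb{R}^3}\mathcal{E}_{\operatorname{Stein}}\mathcal{E}^\Delta_\Omega\varphi)|_\Omega$ is the restriction of a function on $\mathbb{R}^3$ whose Fourier transform is compactly supported in $\{|\xi|\le\lambda|k|\}$; hence that function is the restriction to $\mathbb{R}^3$ of an \emph{entire} function of exponential type $\lambda|k|$, and in particular it is real-analytic on a full neighborhood of $\overline{\Omega}$, with derivative bounds $\vert L_{\mathbb{R}^3}(\cdot)\vert_{\mathbf{H}^n(\mathbb{R}^3)}\le (\lambda|k|)^n\|\cdot\|_{\mathbf{L}^2(\mathbb{R}^3)}$ from (\ref{freqfiltesta}). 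Composing with the (fixed, $k$-independent) bounded operators $\mathcal{E}^\Delta_\Omega: H^{-1/2}(\Gamma)\to H^0(\Omega)$ — using the mapping property stated after (\ref{LaplaceDiriProbl}) — and $\mathcal{L}^\nabla_{\operatorname{imp}},\mathcal{L}^{\operatorname{curl}}_{\operatorname{imp}}:\mathbf{H}^{-1/2}_T(\Gamma)\to H^{1/2}(\Gamma)$ from Lemma~\ref{LemScLift} gives $\|\mathcal{E}_{\operatorname{Stein}}\mathcal{E}^\Delta_\Omega\mathcal{L}^\nabla_{\operatorname{imp}}\mathbf{z}_T\|_{\mathbf{L}^2(\mathbb{R}^3)}\le C\|\mathbf{z}_T\|_{\mathbf{H}^{-1/2}(\Gamma)}$. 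One must also differentiate $\nabla_\Gamma$, which on the analytic manifold $\Gamma$ costs only finitely many extra derivatives per order; this is where analyticity of $\Gamma$ enters, allowing us to absorb the surface gradient and the passage from $\Gamma$ to the tubular neighborhood into the constant $\gamma$.

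Concretely, I would set $\mathbf{Z}$ to be the vector field on $\mathcal{U}_\Gamma$ obtained by taking the constant normal extension of $\nabla_\Gamma$ applied to $(L_{\mathbb{R}^3}\mathcal{E}_{\operatorname{Stein}}\mathcal{E}^\Delta_\Omega\mathcal{L}^\nabla_{\operatorname{imp}}\mathbf{z}_T)|_\Gamma$, plus the analogous $\overrightarrow{\operatorname{curl}_\Gamma}$-term, and verify $\mathbf{Z}|_\Gamma = L_\Gamma\mathbf{z}_T$ directly from the definitions; alternatively, since $L_{\mathbb{R}^3}\mathcal{E}_{\operatorname{Stein}}\mathcal{E}^\Delta_\Omega\mathcal{L}^\nabla_{\operatorname{imp}}\mathbf{z}_T$ is already analytic on a neighborhood of $\Gamma$, one can take the intrinsic tangential gradient of its restriction and extend. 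The derivative bound $|\mathbf{Z}|_{\mathbf{H}^n(\mathcal{U}_\Gamma)}\le C\|\mathbf{z}_T\|_{\mathbf{H}^{-1/2}(\Gamma)}\,\gamma^n\max\{n+1,|k|\}^n$ then follows by combining the exponential-type bound $(\lambda|k|)^n$ for $L_{\mathbb{R}^3}$-derivatives with the analytic estimates for the analytic change of coordinates flattening $\Gamma$, for the normal extension, and for $\nabla_\Gamma$/$\overrightarrow{\operatorname{curl}_\Gamma}$ — all of which contribute constants of the form $C\gamma_0^n n!$, and $n!\le e^n (n+1)^n$, so the product fits the class $\mathcal{A}(C\|\mathbf{z}_T\|_{\mathbf{H}^{-1/2}(\Gamma)},\gamma,\mathcal{U}_\Gamma)$ with $\gamma$ depending on $\lambda$ and on the analyticity data of $\Gamma$ but not on $k$ or $\mathbf{z}_T$. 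The main obstacle is bookkeeping the interaction between the exponential-type growth $(\lambda|k|)^n$ of the band-limited function and the factorial growth coming from differentiating through the analytic diffeomorphism and the normal extension; this is handled by the elementary inequality $(\lambda|k|)^n \gamma_0^n n! \le C (\lambda\gamma_0 e)^n \max\{n+1,|k|\}^n$, so that $\gamma := \lambda\gamma_0 e$ (up to a harmless constant) does the job.
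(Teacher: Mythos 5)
Your handling of (\ref{freqfiltesta}) and (\ref{freqfiltestb}) is fine and is essentially the paper's argument (Plancherel/Bernstein for the cut-off, composition with $\mathcal{E}_{\operatorname{Stein}}$, and for $L^0_\Omega$ commuting $\operatorname{curl}$ with $L_{\mathbb{R}^3}$ after applying $\mathcal{E}_{\operatorname{Stein}}\mathbf{R}_2$), and for the boundary statement you identify the correct chain of fixed bounded operators $\mathcal{L}^{\nabla}_{\operatorname{imp}},\mathcal{L}^{\operatorname{curl}}_{\operatorname{imp}}$, $\mathcal{E}^\Delta_\Omega$, $\mathcal{E}_{\operatorname{Stein}}$, $L_{\mathbb{R}^3}$. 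The genuine gap is in the final bookkeeping: the inequality $(\lambda|k|)^n\gamma_0^n\,n!\le C(\lambda\gamma_0 e)^n\max\{n+1,|k|\}^n$ that your argument hinges on is false. Take $n\approx|k|$ large: by Stirling the left-hand side behaves like $(\lambda\gamma_0/e)^n n^{2n}$, while the right-hand side is only of order $(\lambda\gamma_0 e)^n n^n$. So "multiply the Bernstein bound of the band-limited function by the factorial cost of the analytic coordinate changes/normal extension at top order" does not land in the class $\mathcal{A}(\cdot,\gamma,\mathcal{U}_\Gamma)$, and as written the step fails. What saves such arguments is that the two worst-case bounds must never be multiplied at the same differentiation order: in a Leibniz/Fa\`a di Bruno expansion the factorial cost $j!$ of the $k$-independent analytic factor is incurred only on $j$ of the $n$ derivatives and is absorbed via $\binom{n}{j}j!\le n^j\le\max\{n,|k|\}^j$, which preserves the class with an enlarged $\gamma$; this is exactly the content of the product lemma the paper invokes, \cite[Lem.~{4.3.1}]{MelenkHabil}.

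The paper also arranges the construction so that this is the \emph{only} place where a $k$-dependent bound meets a factorial bound. It does not take $\nabla_\Gamma$ of a boundary restriction and then extend by the constant normal extension (your primary proposal, which would in addition require analytic estimates for the nearest-point projection and for surface-intrinsic derivatives). Instead it sets $\Phi:=L_\Omega\mathcal{E}^\Delta_\Omega\mathcal{L}^{\nabla}_{\operatorname{imp}}\mathbf{z}_T$, $\Psi:=L_\Omega\mathcal{E}^\Delta_\Omega\mathcal{L}^{\operatorname{curl}}_{\operatorname{imp}}\mathbf{z}_T$ and works with the \emph{volume} gradients $\boldsymbol{\phi}=\nabla\Phi$, $\boldsymbol{\psi}=\nabla\Psi$, which are restrictions of band-limited functions and therefore satisfy pure Bernstein bounds $C(\lambda|k|)^p\|\mathbf{z}_T\|_{\mathbf{H}^{-1/2}(\Gamma)}$ with no factorials at all. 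Using $\Pi_T\nabla u=\nabla_\Gamma(u|_\Gamma)$ and $\nabla_\Gamma u\times\mathbf{n}=\overrightarrow{\operatorname{curl}_\Gamma}u$, one then has $\mathbf{L}_\Gamma\mathbf{z}_T=\bigl(\mathbf{N}_*(\boldsymbol{\psi}-\mathbf{N}_*\boldsymbol{\phi})\bigr)|_\Gamma$, where $\mathbf{N}_*$ is the cross-product matrix built from the analytic extension $\mathbf{n}^*$ of the normal; the analyticity of $\Gamma$ enters only through $\mathbf{n}^*$, and the single pointwise product is handled by the cited lemma. If you wish to keep your construction, you must replace your elementary inequality by such a product/composition result for the class $\mathcal{A}$ (and supply the analytic estimates for the normal extension map); otherwise the argument does not close.
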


\begin{proof}
From \cite[(3.32b)]{MelenkSauterMathComp} for the full space and from
\cite[Lem.~{4.3}]{mm_stas_helm2} for bounded domains the estimates
(\ref{freqfiltesta}) and the first one in (\ref{freqfiltestb}) follow. For the
operator $L_{\Omega}^{0}$, recall the lifting operator $\mathbf{R}_{2}$ of 
(\ref{LemRs}). Then, for any $\mathbf{w}\in\mathbf{L}^{2}( \Omega)  $ and $p\in\mathbb{N}_{0}$, the second estimate in
(\ref{freqfiltestb}) follows from%
\begin{align*}
\bigl\vert L_{\Omega}^{0}\mathbf{w}\bigr\vert _{\mathbf{H}^{p}(\Omega)}  &
\leq\bigl\vert L_{\Omega}^{0}\mathbf{w}\bigr\vert _{\mathbf{H}^{p}%
(\mathbb{R}^{3})}=\left\vert \operatorname*{curl}L_{\mathbb{R}^{3}}%
\mathcal{E}_{\operatorname{Stein}}\mathbf{R}_{2}\mathbf{w}\right\vert
_{\mathbf{H}^{p}(\mathbb{R}^{3})}=\left\vert L_{\mathbb{R}^{3}}%
\operatorname*{curl}\mathcal{E}_{\operatorname{Stein}}\mathbf{R}_{2}%
\mathbf{w}\right\vert _{\mathbf{H}^{p}(\mathbb{R}^{3})}\\
&  \overset{(\ref{freqfiltesta})}{\leq}C(\lambda|k|)^{p}\left\Vert
\operatorname{curl}\mathcal{E}_{\operatorname{Stein}}\mathbf{R}_{2}%
\mathbf{w}\right\Vert \leq C(\lambda|k|)^{p}\left\Vert \mathbf{w}\right\Vert .
\end{align*}
Finally, we consider the boundary low frequency operator. For $\mathbf{z}%
_{T}\in\mathbf{H}^{-1/2}(\Gamma)$, we define functions in the volume $\Omega$
via%
\[
\Phi:=L_{\Omega}\mathcal{E}_{\Omega}^{\Delta}\mathcal{L}_{\operatorname*{imp}%
}^{\nabla}\mathbf{z}_{T},\qquad\Psi:=L_{\Omega}\mathcal{E}_{\Omega}^{\Delta
}\mathcal{L}_{\operatorname*{imp}}^{\operatorname*{curl}}\mathbf{z}_{T}%
\]
so that, 
for $%
{\boldsymbol \phi}%
:=\nabla\Phi$ and $%
{\boldsymbol \psi}%
:=\nabla\Psi$, 
\begin{align*}
\mathbf{L}_{\Gamma}\mathbf{z}_{T}  &  =\Pi_{T}\nabla\Phi+\Pi_{T}\nabla
\Psi\times\mathbf{n}
=\mathbf{n}\times\left(  \left.
{\boldsymbol \phi}%
\right\vert _{\Gamma}\times\mathbf{n}\right)  +\left.
{\boldsymbol \psi}%
\right\vert _{\Gamma}\times\mathbf{n}.
\end{align*}
Let $\mathbf{n}^{\ast}$ denote an analytic extension of the
normal vector field into the domain $\Omega$; due to the analyticity of the
domain we may assume that there are constants $C_{\mathbf{n}}$, $\gamma
_{\mathbf{n}}>0$ and a tubular neighborhood $\mathcal{U}_{ \Gamma  }\subset\Omega$ with $ \Gamma \subset\overline{\mathcal{U}_{ \Gamma  }}$ 
such that $\mathbf{n}^{\ast}\in\mathcal{A}^{\infty}\left(  C_{\mathbf{n}%
},\gamma_{\mathbf{n}},\mathcal{U}_{ \Gamma  }\right)  $. Let%
\[
\mathbf{N}_{\ast}:=\left[
\begin{array} [c]{ccc}%
0 & n_{3}^{\ast} & -n_{2}^{\ast}\\
-n_{3}^{\ast} & 0 & n_{1}^{\ast}\\
n_{2}^{\ast} & -n_{1}^{\ast} & 0
\end{array}
\right]  .
\]
\quad Then,%
\[
\left.
\mbox{\boldmath$ \psi$}%
\right\vert _{\Gamma}\times\mathbf{n=}\left(  \mathbf{N}_{\ast}%
\mbox{\boldmath$ \psi$}%
\right)  _{\Gamma}\quad\text{and\quad}\mathbf{n}\times\left(  \left.
\mbox{\boldmath$ \phi$}%
\right\vert _{\Gamma}\times\mathbf{n}\right)  =-\left(  \mathbf{N}_{\ast}^{2}%
\mbox{\boldmath$ \phi$}%
\right)  _{\Gamma},
\]
i.e.,%
\begin{equation}
\mathbf{L}_{\Gamma}\mathbf{z}_{T}=\left.  \mathbf{G}_{\mathbf{z}}\right\vert
_{\Gamma}\quad\text{for }\mathbf{G}_{\mathbf{z}}:=\mathbf{N}_{\ast}\left(
{\boldsymbol \psi}%
-\mathbf{N}_{\ast}%
{\boldsymbol \phi}%
\right)  . \label{DefGz}%
\end{equation}
We further have for $p\in\mathbb{N}_{0}$%
\begin{align*}
\left\vert
{\boldsymbol \phi}%
\right\vert _{\mathbf{H}^{p}(\mathcal{U}_{ \Gamma  })}  
&  =\bigl\vert \nabla L_{\Omega}\mathcal{E}_{\Omega}^{\Delta}%
\mathcal{L}_{\operatorname*{imp}}^{\nabla}\mathbf{z}_{T}\bigr\vert
_{\mathbf{H}^{p}(\mathcal{U}_{ \Gamma  })}
\leq\bigl\vert L_{\mathbb{R}^{3}}\nabla\mathcal{E}_{\operatorname*{Stein}%
}\mathcal{E}_{\Omega}^{\Delta}\mathcal{L}_{\operatorname*{imp}}^{\nabla
}\mathbf{z}_{T}\bigr\vert _{\mathbf{H}^{p}(\mathcal{U}_{ \Gamma  })}
\\
&  \leq C(\lambda|k|)^{p}\bigl\Vert \nabla\mathcal{E}_{\operatorname*{Stein}%
}\mathcal{E}_{\Omega}^{\Delta}\mathcal{L}_{\operatorname*{imp}}^{\nabla
}\mathbf{z}_{T}\bigr\Vert _{\mathbf{L}^{2}(\mathbb{R}^{3})}\leq C(\lambda
|k|)^{p}\bigl\Vert \mathcal{E}_{\Omega}^{\Delta}\mathcal{L}%
_{\operatorname*{imp}}^{\nabla}\mathbf{z}_{T}\bigr\Vert _{\mathbf{H}%
^{1}(\Omega)}\\
&  \leq C\left(  \lambda|k|\right)  ^{p}\bigl\Vert \mathcal{L}%
_{\operatorname*{imp}}^{\nabla}\mathbf{z}_{T}\bigr\Vert _{\mathbf{H}%
^{1/2}(\Gamma)}\leq C_{1}\left(  \lambda|k|\right)  ^{p}\bigl\Vert
\mathbf{z}_{T}\bigr\Vert _{\mathbf{H}^{-1/2}(  \Gamma)  }.
\end{align*}
The proof of the estimate
\[
\left\vert
{\boldsymbol \psi}%
\right\vert _{\mathbf{H}^{p}\left(  \mathcal{U}_{ \Gamma  }\right)  }\leq C_{2}\left(  \lambda\left\vert k\right\vert \right)
^{p}\left\Vert \mathbf{z}_{T}\right\Vert _{\mathbf{H}^{-1/2}\left(
\Gamma\right)  }%
\]
follows along the same lines. Next we use \cite[Lem.~{4.3.1}]{MelenkHabil} (an
inspection of the proof shows that this lemma also holds for $d=3$) to deduce
that $\mathbf{N}_{\ast}{\boldsymbol\psi}$, $-\mathbf{N}_{\ast}^{2}%
{\boldsymbol\phi}\in\mathcal{A}\bigl(  C\left\Vert \mathbf{z}_{T}\right\Vert
_{\mathbf{H}^{-1/2}(  \Gamma)  },\gamma,\mathcal{U}_{ \Gamma  }\bigr)  $, 
where $C$ depends only on $C_{1}$, $C_{2}$, $C_{\mathbf{n}}$,
$\gamma_{\mathbf{n}}$, while $\gamma$ depends additionally on $\lambda$.
\end{proof}


\section{$k$-Explicit regularity by decomposition\label{SecReg}}


In this section, we always assume that the bounded Lipschitz domain
$\Omega\subset\mathbb{R}^{3}$ has a simply connected, analytic boundary
$\Gamma=\partial\Omega$. We consider the Maxwell problem (\ref{MWEq}) with
data $\mathbf{f}$, $\mathbf{g}_{T}$ with finite regularity.

For the regularity analysis of the operator $\mathcal{S}_{\Omega
,k}^{\operatorname*{MW}}$ it is key to understand that the solutions for high 
frequency right-hand sides have low order regularity but well-behaved
stability constant (with respect to the wavenumber) while solutions
corresponding to low-frequency right-hand sides are analytic but with possibly
growing stability constant. This different behavior is reflected in the
regularity theory, which decomposes the solution $\mathbf{z}=\mathcal{S}%
_{\Omega,k}^{\operatorname{MW}}(\mathbf{f},\mathbf{g}_{T})$ into a part with
finite regularity that can be controlled uniformly in $k$ and an analytic part
that can be controlled explicitly in $k$. This is achieved in
Theorem~\ref{TheoMainIt}. The main idea of the proof is to exploit that the
operators $\mathcal{L}_{\Omega,k}$ and $\mathcal{L}_{\Omega,\operatorname{i}%
k}$ have the same leading order differential operator. With the filter
operators of the preceding Section~\ref{SecFreqSplit} and recalling
$I=H_{\Omega}^{0}+L_{\Omega}^{0}+\mathbf{S}=H_{\Omega}^{0}+L_{\Omega}%
^{0}+H_{\Omega}\mathbf{S}+L_{\Omega}\mathbf{S}$ as well as $I=\mathbf{H}%
_{\Gamma}+\mathbf{L}_{\Gamma}$ one can write
\[
\mathcal{S}_{\Omega,k}^{\operatorname{MW}}(\mathbf{f},\mathbf{g}%
_{T})=\mathcal{S}_{\Omega,k}^{+}(H_{\Omega}^{0}\mathbf{f}+H_{\Omega}%
\mathbf{S}\mathbf{f},\mathbf{H}_{\Gamma}\mathbf{g}_{T})+\mathcal{S}_{\Omega
,k}^{\operatorname{MW}}(L_{\Omega}^{0}\mathbf{f}+L_{\Omega}\mathbf{S}%
\mathbf{f},\mathbf{L}_{\Gamma}\mathbf{g}_{T})+\mathbf{z}^{\prime}%
\]
for a remainder $\mathbf{z}^{\prime}$. One then makes the following observations:

\begin{enumerate}
\item If $\operatorname{div}\mathbf{f} = 0$ (which may be achieved by
subtracting a suitable gradient field), then the operator $\mathbf{S}$ is
smoothing by (\ref{eq:mapping-property-S}).

\item The term $\mathcal{S}_{\Omega,k}^{+}(H_{\Omega}^{0} \mathbf{f} +
H_{\Omega}\mathbf{S}\mathbf{f},\mathbf{H}_{\Gamma}\mathbf{g}_{T})$ has finite
regularity properties given by Theorem~\ref{lemma:apriori-with-good-sign}. The 
effect of the high frequency filters $H_{\Omega}^{0}$ and $\mathbf{H}_{\Gamma
}$ is that they improve the $k$-dependence of lower-order terms in the indexed
norms such as $\|\cdot\|_{\mathbf{H}^{m}(\Omega),k}$ (see
Lemma~\ref{lemma:estimateS+H} below).

\item $\mathcal{S}_{\Omega,k}^{\operatorname{MW}}(L_{\Omega}^{0} \mathbf{f} +
L_{\Omega}\mathbf{S} \mathbf{f},\mathbf{L}_{\Gamma}\mathbf{g}_{T})$ is an
analytic function and can be estimated with the aid of
Theorem~\ref{ThmAnaRegSum}.

\item The function $\mathbf{z}^{\prime}$ satisfies
\[
\mathcal{L}_{\Omega,k}^{\operatorname{MW}}\mathbf{z}^{\prime}=\mathbf{r}%
,\qquad\mathcal{B}_{  \Gamma ,k}=0, 
\]
where, by suitably choosing the cut-off parameters $\lambda$ in the frequency
operators, the residual $\mathbf{r}$ satisfies $\Vert\mathbf{r}\Vert_{\ast
}\leq q\Vert\mathbf{f}\Vert_{\ast}$ for some $q\in(0,1)$ and a suitable norm
$\Vert\cdot\Vert_{\ast}$. Hence, the arguments can be repeated for
$\mathbf{z}^{\prime}$ and the decomposition can be obtained by a geometric
series argument.
\end{enumerate}


\subsection{The concatenation of $\mathcal{S}_{\Omega,k}^{+}$ with high
frequency filters}

The following lemma analyzes the mapping properties of the concatenation of
the solution operator $\mathcal{S}_{\Omega,k}^{+}$ with the high frequency
filter operators $H_{\Omega}^{0}$ and $\mathbf{H}_{\Gamma}$.

\begin{lemma}
\label{lemma:estimateS+H} Let $m\in\mathbb{N}_{0}$, $\ell\in\mathbb{N}_{0}$.
Provided the right-hand sides are finite, the following estimates hold:%
\begin{align}
| k| ^{m}\Vert H_{\Omega}^{0}\mathbf{f}\Vert
_{\mathbf{H}^{m}(  \operatorname{div},\Omega)  ,k} &  \leq
C\left\vert k\right\vert \left\Vert \mathbf{f}\right\Vert _{\mathbf{H}%
^{m}(\Omega)},\label{eq:lemma:estimateS+H-3}\\
|k|^{m-1}\Vert H_{\Omega}^{0}\mathbf{f}\Vert_{\mathbf{H}^{m
-1}(\operatorname{div},\Omega),k} &  \leq C\lambda^{-1/2}\Vert\mathbf{f}%
\Vert_{\mathbf{H}^{m}(\Omega)},\qquad m \ge 1, \label{eq:lemma:estimateS+H-5}\\
{|k|}\Vert H_{\Omega}^{0}\mathbf{f}\Vert_{\mathbf{X}_{\operatorname*{imp}%
}^{\prime}(  \Omega)  ,k} &  \leq C\lambda^{-1/2}\Vert
\mathbf{f}\Vert_{\mathbf{L}^{2}(\Omega)}%
,%
\label{eq:lemma:estimateS+H-7}\\
|k|^{m+2}\Vert\mathcal{S}_{\Omega,k}^{+}({H}_{\Omega}^{0}\mathbf{f}%
,0)\Vert_{\mathbf{H}^{m}(\Omega),k} &  \leq C\lambda^{-1/2}\Vert
\mathbf{f}\Vert_{\mathbf{H}^{m}(\Omega)},\label{eq:lemma:estimateS+H-10}
\end{align}
as well as 
\begin{align}
\label{eq:lemma:estimateS+H-15}
& |k|^{m-1}\Vert\mathbf{H}_{\Gamma}\mathbf{g}_{T}\Vert_{\mathbf{H}%
^{m-1/2}(\operatorname{div}_{\Gamma},\Gamma),k}   
\leq 
\\ 
\nonumber 
& \quad 
C(\lambda|k|)^{-\ell
}\left(  |k|\Vert\mathbf{g}_{T}\Vert_{\mathbf{H}^{m-1/2+\ell}(\Gamma)}%
+\Vert\operatorname{div}_{\Gamma}\mathbf{g}_{T}\Vert_{{H}^{m-1/2+\ell}%
(\Gamma)}\right)  , \\
\label{eq:lemma:estimateS+H-20}
& |k|^{m+2}\Vert\mathcal{S}_{\Omega,k}^{+}(0,\mathbf{H}_{\Gamma}\mathbf{g}%
_{T})\Vert_{\mathbf{H}^{m}(\Omega),k}   
\leq   \\
\nonumber 
& \quad  C(\lambda|k|)^{-\ell} 
\begin{cases}
\lambda^{-1}\left(  |k|\Vert\mathbf{g}_{T}\Vert_{\mathbf{H}^{m-1/2+\ell
}(\Gamma)}+\Vert\operatorname{div}_{\Gamma}\mathbf{g}_{T}\Vert_{\mathbf{H}%
^{m-1/2+\ell}(\Gamma)}\right)  , & m\geq1,\\
\left(  |k|\Vert\mathbf{g}_{T}\Vert_{\mathbf{H}^{m-1/2+\ell}(\Gamma)}%
+\Vert\operatorname{div}_{\Gamma}\mathbf{g}_{T}\Vert_{\mathbf{H}^{m-1/2+\ell
}(\Gamma)}\right)  , & m=0.
\end{cases}
\nonumber
\end{align}
For $\mathbf{f}\in\mathbf{L^{2}}(\Omega)$ with $\operatorname{div}%
\mathbf{f}=0$ and the operator $\mathbf{S}$ of (\ref{eq:S}) we have for any
$n\in\mathbb{N}_{0}$%
\begin{align}
|k|^{n}\Vert H_{\Omega}\mathbf{S}\mathbf{f}\Vert_{\mathbf{H}^{m}(\Omega),k} &
\leq C_{n}\lambda^{-n}\Vert\mathbf{f}\Vert_{\mathbf{L}^{2}(\Omega
)},\label{eq:lemma:estimateS+H-25}\\
|k|^{n}\Vert\mathcal{S}_{\Omega,k}^{+}(H_{\Omega}\mathbf{S}\mathbf{f}%
,0)\Vert_{\mathbf{H}^{m+1}(\Omega),k} &  \leq C_{n}\lambda^{-n}\Vert
\mathbf{f}\Vert_{\mathbf{L}^{2}(\Omega)}.\label{eq:lemma:estimateS+H-30}%
\end{align}

\end{lemma}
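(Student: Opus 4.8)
\textbf{Proof plan for Lemma~\ref{lemma:estimateS+H}.} The plan is to verify the estimates in the order they are stated, each one building on the previous and on the two main inputs: the mapping properties of the frequency filters collected in Proposition~\ref{lemma:properties-of-H-via-lifting} and Lemma~\ref{LemLowFreqEst}, and the $k$-explicit regularity theory for the good-sign problem in Theorem~\ref{lemma:apriori-with-good-sign}. First I would establish (\ref{eq:lemma:estimateS+H-3}): by definition $\|H_\Omega^0\mathbf f\|_{\mathbf H^m(\Omega,\operatorname{div}),k}^2 = k^{-2m}|\operatorname{div}H_\Omega^0\mathbf f|_{H^m(\Omega)}^2 + k^2\|H_\Omega^0\mathbf f\|_{\mathbf H^m(\Omega),k}^2$; since $H_\Omega^0\mathbf f=\operatorname{curl}H_{\mathbb R^3}\mathcal E_{\operatorname{Stein}}\mathbf R_2\mathbf f$ is a curl it is divergence-free, so the first term vanishes, and each seminorm $|k|^{-\ell}|H_\Omega^0\mathbf f|_{\mathbf H^\ell(\Omega)}$ for $\ell\le m$ is bounded via (\ref{kurbelHOmega0}) with $s'=\ell\le s=m$ by $(\lambda|k|)^{\ell-m}\|\mathbf f\|_{\mathbf H^m(\Omega)}$; multiplying by $|k|^{m}$ and using $|k|^{m-\ell}(\lambda|k|)^{\ell-m}\le 1$ gives the stated $C|k|\|\mathbf f\|_{\mathbf H^m(\Omega)}$. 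For (\ref{eq:lemma:estimateS+H-5}) I would run the same computation but with the target index $\ell-1$ and source index $\ell$ in (\ref{kurbelHOmega0}), which produces the extra factor $(\lambda|k|)^{-1}$; absorbing one power of $|k|^{-1}$ and retaining $\lambda^{-1}$, and noting $\lambda^{-1}\le\lambda^{-1/2}$, yields the claim. Estimate (\ref{eq:lemma:estimateS+H-7}) follows from the embedding $\mathbf L^2(\Omega)\hookrightarrow\mathbf X'_{\operatorname{imp}}(\Omega)$ in (\ref{eq:H-1div-vs-L2}), $\|H_\Omega^0\mathbf f\|_{\mathbf X'_{\operatorname{imp}}(\Omega),k}\le C|k|^{-1}\|H_\Omega^0\mathbf f\|_{\mathbf L^2(\Omega)}$, together with (\ref{kurbelHOmega0}) for $s'=0<s=1$ applied to $\mathbf R_2\mathbf f$, giving $\|H_\Omega^0\mathbf f\|_{\mathbf L^2(\Omega)}\le C(\lambda|k|)^{-1}\|\mathbf f\|_{\mathbf L^2(\Omega)}\cdot|k|$ up to the order-$1$ smoothing of $\mathbf R_2$; reorganizing the powers of $|k|$ and $\lambda$ leaves $C\lambda^{-1/2}|k|^{-1}\|\mathbf f\|_{\mathbf L^2(\Omega)}$, hence $|k|$ times that is the assertion.

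For the solution-operator estimate (\ref{eq:lemma:estimateS+H-10}) I would invoke (\ref{eq:lemma:apriori-with-good-sign-25b}) of Theorem~\ref{lemma:apriori-with-good-sign} with right-hand side $\mathbf f$ replaced by $H_\Omega^0\mathbf f$ and $\mathbf g_T=0$: this gives $\|\mathcal S_{\Omega,k}^+(H_\Omega^0\mathbf f,0)\|_{\mathbf H^{m+1}(\Omega),k}\le C|k|^{-3}\|H_\Omega^0\mathbf f\|_{\mathbf H^m(\Omega,\operatorname{div}),k}$, and since $\|\cdot\|_{\mathbf H^m(\Omega),k}\le\|\cdot\|_{\mathbf H^{m+1}(\Omega),k}$ one may use the same bound for $\|\mathcal S_{\Omega,k}^+(H_\Omega^0\mathbf f,0)\|_{\mathbf H^m(\Omega),k}$. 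The key point is that $\|H_\Omega^0\mathbf f\|_{\mathbf H^m(\Omega,\operatorname{div}),k}$ is controlled not by (\ref{eq:lemma:estimateS+H-3}) (which would only give $|k|^{1-m}\|\mathbf f\|$) but by splitting off the top index and applying (\ref{eq:lemma:estimateS+H-5}) with $\ell=m$: the leading seminorm $|k|^{1-m}|k|^{-1}|H_\Omega^0\mathbf f|_{\mathbf H^{m-1}(\Omega)}$ picks up $\lambda^{-1/2}$ while lower seminorms are even smaller, so altogether $\|H_\Omega^0\mathbf f\|_{\mathbf H^m(\Omega,\operatorname{div}),k}\le C\lambda^{-1/2}|k|^{1-m}\|\mathbf f\|_{\mathbf H^m(\Omega)}$. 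Multiplying $C|k|^{-3}$ by this and by the extra $|k|^{m+2}$ on the left-hand side produces exactly $C\lambda^{-1/2}\|\mathbf f\|_{\mathbf H^m(\Omega)}$; a small technical point is to double-check the $m=0$ case, where the ``top index'' argument degenerates and one uses (\ref{eq:lemma:estimateS+H-7}) together with (\ref{eq:item:lemma:apriori-with-good-sign-iia}) instead of (\ref{eq:lemma:apriori-with-good-sign-25b}).

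The boundary estimates (\ref{eq:lemma:estimateS+H-15}) and (\ref{eq:lemma:estimateS+H-20}) are handled in the same spirit but using part~(iii) of Proposition~\ref{lemma:properties-of-H-via-lifting}. For (\ref{eq:lemma:estimateS+H-15}) I would expand $\|\mathbf H_\Gamma\mathbf g_T\|_{\mathbf H^{m-1/2}(\Gamma,\operatorname{div}_\Gamma),k}$ via (\ref{eq:Hgamma-D}) and (\ref{defgtGammaind}) into a sum of $|k|$-weighted boundary seminorms of $\mathbf H_\Gamma\mathbf g_T$ and of $\operatorname{div}_\Gamma\mathbf H_\Gamma\mathbf g_T$; each is estimated using (\ref{Hboldfiltera}) resp.\ (\ref{Hboldfilterb}) with source index $s=m-1/2+\ell$ and target index $s'$ running over the relevant range, which turns each into $(\lambda|k|)^{s'-s}$ times a fixed-order boundary norm of $\mathbf g_T$ or $\operatorname{div}_\Gamma\mathbf g_T$. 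Collecting the worst power of $(\lambda|k|)$, which is $(\lambda|k|)^{-\ell}$, and the worst power of $|k|$ coming from the index weights (absorbed into the $|k|\Vert\mathbf g_T\Vert$ versus $\Vert\operatorname{div}_\Gamma\mathbf g_T\Vert$ split on the right) gives the claim. Then (\ref{eq:lemma:estimateS+H-20}) combines this with Theorem~\ref{lemma:apriori-with-good-sign}, namely with (\ref{eq:lemma:apriori-with-good-sign-25b}) applied to $\mathbf g_T$ replaced by $\mathbf H_\Gamma\mathbf g_T$ (and $\mathbf f=0$): the $|k|^{-3}$ from the solution operator together with the $|k|^{m+2}$ prefactor leaves $|k|^{-1}$, which for $m\ge1$ can be rewritten as the extra $\lambda^{-1}$ using one more application of (\ref{Hboldfilterb}) with a shifted target index (this is exactly why the $m\ge1$ case has the better constant), while for $m=0$ that extra factor is not available. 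Finally (\ref{eq:lemma:estimateS+H-25}) and (\ref{eq:lemma:estimateS+H-30}) exploit that for $\operatorname{div}\mathbf f=0$ one has $\mathbf S\mathbf f=\mathbf K_2\mathbf f|_\Omega$ by (\ref{eq:S=K2}), so $\mathbf S\mathbf f$ lies in $\mathbf C^\infty(\overline\Omega)$ with all Sobolev norms bounded by $\|\mathbf f\|_{\mathbf L^2(\Omega)}$ via (\ref{eq:mapping-property-S}); applying (\ref{4.3}) with source index $s=m+n$ and target $s'=m$ gives the decay $(\lambda|k|)^{-n}$, whence (\ref{eq:lemma:estimateS+H-25}), and feeding $H_\Omega\mathbf S\mathbf f$ (which need not be divergence-free, so one uses $\Vert H_\Omega\mathbf S\mathbf f\Vert_{\mathbf H^{m+1}(\Omega,\operatorname{div}),k}\le C\Vert H_\Omega\mathbf S\mathbf f\Vert_{\mathbf H^{m+2}(\Omega),k}$) into Theorem~\ref{lemma:apriori-with-good-sign} gives (\ref{eq:lemma:estimateS+H-30}), absorbing the polynomial-in-$|k|$ losses from the solution operator into the arbitrarily large negative power $\lambda^{-n}$ (increasing $n$ as needed and renaming the constant). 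The main obstacle throughout is purely bookkeeping: tracking the exact powers of $|k|$ and $\lambda$ through the weighted norms (\ref{eq:Hl-rho}), (\ref{eq:HD-norm}), (\ref{defgtGammaind}), (\ref{eq:Hgamma-D}) so that one factor $\lambda^{-1/2}$ (or $\lambda^{-1}$, or $\lambda^{-n}$) survives after all the $|k|$'s cancel — no single step is deep, but the accounting must be done carefully, in particular isolating the ``top-index'' seminorm where the filter gain is spent and checking the degenerate $m=0$ endpoints separately.
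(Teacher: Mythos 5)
Most of your plan tracks the paper's proof, but there is one genuine gap: your argument for (\ref{eq:lemma:estimateS+H-7}) does not work, and it contaminates the $m=0$ case of (\ref{eq:lemma:estimateS+H-10}). You propose to combine the embedding $\Vert H_{\Omega}^{0}\mathbf{f}\Vert_{\mathbf{X}_{\operatorname{imp}}^{\prime}(\Omega),k}\leq C|k|^{-1}\Vert H_{\Omega}^{0}\mathbf{f}\Vert_{\mathbf{L}^{2}(\Omega)}$ with a bound of the form $\Vert H_{\Omega}^{0}\mathbf{f}\Vert_{\mathbf{L}^{2}(\Omega)}\lesssim \lambda^{-1}\Vert\mathbf{f}\Vert_{\mathbf{L}^{2}(\Omega)}$. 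No such bound holds: $H_{\Omega}^{0}\mathbf{f}=\operatorname{curl}H_{\mathbb{R}^{3}}\mathcal{E}_{\operatorname{Stein}}\mathbf{R}_{2}\mathbf{f}$, and the derivative taken by the curl exactly cancels the one order of smoothing supplied by $\mathbf{R}_{2}$, so $H_{\Omega}^{0}\mathbf{f}$ has only the regularity of $\mathbf{f}$ itself; the high-pass filter applied with $s'=s=0$ in (\ref{hfsplitdomain}) gives uniform boundedness in $\mathbf{L}^{2}$ but no decay in $\lambda$. The extra factor of $|k|$ you invoke ``up to the order-$1$ smoothing of $\mathbf{R}_{2}$'' has no source. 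The correct mechanism --- and the one the paper uses --- is to exploit the dual nature of the $\mathbf{X}_{\operatorname{imp}}^{\prime}(\Omega)$ norm: for $\mathbf{v}\in\mathbf{X}_{\operatorname{imp}}$ one integrates by parts, $(H_{\Omega}^{0}\mathbf{f},\mathbf{v})=(\gamma_{T}H_{\mathbb{R}^{3}}\mathcal{E}_{\operatorname{Stein}}\mathbf{R}_{2}\mathbf{f},\mathbf{v}_{T})_{\mathbf{L}^{2}(\Gamma)}+(H_{\mathbb{R}^{3}}\mathcal{E}_{\operatorname{Stein}}\mathbf{R}_{2}\mathbf{f},\operatorname{curl}\mathbf{v})$, so that the object being filtered is $H_{\mathbb{R}^{3}}\mathcal{E}_{\operatorname{Stein}}\mathbf{R}_{2}\mathbf{f}$, which genuinely lies in $\mathbf{H}^{1}$ with norm controlled by $\Vert\mathbf{f}\Vert_{\mathbf{L}^{2}(\Omega)}$. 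The filter then gains $(\lambda|k|)^{-1}$ against $\Vert\operatorname{curl}\mathbf{v}\Vert$ and, via the multiplicative trace inequality as in (\ref{eq:H0omega-trace-estimate}), $(\lambda|k|)^{-1/2}$ against $|k|^{1/2}\Vert\mathbf{v}_{T}\Vert_{\mathbf{L}^{2}(\Gamma)}$; the boundary term is the bottleneck and produces exactly the stated $\lambda^{-1/2}|k|^{-1}$. Since your $m=0$ case of (\ref{eq:lemma:estimateS+H-10}) rests on (\ref{eq:lemma:estimateS+H-7}) via (\ref{eq:item:lemma:apriori-with-good-sign-iia}), it inherits this gap.

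A smaller point: for (\ref{eq:lemma:estimateS+H-15}) with $m=0$ the left-hand norm is the special norm (\ref{eq:H-1/2div}) built from $\Vert\cdot\Vert_{\mathbf{X}_{\operatorname{imp}}^{\prime}(\Gamma),k}$, not a sum of weighted Sobolev seminorms of the form (\ref{eq:Hgamma-D}), so your ``expand via (\ref{eq:Hgamma-D}) and (\ref{defgtGammaind})'' does not literally apply there; one must first invoke Lemma~\ref{lemma:H-1/2-div} to bound it by $\Vert\operatorname{div}_{\Gamma}\cdot\Vert_{H^{-1/2}(\Gamma),k}+|k|\Vert\cdot\Vert_{H^{-1/2}(\Gamma),k}$ before the filter estimates of Proposition~\ref{lemma:properties-of-H-via-lifting} can be brought to bear. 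The remaining items --- (\ref{eq:lemma:estimateS+H-3}), (\ref{eq:lemma:estimateS+H-5}), the $m\geq1$ cases of (\ref{eq:lemma:estimateS+H-10}) and (\ref{eq:lemma:estimateS+H-20}), and (\ref{eq:lemma:estimateS+H-25})--(\ref{eq:lemma:estimateS+H-30}) --- follow the paper's route and are sound.
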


\begin{proof}
\emph{Proof of (\ref{eq:lemma:estimateS+H-3}):} 
(\ref{eq:lemma:estimateS+H-3})
follows from the fact that $\operatorname{div} H_{\Omega}^{0} \mathbf{f} = 0$
and Proposition~\ref{lemma:properties-of-H-via-lifting}.

\emph{Proof of (\ref{eq:lemma:estimateS+H-5}):}
For $m \geq1$, we estimate
\begin{align*}
& \Vert{H}_{\Omega}^{0}\mathbf{f}\Vert_{\mathbf{H}^{m-1}(\Omega),k}    \leq
C\left(  \Vert{H}_{\Omega}^{0}\mathbf{f}\Vert_{\mathbf{L}^{2}(\Omega
)}+|k|^{-(m-1)}|{H}_{\Omega}^{0}\mathbf{f}|_{\mathbf{H}^{m-1}(\Omega
)}\right) \\
&  \qquad \overset{\text{Prop.~\ref{lemma:properties-of-H-via-lifting}}}{\leq
}C\left(  (\lambda|k|)^{-m}+|k|^{-(m-1)}(\lambda|k|)^{-1}\right)
\Vert\mathbf{f}\Vert_{\mathbf{H}^{\ell}(\Omega)}\leq C\lambda^{-1}|k|^{-m
}\Vert\mathbf{f}\Vert_{\mathbf{H}^{m}(\Omega)}.
\end{align*}
Noting that $\operatorname{div}H_{\Omega}^{0}\mathbf{f}=0$, the estimate
(\ref{eq:lemma:estimateS+H-5}) follows. 

\emph{Proof of (\ref{eq:lemma:estimateS+H-7}):} 
Recall
the definition of $\Vert\cdot\Vert_{\mathbf{X}_{\operatorname*{imp}}^{\prime
}(\Omega),k}$ in (\ref{eq:H-1div}) and observe with the multiplicative trace
inequality
\begin{equation}
\Vert\gamma_{T}H_{\mathbb{R}^{3}}\mathcal{E}_{\operatorname{Stein}}%
\mathbf{R}_{2}\mathbf{f}\Vert_{\mathbf{L}^{2}(\Gamma)}\leq C(\lambda
|k|)^{-1/2}\Vert\mathcal{E}_{\operatorname{Stein}}\mathbf{R}_{2}%
\mathbf{f}\Vert_{\mathbf{H}^{1}(\mathbb{R}^{3})}\leq C(\lambda|k|)^{-1/2}%
\Vert\mathbf{f}\Vert_{\mathbf{L}^{2}(\Omega)}.
\label{eq:H0omega-trace-estimate}%
\end{equation}
Since $H_{\Omega}^{0}\mathbf{f}=\operatorname{curl}H_{\mathbb{R}^{3}%
}\mathcal{E}_{\operatorname{Stein}}\mathbf{R}_{2}\mathbf{f}$, we get with an
integration by parts (\cite[Thm.~{3.29}]{Monkbook}) for ${\mathbf{v}}%
\in{\mathbf{X}}_{\operatorname{imp}}$
\begin{align*}
\left\vert (H_{\Omega}^{0}{\mathbf{f}},{\mathbf{v}})\right\vert  &
=\left\vert (\gamma_{T}H_{{\mathbb{R}}^{3}}{\mathcal{E}}_{\operatorname{Stein}%
}{\mathbf{R}}_{2}{\mathbf{f}},{\mathbf{v}}_{T})_{\mathbf{L}^{2}(\Gamma
)}+(H_{\mathbb{R}^{3}}{\mathcal{E}}_{\operatorname{Stein}}{\mathbf{R}}%
_{2}{\mathbf{f}},\operatorname{curl}{\mathbf{v}})\right\vert \\
&  \leq C(\lambda|k|)^{-1/2}\Vert{\mathbf{f}}\Vert_{\mathbf{L}^{2}(\Omega
)}\Vert{\mathbf{v}}_{T}\Vert_{\mathbf{L}^{2}(\Gamma)}+(\lambda|k|)^{-1}%
\Vert{\mathbf{f}}\Vert_{\mathbf{L}^{2}(\Omega)}\Vert\operatorname{curl}%
{\mathbf{v}}\Vert_{\mathbf{L}^{2}(\Omega)}\\
&  \leq C\lambda^{-1/2}|k|^{-1}\|{\mathbf{f}}\|_{{\mathbf{L}}^{2}(\Omega)}
\Vert{\mathbf{v}}\Vert_{\operatorname{imp},k}.
\end{align*}
We conclude
$\displaystyle\left\vert k\right\vert ^{1}\Vert H_{\Omega}^{0}%
\mathbf{f}\Vert _{\mathbf{X}_{\operatorname*{imp}}^{\prime}(
\Omega)  ,k}\leq C\lambda^{-1/2}\Vert\mathbf{f}\Vert_{\mathbf{L}%
^{2}(\Omega)}$ by the definition (\ref{eq:H-1div}) 
of $\Vert\cdot\Vert_{\mathbf{X} _{\operatorname*{imp}}^{\prime}(  \Omega)  ,k}$, 
and the statement (\ref{eq:lemma:estimateS+H-7}) is shown.

\emph{Proof of (\ref{eq:lemma:estimateS+H-10}): }For $m\geq1$, we obtain
(\ref{eq:lemma:estimateS+H-10}) from (\ref{eq:lemma:estimateS+H-5}) and
Theorem~\ref{lemma:apriori-with-good-sign},
(\ref{eq:lemma:apriori-with-good-sign-25b}). For $m=0$,
we observe that 
Theorem~\ref{lemma:apriori-with-good-sign},
(\ref{eq:item:lemma:apriori-with-good-sign-iia}) and
(\ref{eq:lemma:estimateS+H-7}) imply $\Vert\mathcal{S}_{\Omega,k}%
^{+}(H_{\Omega}^{0}\mathbf{f},0)\Vert_{\mathbf{L}^{2}(\Omega)}\leq
|k|^{-1}\Vert\mathcal{S}_{\Omega,k}^{+}(H_{\Omega}^{0}\mathbf{f}%
,0)\Vert_{\operatorname{imp},k}\leq C{|k|^{-1}}\Vert H_{\Omega}^{0}%
\mathbf{f}\Vert_{\mathbf{X}_{\operatorname*{imp}}^{\prime}(
\Omega)  ,k} \leq 
C\lambda^{-1/2}\Vert \mathbf{f}\Vert_{\mathbf{L}^{2}(\Omega)}$. 
%
%

\emph{Proof of (\ref{eq:lemma:estimateS+H-15}):} We distinguish the cases
{$m=0$ and $m\geq1$. For $m\geq1$, the statement follows from the
estimates
of ${\mathbf{H}}_{\Gamma}$ given in
Proposition~\ref{lemma:properties-of-H-via-lifting}. For $m=0$, in addition to
Proposition~\ref{lemma:properties-of-H-via-lifting} one invokes
Lemma~\ref{lemma:H-1/2-div}. }

\emph{Proof of (\ref{eq:lemma:estimateS+H-20}):} For $m\geq1$, the estimate
(\ref{eq:lemma:estimateS+H-20}) follows from combining
Proposition~\ref{lemma:properties-of-H-via-lifting} with
(\ref{eq:lemma:apriori-with-good-sign-25b}) 
and (\ref{eq:lemma:estimateS+H-15}) (taking $\ell+1$ for $\ell$ there) to get
\begin{align*}
& \Vert\mathcal{S}_{\Omega,k}^{+}(0,\mathbf{H}_{\Gamma}\mathbf{g}_{T}%
)\Vert_{\mathbf{H}^{m}(\Omega),k}   \leq C|k|^{-3}\Vert\mathbf{H}_{\Gamma
}\mathbf{g}_{T}\Vert_{\mathbf{H}^{m-1-1/2}(\operatorname{div}_{\Gamma}%
,\Gamma),k}\\
&  \qquad \overset{(\ref{eq:lemma:estimateS+H-15})}{\leq}C|k|^{-3-m+2}(\lambda
|k|)^{-1-\ell}\left(  |k|\Vert\mathbf{g}_{T}\Vert_{\mathbf{H}^{m-1/2+\ell
}(\Gamma)}+\Vert\operatorname{div}_{\Gamma}\mathbf{g}_{T}\Vert_{H^{m-1/2+\ell
}(\Gamma)}\right)  .
\end{align*}
For $m=0$, we use Theorem~\ref{lemma:apriori-with-good-sign},
(\ref{eq:item:lemma:apriori-with-good-sign-iia}) to get
\begin{align*}
& \Vert\mathcal{S}_{\Omega,k}^{+}(0,\mathbf{H}_{\Gamma}\mathbf{g}_{T}%
)\Vert_{\mathbf{L}^{2}(\Omega)} \leq C|k|^{-1}\Vert\mathcal{S}_{\Omega
,k}^{+}(0,\mathbf{H}_{\Gamma}\mathbf{g}_{T})\Vert_{\operatorname{imp},k}\leq
C|k|^{-1}\Vert\mathbf{H}_{\Gamma}\mathbf{g}_{T}\Vert_{\mathbf{X}%
_{\operatorname*{imp}}^{\prime}\left(  \Gamma\right)  ,k}\\
&  \qquad \overset{\text{(\ref{estgTprime})}}{\leq}C\left\vert k\right\vert
^{-2}\left(  |k|\Vert\mathbf{H}_{\Gamma}\mathbf{g}_{T}\Vert_{\mathbf{H}%
^{-1/2}(\Gamma)}+\Vert\operatorname{div}_{\Gamma}\mathbf{H}_{\Gamma}%
\mathbf{g}_{T}\Vert_{{H}^{-1/2}(\Gamma)}\right)  \\
&  \qquad \overset{\text{Prop.~\ref{lemma:properties-of-H-via-lifting}}}{\leq
}C\left\vert k\right\vert ^{-2}(\lambda|k|)^{-\ell}\left(  |k|\Vert%
\mathbf{g}_{T}%
\Vert_{\mathbf{H}^{-1/2+\ell}(\Gamma)}+\Vert\operatorname{div}_{\Gamma}%
\mathbf{g}_{T}%
\Vert_{{H}^{-1/2+\ell}(\Gamma)}\right)  .
\end{align*}
This completes the proof of (\ref{eq:lemma:estimateS+H-20}).

\emph{Proof of (\ref{eq:lemma:estimateS+H-25}), (\ref{eq:lemma:estimateS+H-30}%
):} For $\mathbf{f}$ with $\operatorname{div}\mathbf{f}=0$ we have
$\mathbf{S}\mathbf{f}\in C^{\infty}( \Omega) $ by (\ref{eq:mapping-property-S}%
). Hence, (\ref{eq:lemma:estimateS+H-25}), (\ref{eq:lemma:estimateS+H-30})
follow from Proposition~\ref{lemma:properties-of-H-via-lifting} and
(\ref{eq:mapping-property-S}) and again
Theorem~\ref{lemma:apriori-with-good-sign}.
\end{proof}

\begin{lemma}
\label{lemma:make-div-free} Let $m\in\mathbb{N}_{0}$, $\operatorname{div}%
\mathbf{f}\in H^{m-1}(\Omega)$, $\operatorname{div}_{\Gamma}\mathbf{g}_{T}\in
H^{m-3/2}(\Gamma)$. Then there exist $\varphi_{\mathbf{f}}\in H^{m+1}%
(\Omega)\cap H_{0}^{1}(\Omega)$ and $\varphi_{\mathbf{g}}\in H^{m+1}(\Omega)$
such that for $\ell=0,\ldots,m$
\begin{align*}
\Vert\varphi_{\mathbf{f}}\Vert_{H^{\ell+1}(\Omega)} &  \leq C\Vert
\operatorname{div}\mathbf{f}\Vert_{\mathbf{H}^{\ell-1}(\Omega)}, & -  
\operatorname{div}\nabla\varphi_{\mathbf{f}} &  =\operatorname{div}\mathbf{f},
& \mathcal{B}_{ \Gamma ,k}\nabla\varphi_{\mathbf{f}} &  =0,\\
\Vert\varphi_{\mathbf{g}}\Vert_{H^{\ell+1}(\Omega)} &  \leq C\Vert
\operatorname{div}_{\Gamma}\mathbf{g}_{T}\Vert_{\mathbf{H}^{\ell-3/2}(\Gamma
)}, & \operatorname{div}\nabla\varphi_{\mathbf{g}} &  =0, & \operatorname{div}%
_{\Gamma}\nabla_{\Gamma}\varphi_{\mathbf{g}} &  =\operatorname{div}_{\Gamma
}\mathbf{g}_{%
T}%
.
\end{align*}

\end{lemma}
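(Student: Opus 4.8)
The plan is to split the assertion into two independent constructions, one producing $\varphi_{\mathbf f}$ and one producing $\varphi_{\mathbf g}$, each of which is obtained by solving a scalar elliptic boundary value problem on $\Omega$ and then invoking elliptic shift estimates on the smooth domain $\Omega$.

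For $\varphi_{\mathbf f}$, first I would let $\varphi_{\mathbf f}\in H^1_0(\Omega)$ solve the Dirichlet problem $-\Delta\varphi_{\mathbf f}=-\operatorname{div}\mathbf f$ in $\Omega$, $\varphi_{\mathbf f}|_\Gamma=0$. Then $\operatorname{div}\nabla\varphi_{\mathbf f}=\operatorname{div}\mathbf f$ by construction. The boundary condition $\mathcal B_{\partial\Omega,k}\nabla\varphi_{\mathbf f}=\gamma_T\operatorname{curl}\nabla\varphi_{\mathbf f}-\operatorname{i}k\Pi_T\nabla\varphi_{\mathbf f}$ reduces, using $\operatorname{curl}\nabla=0$ and (\ref{PiTgradProp}), to $-\operatorname{i}k\nabla_\Gamma(\varphi_{\mathbf f}|_\Gamma)=0$, which vanishes since $\varphi_{\mathbf f}|_\Gamma=0$. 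The norm estimate $\|\varphi_{\mathbf f}\|_{H^{\ell+1}(\Omega)}\le C\|\operatorname{div}\mathbf f\|_{\mathbf H^{\ell-1}(\Omega)}$ for $\ell=0,\dots,m$ follows from the shift theorem for the Dirichlet Laplacian on the smooth domain $\Omega$ (for $\ell=0$ this is the standard $H^1$ a priori bound with the negative-order right-hand side, for $\ell\ge 1$ it is elliptic regularity).

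For $\varphi_{\mathbf g}$, the idea is to prescribe the tangential trace rather than the full Dirichlet datum. First solve the surface problem $\Delta_\Gamma\phi=\operatorname{div}_\Gamma\mathbf g_T$ on $\Gamma$, selecting the unique solution with vanishing mean (possible since $\Gamma$ is connected and $\operatorname{div}_\Gamma\mathbf g_T$ integrates to zero on the closed surface); equivalently $\phi=\mathcal L^\nabla_{\operatorname{imp}}\mathbf g_T$ in the notation of (\ref{nablaliftings}), and by Lemma~\ref{LemScLift} we have $\|\phi\|_{H^{\ell+3/2}(\Gamma)}\le C\|\operatorname{div}_\Gamma\mathbf g_T\|_{H^{\ell-1/2}(\Gamma)}$. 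Then let $\varphi_{\mathbf g}:=\mathcal E^\Delta_\Omega\phi$ be the harmonic extension from (\ref{LaplaceDiriProbl}), so that $\Delta\varphi_{\mathbf g}=0$, hence $\operatorname{div}\nabla\varphi_{\mathbf g}=0$, and by (\ref{PiTgradProp}) we get $\nabla_\Gamma\varphi_{\mathbf g}=\nabla_\Gamma(\varphi_{\mathbf g}|_\Gamma)=\nabla_\Gamma\phi$, so that $\operatorname{div}_\Gamma\nabla_\Gamma\varphi_{\mathbf g}=\Delta_\Gamma\phi=\operatorname{div}_\Gamma\mathbf g_T$ as required. The norm bound $\|\varphi_{\mathbf g}\|_{H^{\ell+1}(\Omega)}\le C\|\operatorname{div}_\Gamma\mathbf g_T\|_{\mathbf H^{\ell-3/2}(\Gamma)}$ follows from the continuity $\mathcal E^\Delta_\Omega:H^s(\Gamma)\to H^{s+1/2}(\Omega)$ (stated after (\ref{LaplaceDiriProbl})) applied with $s=\ell+1/2$, combined with the surface shift estimate for $\phi$.

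The only mild subtlety — and the step I would be most careful about — is bookkeeping of the Sobolev indices and the mean-value normalizations: one must check that $\operatorname{div}_\Gamma\mathbf g_T\in H^{m-3/2}(\Gamma)$ really gives $\phi\in H^{m+1/2}(\Gamma)$ with the claimed $\ell$-by-$\ell$ bounds down to $\ell=0$ (where the estimate is the energy estimate on $\Gamma$ with a negative-order datum), and that the harmonic extension indeed maps into $H^{m+1}(\Omega)$ rather than losing half a derivative; this is why the regularity hypothesis on $\Gamma$ (smoothness) is used. No fixed-point or frequency-splitting argument is needed here — the lemma is purely a pair of elliptic lifting constructions, so beyond the index tracking there is no real obstacle.
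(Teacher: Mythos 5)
Your construction is exactly the paper's: $\varphi_{\mathbf f}$ as the $H^1_0$ solution of $\Delta\varphi_{\mathbf f}=\operatorname{div}\mathbf f$ (with the boundary condition $\mathcal B_{\partial\Omega,k}\nabla\varphi_{\mathbf f}=0$ following from $\operatorname{curl}\nabla=0$ and the vanishing trace), and $\varphi_{\mathbf g}$ as the harmonic extension of the zero-mean solution of $\Delta_\Gamma\phi=\operatorname{div}_\Gamma\mathbf g_T$, with the bounds from elliptic shift theorems on the smooth domain and surface. The only blemish is the intermediate surface estimate, which should read $\|\phi\|_{H^{\ell+1/2}(\Gamma)}\le C\|\operatorname{div}_\Gamma\mathbf g_T\|_{H^{\ell-3/2}(\Gamma)}$ (as you in fact use in the final step), not $H^{\ell+3/2}$ against $H^{\ell-1/2}$.
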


\begin{proof}
Define $\varphi_{\mathbf{f}}\in H_{0}^{1}(\Omega)$ as the weak solution of
\[%
 - \Delta\varphi_{\mathbf{f}}=\operatorname{div}\mathbf{f}.
\]
By elliptic regularity, we have $\varphi_{\mathbf{f}}\in H^{m+1}(\Omega)$
and the stated bounds. The function $\varphi_{\mathbf{g}}$ is defined in two
steps: First, let $\varphi\in H^{1}(\Gamma)$ with $\int_{\Gamma}\varphi=0$
denote the weak solution of
\[
\Delta_{\Gamma}\varphi=\operatorname{div}_{\Gamma}\mathbf{g}_{T},
\]
which satisfies $\Vert\varphi\Vert_{H^{m+1/2}(\Gamma)}\leq C\Vert
\operatorname{div}\mathbf{g}_{T}\Vert_{H^{m-3/2}(\Gamma)}$. Then,
$\varphi_{\mathbf{g}}$ is defined on $\Omega$ as the harmonic extension from
$\Gamma$, i.e., $\varphi_{\mathbf{g}}\in H^{m+1}(\Omega)$ solves
\[
\Delta\varphi_{\mathbf{g}}=0,\qquad\varphi_{\mathbf{g}}|_{\Gamma}=\varphi.
\]
Again, the bounds follow from elliptic regularity theory.
\end{proof}

\subsection{Regularity by decomposition: the main result}

\begin{theorem}
\label{TheoMainIt} Let $\Omega\subset\mathbb{R}^{3}$ be a bounded Lipschitz
domain with a simply connected, analytic boundary $\Gamma=\partial\Omega$. Let
the stability Assumption~\ref{AssumptionAlgGrowth} be satisfied. Then there is
a linear mapping $\mathbf{L}^{2}(\Omega)\times\mathbf{H}_{T}^{-1/2}%
(\operatorname{div}_{\Gamma},\Gamma)\ni(\mathbf{f},\mathbf{g}_{T}%
)\mapsto(\mathbf{z}_{H^{2}},\mathbf{z}_{\mathcal{A}},\varphi_{\mathbf{f}%
},\varphi_{\mathbf{g}})$ such that the solution $\mathbf{z}:=\mathcal{S}%
_{\Omega,k}^{\operatorname{MW}}(\mathbf{f},\mathbf{g}_{T})\in\mathbf{X}%
_{\operatorname{imp}}$ of (\ref{MWEq}) can be written as $\mathbf{z}%
=\mathbf{z}_{H^{2}}+\mathbf{z}_{\mathcal{A}}+k^{-2}\nabla\varphi_{\mathbf{f}%
}+\operatorname{i}k^{-1}\nabla\varphi_{\mathbf{g}}$.

The linear mapping has the following properties: For any $m$, $m^{\prime}%
\in\mathbb{N}_{0}$, there are constants $C$, $B>0$ (depending only on $\Omega$
and $m$, $m^{\prime}$) such that for $(\mathbf{f},\mathbf{g}_{T})\in
\mathbf{H}^{m}(\Omega)\times\mathbf{H}_{T}^{m-1/2}(\Gamma)$ with
$(\operatorname{div}\mathbf{f},\operatorname{div}_{\Gamma}\mathbf{g}_{T})\in
H^{m^{\prime}-1}(\Omega)\times H^{m^{\prime}-3/2}(\Gamma)$ the following holds:

\begin{enumerate}
[(i)]

\item \label{item:TheoMainIt-i} The function $\mathbf{z}_{H^{2}}$ satisfies
\begin{equation}
\Vert\mathbf{z}_{H^{2}}\Vert_{\mathbf{H}^{m+1}(\Omega),k}\leq C|k|^{-m-2}%
\left(  |k|\Vert\mathbf{g}_{T}\Vert_{\mathbf{H}^{m-1/2}(\Gamma)}%
+\Vert\mathbf{f}\Vert_{\mathbf{H}^{m}(\Omega)}\right)  . \label{zH2estimate0}%
\end{equation}
If $\mathbf{g}_{T}\in\mathbf{H}_{T}^{m+1/2}(\Gamma)$, then%
\begin{equation}
\left\Vert \mathbf{z}_{H^{2}}\right\Vert _{\mathbf{H}^{m+1}\left(
\operatorname*{curl},\Omega\right)  ,k}\leq C\left\vert k\right\vert
^{-m-1}\left(  \left\Vert \mathbf{g}_{T}\right\Vert _{\mathbf{H}%
^{m+1/2}\left(  \Gamma\right)  }+\left\Vert \mathbf{f}\right\Vert
_{\mathbf{H}^{m}\left(  \Omega\right)  }\right)  \label{zH2estimate1}%
\end{equation}
and in (\ref{zH2estimate0}) the term $|k|\Vert\mathbf{g}_{T}\Vert
_{\mathbf{H}^{m-1/2}(\Gamma)}$ can be replaced with $\Vert\mathbf{g}_{T}%
\Vert_{\mathbf{H}^{m+1/2}(\Gamma)}$.

\item \label{item:TheoMainIt-ii} The gradient fields $\nabla\varphi
_{\mathbf{f}}$ and $\nabla\varphi_{\mathbf{g}}$ are given by
Lemma~\ref{lemma:make-div-free} and satisfy, for $\ell=0,\ldots,m^{\prime}$:
\begin{align}
\left\Vert \varphi_{\mathbf{f}}\right\Vert _{\mathbf{H}^{\ell+1}(\Omega)}  &
\leq C\left\Vert \operatorname*{div}\mathbf{f}\right\Vert _{H^{\ell-1}\left(
\Omega\right)  },\\
\left\Vert \varphi_{\mathbf{g}}\right\Vert _{\mathbf{H}^{\ell+1}(\Omega)}  &
\leq C\left\Vert \operatorname{div}_{\Gamma}\mathbf{g}_{T}\right\Vert
_{H^{\ell-3/2}\left(  \Gamma\right)  }.
\end{align}

\item \label{item:TheoMainIt-iv} The analytic part $\mathbf{z}_{\mathcal{A}}$
satisfies
\begin{equation}
\mathbf{z}_{\mathcal{A}}\in\mathcal{A}(C|k|^{ \theta-1} 
\{\Vert\mathbf{f}\Vert+|k|\Vert\mathbf{g}_{T}\Vert_{\mathbf{H}^{-1/2}(\Gamma
)}\},B,\Omega). \label{zaregsplitest}%
\end{equation}

\end{enumerate}
\end{theorem}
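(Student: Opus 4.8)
The plan is to realize the decomposition iteratively, exactly as the four-point outline preceding the theorem suggests. First I would reduce to divergence-free data: using Lemma~\ref{lemma:make-div-free}, split off the gradient fields $k^{-2}\nabla\varphi_{\mathbf f}$ and $\operatorname{i}k^{-1}\nabla\varphi_{\mathbf g}$ so that the remaining data $\widetilde{\mathbf f}:=\mathbf f-\nabla\varphi_{\mathbf f}$ satisfies $\operatorname{div}\widetilde{\mathbf f}=0$ and the remaining boundary datum $\widetilde{\mathbf g}_T:=\mathbf g_T-\nabla_\Gamma\varphi_{\mathbf g}$ satisfies $\operatorname{div}_\Gamma\widetilde{\mathbf g}_T=0$; one checks using (\ref{eq:Ak-alternative}) and $\mathcal B_{\partial\Omega,k}\nabla\varphi_{\mathbf f}=0$ that $\mathcal S^{\operatorname{MW}}_{\Omega,k}(\mathbf f,\mathbf g_T)-k^{-2}\nabla\varphi_{\mathbf f}-\operatorname{i}k^{-1}\nabla\varphi_{\mathbf g}=\mathcal S^{\operatorname{MW}}_{\Omega,k}(\widetilde{\mathbf f},\widetilde{\mathbf g}_T)$ up to the fact that the gradient corrections do not perturb the boundary operator's tangential part. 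The bounds in item~(\ref{item:TheoMainIt-ii}) are then just the statement of Lemma~\ref{lemma:make-div-free}.

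Next I would set up the iteration on the divergence-free data. Write $\mathbf I=H^0_\Omega+L^0_\Omega+H_\Omega\mathbf S+L_\Omega\mathbf S$ on the volume data and $\mathbf I=\mathbf H_\Gamma+\mathbf L_\Gamma$ on the boundary data, and decompose
\[
\mathcal S^{\operatorname{MW}}_{\Omega,k}(\widetilde{\mathbf f},\widetilde{\mathbf g}_T)
=\mathcal S^+_{\Omega,k}(H^0_\Omega\widetilde{\mathbf f}+H_\Omega\mathbf S\widetilde{\mathbf f},\mathbf H_\Gamma\widetilde{\mathbf g}_T)
+\mathcal S^{\operatorname{MW}}_{\Omega,k}(L^0_\Omega\widetilde{\mathbf f}+L_\Omega\mathbf S\widetilde{\mathbf f},\mathbf L_\Gamma\widetilde{\mathbf g}_T)
+\mathbf z',
\]
where the remainder $\mathbf z'$ solves $\mathcal L_{\Omega,k}\mathbf z'=\mathbf r$, $\mathcal B_{\partial\Omega,k}\mathbf z'=0$ with residual $\mathbf r$ arising from the mismatch $\mathcal L_{\Omega,k}-\mathcal L_{\Omega,\operatorname{i}k}=-2k^2\operatorname{I}$ applied to the first (``good sign'') term. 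The first term is the candidate $\mathbf H^2$-part contribution: its finite-regularity bounds (\ref{zH2estimate0}), (\ref{zH2estimate1}) follow from Lemma~\ref{lemma:estimateS+H}, specifically (\ref{eq:lemma:estimateS+H-10}), (\ref{eq:lemma:estimateS+H-20}), (\ref{eq:lemma:estimateS+H-30}); the key point is that the high-frequency filters supply the extra negative power of $\lambda$ (and of $|k|$) needed for summability. The second term has analytic data by Lemma~\ref{LemLowFreqEst} (the low-frequency filters produce functions in the analyticity class $\mathcal A$), so Theorem~\ref{ThmAnaRegSum} with estimate (\ref{eq:ThmAnaRegSum-20}) — where Assumption~\ref{AssumptionAlgGrowth} enters — gives that it lies in $\mathcal A(C|k|^{-2+\max(0,\theta+1)}(\|\mathbf f\|+|k|\|\mathbf g_T\|_{\mathbf H^{-1/2}(\Gamma)}),B',\Omega)$.

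Then I would close the argument by a geometric-series / contraction step. One needs a norm $\|\cdot\|_*$ — essentially $\|\mathbf f\|_{\mathbf L^2(\Omega)}+|k|\|\mathbf g_T\|_{\mathbf H^{-1/2}(\Gamma)}$ restricted to divergence-free / surface-divergence-free data — in which the residual map $(\widetilde{\mathbf f},\widetilde{\mathbf g}_T)\mapsto\mathbf r$ is a contraction with factor $q\in(0,1)$, achieved by fixing $\lambda$ large (the filters in Lemma~\ref{lemma:estimateS+H} contribute $\lambda^{-1/2}$, and $\mathbf S$ is smoothing by (\ref{eq:mapping-property-S})). Iterating, one obtains $\mathbf z=\sum_{n\ge0}\big[\mathcal S^+_{\Omega,k}(\cdots_n)+\mathcal S^{\operatorname{MW}}_{\Omega,k}(L\cdots_n)\big]+\lim\mathbf z'_n$, with the limit of the remainders zero; the $H^2$-parts sum (the series of their norm bounds converges by the contraction), and the analytic parts sum within a fixed class $\mathcal A(C|k|^{-2+\max(0,\theta+1)}(\cdots),B,\Omega)$ since the analyticity radii $B'$ stay bounded below uniformly in $n$ and the prefactors decay geometrically. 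Assembling, $\mathbf z_{H^2}$ is the sum of the good-sign contributions, $\mathbf z_{\mathcal A}$ the sum of the low-frequency contributions, and the gradient parts are those split off at the very start. The main obstacle I anticipate is the bookkeeping in the contraction step: one must verify that the residual $\mathbf r$ at each stage is again divergence-free (so that $\mathbf S$ remains smoothing and Lemma~\ref{lemma:make-div-free} is not needed again) and that the single norm $\|\cdot\|_*$ simultaneously controls the $\lambda$-gain from all of (\ref{eq:lemma:estimateS+H-10}), (\ref{eq:lemma:estimateS+H-20}), (\ref{eq:lemma:estimateS+H-30}) as well as the $|k|$-powers coming through Theorem~\ref{ThmAnaRegSum} — i.e., that the $|k|^\theta$-type loss in the analytic estimate is not amplified by the iteration, which is why only the $*$-norm (and not the higher $\mathbf H^m$-norms) may appear on the right-hand side of the contraction.
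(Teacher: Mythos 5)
Your overall strategy is the paper's: reduce to divergence-free data via Lemma~\ref{lemma:make-div-free}, split with the frequency filters into a high-frequency part sent through $\mathcal{S}^+_{\Omega,k}$, a low-frequency analytic part sent through $\mathcal{S}^{\operatorname{MW}}_{\Omega,k}$, and a remainder driven by the zeroth-order mismatch $2k^2(\cdot)$, then sum a geometric series after fixing $\lambda$ large. Items (\ref{item:TheoMainIt-ii}) and (\ref{item:TheoMainIt-iv}) are handled as you describe. However, there are two concrete gaps in your contraction step.

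First, the property you flag as needing verification --- that the residual at each stage is again divergence-free --- is in fact \emph{false}, and the iteration does not close without repairing it. The residual is $\mathbf{f}'_{n+1}=2k^2\mathbf{z}_{H^2,n}$ with $\mathbf{z}_{H^2,n}=\mathcal{S}^+_{\Omega,k}(H^0_\Omega\widetilde{\mathbf{f}}_n+H_\Omega\mathbf{S}\widetilde{\mathbf{f}}_n,0)$, and from the PDE one gets $\operatorname{div}\mathbf{f}'_{n+1}=2\operatorname{div}H_\Omega\mathbf{S}\widetilde{\mathbf{f}}_n\neq 0$ in general. The paper's fix is to reapply the gradient correction of Lemma~\ref{lemma:make-div-free} at \emph{every} iteration step, setting $\widetilde{\mathbf{f}}_{n+1}=\mathbf{f}'_{n+1}-\nabla\varphi_{\mathbf{f}'_{n+1}}$; the accumulated series $\sum_n\nabla\varphi_{\mathbf{f}'_n}$ is controlled using (\ref{rep_divfnprime}) together with (\ref{eq:lemma:estimateS+H-25}) (the divergence of the residual is smooth because $\mathbf{S}$ is smoothing) and is absorbed into $\mathbf{z}_{H^2}$, not into the gradient fields of item (\ref{item:TheoMainIt-ii}). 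Without this per-step correction, (\ref{eq:mapping-property-S}) is not applicable to $\widetilde{\mathbf{f}}_{n+1}$ and the smoothing mechanism breaks at the second iteration.

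Second, your insistence that the contraction be measured only in the $\mathbf{L}^2$-based norm $\Vert\mathbf{f}\Vert+|k|\Vert\mathbf{g}_T\Vert_{\mathbf{H}^{-1/2}(\Gamma)}$ cannot produce the $\mathbf{H}^{m+1}$-bounds (\ref{zH2estimate0})--(\ref{zH2estimate1}) for $m\geq 1$: one needs geometric decay of $\Vert\widetilde{\mathbf{f}}_n\Vert_{\mathbf{H}^{\ell}(\Omega)}$ for every $\ell=0,\dots,m$, which is exactly what (\ref{eq:lemma:estimateS+H-5})/(\ref{Splusauxest}) provide with the uniform factor $C\lambda^{-1/2}$ (the constant depends on $m$, but $\lambda$ is chosen after $m$ is fixed). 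Your worry about the $|k|^{\theta}$-loss being amplified is unfounded for a structural reason you should make explicit: the analytic pieces $\mathbf{z}_{\mathcal{A},n}$ are terminal --- they are never fed back into the recursion; only $2k^2\mathbf{z}_{H^2,n}$ is --- so $\mathcal{S}^{\operatorname{MW}}_{\Omega,k}$ and its $|k|^\theta$ growth enter each term of the series exactly once, multiplied by the geometrically decaying $\Vert\widetilde{\mathbf{f}}_n\Vert_{\mathbf{L}^2(\Omega)}$. A final bookkeeping slip: the modified volume datum after Step~1 is $\mathbf{f}'=\mathbf{f}-\nabla\varphi_{\mathbf{f}}-\operatorname{i}k\nabla\varphi_{\mathbf{g}}$ (the term $\operatorname{i}k^{-1}\nabla\varphi_{\mathbf{g}}$ subtracted from $\mathbf{z}$ contributes to the volume equation through $-k^2(\cdot)$), although this does not affect $\operatorname{div}\mathbf{f}'=0$ since $\varphi_{\mathbf{g}}$ is harmonic.
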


\begin{proof}
By linearity of the solution operator $\mathcal{S}_{\Omega,k}%
^{\operatorname{MW}}$, we consider the cases $\mathcal{S}_{\Omega
,k}^{\operatorname{MW}}(\mathbf{f},0)$ and $\mathcal{S}_{\Omega,k}%
^{\operatorname{MW}}(0,\mathbf{g}_{T})$ separately. The fact that the
right-hand sides in (\ref{zH2estimate0}), (\ref{zH2estimate1}),
(\ref{zaregsplitest}) do not contain the divergence of $\mathbf{f}$ or
$\mathbf{g}_{T}$ is due to the fact that we suitably choose the functions
$\varphi_{\mathbf{f}}$, $\varphi_{\mathbf{g}}$ in the course of the proof.

\textbf{Step 1:} (reduction to divergence-free data) Let the functions
$\varphi_{\mathbf{f}}$, $\varphi_{\mathbf{g}}$ be given by
Lemma~\ref{lemma:make-div-free}. These functions have the regularity
properties given in (\ref{item:TheoMainIt-ii}). The function $\mathbf{z}%
^{\prime}:=\mathbf{z}-k^{-2}\nabla\varphi_{\mathbf{f}}-\operatorname{i}%
k^{-1}\nabla\varphi_{\mathbf{g}}$ satisfies
\begin{align*}
\mathcal{L}_{\Omega,k}\mathbf{z}^{\prime} &  =\mathbf{f} + 
\nabla\varphi_{\mathbf{f}} +  
\operatorname{i}k\nabla\varphi_{\mathbf{g}}=:\mathbf{f}^{\prime}%
\quad\mbox{in $\Omega$},\\
\mathcal{B}_{ \Gamma  ,k}\mathbf{z}^{\prime} &  =\mathbf{g}_{T}-\nabla_{\Gamma}\varphi_{\mathbf{g}%
}=:\mathbf{g}_{T}^{\prime}\quad\mbox{on $\Gamma$}.
\end{align*}
By construction, $\operatorname{div}\mathbf{f}^{\prime}=0$ and
$\operatorname{div}_{\Gamma}\mathbf{g}_{T}^{\prime}=0$. Furthermore, using
$\Vert\operatorname{div}\mathbf{f}\Vert_{\mathbf{H}^{m-1}(\Omega)}\leq
C\Vert\mathbf{f}\Vert_{\mathbf{H}^{m}(\Omega)}$ and $\Vert\operatorname{div}%
_{\Gamma}\mathbf{g}_{T}\Vert_{\mathbf{H}^{m-3/2}(\Gamma)}\leq C\Vert
\mathbf{g}_{T}\Vert_{\mathbf{H}^{m-1/2}(\Gamma)}$ we obtain%
\begin{align}
\Vert\mathbf{f}^{\prime}\Vert_{\mathbf{H}^{m}(\Omega)} &  \leq C\left(
\Vert\mathbf{f}\Vert_{\mathbf{H}^{m}(\Omega)}+C|k|\Vert\mathbf{g}_{T}%
\Vert_{\mathbf{H}^{m-1/2}(\Gamma)}\right)  ,\label{eq:estimate-f'}\\
\Vert\mathbf{g}_{T}^{\prime}\Vert_{\mathbf{H}^{m-1/2}(\Gamma)} &  \leq
C\Vert\mathbf{g}_{T}\Vert_{\mathbf{H}^{m-1/2}(\Gamma)}.\label{eq:estimate-g'}%
\end{align}

\textbf{Step 2:} (Analysis of $\mathcal{S}_{\Omega,k}^{\operatorname{MW}%
}(\mathbf{f}^{\prime},0)$ with $\operatorname{div}\mathbf{f}^{\prime}=0$) {We
claim that
\begin{equation}
\mathcal{S}_{\Omega,k}^{\operatorname{MW}}(\mathbf{f}^{\prime},0)={\mathbf{z}%
}_{H^{2},{\mathbf{f}}}+{\mathbf{z}}_{{\mathcal{A}},{\mathbf{f}}}%
\end{equation}
for some functions ${\mathbf{z}}_{H^{2},{\mathbf{f}}}$ and ${\mathbf{z}%
}_{{\mathcal{A}},{\mathbf{f}}}$ satisfying the estimates (\ref{zH2estimate1})
(and therefore also (\ref{zH2estimate0}) since we focus on the case ${\mathbf g}_T=  0$ and (\ref{zaregsplitest}). }We have
$\operatorname{div}\mathbf{f}^{\prime}=0$ and assume $\mathbf{g}_{T}=0$, which
implies $\mathbf{g}_{T}^{\prime}=0$. Set $\mathbf{f}_{0}^{\prime}%
:=\widetilde{\mathbf{f}}_{0}:=\mathbf{f}^{\prime}$ and define, with the
mapping $\mathbf{f}\mapsto\varphi_{\mathbf{f}}$ of
Lemma~\ref{lemma:make-div-free}, recursively for $n=0,1,\ldots,$
\begin{align}
\mathbf{z}_{H^{2},n} &  :=\mathcal{S}_{\Omega,k}^{+}(H_{\Omega}^{0}%
\widetilde{\mathbf{f}}_{n},0)+\mathcal{S}_{\Omega,k}^{+}(H_{\Omega}%
\mathbf{S}\widetilde{\mathbf{f}}_{n},0)\label{zh2defplus}\\
\mathbf{z}_{\mathcal{A},n} &  :=\mathcal{S}_{\Omega,k}^{\operatorname{MW}%
}(L_{\Omega}^{0}\widetilde{\mathbf{f}}_{n},0)+\mathcal{S}_{\Omega
,k}^{\operatorname{MW}}(L_{\Omega}\mathbf{S}\widetilde{\mathbf{f}}%
_{n},0)\nonumber\\
\mathbf{f}_{n+1}^{\prime} &  :=2k^{2}\mathbf{z}_{H^{2},n},\nonumber\\
\widetilde{\mathbf{f}}_{n+1} &  :=\mathbf{f}_{n+1}^{\prime}+ \nabla
\varphi_{\mathbf{f}_{n+1}^{\prime}}.\nonumber
\end{align}
We note that $\operatorname{div}\widetilde{\mathbf{f}}_{n}=0$ for all $n$. 
From Lemma~\ref{lemma:estimateS+H}, we get: 
if $\widetilde{\mathbf{f}}_{n}\in\mathbf{H}^{\ell}(  \Omega)  $, then 
\begin{equation}
|k|^{\ell+2}\Vert\mathcal{S}_{\Omega,k}^{+}({H}_{\Omega}^{0}\widetilde
{\mathbf{f}}_{n},0)\Vert_{\mathbf{H}^{\ell}(\Omega),k}\leq C\lambda^{-1/2}%
\Vert\widetilde{\mathbf{f}}_{n}\Vert_{\mathbf{H}^{\ell}(\Omega)}%
.\label{Splusauxest}%
\end{equation}
Next, we obtain from Lemma~\ref{lemma:make-div-free} and the above defined recurrence relation 
%
%
\begin{align}
\Vert\nabla\varphi_{\mathbf{f}_{n}^{\prime}}\Vert_{\mathbf{H}^{\ell}(\Omega)}
&  \leq C\Vert\operatorname{div}\mathbf{f}_{n}^{\prime}\Vert_{\mathbf{H}%
^{\ell-1}(\Omega)}\leq C\Vert\mathbf{f}_{n}^{\prime}\Vert_{\mathbf{H}^{\ell
}(\Omega)},\qquad\ell=0,\ldots,m,
\label{estnablaphifnprime}\\
\Vert\mathbf{f}_{n+1}^{\prime}\Vert_{\mathbf{H}^{\ell}(\Omega)} &
\overset{\text{(\ref{Splusauxest})}}{\leq}C\lambda^{-1/2}\Vert\widetilde
{\mathbf{f}}_{n}\Vert_{\mathbf{H}^{\ell}(\Omega)}\leq C\lambda^{-1/2}%
\Vert\mathbf{f}_{n}^{\prime}\Vert_{\mathbf{H}^{\ell}(\Omega)}, \
\ell=0,\ldots,m,
\label{eq:7.21}
\\
\Vert\widetilde{\mathbf{f}}_{n+1}\Vert_{\mathbf{H}^{m}(\Omega)} &  \leq
C\Vert\mathbf{f}_{n+1}^{\prime}\Vert_{\mathbf{H}^{m}(\Omega)}\leq
C\lambda^{-1/2}\Vert\widetilde{\mathbf{f}}_{n}\Vert_{\mathbf{H}^{m}(\Omega)}.
\label{eq:7.22}
\end{align}
{}From the equation that defines $\mathbf{z}_{H^{2},n}$ and since
$\operatorname*{div}H_{\Omega}^{0}\widetilde{\mathbf{f}}_{n}=0$ we get%
\begin{equation}
(  2k^{2})  ^{-1}\operatorname*{div}\mathbf{f}_{n+1}^{\prime
}=\operatorname*{div}\mathbf{z}_{H^{2},n}=k^{-2}\operatorname*{div}H_{\Omega
}\mathbf{S}\widetilde{\mathbf{f}}_{n}.\label{rep_divfnprime}%
\end{equation}
Since $\mathbf{S}$ is a smoothing operator, this implies that 
$\operatorname*{div} \mathbf{f}_{n+1}^{\prime}$ is smooth and 
the first estimate in
(\ref{estnablaphifnprime}) actually holds for any $\ell\in\mathbb{N}_{0}$. 
The bounds (\ref{eq:7.21}), (\ref{eq:7.22}) show that the
functions $\mathbf{f}_{n}$ and $\widetilde{\mathbf{f}}_{n}$ decay in geometric
progression as $n$ increases if $\lambda>1$ is chosen such that $C\lambda
^{-1/2}=:q<1$. Fixing such a $\lambda>1$, a geometric series argument implies
for any $\mu\in\left\{  0,1,\ldots,m\right\}  $%
\begin{equation}
\sum_{n=0}^{\infty}\Vert\widetilde{\mathbf{f}}_{n}\Vert_{\mathbf{H}^{\mu
}(\Omega)}\leq C\Vert\mathbf{f}^{\prime}\Vert_{\mathbf{H}^{\mu}(\Omega
)}.\label{eq:geometric-series}%
\end{equation}
We also get from Theorem~\ref{lemma:apriori-with-good-sign} and
Lemma~\ref{lemma:estimateS+H} and the smoothing property of ${\mathbf S}$ 
(recall that $\operatorname*{div}\widetilde{\mathbf f} = 0$) 
\begin{align}
\nonumber 
\Vert\mathbf{z}_{H^{2},n}\Vert_{\mathbf{H}^{m+1}(\operatorname{curl}%
,\Omega),k} &\!\!  \overset{\text{Thm.~\ref{lemma:apriori-with-good-sign}}}{\leq
}\!\!C\left\vert k\right\vert ^{-2}\left(  \Vert H_{\Omega}^{0}\widetilde
{\mathbf{f}}_{n}\Vert_{\mathbf{H}^{m}(\operatorname{div},\Omega),k}+\Vert
H_{\Omega}\mathbf{S}\widetilde{\mathbf{f}}_{n}\Vert_{\mathbf{H}^{m}%
(\operatorname{div},\Omega),k}\right)  \\
&  \overset{(\ref{eq:lemma:estimateS+H-3})}{\leq}C|k|^{-(m+1)}\Vert
\widetilde{\mathbf{f}}_{n}\Vert_{\mathbf{H}^{m}(\Omega)}.
\label{eq:zh2-estimate}
\end{align}
Lemma~\ref{LemLowFreqEst} shows that $L_{\Omega}^{0}\widetilde{\mathbf{f}}%
_{n}$, $L_{\Omega}\mathbf{S}\widetilde{\mathbf{f}}_{n}\in\mathcal{A}%
(C_{1}\Vert\widetilde{\mathbf{f}}_{n}\Vert_{\mathbf{L}^{2}(\Omega)}%
,C_{2}\lambda|k|,\Omega)$ for some $C_{1}$, $C_{2}$ depending only on $\Omega
$. {}From Theorem~\ref{ThmAnaRegSum}, we infer
\begin{equation}
|\mathbf{z}_{\mathcal{A},n}|_{\mathbf{H}^{p}(\Omega)}\leq C_{\mathbf{z}}|k|^{ \theta-1  }%
\Vert\widetilde{\mathbf{f}}_{n}\Vert_{\mathbf{L}^{2}(\Omega)}\gamma^{p}%
\max\,(p,|k|)^{p}\qquad\forall p\in\mathbb{N}_{p\geq2}\label{eq:zA-H1curl}%
\end{equation}
for some $C_{\mathbf{z}}$, $\gamma$ independent of $k$ and $n$; $\gamma$
depends on $\lambda$, which has been fixed above. Upon setting
\[
\mathbf{z}_{H^{2},{\mathbf{f}^{\prime}}}:=\sum_{n=0}^{\infty}\mathbf{z}%
_{H^{2},n},\qquad\nabla\varphi:=\sum_{n=1}^{\infty}\nabla\varphi
_{\mathbf{f}_{n}^{\prime}},\qquad\mathbf{z}_{\mathcal{A},\mathbf{f}}%
:=\sum_{n=0}^{\infty}\mathbf{z}_{\mathcal{A},n},
\]
we have by (\ref{eq:zh2-estimate}) and (\ref{eq:geometric-series})%
\begin{align*}
\left\Vert \mathbf{z}_{H^{2},{\mathbf{f}^{\prime}}}\right\Vert _{\mathbf{H}%
^{m+1}(\operatorname{curl},\Omega),k} &  \leq C|k|^{-(m+1)}\Vert
\mathbf{f}^{\prime}\Vert_{\mathbf{H}^{m}(\Omega)}\leq C|k|^{-(m+1)}%
\Vert\mathbf{f}\Vert_{\mathbf{H}^{m}(\Omega)},\\
\mathbf{z}_{\mathcal{A},\mathbf{f}} &  \in\mathcal{A}(C|k|^{ \theta-1 }%
\Vert\mathbf{f}\Vert_{\mathbf{L}^{2}(\Omega)},\gamma,\Omega).
\end{align*}
For the term $\nabla\varphi$, we get%
\begin{align*}
\left\Vert k^{-2}\nabla\varphi\right\Vert _{\mathbf{H}^{m+1}%
(\operatorname{curl},\Omega),k} &  =\left\vert k\right\vert ^{-1}\left\Vert
\nabla\varphi\right\Vert _{\mathbf{H}^{m+1}(\Omega),k}\leq C\left\vert
k\right\vert ^{-1}\sum_{n=1}^{\infty}\left\Vert \operatorname*{div}%
\mathbf{f}_{n}^{\prime}\right\Vert _{\mathbf{H}^{m}\left(  \Omega\right)
,k}\\
&  \overset{\text{(\ref{rep_divfnprime})}}{=}C\left\vert k\right\vert
^{-1}\sum_{n=0}^{\infty}\Vert2\operatorname*{div}H_{\Omega}\mathbf{S}%
\widetilde{\mathbf{f}}_{n}\Vert_{\mathbf{H}^{m}\left(  \Omega\right)  ,k}\\
&  \leq C\sum_{n=0}^{\infty}\Vert H_{\Omega}\mathbf{S}\widetilde{\mathbf{f}%
}_{n}\Vert_{\mathbf{H}^{m+1}\left(  \Omega\right)  ,k}\leq C\left\vert
k\right\vert ^{-\left(  m+1\right)  }\sum_{n=0}^{\infty}\Vert\widetilde
{\mathbf{f}}_{n}\Vert_{\mathbf{L}^{2}\left(  \Omega\right)  },
\end{align*}
where we used (\ref{eq:lemma:estimateS+H-25}) with $n\leftarrow m+1$ for the
last estimate. The combination with (\ref{eq:geometric-series}) shows%
\[
\Vert k^{-2}\nabla\varphi\Vert _{\mathbf{H}^{m+1}%
(\operatorname{curl},\Omega),k}\leq C\left\vert k\right\vert ^{-\left(
m+1\right)  }\Vert \mathbf{f}^{\prime}\Vert _{\mathbf{L}^{2}( \Omega)  }.
\]
{We set
\[
{\mathbf{z}}_{H^{2},{\mathbf{f}}}:={\mathbf{z}}_{H^{2},{\mathbf{f}}^{\prime}%
}+ k^{-2}\nabla\varphi.
\]
} That is, the terms $\mathbf{z}_{H^{2},\mathbf{f}}$ and $\mathbf{z}%
_{\mathcal{A},\mathbf{f}}$ satisfy the estimates 
(\ref{zH2estimate0}), (\ref{zH2estimate1}) given in the statement of the theorem 
for the present case ${\mathbf g}_T = 0$. 
We compute%
\[
\mathcal{L}_{\Omega,k}(\mathbf{z}_{H^{2},\mathbf{f}}+\mathbf{z}_{\mathcal{A}%
,\mathbf{f}})=\sum_{n=0}^{\infty}\mathbf{f}_{n}^{\prime}-\mathbf{f}%
_{n+1}^{\prime}=\mathbf{f}_{0}^{\prime}=\mathbf{f}^{\prime},\qquad
\mathcal{B}_{ \Gamma ,k}(\mathbf{z}_{H^{2},{\mathbf{f}}}+\mathbf{z}_{\mathcal{A},\mathbf{f}})=0.
\]
By the uniqueness assertion of Proposition~\ref{lemma:apriori-homogeneous-rhs}%
, we have identified $\mathbf{z}_{H^{2},\mathbf{f}}+\mathbf{z}_{\mathcal{A}%
,\mathbf{f}}=\mathcal{S}_{\Omega,k}^{\operatorname{MW}}(\mathbf{f}^{\prime
},0)$.

\textbf{Step 3:} (Analysis of $\mathcal{S}_{\Omega,k}^{\operatorname{MW}%
}(0,\mathbf{g}_{T}^{\prime})$ with $\operatorname{div}_{\Gamma}\mathbf{g}%
_{T}^{\prime}=0$) We define
\[
\mathbf{z}_{H^{2},\mathbf{g}}:=\mathcal{S}_{\Omega,k}^{+}(0,\mathbf{H}%
_{\Gamma}\mathbf{g}_{T}^{\prime}),\qquad\mathbf{z}_{\mathcal{A},\mathbf{g}%
}:=\mathcal{S}_{\Omega,k}^{\operatorname{MW}}(0,\mathbf{L}_{\Gamma}%
\mathbf{g}_{T}^{\prime}).
\]
{}From Theorem~\ref{lemma:apriori-with-good-sign} and the properties of
$\mathbf{H}_{\Gamma}$ given in
Proposition~\ref{lemma:properties-of-H-via-lifting} we get
\begin{align}
\nonumber 
\Vert\mathbf{z}_{H^{2},\mathbf{g}}\Vert_{\mathbf{H}^{m+1}(\Omega),k}  &  \leq
C|k|^{-3}\Vert\mathbf{H}_{\Gamma}\mathbf{g}_{T}^{\prime}\Vert_{\mathbf{H}%
^{m-1/2}(\operatorname{div}_{\Gamma},\Gamma),k} \\
& \overset
{(\ref{eq:lemma:estimateS+H-15}),\operatorname{div}_{\Gamma}\mathbf{g}%
_{T}^{\prime}=0}{\leq}C|k|^{-1-m}\Vert\mathbf{g}_{T}\Vert_{\mathbf{H}%
^{m-1/2}(\Gamma)},\\
\Vert\mathbf{z}_{H^{2},\mathbf{g}}\Vert_{\mathbf{H}^{m+1}(\operatorname{curl}%
,\Omega),k}  &  \overset{\text{Prop.~\ref{lemma:properties-of-H-via-lifting}}%
}{\leq}C|k|^{-(m+1)}\Vert\mathbf{g}_{T}\Vert_{\mathbf{H}^{m+1/2}(\Gamma)}.
\label{ZH2gsecond}%
\end{align}
That is, $\mathbf{z}_{H^{2},\mathbf{g}}$ satisfies the estimates 
(\ref{zH2estimate0}), (\ref{zH2estimate1}) given in the
statement of the theorem for the present case ${\mathbf f} = 0$. 
For $\mathbf{z}_{\mathcal{A},\mathbf{g}}$ we observe
that Lemma~\ref{LemLowFreqEst} ensures\footnote{We write $\mathbf{L}_{\Gamma
}\mathbf{g}_{T}^{\prime}$ instead of introducing a new symbol $\mathbf{Z}$
with $\mathbf{Z}|_{\Gamma}=\mathbf{L}_{\Gamma}\mathbf{g}_{T}^{\prime}$}
$\mathbf{L}_{\Gamma}\mathbf{g}_{T}^{\prime}\in\mathcal{A}(C\Vert\mathbf{g}%
_{T}^{\prime}\Vert_{\mathbf{H}^{-1/2}(\Gamma)},\gamma,\Omega)$ for some $C$
depending only on $\Omega$ and $\gamma>0$ depending on $\Omega$ and $\lambda$.
We note $\Vert\mathbf{g}_{T}^{\prime}\Vert_{\mathbf{H}^{-1/2}(\Gamma)}\leq
C\Vert\mathbf{g}_{T}\Vert_{\mathbf{H}^{-1/2}(\Gamma)}$ by
(\ref{eq:estimate-g'}). From Theorem~\ref{ThmAnaRegSum} we obtain
\[
\mathbf{z}_{\mathcal{A},\mathbf{g}}\in\mathcal{A}(C|k|^{ \theta-1/2}  \Vert\mathbf{g}_{T}\Vert_{\mathbf{H}^{-1/2}(\Gamma)},\gamma,\Omega).
\]
Since $\theta-1/2\leq\theta$, the function $\mathbf{z}_{\mathbf{A},\mathbf{g}}$ satisfies the estimate
stated in the theorem. Finally, we observe that $\widehat{\mathbf{z}%
}:=\mathcal{S}_{\Omega,k}^{\operatorname{MW}}(0,\mathbf{g}_{T}^{\prime
})-(\mathbf{z}_{H^{2},\mathbf{g}}+\mathbf{z}_{\mathcal{A},\mathbf{g}})$
satisfies%
\[
\mathcal{L}_{\Omega,k}\widehat{\mathbf{z}}=2k^{2}\mathcal{S}_{\Omega,k}%
^{+}(0,\mathbf{H}_{\Gamma}\mathbf{g}_{T}^{\prime})=:\widehat{\mathbf{f}%
},\qquad\mathcal{B}_{ \Gamma  ,k}\widehat{\mathbf{z}}=0.
\]
{}From Lemma~\ref{lemma:estimateS+H} we get using $\operatorname{div}_{\Gamma
}\mathbf{g}_{T}^{\prime}=0$
\begin{subequations}
\label{eq:estimate-fhat}%
\begin{align}
\Vert\widehat{\mathbf{f}}\Vert_{\mathbf{H}^{m}(\Omega)}& \leq C|k|\Vert
\mathbf{g}_{T}^{\prime}\Vert_{\mathbf{H}^{m-1/2}(\Gamma)},
& 
\Vert \widehat{\mathbf{f}}\Vert_{\mathbf{H}^{m}(\Omega)}& \leq C\Vert\mathbf{g}%
_{T}^{\prime}\Vert_{\mathbf{H}^{m+1/2}(\Gamma)},
\\
\Vert\widehat{\mathbf{f}%
}\Vert_{\mathbf{L}^{2}(\Omega)}& \leq C|k|\Vert\mathbf{g}_{T}^{\prime}%
\Vert_{\mathbf{H}^{-1/2}(\Gamma)}. 
\end{align}
\end{subequations}
We note that $\operatorname{div}\widehat{\mathbf{f}}=0$ and that Step~2
provides a decomposition of $\widehat{\mathbf{z}}$ in the form {$\widehat
{\mathbf{z}}=\mathbf{z}_{H^{2},\widehat{\mathbf{f}}}+\mathbf{z}_{\mathcal{A}%
,\widehat{\mathbf{f}}}$.} By Step~2, the term $\mathbf{z}_{H^{2}%
,\widehat{\mathbf{f}}}$ can be controlled in terms of $\Vert\widehat
{\mathbf{f}}\Vert_{\mathbf{H}^{m}(\Omega)}$ in thus in the required form. For
$\mathbf{z}_{\mathcal{A},\widehat{\mathbf{f}}}$, we note that Step~2 yields
\[
\mathbf{z}_{\mathcal{A},\mathbf{\widehat{\mathbf{f}}}}\in\mathcal{A}(C|k|^{ \theta-1 }%
\Vert\widehat{\mathbf{f}}\Vert_{\mathbf{L}^{2}(\Omega)},\gamma,\Omega
)\subset\mathcal{A}(C|k|^{ \theta} 
\Vert\mathbf{g}_{T}\Vert_{\mathbf{H}^{-1/2}(\Gamma)},\gamma,\Omega),
\]
which is an analytic function with the desired estimate. We summarize that
$\mathbf{z}_{H^{2}}$, $\mathbf{z}_{\mathcal{A}}$ in the statement of the
theorem are given by%
\[
\mathbf{z}_{H^{2}}=\mathbf{z}_{H^{2},\mathbf{f}}
+\mathbf{z}_{H^{2},\mathbf{g}}+\mathbf{z}_{H^{2},\widehat{\mathbf{f}}}%
\quad\text{and\quad}\mathbf{z}_{\mathcal{A}}=\mathbf{z}_{\mathcal{A}%
,\mathbf{f}}+\mathbf{z}_{\mathcal{A},\mathbf{g}}+\mathbf{z}_{\mathcal{A}%
,\widehat{\mathbf{f}}}%
\]
and the summands have been estimated in Step 1-3.

\textbf{Step 4:} The proof is now complete with the exception of the statement
in (\ref{item:TheoMainIt-i}) that $|k|\Vert\mathbf{g}_{T}\Vert_{\mathbf{H}%
^{m-1/2}(\Gamma)}$ can be replaced with $\Vert\mathbf{g}_{T}\Vert
_{\mathbf{H}^{m+1/2}(\Gamma)}$. However, this follows directly from
(\ref{zH2estimate1}) via%
\[
\left\vert k\right\vert \left\Vert \mathbf{z}_{H^{2},\mathbf{g}}\right\Vert
_{\mathbf{H}^{m+1}\left(  \Omega\right)  ,k}\leq\left\Vert \mathbf{z}%
_{H^{2},\mathbf{g}}\right\Vert _{\mathbf{H}^{m+1}\left(  \operatorname*{curl}%
,\Omega\right)  ,k}\overset{\text{(\ref{ZH2gsecond})}}{\leq}C\left\vert
k\right\vert ^{-m-1}\left\Vert \mathbf{g}_{T}\right\Vert _{\mathbf{H}%
^{m+1/2}\left(  \Gamma\right)  }
\]
and for the control of $\widehat{\mathbf{z}}$ in Step~3 via the bound
$\Vert\widehat{\mathbf{f}}\Vert_{{\mathbf{H}^{m}(\Omega)}}\lesssim
\Vert{\mathbf{g}}_{T}^{\prime}\Vert_{{\mathbf{H}^{m+1/2}(\Gamma)}}$ in
(\ref{eq:estimate-fhat}).
\end{proof}

\bigskip

\section{Discretization}

\label{sec:discretization}
In this section, we describe the $hp$-FEM based on N\'ed\'elec elements 
and discuss the approximation properties of various $hp$-approximation operators. 
These operators made their appearance already in \cite{MelenkSauterMaxwell_I}. Here, 
we strengthen the results of \cite[Sec.~{8}]{MelenkSauterMaxwell_I} in that we additionally 
control the error on the boundary of the elements, which is required due to the 
impedance boundary conditions considered here.

\subsection{Meshes and N\'{e}d\'{e}lec elements}

\label{sec:nedelec-elements}

The classical example of curl-conforming FE spaces are the N\'{e}d\'{e}lec
elements, \cite{nedelec80}. We restrict our attention here to so-called
\textquotedblleft type I\textquotedblright\ elements (sometimes also referred
to as the N\'{e}d\'{e}lec-Raviart-Thomas element) on tetrahedra. These spaces
are based on a conforming (no hanging nodes), shape-regular triangulation ${\mathcal{T}}_{h}$ of $\Omega\subset
\mathbb{R}^{3}$. That is, ${\mathcal{T}}_{h}$ satisfies:

\begin{enumerate}
[(i)]

\item The (open) elements $K\in{\mathcal{T}}_{h}$ cover $\Omega$, i.e.,
$\overline{\Omega}=\cup_{K\in{\mathcal{T}}_{h}}\overline{K}$.

\item Associated with each element $K$ is the \emph{element map}, a $C^{1}%
$-diffeomorphism $F_{K}:\overline{\widehat{K}} \rightarrow\overline{K}$. The
set $\widehat{K}$ is the \emph{reference tetrahedron}.

\item Denoting $h_{K}=\operatorname*{diam}K$, there holds, with some
\emph{shape-regularity constant $\gamma_{\mathcal{T}}$},
\begin{equation}
h_{K}^{-1}\Vert F_{K}^{\prime}\Vert_{L^{\infty}(\widehat{K})}+h_{K}\Vert
(F_{K}^{\prime})^{-1}\Vert_{L^{\infty}(\widehat{K})}\leq\gamma_{\mathcal{T}}.
\label{defhkloc}%
\end{equation}

\item The intersection of two elements is only empty, a vertex, an edge, a
face, or they coincide (here, vertices, edges, and faces are the images of the
corresponding entities on the reference tetrahedron $\widehat{K}$). The
parametrization of common edges or faces are compatible. That is, if two
elements $K$, $K^{\prime}$ share an edge (i.e., $F_{K}(e)=F_{K^{\prime}%
}(e^{\prime})$ for edges $e$, $e^{\prime}$ of $\widehat{K}$) or a face (i.e.,
$F_{K}(f)=F_{K^{\prime}}(f^{\prime})$ for faces $f$, $f^{\prime}$ of
$\widehat{K}$), then $F_{K}^{-1}\circ F_{K^{\prime}}:f^{\prime}\rightarrow f$
is an affine isomorphism.
\end{enumerate}

The maximal mesh width is denoted by%
\begin{equation}
h:=\max\left\{  h_{K}:K\in\mathcal{T}_{h}\right\}  .\label{eq:hmax}%
\end{equation}
The following assumption
requires
that the element map $F_{K}$ can be decomposed as a composition of an affine
scaling with an $h$-independent mapping. We adopt the setting of
\cite[Sec.~{5}]{MelenkSauterMathComp} and assume that the element maps $F_{K}$
of the conforming, $\gamma$-shape regular triangulation ${\mathcal{T}}_{h}$ satisfy the
following additional requirements:

\begin{assumption}
[normalizable regular triangulation]\label{def:element-maps} Each element map
$F_{K}$ can be written as $F_{K}=R_{K}\circ A_{K}$, where $A_{K}$ is an
\emph{affine} map and the maps $R_{K}$ and $A_{K}$ satisfy for constants
$C_{\operatorname*{affine}}$, $C_{\operatorname{metric}}$, $\gamma>0$
independent of $K$:
\begin{align*}
&  \Vert A_{K}^{\prime}\Vert_{L^{\infty}(\widehat{K})}\leq
C_{\operatorname*{affine}}h_{K},\qquad\Vert(A_{K}^{\prime})^{-1}%
\Vert_{L^{\infty}(\widehat{K})}\leq C_{\operatorname*{affine}}h_{K}^{-1},\\
&  \Vert(R_{K}^{\prime})^{-1}\Vert_{L^{\infty}(\widetilde{K})}\leq
C_{\operatorname{metric}},\qquad\Vert\nabla^{n}R_{K}\Vert_{L^{\infty
}(\widetilde{K})}\leq C_{\operatorname{metric}}\gamma^{n}n!\qquad\forall
n\in{\mathbb{N}}_{0}.
\end{align*}
Here, $\widetilde{K}=A_{K}(\widehat{K})$ and $h_{K}>0$ is the element diameter.
\end{assumption}

\begin{remark}
A prime example of meshes that satisfy Assumption~\ref{def:element-maps} are
those patchwise structured meshes as described, for example, in
\cite[Ex.~{5.1}]{MelenkSauterMathComp} or \cite[Sec.~{3.3.2}]{MelenkHabil}.
These meshes are obtained by first fixing a macro triangulation of $\Omega$;
the actual triangulation is then obtained as images of affine triangulations
of the reference element. 
\eremk
\end{remark}

On the reference tetrahedron $\widehat{K}$ we introduce the classical
N\'{e}d\'{e}lec type I and Raviart-Thomas elements of degree $p\geq0$ (see,
e.g., \cite{Monkbook}):
\begin{align}
{\mathcal{P}}_{p}(\widehat{K})  &  :=\operatorname*{span}\{\mathbf{x}^{\alpha
}\,|\,|\alpha|\leq p\},\label{eq:Pp}\\
\mathbf{RT}_{p}(\widehat{K})  &  :=\{\mathbf{p}(\mathbf{x})+\mathbf{x}%
{q}(\mathbf{x})\,|\,\mathbf{p}\in({\mathcal{P}}_{p}(\widehat{K}))^{3},{q}%
\in{\mathcal{P}}_{p}(\widehat{K})\},\\
\boldsymbol{\mathcal{N}}_{p}^{\operatorname*{I}}(\widehat{K})  &
:=\{\mathbf{p}(\mathbf{x})+\mathbf{x}\times\mathbf{q}(\mathbf{x}%
)\,|\,\mathbf{p},\mathbf{q}\in({\mathcal{P}}_{p}(\widehat{K}))^{3}\}.
\label{eq:Np}%
\end{align}
The spaces $S_{p+1}({\mathcal{T}}_{h})$, $\mathbf{RT}_{p}\left(
\mathcal{T}_{h}\right)  $, $\boldsymbol{\mathcal{N}}_{p}^{\operatorname*{I}%
}({\mathcal{T}}_{h})$ are then defined as in \cite[(3.76)]{Monkbook} by
transforming covariantly $\boldsymbol{\mathcal{N}}_{p}^{\operatorname*{I}
}(\widehat{K})$ and contravariantly $\mathbf{RT}_p({\widehat K})$:
\begin{subequations}
\label{eq:curl-conforming-hp-spaces}%
\begin{align}
S_{p+1}({\mathcal{T}}_{h})  &  :=\{u\in H^{1}(\Omega)\,|\,u|_{K}\circ F_{K}%
\in{\mathcal{P}}_{p+1}(\widehat{K})\},\\
\mathbf{RT}_{p}({\mathcal{T}}_{h})  &  :=\{\mathbf{u}\in\mathbf{H}%
(\operatorname*{div},\Omega)\,|\,({\operatorname*{det}F_{K}^{\prime}}%
)(F_{K}^{\prime})^{-1}\mathbf{u}|_{K}\circ F_{K}\in\mathbf{RT}_{p}(\widehat
{K})\},\\
\boldsymbol{\mathcal{N}}_{p}^{\operatorname*{I}}({\mathcal{T}}_{h})  &
:=\{\mathbf{u}\in\mathbf{H}(\operatorname*{curl},\Omega)\,|\,(F_{K}^{\prime
})^{T}\mathbf{u}|_{K}\circ F_{K}\in\boldsymbol{\mathcal{N}}_{p}%
^{\operatorname*{I}}(\widehat{K})\}.
\end{align}
\end{subequations}
We set\footnote{%
Note that $\mathbf{X}_{h}=\boldsymbol{\mathcal{N}}_{p}^{\operatorname*{I}%
}({\mathcal{T}}_{h})$ and $S_{h}=S_{p+1}({\mathcal{T}}_{h})$ since
$\boldsymbol{\mathcal{N}}_{p}^{\operatorname*{I}}({\mathcal{T}}_{h}%
)\subset\mathbf{X}_{\operatorname*{imp}}$ and $S_{p+1}({\mathcal{T}}%
_{h})\subset H_{\operatorname*{imp}}^{1}(  \Omega)  $.
 }%
\begin{align}
\label{defXh} 
\mathbf{X}_{h}  &  :=\boldsymbol{\mathcal{N}}_{p}^{\operatorname*{I}%
}({\mathcal{T}}_{h})\cap\mathbf{X}_{\operatorname*{imp}},
& 
S_{h}  &  :=S_{p+1}({\mathcal{T}}_{h})\cap H_{\operatorname*{imp}}^{1}\left(
\Omega\right) 
\end{align}
and recall the well-known exact sequence property%
\begin{equation}
S_{h}\overset{\nabla}{\longrightarrow}\mathbf{X}_{h}\overset
{\operatorname*{curl}}{\longrightarrow}\operatorname*{curl}\mathbf{X}_{h}.
\label{exdiscseq_rm}%
\end{equation}

The $hp$-FEM Galerkin discretization for the electric Maxwell problem
(\ref{weakformulation}) is given by:%
\begin{equation}
\text{find }\mathbf{E}_{h}\in\mathbf{X}_{h}\text{\quad such that }A_{k}(
\mathbf{E}_{h},\mathbf{v})  =\left(  \mathbf{j,v}\right)  +\left(
\mathbf{g}_{T},\mathbf{v}\right)  _{\mathbf{L}^{2}\left(  \Gamma\right)
}\quad\forall\mathbf{v}\in\mathbf{X}_{h}. \label{discMW}%
\end{equation}


\subsection{$hp$-Approximation operators}

{We will use polynomial approximation operators that are constructed
elementwise, i.e., for an operator $\widehat{I}_{p}$ on the reference element
$\widehat{K}$, a global operator $I_{p}$ is defined 
by setting $(I_{p}u)|_{K}:=\widehat{I}_{p}(u\circ F_{K}))\circ F_{K}^{-1}%
$. If $\widehat{I}_{p}$ maps into ${\mathcal{P}}_{p+1}(\widehat{K})$, we say
$\widehat{I}_{p}$ \emph{admits an element-by-element construction}, if the
operator $I_{p}$ defined in this way maps into $S_{p+1}({\mathcal{T}}_{h})$.
Analogously, if $\widehat{I}_{p}$ maps into $\boldsymbol{\mathcal{N}}%
_{p}^{\operatorname*{I}}(\widehat{K})$, then we say that $\widehat{I}_{p}$
\emph{admits an element-by-element construction} if the resulting operator
$I_{p}$ maps into $\boldsymbol{\mathcal{N}}_{p}^{\operatorname*{I}%
}(\mathcal{T}_{h})$. }

For scalar functions (or gradient fields), we have elemental approximation
operators with the optimal convergence in $L^{2}$ and $H^{1}$:

\begin{lemma}
\label{lemma:element-by-element-approximation} Let $\widehat{K}\subset
\mathbb{R}^{d}$, $d\in\{2,3\}$, be the reference triangle or reference
tetrahedron and
$\mathbb{R} \ni m%
\geq(d+1)/2$. Then, for every $p\in\mathbb{N}_{0}$,  there exists a linear
operator $\widehat{\Pi}_{p}:H^{%
m%
}(\widehat{K})\rightarrow\mathcal{P}_{p+1}$ that permits an element-by-element
construction
such that if $p\geq m-2$
\begin{equation}
\begin{split}
&  \Vert u-\widehat{\Pi}_{p}u\Vert_{L^{2}(\widehat{K})}
+\frac{1}{p+1}\Vert u-\widehat{\Pi}_{p}u\Vert_{H^{1}(\widehat{K})}
+ (p+1)^{-1/2}\Vert u-\widehat{\Pi}_{p}u\Vert_{L^{2}(\partial \widehat{K})}  
\\ 
\label{eq:lemma:element-by-element-approximation}
& \qquad \mbox{}
+(p+1)^{-3/2}\Vert u-\widehat{\Pi}_{p}u\Vert_{H^{1}%
(\partial\widehat{K})}\leq C{(p+1)^{-m}}|u|_{H^{m}(\widehat{K})}
\end{split}
\end{equation}
for a constant $C>0$ that depends only on
$m$, $d$, and the choice of reference triangle/tetrahedron.

{For the case $d=3$, the condition on
$m$
can be relaxed to
$m%
>d/2$. }
\end{lemma}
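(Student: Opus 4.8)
The plan is to reduce the statement to a known $hp$-approximation result on the reference simplex together with a standard "element-by-element gluing" argument. First I would recall the construction of a simplex-based $H^1$-conforming projection-based interpolation operator (e.g., from Demkowicz--Babu\v{s}ka, or the operators in \cite{MelenkSauterMathComp}, \cite{MelenkHabil}) which, on a single reference element $\widehat K$, maps $H^t(\widehat K) \to \mathcal{P}_{p+1}$ and is defined through a sequence of constrained $L^2$-projections onto vertex, edge, face, and interior degrees of freedom. The key structural property is that this operator matches polynomial traces on all subsimplices: if two elements share an edge or face, the interpolants produced on the two sides agree there, because the edge/face contributions depend only on the restriction of $u$ to that edge/face (and lower-dimensional data). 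This is exactly what is needed so that the elementwise-defined operator $I_p$, $(I_p u)|_K := \widehat\Pi_p(u\circ F_K)\circ F_K^{-1}$, is globally $H^1$-conforming, i.e.\ lands in $S_{p+1}(\mathcal{T}_h)$; the compatibility of the element maps along shared faces/edges assumed in Section~\ref{sec:nedelec-elements} guarantees that the pullbacks are consistent.

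Next I would establish the local error bounds on the reference element. The estimate
\[
(p+1)\|u-\widehat\Pi_p u\|_{L^2(\widehat K)} + \|u-\widehat\Pi_p u\|_{H^1(\widehat K)} \le C (p+1)^{-(t-1)} |u|_{H^t(\widehat K)}
\]
is the classical $p$-version approximation result for projection-based interpolation on a simplex; it is proved by combining the polynomial approximation theory on $\widehat K$ (Jacobi-polynomial expansions, trace liftings with $p$-explicit norm bounds) with the stability of the constrained projections. The two boundary terms,
\[
(p+1)^{1/2}\|u-\widehat\Pi_p u\|_{L^2(\partial\widehat K)} + (p+1)^{-1/2}\|u-\widehat\Pi_p u\|_{H^1(\partial\widehat K)},
\]
then follow from the multiplicative trace inequality on $\widehat K$ applied to $w:=u-\widehat\Pi_p u$, namely $\|w\|_{L^2(\partial\widehat K)}^2 \le C\|w\|_{L^2(\widehat K)}\|w\|_{H^1(\widehat K)}$ and $\|w\|_{H^1(\partial\widehat K)}^2 \le C\|w\|_{H^1(\widehat K)}\|w\|_{H^2(\widehat K)}$, combined with an inverse inequality to absorb the $H^2(\widehat K)$-norm of the polynomial part (or, alternatively, by using that $\widehat\Pi_p$ is stable in a slightly stronger norm). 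Here the hypothesis $t \ge (d+1)/2$ ensures $H^{t-1/2}(\partial\widehat K) \hookrightarrow L^2(\partial\widehat K)$ and that point/edge evaluations in the construction are well defined; the relaxation to $t > d/2$ for $d=3$ uses that one only needs face data in $H^{1/2+\varepsilon}$ rather than edge data.

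The main obstacle I expect is the bookkeeping for the $p$-explicit constants in the projection-based interpolation estimate, in particular getting the extra factor $(p+1)^{-1}$ in the $L^2$-estimate (superconvergence in $L^2$ relative to $H^1$) together with the boundary norms, uniformly in $p \ge t-2$; this is where one must be careful about which polynomial liftings are used and about the precise stability constants of the constrained projections. Since this is a by-now standard result, however, I would simply cite it (it is essentially contained in \cite{MelenkSauterMathComp, MelenkHabil} and the references therein) and concentrate the write-up on (a) verifying that the hypotheses there are met, (b) the trace-inequality argument for the two boundary terms, and (c) the conformity/gluing statement that turns the reference-element operator into a global operator on $S_{p+1}(\mathcal{T}_h)$ under Assumption~\ref{def:element-maps}.
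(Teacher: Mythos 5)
Your overall strategy---take the projection-based interpolation operators of \cite{melenk_rojik_2018} (or \cite{MelenkSauterMathComp}) for the volume bounds and the element-by-element conformity, then derive the boundary bounds from the volume bounds---matches the paper for the first part, but the mechanism you propose for the boundary terms has a genuine gap in the $H^1(\partial\widehat K)$ estimate. The multiplicative trace inequality does work for the $L^2(\partial\widehat K)$ term: $\Vert w\Vert_{L^2(\partial\widehat K)}^2\le C\Vert w\Vert_{L^2(\widehat K)}\Vert w\Vert_{H^1(\widehat K)}$ combined with the volume rates $(p+1)^{-t}$ and $(p+1)^{-(t-1)}$ gives exactly the required $(p+1)^{-(t-1/2)}$. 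It does not work for the $H^1(\partial\widehat K)$ term. First, $\Vert w\Vert_{H^1(\partial\widehat K)}^2\le C\Vert w\Vert_{H^1(\widehat K)}\Vert w\Vert_{H^2(\widehat K)}$ requires $w\in H^2(\widehat K)$, which fails in the admissible range $t\in(3/2,2)$ (allowed for $d=2$ and in the relaxed $d=3$ case). Second, even for $t\ge 2$ you would need $\Vert w\Vert_{H^2(\widehat K)}\lesssim (p+1)^{-(t-2)}|u|_{H^t(\widehat K)}$, i.e., simultaneous quasi-optimality of $\widehat\Pi_p$ in $H^2$, which is not among the cited properties of these operators; your fallback of an inverse inequality on the polynomial part costs a full factor $(p+1)$: writing $w=(u-v)+(v-\widehat\Pi_p u)$ with $v$ a near-best simultaneous approximant, the polynomial trace/inverse estimate gives $\Vert v-\widehat\Pi_p u\Vert_{H^1(\partial\widehat K)}\le C(p+1)\Vert v-\widehat\Pi_p u\Vert_{H^1(\widehat K)}\lesssim (p+1)^{-(t-2)}|u|_{H^t(\widehat K)}$, whereas the claimed bound is $(p+1)^{-(t-3/2)}|u|_{H^t(\widehat K)}$. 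You lose half a power of $p$, and no generic trace argument recovers it.

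The paper closes this by exploiting a structural property rather than a trace inequality: the restriction of $\widehat\Pi_{p+1}^{\operatorname{grad},3d}$ to a face $f$ of $\widehat K$ is the two-dimensional operator $\widehat\Pi_{p+1}^{\operatorname{grad},2d}$ applied to $u|_f$ (and its restriction to an edge of the reference triangle is the one-dimensional operator). Hence the boundary estimates are the lower-dimensional \emph{volume} estimates applied to the trace $u|_f\in H^{t-1/2}(f)$, which yield the optimal rates $(p+1)^{-(t-1/2)}$ in $L^2(f)$ and $(p+1)^{-(t-3/2)}$ in $H^1(f)$ directly. For the relaxed case $t>3/2$, $d=3$, the operator is instead the one of \cite[Thm.~{B.4}]{MelenkSauterMathComp}, and the face error is controlled directly through the face minimization functional of the element-by-element construction together with trace estimates for the preliminary unconstrained approximant and $p$-explicit liftings (this is reproduced in Theorem~\ref{thm:finite-regularity-constrained-approximation}). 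To repair your write-up you must replace the multiplicative-trace step for the $H^1(\partial\widehat K)$ term by one of these direct face-wise arguments.
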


\begin{proof}
The operator $\widehat{\Pi}_{p}$ may be taken as the operators $\widehat{\Pi
}^{\operatorname{grad},3d}_{p+1}$ \ for $d = 3$ or $\widehat{\Pi
}^{\operatorname{grad},2d}_{p+1}$ for $d = 2$ of \cite{melenk_rojik_2018}. The
volume estimates follow from \cite[Cor.~{2.12}]{melenk_rojik_2018} for the case $d= 3$ 
and \cite[Thm.~{2.13}]{melenk_rojik_2018} for the case $d = 2$. 
For the estimates on $\partial\widehat{K}$, one notices that the restriction
of $\widehat{\Pi}^{\operatorname{grad},3d}_{p+1}$ to a boundary face
$\widehat{f}$ is the operator $\widehat{\Pi}^{\operatorname{grad},2d}_{p+1}$
on that face and that the restriction of $\widehat{\Pi}^{\operatorname{grad}%
,2d}_{p+1}$ to an edge of the reference triangle is the operator $\widehat
{\Pi}^{\operatorname{grad},1d}_{p+1}$ discussed in \cite[Lem.~{4.1}%
]{melenk_rojik_2018}.

For $d=3$ an operator $\widehat{\Pi}_{p}$ with the stated approximation
properties is constructed in \cite[Thm.~{B.4}]{MelenkSauterMathComp} for the
case
$m
>d/2=3/2$. The statement about the approximation on $\partial\widehat{K}$
follows by a more careful analysis of the proof of \cite[Thm.~{B.4}%
]{MelenkSauterMathComp}. For the reader's convenience, the proof is reproduced
in 
\ifarxiv Theorem~\ref{thm:finite-regularity-constrained-approximation}.
\else
\cite[Thm.~{B.5}]{MelenkSauterMaxwell_II}. 
\fi
\end{proof}

The fact that $\widehat{\Pi}_{p}$ in
Lemma~\ref{lemma:element-by-element-approximation} has the element-by-element
construction property means that an elementwise definition of the operator
$\Pi_{p}^{\nabla,s}:H^{m}(\Omega)\rightarrow S_{p+1}(\mathcal{T}_{h})$ by
$(\Pi_{p}^{\nabla,s}\varphi)|_{K}=(\widehat{\Pi}_{p}(\varphi\circ F_{K}))\circ
F_{K}^{-1}$ maps indeed into $S_{p+1}(\mathcal{T}_{h})\subset H^{1}(\Omega)$.

In the following we always assume for the spatial dimension $d=3$. By scaling
arguments we get the following result:

\begin{corollary}
\label{cor:Pinabla}Let $d=3$. For $m\in\mathbb{N}_{>3/2}$ and $p\geq m-2$ the
operator $\Pi_{p}^{\nabla,s}:H^{m}(\Omega)\rightarrow S_{p+1}(\mathcal{T}%
_{h})$ has following the approximation properties for all $K\in\mathcal{T}_{h}$:%
\begin{align}
&\!\!\! \Vert\varphi-\Pi_{p}^{\nabla,s}\varphi\Vert_{L^{2}(K)}+\frac{h_{K}}{p+1}%
\Vert\varphi-\Pi_{p}^{\nabla,s}\varphi\Vert_{H^{1}(K)}  
  \leq C\left(
\frac{h_{K}}{p+1}\right)  ^{m}\Vert\varphi\Vert_{H^{m}(K)},\\
& \frac{h_{K}}{p+1}\Vert\varphi-\Pi_{p}^{\nabla,s}\varphi\Vert_{H^{1}(\partial
K)}    \leq C\left(  \frac{h_{K}}{p+1}\right)  ^{m-1/2}\Vert\varphi
\Vert_{H^{m}(K)}.
\end{align}
\end{corollary}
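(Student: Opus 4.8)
\textbf{Proof proposal for Corollary~\ref{cor:Pinabla}.}
The plan is to reduce everything to the reference-element estimates of Lemma~\ref{lemma:element-by-element-approximation} by a standard scaling argument, using the decomposition $F_K = R_K \circ A_K$ of Assumption~\ref{def:element-maps} so that only the affine part carries the $h_K$-dependence while the analytic part $R_K$ contributes only $h$-independent constants. First I would fix $K \in \mathcal{T}_h$, write $\widehat\varphi := \varphi \circ F_K$, and recall that by construction $(\Pi_p^{\nabla,s}\varphi)|_K = (\widehat\Pi_p \widehat\varphi) \circ F_K^{-1}$. The element-by-element construction property of $\widehat\Pi_p$ is exactly what guarantees that the resulting global object lies in $S_{p+1}(\mathcal{T}_h) \subset H^1(\Omega)$, so no interelement-continuity check is needed; it suffices to establish the local bounds.

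The core step is the transformation of norms under $F_K$. For the volume terms I would use that for a scalar function $u$ on $K$, $\|u\|_{L^2(K)} \sim |\det F_K'|^{1/2}\|u\circ F_K\|_{L^2(\widehat K)}$ and $|u|_{H^j(K)}$ is comparable, up to powers of $\|F_K'\|_\infty$ and $\|(F_K')^{-1}\|_\infty$ and $h$-independent constants from the higher derivatives of $R_K$, to a combination of seminorms $|u\circ F_K|_{H^i(\widehat K)}$, $i \le j$. Assumption~\ref{def:element-maps} gives $\|F_K'\|_\infty \lesssim h_K$ and $\|(F_K')^{-1}\|_\infty \lesssim h_K^{-1}$ (with constants depending only on $C_{\operatorname{affine}}$, $C_{\operatorname{metric}}$, $\gamma_{\mathcal T}$), so each derivative transforms with a clean power of $h_K$. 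Applying this to $u = \varphi - \Pi_p^{\nabla,s}\varphi$, invoking (\ref{eq:lemma:element-by-element-approximation}) on $\widehat K$ with $t = m$ (legitimate since $m > 3/2$ and $p \ge m-2$), and then transforming $|\widehat\varphi|_{H^m(\widehat K)}$ back to $K$ produces the factor $(h_K/(p+1))^m$ in front of $\|\varphi\|_{H^m(K)}$; the intermediate $H^1$-term on the left picks up one fewer power of $h_K$ and one more power of $p+1$, matching the stated estimate. One technical point to handle carefully is that $\widehat\Pi_p$ need not commute with affine pullbacks, so the scaling argument must be phrased purely in terms of norm equivalences for the fixed function $u = \varphi - \Pi_p^{\nabla,s}\varphi$, not by pushing the operator through the map; this is routine but must be stated.

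For the boundary estimate I would argue analogously using the trace-norm transformation: a face $f \subset \partial K$ is the image under $F_K$ of a face $\widehat f \subset \partial\widehat K$, and $\|u\|_{H^1(f)}$ transforms to $\|u\circ F_K\|_{H^1(\widehat f)}$ with the surface Jacobian contributing a factor $\sim h_K$ and the tangential-derivative term a factor $\sim h_K^{-1}$, i.e.\ net scaling $h_K^{0}$ for the $L^2$-part of the $H^1(\partial K)$-norm but the relevant balance yields the exponent $m - 3/2$ overall once the $(p+1)^{-(m-1)}|\widehat\varphi|_{H^m(\widehat K)}$ bound from (\ref{eq:lemma:element-by-element-approximation}) is combined with the $h_K^{m - 3/2}$ coming from transforming $|\widehat\varphi|_{H^m(\widehat K)}$ to $|\varphi|_{H^m(K)}$ together with the surface scaling. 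Concretely, the $\partial\widehat K$-estimate in Lemma~\ref{lemma:element-by-element-approximation} gives $(p+1)^{-1/2}\|\widehat\varphi - \widehat\Pi_p\widehat\varphi\|_{H^1(\partial\widehat K)} \lesssim (p+1)^{-(m-1)}|\widehat\varphi|_{H^m(\widehat K)}$, and tracking the $h_K$-powers through the affine scaling of both the $H^1(\partial K)$-norm and the $H^m(K)$-seminorm yields the claimed $(h_K/(p+1))^{m-3/2}$.

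The main obstacle is not conceptual but bookkeeping: keeping the powers of $h_K$ and $p+1$ straight through the (non-commuting) scaling, and verifying that the higher-order chain-rule terms generated by the non-affine factor $R_K$ really do stay $h$-uniformly bounded. The latter is exactly where the analyticity-type bound $\|\nabla^n R_K\|_{L^\infty} \le C_{\operatorname{metric}}\gamma^n n!$ in Assumption~\ref{def:element-maps} is used — for a fixed finite $m$ only finitely many such terms appear, each bounded by an $h$-independent constant, so the constant in the final estimate depends on $m$, $\Omega$, and the shape-regularity/mesh constants but not on $h_K$, $p$, or $\varphi$. This is a completely standard $hp$-scaling argument of the type carried out, e.g., in \cite[Sec.~{5}]{MelenkSauterMathComp}, so I would present it compactly rather than in full detail.
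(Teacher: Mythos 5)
Your proposal is correct and coincides with the paper's (only sketched) argument: the paper simply asserts the corollary "by scaling arguments" from Lemma~\ref{lemma:element-by-element-approximation} with $t=m$, which is exactly the bookkeeping you carry out, and your powers of $h_K$ and $p+1$ all check out. One small simplification: since $(\Pi_p^{\nabla,s}\varphi)|_K$ is \emph{defined} as $(\widehat\Pi_p(\varphi\circ F_K))\circ F_K^{-1}$, the error function pulls back exactly to $\widehat\varphi-\widehat\Pi_p\widehat\varphi$, so the non-commutation issue you flag does not actually arise.
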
%

In \cite[Lem.~{8.2}]{MelenkSauterMaxwell_I} approximation operators
$\widehat{\Pi}_{p}^{\operatorname*{curl},s}:{\mathbf{H}}^{1}%
(\operatorname*{curl},\widehat{K})\rightarrow\boldsymbol{\mathcal{N}}%
_{p}^{\operatorname*{I}}(\widehat{K})$ and $\widehat{\Pi}_{p}%
^{\operatorname*{div},s}:{\mathbf{H}}^{1}(\operatorname*{div},\widehat
{K})\rightarrow\mathbf{RT}_{p}(\widehat{K})$ on the reference tetrahedron
$\widehat{K}$ are defined with certain elementwise approximation properties.
Global versions of these operators $\Pi_{p}^{\operatorname*{curl}%
,s}:\mathbf{H}(\operatorname{curl},\Omega)\cap{\prod_{K\in\mathcal{T}_{h}}}{\mathbf{H}}^{1}%
(\operatorname*{curl},%
K%
)\rightarrow\boldsymbol{\mathcal{N}}_{p}^{\operatorname*{I}}(\mathcal{T}_{h})$
and $\Pi_{p}^{\operatorname*{div},s}:\mathbf{H}\left(  \operatorname*{div}%
,\Omega\right)  \mathbf{\cap}{\prod_{K\in\mathcal{T}_{h}}}{\mathbf{H}}^{1}(%
K%
,\operatorname*{div},\widehat{K})\rightarrow\mathbf{RT}_{p}\left(
\mathcal{T}_{h}\right)  $ are characterized by lifting the operators on the
reference element by (cf.~\cite[Def.~{8.1}]{MelenkSauterMaxwell_I})%
\begin{align}
& 
(\Pi_{p}^{\operatorname{curl},s}\mathbf{u})|_{K}\circ F_{K}   :=(F_{K}%
^{\prime})^{-T}\widehat{\Pi}_{p}^{\operatorname{curl},s}((F_{K}^{\prime
})^{\top}\mathbf{u}\circ F_{K}),
\label{DefPiE}\\
& (\Pi_{p}^{\operatorname*{div},s}\mathbf{u})|_{K}   :=(\operatorname*{det}%
(F_{K}^{\prime}))^{-1}F_{K}^{\prime}(\widehat{\Pi}_{p}^{\operatorname*{div}%
,s}(\operatorname*{det}F_{K}^{\prime})(F_{K}^{\prime})^{-1}\mathbf{u}\circ
F_{K}))\circ F_{K}^{-1}.
\label{DefPiF}%
\end{align}
The approximation properties of $\Pi_{p}^{\operatorname{curl},s}$ are inferred
from those of $\widehat{\Pi}_{p}^{\operatorname{curl},s}$ given in
\cite[Lem.~{8.2}]{MelenkSauterMaxwell_I}. We obtain:

\begin{lemma}
\label{lemma:Picurls} Let $m\in\mathbb{N}_{>3/2}$ and $p\geq m-1$. Let
$\widetilde{C}$, $B>0$. Then there are constants $C$, $\sigma>0$ depending
only on $\widetilde{C}$, $B$, $m$, and the constants of
Assumption~\ref{def:element-maps} such that the following holds for the
operator $\Pi_{p}^{\operatorname{curl},s}:\mathbf{H}^{m}(\Omega)\rightarrow%
{\boldsymbol{\mathcal{N}}}_{p}^{\operatorname{I}}%
(\mathcal{T}_{h})$ and all $K\in\mathcal{T}_{h}$:
\begin{enumerate}
[(i)]

\item \label{item:lemma:Picurls-i} If $\mathbf{u}\in\mathbf{H}^{m}(K)$ then%
\begin{align}
\label{eq:lemma:Picurls-10}
& \Vert\mathbf{u}-\Pi_{p}^{\operatorname{curl},s}\mathbf{u}\Vert_{\mathbf{L}%
^{2}(K)}+\frac{h_{K}}{p+1}\Vert\mathbf{u}-\Pi_{p}^{\operatorname{curl}%
,s}\mathbf{u}\Vert_{\mathbf{H}^{1}(K)}    
\leq C\left(\frac{h_{K}}{p+1}\right)^{m}
\Vert\mathbf{u}\Vert_{\mathbf{H}^{m}(K)},
\\
\label{eq:lemma:Picurls-20}%
& \Vert\mathbf{u}-\Pi_{p}^{\operatorname{curl},s}\mathbf{u}\Vert_{\mathbf{L}%
^{2}(\partial K)}  
  \leq C\left(  \frac{h_{K}}{p+1}\right)  ^{m-1/2}%
\Vert\mathbf{u}\Vert_{\mathbf{H}^{m}(K)}. 
\end{align}
\item \label{item:lemma:Picurls-ii} 
If $\mathbf{u}\in\mathcal{A}%
(C_{\mathbf{u}}(K),B,K)$ for some $C_{\mathbf{u}}(K)>0$ and if
\begin{equation}
h_{K}+|k|h_{K}/p\leq\widetilde{C} \label{eq:cond-lemma:Picurls}%
\end{equation}
then
\begin{align}
&  h_{K}^{1/2}\Vert\mathbf{u}-\Pi_{p}^{\operatorname{curl},s}\mathbf{u}%
\Vert_{\mathbf{L}^{2}(\partial{K})}+\Vert\mathbf{u}-\Pi_{p}%
^{\operatorname{curl},s}\mathbf{u}\Vert_{\mathbf{L}^{2}({K})}+h_{K}%
\Vert\mathbf{u}-\Pi_{p}^{\operatorname{curl},s}\mathbf{u}\Vert_{\mathbf{H}%
^{1}({K})}\nonumber\\
&  \qquad\leq CC_{\mathbf{u}}(K)\left(  \left(  \frac{h_{K}}{h_{K}+\sigma
}\right)  ^{p+1}+\left(  \frac{|k|h_{K}}{\sigma p}\right)  ^{p+1}\right)  .
\end{align}

\item \label{item:lemma:Picurls-iii} 
If $\mathbf{u}\in\mathcal{A}%
(C_{\mathbf{u}},B,\Omega)$ for some $C_{\mathbf{u}}>0$ and if
(\ref{eq:cond-lemma:Picurls}) holds, then
\[
\Vert\mathbf{u}-\Pi_{p}^{\operatorname{curl},s}\mathbf{u}\Vert
_{\operatorname{imp},k}\leq C_{\mathbf{u}}|k|\left(  \left(  \frac{h}%
{h+\sigma}\right)  ^{p}+\left(  \frac{|k|h}{\sigma p}\right)  ^{p}\right)  .
\]
\end{enumerate}
\end{lemma}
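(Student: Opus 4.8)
The plan is to derive all three parts from the approximation properties of the reference‑element operators $\widehat\Pi_p^{\operatorname{curl},s}$ (and $\widehat\Pi_p^{\operatorname{div},s}$) of \cite[Lem.~8.2]{melenk2018wavenumber} by pulling back to the reference tetrahedron $\widehat K$ through the covariant transformation (\ref{DefPiE}), exploiting the factorization $F_K = R_K\circ A_K$ of Assumption~\ref{def:element-maps}, and scaling back to $K$; the element‑by‑element construction of $\widehat\Pi_p^{\operatorname{curl},s}$ guarantees $\Pi_p^{\operatorname{curl},s}\mathbf u\in\boldsymbol{\mathcal N}_p^{\operatorname{I}}(\mathcal T_h)$, so the global operator is well defined. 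The three assertions differ only in the regularity class fed into this machinery: finite Sobolev regularity on a single element for (i), an element‑local analyticity class for (ii), and a global analyticity class together with a summation over the mesh for (iii).

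\emph{Parts (i) and (ii).} For (i), with $\widehat{\mathbf u} := (F_K')^{\top}(\mathbf u\circ F_K)$ one has, by Assumption~\ref{def:element-maps} and the chain rule, two‑sided $h_K$‑weighted bounds relating the $\mathbf H^j(\widehat K)$‑ and $\mathbf L^2(\partial\widehat K)$‑norms of $\widehat{\mathbf u}$ and of its image under $\widehat\Pi_p^{\operatorname{curl},s}$ to the corresponding norms on $K$; inserting the volume and trace estimates of \cite[Lem.~8.2]{melenk2018wavenumber} on $\widehat K$ and transforming back yields (\ref{eq:lemma:Picurls-10})--(\ref{eq:lemma:Picurls-20}) by the standard $hp$‑scaling argument. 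For (ii) the extra ingredient is to track analyticity under $F_K = R_K\circ A_K$: a Cauchy‑estimate/chain‑rule argument of the type in \cite{MelenkHabil,MelenkSauterMathComp} shows that $\mathbf u\in\mathcal A(C_{\mathbf u}(K),B,K)$ implies that $\widehat{\mathbf u}$ belongs to an analyticity class on $\widehat K$ whose growth parameter is governed jointly by $Bh_K$ and the mesh‑analyticity constant $\gamma$, schematically $|\widehat{\mathbf u}|_{\mathbf H^n(\widehat K)}\lesssim C_{\mathbf u}(K)\,\gamma_1^{\,n}\max\{n+1,|k|h_K\}^{\,n}$. Feeding this into the analytic estimate of \cite[Lem.~8.2]{melenk2018wavenumber} and splitting the governing series according to whether the running index is below $|k|h_K$ (where $\max\{n+1,|k|h_K\}=|k|h_K$) or above it (where it equals $n+1$) produces precisely the two geometric factors $\bigl(\tfrac{h_K}{h_K+\sigma}\bigr)^{p+1}$ and $\bigl(\tfrac{|k|h_K}{\sigma p}\bigr)^{p+1}$, both genuine contractions once (\ref{eq:cond-lemma:Picurls}) holds; transforming back and collecting the $\mathbf L^2(K)$‑, $\mathbf H^1(K)$‑ and $\mathbf L^2(\partial K)$‑contributions with the appropriate powers of $h_K$ gives (ii).

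\emph{Part (iii).} Since restriction to a subdomain only decreases the seminorms $|\cdot|_{\mathbf H^n}$, $\mathbf u\in\mathcal A(C_{\mathbf u},B,\Omega)$ implies $\mathbf u|_K\in\mathcal A(C_{\mathbf u},B,K)$ for every $K$, so (ii) applies elementwise with the same constant $C_{\mathbf u}$. Writing
\[
\|\mathbf u-\Pi_p^{\operatorname{curl},s}\mathbf u\|_{\operatorname{imp},k}^2 = \sum_{K\in\mathcal T_h}\Bigl(\|\operatorname{curl}(\mathbf u-\Pi_p^{\operatorname{curl},s}\mathbf u)\|_{\mathbf L^2(K)}^2 + k^2\|\mathbf u-\Pi_p^{\operatorname{curl},s}\mathbf u\|_{\mathbf L^2(K)}^2\Bigr) + |k|\!\!\sum_{f\subset\Gamma}\|(\mathbf u-\Pi_p^{\operatorname{curl},s}\mathbf u)_T\|_{\mathbf L^2(f)}^2,
\]
one estimates $\|\operatorname{curl}(\mathbf u-\Pi_p^{\operatorname{curl},s}\mathbf u)\|_{\mathbf L^2(K)}\le\sqrt2\,\|\mathbf u-\Pi_p^{\operatorname{curl},s}\mathbf u\|_{\mathbf H^1(K)}$ (or uses the $\mathbf H(\operatorname{curl})$‑optimality of $\Pi_p^{\operatorname{curl},s}$ directly), and then observes that the inverse power $h_K^{-1}$ multiplying $h_K\|\mathbf u-\Pi_p^{\operatorname{curl},s}\mathbf u\|_{\mathbf H^1(K)}$ combines with the two geometric factors of (ii): $h_K^{-1}\bigl(\tfrac{h_K}{h_K+\sigma}\bigr)^{p+1}\le\sigma^{-1}\bigl(\tfrac{h}{h+\sigma}\bigr)^{p}$ and $h_K^{-1}\bigl(\tfrac{|k|h_K}{\sigma p}\bigr)^{p+1} = \tfrac{|k|}{\sigma p}\bigl(\tfrac{|k|h_K}{\sigma p}\bigr)^{p}\le|k|\bigl(\tfrac{|k|h}{\sigma p}\bigr)^{p}$, which is where the factor $|k|$ of the statement is produced; the $h_K^{-1/2}$ from the $\mathbf L^2(\partial K)$‑estimate is absorbed against $|k|^{1/2}$ in the same way. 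Summing over the elements (whose number is $\lesssim h^{-3}$) and boundary faces, using $h_K\le h\le\widetilde C$ and shape‑regularity (so that $\sum_K h_K^3\le C|\Omega|$), the mesh count is absorbed into the exponential decay at the expense of slightly enlarging $C$ and shrinking $\sigma$; this is the summation already carried out in \cite[Lem.~8.5]{melenk2018wavenumber}, and the claimed bound follows.

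The main obstacle is the analyticity bookkeeping in (ii): propagating the analyticity class through the covariant pullback under $F_K = R_K\circ A_K$ so that the effective reference‑element growth parameter is exactly $\max\{n+1,|k|h_K\}$, and then splitting the resulting series so that the low‑index and high‑index regimes reproduce precisely the two contractions $\bigl(\tfrac{h_K}{h_K+\sigma}\bigr)^{p+1}$ and $\bigl(\tfrac{|k|h_K}{\sigma p}\bigr)^{p+1}$. Part (i) and the summation in (iii) are then comparatively routine, the only subtlety in (iii) being to keep the local, $h_K$‑weighted form of the bounds of (ii) when summing, so that the inverse powers of $h_K$ are controlled by the geometric decay rather than producing a $k$‑ or mesh‑dependent loss.
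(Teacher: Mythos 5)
Your treatment of parts (\ref{item:lemma:Picurls-i}) and (\ref{item:lemma:Picurls-ii}) follows the paper's route: pull back via $F_K=R_K\circ A_K$, invoke the reference-element estimates of \cite[Lem.~{8.2}]{melenk2018wavenumber} (and, for (\ref{item:lemma:Picurls-ii}), the transfer of the analyticity class under the pullback, which the paper takes from \cite[Lem.~{8.4}]{melenk2018wavenumber} rather than re-deriving), and scale back. That part is sound.

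Part (\ref{item:lemma:Picurls-iii}) contains a genuine gap. You apply (\ref{item:lemma:Picurls-ii}) on every element with the \emph{same} constant $C_{\mathbf u}$ and then sum over the $\sim h^{-3}$ elements, claiming the resulting factor can be ``absorbed into the exponential decay at the expense of slightly enlarging $C$ and shrinking $\sigma$.'' This absorption fails: after summing squares and taking the square root you are left with $C_{\mathbf u}\,h^{-3/2}\bigl(\bigl(\tfrac{h}{h+\sigma}\bigr)^{p}+\bigl(\tfrac{|k|h}{\sigma p}\bigr)^{p}\bigr)$, and for \emph{fixed} $p$ (the lemma only requires $p\ge m-1\ge 1$) the quotient $h^{-3/2}\bigl(\tfrac{h}{h+\sigma}\bigr)^{p}\big/\bigl(\tfrac{h}{h+\sigma'}\bigr)^{p}$ is unbounded as $h\to 0$ for any $\sigma'$, since the ratio of the two geometric factors stays bounded while $h^{-3/2}\to\infty$. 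The paper avoids element counting entirely by localizing the analyticity constant: it sets
\[
\widetilde{C}_{\mathbf u}^{2}(K):=\sum_{n=0}^{\infty}\frac{|\mathbf u|_{\mathbf H^{n}(K)}^{2}}{(2B)^{2n}\max(n+1,|k|)^{2n}},
\]
so that $\mathbf u\in\mathcal A(\widetilde{C}_{\mathbf u}(K),2B,K)$ on each element while $\sum_{K}\widetilde{C}_{\mathbf u}^{2}(K)\le 2C_{\mathbf u}^{2}$; applying (\ref{item:lemma:Picurls-ii}) with these summable local constants and summing the squared elementwise bounds then yields the global estimate with no mesh-cardinality factor. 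Your computation of how the inverse powers $h_K^{-1}$ and $h_K^{-1/2}$ combine with the geometric factors to produce the single power $|k|$ and the exponent drop from $p+1$ to $p$ is correct; it is only the passage from local to global constants that needs the above fix.
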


\begin{proof}
The result follows from modifications of the procedure in \cite[Sec.~{8.3}%
]{MelenkSauterMaxwell_I}. We recall the structure $F_{K}=R_{K}\circ A_{K}$ of
the element maps by Assumption~\ref{def:element-maps}. For $K\in
\mathcal{T}_{h}$ we define $\widetilde{K}:=A_{K}(K)$ and the transformed
functions $\widehat{\mathbf{v}}:=(F_{K}^{\prime})^{\top}\mathbf{v}\circ F_{K}$
on $\widehat{K}$ and $\widetilde{\mathbf{v}}:=(R_{K}^{\prime})^{\top
}\mathbf{v}\circ R_{K}$ on $\widetilde{K}$. We note that $\widehat{\mathbf{v}%
}=(A_{K}^{\prime})^{\top}\widetilde{\mathbf{v}}\circ A_{K}$. By
Assumption~\ref{def:element-maps} and the fact that $A_{K}$ is affine, we have%
\begin{align}
\Vert\widetilde{\mathbf{v}}\Vert_{\mathbf{H}^{j}(\widetilde{K})}\sim
\Vert{\mathbf{v}}\Vert_{\mathbf{H}^{j}({K})},\qquad &  \Vert\widetilde
{\mathbf{v}}\Vert_{\mathbf{L}^{2}(\partial\widetilde{K})}\sim\Vert{\mathbf{v}%
}\Vert_{\mathbf{L}^{2}(\partial{K})},\label{eq:scaling-1}\\
|\widehat{\mathbf{v}}|_{\mathbf{H}^{j}(\widehat{K})}\sim h_{K}^{1+j-3/2}%
|\widetilde{\mathbf{v}}|_{\mathbf{H}^{j}(\widetilde{K})},\quad &
\Vert\widehat{\mathbf{v}}\Vert_{\mathbf{L}^{2}(\partial\widehat{K})}\sim
h_{K}^{1-1}\Vert\widetilde{\mathbf{v}}\Vert_{\mathbf{L}^{2}(\partial
{\widetilde{K}})}, \label{eq:scaling-2}%
\end{align}
where the implied constant depends only on $j$ and the constants of
Assumption~\ref{def:element-maps}.

\emph{Proof of (\ref{item:lemma:Picurls-i}):} {}From \cite[Lem.~{8.2}%
]{MelenkSauterMaxwell_I}, we have for
\begin{equation}
p\Vert\widehat{\mathbf{u}}-\widehat{\Pi}_{p}^{\operatorname{curl},s}%
\widehat{\mathbf{u}}\Vert_{\mathbf{L}^{2}(\widehat{K})}+\Vert\widehat
{\mathbf{u}}-\widehat{\Pi}_{p}^{\operatorname{curl},s}\widehat{\mathbf{u}%
}\Vert_{\mathbf{H}^{1}(\widehat{K})}\leq Cp^{-(m-1)}|\mathbf{u}|_{\mathbf{H}%
^{m}(\widehat{K})}. \label{eq:lemma:Picurls-100}%
\end{equation}
This approximation result and the scaling argument expressed in
(\ref{eq:scaling-1}), (\ref{eq:scaling-2}) produce (\ref{eq:lemma:Picurls-10}%
). The multiplicative trace inequality $\Vert\widehat{\mathbf{v}%
}\Vert_{\mathbf{L}^{2}(\partial\widehat{K})}^{2}\leq C\Vert\widehat
{\mathbf{v}}\Vert_{\mathbf{L}^{2}(\widehat{K})}\Vert\widehat{\mathbf{v}}%
\Vert_{\mathbf{H}^{1}(\widehat{K})}$ applied to (\ref{eq:lemma:Picurls-100})
and similar scaling arguments produce (\ref{eq:lemma:Picurls-20}).

\emph{Proof of (\ref{item:lemma:Picurls-ii}):} By \cite[Lem.~{8.4}%
]{MelenkSauterMaxwell_I}, the pull-back $\widehat{\mathbf{u}}
\in \mathcal{A}(CC_{\mathbf{u}}(K)h_{K}^{1-3/2},h_{K}B^{\prime},\widehat{K})$ for
some $B^{\prime}$ depending only on $B$ and the constants of
Assumption~\ref{def:element-maps}. By \cite[Lem.~{8.2}]{MelenkSauterMaxwell_I}
there are constants depending only on $B$ and the constants of
Assumption~\ref{def:element-maps} such that
\[
\Vert\widehat{\mathbf{u}}-\widehat{\Pi}_{p}^{\operatorname{curl},s}%
\widehat{\mathbf{u}}\Vert_{W^{2,\infty}(\widehat{K})}\leq Ch_{K}%
^{1-3/2}C_{\mathbf{u}}(K)\left(  \left(  \frac{h_{K}}{h_{K}+\sigma}\right)
^{p+1}+\left(  \frac{|k|h_{K}}{\sigma p}\right)  ^{p+1}\right)  .
\]
With similar scaling arguments as in the proof of (\ref{item:lemma:Picurls-i}%
), we obtain the stated estimate.

\emph{Proof of (\ref{item:lemma:Picurls-iii}):} For each $K\in\mathcal{T}_{h}%
$, we define
\[
\widetilde{C}_{\mathbf{u}}^{2}(K):=\sum_{n=0}^{\infty}\frac{|\mathbf{u}%
|_{\mathbf{H}^{n}(K)}^{2}}{(2B)^{2n}\max(n+1,|k|)^{2n}}%
\]
and note
\[
\mathbf{u}\in\mathcal{A}(\widetilde{C}_{\mathbf{u}}(K),2B,K) \qquad\mbox{with}
\qquad\sum_{K\in\mathcal{T}_{h}}\widetilde{C}_{\mathbf{u}}^{2}(K)\leq
2C_{\mathbf{u}}^{2}.
\]
We then sum the elementwise error estimates provided by
(\ref{item:lemma:Picurls-ii}).
\end{proof}


\subsection{An interpolating projector on the finite element space}


For the error analysis, the following subspaces of $\mathbf{H}^{1}( \Omega) $
will play an important role:

\begin{equation}
\mathbf{V}_{k,0}:=\left\{  \mathbf{u}\in\mathbf{X}_{\operatorname*{imp}}%
\mid
\innerprod{
\mathbf{u},\nabla\varphi
}
=0\quad\forall\varphi\in H_{\operatorname*{imp}}^{1}\left(
\Omega\right)  \right\}  . \label{DefV0}%
\end{equation}


\begin{proposition}
\label{PropV0incl}Let $\Omega$ be a bounded Lipschitz domain with simply
connected, analytic  boundary. The space $\mathbf{V}_{k,0}$ can alternatively be characterized by%
\begin{equation}
\mathbf{V}_{k,0}=\left\{  \mathbf{u}\in\mathbf{X}_{\operatorname*{imp}}%
\mid\operatorname*{div}\mathbf{u}=0\wedge\operatorname*{i}k\left\langle
\mathbf{u},\mathbf{n}\right\rangle +\operatorname*{div}\nolimits_{\Gamma
}\mathbf{u}_{T}=0\quad\text{on }\Gamma\right\}  . \label{PropVcharact}%
\end{equation}

\end{proposition}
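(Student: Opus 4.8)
The plan is to rewrite the orthogonality condition defining $\mathbf{V}_{k,0}$ in \eqref{DefV0} by integration by parts. Using \eqref{eq:def:(())} together with the identity $(\nabla\varphi)_{T}=\Pi_{T}\nabla\varphi=\nabla_{\Gamma}(\varphi|_{\Gamma})$ from \eqref{PiTgradProp}, for $\mathbf{u}\in\mathbf{X}_{\operatorname{imp}}$ and $\varphi\in H_{\operatorname{imp}}^{1}(\Omega)$ one has
\[
((\mathbf{u},\nabla\varphi))_{k}=k^{2}(\mathbf{u},\nabla\varphi)+\operatorname*{i}k\,(\mathbf{u}_{T},\nabla_{\Gamma}(\varphi|_{\Gamma}))_{\mathbf{L}^{2}(\Gamma)}.
\]
First I would test with $\varphi\in C_{0}^{\infty}(\Omega)\subset H_{\operatorname{imp}}^{1}(\Omega)$; then $\varphi|_{\Gamma}=0$, the boundary term drops, and $\mathbf{u}\in\mathbf{V}_{k,0}$ forces $(\mathbf{u},\nabla\varphi)=0$ for all such $\varphi$, i.e.\ $\operatorname{div}\mathbf{u}=0$ in $\Omega$. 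Hence $\mathbf{u}\in\mathbf{H}(\Omega,\operatorname{div})$ and its normal trace $\langle\mathbf{u},\mathbf{n}\rangle\in H^{-1/2}(\Gamma)$ is well defined.

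Given $\operatorname{div}\mathbf{u}=0$, the next step is the two integrations by parts. Green's formula for $\mathbf{H}(\Omega,\operatorname{div})$ gives $(\mathbf{u},\nabla\varphi)=\langle\langle\mathbf{u},\mathbf{n}\rangle,\varphi|_{\Gamma}\rangle_{H^{-1/2}(\Gamma)\times H^{1/2}(\Gamma)}$, while on the closed surface $\Gamma$ one has $(\mathbf{u}_{T},\nabla_{\Gamma}(\varphi|_{\Gamma}))_{\mathbf{L}^{2}(\Gamma)}=-\langle\operatorname{div}_{\Gamma}\mathbf{u}_{T},\varphi|_{\Gamma}\rangle_{H^{-1}(\Gamma)\times H^{1}(\Gamma)}$, where for $\mathbf{u}_{T}\in\mathbf{L}_{T}^{2}(\Gamma)$ the surface divergence $\operatorname{div}_{\Gamma}\mathbf{u}_{T}\in H^{-1}(\Gamma)$ is understood distributionally (so this ``integration by parts'' is in fact the definition of $\operatorname{div}_{\Gamma}$ on $\mathbf{L}^{2}$ fields), and the pairing is legitimate because $\varphi|_{\Gamma}\in H^{1}(\Gamma)$ by the definition \eqref{H1imp} of $H_{\operatorname{imp}}^{1}(\Omega)$. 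Combining,
\[
((\mathbf{u},\nabla\varphi))_{k}=\bigl(k^{2}\langle\mathbf{u},\mathbf{n}\rangle-\operatorname*{i}k\operatorname{div}_{\Gamma}\mathbf{u}_{T},\,\varphi|_{\Gamma}\bigr)_{L^{2}(\Gamma)}=-\operatorname*{i}k\,\bigl(\operatorname*{i}k\langle\mathbf{u},\mathbf{n}\rangle+\operatorname{div}_{\Gamma}\mathbf{u}_{T},\,\varphi|_{\Gamma}\bigr)_{L^{2}(\Gamma)}.
\]
Since the trace map $\varphi\mapsto\varphi|_{\Gamma}$ is surjective from $H_{\operatorname{imp}}^{1}(\Omega)$ onto $H^{1}(\Gamma)$ — e.g.\ the harmonic lifting $\mathcal{E}_{\Omega}^{\Delta}$ of \eqref{LaplaceDiriProbl} maps $H^{1}(\Gamma)$ into $H^{3/2}(\Omega)\subset H_{\operatorname{imp}}^{1}(\Omega)$ and is a right inverse of the trace — the vanishing of $((\mathbf{u},\nabla\varphi))_{k}$ for all $\varphi\in H_{\operatorname{imp}}^{1}(\Omega)$ is equivalent to $\operatorname*{i}k\langle\mathbf{u},\mathbf{n}\rangle+\operatorname{div}_{\Gamma}\mathbf{u}_{T}=0$ as an element of $H^{-1}(\Gamma)=(H^{1}(\Gamma))'$. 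The reverse inclusion is obtained by reading the same computation backwards: if $\mathbf{u}\in\mathbf{X}_{\operatorname{imp}}$ satisfies $\operatorname{div}\mathbf{u}=0$ in $\Omega$ and $\operatorname*{i}k\langle\mathbf{u},\mathbf{n}\rangle+\operatorname{div}_{\Gamma}\mathbf{u}_{T}=0$ on $\Gamma$, then $((\mathbf{u},\nabla\varphi))_{k}=0$ for every $\varphi\in H_{\operatorname{imp}}^{1}(\Omega)$, so $\mathbf{u}\in\mathbf{V}_{k,0}$. This yields \eqref{PropVcharact}.

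The argument presents no deep obstacle; the only points that need care are the functional-analytic bookkeeping: justifying the $\mathbf{H}(\operatorname{div})$ Green identity in the correct $H^{-1/2}$--$H^{1/2}$ duality, recognizing that $\operatorname{div}_{\Gamma}$ applied to an $\mathbf{L}^{2}_{T}(\Gamma)$ field lies a priori only in $H^{-1}(\Gamma)$ (so the surface integration by parts is a definition, not a theorem), and invoking surjectivity of the trace onto $H^{1}(\Gamma)$ rather than onto $H^{1/2}(\Gamma)$. As a by-product one notes that the boundary relation together with $\langle\mathbf{u},\mathbf{n}\rangle\in H^{-1/2}(\Gamma)$ upgrades $\operatorname{div}_{\Gamma}\mathbf{u}_{T}$ to $H^{-1/2}(\Gamma)$, so that elements of $\mathbf{V}_{k,0}$ automatically satisfy $\mathbf{u}_{T}\in\mathbf{H}_{T}^{-1/2}(\Gamma,\operatorname{div}_{\Gamma})$.
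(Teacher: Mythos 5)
Your argument is correct and is exactly the standard computation that the paper itself invokes (it gives no proof, merely citing the analogous Lemma~4.10 of \cite{melenk2018wavenumber}): localize with $C_0^\infty$ test functions to get $\operatorname{div}\mathbf{u}=0$, then use the $\mathbf{H}(\Omega,\operatorname{div})$ Green identity and the distributional surface divergence together with surjectivity of the trace onto $H^1(\Gamma)$. The signs and the factor $-\operatorname{i}k$ in your combined identity check out, so nothing further is needed.
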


The proof of this proposition is standard and uses the same arguments as,
e.g., \cite[Lem.~{4.10}]{MelenkSauterMaxwell_I}.

\begin{proposition}
\label{PropEmbVk0}Let $\Omega$ be a bounded Lipschitz domain with simply
connected analytic boundary. It holds $\displaystyle\mathbf{V}_{k,0}\subset
\mathbf{H}^{1}(\Omega)$, and there exists $c>0$ independent of $k$ such that%
\[
c\left\vert k\right\vert \left\Vert \mathbf{v}\right\Vert _{\mathbf{H}%
^{1}\left(  \Omega\right)  ,k}\leq\left\Vert \mathbf{v}\right\Vert
_{\mathbf{H}\left(  \operatorname*{curl},\Omega\right)  ,k}\leq\left\Vert
\mathbf{v}\right\Vert _{\operatorname*{imp},k}\qquad\forall\mathbf{v}%
\in\mathbf{V}_{k,0}.
\]

\end{proposition}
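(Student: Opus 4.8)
The plan is to reduce the claimed two-sided estimate to a single $k$-explicit interior $\mathbf{H}^{1}$-bound and then prove that bound by combining the characterization of $\mathbf{V}_{k,0}$ from Proposition~\ref{PropV0incl} with a surface regularity step and an interior regularity step. The right inequality $\|\mathbf{v}\|_{\mathbf{H}(\Omega,\operatorname*{curl}),k}\leq\|\mathbf{v}\|_{\operatorname*{imp},k}$ is trivial, since it amounts to dropping the nonnegative term $|k|\|\mathbf{v}_{T}\|_{\mathbf{L}^{2}(\Gamma)}^{2}$. Writing out the indexed norms, the left inequality $c|k|\|\mathbf{v}\|_{\mathbf{H}^{1}(\Omega),k}\leq\|\mathbf{v}\|_{\mathbf{H}(\Omega,\operatorname*{curl}),k}$ is equivalent, after absorbing the matching $|k|\|\mathbf{v}\|_{\mathbf{L}^{2}(\Omega)}$-contributions on the two sides, to an estimate of the form $|\mathbf{v}|_{\mathbf{H}^{1}(\Omega)}\leq C(\|\operatorname*{curl}\mathbf{v}\|+|k|\|\mathbf{v}\|)$ with $C$ independent of $k$; once this is proved for all $\mathbf{v}\in\mathbf{V}_{k,0}$ (which a priori is only a subspace of $\mathbf{X}_{\operatorname*{imp}}$), the inclusion $\mathbf{V}_{k,0}\subset\mathbf{H}^{1}(\Omega)$ follows as well.

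For $\mathbf{v}\in\mathbf{V}_{k,0}$ Proposition~\ref{PropV0incl} gives $\operatorname*{div}\mathbf{v}=0$ in $\Omega$ and $\operatorname*{i}k\langle\mathbf{v},\mathbf{n}\rangle+\operatorname*{div}\nolimits_{\Gamma}\mathbf{v}_{T}=0$ on $\Gamma$. The heart of the argument is to bootstrap $\mathbf{v}_{T}\in\mathbf{L}_{T}^{2}(\Gamma)$ to $\mathbf{v}_{T}\in\mathbf{H}_{T}^{1/2}(\Gamma)$ with a wavenumber-explicit bound. On the one hand, $\operatorname*{div}\mathbf{v}=0$ and the normal trace theorem give $\|\langle\mathbf{v},\mathbf{n}\rangle\|_{H^{-1/2}(\Gamma)}\leq C\|\mathbf{v}\|$, so the boundary relation yields $\|\operatorname*{div}\nolimits_{\Gamma}\mathbf{v}_{T}\|_{H^{-1/2}(\Gamma)}\leq C|k|\|\mathbf{v}\|$. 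On the other hand, $\operatorname*{curl}\mathbf{v}$ is divergence-free, so the surface identity $\operatorname*{curl}\nolimits_{\Gamma}\mathbf{v}_{T}=\langle\operatorname*{curl}\mathbf{v},\mathbf{n}\rangle$ (cf.\ (\ref{sccounttangcurl}), (\ref{divcurlrel})) together with the normal trace theorem gives $\|\operatorname*{curl}\nolimits_{\Gamma}\mathbf{v}_{T}\|_{H^{-1/2}(\Gamma)}\leq C\|\operatorname*{curl}\mathbf{v}\|$. I would then use the Hodge decomposition $\mathbf{v}_{T}=\nabla_{\Gamma}V^{\nabla}+\overrightarrow{\operatorname*{curl}\nolimits_{\Gamma}}V^{\operatorname*{curl}}$ with the zero-mean potentials $V^{\nabla}=\mathcal{L}_{\operatorname*{imp}}^{\nabla}\mathbf{v}_{T}$, $V^{\operatorname*{curl}}=\mathcal{L}_{\operatorname*{imp}}^{\operatorname*{curl}}\mathbf{v}_{T}$ of (\ref{nablaliftings}) (see Remark~\ref{remk:characterize-L}): applying Lemma~\ref{LemScLift} with $s=1/2$ bounds $\|V^{\nabla}\|_{H^{3/2}(\Gamma)}$ and $\|V^{\operatorname*{curl}}\|_{H^{3/2}(\Gamma)}$ by $\|\operatorname*{div}\nolimits_{\Gamma}\mathbf{v}_{T}\|_{H^{-1/2}(\Gamma)}$ and $\|\operatorname*{curl}\nolimits_{\Gamma}\mathbf{v}_{T}\|_{H^{-1/2}(\Gamma)}$ respectively, and therefore $\|\mathbf{v}_{T}\|_{\mathbf{H}^{1/2}(\Gamma)}\leq C\bigl(|k|\|\mathbf{v}\|+\|\operatorname*{curl}\mathbf{v}\|\bigr)$.

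Next I would pass from this boundary bound to interior $\mathbf{H}^{1}$-regularity via a regular decomposition (alternatively, one may invoke the standard $\mathbf{H}^{1}$-regularity of $\mathbf{H}(\operatorname*{curl})\cap\mathbf{H}(\operatorname*{div})$ fields with $\mathbf{H}^{1/2}(\Gamma)$ tangential trace). By Lemma~\ref{lemma:helmholtz-a-la-schoeberl} (\ref{item:lemma:helmholtz-a-la-schoeberl-i}), write $\mathbf{v}=\nabla\varphi+\mathbf{z}$ with $\operatorname*{div}\mathbf{z}=0$ and $\|\mathbf{z}\|_{\mathbf{H}^{1}(\Omega)}\leq C\|\operatorname*{curl}\mathbf{v}\|$. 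Since $\operatorname*{div}\mathbf{v}=0$, the function $\varphi$ is harmonic; from $\gamma_{T}\nabla\varphi=\gamma_{T}\mathbf{v}-\gamma_{T}\mathbf{z}$ and (\ref{gmmaTgradProp}) we read off $\nabla_{\Gamma}(\varphi|_{\Gamma})\in\mathbf{H}^{1/2}(\Gamma)$ with $\|\nabla_{\Gamma}(\varphi|_{\Gamma})\|_{\mathbf{H}^{1/2}(\Gamma)}\leq C\bigl(\|\mathbf{v}_{T}\|_{\mathbf{H}^{1/2}(\Gamma)}+\|\mathbf{z}\|_{\mathbf{H}^{1}(\Omega)}\bigr)$. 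Because $\Gamma$ is connected, choosing the free additive constant so that $\varphi|_{\Gamma}$ has vanishing mean gives $\varphi|_{\Gamma}\in H^{3/2}(\Gamma)$ with the corresponding bound (Poincar\'e on $\Gamma$), and the shift theorem for the Dirichlet Laplacian on the smooth domain $\Omega$ yields $\varphi\in H^{2}(\Omega)$, i.e.\ $\nabla\varphi\in\mathbf{H}^{1}(\Omega)$, with $\|\nabla\varphi\|_{\mathbf{H}^{1}(\Omega)}\leq C\|\varphi|_{\Gamma}\|_{H^{3/2}(\Gamma)}$. Adding the contributions of $\nabla\varphi$ and $\mathbf{z}$ and inserting the surface bound gives $\|\mathbf{v}\|_{\mathbf{H}^{1}(\Omega)}\leq C\bigl(\|\operatorname*{curl}\mathbf{v}\|+\|\mathbf{v}_{T}\|_{\mathbf{H}^{1/2}(\Gamma)}\bigr)\leq C\bigl(\|\operatorname*{curl}\mathbf{v}\|+|k|\|\mathbf{v}\|\bigr)$, which is the estimate required in the first paragraph.

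I expect the difficulty to be essentially bookkeeping rather than conceptual: one must verify that every constant appearing above (the normal trace theorems, Lemma~\ref{LemScLift}, the surface and interior elliptic shift theorems, the Poincar\'e normalization on the connected surface $\Gamma$) is genuinely $k$-independent, and that the two normalizations of additive constants do not produce a spurious $\|\mathbf{v}\|_{\mathbf{L}^{2}(\Omega)}$-term lacking a power of $|k|$ — which would in any case be harmless since $|k|\geq 1$. The one point that should be spelled out with some care is the precise distributional sense of the identities $\operatorname*{div}\nolimits_{\Gamma}\mathbf{v}_{T}=-\operatorname*{i}k\langle\mathbf{v},\mathbf{n}\rangle$ and $\operatorname*{curl}\nolimits_{\Gamma}\mathbf{v}_{T}=\langle\operatorname*{curl}\mathbf{v},\mathbf{n}\rangle$ for $\mathbf{v}\in\mathbf{X}_{\operatorname*{imp}}$ with $\operatorname*{div}\mathbf{v}=0$; both follow by integration by parts from the definitions of the surface operators in Section~\ref{SecGeometry}, the first being exactly the boundary relation built into the characterization (\ref{PropVcharact}).
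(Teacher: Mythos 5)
Your proof is correct and follows essentially the same route as the paper: a regular decomposition $\mathbf{v}=\nabla\varphi+\mathbf{z}$ with $\|\mathbf{z}\|_{\mathbf{H}^{1}(\Omega)}\lesssim\|\operatorname*{curl}\mathbf{v}\|$, the boundary relation $\operatorname*{div}_{\Gamma}\mathbf{v}_{T}=-\operatorname*{i}k\langle\mathbf{v},\mathbf{n}\rangle$ from Proposition~\ref{PropV0incl} combined with the normal trace estimate to control $\varphi|_{\Gamma}$ in $H^{3/2}(\Gamma)$, and the interior shift theorem to conclude $\nabla\varphi\in\mathbf{H}^{1}(\Omega)$. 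The only deviation is a harmless detour: you first bootstrap the full tangential trace $\mathbf{v}_{T}$ to $\mathbf{H}_{T}^{1/2}(\Gamma)$ via the Hodge decomposition (which additionally requires the estimate of $\operatorname*{curl}_{\Gamma}\mathbf{v}_{T}$ through the normal trace of $\operatorname*{curl}\mathbf{v}$), whereas the paper works directly with the Laplace--Beltrami equation $\Delta_{\Gamma}(\varphi|_{\Gamma})=\operatorname*{div}_{\Gamma}(\mathbf{v}_{T}-\mathbf{z}_{T})$, so that only the surface divergence of $\mathbf{v}_{T}$ is needed.
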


\begin{proof}
The estimate $\Vert\mathbf{v}\Vert_{\mathbf{H}(\operatorname{curl},\Omega
),k}\leq\Vert\mathbf{v}\Vert_{\operatorname{imp},k}$ follows directly from the
definition of the norms. For the lower bound, we employ the Helmholtz
decomposition of $\mathbf{v}\in\mathbf{V}_{k,0}$ 
as in Lemma~\ref{lemma:helmholtz-a-la-schoeberl}%
(\ref{item:lemma:helmholtz-a-la-schoeberl-i}) 
and take into account $\operatorname*{div}\mathbf{v}=0$. That is, there
exist $\mathbf{w}\in\mathbf{H}^{1}(\Omega)$ and $\varphi\in H^{1}(\Omega)$
with
\begin{align}
\mathbf{v}& =\nabla\varphi+\mathbf{w} , 
& \Vert \mathbf{w}\Vert _{\mathbf{H}^{1}(\Omega)}& \leq C\left\Vert
\operatorname*{curl}\mathbf{v}\right\Vert . \label{wcurlv}%
\end{align}
Since $\operatorname*{div}\mathbf{v}=0$ we conclude $\Vert \mathbf{v}%
\Vert _{\mathbf{H}(\operatorname*{div},\Omega)}=\Vert
\mathbf{v}\Vert $ so that a trace theorem gives us%
\[
\Vert \langle \mathbf{v},\mathbf{n}\rangle \Vert
_{H^{-1/2}(  \Gamma)  }\leq C\Vert \mathbf{v}\Vert
_{\mathbf{H}(\operatorname*{div},\Omega)}=C\Vert \mathbf{v}\Vert .
\]
It holds%
\[
\Delta_{\Gamma}(  \varphi\vert _{\Gamma})
=\operatorname*{div}\nolimits_{\Gamma}(  \mathbf{v}_{T}-\mathbf{w}%
_{T})  \overset{\text{Prop.~\ref{PropV0incl}}}{=}-\operatorname*{i}%
k\langle \mathbf{v},\mathbf{n}\rangle-\operatorname*{div}%
\nolimits_{\Gamma}\mathbf{w}_{T}=:\tilde{v}\text{.}%
\]
By the smoothness of the closed manifold $\Gamma$ and the shift
properties of the Laplace-Beltrami operator we get 
\begin{align*}
\Vert \varphi\Vert _{H^{3/2}(  \Gamma)  } 
& \leq
C\Vert \tilde{v}\Vert _{H^{-1/2}(  \Gamma)  }
\leq C \left[ \Vert \operatorname*{div}\nolimits_{\Gamma}\mathbf{w}_{T}\Vert
_{H^{-1/2}\left(  \Gamma\right)  }+\left\vert k\right\vert \Vert
\langle \mathbf{v},\mathbf{n}\rangle \Vert _{H^{-1/2}( \Gamma)  } 
\right]\\
& 
\leq C\left[  \Vert \mathbf{w}_{T}\Vert
_{\mathbf{H}^{1/2}(  \Gamma)  }+|k|  \Vert
\mathbf{v}\Vert \right] 
  \leq C\left(  \left\Vert \mathbf{w}\right\Vert _{\mathbf{H}^{1}\left(
\Omega\right)  }+\left\vert k\right\vert \left\Vert \mathbf{v}\right\Vert
\right)  
\\
& \overset{\text{(\ref{wcurlv})}}{\leq}C\left\Vert \mathbf{v}%
\right\Vert _{\mathbf{H}\left(  \operatorname*{curl},\Omega\right)  ,k}.
\end{align*}
%
%
Since $\varphi$ solves
\[
-\Delta\varphi=\operatorname*{div}\mathbf{w}\text{\textbf{\quad}in }\Omega,
\]
the shift theorem for the Laplace operator on smooth domains leads to%
\begin{equation}
\Vert \nabla\varphi\Vert _{\mathbf{H}^{1}(  \Omega)
}\leq C\!\left(  \Vert \operatorname*{div}\mathbf{w}\Vert +\Vert
\varphi\Vert _{H^{3/2}(  \Gamma)  }\right)  \leq C\!\left(
\Vert \operatorname*{curl}\mathbf{v}\Vert +\Vert
\mathbf{v}\Vert _{\mathbf{H}(  \operatorname*{curl},\Omega)
,k}\right)  . \label{nablaphiest}%
\end{equation}
The combination of (\ref{wcurlv}) and (\ref{nablaphiest}) shows that
$\mathbf{v}\in\mathbf{H}^{1}(  \Omega)  $ and%
\[
\left\Vert \mathbf{v}\right\Vert _{\mathbf{H}^{1}\left(  \Omega\right)  }\leq
C\left\Vert \mathbf{v}\right\Vert _{\mathbf{H}\left(  \operatorname*{curl}%
,\Omega\right)  ,k}\leq C\left\Vert \mathbf{v}\right\Vert
_{\operatorname*{imp},k}.
\]
Since we have trivially $\left\vert k\right\vert \left\Vert \mathbf{v}%
\right\Vert \leq\left\Vert \mathbf{v}\right\Vert _{\mathbf{H}\left(
\operatorname*{curl},\Omega\right)  ,k}$, the assertion follows.%
\end{proof}

We also need the following subspace of $\mathbf{V}_{k,0}$  
given by%
\begin{equation}
{\mathbf{V}}_{k,0,h}:=\left\{  {\mathbf{v}}\in{\mathbf{V}}_{k,0}%
\mid\operatorname*{curl}{\mathbf{v}}\in\operatorname*{curl}{\mathbf{X}}%
_{h}\right\}  . \label{eq:Vk0h}%
\end{equation}

The operator $\Pi_{p}^{\operatorname*{curl},s}$ in (\ref{DefPiE}),
(\ref{DefPiF}) has ($p$-optimal) approximation properties in $\Vert\cdot
\Vert_{\operatorname*{curl},\Omega,k}$ as it has simultaneously $p$-optimal
approximation properties in $L^{2}$ and $H^{1}$. However, it is not a
projection and does not have the commuting diagram property. Since this is
needed for the estimate of the consistency term in Section \ref{SecConsAna} we
employ operators, $\Pi_{p}^{\operatorname{curl},c}$, $\Pi_{p}%
^{\operatorname*{div},c}$, which enjoy these properties. They were constructed
in \cite{melenk_rojik_2018} in an element-by-element fashion and used in
\cite[Thm.~{8.2}]{MelenkSauterMaxwell_I}. The choice $\Pi_{h}^{E}%
:\mathbf{V}_{k,0,h}+\mathbf{X}_{h}\rightarrow\mathbf{X}_{h}$ as $\Pi
_{p}^{\operatorname{curl},c}$ and the companion operator $\Pi_{h}%
^{F}:H(\operatorname{div},\Omega)\cap\prod_{K\in{\mathcal{T}}_{h}}{\mathbf{H}%
}^{1}(\operatorname{div},K)\rightarrow\operatorname*{curl}\mathbf{X}_{h}$ as
$\Pi_{p}^{\operatorname*{div},c}$ allows us to derive quantitative convergence
estimates in Section~\ref{SecStabConv}.

\begin{lemma}
\label{AdiscSp}
The operators $\Pi_{h}^{E}:=\Pi_{p}^{\operatorname*{curl},c}$ and $\Pi_{h}%
^{F}:=\Pi_{p}^{\operatorname*{div},c}$ satisfy the following properties:
$\Pi_{h}^{E}:\mathbf{V}_{k,0,h}+\mathbf{X}_{h}\rightarrow{\mathbf{X}}_{h}$ and
$\Pi_{h}^{F}:H(\operatorname{div},\Omega)\cap\prod_{K\in{\mathcal{T}}_{h}%
}{\mathbf{H}}^{1}(\operatorname{div},K)\rightarrow\operatorname*{curl}%
\mathbf{X}_{h}$ are linear mappings with

\begin{enumerate}
[(i)] 

\item \label{item:AdiscSp-a} $\Pi_{h}^{E}$ is a projection, i.e., the
restriction $\left.  \Pi_{h}^{E}\right\vert _{\mathbf{X}_{h}}$ is the identity
on $\mathbf{X}_{h}$.

\item \label{item:AdiscSp-b}The operators $\Pi_{h}^{E}$ have the commuting
property: $\operatorname*{curl}\Pi_{h}^{E}=\Pi_{h}^{F}\operatorname*{curl}$.
\end{enumerate}
\end{lemma}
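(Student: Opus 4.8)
The plan is to cite the construction of the commuting-diagram operators from \cite{melenk_rojik_2018} and to verify that the element-by-element assembly is compatible with the required domains and codomains. First I would recall that \cite{melenk_rojik_2018} constructs operators $\widehat\Pi_p^{\operatorname{curl},3d}:{\mathbf H}^1(\widehat K,\operatorname{curl})\to\boldsymbol{\mathcal N}_p^{\operatorname I}(\widehat K)$ and $\widehat\Pi_p^{\operatorname{div},3d}:{\mathbf H}^1(\widehat K,\operatorname{div})\to{\mathbf{RT}}_p(\widehat K)$ on the reference tetrahedron, together with the scalar operator $\widehat\Pi_{p+1}^{\operatorname{grad},3d}$, that these are projections, that they satisfy the commuting diagram $\operatorname{curl}\widehat\Pi_p^{\operatorname{curl},3d}=\widehat\Pi_p^{\operatorname{div},3d}\operatorname{curl}$ and $\nabla\widehat\Pi_{p+1}^{\operatorname{grad},3d}=\widehat\Pi_p^{\operatorname{curl},3d}\nabla$ on $\widehat K$, and — crucially — that they are constructed so as to admit an element-by-element assembly: the local projections on two elements sharing a face (or edge) agree on that face (or edge) because the relevant degrees of freedom are localized there and the compatibility of element maps in Assumption~\ref{def:element-maps} is respected. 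This is exactly the property recorded in Section~\ref{sec:nedelec-elements} that an operator ``admits an element-by-element construction'' if the resulting global operator maps into the conforming space $\boldsymbol{\mathcal N}_p^{\operatorname I}(\mathcal T_h)$ resp.\ ${\mathbf{RT}}_p(\mathcal T_h)$ resp.\ $S_{p+1}(\mathcal T_h)$.

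Next I would define $\Pi_h^E:=\Pi_p^{\operatorname{curl},c}$ and $\Pi_h^F:=\Pi_p^{\operatorname{div},c}$ by the covariant resp.\ contravariant (Piola) transformation formulas, in exact analogy with (\ref{DefPiE}), (\ref{DefPiF}), i.e.\ $(\Pi_p^{\operatorname{curl},c}{\mathbf u})|_K\circ F_K:=(F_K')^{-T}\widehat\Pi_p^{\operatorname{curl},3d}((F_K')^{\top}{\mathbf u}\circ F_K)$ and similarly for $\Pi_p^{\operatorname{div},c}$ with the Piola transform. For the domains: on each $K\in\mathcal T_h$ the pull-back of a function in $\mathbf X_{\operatorname{imp}}$ is in ${\mathbf H}^1(\widehat K,\operatorname{curl})$ when the function is elementwise ${\mathbf H}^1$; since $\mathbf V_{k,0,h}+\mathbf X_h\subset\mathbf V_{k,0}+\mathbf X_h\subset{\mathbf H}^1(\Omega)+\mathbf X_h$ by Proposition~\ref{PropEmbVk0} and the piecewise polynomiality of $\mathbf X_h$, the pull-backs lie in ${\mathbf H}^1(\widehat K,\operatorname{curl})$, so $\widehat\Pi_p^{\operatorname{curl},3d}$ is applicable elementwise; analogously $\Pi_h^F$ is well defined on $H(\Omega,\operatorname{div})\cap\prod_{K}{\mathbf H}^1(K,\operatorname{div})$. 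The element-by-element compatibility then guarantees that the pieced-together functions are globally ${\mathbf H}(\operatorname{curl})$- resp.\ ${\mathbf H}(\operatorname{div})$-conforming, hence land in $\mathbf X_h$ resp.\ $\operatorname{curl}\mathbf X_h$ (for the latter one additionally uses the exact-sequence property (\ref{exdiscseq_rm}) together with the commuting relation to identify the image of $\Pi_h^F$ restricted to $\operatorname{curl}(\mathbf V_{k,0,h}+\mathbf X_h)$ as lying in $\operatorname{curl}\mathbf X_h$).

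For item~(\ref{item:AdiscSp-a}), I would note that on each element the covariant transform of a function in $\boldsymbol{\mathcal N}_p^{\operatorname I}(\widehat K)$ is precisely a function whose pull-back is in $\boldsymbol{\mathcal N}_p^{\operatorname I}(\widehat K)$, so for ${\mathbf u}\in\mathbf X_h$ the local projection $\widehat\Pi_p^{\operatorname{curl},3d}$ reproduces the pull-back (it is a projection onto $\boldsymbol{\mathcal N}_p^{\operatorname I}(\widehat K)$), whence $\Pi_h^E{\mathbf u}={\mathbf u}$; the argument is elementwise and transformation-invariant. For item~(\ref{item:AdiscSp-b}), the commuting property $\operatorname{curl}\Pi_h^E=\Pi_h^F\operatorname{curl}$ follows from the reference-element commuting diagram $\operatorname{curl}\widehat\Pi_p^{\operatorname{curl},3d}=\widehat\Pi_p^{\operatorname{div},3d}\operatorname{curl}$ combined with the standard fact that the covariant transform intertwines $\operatorname{curl}$ with the Piola transform, i.e.\ $\operatorname{curl}\bigl((F_K')^{-T}\widehat{\mathbf v}\circ F_K^{-1}\bigr)=(\det F_K')^{-1}F_K'(\widehat{\operatorname{curl}}\,\widehat{\mathbf v})\circ F_K^{-1}$; chasing these identities through the definitions (\ref{DefPiE})–(\ref{DefPiF}) gives the claim on each $K$, and the global statement follows since both sides are already known to be conforming. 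The main obstacle — really the only nontrivial point — is the verification that the element-by-element construction genuinely produces globally conforming functions, i.e.\ that the boundary degrees of freedom (edge and face moments) defining $\widehat\Pi_p^{\operatorname{curl},3d}$ and $\widehat\Pi_p^{\operatorname{div},3d}$ depend only on the trace on that edge/face and are transformed compatibly under the affine gluing maps $F_K^{-1}\circ F_{K'}$ of Assumption~\ref{def:element-maps}; this is established in \cite{melenk_rojik_2018} and used already in \cite[Thm.~8.2]{melenk2018wavenumber}, so here it suffices to invoke it. Everything else is routine transformation bookkeeping.
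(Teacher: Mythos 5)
Your proposal is correct and follows essentially the same route as the paper: the paper's proof likewise observes that the element-by-element construction makes $\Pi_p^{\operatorname{curl},c}$ well defined on $\mathbf{H}(\Omega,\operatorname{curl})\cap\prod_{K}\mathbf{H}^1(K,\operatorname{curl})$, which contains $\mathbf{V}_{k,0,h}+\mathbf{X}_h$, and then simply cites \cite[Thm.~2.10, Rem.~2.11]{melenk_rojik_2018} for the projection and commuting properties. You spell out the transformation bookkeeping (covariant/Piola intertwining of $\operatorname{curl}$) that the paper delegates entirely to the citation, but the substance is identical.
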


\begin{proof}
Since $\Pi_{p}^{\operatorname*{curl},c}$ is based on an element-by-element
construction it is well defined on $\mathbf{H}(\operatorname*{curl}%
,\Omega)\cap{\prod_{K\in\mathcal{T}_{h}}}\mathbf{H}^{1}(\operatorname*{curl}%
,K)$. Since $\mathbf{V}_{k,0,h}+\mathbf{X}_{h}$ is a subspace of this space,
the mapping properties follow.
The projection property of $\Pi_{h}^{E}$ and the commuting property of
$\Pi_{h}^{E}$ and $\Pi_{h}^{F}$ are proved in \cite[Thm.~{2.10}, Rem.~{2.11}%
]{melenk_rojik_2018}.%
\end{proof}


\section{%
Stability and convergence of the Galerkin discretization\label{SecStabConv}}

%

The wavenumber-explicit stability and convergence analysis for Maxwell's
equations with transparent boundary conditions has been developed recently in
\cite{MelenkSauterMaxwell_I} and generalizes the theory in \cite[Sec.~{7.2}%
]{Monkbook}. A ``roadmap'' for the convergence proof of \cite{MelenkSauterMaxwell_I}
is given in \cite[Sec.~{1.1}--{1.3}]{MelenkSauterMaxwell_I}. In the present analysis, 
we follow this ``roadmap'' taking into account the change in boundary conditions
from transparent boundary conditions to impedance boundary conditions. 
A key role is played by the term 
$\innerprod{
\mathbf{u},\mathbf{v}%
} = 
A_{k}( \mathbf{u} ,\mathbf{v}) - 
\left(  \operatorname*{curl}%
\mathbf{u,}\operatorname*{curl}\mathbf{v}\right)$  from 
(\ref{eq:def:(())}), which includes the boundary conditions. This sesquilinear form  
determines the space $\mathbf{V}_{k,0}$ (see (\ref{DefV0})) and the regular decomposition
in Def.~\ref{DefHelmsplit} ahead and its properties differentiate the present case of 
impedance boundary conditions from the transparent boundary condition case. 
Compared to the case of transparent boundary conditions, the present impedance boundary conditions
case is simpler in that fewer approximation quantities $\eta^{\operatorname{alg}}_j$, $\tilde \eta_j^{\operatorname{alg}}$ 
are required in the analysis.

In this section, we develop a stability and convergence theory for
Maxwell's equations with impedance boundary conditions, see
Sect.~\ref{SecMW_Imp}. Recall the definition of the sesquilinear form 
$
\innerprod{
\cdot,\cdot%
}$ of (\ref{eq:def:(())}) and of the norm $\left\Vert
\cdot\right\Vert _{k,+}$ in Definition \ref{def:Ximp}.

We introduce the quantity $\delta_{k}:\mathbf{X}_{\operatorname*{imp}%
}\rightarrow\mathbb{R}$ by $\delta_{k}\left(  \mathbf{0}\right)  :=0\ $and for
$\mathbf{w}\in\mathbf{X}_{\operatorname*{imp}}\backslash\left\{
\mathbf{0}%
\right\}  $ by
\begin{equation}
\delta_{k}(\mathbf{w}):=\sup_{\mathbf{v}_{h}\in\mathbf{X}_{h}\backslash
\left\{
\mathbf{0}%
\right\}  }\left(  2\frac{\vert 
\innerprod{
\mathbf{w},\mathbf{v}_{h}%
}\vert }{\Vert\mathbf{w}\Vert_{\operatorname*{imp},k}%
\Vert\mathbf{v}_{h}\Vert_{\operatorname*{imp},k}}\right)  ,
\label{delta_k_def}%
\end{equation}
which will play the important role of a consistency term.

\begin{proposition}
[quasi-optimality]\label{prop:error-est} Let $\mathbf{E}\in\mathbf{X}%
_{\operatorname*{imp}}$ and $\mathbf{E}_{h}\in\mathbf{X}_{h}$ satisfy%
\[
A_{k}( \mathbf{E}-\mathbf{E}_{h},\mathbf{v}_{h}) =0\qquad\forall
\,\mathbf{v}_{h}\in\mathbf{X}_{h}.
\]
Assume that $\delta_{k}( \mathbf{e}_{h}) <1$ for $\mathbf{e}_{h}%
:=\mathbf{E}-\mathbf{E}_{h}$. Then, $\mathbf{e}_{h}$ satisfies, for all
$\mathbf{w}_{h}\in\mathbf{X}_{h}$, the quasi-optimal error estimate%
\[
\left\Vert \mathbf{e}_{h}\right\Vert _{\operatorname*{imp},k}\leq
\frac{1+\delta_{k}( \mathbf{e}_{h}) }{1-\delta_{k}( \mathbf{e}_{h})
}\left\Vert \mathbf{E}-\mathbf{w}_{h}\right\Vert _{\operatorname*{imp},k}.
\]

\end{proposition}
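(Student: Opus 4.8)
The plan is to prove this quasi-optimality estimate by a variant of the standard ``Schatz argument'' adapted to the impedance setting, where the consistency term $\delta_k(\mathbf{e}_h)$ quantifies the Galerkin error in the sesquilinear form $((\cdot,\cdot))_k$. The key structural fact is the splitting $A_k(\mathbf{u},\mathbf{v}) = (\operatorname{curl}\mathbf{u},\operatorname{curl}\mathbf{v}) - ((\mathbf{u},\mathbf{v}))_k$ from (\ref{eq:Ak-alternative}), which decomposes $A_k$ into a part that is coercive on the curl-seminorm and a lower-order part. First I would fix an arbitrary $\mathbf{w}_h \in \mathbf{X}_h$ and write $\mathbf{e}_h = (\mathbf{E} - \mathbf{w}_h) + (\mathbf{w}_h - \mathbf{E}_h) =: \boldsymbol{\eta} + \boldsymbol{\xi}_h$ with $\boldsymbol{\xi}_h \in \mathbf{X}_h$, so that Galerkin orthogonality $A_k(\mathbf{e}_h, \mathbf{v}_h) = 0$ for all $\mathbf{v}_h \in \mathbf{X}_h$ is available.

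The central computation is to test Galerkin orthogonality with $\mathbf{v}_h = \boldsymbol{\xi}_h$ and use the decomposition of $A_k$. Writing $A_k(\mathbf{e}_h, \boldsymbol{\xi}_h) = 0$ and inserting $\boldsymbol{\xi}_h = \mathbf{e}_h - \boldsymbol{\eta}$ on one side, one obtains an identity of the form $(\operatorname{curl}\mathbf{e}_h, \operatorname{curl}\boldsymbol{\xi}_h) - ((\mathbf{e}_h,\boldsymbol{\xi}_h))_k = 0$. The idea is then to bound $\|\mathbf{e}_h\|_{\operatorname{imp},k}^2 = \|\operatorname{curl}\mathbf{e}_h\|^2 + \|\mathbf{e}_h\|_{k,+}^2$ by combining: (a) the curl-part, which one extracts by noting $\|\operatorname{curl}\mathbf{e}_h\|^2 = \operatorname{Re}(\operatorname{curl}\mathbf{e}_h,\operatorname{curl}(\boldsymbol{\eta} + \boldsymbol{\xi}_h))$ and using Galerkin orthogonality to replace $(\operatorname{curl}\mathbf{e}_h,\operatorname{curl}\boldsymbol{\xi}_h)$ by $((\mathbf{e}_h,\boldsymbol{\xi}_h))_k$; and (b) controlling the $\|\cdot\|_{k,+}$ part, which comes from the imaginary part of $A_k$ (the term $-\operatorname{i}k(\mathbf{e}_{h,T},\mathbf{e}_{h,T})_{\mathbf{L}^2(\Gamma)}$ gives $\operatorname{Im}A_k(\mathbf{e}_h,\mathbf{e}_h) = -k\|\mathbf{e}_{h,T}\|^2_{\mathbf{L}^2(\Gamma)}$) together with the definition of $((\cdot,\cdot))_k$. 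The crucial step is that the ``bad'' term $((\mathbf{e}_h,\boldsymbol{\xi}_h))_k$ is split as $((\mathbf{e}_h,\mathbf{e}_h))_k - ((\mathbf{e}_h,\boldsymbol{\eta}))_k$; the first piece contributes $\|\mathbf{e}_h\|_{k,+}^2$-type quantities with a favorable sign after taking real/imaginary parts appropriately, while the genuinely problematic contribution is $((\mathbf{e}_h,\boldsymbol{\xi}_h))_k$ with $\boldsymbol{\xi}_h \in \mathbf{X}_h$, which is precisely what is estimated by $\tfrac12\delta_k(\mathbf{e}_h)\|\mathbf{e}_h\|_{\operatorname{imp},k}\|\boldsymbol{\xi}_h\|_{\operatorname{imp},k}$ from the definition (\ref{delta_k_def}) of $\delta_k$.

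Assembling these bounds, together with the continuity estimate $|A_k(\mathbf{u},\mathbf{v})| \le \|\mathbf{u}\|_{\operatorname{imp},k}\|\mathbf{v}\|_{\operatorname{imp},k}$ (with $C_{\operatorname{cont}}=1$) for the terms involving $\boldsymbol{\eta}$, and using $\|\boldsymbol{\xi}_h\|_{\operatorname{imp},k} \le \|\mathbf{e}_h\|_{\operatorname{imp},k} + \|\boldsymbol{\eta}\|_{\operatorname{imp},k}$ by the triangle inequality, yields an inequality of the shape
\[
\|\mathbf{e}_h\|_{\operatorname{imp},k}^2 \le \|\boldsymbol{\eta}\|_{\operatorname{imp},k}\|\mathbf{e}_h\|_{\operatorname{imp},k} + \delta_k(\mathbf{e}_h)\|\mathbf{e}_h\|_{\operatorname{imp},k}\bigl(\|\mathbf{e}_h\|_{\operatorname{imp},k} + \|\boldsymbol{\eta}\|_{\operatorname{imp},k}\bigr).
\]
Dividing by $\|\mathbf{e}_h\|_{\operatorname{imp},k}$ (the case $\mathbf{e}_h = \mathbf{0}$ being trivial) and rearranging gives $(1 - \delta_k(\mathbf{e}_h))\|\mathbf{e}_h\|_{\operatorname{imp},k} \le (1 + \delta_k(\mathbf{e}_h))\|\boldsymbol{\eta}\|_{\operatorname{imp},k}$, and since $\delta_k(\mathbf{e}_h) < 1$ by hypothesis we may divide by $1 - \delta_k(\mathbf{e}_h) > 0$ to obtain the claimed bound with $\boldsymbol{\eta} = \mathbf{E} - \mathbf{w}_h$.

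The main obstacle I anticipate is bookkeeping the real and imaginary parts carefully so that the factor $2$ in the definition (\ref{delta_k_def}) of $\delta_k$ is exactly what is needed, and so that all ``good'' sign contributions from $\|\operatorname{curl}\mathbf{e}_h\|^2$ and $\|\mathbf{e}_h\|_{k,+}^2$ are genuinely recovered rather than lost to the lower-order term. Concretely, one has to choose whether to test with $\boldsymbol{\xi}_h$, with $\operatorname{i}\boldsymbol{\xi}_h$, or with a rotated combination $\sigma\boldsymbol{\xi}_h$ analogous to the trick in Theorem~\ref{lemma:apriori-with-good-sign}(\ref{item:lemma:apriori-with-good-sign-0}); the cleanest route is probably to argue directly from $A_k(\mathbf{e}_h,\boldsymbol{\xi}_h) = 0$ that $\|\mathbf{e}_h\|_{\operatorname{imp},k}^2 \le |A_k(\mathbf{e}_h,\boldsymbol{\eta})| + 2|((\mathbf{e}_h,\boldsymbol{\xi}_h))_k|$ using $\|\mathbf{e}_h\|_{\operatorname{imp},k}^2 \le |(\operatorname{curl}\mathbf{e}_h,\operatorname{curl}\mathbf{e}_h) - ((\mathbf{e}_h,\mathbf{e}_h))_k| + 2|((\mathbf{e}_h,\mathbf{e}_h))_k|$ — noting $(\operatorname{curl}\mathbf{e}_h,\operatorname{curl}\mathbf{e}_h) - ((\mathbf{e}_h,\mathbf{e}_h))_k = A_k(\mathbf{e}_h,\mathbf{e}_h)$ and then writing $A_k(\mathbf{e}_h,\mathbf{e}_h) = A_k(\mathbf{e}_h,\boldsymbol{\eta}) + A_k(\mathbf{e}_h,\boldsymbol{\xi}_h) = A_k(\mathbf{e}_h,\boldsymbol{\eta}) - ((\mathbf{e}_h,\boldsymbol{\xi}_h))_k$ by (\ref{eq:Ak-alternative}) together with $(\operatorname{curl}\mathbf{e}_h,\operatorname{curl}\boldsymbol{\xi}_h)$ vanishing after orthogonality is combined with the splitting. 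Verifying the elementary inequality $\|\mathbf{w}\|_{\operatorname{imp},k}^2 \le |A_k(\mathbf{w},\mathbf{w})| + 2|((\mathbf{w},\mathbf{w}))_k|$ — which holds since $\|\mathbf{w}\|_{\operatorname{imp},k}^2 = \|\operatorname{curl}\mathbf{w}\|^2 + k^2\|\mathbf{w}\|^2 + |k|\|\mathbf{w}_T\|^2_{\mathbf{L}^2(\Gamma)}$ while $\operatorname{Re}A_k(\mathbf{w},\mathbf{w}) = \|\operatorname{curl}\mathbf{w}\|^2 - k^2\|\mathbf{w}\|^2$ and $\operatorname{Re}((\mathbf{w},\mathbf{w}))_k = k^2\|\mathbf{w}\|^2$, $|\operatorname{Im}((\mathbf{w},\mathbf{w}))_k| = |k|\|\mathbf{w}_T\|^2_{\mathbf{L}^2(\Gamma)}$ — is the one genuinely delicate point and determines the constant $2$.
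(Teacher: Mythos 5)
Your overall route is the paper's: the final paragraph of your proposal (control the norm by $A_k(\mathbf{e}_h,\mathbf{e}_h)+2((\mathbf{e}_h,\mathbf{e}_h))_k$, use Galerkin orthogonality to trade $\mathbf{e}_h$ for $\mathbf{E}-\mathbf{w}_h$ in the second slot, absorb the discrete remainder into $\delta_k$, apply Cauchy--Schwarz to the $\mathbf{E}-\mathbf{w}_h$ term, then the triangle inequality and a rearrangement) is exactly how the paper argues; the Schatz-style $\boldsymbol{\eta}+\boldsymbol{\xi}_h$ discussion in your earlier paragraphs is superfluous once you reach that point.

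The one place where your write-up does not survive scrutiny is precisely the elementary inequality you single out as delicate: in the separated form
\[
\|\mathbf{w}\|^2_{\operatorname{imp},k}\leq |A_k(\mathbf{w},\mathbf{w})|+2\,|((\mathbf{w},\mathbf{w}))_k|
\]
it is false. Writing $a_1=\|\operatorname{curl}\mathbf{w}\|^2$, $a_2=k^2\|\mathbf{w}\|^2$, $b=|k|\,\|\mathbf{w}_T\|^2_{\mathbf{L}^2(\Gamma)}$, the left-hand side equals $a_1+a_2+b$ while the right-hand side equals $\sqrt{(a_1-a_2)^2+b^2}+2\sqrt{a_2^2+b^2}$; for $a_1=10$, $a_2=b=1$ this would read $12\leq\sqrt{82}+2\sqrt{2}\approx 11.88$. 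The cure --- and what the paper does --- is to never separate the two summands: with $Z:=A_k(\mathbf{w},\mathbf{w})+2((\mathbf{w},\mathbf{w}))_k$ one has $Z=a_1+a_2+\operatorname{i}k\|\mathbf{w}_T\|^2_{\mathbf{L}^2(\Gamma)}$, hence the exact identity $\|\mathbf{w}\|^2_{\operatorname{imp},k}=\operatorname{Re}Z+(\operatorname{sign}k)\operatorname{Im}Z$, with no triangle inequality between $A_k(\mathbf{w},\mathbf{w})$ and $((\mathbf{w},\mathbf{w}))_k$ ever invoked. Applying Galerkin orthogonality to $Z$ \emph{before} taking any modulus leaves $(\operatorname{curl}\mathbf{e}_h,\operatorname{curl}(\mathbf{E}-\mathbf{w}_h))+((\mathbf{e}_h,\mathbf{E}-\mathbf{w}_h))_k$ plus $2((\mathbf{e}_h,\mathbf{w}_h-\mathbf{E}_h))_k$; the first is bounded by $\|\mathbf{e}_h\|_{\operatorname{imp},k}\|\mathbf{E}-\mathbf{w}_h\|_{\operatorname{imp},k}$ with constant one, the second by $\delta_k(\mathbf{e}_h)\|\mathbf{e}_h\|_{\operatorname{imp},k}\|\mathbf{w}_h-\mathbf{E}_h\|_{\operatorname{imp},k}$ (the factor $2$ is already built into the definition of $\delta_k$), and from there the remainder of your computation closes exactly as you describe.
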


\begin{proof}
The definitions of the sesquilinear forms $A_{k}$ and $
\innerprod{
\cdot,\cdot
}$ imply%
\begin{equation}
\left\Vert \mathbf{e}_{h}\right\Vert _{_{\operatorname*{imp},k}}%
^{2}=\left\vert A_{k}( \mathbf{e}_{h},\mathbf{e}_{h}) +2
\innerprod{
\mathbf{e}_{h},\mathbf{e}_{h}%
}\right\vert . \label{eq:prop:error-est-10}%
\end{equation}%
We employ Galerkin orthogonality for the first term in
(\ref{eq:prop:error-est-10}) to obtain for any $\mathbf{w}_{h}\in
\mathbf{X}_{h}$%
\[
\left\Vert \mathbf{e}_{h}\right\Vert _{\operatorname*{imp},k}^{2}%
\leq\bigl\vert A_{k}( \mathbf{e}_{h},\mathbf{E}-\mathbf{w}_{h}) +2
\innerprod{
\mathbf{e}_{h},\mathbf{E}-\mathbf{w}_{h}%
}\bigr\vert +\delta_{k}\left(  {\mathbf{e}}_{h}\right)
\bigl\Vert \mathbf{e}_{h}\bigr\Vert _{\operatorname*{imp},k}\!\!\!\!
\underset
{\leq\left\Vert \mathbf{e}_{h}\right\Vert _{\operatorname*{imp},k}+\left\Vert
\mathbf{E}-\mathbf{w}_{h}\right\Vert _{\operatorname*{imp},k}}{\underbrace
{\left\Vert \mathbf{E}_{h}-\mathbf{w}_{h}\right\Vert _{\operatorname*{imp},k}%
}}.
\]
We write $A_{k}$ in the form (\ref{eq:Ak-alternative}) so that%
\begin{align}
\left(  1-\delta_{k}( \mathbf{e}_{h}) \right)  \left\Vert \mathbf{e}%
_{h}\right\Vert _{\operatorname*{imp},k}^{2}  &  \leq\left\vert \left(
\operatorname{curl}\mathbf{e}_{h},\operatorname{curl}\left(  \mathbf{E}%
-\mathbf{w}_{h}\right)  \right)  +
\innerprod{
\mathbf{e}_{h},\mathbf{E}-\mathbf{w}_{h}%
} \right\vert \label{errest1mdelta}\\
&\quad   +\delta_{k}( {\mathbf{e}}_{h}) \left\Vert \mathbf{e}_{h}\right\Vert
_{\operatorname*{imp},k}\left\Vert \mathbf{E}-\mathbf{w}_{h}\right\Vert
_{\operatorname*{imp},k}.\nonumber
\end{align}

The sesquilinear form $
\innerprod{
\cdot,\cdot%
} $ is continuous, and we have%
\begin{equation}
\bigl|
\innerprod{
\mathbf{u},\mathbf{v}%
}
\bigr| \leq\left\Vert \mathbf{u}\right\Vert _{k,+}\left\Vert
\mathbf{v}\right\Vert _{k,+}\qquad\forall\mathbf{u},\mathbf{v}\in
\mathbf{X}_{\operatorname*{imp}}. \label{Cont2prod}%
\end{equation}
Hence,
\begin{align*}
\left(  1-\delta_{k}( \mathbf{e}_{h}) \right)  \left\Vert \mathbf{e}%
_{h}\right\Vert _{\operatorname*{imp},k}^{2}
&\leq 
\left\Vert \mathbf{e}%
_{h}\right\Vert _{\operatorname*{imp},k}\left\Vert \mathbf{E}-\mathbf{w}%
_{h}\right\Vert _{\operatorname*{imp},k}
\\
& \qquad \mbox{} +\delta_{k}( {\mathbf{e}}_{h})
\left\Vert \mathbf{e}_{h}\right\Vert _{\operatorname*{imp},k}\left\Vert
\mathbf{E}-\mathbf{w}_{h}\right\Vert _{\operatorname*{imp},k},
\end{align*}
and the assertion follows.%
\end{proof}


\subsection{Splitting of the consistency term}


We introduce continuous and discrete Helmholtz decompositions that are adapted 
to the problem under consideration.

\begin{definition}
\label{DefHelmsplit} On $\mathbf{v}\in\mathbf{X}_{\operatorname*{imp}}$ the
\emph{Helmholtz splittings}%
\begin{subequations}
\label{Helmtot}
\begin{align}
\mathbf{v}  &  =\Pi_{k}^{\operatorname*{curl}}\mathbf{v}+\Pi_{k}^{\nabla
}\mathbf{v},
\label{Helmtota}\\
\mathbf{v}  &  =\Pi_{k,h}^{\operatorname*{curl}}\mathbf{v}+\Pi_{k,h}^{\nabla
}\mathbf{v} 
\label{Helmtotb}%
\end{align}
\end{subequations}%
are given via operators $\Pi_{k}^{\nabla}$, $\Pi_{k}^{\operatorname*{curl}}$
and their discrete counterparts $\Pi_{k,h}^{\nabla}$, $\Pi_{k,h}%
^{\operatorname*{curl}}$ by seeking $\Pi_{k}^{\nabla}\mathbf{v}\in\nabla
H_{\operatorname*{imp}}^{1}( \Omega) $ and $\Pi_{k,h}^{\nabla}\mathbf{v}%
\in\nabla S_{h}$ such that%
\begin{subequations}
\label{HelmDecomp}
\begin{align}
\bdinnerprod{
\Pi_{k}^{\nabla}\mathbf{v},\nabla\psi%
}  &  =
\innerprod{
\mathbf{v},\nabla\psi%
}\qquad\forall\psi\in H_{\operatorname*{imp}}^{1}( \Omega) ,
\label{HelmDecompa}\\
\bdinnerprod{
\Pi_{k,h}^{\nabla}\mathbf{v},\nabla\psi%
}  &  =\innerprod{
\mathbf{v},\nabla\psi%
}\qquad\forall\psi\in S_{h}. 
\label{HelmDecompb}%
\end{align}
\end{subequations}%
The operators $\Pi_{k}^{\operatorname*{curl}}\mathbf{v}$, $\Pi_{k,h}%
^{\operatorname*{curl}}\mathbf{v}$ are then given via the relations
(\ref{Helmtot}).
\end{definition}

It is easy to see (cf.\ (\ref{Akcommute})) that
\begin{subequations}
\label{sign_prop}
\begin{align}
\bdinnerprod{
\nabla\psi,\Pi_{-k}^{\nabla}\mathbf{v}%
}  &  =\innerprod{
\nabla\psi,\mathbf{v}%
}\qquad\forall\psi\in H_{\operatorname*{imp}}^{1}( \Omega) ,
\label{sign_propa}\\
\bdinnerprod{
\nabla\psi,\Pi_{-k,h}^{\nabla}\mathbf{v}%
}  &  =\innerprod{
\nabla\psi,\mathbf{v}%
}\qquad\forall\psi\in S_{h}. 
\label{sign_propb}%
\end{align}
\end{subequations}%

Solvability of these equations follows trivially from the Lax-Milgram lemma as
can be seen from the following lemma.

\begin{lemma}
\label{LemCurlGradProj}Problems (\ref{HelmDecomp}) have unique solutions,
which satisfy%
\begin{align*}
\bigl\Vert \Pi_{k}^{\nabla}\mathbf{v}\bigr\Vert _{\operatorname*{imp}%
,k}+\bigr\Vert \Pi_{k}^{\operatorname*{curl}}\mathbf{v}\bigr\Vert
_{\operatorname*{imp},k} &  \leq C\bigl\Vert \mathbf{v}\bigr\Vert
_{\operatorname*{imp},k},\\
\bigl\Vert \Pi_{k,h}^{\nabla}\mathbf{v}\bigr\Vert _{\operatorname*{imp}%
,k}+\bigl\Vert \Pi_{k,h}^{\operatorname*{curl}}\mathbf{v}\bigr\Vert
_{\operatorname*{imp},k} &  \leq C\bigl\Vert \mathbf{v}\bigr\Vert
_{\operatorname*{imp},k}.
\end{align*}

\end{lemma}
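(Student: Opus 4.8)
The plan is to solve the variational problems (\ref{HelmDecompa}), (\ref{HelmDecompb}) by a Lax--Milgram argument applied to the sesquilinear form $((\cdot,\cdot))_k$ of (\ref{eq:def:(())}) restricted to the space of gradient fields, after the complex rotation that already served in Theorem~\ref{lemma:apriori-with-good-sign}~(\ref{item:lemma:apriori-with-good-sign-0}). The key preliminary observation is that on a gradient field $\nabla\psi$ with $\psi\in H^1_{\operatorname{imp}}(\Omega)$ one has $\operatorname{curl}\nabla\psi=0$ and, by (\ref{PiTgradProp}), $\Pi_T\nabla\psi=\nabla_\Gamma(\psi|_\Gamma)$, so that $\|\nabla\psi\|_{\operatorname{imp},k}=\|\nabla\psi\|_{k,+}$ and, by the definition of $((\cdot,\cdot))_k$,
\[
((\nabla\psi,\nabla\psi))_k=k^2\|\nabla\psi\|_{L^2(\Omega)}^2+\operatorname{i}k\,\|\Pi_T\nabla\psi\|_{\mathbf{L}^2(\Gamma)}^2 .
\]
With $\sigma=\exp\!\big(\tfrac{\pi\operatorname{i}}{4}\operatorname{sign}k\big)$, a direct computation (as in the proof of Theorem~\ref{lemma:apriori-with-good-sign}) gives
\[
\operatorname{Re}\big(\overline{\sigma}\,((\nabla\psi,\nabla\psi))_k\big)=\tfrac{1}{\sqrt2}\big(k^2\|\nabla\psi\|_{L^2(\Omega)}^2+|k|\,\|\Pi_T\nabla\psi\|_{\mathbf{L}^2(\Gamma)}^2\big)=\tfrac{1}{\sqrt2}\|\nabla\psi\|_{\operatorname{imp},k}^2 .
\]
Thus $((\cdot,\cdot))_k$ is, up to the rotation by $\sigma$, coercive on $\mathbf{X}_{\operatorname{imp},0}=\nabla H^1_{\operatorname{imp}}(\Omega)$ with respect to $\|\cdot\|_{\operatorname{imp},k}$, and it is continuous there since $|((\mathbf{u},\mathbf{w}))_k|\le\|\mathbf{u}\|_{k,+}\|\mathbf{w}\|_{k,+}$ by Cauchy--Schwarz.

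Next I would note that $\mathbf{X}_{\operatorname{imp},0}$ is a closed subspace of the Hilbert space $(\mathbf{X}_{\operatorname{imp}},\|\cdot\|_{\operatorname{imp},k})$ and that, for fixed $\mathbf{v}$, the right-hand side $\psi\mapsto((\mathbf{v},\nabla\psi))_k$ of (\ref{HelmDecompa}) is a bounded conjugate-linear functional on $\mathbf{X}_{\operatorname{imp},0}$ of norm at most $\|\mathbf{v}\|_{k,+}\le\|\mathbf{v}\|_{\operatorname{imp},k}$. Applying the Lax--Milgram lemma to the rotated form $\overline{\sigma}\,((\cdot,\cdot))_k$ yields a unique $\Pi_k^\nabla\mathbf{v}\in\mathbf{X}_{\operatorname{imp},0}$ solving (\ref{HelmDecompa}); testing with $\nabla\psi=\Pi_k^\nabla\mathbf{v}$ and using (\ref{HelmDecompa}) gives
\[
\tfrac{1}{\sqrt2}\|\Pi_k^\nabla\mathbf{v}\|_{\operatorname{imp},k}^2\le\operatorname{Re}\big(\overline{\sigma}\,((\Pi_k^\nabla\mathbf{v},\Pi_k^\nabla\mathbf{v}))_k\big)=\operatorname{Re}\big(\overline{\sigma}\,((\mathbf{v},\Pi_k^\nabla\mathbf{v}))_k\big)\le\|\mathbf{v}\|_{\operatorname{imp},k}\,\|\Pi_k^\nabla\mathbf{v}\|_{\operatorname{imp},k},
\]
hence $\|\Pi_k^\nabla\mathbf{v}\|_{\operatorname{imp},k}\le\sqrt2\,\|\mathbf{v}\|_{\operatorname{imp},k}$. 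Since by (\ref{Helmtot}) $\Pi_k^{\operatorname{curl}}\mathbf{v}=\mathbf{v}-\Pi_k^\nabla\mathbf{v}$, the triangle inequality gives $\|\Pi_k^{\operatorname{curl}}\mathbf{v}\|_{\operatorname{imp},k}\le(1+\sqrt2)\|\mathbf{v}\|_{\operatorname{imp},k}$, which proves the first estimate. The discrete statement follows verbatim with $\mathbf{X}_{\operatorname{imp},0}$ replaced by the closed subspace $\nabla S_h$ (note $\nabla S_h\subset\nabla H^1_{\operatorname{imp}}(\Omega)$ because $S_h\subset H^1_{\operatorname{imp}}(\Omega)$): coercivity and continuity are inherited on $\nabla S_h$, and Lax--Milgram on this subspace produces a unique $\Pi_{k,h}^\nabla\mathbf{v}$ with the same bound, whence the estimate for $\Pi_{k,h}^{\operatorname{curl}}\mathbf{v}=\mathbf{v}-\Pi_{k,h}^\nabla\mathbf{v}$.

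There is no substantial obstacle in this argument; the only points requiring care are that $((\cdot,\cdot))_k$ is \emph{not} coercive in the usual sense but only after multiplication by $\overline{\sigma}$, so one must invoke the rotated Lax--Milgram lemma rather than the standard one, and that on gradient fields the $\operatorname{imp},k$-norm collapses to the $k,+$-norm so that the ellipticity is indeed with respect to the correct norm. (The sign relations (\ref{Akcommute}) underlie the corresponding identities for $\Pi_{-k}^\nabla$, $\Pi_{-k,h}^\nabla$ in (\ref{sign_prop}), but are not needed for the present lemma.)
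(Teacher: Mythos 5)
Your proposal is correct and follows essentially the same route as the paper: rotation by $\sigma=\exp(\frac{\pi\operatorname{i}}{4}\operatorname{sign}k)$ to obtain coercivity of $((\cdot,\cdot))_k$ on gradient fields in the $\|\cdot\|_{\operatorname{imp},k}$-norm (which coincides with $\|\cdot\|_{k,+}$ there), continuity via Cauchy--Schwarz, Lax--Milgram for existence/uniqueness and the bound $\|\Pi_k^\nabla\mathbf{v}\|_{\operatorname{imp},k}\le\sqrt2\,\|\mathbf{v}\|_{\operatorname{imp},k}$, a triangle inequality for $\Pi_k^{\operatorname{curl}}$, and inheritance of coercivity and continuity by the subspace $\nabla S_h$ for the discrete case.
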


\begin{proof}
We first consider the continuous problem (\ref{HelmDecompa}). Taking 
$\sigma=\exp\left(  \left(  \operatorname*{sign}k\right)  \operatorname*{i}%
\frac{\pi}{4}\right)  $ we obtain coercivity as in the proof of
Theorem~\ref{lemma:apriori-with-good-sign}%
(\ref{item:lemma:apriori-with-good-sign-0}) via
\[
\operatorname{Re}\innerprod{
\nabla\psi,\sigma\nabla\psi%
}=2^{-1/2}\! \left[  k^{2}\left(  \nabla\psi,\nabla\psi\right)
+\left\vert k\right\vert \left(  \nabla_{\Gamma}\psi,\nabla_{\Gamma}%
\psi\right)  _{\mathbf{L}^{2}(  \Gamma)  }\right]  \! =2^{-1/2}%
\Vert\nabla\psi\Vert_{\operatorname{imp},k}^{2}.
\]
The continuity follows from (\ref{Cont2prod}):%
\begin{equation}
\left\vert \innerprod{
\nabla\varphi,\nabla\psi%
}\right\vert \leq\left\Vert \nabla\varphi\right\Vert
_{k,+}\left\Vert \nabla\psi\right\Vert _{k,+}=\left\Vert \nabla\varphi
\right\Vert _{\operatorname*{imp},k}\left\Vert \nabla\psi\right\Vert
_{\operatorname*{imp},k}. \label{2prodcont}%
\end{equation}
This implies existence, uniqueness, and the \textsl{a priori} estimate%
\[
\bigl\Vert \Pi_{k}^{\nabla}\mathbf{v}\bigr\Vert _{\operatorname*{imp},k}%
\leq\sqrt{2}\left\Vert \mathbf{v}\right\Vert _{\operatorname*{imp},k}.
\]
The estimate of $\Pi_{k}^{\operatorname*{curl}}\mathbf{v}$ follows by a
triangle inequality. Since the coercivity and continuity estimates are
inherited by the finite dimensional subspace $\nabla S_{h}$, well-posedness
also follows on the discrete level. The estimates for the other operators
follow verbatim.%
\end{proof}

The principal splitting of the consistency term $\delta_{k}$ in
(\ref{delta_k_def}) is introduced next. We write%

\begin{align}
\label{mainsplittingerror}%
& \innerprod{
 \mathbf{e}_{h},\mathbf{v}_{h}%
} = \\
\nonumber 
&  \quad 
\underset{=:T_{1}}{\underbrace{\bdinnerprod{
\mathbf{e}_{h},\left(  \Pi_{-k,h}^{\operatorname*{curl}}-\Pi_{-k}%
^{\operatorname*{curl}}\right)  \mathbf{v}_{h}%
}}}+\underset{=:T_{2}}{\underbrace{\bdinnerprod{
\mathbf{e}_{h},\Pi_{-k}^{\operatorname*{curl}}\mathbf{v}_{h}%
}}}+\underset{=:T_{3}}{\underbrace{\bdinnerprod{
\mathbf{e}_{h},\Pi_{-k,h}^{\nabla}\mathbf{v}_{h}%
}}}. 
\end{align}
Galerkin orthogonality implies $\innerprod{
\mathbf{e}_{h},\Pi_{-k,h}^{\nabla}\mathbf{v}_{h}%
}=0$, i.e., $T_{3}=0$.


\subsection{Consistency analysis: the term $\mathbf{T}_{1}$ in
(\ref{mainsplittingerror})\label{SecConsAna}}


The continuity of the sesquilinear form $\innerprod{
\cdot,\cdot%
}$ (cf.\ (\ref{Cont2prod})) implies%
\begin{equation}
\left\vert T_{1}\right\vert \leq\left\Vert \mathbf{e}_{h}\right\Vert
_{k,+}\bigl\Vert \bigl(  \Pi_{-k,h}^{\operatorname*{curl}}-\Pi_{-k}%
^{\operatorname*{curl}}\bigr)  \mathbf{v}_{h}\bigr\Vert _{k,+}.
\label{estT1}%
\end{equation}

The definition of the discrete and continuous Helmholtz decomposition applied
to a discrete function $\mathbf{v}_{h}$ leads to (cf.\ Def.~\ref{DefHelmsplit}%
, (\ref{sign_prop}))%
\begin{equation}
\bdinnerprod{
\nabla\psi_{h},\left(  \Pi_{-k,h}^{\operatorname*{curl}}-\Pi_{-k}%
^{\operatorname*{curl}}\right)  \mathbf{v}_{h}%
}=0\quad\forall\psi_{h}\in S_{h}. \label{eq:galerkin-Pi}%
\end{equation}
We use (\ref{Helmtot}) to get $\operatorname*{curl}\Pi_{-k}%
^{\operatorname*{curl}}=\operatorname*{curl}\Pi_{-k,h}^{\operatorname*{curl}%
}=\operatorname*{curl}$ on $\mathbf{X}_{h}$ and thus%
\begin{align}
\label{eq:intro-2000}%
& \operatorname*{curl}\bigl(  \Pi_{-k,h}^{\operatorname*{curl}}{\mathbf{v}}%
_{h}-\Pi_{h}^{E}\Pi_{-k}^{\operatorname*{curl}}{\mathbf{v}}_{h}\bigr)   
\overset{\text{Lem.~\ref{AdiscSp}(\ref{item:AdiscSp-b}) }}{=}%
\\ 
\nonumber 
& \quad \operatorname*{curl}\bigl(  \Pi_{-k,h}^{\operatorname*{curl}}{\mathbf{v}}%
_{h}\bigr)  -\Pi_{h}^{F}\operatorname*{curl}\bigl(  \Pi_{-k}%
^{\operatorname*{curl}}{\mathbf{v}}_{h}\bigr)  =
 \operatorname*{curl}%
{\mathbf{v}}_{h}-\Pi_{h}^{F}\operatorname*{curl}{\mathbf{v}}_{h} 
\overset{\text{Lem.~\ref{AdiscSp}(\ref{item:AdiscSp-b})}}{=}%
\\ \nonumber 
& \quad 
\operatorname*{curl}{\mathbf{v}}_{h}-\operatorname*{curl}\Pi_{h}%
^{E}{\mathbf{v}}_{h}\overset{\text{Lem.~\ref{AdiscSp}(\ref{item:AdiscSp-a}) }%
}{=}\operatorname*{curl}\left(  {\mathbf{v}}_{h}-{\mathbf{v}}_{h}\right)  =0.
\end{align}
By the exact sequence property (\ref{exdiscseq_rm}), the observation
(\ref{eq:intro-2000}) implies that $\Pi_{-k,h}^{\operatorname*{curl}%
}\mathbf{v}_{h}-\Pi_{h}^{E}\Pi_{-k}^{\operatorname*{curl}}\mathbf{v}%
_{h}=\nabla\psi_{h}$ for some $\psi_{h}\in S_{h}$ and therefore 
\begin{equation}
\bigl(  \Pi_{-k,h}^{\operatorname*{curl}}-\Pi_{-k}^{\operatorname*{curl}%
}\bigr)  \mathbf{v}_{h}=\nabla\psi_{h}+\bigl(  (  \Pi_{h}^{E}-I)
\Pi_{-k}^{\operatorname*{curl}}\bigr)  \mathbf{v}_{h}.
\label{eq:res-of-exact-sequence}%
\end{equation}

For the second factor in (\ref{estT1}) we get by the Galerkin orthogonality
(\ref{eq:galerkin-Pi}) and (\ref{eq:res-of-exact-sequence})%
\begin{align*}
\left\Vert \left(  \Pi_{-k,h}^{\operatorname*{curl}}-\Pi_{-k}%
^{\operatorname*{curl}}\right)  \mathbf{v}_{h}\right\Vert _{k,+}^{2} \!\!& = 
\operatorname{Re}\bdinnerprod{
  \bigl(  \Pi_{h}^{E}-I)  \Pi_{-k}^{\operatorname*{curl}}
\mathbf{v}_{h},(  \Pi_{-k,h}^{\operatorname*{curl}}-\Pi_{-k}%
^{\operatorname*{curl}}\bigr)  \mathbf{v}_{h}%
}\\
&  +\left(  \operatorname*{sign}k\right)  \operatorname{Im}\bdinnerprod{
\  
  \bigl(  \Pi_{h}^{E}-I\bigr)  \Pi_{-k}^{\operatorname*{curl}}
\mathbf{v}_{h},\bigl(  \Pi_{-k,h}^{\operatorname*{curl}}-\Pi_{-k}%
^{\operatorname*{curl}}\bigr)  \mathbf{v}_{h}%
}\\
& \leq   2\bigl\Vert \bigl(  \Pi_{h}^{E}-I\bigr)  \Pi_{-k}%
^{\operatorname*{curl}}\mathbf{v}_{h}\bigr\Vert _{k,+}\bigl\Vert
\bigl(  \Pi_{-k,h}^{\operatorname*{curl}}-\Pi_{-k}^{\operatorname*{curl}%
}\bigr)  \mathbf{v}_{h}\bigr\Vert _{k,+}%
\end{align*}
so that%
\[
\bigl\Vert \bigl(  \Pi_{-k,h}^{\operatorname*{curl}}-\Pi_{-k}%
^{\operatorname*{curl}}\bigr)  \mathbf{v}_{h}\bigr\Vert _{k,+}%
\leq2\bigr\Vert \bigl(  \Pi_{h}^{E}-I\bigr)  \Pi_{-k}%
^{\operatorname*{curl}}  \mathbf{v}_{h}\bigr\Vert _{k,+}.
\]
This leads to the estimate of $T_{1}$%
\[
\left\vert T_{1}\right\vert \leq2\left\Vert \mathbf{e}_{h}\right\Vert
_{k,+}\bigl\Vert \bigl(  I-\Pi_{h}^{E}\bigr)  \Pi_{-k}^{\operatorname*{curl}%
}\mathbf{v}_{h}\bigr\Vert _{k,+}.
\]
We set\footnote{%
Our choice of notation is motivated by the appearance of similar approximation
quantities in the companion paper \cite{MelenkSauterMaxwell_I} (for transparent boundary
conditions), where various measures of approximability $\eta_{j}^{\exp}$,
$j\in\left\{  1,3,4,5,7\right\}  $, and $\eta_{j}^{\operatorname*{alg}}$,
$j\in\left\{  2,6\right\}  $, were introduced and used in the
convergence analysis. Here, only the quantities $\eta_{6}^{\operatorname*{alg}%
}$ in (\ref{eq:eta-6}) and $\tilde{\eta}_{2}^{\operatorname*{alg}}$ in
(\ref{eq:eta2-alg}) are needed.
}%
\begin{equation}
\eta_{6}^{\operatorname*{alg}}:=\eta_{6}^{\operatorname{alg}}\bigl(
\mathbf{X}_{h},\Pi_{h}^{E}\bigr)  :=\sup_{\substack{\mathbf{w}\in
\mathbf{V}_{-k,0}\backslash\left\{
\mathbf{0}%
\right\}  \colon\\\operatorname*{curl}{\mathbf{w}}\in\operatorname*{curl}%
{\mathbf{X}}_{h}}}\frac{\bigl\Vert \mathbf{w}-\Pi_{h}^{E}\mathbf{w}\bigr\Vert
_{k,+}}{\Vert \mathbf{w}\Vert _{\mathbf{H}^{1}( \Omega)  }}\label{eq:eta-6}%
\end{equation}
and obtain%
\begin{align}
\nonumber 
\left\vert T_{1}\right\vert & \leq2\Vert \mathbf{e}_{h}\Vert
_{k,+}\eta_{6}^{\operatorname*{alg}}\bigl\Vert \Pi_{-k}^{\operatorname*{curl}%
}\mathbf{v}_{h}\bigr\Vert _{\mathbf{H}^{1}(  \Omega)  }%
\leq2C\left\Vert \mathbf{e}_{h}\right\Vert _{k,+}\eta_{6}^{\operatorname*{alg}%
}\bigl\Vert \Pi_{-k}^{\operatorname*{curl}}\mathbf{v}_{h}\bigr\Vert
_{\operatorname*{imp},k} \\
& \leq\tilde{C}\left\Vert \mathbf{e}_{h}\right\Vert
_{k,+}\eta_{6}^{\operatorname*{alg}}\left\Vert \mathbf{v}_{h}\right\Vert
_{\operatorname*{imp},k}.
\label{finalestT1}%
\end{align}

\subsubsection{$hp$-Analysis of $T_{1}$}

In \cite[(4.72)]{MelenkSauterMaxwell_I} it was proved that for our choice
$\Pi_{h}^{E}:=\Pi_{p}^{\operatorname*{curl},c}$ with $\Pi_{p}%
^{\operatorname*{curl},c}$ as in \cite{melenk_rojik_2018}, \cite[\S 8]%
{MelenkSauterMaxwell_I} (see Lem.~\ref{AdiscSp}), one has
\begin{equation}
\sup_{\substack{\mathbf{w}\in\mathbf{V}_{-k,0}\backslash\left\{
\mathbf{0}%
\right\}  \colon\\\operatorname*{curl}{\mathbf{w}}\in\operatorname*{curl}%
{\mathbf{X}}_{h}}}\frac{\vert k\vert \bigl\Vert \mathbf{w}-\Pi
_{h}^{E}\mathbf{w}\bigr\Vert }{\left\Vert \mathbf{w}\right\Vert
_{\mathbf{H}^{1}(  \Omega)  }}\leq C\frac{\left\vert k\right\vert
h}{p}. \label{esteta6alg1}%
\end{equation}
For the boundary term in the norm $\Vert\cdot\Vert_{k,+}$ we study the
approximation properties of the operator $\Pi_{p}^{\operatorname{curl},c}$ of
\cite{melenk_rojik_2018} on the boundary of the reference tetrahedron
$\widehat{K}$ more carefully.

\begin{lemma}
Let $\widehat{\Pi}_{p}^{\operatorname{curl},3d}$ be the operator introduced in
\cite{melenk_rojik_2018}. \label{LemRojikVar}For all $\mathbf{u}\in
\mathbf{H}^{1}( \widehat{K}) $ with $\operatorname*{curl}\mathbf{u}%
\in(  \mathcal{P}_{p}( \widehat{K}) )  ^{3}$, there holds with the
tangential component operator $\Pi_{T,\partial\widehat{K}}$
\[
\bigl\Vert \Pi_{T,\partial\widehat{K}} \bigl(  \mathbf{u}-\widehat{\Pi}%
_{p}^{\operatorname*{curl},3d}\mathbf{u}\bigr)  \bigr\Vert _{\mathbf{L}%
^{2}(\partial\widehat{K}) }\leq Cp^{-1/2}\left\Vert \mathbf{u}\right\Vert
_{H^{1}( \widehat{K}) }.
\]

\end{lemma}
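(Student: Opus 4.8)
The plan is to localize to a single face of $\partial\widehat K$ and to reduce, using the structural properties of the projection-based interpolation operator $\widehat\Pi_p^{\operatorname{curl},3d}$, to a two-dimensional approximation statement for a \emph{scalar} potential, which is then handled by Lemma~\ref{lemma:element-by-element-approximation}. Concretely, I would first fix a face $\widehat f$ of $\partial\widehat K$ and exploit that $\widehat\Pi_p^{\operatorname{curl},3d}$ of \cite{melenk_rojik_2018} is constructed element-by-element in a way that commutes with the tangential trace: its tangential component on $\widehat f$ depends only on $\mathbf u_{\widehat f}:=\Pi_{T,\widehat f}\mathbf u$ and equals $\widehat\Pi^{\operatorname{curl},2d}_p\mathbf u_{\widehat f}$, where $\widehat\Pi^{\operatorname{curl},2d}_p$, $\widehat\Pi^{\operatorname{grad},2d}_{p+1}$, together with the $L^2$-type companion operator $\widehat\Pi^{L^2,2d}_p$, form a commuting de Rham diagram on the reference triangle $\widehat f$. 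Since $\partial\widehat K$ has only finitely many faces, it then suffices to show $\|\mathbf u_{\widehat f}-\widehat\Pi^{\operatorname{curl},2d}_p\mathbf u_{\widehat f}\|_{\mathbf L^2(\widehat f)}\le C p^{-1/2}\|\mathbf u\|_{\mathbf H^1(\widehat K)}$. Here the trace theorem gives $\mathbf u_{\widehat f}\in\mathbf H^{1/2}(\widehat f)$ with $\|\mathbf u_{\widehat f}\|_{\mathbf H^{1/2}(\widehat f)}\le C\|\mathbf u\|_{\mathbf H^1(\widehat K)}$, while the hypothesis $\operatorname{curl}\mathbf u\in(\mathcal P_p(\widehat K))^3$ yields $\operatorname{curl}_{\widehat f}\mathbf u_{\widehat f}=\pm\langle\operatorname{curl}\mathbf u,\mathbf n\rangle|_{\widehat f}\in\mathcal P_p(\widehat f)$; moreover, since $\operatorname{div}\operatorname{curl}\mathbf u=0$, the normal trace is controlled $p$-robustly, $\|\operatorname{curl}_{\widehat f}\mathbf u_{\widehat f}\|_{H^{-1/2}(\widehat f)}\le C\|\operatorname{curl}\mathbf u\|_{\mathbf L^2(\widehat K)}\le C\|\mathbf u\|_{\mathbf H^1(\widehat K)}$.

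Next I would split $\mathbf u_{\widehat f}=\mathbf r+\nabla_{\widehat f}\phi$ on $\widehat f$, where $\mathbf r\in\boldsymbol{\mathcal N}_p^{\operatorname{I}}(\widehat f)$ is a polynomial-degree-robust right inverse of the surface curl applied to $\operatorname{curl}_{\widehat f}\mathbf u_{\widehat f}$ (such a lifting is available from the polynomial extension operator theory, cf.\ \cite{melenk_rojik_2018,MelenkSauterMathComp}), so that $\operatorname{curl}_{\widehat f}\mathbf r=\operatorname{curl}_{\widehat f}\mathbf u_{\widehat f}$ and $\|\mathbf r\|_{\mathbf H^{1/2}(\widehat f)}\le C\|\operatorname{curl}_{\widehat f}\mathbf u_{\widehat f}\|_{H^{-1/2}(\widehat f)}\le C\|\mathbf u\|_{\mathbf H^1(\widehat K)}$. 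Then $\nabla_{\widehat f}\phi=\mathbf u_{\widehat f}-\mathbf r\in\mathbf H^{1/2}(\widehat f)$ is curl-free, and by elliptic regularity on the convex reference triangle the potential satisfies $\phi\in H^{3/2}(\widehat f)$ with $|\phi|_{H^{3/2}(\widehat f)}\le C(\|\mathbf u_{\widehat f}\|_{\mathbf H^{1/2}(\widehat f)}+\|\mathbf r\|_{\mathbf H^{1/2}(\widehat f)})\le C\|\mathbf u\|_{\mathbf H^1(\widehat K)}$. Because $\widehat\Pi^{\operatorname{curl},2d}_p$ is a projection onto $\boldsymbol{\mathcal N}_p^{\operatorname{I}}(\widehat f)$ and commutes with the gradient, $\widehat\Pi^{\operatorname{curl},2d}_p\mathbf u_{\widehat f}=\mathbf r+\nabla_{\widehat f}\widehat\Pi^{\operatorname{grad},2d}_{p+1}\phi$, hence
\[
\mathbf u_{\widehat f}-\widehat\Pi^{\operatorname{curl},2d}_p\mathbf u_{\widehat f}=\nabla_{\widehat f}\bigl(\phi-\widehat\Pi^{\operatorname{grad},2d}_{p+1}\phi\bigr).
\]
The constraint $\operatorname{curl}_{\widehat f}\mathbf u_{\widehat f}\in\mathcal P_p(\widehat f)$ is exactly what is used here: it is what makes $\mathbf r$ reproduced exactly and the whole error a pure gradient (equivalently, $\operatorname{curl}_{\widehat f}$ of the error vanishes because $\widehat\Pi^{L^2,2d}_p$ reproduces the polynomial $\operatorname{curl}_{\widehat f}\mathbf u_{\widehat f}$).

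It then remains to bound $\|\nabla_{\widehat f}(\phi-\widehat\Pi^{\operatorname{grad},2d}_{p+1}\phi)\|_{\mathbf L^2(\widehat f)}=|\phi-\widehat\Pi^{\operatorname{grad},2d}_{p+1}\phi|_{H^1(\widehat f)}$, and here I would invoke Lemma~\ref{lemma:element-by-element-approximation} in the plane ($d=2$) with $t=3/2$, the admissible borderline value $t=(d+1)/2$, which gives $|\phi-\widehat\Pi^{\operatorname{grad},2d}_{p+1}\phi|_{H^1(\widehat f)}\le C(p+1)^{-1/2}|\phi|_{H^{3/2}(\widehat f)}\le Cp^{-1/2}\|\mathbf u\|_{\mathbf H^1(\widehat K)}$. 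Summing over the faces of $\partial\widehat K$ completes the argument.

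The main obstacles are the two structural inputs. First, one must make precise that the tangential component of $\widehat\Pi^{\operatorname{curl},3d}_p\mathbf u$ on a face coincides with $\widehat\Pi^{\operatorname{curl},2d}_p$ applied to $\mathbf u_{\widehat f}$; this rests on the element-by-element construction of \cite{melenk_rojik_2018}. Second, and more delicate, is the existence of a $p$-robust polynomial right inverse of the surface curl together with the resulting $H^{3/2}(\widehat f)$-bound for $\phi$ \emph{with a constant independent of $p$}: this non-degeneracy in $p$ is essential, since the gain $p^{-1/2}$ from the scalar estimate would be lost if one bounded $\phi$ only through $\|\operatorname{curl}_{\widehat f}\mathbf u_{\widehat f}\|_{L^2(\widehat f)}$, a quantity that degrades like $p$ for polynomial data. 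An alternative that avoids this second point is to apply the regular decomposition of Lemma~\ref{LemRs} to $\mathbf u$ itself in three dimensions, $\mathbf u=\nabla\Phi+\mathbf Z$ with $\Phi\in H^2(\widehat K)$, $\mathbf Z\in\mathbf H^1(\widehat K)$, $\operatorname{curl}\mathbf Z=\operatorname{curl}\mathbf u$, all bounded by $\|\mathbf u\|_{\mathbf H^1(\widehat K)}$; the gradient part is then handled by Lemma~\ref{lemma:element-by-element-approximation} at $t=2$ on $\partial\widehat K$, while the curl-carrying part $\mathbf Z$ becomes curl-free after interpolation and is treated by combining the $p$-explicit $\mathbf L^2(\widehat K)$-convergence of $\widehat\Pi^{\operatorname{curl},3d}_p$ with an interpolation-space estimate and a trace theorem to transfer to $\mathbf L^2(\partial\widehat K)$ with the rate $p^{-1/2}$; this route trades the polynomial lifting for $p$-uniform stability/convergence estimates of $\widehat\Pi^{\operatorname{curl},3d}_p$.
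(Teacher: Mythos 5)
Your overall architecture is the right one and matches the paper in spirit: the whole point is to turn the interpolation error into a pure (surface) gradient of a scalar potential with $H^{3/2}$-regularity on each face and then invoke the scalar $p^{-1/2}$ estimate. But your primary route has a genuine gap exactly at the step you yourself flag as ``more delicate'': the existence of a polynomial $\mathbf r\in\boldsymbol{\mathcal N}_p^{\operatorname I}(\widehat f)$ with $\operatorname{curl}_{\widehat f}\mathbf r=\operatorname{curl}_{\widehat f}\mathbf u_{\widehat f}$ and the \emph{$p$-uniform} negative-norm bound $\Vert\mathbf r\Vert_{\mathbf H^{1/2}(\widehat f)}\lesssim\Vert\operatorname{curl}_{\widehat f}\mathbf u_{\widehat f}\Vert_{H^{-1/2}(\widehat f)}$ is not available from anything cited in this paper, and without it the $H^{3/2}(\widehat f)$ bound on $\phi$ (and hence the $p^{-1/2}$ gain) collapses, as you correctly observe. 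There is also a secondary issue in localizing $\langle\operatorname{curl}\mathbf u,\mathbf n\rangle\in H^{-1/2}(\partial\widehat K)$ to a single face in the $H^{-1/2}(\widehat f)$ norm rather than the larger dual of $H^{1/2}_{00}(\widehat f)$. Your alternative route at the end is closer to the actual proof but still does not close: it needs an $O(p^{-1})$ $\mathbf L^2(\widehat K)$ error bound for $\widehat\Pi_p^{\operatorname{curl},3d}$ applied to a function that is merely $\mathbf H^1(\widehat K)$ with polynomial curl, whereas the available estimates (Lemma~\ref{lemma:Picurls}, \cite[Lem.~{8.2}]{melenk2018wavenumber}) require $\mathbf H^m$ with $m>3/2$.

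The paper sidesteps both difficulties by performing the regular decomposition \emph{in the volume}: following \cite[proof of Lem.~{6.15}, (6.42)]{melenk_rojik_2018} one writes $\mathbf u=\nabla\varphi+\mathbf v$ with $\Vert\varphi\Vert_{H^{2}(\widehat K)}+\Vert\mathbf v\Vert_{\mathbf H^{1}(\widehat K)}\lesssim\Vert\mathbf u\Vert_{\mathbf H^{1}(\widehat K)}$, where the decomposition is designed so that $\mathbf v$ is \emph{exactly reproduced}, $\mathbf v-\widehat\Pi_p^{\operatorname{curl},3d}\mathbf v=0$, precisely because $\operatorname{curl}\mathbf u\in(\mathcal P_p(\widehat K))^3$. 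No approximation estimate for the curl-carrying part is ever needed; the whole error is $\nabla(\varphi-\widehat\Pi^{\operatorname{grad},3d}_{p+1}\varphi)$, the facewise restriction property reduces this to $\nabla_f(I-\widehat\Pi^{\operatorname{grad},2d}_{p+1})(\varphi|_f)$, and \cite[Thm.~{2.13}]{melenk_rojik_2018} gives the $p^{-1/2}\Vert\varphi\Vert_{H^{3/2}(f)}\lesssim p^{-1/2}\Vert\varphi\Vert_{H^{2}(\widehat K)}$ bound. If you want to salvage your argument, replace your facewise Hodge decomposition (and the unproven polynomial curl-lifting) by this volume decomposition with the exact-reproduction property; the rest of your reasoning then goes through.
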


\begin{proof}
We follow the proof of \cite[Lem.~{6.15}]{melenk_rojik_2018} and employ the
notation used there. {From \cite[proof of Lem.~{6.15}]{melenk_rojik_2018} and
\cite[(6.42)]{melenk_rojik_2018}, we can decompose ${\mathbf{u}}=\nabla
\varphi+{\mathbf{v}}$ with
\begin{equation}
\Vert\varphi\Vert_{H^{2}(\widehat{K})}+\Vert{\mathbf{v}}\Vert_{{\mathbf{H}%
}^{1}(\widehat{K})}\leq C\Vert{\mathbf{u}}\Vert_{{\mathbf{H}}^{1}(\widehat
{K})}. \label{StabRojikDecomp}%
\end{equation}
}Since $\operatorname{curl}\mathbf{u}\in(\mathcal{P}_{p}(\widehat
{K}))^{3}$ we have $\mathbf{v}-\widehat{\Pi}_{p}^{\operatorname*{curl}%
,3d}\mathbf{v}=0$. We conclude
\[
\mathbf{u}-\widehat{\Pi}_{p}^{\operatorname*{curl},3d}\mathbf{u}%
=\mathbf{v}+\nabla\varphi-\widehat{\Pi}_{p}^{\operatorname{curl}%
,3d}(\mathbf{v}+\nabla\varphi)=\nabla(\varphi-\widehat{\Pi}_{p+1}%
^{\operatorname*{grad},3d}\varphi)
\]
with $\widehat{\Pi}_{p+1}^{\operatorname*{grad},3d}$ as in
\cite{melenk_rojik_2018}. The construction of the projection-based
interpolation operators $\widehat{\Pi}_{p}^{\operatorname{curl},3d}$,
$\widehat{\Pi}_{p}^{\operatorname{grad},3d}$ is such that facewise, they
reduce to corresponding 2D operators. That is, for each face $f\subset
\partial\widehat{K}$ we have
\[
  \Pi_{T,\partial\widehat{K}}\bigl(  \mathbf{u}-\widehat{\Pi}%
_{p}^{\operatorname*{curl},3d}\mathbf{u}\bigr)  \vert _{f}=
\Pi_{T,\partial\widehat{K}}\bigl(  \nabla\bigl(  \varphi-\widehat{\Pi}%
_{p+1}^{\operatorname*{grad},3d}\varphi\bigr)  \bigr)  \vert
_{f}=\nabla_{{f}}\bigl(  \mathbf{I}-\widehat{\Pi}_{p+1}^{\operatorname*{grad}%
,2d}\bigr)  \bigl(  \varphi\vert _{f}\bigr)  .
\]
We apply \cite[Thm.~{2.13}]{melenk_rojik_2018} to obtain%
\begin{align*}
\bigl\Vert \Pi_{T,\partial\widehat{K}}\bigl(  \mathbf{u}-\widehat{\Pi}%
_{p}^{\operatorname*{curl},3d}\mathbf{u}\bigr)  \bigr\Vert _{\mathbf{L}%
^{2}(f)}  &  =\bigl\Vert \nabla_{{f}}\bigl(  \mathbf{I}-\widehat{\Pi}%
_{p+1}^{\operatorname*{grad},2d}\bigr)  \varphi\vert
_{f}\bigr)  \bigr\Vert _{\mathbf{L}^{2}(f)}\leq Cp^{-1/2}\Vert
\varphi\Vert _{H^{3/2}(f)}\\
&  \leq Cp^{-1/2}\Vert \varphi\Vert _{H^{2}(\widehat{K})}%
\overset{\text{(\ref{StabRojikDecomp})}}{\leq}Cp^{-1/2}\Vert
\mathbf{u}\Vert _{H^{1}(\widehat{K})}.
\end{align*}%
\end{proof}

For the boundary part of $\left\Vert \mathbf{w}\right\Vert
_{\operatorname*{imp},k}$ of a $\mathbf{w}\in\mathbf{V}_{-k,0}$ with
$\operatorname*{curl}{\mathbf{w}}\in\operatorname*{curl}{\mathbf{X}}_{h}$, we
get, by applying a scaling argument to Lemma~\ref{LemRojikVar}:%
\begin{align}
| k|  \bigl\Vert \Pi_{T}\bigl(  \mathbf{w}-\Pi_{h}%
^{E}\mathbf{w}\bigr)  \bigr\Vert _{\mathbf{L}^{2}(\Gamma)}^{2}  &
=| k|  \sum_{\substack{K\in\mathcal{T}_{h}\\ \left\vert
K\cap\Gamma\right\vert >0}}\bigl\Vert \Pi_{T}\bigl(  \mathbf{w}-\Pi_{h}%
^{E}\mathbf{w}\bigr)  \bigr\Vert _{\mathbf{L}^{2}(K\cap\Gamma)}%
^{2}
\label{esteta6alg2}\\
&  \leq C\frac{|k|  h}{p}\sum_{\substack{K\in
\mathcal{T}_{h}\\ \left\vert K\cap\Gamma\right\vert >0}}\Vert
\mathbf{w}\Vert _{\mathbf{H}^{1}(K)}^{2}\leq C\frac{ |k| h}{p}\Vert \mathbf{w}\Vert _{\mathbf{H}^{1}(\Omega
)}^{2}.\nonumber
\end{align}
The combination of (\ref{esteta6alg1}) with (\ref{esteta6alg2}) leads to%
\begin{equation}
\eta_{6}^{\operatorname*{alg}}\leq C\left(  \frac{|k | h}{p}\right)  ^{1/2}
\left(  1+\left(  \frac{\left\vert k\right\vert h}%
{p}\right)  ^{1/2}\right)  . 
\label{eta6final}%
\end{equation}


\subsection{Consistency analysis: the term $\mathbf{T}_{2}$ in
(\ref{mainsplittingerror})}

\label{SecT2}

Recall the definition of $T_{2}=\innerprod{
\mathbf{e}_{h},\mathbf{v}_{0}%
}$ with $\mathbf{v}_{0}:=\Pi_{-k}^{\operatorname*{curl}}%
\mathbf{v}_{h} = (\operatorname{I} - \Pi_{-k}^{\nabla}) \mathbf{v}_h$. The function $\mathbf{v}_{0}$ belongs to $\mathbf{X}%
_{\operatorname*{imp}}$ and by combining (\ref{Helmtota}) and
(\ref{HelmDecompa}) we find that $\mathbf{v}_{0}$ belongs to $\mathbf{V}%
_{-k,0}$. Proposition~\ref{PropEmbVk0} implies $\mathbf{v}_{0}\in
\mathbf{H}^{1}(\Omega)$ and%
\begin{equation}
\Vert \mathbf{v}_{0}\Vert _{\mathbf{H}^{1}(\Omega)}\leq |k| 
\Vert \mathbf{v}_{0}\Vert _{\mathbf{H}^{1}(\Omega
),k}\leq C\Vert \mathbf{v}_{0}\Vert _{\mathbf{H}%
(\operatorname*{curl},\Omega),k}\leq C\Vert \mathbf{v}_{0}\Vert
_{\operatorname*{imp},k}. \label{v0estH1}%
\end{equation}
The characterization (\ref{PropVcharact}) of $\mathbf{V}_{-k,0}$ implies%
\begin{equation}
\operatorname*{div}\nolimits_{\Gamma}(  \mathbf{v}_{0})
_{T}=\operatorname*{i}k\langle \mathbf{v}_{0},\mathbf{n}\rangle .
\label{divgammaest}%
\end{equation}

To estimate the term $T_{2}$, we consider the dual problem: Given
$\mathbf{v}_{0}\in\mathbf{V}_{-k,0}$, find $\mathbf{z}\in\mathbf{X}%
_{\operatorname*{imp}}$ such that%
\[
A_{k}(\mathbf{w},\mathbf{z})=\innerprod{
\mathbf{w},\mathbf{v}_{0}%
}\quad\forall\mathbf{w}\in\mathbf{X}_{\operatorname*{imp}}.
\]
The operator $\mathcal{N}_{-k}:\mathbf{V}_{-k,0}\rightarrow\mathbf{X}%
_{\operatorname*{imp}}$ is defined by $\mathcal{N}_{-k}\mathbf{v}%
_{0}:=\mathbf{z}$. The strong formulation is given by%
\begin{align}
\mathcal{L}_{\Omega,-k}\mathbf{z}& =k^{2}\mathbf{v}_{0} \quad  \text{in }\Omega, 
& 
\mathcal{B}_{ \Gamma ,-k}\mathbf{z}& =-\operatorname*{i}k\left(  \mathbf{v}_{0}\right)  _{T} \quad 
\text{on }\Gamma. 
\label{eq:dual-problem-N}%
\end{align}
Hence, $\mathcal{N}_{-k}\mathbf{v}_{0}=\mathcal{S}_{\Omega,-k}%
^{\operatorname*{MW}}\left(  k^{2}\mathbf{v}_{0},-\operatorname*{i}k\left(
\mathbf{v}_{0}\right)  _{T}\right)  $. By Galerkin orthogonality satisfied by
$\mathbf{e}_{h}$, we have for any $\mathbf{w}_{h}\in\mathbf{X}_{h}$
\begin{equation}
\left\vert \innerprod{
\mathbf{e}_{h},\mathbf{v}_{0}%
}\right\vert =\left\vert A_{k}(\mathbf{e}_{h},\mathcal{N}%
_{-k}\mathbf{v}_{0}-\mathbf{w}_{h})\right\vert \leq\left\Vert \mathbf{e}%
_{h}\right\Vert _{\operatorname*{imp},k}\left\Vert \mathcal{N}_{-k}%
\mathbf{v}_{0}-\mathbf{w}_{h}\right\Vert _{\operatorname*{imp},k}.
\label{ReT2}%
\end{equation}
We set%
\begin{equation}
\tilde{\eta}_{2}^{\operatorname*{alg}}\left(  \mathbf{X}_{h}\right)
:=\sup_{\mathbf{v}_{0}\in\mathbf{V}_{-k,0}\backslash\left\{
\mathbf{0}%
\right\}  }\inf_{\mathbf{w}_{h}\in\mathbf{X}_{h}}\frac{\left\Vert
\mathcal{N}_{-k}\mathbf{v}_{0}-\mathbf{w}_{h}\right\Vert _{\operatorname*{imp}%
,k}}{\left\Vert \mathbf{v}_{0}\right\Vert _{\operatorname*{imp},k}}
\label{eq:eta2-alg}%
\end{equation}
so that%
\begin{equation}
\left\vert T_{2}\right\vert =\left\vert \innerprod{
\mathbf{e}_{h},\mathbf{v}_{0}%
}\right\vert \leq\tilde{\eta}_{2}^{\operatorname*{alg}}\left(
\mathbf{X}_{h}\right)  \Vert \mathbf{e}_{h}\Vert
_{\operatorname*{imp},k}\Vert \mathbf{v}_{0}\Vert
_{\operatorname*{imp},k}. 
\label{estT2a}%
\end{equation}

\subsubsection{$hp$-Analysis of $T_{2}$}


Next, we gauge the approximation property $\tilde{\eta}_{2}%
^{\operatorname*{alg}}\left(  \mathbf{X}_{h}\right)  $. We employ the
splitting given by Theorem~\ref{TheoMainIt}, viz.,
\begin{equation}
\mathcal{N}_{-k}\mathbf{v}_{0}=\mathbf{z}=\mathbf{z}_{H^{2}}+\mathbf{z}%
_{\mathcal{A}}+k^{-2}\nabla\varphi_{\mathbf{f}} -  \operatorname{i}k^{-1}%
\nabla\varphi_{\mathbf{g}}.\label{Nmksplitting}%
\end{equation}
Note that these
five
functions $\mathbf{z}$, $\mathbf{z}_{H^{2}}$, $\mathbf{z}_{\mathcal{A}}$,
$\varphi_{\mathbf{f}}$, $\varphi_{\mathbf{g}}$ depend on $\mathbf{v}_{0}$ but
we suppress this in the notation. From Theorem~\ref{TheoMainIt} with
$m=m^{\prime}=1$ we have%
\begin{align}
\Vert\varphi_{\mathbf{f}}\Vert_{H^{2}(\Omega)} &  \leq C\left\vert
k\right\vert ^{2}\Vert\operatorname{div}\mathbf{v}_{0}\Vert_{L^{2}(\Omega
)}\overset{\operatorname{div}\mathbf{v}_{0}=0}{=}0,\nonumber\\
\Vert\varphi_{\mathbf{g}}\Vert_{H^{2}(\Omega)} &  \leq C|k|\Vert
\operatorname{div}_{\Gamma}(\mathbf{v}_{0})_{T}\Vert_{H^{-1/2}(\Gamma)}\leq
C|k|\Vert(\mathbf{v}_{0})_{T}\Vert_{H^{1/2}(\Gamma)}\overset{(\ref{v0estH1}%
)}{\leq}C|k|\Vert\mathbf{v}_{0}\Vert_{\operatorname{imp},k},\nonumber\\
\Vert\mathbf{z}_{H^{2}}\Vert_{\mathbf{H}^{2}(\Omega)} &  \leq\left\vert
k\right\vert ^{2}\Vert\mathbf{z}_{H^{2}}\Vert_{\mathbf{H}^{2}(\Omega
),k}\overset{\text{(\ref{zH2estimate0})}}{\leq}C|k|^{-1}\left(  \Vert
k^{2}\mathbf{v}_{0}\Vert_{\mathbf{H}^{1}(\Omega)}+|k|\Vert k(\mathbf{v}%
_{0})_{T}\Vert_{\mathbf{H}^{1/2}(\Gamma)}\right)  \nonumber\\
&  \overset{(\ref{v0estH1})}{\leq}C|k|\Vert\mathbf{v}_{0}\Vert
_{\operatorname{imp},k},\nonumber\\
\mathbf{z}_{\mathcal{A}} &  \in\mathcal{A}(CC_{\mathbf{z}},B,\Omega
),\nonumber\\
C_{\mathbf{z}} &  =|k|^{ \theta-1 }%
\bigl(  \Vert k^{2}\mathbf{v}_{0}\Vert+|k|\Vert k(\mathbf{v}_{0})_{T}%
\Vert_{\mathbf{H}^{-1/2}(\Gamma)}\bigr)  \nonumber\\
&  \overset{\operatorname{i}k(\mathbf{v}_{0})_{T}=\operatorname{div}_{\Gamma
}\mathbf{v}_{0},(\ref{v0estH1})}{\leq}C|k|^{ \theta  }%
\Vert\mathbf{v}_{0}\Vert_{\operatorname{imp},k}.\label{eq:regularity-zA}%
\end{align}
We note that $\varphi_{\mathbf{f}}=0$. For the approximation of $\nabla
\varphi_{\mathbf{g}}$, we use the elementwise defined operator $\Pi
_{p}^{\nabla,s}$ of Corollary~\ref{cor:Pinabla} with $m=2$ there to get%
\begin{align}
\bigl\Vert k^{-1}\bigl(  \nabla\varphi_{\mathbf{g}}-\nabla \Pi_{p}^{\nabla,s}%
\varphi_{\mathbf{g}}\bigr)  \bigr\Vert _{\operatorname{imp},k}  & \leq
C|k|^{-1}\left(  |k|\frac{h}{p}+|k|^{1/2}\left(  \frac{h}{p}\right)
^{1/2}\right)  \Vert\varphi_{\mathbf{g}}\Vert_{H^{2}(\Omega)}\nonumber\\
& \leq C\left(  \frac{|k|h}{p}\right)  ^{1/2}\Vert\mathbf{v}_{0}%
\Vert_{\operatorname{imp},k}.\label{eq:approx-varphi_g-contribution}%
\end{align}%
For the approximation of $\mathbf{z}_{H^{2}}$, we employ the elementwise
defined operator $\Pi_{p}^{\operatorname*{curl},s}:\mathbf{H}^{2}%
(\Omega)\rightarrow\mathbf{X}_{h}$ as in Lemma~\ref{lemma:Picurls}. By summing
over all elements the estimates of Lemma~\ref{lemma:Picurls}%
(\ref{item:lemma:Picurls-i}) we get
\begin{align}
&  \Vert \mathbf{z}_{H^{2}}-\Pi_{p}^{\operatorname*{curl},s}%
\mathbf{z}_{H^{2}}\Vert _{\mathbf{H}\left(  \operatorname*{curl}%
,\Omega\right)  ,k}^{2}=\sum_{K\in\mathcal{T}_{h}}\Vert \mathbf{z}%
_{H^{2}}-\Pi_{p}^{\operatorname*{curl},s}\mathbf{z}_{H^{2}}\Vert
_{\mathbf{H}(  \operatorname*{curl},K)  ,k}^{2}%
\label{zH2PiCurlsEst}\\
&  \qquad\quad\leq C\sum_{K\in\mathcal{T}_{h}}\frac{h_{K}^{2}}{p^{2}}\left(
1+\frac{\left\vert k\right\vert ^{2}h_{K}^{2}}{p^{2}}\right)  \Vert
\mathbf{z}_{H^{2}}\Vert _{\mathbf{H}^{2}(  K)  }^{2}\leq
C\frac{h^{2}}{p^{2}}\left(  1+\frac{\left\vert k\right\vert ^{2}h^{2}}{p^{2}%
}\right)  \Vert \mathbf{z}_{H^{2}}\Vert _{\mathbf{H}^{2}(
\Omega)  }^{2}\nonumber\\
&  \qquad\quad\leq C\frac{\left\vert k\right\vert ^{2}h^{2}}{p^{2}}\left(
1+\frac{\left\vert k\right\vert ^{2}h^{2}}{p^{2}}\right)  \left\Vert
\mathbf{v}_{0}\right\Vert _{\operatorname*{imp},k}^{2}.\nonumber
\end{align}
For the boundary part of the $\left\Vert \cdot\right\Vert
_{\operatorname*{imp},k}$ norm we proceed similarly using 
Lemma~\ref{lemma:Picurls}(\ref{item:lemma:Picurls-i}) to arrive at
\[
|k|^{1/2}\Vert\mathbf{z}_{H^{2}}-\Pi_{p}^{\operatorname{curl},s}%
\mathbf{z}_{H^{2}}\Vert_{\mathbf{L}^{2}(\Gamma)}\leq \! C|k|^{1/2}\left(
\frac{h}{p}\right)  ^{3/2}\!\!\!
\Vert\mathbf{z}_{H^{2}}\Vert_{\mathbf{H}^{2}%
(\Omega)}\leq \! C\! \left(  \frac{|k|h}{p}\right)  ^{3/2}\!\!\!
\Vert\mathbf{v}_{0}%
\Vert_{\operatorname{imp},k}.
\]
In summary, we have proved%
\begin{equation}
\Vert \mathbf{z}_{H^{2}}-\Pi_{p}^{\operatorname*{curl},s}\mathbf{z}%
_{H^{2}}\Vert _{\operatorname*{imp},k}\leq C\frac{ | k| h }{p}\left(  1+\left(  \frac{h\left\vert k\right\vert }{p}\right)
^{1/2}+\frac{h\left\vert k\right\vert }{p}\right)  \left\Vert \mathbf{v}%
_{0}\right\Vert _{\operatorname*{imp},k}.\label{zH2finalest}%
\end{equation}
Next, for the analytic part $\mathbf{z}_{\mathcal{A}}$ we get from
Lemma~\ref{lemma:Picurls}(\ref{item:lemma:Picurls-iii}) in view of
(\ref{eq:regularity-zA}) under the (mild) resolution condition
\begin{equation}
h+\frac{\left\vert k\right\vert h}{p}\leq\widetilde{C}\label{mildrescond}%
\end{equation}
that%
\begin{align}
\Vert\mathbf{z}_{\mathcal{A}}-\Pi_{p}^{\operatorname{curl},s}\mathbf{z}%
_{\mathcal{A}}\Vert_{\operatorname{imp},k} &  \leq CC_{\mathbf{z}}\left\vert
k\right\vert \left(  \left(  \frac{h}{h+\sigma}\right)  ^{p}+\left(
\frac{|k|h}{\sigma p}\right)  ^{p}\right)  \label{zAfinalest}\\
&  \leq C\left\Vert \mathbf{v}_{0}\right\Vert _{\operatorname{imp},k}|k|^{ \theta+1 }%
\left(  \left(  \frac{h}{h+\sigma}\right)  ^{p}+\left(  \frac{|k|h}{\sigma
p}\right)  ^{p}\right)  .\nonumber
\end{align}
This derivation is summarized in the following lemma.

\begin{lemma}
\label{Lemeta2alg} Assume hypothesis (\ref{AssumptionAlgGrowth}) and let
$\Omega$ be a bounded Lipschitz domain with simply connected, analytic
boundary. Let the mesh satisfy 
Assumption~\ref{def:element-maps}. Let $c_{2}$, $\varepsilon>0$ be given. Then
there exists $c_{1}>0$ (depending only on the constants of
(\ref{AssumptionAlgGrowth}), $\Omega$, the parameters of
Assumption~\ref{def:element-maps}, and $c_{2}$, $\varepsilon$) such that for
$h$, $k$, $p$ satisfying the resolution condition
\begin{equation}
\frac{\left\vert k\right\vert h}{p}\leq c_{1}\quad\text{and\quad}p\geq
\max\left\{  1,c_{2}\ln\left\vert k\right\vert \right\}  \label{rescond}%
\end{equation}
there holds
\begin{equation}
\tilde{\eta}_{2}^{\operatorname*{alg}}\left(  \mathbf{X}_{h}\right)
\leq\varepsilon. \label{eta2est}%
\end{equation}

\end{lemma}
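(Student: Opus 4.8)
The proof is the assembly of the estimates derived in the paragraphs immediately preceding the statement; the plan is as follows. Fix $\mathbf{v}_{0}\in\mathbf{V}_{-k,0}\setminus\{0\}$; by Proposition~\ref{PropV0incl} we have $\operatorname{div}\mathbf{v}_{0}=0$ and $\operatorname{div}_{\Gamma}(\mathbf{v}_{0})_{T}=\operatorname{i}k\langle\mathbf{v}_{0},\mathbf{n}\rangle$. Applying Theorem~\ref{TheoMainIt} with $m=m^{\prime}=1$ to the dual problem (\ref{eq:dual-problem-N}) (whose data are $k^{2}\mathbf{v}_{0}$ and $-\operatorname{i}k(\mathbf{v}_{0})_{T}$, with wavenumber $-k$, so that all bounds are in terms of $|k|$) gives the splitting $\mathcal{N}_{-k}\mathbf{v}_{0}=\mathbf{z}_{H^{2}}+\mathbf{z}_{\mathcal{A}}+k^{-2}\nabla\varphi_{\mathbf{f}}-\operatorname{i}k^{-1}\nabla\varphi_{\mathbf{g}}$ of (\ref{Nmksplitting}), where $\varphi_{\mathbf{f}}=0$ because $\operatorname{div}\mathbf{v}_{0}=0$ and where $\mathbf{z}_{H^{2}}$, $\mathbf{z}_{\mathcal{A}}$, $\varphi_{\mathbf{g}}$ obey the regularity bounds collected in (\ref{eq:regularity-zA}). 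As competitor $\mathbf{w}_{h}\in\mathbf{X}_{h}$ we take $\mathbf{w}_{h}:=-\operatorname{i}k^{-1}\Pi_{p}^{\nabla,s}\nabla\varphi_{\mathbf{g}}+\Pi_{p}^{\operatorname*{curl},s}\mathbf{z}_{H^{2}}+\Pi_{p}^{\operatorname*{curl},s}\mathbf{z}_{\mathcal{A}}$, which lies in $\mathbf{X}_{h}$ since $\nabla S_{h}\subset\mathbf{X}_{h}$ (cf.\ (\ref{exdiscseq_rm})).

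By the triangle inequality, $\Vert\mathcal{N}_{-k}\mathbf{v}_{0}-\mathbf{w}_{h}\Vert_{\operatorname*{imp},k}$ is bounded by the sum of $|k|^{-1}\Vert\nabla\varphi_{\mathbf{g}}-\Pi_{p}^{\nabla,s}\nabla\varphi_{\mathbf{g}}\Vert_{\operatorname*{imp},k}$, $\Vert\mathbf{z}_{H^{2}}-\Pi_{p}^{\operatorname*{curl},s}\mathbf{z}_{H^{2}}\Vert_{\operatorname*{imp},k}$, and $\Vert\mathbf{z}_{\mathcal{A}}-\Pi_{p}^{\operatorname*{curl},s}\mathbf{z}_{\mathcal{A}}\Vert_{\operatorname*{imp},k}$; these are precisely the three quantities already estimated in (\ref{eq:approx-varphi_g-contribution}), (\ref{zH2finalest}) and (\ref{zAfinalest}), which in turn use Corollary~\ref{cor:Pinabla}, Lemma~\ref{lemma:Picurls} and the bounds (\ref{eq:regularity-zA}). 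The hypotheses of Corollary~\ref{cor:Pinabla} and Lemma~\ref{lemma:Picurls} hold once the mild resolution condition (\ref{mildrescond}) is met; since $h\le\operatorname{diam}\Omega$ always, (\ref{rescond}) with $c_{1}$ small implies (\ref{mildrescond}). Dividing by $\Vert\mathbf{v}_{0}\Vert_{\operatorname*{imp},k}$ and taking the supremum over $\mathbf{v}_{0}$ and the infimum over $\mathbf{w}_{h}$ yields
\[
\tilde{\eta}_{2}^{\operatorname*{alg}}(\mathbf{X}_{h})\le C\left(\frac{|k|h}{p}\right)^{1/2}+C\,\frac{|k|h}{p}\left(1+\left(\frac{|k|h}{p}\right)^{1/2}+\frac{|k|h}{p}\right)+C\,|k|^{\max(0,\theta+1)}\left(\left(\frac{h}{h+\sigma}\right)^{p}+\left(\frac{|k|h}{\sigma p}\right)^{p}\right).
\]

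It remains to choose $c_{1}$, depending on $c_{2}$, $\varepsilon$, $\Omega$, $\theta$, $\sigma$ and the mesh constants, so small that the right-hand side is $\le\varepsilon$ whenever (\ref{rescond}) holds. The first two (algebraic) terms are $\le C\sqrt{c_{1}}+C\,c_{1}(1+\sqrt{c_{1}}+c_{1})$, hence $\le\varepsilon/2$ for $c_{1}$ small. For the exponential part one invokes the side constraint $p\ge c_{2}\ln|k|$, which gives $|k|^{\max(0,\theta+1)}\le\exp(\max(0,\theta+1)\,p/c_{2})$; consequently $|k|^{\max(0,\theta+1)}(|k|h/(\sigma p))^{p}\le\bigl((c_{1}/\sigma)\exp(\max(0,\theta+1)/c_{2})\bigr)^{p}$ decays geometrically in $p$ (and is $\le\varepsilon/4$) as soon as $c_{1}$ is chosen below $\sigma\varepsilon\exp(-\max(0,\theta+1)/c_{2})$, while for the term $(h/(h+\sigma))^{p}$ one combines the two bounds $h/(h+\sigma)\le\operatorname{diam}\Omega/(\operatorname{diam}\Omega+\sigma)<1$ and $h/(h+\sigma)\le c_{1}p/(\sigma|k|)$ (the latter from (\ref{rescond}) and $|k|\ge1$): the first handles the range where $p$ is large relative to $\ln|k|$, the second the range where $|k|$ is large, and a short optimisation over $p$ shows the resulting contribution is also $\le\varepsilon/4$ for $c_{1}$ small. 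This last balancing — trading the prescribed $c_{2}$ against $c_{1}$ and exploiting that $|k|\ge1$ and $h\le\operatorname{diam}\Omega$ are bounded, so that the geometric $p$-decay eventually dominates the polynomial factor $|k|^{\max(0,\theta+1)}$ — is the only genuinely delicate step; everything else is bookkeeping with results already established.
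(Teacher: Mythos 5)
Your proposal is correct and follows essentially the same route as the paper: the proof there likewise just combines (\ref{eq:approx-varphi_g-contribution}), (\ref{zH2finalest}), and (\ref{zAfinalest}) into the bound (\ref{eta2algestproof}) and then tunes $c_{1}$ against $c_{2}$ exactly as you describe, with the exponent computation $|k|^{\max(0,\theta+1)}(|k|h/(\sigma p))^{p}\le|k|^{\max(0,\theta+1)+c_{2}\ln(c_{1}/\sigma)}$ matching yours. The only cosmetic difference is that for the term $|k|^{\max(0,\theta+1)}(h/(h+\sigma))^{p}$ with $|k|>2$ the paper cites \cite[Lem.~{8.7}]{melenk2018wavenumber}, whereas you sketch the underlying two-range argument in $p$ directly (which is also how the paper handles the residual case $1\le|k|\le 2$).
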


\begin{proof}
We combine (\ref{eq:approx-varphi_g-contribution}), (\ref{zH2finalest}), and
(\ref{zAfinalest}) with the resolution condition to arrive at%
\begin{equation}
\tilde{\eta}_{2}^{\operatorname*{alg}}\left(  \mathbf{X}_{h}\right)  \leq
C\left(  \left(  \frac{h\left\vert k\right\vert }{p}\right)  ^{1/2}%
+{\left\vert k\right\vert ^{ \theta+1} }%
\left[  \left(  \frac{h}{\sigma+h}\right)  ^{p}+\left(  \frac{\left\vert
k\right\vert h}{\sigma p}\right)  ^{p}\right]  \right)
.\label{eta2algestproof}%
\end{equation}
Clearly, by selecting $c_1$ sufficiently small, we may ensure 
that the first term in (\ref{eta2algestproof}), $(|k| h/p)^{1/2}$, is smaller than $\varepsilon/3$. 
The second term in (\ref{eta2algestproof}), $|k|^{\theta+1} (h/(\sigma + h))^p$, can be made smaller than $\varepsilon/3$ 
for sufficiently small $c_1$ by appealing to \cite[Lem.~{8.7}]{MelenkSauterMaxwell_I}.  
For the last term in (\ref{eta2algestproof}), we may assume that $c_1 < \sigma$ and then estimate 
\begin{align*}
|k|^{\theta+1} \left(  \frac{|k| h}{\sigma p}\right)  ^{p}
& \leq |k|^{\theta+1} (c_1/\sigma)^p \leq |k|^{\theta+1} (c_1/\sigma)^{\max\{1,c_2 \ln |k|\}} \\
& = \min\{|k|^{\theta+1} (c_1/\sigma), |k|^{\theta+1 + c_2 \ln (c_1/\sigma)}\}. 
\end{align*}
This expression can be made smaller than $\varepsilon/3$ uniformly in $|k| \in [1,\infty)$  by selecting $c_1$ sufficiently small: 
the first term in the minimum tends to $0$ as $c_1\rightarrow 0$ uniformly in $|k| \in [1,2]$ and the second term in the minimum 
tends to zero as $c_1 \rightarrow 0$ uniformly in $|k| \ge  2$. 
\end{proof}


\subsection{$h$-$p$-$k$-explicit stability and convergence estimates for 
Maxwell's equations}

We begin with the estimate of the consistency term $\delta_{k}$.

\begin{lemma}
\label{LemResCond}Let the assumptions in Lemma~\ref{Lemeta2alg} hold. Let
$\varepsilon$, $c_{2}>0$ be given. Then, one can choose a constant $c_{1}>0$
sufficiently small such that the resolution condition (\ref{rescond}) implies%
\[
\delta_{k}(\mathbf{e}_{h})\leq\varepsilon.
\]
\end{lemma}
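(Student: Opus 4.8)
The idea is to feed the splitting (\ref{mainsplittingerror}) of $((\mathbf{e}_h,\mathbf{v}_h))_k$ into the definition (\ref{delta_k_def}) of $\delta_k(\mathbf{e}_h)$ and to invoke the term-by-term estimates already assembled in Sections~\ref{SecConsAna} and~\ref{SecT2}. Since $T_3=0$ by Galerkin orthogonality, one has $|((\mathbf{e}_h,\mathbf{v}_h))_k|\le |T_1|+|T_2|$ for every $\mathbf{v}_h\in\mathbf{X}_h$, so it suffices to bound $T_1$ and $T_2$ each by (a constant times) $\|\mathbf{e}_h\|_{\operatorname{imp},k}\|\mathbf{v}_h\|_{\operatorname{imp},k}$ with a factor that can be made arbitrarily small under the resolution condition (\ref{rescond}).

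For $T_1$ I would combine (\ref{finalestT1}) with the trivial inequality $\|\mathbf{e}_h\|_{k,+}\le\|\mathbf{e}_h\|_{\operatorname{imp},k}$ (immediate from (\ref{eq:Himp})) and the approximation bound (\ref{eta6final}) for $\eta_6^{\operatorname{alg}}$, obtaining $|T_1|\le C\,(|k|h/p)^{1/2}\bigl(1+(|k|h/p)^{1/2}\bigr)\,\|\mathbf{e}_h\|_{\operatorname{imp},k}\|\mathbf{v}_h\|_{\operatorname{imp},k}$ with $C$ independent of $k$, $h$, $p$. For $T_2$ I would start from (\ref{estT2a}), use the stability of the Helmholtz projector $\Pi^{\operatorname{curl}}_{-k}$ (the lemma following Definition~\ref{DefHelmsplit}) in the form $\|\mathbf{v}_0\|_{\operatorname{imp},k}=\|\Pi^{\operatorname{curl}}_{-k}\mathbf{v}_h\|_{\operatorname{imp},k}\le C\|\mathbf{v}_h\|_{\operatorname{imp},k}$, and then control $\tilde\eta_2^{\operatorname{alg}}(\mathbf{X}_h)$ by Lemma~\ref{Lemeta2alg}; this gives $|T_2|\le C\,\tilde\eta_2^{\operatorname{alg}}(\mathbf{X}_h)\,\|\mathbf{e}_h\|_{\operatorname{imp},k}\|\mathbf{v}_h\|_{\operatorname{imp},k}$.

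Dividing by $\|\mathbf{e}_h\|_{\operatorname{imp},k}\|\mathbf{v}_h\|_{\operatorname{imp},k}$, taking the supremum over $\mathbf{v}_h$, and recalling the factor $2$ in (\ref{delta_k_def}) then yields $\delta_k(\mathbf{e}_h)\le 2C\bigl((|k|h/p)^{1/2}(1+(|k|h/p)^{1/2})+\tilde\eta_2^{\operatorname{alg}}(\mathbf{X}_h)\bigr)$. To conclude, given $\varepsilon>0$ and $c_2>0$ I would first apply Lemma~\ref{Lemeta2alg} with tolerance $\varepsilon/(8C)$ to get a threshold $c_1'$, and independently pick $c_1''>0$ so small that $(c_1'')^{1/2}(1+(c_1'')^{1/2})\le\varepsilon/(8C)$ and that $h+|k|h/p\le 2c_1''$ stays below the mild resolution constant $\widetilde C$ of (\ref{mildrescond}) (note $h\le|k|h/p\le c_1''$ since $|k|\ge1$, $p\ge1$). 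Setting $c_1:=\min\{c_1',c_1''\}$, the resolution condition (\ref{rescond}) forces both summands to be $\le\varepsilon/(8C)$, hence $\delta_k(\mathbf{e}_h)\le\varepsilon/2<\varepsilon$. The only point needing (minor) care is that a single $c_1$ must simultaneously control the purely algebraic $\eta_6^{\operatorname{alg}}$-contribution and the $\tilde\eta_2^{\operatorname{alg}}$-contribution; here this is handled by taking a minimum of two thresholds, and all constants remain independent of $k$, $h$, $p$. The genuinely nontrivial input is Lemma~\ref{Lemeta2alg} itself — it is there that the logarithmic side condition $p\ge c_2\ln|k|$ is consumed to absorb the algebraic factor $|k|^{\max(0,\theta+1)}$ — whereas in the present lemma we merely assemble the pieces.
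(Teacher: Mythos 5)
Your argument is correct and is essentially the paper's own proof: split $((\mathbf{e}_h,\mathbf{v}_h))_k$ via (\ref{mainsplittingerror}) with $T_3=0$, bound $T_1$ by (\ref{finalestT1}) and (\ref{eta6final}), bound $T_2$ by (\ref{estT2a}), the stability of $\Pi^{\operatorname{curl}}_{-k}$, and Lemma~\ref{Lemeta2alg}, then shrink $c_1$. One small slip in a side remark: the inequality $h\le |k|h/p$ is false unless $p\le|k|$, but this is harmless since $h\le\operatorname{diam}\Omega$ trivially and the constant $\widetilde C$ in (\ref{mildrescond}) is a free parameter of Lemma~\ref{lemma:Picurls}, so the mild resolution condition is automatic once $|k|h/p\le c_1$.
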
%

\begin{proof}
We
combine estimates (\ref{2prodcont}), (\ref{mainsplittingerror}),
(\ref{finalestT1}), (\ref{eta6final}), (\ref{estT2a})
for $\mathbf{v}_{0}:=\Pi_{-k}^{\operatorname*{curl}}\mathbf{v}_{h}$, 
Lemma~\ref{LemCurlGradProj}, and (\ref{eta2est}) in a straightforward way to obtain%
\[
\left\vert \innerprod{
\mathbf{e}_{h},\mathbf{v}_{h}%
}\right\vert \leq C\bigl(  c_{1}^{1/2}+c_{1}+\varepsilon\bigr)
\left\Vert \mathbf{e}_{h}\right\Vert _{\operatorname*{imp},k}\left\Vert
\mathbf{v}_{h}\right\Vert _{\operatorname*{imp},k}%
\]
and thus $\displaystyle\delta_{k}(\mathbf{e}_{h})\leq2C(c_{1}^{1/2}%
+c_{1}+\varepsilon).$ We may assume that $\sqrt{c_{1}}\leq\varepsilon\leq1$ so
that $\displaystyle\delta_{k}(\mathbf{e}_{h})\leq6C\varepsilon$. Since the
constant $C>0$ does not depend on $\varepsilon$, the result follows by
adjusting constants.
\end{proof}

This estimate allows us to formulate the quasi-optimality of the $hp$-FEM
Galerkin discretization and to show $h$-$p$-$k$-explicit convergence rates
under suitable regularity assumptions.

\begin{theorem}
\label{thm:quasi-optimality} Let $\Omega\subset\mathbb{R}^{3}$ be a bounded
Lipschitz domain with a simply connected, analytic boundary. Let the stability
Assumption~\ref{AssumptionAlgGrowth} be satisfied.
Let the finite element mesh with mesh size $h$ satisfy
Assumption~\ref{def:element-maps}, and let $\mathbf{X}_{h}$ be defined by as
the space of N\'{e}d\'{e}lec-type-I elements of degree $p$ (cf.\ (\ref{defXh})).

Then, for any $\mathbf{j}$, $\mathbf{g}_{T}$ satisfying
(\ref{eq:condition-on-j-g}), the variational form of Maxwell's equations
(\ref{weakformulation}) has a unique solution $\mathbf{E}$.

For any fixed $c_{2}>0$ and $\eta\in(0,1) $ one can select $c_{1}>0$
(depending only on $\Omega$ and the constants of (\ref{AssumptionAlgGrowth})
and Assumption~\ref{def:element-maps}) such that the resolution condition
(\ref{rescond}) implies that the discrete problem (\ref{discMW}) has a unique
solution $\mathbf{E}_{h}$, which satisfies the quasi-optimal error estimate%
\begin{equation}
\label{eq:quasi-optimality}\left\Vert \mathbf{E}-\mathbf{E}_{h}\right\Vert
_{\operatorname*{imp},k}\leq\frac{1+\eta}{1-\eta}\inf_{\mathbf{w}_{h}%
\in\mathbf{X}_{h}}\left\Vert \mathbf{E}-\mathbf{w}_{h}\right\Vert
_{\operatorname*{imp},k}.
\end{equation}

\end{theorem}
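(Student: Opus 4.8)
The plan is to derive Theorem~\ref{thm:quasi-optimality} as a routine consequence of the quasi-optimality mechanism already set up, namely Proposition~\ref{prop:error-est} together with the consistency-term estimate in Lemma~\ref{LemResCond}. First I would record existence and uniqueness of the continuous solution $\mathbf{E}$: this is exactly Proposition~\ref{lemma:apriori-homogeneous-rhs} (the inf-sup condition on $A_k$ on $\mathbf{X}_{\operatorname{imp}}$), applied to the data $(\mathbf{j},\mathbf{g}_T)$ satisfying (\ref{eq:condition-on-j-g}), which lie in the dual spaces for which the weak formulation (\ref{weakformulation}) makes sense. For the discrete problem, I would note that $A_k$ is a bounded sesquilinear form on the finite-dimensional space $\mathbf{X}_h$, so uniqueness of $\mathbf{E}_h$ implies existence; uniqueness in turn will follow from the a~priori bound implicit in the quasi-optimality estimate (\ref{eq:quasi-optimality}) once it is established, since (\ref{eq:quasi-optimality}) with $\mathbf{w}_h=\mathbf{E}_h$ arbitrary forces the zero solution for zero data. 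So the crux is producing (\ref{eq:quasi-optimality}).

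The key step is the following: given $\eta\in(0,1)$ and $c_2>0$, apply Lemma~\ref{LemResCond} with $\varepsilon$ chosen so that $\varepsilon<\eta$ (any such $\varepsilon$ works, say $\varepsilon=\eta/2$). Lemma~\ref{LemResCond} then furnishes a constant $c_1>0$, depending only on $\Omega$, the stability constants of (\ref{AssumptionAlgGrowth}), the mesh constants of Assumption~\ref{def:element-maps}, and $c_2$, such that the resolution condition (\ref{rescond}) implies $\delta_k(\mathbf{e}_h)\le\varepsilon<\eta<1$ for $\mathbf{e}_h:=\mathbf{E}-\mathbf{E}_h$. In particular $\delta_k(\mathbf{e}_h)<1$, so Proposition~\ref{prop:error-est} applies and yields
\[
\|\mathbf{e}_h\|_{\operatorname{imp},k}
\le\frac{1+\delta_k(\mathbf{e}_h)}{1-\delta_k(\mathbf{e}_h)}
\inf_{\mathbf{w}_h\in\mathbf{X}_h}\|\mathbf{E}-\mathbf{w}_h\|_{\operatorname{imp},k}.
\]
Since $t\mapsto(1+t)/(1-t)$ is increasing on $[0,1)$ and $\delta_k(\mathbf{e}_h)\le\varepsilon<\eta$, the prefactor is bounded by $(1+\eta)/(1-\eta)$, giving (\ref{eq:quasi-optimality}). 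One subtlety I would address explicitly: Proposition~\ref{prop:error-est} presupposes that a discrete solution $\mathbf{E}_h$ exists; to close the circle, one argues as usual that on the finite-dimensional space $\mathbf{X}_h$, the bound $\delta_k<1$ together with the reasoning in the proof of Proposition~\ref{prop:error-est} shows the only solution of the homogeneous discrete problem is zero (take $\mathbf{E}=0$, so $\mathbf{e}_h=-\mathbf{E}_h$ satisfies the Galerkin equations and the estimate forces $\mathbf{e}_h=0$), hence the square discrete system is invertible and $\mathbf{E}_h$ exists and is unique.

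The main obstacle is essentially bookkeeping rather than mathematics: one must make sure the constant $c_1$ delivered by Lemma~\ref{LemResCond} is allowed to depend only on the quantities listed in the theorem statement, and in particular not on $k$, $h$, $p$, nor on the data $\mathbf{j}$, $\mathbf{g}_T$. This is guaranteed by tracing back through Lemma~\ref{LemResCond} $\to$ Lemma~\ref{Lemeta2alg} and the $T_1$-analysis (\ref{eta6final}), whose constants come from the $hp$-approximation estimates of Lemma~\ref{lemma:Picurls} and Corollary~\ref{cor:Pinabla} (depending on the mesh constants and on fixed polynomial-degree-independent Sobolev indices), from the regularity-by-decomposition Theorem~\ref{TheoMainIt} (depending on $\Omega$), and from the stability exponent $\theta$ and constant $C_{\operatorname{stab}}$ of (\ref{AssumptionAlgGrowth}); the logarithmic side condition $p\ge c_2\ln|k|$ is precisely what absorbs the algebraic factor $|k|^{\max(0,\theta+1)}$ appearing in (\ref{eta2algestproof}), via \cite[Lem.~8.7]{melenk2018wavenumber} as used in the proof of Lemma~\ref{Lemeta2alg}. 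Having checked this dependence, the proof is complete.
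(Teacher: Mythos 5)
Your proposal is correct and follows essentially the same route as the paper: existence and uniqueness of $\mathbf{E}$ from Proposition~\ref{lemma:apriori-homogeneous-rhs}, the choice of $c_{1}$ via Lemma~\ref{LemResCond} so that $\delta_{k}(\mathbf{e}_{h})<\eta$, and then Proposition~\ref{prop:error-est} for the quasi-optimal bound. The only difference is that the paper delegates the discrete existence/uniqueness step to the cited references, whereas you spell out the standard finite-dimensional argument (homogeneous data forces $\mathbf{E}_{h}=0$); this is exactly the argument those references contain.
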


\begin{proof}
Existence and uniqueness of the continuous variational Maxwell problem follow
from Proposition~\ref{lemma:apriori-homogeneous-rhs}. From
Lemma~\ref{LemResCond} we know that $c_{1}$ can be chosen sufficiently small
such that $\delta(\mathbf{e}_{h})<\eta$. As in the proof of Theorem
\cite[Thm.~{4.15}]{MelenkSauterMaxwell_I} (which goes back to \cite[Thm.~{3.9}%
]{MelenkLoehndorf}) existence, uniqueness, and quasi-optimality follows.%
\end{proof}

The quasi-optimality result (\ref{eq:quasi-optimality}) leads to quantitative,
$k$-explicit error estimates if a $k$-explicit regularity of the solution
$\mathbf{E}$ is available. In the following corollary, we draw on the
regularity assertions of Theorem~\ref{TheoMainIt}. We point out, however, that
due to our relying on the operator $\Pi_{p}^{\operatorname{curl},s}$ and the
regularity assertion Theorem~\ref{TheoMainIt}, the regularity requirements on
the data $\mathbf{j}$, $\mathbf{g}_{T}$ are not the weakest possible ones.

\begin{corollary}
\label{cor:convergence} Let the hypotheses of
Theorem~\ref{thm:quasi-optimality} be valid. Given $\eta\in(0,1)$ and
$c_{2}>0$ let $c_{1}$ be as in Theorem~\ref{thm:quasi-optimality}. Then, under
the scale resolution condition (\ref{rescond}) the following holds: Let $m$,
$m^{\prime}\in\mathbb{N}_{0}$ and $(\mathbf{j},\mathbf{g}_{T})\in
\mathbf{H}^{m}(\Omega)\times\mathbf{H}^{m-1/2}(\Gamma)$ together with
$(\operatorname{div}\mathbf{j},\operatorname{div}_{\Gamma}\mathbf{g}_{T}%
)\in\mathbf{H}^{m^{\prime}}(\Omega)\times\mathbf{H}^{m^{\prime}-1/2}(\Gamma)$.
If $p\geq\max(m,m^{\prime})$, then
\begin{align}
\Vert\mathbf{E}-\mathbf{E}_{h}\Vert_{\operatorname{imp},k}  &  \leq
C\frac{1+\eta}{1-\eta}\Biggl\{C_{\mathbf{j},\mathbf{g},m}\left(  \frac{h}%
{p}\right)  ^{m}+C_{\mathbf{j},\mathbf{g},m^{\prime}}|k|^{-3/2}\left(
\frac{h}{p}\right)  ^{m^{\prime}+1/2}\label{QuantErrEst}\\
&  \qquad\qquad\qquad\mbox{}+|k|C_{\mathbf{j},\mathbf{g},\mathcal{A}}\left(
\left(  \frac{h}{h+\sigma}\right)  ^{p}+\left(  \frac{|k|h}{\sigma p}\right)
^{p}\right)  \Biggr\},\nonumber
\end{align}
where%
\begin{align}
C_{\mathbf{j},\mathbf{g},m}  &  :=|k|^{-1}\Vert\mathbf{j}\Vert_{\mathbf{H}%
^{m}(\Omega)}+\Vert\mathbf{g}_{T}\Vert_{\mathbf{H}^{m-1/2}(\Gamma)},\\
C_{\mathbf{j},\mathbf{g},m^{\prime}}  &  :=\Vert\operatorname{div}%
\mathbf{j}\Vert_{\mathbf{H}^{m^{\prime}}(\Omega)}+|k|\Vert\operatorname{div}%
_{\Gamma}\mathbf{g}_{T}\Vert_{\mathbf{H}^{m^{\prime}-1/2}(\Gamma)},\\
C_{\mathbf{j},\mathbf{g},\mathcal{A}}  &  :=\left\vert k\right\vert ^{ \theta-1} 
\left(  \Vert\mathbf{j}\Vert_{\mathbf{L}^{2}(\Omega)}+|k|\Vert\mathbf{g}%
_{T}\Vert_{\mathbf{H}^{-1/2}(\Gamma)}\right)  .
\end{align}

\end{corollary}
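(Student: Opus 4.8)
The plan is to combine the quasi-optimality estimate from Theorem~\ref{thm:quasi-optimality} with the regularity-by-decomposition result Theorem~\ref{TheoMainIt} applied to the solution $\mathbf{E}$ of (\ref{weakformulation}), and then bound the best-approximation error term $\inf_{\mathbf{w}_h} \|\mathbf{E}-\mathbf{w}_h\|_{\operatorname{imp},k}$ by choosing an explicit $\mathbf{w}_h$ adapted to the decomposition. Concretely, under the resolution condition (\ref{rescond}) Theorem~\ref{thm:quasi-optimality} gives
\[
\|\mathbf{E}-\mathbf{E}_h\|_{\operatorname{imp},k} \le \frac{1+\eta}{1-\eta}\inf_{\mathbf{w}_h\in\mathbf{X}_h}\|\mathbf{E}-\mathbf{w}_h\|_{\operatorname{imp},k},
\]
so the entire task reduces to estimating the right-hand side.

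First I would invoke Theorem~\ref{TheoMainIt} with the data $(\mathbf{f},\mathbf{g}_T)=(\mathbf{j},\mathbf{g}_T)$ (recall $\mathbf{j}$ plays the role of $\mathbf{f}$ in (\ref{MWEq})), obtaining the splitting $\mathbf{E}=\mathbf{z}_{H^2}+\mathbf{z}_{\mathcal A}+k^{-2}\nabla\varphi_{\mathbf{f}}+\operatorname{i}k^{-1}\nabla\varphi_{\mathbf{g}}$. Since $\operatorname{div}\mathbf{j}\in H^{m'}(\Omega)$ and $\operatorname{div}_\Gamma\mathbf{g}_T\in H^{m'-1/2}(\Gamma)$, part~(\ref{item:TheoMainIt-ii}) gives $\varphi_{\mathbf{f}},\varphi_{\mathbf{g}}\in H^{m'+1}(\Omega)$ with the stated bounds; part~(\ref{item:TheoMainIt-i}) with the ``curl-version'' (\ref{zH2estimate1}) gives $\mathbf{z}_{H^2}\in\mathbf{H}^{m+1}(\Omega,\operatorname{curl})$ with $\|\mathbf{z}_{H^2}\|_{\mathbf{H}^{m+1}(\Omega,\operatorname{curl}),k}\lesssim |k|^{-m-1}(\|\mathbf{g}_T\|_{\mathbf{H}^{m+1/2}(\Gamma)}+\|\mathbf{j}\|_{\mathbf{H}^m(\Omega)})$; and part~(\ref{item:TheoMainIt-iv}) places $\mathbf{z}_{\mathcal A}$ in an analyticity class $\mathcal{A}(CC_{\mathbf{j},\mathbf{g},\mathcal A}|k|^{?},B,\Omega)$ with constant of size $C_{\mathbf{j},\mathbf{g},\mathcal A}$ as defined in the statement. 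Then I would approximate each piece separately in the $\|\cdot\|_{\operatorname{imp},k}$ norm: use $\Pi_p^{\operatorname{curl},s}$ from Lemma~\ref{lemma:Picurls}(\ref{item:lemma:Picurls-i}) (summed over elements, as in (\ref{zH2PiCurlsEst}) and the boundary estimate below it) for $\mathbf{z}_{H^2}$, yielding the $(h/p)^m$ and $(h/p)^{m'+1/2}$-type contributions after inserting the $k$-explicit regularity bounds and $h|k|/p\le c_1$; use Lemma~\ref{lemma:Picurls}(\ref{item:lemma:Picurls-iii}) for the analytic part $\mathbf{z}_{\mathcal A}$ to get the exponentially small term $|k| C_{\mathbf{j},\mathbf{g},\mathcal A}\bigl((h/(h+\sigma))^p+(|k|h/(\sigma p))^p\bigr)$; and use $\Pi_p^{\nabla,s}$ from Corollary~\ref{cor:Pinabla} for the two gradient fields $k^{-2}\nabla\varphi_{\mathbf{f}}$ and $k^{-1}\nabla\varphi_{\mathbf{g}}$, which — after multiplying the $H^{m'+1}$-regularity bounds by the appropriate powers of $|k|$ and accounting for the $|k|$-weights in $\|\cdot\|_{\operatorname{imp},k}$ — produce further $(h/p)^{m'+1/2}$ contributions that can be absorbed into $C_{\mathbf{j},\mathbf{g},m'}|k|^{-3/2}(h/p)^{m'+1/2}$. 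Summing all these by the triangle inequality gives (\ref{QuantErrEst}).

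The existence and uniqueness of $\mathbf{E}$ and $\mathbf{E}_h$ are already part of Theorem~\ref{thm:quasi-optimality}, so nothing new is needed there. The main bookkeeping obstacle is the careful tracking of powers of $|k|$ and the mesh ratio $h/p$ across the three pieces — in particular verifying that the $\varphi_{\mathbf{f}}$-contribution (which carries $\|\operatorname{div}\mathbf{j}\|$) and the $\varphi_{\mathbf{g}}$-contribution combine into the single constant $C_{\mathbf{j},\mathbf{g},m'}$ with the claimed $|k|^{-3/2}(h/p)^{m'+1/2}$ scaling, and that the analytic-part constant matches $C_{\mathbf{j},\mathbf{g},\mathcal A}$ including the $|k|^{-2+\max\{0,\theta+1\}}$ factor coming from (\ref{zaregsplitest}) together with the extra $|k|$ produced by Lemma~\ref{lemma:Picurls}(\ref{item:lemma:Picurls-iii}). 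The condition $p\ge\max(m,m'+1)$ is exactly what is needed so that Lemma~\ref{lemma:Picurls} and Corollary~\ref{cor:Pinabla} apply (their hypotheses require $p\ge m-1$ and $p\ge m-2$ respectively, with $m$ replaced by $m+1$ for $\mathbf{z}_{H^2}$ and $m'+1$ for the potentials). No genuinely hard estimate is involved — every ingredient is already in place — so the proof is essentially an assembly of Theorem~\ref{thm:quasi-optimality}, Theorem~\ref{TheoMainIt}, Lemma~\ref{lemma:Picurls}, and Corollary~\ref{cor:Pinabla} with the resolution condition used to tidy up lower-order powers of $h|k|/p$.
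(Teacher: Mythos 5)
Your proposal follows the paper's proof essentially verbatim: quasi-optimality from Theorem~\ref{thm:quasi-optimality}, the decomposition of Theorem~\ref{TheoMainIt} applied to $(\mathbf{j},\mathbf{g}_T)$, piecewise approximation of the four components via $\Pi_p^{\operatorname{curl},s}$ (Lemma~\ref{lemma:Picurls}) and $\Pi_p^{\nabla,s}$ (Corollary~\ref{cor:Pinabla}), and assembly by the triangle inequality using $|k|h/p\le c_1$ to absorb lower-order terms. One small correction: for $\mathbf{z}_{H^2}$ you should invoke (\ref{zH2estimate0}) rather than the curl-version (\ref{zH2estimate1}), since the corollary only assumes $\mathbf{g}_T\in\mathbf{H}^{m-1/2}(\Gamma)$; the bound $\Vert\mathbf{E}_{H^2}\Vert_{\mathbf{H}^{m+1}(\Omega)}\le C\,C_{\mathbf{j},\mathbf{g},m}$ from (\ref{zH2estimate0}) combined with Lemma~\ref{lemma:Picurls}(\ref{item:lemma:Picurls-i}) already controls the full $\|\cdot\|_{\operatorname{imp},k}$ error by $C\,C_{\mathbf{j},\mathbf{g},m}(h/p)^m$, so the stronger trace regularity is not needed.
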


\begin{proof}
For the error estimate (\ref{QuantErrEst}), we employ the solution
decomposition provided by Thm.~\ref{TheoMainIt}:
\[
\mathbf{E}=\mathbf{E}_{H^{2}}+\mathbf{E}_{\mathcal{A}}+k^{-2}\nabla
\varphi_{\mathbf{j}}+\operatorname{i}k^{-1}\nabla\varphi_{\mathbf{g}}%
\]
with
\begin{align*}
\Vert\mathbf{E}_{H^{2}}\Vert_{\mathbf{H}^{m+1}(\Omega)} &  \leq CC_{\mathbf{j}%
,\mathbf{g},m},\\
\Vert\varphi_{\mathbf{j}}\Vert_{H^{m^{\prime}+2}(\Omega)}+|k|\Vert
\varphi_{\mathbf{g}}\Vert_{H^{m^{\prime}+2}(\Omega)} &  \leq CC_{\mathbf{j}%
,\mathbf{g},m^{\prime}},\\
\mathbf{E}_{\mathcal{A}} &  \in\mathcal{A}(CC_{\mathbf{j},\mathbf{g}%
,\mathcal{A}},B,\Omega)
\end{align*}
for $k$-independent constants $C$, $B$. With the operators $\Pi_{p}^{\nabla
,p}$ and $\Pi_{p}^{\operatorname{curl},s}$ of Corollary~\ref{cor:Pinabla} and
Lemma~\ref{lemma:Picurls} we get
\begin{align*}
 \Vert\mathbf{E}_{H^{2}}-\Pi_{p}^{\operatorname{curl},s}\mathbf{E}_{H^{2}}%
\Vert_{\operatorname{imp},k} 
& \! \leq \! CC_{\mathbf{j},\mathbf{g},m}\!\left[\!
\left(  \frac{h}{p}\right)  ^{m}\!\!+|k|\left(  \frac{h}{p}\right)  ^{m+1}%
\!\!+|k|^{1/2}\left(  \frac{h}{p}\right)  ^{m+1/2}\right]  \\
&  \leq CC_{\mathbf{j},\mathbf{g},m}\left(  \frac{h}{p}\right)  ^{m},\\
\left\vert k\right\vert ^{-2}\Vert\nabla\varphi_{\mathbf{j}}-\nabla\Pi
_{p}^{\nabla,s}\varphi_{\mathbf{j}}\Vert_{\operatorname{imp},k} 
&\!  \leq\!
CC_{\mathbf{j},\mathbf{g},m^{\prime}}|k| ^{-2}\!\left[\!
|k|\left(  \frac{h}{p}\right)  ^{m^{\prime}+1}+|k|^{1/2}\left(  \frac{h}%
{p}\right)  ^{m^{\prime}+1/2}\right]  \\
&  \leq CC_{\mathbf{j},\mathbf{g},m^{\prime}}\left\vert k\right\vert
^{-2}|k|^{1/2}\left(  \frac{h}{p}\right)  ^{m^{\prime}+1/2},\\
|k|^{-1}\Vert\nabla\varphi_{\mathbf{g}}-\nabla\Pi
_{p}^{\nabla,s}\varphi_{\mathbf{g}}\Vert_{\operatorname{imp},k} &  \leq
CC_{\mathbf{j},\mathbf{g},m^{\prime}}\left\vert k\right\vert ^{-2}%
|k|^{1/2}\left(  \frac{h}{p}\right)  ^{m^{\prime}+1/2},\\
\Vert\mathbf{E}_{\mathcal{A}}-\Pi_{p}^{\operatorname{curl},s}\mathbf{E}%
_{\mathcal{A}}\Vert_{\operatorname{imp},k} &  \leq C|k|C_{\mathbf{j}%
,\mathbf{g},\mathcal{A}}\left(  \left(  \frac{h}{h+\sigma}\right)
^{p}+\left(  \frac{|k|h}{\sigma p}\right)  ^{p}\right)  .
\end{align*}
\end{proof}

\section{Numerical results}

\label{sec:numerics}
\begin{figure}[ptb]
\includegraphics[width=0.3\textwidth]{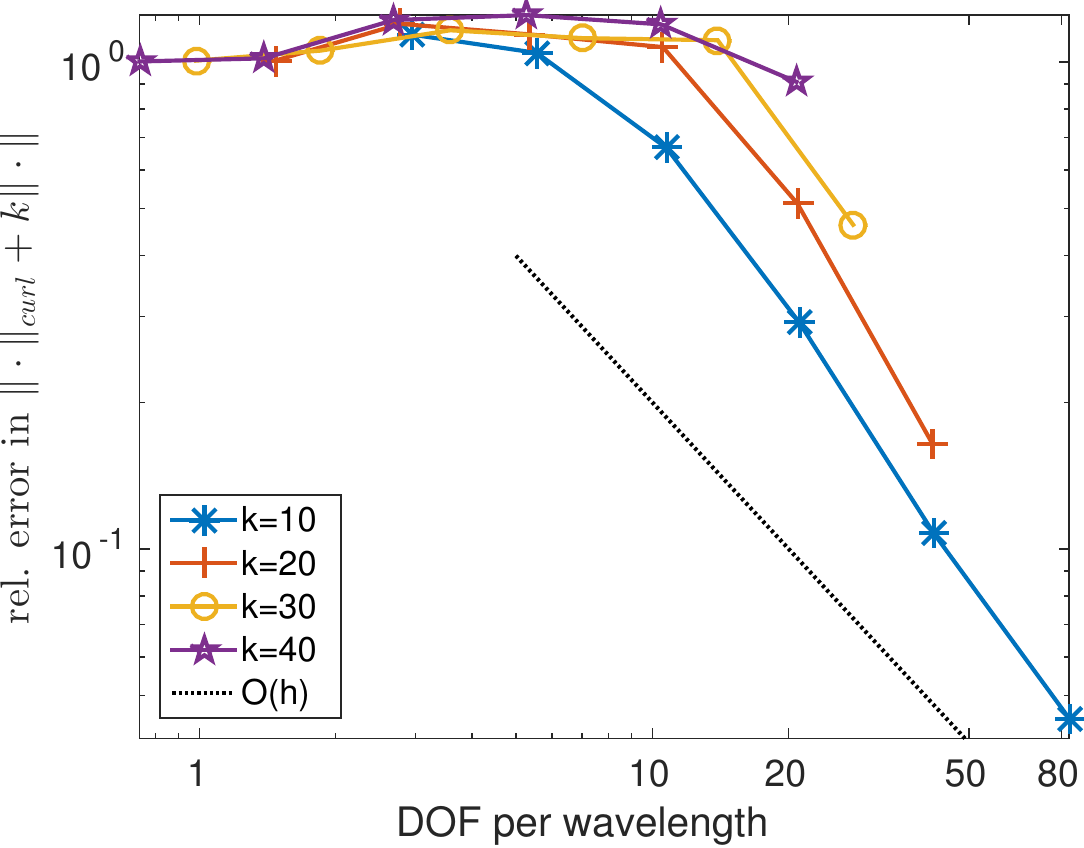}
\includegraphics[width=0.3\textwidth]{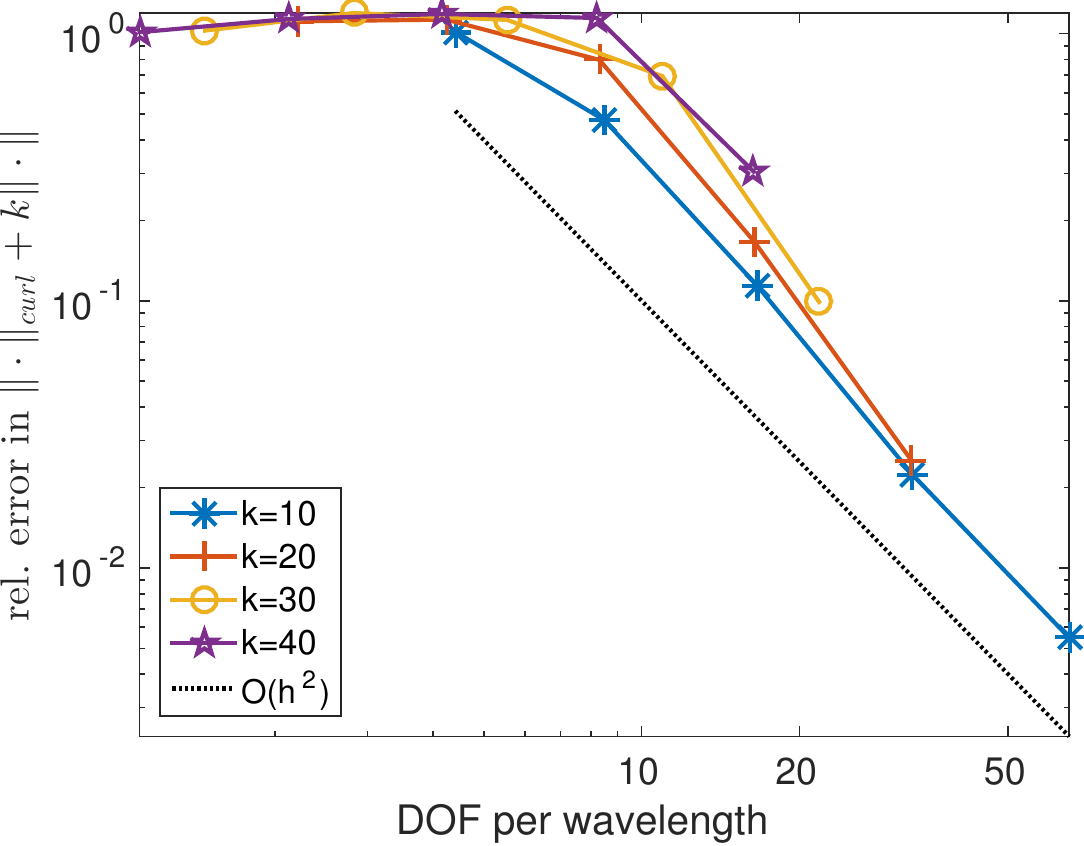}
\includegraphics[width=0.3\textwidth]{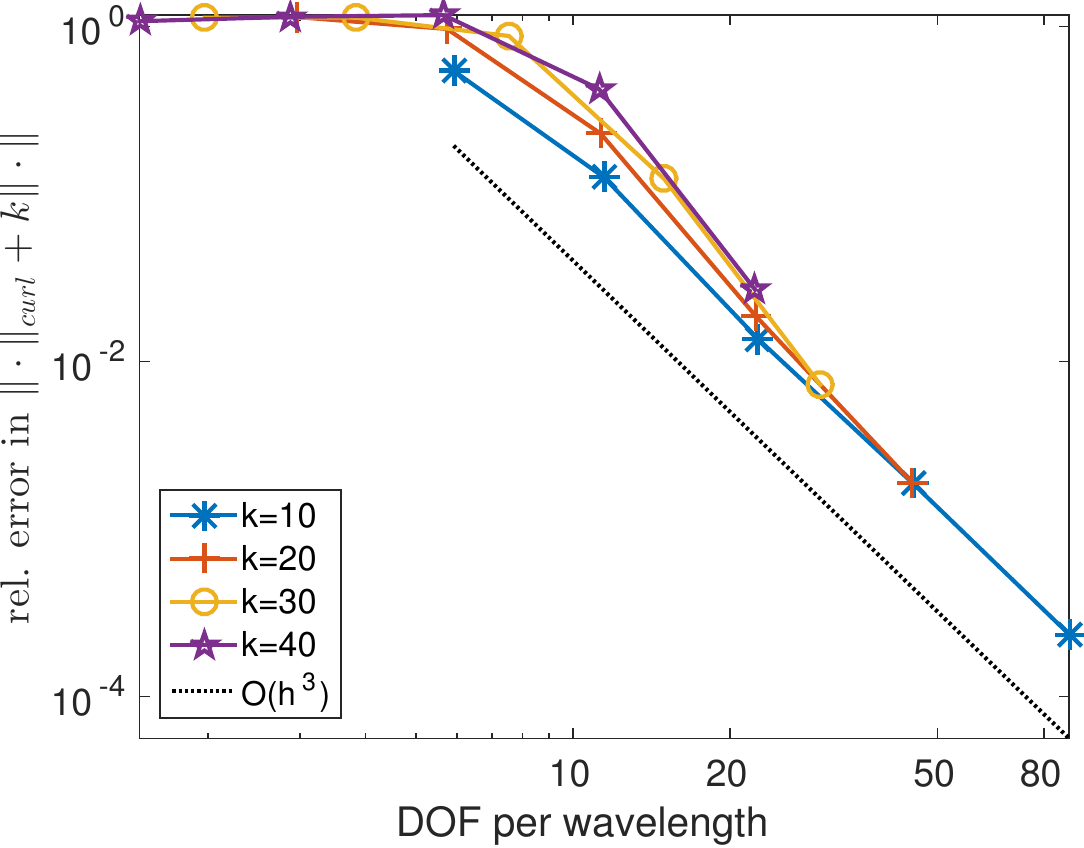}
\caption{
$\Omega=(-1,1)^{3}$, smooth solution; left to right: $p\in\{1,2,3\}$
        }%
\label{fig:cube}%
\end{figure}
\begin{figure}[ptb]
\includegraphics[width=0.3\textwidth]{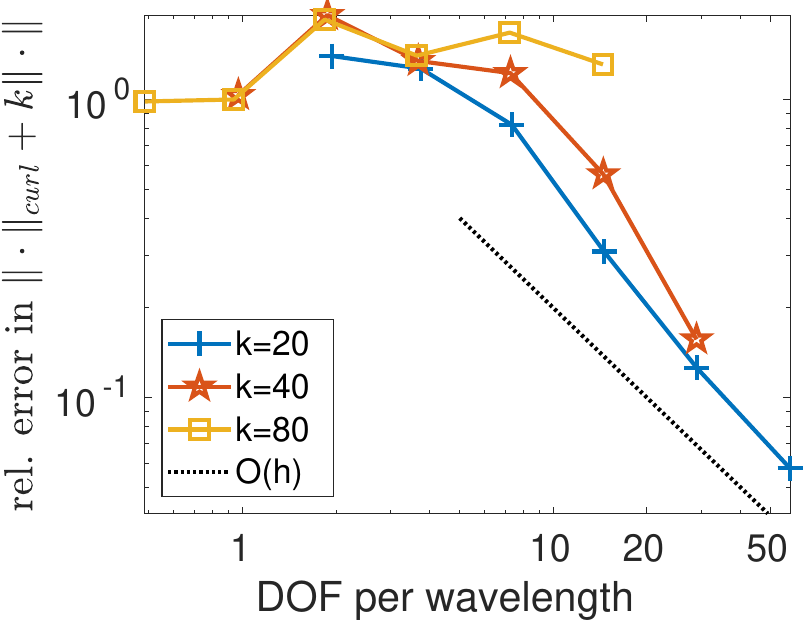}
\includegraphics[width=0.3\textwidth]{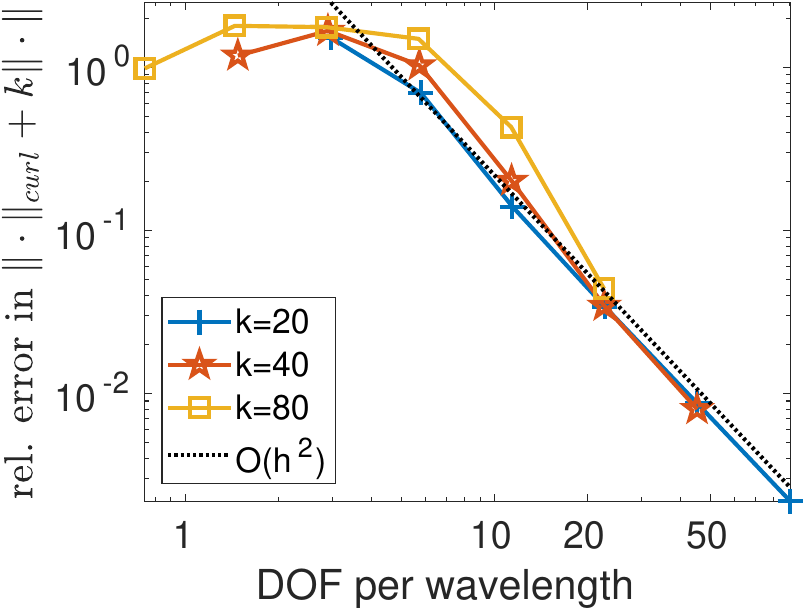}
\includegraphics[width=0.3\textwidth]{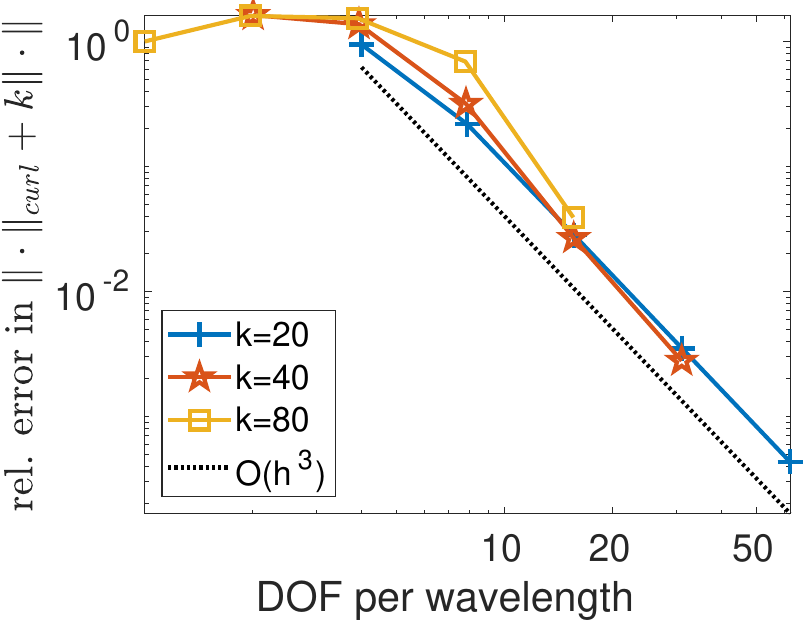}
\caption{
$\Omega= (-1,1)^{3}\setminus[-1/2,1/2]^{3}$, smooth solution; left to right:
$p \in \{1,2,3\}$}%
\label{fig:cube-hole}%
\end{figure}
We illustrate the theoretical findings of Theorem~\ref{thm:quasi-optimality}
and Corollary~\ref{cor:convergence} by two numerical experiments. All
computations are performed with NGSolve, \cite{schoeberlNGSOLVE,schoeberl97}
using N\'{e}d\'{e}lec type II elements, i.e., full polynomial spaces.
\begin{remark} 
While the analysis of the present paper is peformed in detail for 
N\'{e}d\'{e}lec type I elements, it can be extended to 
N\'{e}d\'{e}lec type II elements. Key is the observation that 
commuting diagram operators $\hatPigradcom$ and $\hatPicurlcom$ analogous
to the ones used in Sections~\ref{sec:discretization}, \ref{SecStabConv} for 
type I elements also exist for type II elements. This is discussed in 
\cite[Sec.~{4.8}]{rojikDiss}. 
%
\eremk
\end{remark}

We show in Figs.~\ref{fig:cube}, \ref{fig:cube-hole} the relative error in the
norm $\Vert\operatorname{curl}\cdot\Vert_{\mathbf{L}^{2}(\Omega)}+\left\vert
k\right\vert \Vert\cdot\Vert_{\mathbf{L}^{2}(\Omega)}\sim\Vert\cdot
\Vert_{\mathbf{H}(\operatorname{curl},\Omega),\left\vert k\right\vert }$
versus the number of degrees of freedom per wavelength
\[
N_{\lambda}=\frac{2\pi\operatorname{DOF}^{1/3}}{|k||\Omega|^{1/3}},
\]
where $\operatorname{DOF}$ stands for the dimension of the ansatz space.

\begin{example}
\label{example:cube} We consider $\Omega=(-1,1)^{3}$ and impose the right-hand
side and the impedance boundary conditions in such a way that the exact solution is
$\mathbf{E}\left(  \mathbf{x}\right)  =\operatorname{curl}\sin(kx_{1}%
)(1,1,1)^{\top}$. Fig.~\ref{fig:cube} shows the performance for the choices
$k\in\{10,20,30,40\}$ and $p\in\{1,2,3\}$ as the mesh is refined
quasi-uniformly.
The final problem sizes were $\operatorname{DOF}=18,609,324$ for $p=1$,
$\operatorname{DOF}=9,017,452$ for $p=2$, and $\operatorname{DOF}=23,052,940$
for $p=3$.

We observe the expected asymptotic $O(h^{p})$ convergence. We also observe
that the onset of asymptotic quasi-optimal convergence is reached for smaller
values of $N_{\lambda}$ for higher order methods. This is expected in view of
Theorem~\ref{thm:quasi-optimality}, although the present setting of a
piecewise analytic geometry is not covered by
Theorem~\ref{thm:quasi-optimality}. 
\eremk
\end{example}

\begin{example}
\label{example:cube_hole} We consider $\Omega=(-1,1)^{3}\setminus
\lbrack-1/2,1/2]^{3}$ and Maxwell's equations with impedance boundary
conditions on $\partial(-1,1)^{3}$ and perfectly conducting boundary
conditions on the inner boundary $\partial(-1/2,1/2)^{3}$. We prescribe an exact solution 
$\mathbf{E}(  \mathbf{x})  =k\cos(kx_{1})(x_{1}^{2}-1/4)(x_{2}%
^{2}-1/4)(x_{3}^{2}-1/4)(0,-1,1)^{\top}$. Fig.~\ref{fig:cube-hole} shows the
performance for the choices $k\in\{20,40,80\}$ and $p\in\{1,2,3\}$ as the mesh
is refined quasi-uniformly.
The final problem sizes were $\operatorname{DOF}=43,598,374$ for $p=1$,
$\operatorname{DOF}=168,035,046$ for $p=2$, and $\operatorname{DOF}%
=54,063,558$ for $p=3$.

We observe the expected asymptotic $O(h^{p})$ convergence. We also observe
that the onset of asymptotic quasi-optimal convergence is reached for smaller
values of $N_{\lambda}$ for higher order methods. 
\eremk
\end{example}

\ifarxiv
\appendix

\color{black}%

\section{Analytic regularity with bounds explicit in the wavenumber
(\cite{nicaise-tomezyk19,tomezyk19} revisited)\label{AppAnalyticity}}

In this appendix, we reproduce \cite[Appendix A]{nicaise-tomezyk19} and add
parts (marked in \color{red} red color \color{black} that are required to
account for the addition of right-hand sides in Theorem~\ref{ThmAnaRegSum}. We
will also follow the numbering in \cite[Appendix A]{nicaise-tomezyk19} and
indicate insertions by \textquotedblleft+1/2\textquotedblright,
\textquotedblleft+1/3\textquotedblright\ etc.

To be consistent with \cite{nicaise-tomezyk19}, we follow in this appendix the
\textquotedblleft French\textquotedblright\ convention and denote
\[
\mathbb{N}=\{0,1,2,\ldots\}\quad\mbox{ and }\quad\mathbb{N}^{\ast
}=\{1,2,\ldots\}.
\]
Furthermore, we assume as in \cite{nicaise-tomezyk19} that $k\geq k_{0}>0$,
while the estimates for negative $k\leq-k_{0}<0$ follow by replacing $k$ by
$\left\vert k\right\vert $ in
\color{black}%
all estimates.
\color{red}%
{For a bounded open set $\omega$, the seminorms $|\cdot|_{p,\omega}$ are
defined by \setcounter{equation}{-1}
\begin{equation}
|u|_{p,\omega}^{2}=\sum_{\alpha\in\mathbb{N}^{n}\colon|\alpha|=p}\frac
{|\alpha|!}{\alpha!}\Vert\partial^{\alpha}u\Vert_{L^{2}(\omega)}^{2}%
,\qquad\Vert u\Vert_{p,\omega}^{2}=\sum_{l=0}^{p}|u|_{l,\omega}^{2}.
\label{eq:A0}%
\end{equation}
}
%

\color{black}%
We assume that ${\mathbf{f}}$ is an analytic function satisfying%
\begin{subequations}
\label{eq:A1}
\end{subequations}%
%

\begin{equation}
|{\mathbf{f}}|_{p,\Omega}\leq C_{\mathbf{f}}\lambda_{\mathbf{f}}^{p}%
\max(p,k)^{p}\quad\forall p\in{\mathbb{N}}. \tag{%
\ref{eq:A1}%
}\label{eq:A10}%
\end{equation}
{%
\color{red}
Additionally, given a tubular neighborhood ${\mathcal{T}}$ of $\partial\Omega$
and (analytic) functions ${\mathbf{G}}_{D}:{\mathcal{T}}\rightarrow
{\mathbb{C}}^{N_{1}}$ ${\mathbf{G}}_{B}:{\mathcal{T}}\rightarrow{\mathbb{C}%
}^{N_{0}}$ that satisfy }
{%
\color{red}%
\begin{align}
\Vert{\mathbf{G}}_{D}\Vert_{p,{\mathcal{T}}}  &  \leq C_{{\mathbf{g}}_{D}%
}\lambda_{{\mathbf{g}}_{D}}^{p}\max(p,k)^{p}\qquad\forall p\in\mathbb{N},\tag{%
\ref{eq:A1}%
+1/3}\label{eq:A1a}\\
\Vert{\mathbf{G}}_{B}\Vert_{p,{\mathcal{T}}}  &  \leq C_{{\mathbf{g}}_{B}%
}\lambda_{{\mathbf{g}}_{B}}^{p}\max(p,k)^{p}\qquad\forall p\in\mathbb{N}.
\tag{%
\ref{eq:A1}%
+2/3}\label{eq:A1b}%
\end{align}
{We consider the two traces ${\mathbf{g}}_{D}:=\gamma\mathbf{G}_{D}${ and
}${\mathbf{g}}_{B}:=\gamma\mathbf{G}_{B}${. } Then, we can formulate in
analogy to \cite[Thm.~{A.1}]{nicaise-tomezyk19}} }

\begin{theorem}
\label{thm:A1}Let $\Omega\subset\mathbb{R}^{n}$, $n\leq3$, be a bounded domain
with an analytic boundary $\partial\Omega$, and let $(L,D,B)$ be an elliptic
system in the sense of {
\color{red}%
{ \cite[Def.~{2.2.31}, Def.~{2.2.33}]{CoDaNi} }}
with $L$ (resp.~D and B) an $N\times N$ (resp.~$N_{0}\times N$ and $N_{{%
\color{red}%
{1}}}\times N$ with $N_{0}$, $N_{1}\in{
\color{red}%
{\mathbb{N}} }
$ such that $N_{0}+N_{1}=N$) system of differential operators of order $2$
(resp.~$0$ and $1$) with analytic coefficients, $N\in\mathbb{N}^{\ast}$ and
$k>1$. Let ${\mathbf{f}}$, {
\color{red}%
{ ${\mathbf{g}}_{D}$, ${\mathbf{g}}_{B}$} }
be analytic functions verifying (\ref{eq:A10}) {
\color{red}%
{ and (\ref{eq:A1a}), (\ref{eq:A1b})} }
and ${\mathbf{G}}$ a matrix with analytic coefficients {
\color{red}%
{(in a tubular neighborhood of $\partial\Omega$)} }
. If ${\mathbf{u}}{
\color{red}%
{\in{\mathbf{H}}^{2}(\Omega)} }
$ is a solution of
\begin{subequations}
\label{eq:A2}
\end{subequations}%
\begin{equation}
\left\{
\begin{array}
[c]{ccll}%
L({\mathbf{u}}) & = & {\mathbf{f}}+k^{2}{\mathbf{u}} & \text{ in $\Omega$},\\
D({\mathbf{u}}) & = & {
\color{red}%
\mathbf{g}{_{D}}} & \text{ on $\partial\Omega$},\\
B({\mathbf{u}}) & = & k{\mathbf{G}}{\mathbf{u}}{
\color{red}%
{+{\mathbf{g}}_{B}}} & \text{ on $\partial\Omega$,}%
\end{array}
\right.  \tag{%
\ref{eq:A2}%
}\label{eq:A2neu}%
\end{equation}
then we have
\[
|{\mathbf{u}}|_{p,\Omega}\leq C_{\mathbf{u}}K^{p}\max(p,k)^{p},\quad\forall
p\in{\mathbb{N}},p\geq2
\]
with $C_{\mathbf{u}}=\left(  {
\color{red}%
{k^{-2}}}
C_{\mathbf{f}}+{
\color{red}%
{C_{{\mathbf{g}}_{D}}+k^{-1}C_{{\mathbf{g}}_{B}}}
+}\Vert{\mathbf{u}}\Vert_{\Omega}+k^{-1}\Vert{\mathbf{u}}\Vert_{1,\Omega
}\right)  $.
\end{theorem}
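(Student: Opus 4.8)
The plan is to follow the classical analytic-regularity machinery for elliptic systems with analytic data, as developed in \cite{CoDaNi}, and to track the dependence on $k$ explicitly through the bootstrap. Since $(L,D,B)$ is elliptic in the sense of \cite{CoDaNi}, the starting point is the a priori estimate in nested domains: for concentric balls (or half-balls near the boundary) $B_{\rho'}\subset B_\rho$ one has a Caccioppoli-type shift estimate controlling $|{\mathbf u}|_{p+2,B_{\rho'}}$ in terms of the right-hand side data on $B_\rho$ plus lower-order derivatives of ${\mathbf u}$ on $B_\rho$, with a constant that scales like $(\rho-\rho')^{-2}$ times a fixed elliptic constant. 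The key point is that the "new" inhomogeneous term $k^2{\mathbf u}$ in the volume equation and the $k{\mathbf G}{\mathbf u}$ term in the boundary condition $B$ are genuinely lower order, so they can be absorbed by the induction hypothesis: each application of $L$ gains two derivatives but only pays a factor $k^2$, i.e. $k^2|{\mathbf u}|_{p} \le k^2 C_{\mathbf u} K^p \max(p,k)^p$, which is consistent with the target $C_{\mathbf u} K^{p+2}\max(p+2,k)^{p+2}$ provided $K$ is chosen large enough (since $\max(p+2,k)^2 \ge k^2$). The same observation applies to the boundary term $k{\mathbf G}{\mathbf u}$ after using a trace/interpolation estimate.

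First I would set up the induction in the Besov/seminorm framework: the right object to control is, as in \cite{nicaise-tomezyk19} and Appendix~\ref{AppAnalyticity}, a weighted family of seminorms $\rho_\ast^2\llbracket\cdot\rrbracket_{p,q,B_R^+}$ that measure high-order derivatives with the correct powers of the distance to the boundary. The base case $p=2$ is exactly the hypothesis ${\mathbf u}\in{\mathbf H}^2(\Omega)$ together with the $H^2$-regularity estimate for the elliptic system (which gives $\|{\mathbf u}\|_{2,\Omega}\le C(k^{-2}C_{\mathbf f} + C_{{\mathbf g}_D} + k^{-1}C_{{\mathbf g}_B} + \|{\mathbf u}\|_\Omega + k^{-1}\|{\mathbf u}\|_{1,\Omega})$, i.e. $\|{\mathbf u}\|_{2,\Omega}\le C C_{\mathbf u}$ after also noting that the $k$-weighted norms of ${\mathbf f}$, ${\mathbf g}_D$, ${\mathbf g}_B$ at low order are bounded by $C_{\mathbf f}$, $C_{{\mathbf g}_D}$, $C_{{\mathbf g}_B}$). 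Then I would run the induction on $p$: apply the interior and boundary analytic shift estimates of \cite{CoDaNi} on a dyadic family of shrinking balls, differentiate the equations $p-2$ times, and feed in the bounds on $\partial^\alpha{\mathbf f}$, $\partial^\alpha{\mathbf g}_D$, $\partial^\alpha{\mathbf g}_B$ from \eqref{eq:A10}, \eqref{eq:A1a}, \eqref{eq:A1b}, together with the induction hypothesis for the lower-order terms $k^2{\mathbf u}$ and $k{\mathbf G}{\mathbf u}$. The analytic coefficients of $L$, $D$, $B$ and of ${\mathbf G}$ contribute their own geometric factors $\gamma^j j!$, which combine with the combinatorial $\binom{p}{j}$ factors in the Leibniz rule exactly as in the scalar analytic-regularity proofs; this is where $K$ is fixed, depending only on $\Omega$, on the neighborhood, and on $\lambda_{\mathbf f}$, $\lambda_{{\mathbf g}_D}$, $\lambda_{{\mathbf g}_B}$.

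The interior estimate handles derivatives away from $\partial\Omega$; near the boundary one flattens $\partial\Omega$ using analytic local charts (available since $\partial\Omega$ is analytic), which preserves ellipticity and only perturbs the geometric constants, and then uses the half-ball shift estimates. The boundary conditions $D({\mathbf u})={\mathbf g}_D$ and $B({\mathbf u})=k{\mathbf G}{\mathbf u}+{\mathbf g}_B$ must be differentiated tangentially and the normal derivatives recovered from the equation; the term $k{\mathbf G}{\mathbf u}$ requires one trace inequality of the form $\|v\|_{H^{s}(\partial\Omega)}\lesssim \|v\|_{H^{s+1/2}(\Omega)}$, and the resulting half-integer loss is exactly absorbed by the weights $\rho_\ast^{3/2}\llbracket\cdot\rrbracket_{p,\frac12,\Gamma_R}$ and $\rho_\ast^{1/2}\llbracket\cdot\rrbracket_{p,\frac32,\Gamma_R}$, again mirroring \cite{nicaise-tomezyk19}.

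I expect the main obstacle to be the careful, fully $k$-explicit bookkeeping near the boundary: one must verify that every trace estimate, every tangential-derivative commutator, and every use of the boundary conditions produces at worst a factor $k$ (never $k^2$ per derivative), so that the chain $C_{\mathbf u}\to C_{\mathbf u}K^p\max(p,k)^p$ closes with a \emph{single} geometric constant $K$ and a $p$-independent $C_{\mathbf u}$. The scalar Helmholtz template from \cite{mm_stas_helm2,MelenkSauterMathComp} and \cite{nicaise-tomezyk19} shows this is possible, but for the $6\times 6$ system \eqref{MwEllSys} with the coupling matrix ${\mathbf G}$ and the two separate boundary operators $D$, $B$ of different orders, the Leibniz-rule estimates and the choice of the weights must be matched precisely; this is the step where \cite{CoDaNi} is invoked for the abstract elliptic shift theorem, and where the "+1/3", "+2/3" insertions of Appendix~\ref{AppAnalyticity} carry the contributions of ${\mathbf g}_D$ and ${\mathbf g}_B$. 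Once the closed induction is in place, the statement for $p\ge 2$ follows, and the extension to $p=0,1$ is the trivial observation (noted after Corollary~\ref{CorA.2}) that $\|{\mathbf u}\|_{1,\Omega}\le C_{\mathbf u}$ and $\|{\mathbf u}\|_{0,\Omega}\le k C_{\mathbf u}$.
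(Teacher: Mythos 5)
Your proposal is correct and follows essentially the same route as the paper: a $k$-explicit induction in the weighted seminorms $\llbracket\cdot\rrbracket_{p,q,B_R^+}$, $\rho_\ast^{2}\llbracket\cdot\rrbracket_{p,q,B_R^+}$, $\rho_\ast^{3/2}\llbracket\cdot\rrbracket_{p,\frac12,\Gamma_R}$, $\rho_\ast^{1/2}\llbracket\cdot\rrbracket_{p,\frac32,\Gamma_R}$ built on the local analytic shift estimates of \cite{CoDaNi} (interior balls and analytically flattened half-balls), absorbing the lower-order terms $k^{2}\mathbf{u}$ and $k\mathbf{G}\mathbf{u}$ through the induction hypothesis and a trace estimate, fixing a single $K$ by geometric-series arguments, and concluding with a covering/chart argument --- which is precisely the content of Lemmas~\ref{lemma:A6}--\ref{lemma:A8}, Lemma~\ref{lemma:A10} and the final covering step of Appendix~\ref{AppAnalyticity}. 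The only slip is in your base case: the $H^{2}$ a priori estimate gives $\Vert\mathbf{u}\Vert_{2,\Omega}\leq Ck^{2}C_{\mathbf{u}}$ rather than $CC_{\mathbf{u}}$, which is harmless because the induction target at $p=2$ is $C_{\mathbf{u}}K^{2}\max(2,k)^{2}$.
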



\begin{extraremark}{\Alph{section}.1+1/3}
The point of Theorem~\ref{thm:A1} is the tracking of the $k$-dependence. The key
ingredient of the proof below is the appeal to \cite[Prop.~{2.6.6}]{CoDaNi}
and \cite[Prop.~{2.6.7}]{CoDaNi}, which in turn rely on

\begin{enumerate}
[(a)]

\item local $H^{2}$-estimates of the form \cite[(2.47)]{CoDaNi}. These local
$H^{2}$-estimates result from the ellipticity of the operator $L$ and the
covering condition of the boundary conditions (cf.\ \cite[Cor.~{2.2.16}%
]{CoDaNi});

\item the condition \cite[(2.50)]{CoDaNi}. Condition \cite[(2.50)]{CoDaNi} is
satisfied by the ellipticity of the operator $L$ (see footnote $3$ in
\cite[p.~97]{CoDaNi}). \hbox{}\hfill%
\endproof

\end{enumerate}
\end{extraremark}

\begin{extraexample}{\Alph{section}.1+2/3}
We transform the problem
\begin{align}
\operatorname*{curl}\operatorname*{curl}{\mathbf{E}}-k^{2}{\mathbf{E}}  &
={\mathbf{f}}\quad\mbox{ in $\Omega$},\tag{%
\ref{eq:A2}%
+1/5}\label{EMWAPPa}\\
\gamma_{T}\operatorname*{curl}{\mathbf{E}}-\operatorname{i}k{\mathbf{E}}_{T}
&  ={\mathbf{g}}_{T}\quad\mbox{ on $\partial\Omega$}, \tag{%
\ref{eq:A2}%
+2/5}\label{EMWAPPb}%
\end{align}
for sufficiently smooth ${\mathbf{f}}$, ${\mathbf{g}}_{T}$ to the form
considered in Theorem~\ref{thm:A1}. We proceed as in \cite{nicaise-tomezyk19}
and write the Maxwell equations as a first order system for the electric field
$\mathbf{E}$ and the magnetic field $\mathbf{H}:=-\frac{\operatorname*{i}}%
{k}\operatorname*{curl}\mathbf{E}$ and set $\mathbf{u}=\left(  \mathbf{E}%
,\mathbf{H}\right)  $. The function ${\mathbf u}$ solves the elliptic system%
\begin{equation}%
\begin{array}
[c]{cclc}%
L\left(  \mathbf{u}\right)  := & \left(
\begin{array}
[c]{c}%
\operatorname*{curl}\operatorname*{curl}\mathbf{E}-\nabla\operatorname*{div}%
\mathbf{E}\\
\operatorname*{curl}\operatorname*{curl}\mathbf{H}-\nabla\operatorname*{div}%
\mathbf{H}%
\end{array}
\right)  & =\mathbf{F}+k^{2}\mathbf{u} & \text{in }\Omega,\\
D\left(  \mathbf{u}\right)  := & \mathbf{H}\times\mathbf{n}-\mathbf{E}_{T} &
=-\frac{\operatorname*{i}}{k}\mathbf{g}_{T} & \text{on }\Gamma,\\
B\left(  \mathbf{u}\right)  := & \left(
\begin{array}
[c]{c}%
\operatorname*{div}\mathbf{E}\\
\operatorname*{div}\mathbf{H}\\
\gamma_{T}\operatorname*{curl}\mathbf{H}+\left(  \operatorname*{curl}%
\mathbf{E}\right)  _{T}%
\end{array}
\right)  & =kG\mathbf{u}+\mathbf{G}_{\Gamma} & \text{on }\Gamma
\end{array}
\tag{%
\ref{eq:A2}%
+3/5}\label{eq:A2a}%
\end{equation}
for
\begin{equation}
\mathbf{F}:=\left(
\begin{array}
[c]{c}%
\mathbf{f}+\frac{1}{k^{2}}\nabla\operatorname*{div}\mathbf{f}\\
-\frac{\operatorname*{i}}{k}\operatorname*{curl}\mathbf{f}%
\end{array}
\right)  ,\text{\quad}G\mathbf{u}:=\left(
\begin{array}
[c]{c}%
0\\
0\\
\operatorname*{i}\left(  \mathbf{H}_{T}-\gamma_{T}\mathbf{E}\right)
\end{array}
\right)  \text{,\quad}\mathbf{G}_{\Gamma}:=\left(
\begin{array}
[c]{c}%
-\frac{1}{k^{2}}\left.  \left(  \operatorname*{div}\mathbf{f}\right)
\right\vert _{\Gamma}\\
0\\
-\frac{\operatorname*{i}}{k}\gamma_{T}\mathbf{f}%
\end{array}
\right)  . \tag{%
\ref{eq:A2}%
+4/5}\label{eq:A2aa}%
\end{equation}

Elliptic systems of the form (\ref{eq:A2a}) are considered in \cite[Thm.~{A1}%
]{nicaise-tomezyk19} for $\mathbf{g}_{T}=\mathbf{G}_{\Gamma}=\mathbf{0}$.

We now show that (\ref{eq:A2a}) is indeed an elliptic system in the sense of
\cite[Def.~{2.2.31}]{CoDaNi}. The operator $L$ is simply the block Laplacian
since $\operatorname{curl}\operatorname{curl}=\nabla\operatorname{div}-\Delta$
and clearly elliptic. To see that the boundary conditions are \emph{covering}
at a boundary point ${\mathbf{x}}_{0}$, we first consider the system
(\ref{eq:A2a}) in a half space $\{x_{3}>0\}$ with boundary $\{x_{3}=0\}$. We
assume that ${\mathbf{x}}_{0}=(0,0,0)^{\top}$. The principal part
$L^{\mathsf{pr}}$ of $L$ coincides with $L$. In the notation of
\cite[Notation~{2.2.1}]{CoDaNi}, the spaces $\mathfrak{M}[L;%
\mbox{\boldmath$ \xi$}%
^{\prime}]$ and $\mathfrak{M}_{+}[L;%
\mbox{\boldmath$ \xi$}%
^{\prime}]$ (with $%
\mbox{\boldmath$ \xi$}%
^{\prime}\in\mathbb{R}^{2}$) can be computed explicitly due to the simple
structure of $L$:
\begin{align*}
\mathfrak{M}[L;%
\mbox{\boldmath$ \xi$}%
^{\prime}]=\{{\mathbf{u}}(t)  &  =%
\mbox{\boldmath$ \eta$}%
e^{-|%
\mbox{\boldmath$ \xi$}%
^{\prime}|t}+%
\mbox{\boldmath$ \beta$}%
e^{|%
\mbox{\boldmath$ \xi$}%
^{\prime}|t}\,|\,%
\mbox{\boldmath$ \eta$}%
,%
\mbox{\boldmath$ \beta$}%
\in\mathbb{C}^{6}\},\\
\mathfrak{M}_{+}[L;%
\mbox{\boldmath$ \xi$}%
^{\prime}]=\{{\mathbf{u}}(t)  &  =%
\mbox{\boldmath$ \eta$}%
e^{-|%
\mbox{\boldmath$ \xi$}%
^{\prime}|t}\,|\,%
\mbox{\boldmath$ \eta$}%
\in\mathbb{C}^{6}\}.
\end{align*}
In particular, the operator $L$ is properly elliptic, \cite[Def.~{2.2.3}%
]{CoDaNi}. The boundary operators are obtained by inserting {$\mathbf{u}$%
}$({\mathbf{x}}^{\prime},t)=e^{\operatorname{i}{\mathbf{x}}^{\prime}\cdot%
\mbox{\boldmath$ \xi$}%
^{\prime}}e^{-|%
\mbox{\boldmath$ \xi$}%
^{\prime}|t}%
\mbox{\boldmath$ \eta$}%
$ into the boundary conditions and afterwards removing the factor
$e^{\operatorname{i}{\mathbf{x}}^{\prime}\cdot%
\mbox{\boldmath$ \xi$}%
^{\prime}}$ and setting $({\mathbf{x}}^{\prime},t)=(0,0,0)={\mathbf{x}}_{0}$,
which gives
\begin{align*}
D({\mathbf{u}})  &  =\left(
\begin{array}
[c]{c}%
\eta_{5}-\eta_{1}\\
-\eta_{4}-\eta_{2}%
\end{array}
\right) \\
B({\mathbf{u}})  &  =\left(
\begin{array}
[c]{c}%
\operatorname{i}\xi_{1}^{\prime}\eta_{1}+\operatorname{i}\xi_{2}^{\prime}%
\eta_{2}-|%
\mbox{\boldmath$ \xi$}%
^{\prime}|\eta_{3}\\
\operatorname{i}\xi_{1}^{\prime}\eta_{4}+\operatorname{i}\xi_{2}^{\prime}%
\eta_{5}-|%
\mbox{\boldmath$ \xi$}%
^{\prime}|\eta_{6}\\
-\operatorname{i}\xi_{1}^{\prime}\eta_{6}-|%
\mbox{\boldmath$ \xi$}%
^{\prime}|\eta_{4}+\operatorname{i}\xi_{2}^{\prime}\eta_{3}+|%
\mbox{\boldmath$ \xi$}%
^{\prime}|\eta_{2}\\
-\operatorname{i}\xi_{2}^{\prime}\eta_{6}-|\xi^{\prime}|\eta_{5}%
-\operatorname{i}\xi_{1}^{\prime}\eta_{3}-|%
\mbox{\boldmath$ \xi$}%
^{\prime}|\eta_{1}%
\end{array}
\right)  .
\end{align*}%
\color{red}%
In matrix form, we have
\[
\underbrace{\left(
\begin{array}
[c]{cccccc}%
-1 & 0 & 0 & 0 & 1 & 0\\
0 & -1 & 0 & -1 & 0 & 0\\
\operatorname{i}\xi_{1}^{\prime} & \operatorname{i}\xi_{2}^{\prime} & -|%
\mbox{\boldmath$ \xi$}%
^{\prime}| & 0 & 0 & 0\\
0 & 0 & 0 & \operatorname{i}\xi_{1}^{\prime} & \operatorname{i}\xi_{2}%
^{\prime} & -|%
\mbox{\boldmath$ \xi$}%
^{\prime}|\\
0 & |\xi^{\prime}| & \operatorname{i}\xi_{2}^{\prime} & -|%
\mbox{\boldmath$ \xi$}%
^{\prime}| & 0 & -\operatorname{i}\xi_{1}^{\prime}\\
-|%
\mbox{\boldmath$ \xi$}%
^{\prime}| & 0 & -\operatorname{i}\xi_{1}^{\prime} & 0 & -|%
\mbox{\boldmath$ \xi$}%
^{\prime}| & -\operatorname{i}\xi_{2}^{\prime}%
\end{array}
\right)  }_{=:{\mathbf{M}}}\left(
\begin{array}
[c]{c}%
\eta_{1}\\
\eta_{2}\\
\eta_{3}\\
\eta_{4}\\
\eta_{5}\\
\eta_{6}%
\end{array}
\right)  .
\]
A direct calculation shows $\operatorname{det}{\mathbf{M}}=|%
\mbox{\boldmath$ \xi$}%
^{\prime}|^{4}$. Hence, ${\mathbf{M}}$ is invertible for every $%
\mbox{\boldmath$ \xi$}%
^{\prime}\neq0$. This means that the set of boundary conditions satisfies the
covering condition. According to \cite[Def.~{2.2.31}]{CoDaNi}, this covering
condition has to be satisfied for every ${\mathbf{x}}_{0}\in\partial\Omega$
and the \textquotedblleft tangent\textquotedblright\ model system. Locally
flattening the boundary at ${\mathbf{x}}_{0}$ in such a way that the Jacobian
is orthogonal at ${\mathbf{x}}_{0}$ leads to \textquotedblleft
tangent\textquotedblright\ systems that are of the same form as above. Hence,
the system (\ref{eq:A2a}) is elliptic. \hbox{}\hfill%
\endproof

\end{extraexample}

{ \color{red} While \cite[Cor.~A.2]{nicaise-tomezyk19} focuses on the
regularity of $\mathbf{u}$, we formulate the following corollary differently
by giving a regularity assertion for for $\mathbf{E}$. }

\begin{corollary}
\label{CorA.2}Let $\Omega\subset{\mathbb{R}}^{3}$ be a bounded domain with an
analytic boundary. Let the function ${\mathbf{f}}$ satisfy (\ref{eq:A10}). {%
\color{red}
Let the function ${\mathbf{g}}:{\mathcal{T}}\rightarrow{\mathbb{C}}^{3}$
satisfy for a tubular neighborhood ${\mathcal{T}}$ of $\partial\Omega$
\[
|{\mathbf{g}}|_{p,{\mathcal{T}}}\leq C_{{\mathbf{g}}}\lambda_{{\mathbf{g}}%
}^{p}\max(p,k)^{p},\quad\forall p\in\mathbb{N}.
\]
Let ${\mathbf{E}}\in{\mathbf{H}}^{2}(\Omega,\operatorname*{curl})$ solve
(\ref{EMWAPPa}), (\ref{EMWAPPb}). Then, we have%
\[
|{\mathbf{E}}|_{p,\Omega}\leq C_{\mathbf{E}}K^{p}\max(p,k)^{p}\quad\forall
p\in{\mathbb{N}},\quad p\geq2,
\]
where, for some $C$, $K>0$ independent of $k$,
\[
C_{\mathbf{E}}=C\left(  C_{\mathbf{f}}k^{-2}+\left\vert k\right\vert
^{-1}C_{{\mathbf{g}}}+\frac{1}{\left\vert k\right\vert }\left\Vert
\mathbf{E}\right\Vert _{\mathbf{H}^{1}\left(  \Omega,\operatorname*{curl}%
\right)  ,k}\right)  .
\]
}

\end{corollary}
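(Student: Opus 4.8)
The plan is to derive Corollary~\ref{CorA.2} from Theorem~\ref{thm:A1} by applying the latter to the elliptic system \eqref{eq:A2a} for the pair $\mathbf{U}=(\mathbf{E},\mathbf{H})$ with $\mathbf{H}=-\frac{\operatorname*{i}}{k}\operatorname*{curl}\mathbf{E}$, and then extracting from the conclusion the stated estimate for the component $\mathbf{E}$ alone. First I would record that $\mathbf{E}\in\mathbf{H}^2(\Omega,\operatorname*{curl})$ implies $\mathbf{U}\in\mathbf{H}^2(\Omega)$ (using $\mathbf{H}=-\frac{\operatorname*{i}}{k}\operatorname*{curl}\mathbf{E}$ and $\operatorname*{div}\mathbf{E}=-k^{-2}\operatorname*{div}\mathbf{f}$, which is analytic, hence in $H^2$), so that the hypothesis ${\mathbf{u}}\in{\mathbf{H}}^2(\Omega)$ of Theorem~\ref{thm:A1} is met and \eqref{eq:A2a} holds with the right-hand side data $\mathbf{F}$, $\mathbf{g}_T$, $\mathbf{G}_\Gamma$ of \eqref{eq:A2aa}. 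As shown in Example~\Alph{section}.1+2/3, the triple $(L,D,B)$ in \eqref{eq:A2a} is an elliptic system in the sense of \cite{CoDaNi}.

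Next I would verify that the analyticity/$k$-dependence hypotheses \eqref{eq:A10}, \eqref{eq:A1a}, \eqref{eq:A1b} are inherited by $\mathbf{F}$, $\mathbf{g}_D:=-\frac{\operatorname*{i}}{k}\mathbf{g}_T$ and $\mathbf{g}_B:=\mathbf{G}_\Gamma$ from those of $\mathbf{f}$ (and of $\mathbf{g}$, via $\mathbf{g}_T=\gamma\mathbf{g}$). The key point is the $k$-bookkeeping: $\mathbf{F}$ involves $\mathbf{f}+k^{-2}\nabla\operatorname*{div}\mathbf{f}$ and $-\frac{\operatorname*{i}}{k}\operatorname*{curl}\mathbf{f}$, so that $|\mathbf{F}|_{p,\Omega}\lesssim C_{\mathbf f}\lambda^{p+2}\max(p+2,k)^{p+2}k^{-2}$-type bounds reduce, after absorbing the shift $p\mapsto p+2$ into a slightly enlarged $\lambda_{\mathbf f}$ and a constant, to $C_{\mathbf F}=Ck^{-2}C_{\mathbf f}$ with $C_{\mathbf F}$ of the form $C_{\mathbf f}\lambda^p\max(p,k)^p$; similarly $C_{\mathbf{g}_D}=CC_{\mathbf g}$ and $C_{\mathbf{g}_B}=Ck^{-1}C_{\mathbf g}$ since $\mathbf{G}_\Gamma$ carries factors $k^{-2}\operatorname*{div}\mathbf f$ and $-\frac{\operatorname*{i}}{k}\gamma_T\mathbf f$, which are dominated by $k^{-1}$ times the analyticity constant of $\mathbf g$ once one uses $\mathbf f$'s bound and a trace estimate onto the tubular neighborhood. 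The matrix $G$ in \eqref{eq:A2a} has constant (hence trivially analytic) coefficients. Thus Theorem~\ref{thm:A1} applies and yields
\[
|\mathbf{U}|_{p,\Omega}\leq C_{\mathbf U}K^p\max(p,k)^p,\qquad p\geq 2,
\]
with $C_{\mathbf U}=C\big(k^{-2}C_{\mathbf f}+C_{\mathbf g}+k^{-1}C_{\mathbf g}+\|\mathbf U\|_\Omega+k^{-1}\|\mathbf U\|_{1,\Omega}\big)$.

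Finally I would translate this into the statement about $\mathbf{E}$. Since $\mathbf{E}$ is the first block of $\mathbf{U}$, $|\mathbf{E}|_{p,\Omega}\leq|\mathbf{U}|_{p,\Omega}$, so the same bound holds with $C_{\mathbf E}:=C_{\mathbf U}$. It remains to rewrite the norm contributions $\|\mathbf U\|_\Omega+k^{-1}\|\mathbf U\|_{1,\Omega}$ in terms of $\mathbf{E}$: using $\mathbf{H}=-\frac{\operatorname*{i}}{k}\operatorname*{curl}\mathbf{E}$ one gets $\|\mathbf U\|_{\mathbf L^2}\leq\|\mathbf E\|+|k|^{-1}\|\operatorname*{curl}\mathbf E\|$ and $\|\mathbf U\|_{\mathbf H^1}\leq\|\mathbf E\|_{\mathbf H^1}+|k|^{-1}\|\operatorname*{curl}\mathbf E\|_{\mathbf H^1}$, whence
\[
\|\mathbf U\|_\Omega+k^{-1}\|\mathbf U\|_{1,\Omega}\leq C|k|^{-1}\|\mathbf E\|_{\mathbf H^1(\Omega,\operatorname*{curl}),k}
\]
after regrouping the powers of $k$ according to the definition of $\|\cdot\|_{\mathbf H^1(\Omega,\operatorname*{curl}),k}$ (namely $|k|^{-2}$ weights on second-order, $|k|^{-1}$ on first-order, etc.). Combining, $C_{\mathbf E}\leq C\big(C_{\mathbf f}k^{-2}+|k|^{-1}C_{\mathbf g}+|k|^{-1}\|\mathbf E\|_{\mathbf H^1(\Omega,\operatorname*{curl}),k}\big)$, which is the claim. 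The main obstacle I anticipate is the careful $k$-explicit bookkeeping in the second step — showing that the derivative-shifts in $\mathbf F$ and $\mathbf G_\Gamma$ (one order for $\operatorname*{curl}\mathbf f$ and $\nabla\operatorname*{div}\mathbf f$, respectively with the accompanying $k^{-1}$, $k^{-2}$) can be absorbed into the analyticity constants without spoiling the $\max(p,k)^p$ structure, and that the trace of $\mathbf f$ (and of $\mathbf g$) onto the tubular neighborhood $\mathcal T$ satisfies an analytic bound of the required type; everything else is a direct invocation of Theorem~\ref{thm:A1} and elementary norm manipulations.
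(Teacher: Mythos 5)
Your overall strategy coincides with the paper's: rewrite the Maxwell problem as the elliptic system (\ref{eq:A2a}) for $\mathbf{U}=(\mathbf{E},\mathbf{H})$, check that $\mathbf{E}\in\mathbf{H}^{2}(\Omega,\operatorname{curl})$ puts $\mathbf{U}$ in $\mathbf{H}^{2}(\Omega)$, apply Theorem~\ref{thm:A1}, and convert $\Vert\mathbf{U}\Vert_{\Omega}+k^{-1}\Vert\mathbf{U}\Vert_{1,\Omega}$ into $|k|^{-1}\Vert\mathbf{E}\Vert_{\mathbf{H}^{1}(\Omega,\operatorname{curl}),k}$ (that last conversion you do correctly). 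However, the $k$-bookkeeping in your middle step --- which you yourself flag as the crux --- is wrong in several places, and as a consequence your displayed chain of constants does not produce the corollary's constant.

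First, $C_{\mathbf{F}}=Ck^{-2}C_{\mathbf{f}}$ is false. The term $\mathbf{f}$ itself enters $\mathbf{F}$ with no power of $k$, and for the term $k^{-2}\nabla\operatorname{div}\mathbf{f}$ the reduction $k^{-2}\max(p+2,k)^{p+2}\le C\gamma^{p}\max(p,k)^{p}$ \emph{consumes} the factor $k^{-2}$: the two extra powers of $\max(p+2,k)$ contribute $\max(p,k)^{2}$, which equals $k^{2}$ when $k\ge p$ and must be absorbed into $\gamma^{p}$ when $p>k$; in neither case can you keep $k^{-2}$ in front. The correct conclusion is $C_{\mathbf{F}}=CC_{\mathbf{f}}$, and the factor $k^{-2}$ in the corollary is then supplied by Theorem~\ref{thm:A1} itself (whose constant is $k^{-2}C_{\mathbf{f}}+C_{\mathbf{g}_{D}}+k^{-1}C_{\mathbf{g}_{B}}+\cdots$). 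With your $C_{\mathbf{F}}$ the theorem would yield $k^{-4}C_{\mathbf{f}}$, which is not what you write down. Second, $C_{\mathbf{g}_{D}}=CC_{\mathbf{g}}$ is off by a factor of $k$: the Dirichlet-type datum in (\ref{eq:A2a}) is $-\frac{\operatorname{i}}{k}\mathbf{g}_{T}$, so $C_{\mathbf{g}_{D}}=Ck^{-1}C_{\mathbf{g}}$; this is precisely where the corollary's $|k|^{-1}C_{\mathbf{g}}$ comes from. Your intermediate formula $C_{\mathbf{U}}=C(k^{-2}C_{\mathbf{f}}+C_{\mathbf{g}}+k^{-1}C_{\mathbf{g}}+\cdots)$ therefore contains a term $C_{\mathbf{g}}$ with no negative power of $k$, yet your final line asserts $|k|^{-1}C_{\mathbf{g}}$; the last step does not follow from the one before it. (Also, $C_{\mathbf{g}_{B}}$ is controlled by $C_{\mathbf{f}}$, not $C_{\mathbf{g}}$, since $\mathbf{G}_{\Gamma}$ is built from $\mathbf{f}$.) Third, the matrix $G$ is not constant-coefficient: $G\mathbf{u}=\bigl(0,0,\operatorname{i}(\mathbf{H}_{T}-\gamma_{T}\mathbf{E})\bigr)$ involves the normal vector, and one needs the analytic extension $\mathbf{n}^{\ast}$ of $\mathbf{n}$ to a tubular neighborhood (available because $\partial\Omega$ is analytic) to write $G\mathbf{u}=\gamma\widetilde{G}\mathbf{u}$, $\mathbf{g}_{T}=\gamma\widetilde{\mathbf{g}}$ with analytic $\widetilde{G}$, $\widetilde{\mathbf{g}}$; the same device is needed to verify (\ref{eq:A4a})--(\ref{eq:A4b}) for the traces. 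The paper carries out exactly these three pieces of bookkeeping (citing a product-of-analytic-functions lemma for the derivative shifts and the multiplication by $\mathbf{n}^{\ast}$), after which Theorem~\ref{thm:A1} delivers the stated $C_{\mathbf{E}}$ directly.
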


\begin{proof}
{
\color{red}%
We note that the analyticity of $\partial\Omega$ implies that the normal
vector $\mathbf{n}$ has an analytic extension $\mathbf{n}^{\star}$ to a
tubular neighborhood of $\partial\Omega$. Since the operators $\gamma_{T}$,
$\Pi_{T}$ have the form $\gamma_{T}\mathbf{v}=\gamma(\mathbf{v}\times
\mathbf{n}^{\star})$, $\Pi_{T}\mathbf{v}=\gamma(\mathbf{n}^{\star}%
\times(\mathbf{v}\times\mathbf{n}^{\star}))$, we see that $G\mathbf{u}$ and
$\mathbf{G}_{\Gamma}$ of (\ref{eq:A2aa}) can be written as
\[
G\mathbf{u}=\gamma\widetilde{G}\mathbf{u},\qquad\mathbf{G}_{\Gamma}%
=\gamma\widetilde{\mathbf{G}}_{\Gamma},\qquad\widetilde{\mathbf{G}}_{\Gamma
}:=\left(
\begin{array}
[c]{c}%
-\frac{1}{k^{2}}\operatorname{div}\mathbf{f}\\
0\\
-\frac{\operatorname{i}}{k}\mathbf{f}\times\mathbf{n}^{\star}%
\end{array}
\right)  ,
\]
where the matrix-valued function $\widetilde{G}$ is analytic in a tubular
neighborhood of $\partial\Omega$. Likewise, the boundary data $\mathbf{g}%
_{T}=\gamma(\mathbf{n}^{\star}\times(\mathbf{g}\times\mathbf{n}^{\star
}))=\gamma\widetilde{\mathbf{g}}$ for some $\widetilde{\mathbf{g}}$ that is
analytic in a tubular neighborhood of $\partial\Omega$. By, e.g.,
\cite[Lemma~{2.6}]{melenk2018wavenumber}, we get by setting $\mathbf{G}%
_{B}:=\widetilde{\mathbf{G}}_{\Gamma}$, $\mathbf{G}_{D}:=-\operatorname{i}%
k^{-1}\widetilde{\mathbf{g}}$ for suitable $K_{\mathbf{f}}$,
$\widetilde{\lambda}_{\mathbf{g}}$ independent of $k$
\begin{align*}
\Vert{\mathbf{F}}\Vert_{p,\Omega}  &  \leq CC_{\mathbf{f}}K_{\mathbf{f}}%
^{p}\max(p,k)^{p},\quad\forall p\in{\mathbb{N}},\\
\Vert{\mathbf{G}}_{B}\Vert_{p,\Omega}  &  \leq k^{-1}CC_{\mathbf{f}%
}K_{\mathbf{f}}^{p}\max(p,k)^{p},\quad\forall p\in{\mathbb{N}},\\
\Vert{\mathbf{G}}_{D}\Vert_{p,\Omega}  &  \leq k^{-1}CC_{\mathbf{g}%
}\widetilde{\lambda}_{\mathbf{g}}^{p}\max(p,k)^{p},\quad\forall p\in
{\mathbb{N}}.
\end{align*}%
\color{red}%
We may apply Theorem~\ref{thm:A1} with $G=\widetilde{G}$ and ${\mathbf{F}}$
given by (\ref{eq:A2a}). To that end, we observe that the regularity
assumption {${\mathbf{E}}\in{\mathbf{H}}^{2}(\Omega,\operatorname*{curl})$
implies }$\mathbf{u}=\left(  \mathbf{E},\mathbf{H}\right)  =\left(
\mathbf{E},-\frac{\operatorname*{i}}{k}\operatorname*{curl}\mathbf{E}\right)
\in\mathbf{H}^{2}\left(  \Omega\right)  $ so that Thm.~\ref{thm:A1} is indeed
applicable.}
\end{proof}

\color{black}%

\begin{remark}
\thinspace%
\color{red}%
See{ \cite[Rem.~{A.3}]{nicaise-tomezyk19}. \hbox{}\hfill%
\endproof
}
\end{remark}

\subsection{%
\color{black}%
Analytic regularity near the boundary}


\color{black}%
Denote $B_{R}^{+}=B_{R}(0,R)\cap\{\mathbf{x}|x_{n}>0\}$ and $\Gamma
_{R}=\{\mathbf{x}\in\overline{B_{R}^{+}}|x_{n}=0\}$, with $R\in(0,1]$. {%
\color{red}
We emphasize that in the following developments, most estimates will not be
sharp in their dependence on $R$. }

Let ${\mathbf{f}}$ be an analytic function and ${\mathbf{G}}$ be a matrix with
analytic coefficients defined on $\overline{B_{R}^{+}}$ such that
\begin{subequations}
\label{eq:A3}
\end{subequations}%
\begin{subequations}
\label{eq:A4}
\end{subequations}%
\begin{align}
\Vert\partial^{\alpha}{\mathbf{f}}\Vert_{B_{R}^{+}}  &  \leq C_{\mathbf{f}%
}\lambda_{\mathbf{f}}^{|\alpha|}\max(|\alpha|,k)^{|\alpha|}\qquad\forall
\alpha\in\mathbb{N}^{n},\tag{%
\ref{eq:A3}%
}\label{eq:A3a}\\
\Vert\partial^{\alpha}{\mathbf{G}}\Vert_{\infty,B_{R}^{+}}  &  \leq
C_{\mathbf{G}}\lambda_{\mathbf{G}}^{|\alpha|}\max(|\alpha|,k)^{|\alpha|}%
\qquad\forall\alpha\in\mathbb{N}^{n} \tag{%
\ref{eq:A4}%
}\label{eq:A4neu}%
\end{align}
{%
\color{red}
as well as the functions ${\mathbf{g}}_{D}:={\mathbf{G}}_{D}|_{\Gamma_{R}}$
and ${\mathbf{g}}_{B}:={\mathbf{G}}_{B}|_{\Gamma_{R}}$ with%
\begin{align}
\Vert\partial^{\alpha}{\mathbf{G}}_{D}\Vert_{B_{R}^{+}}  &  \leq
C_{{\mathbf{g}}_{D}}\lambda_{{\mathbf{g}}_{D}}^{|\alpha|}\max(|\alpha
|,k)^{|\alpha|}\qquad\forall\alpha\in\mathbb{N}^{n},\tag{%
\ref{eq:A4}%
+1/3}\label{eq:A4a}\\
\Vert\partial^{\alpha}{\mathbf{G}}_{B}\Vert_{B_{R}^{+}}  &  \leq
C_{{\mathbf{g}}_{B}}\lambda_{{\mathbf{g}}_{B}}^{|\alpha|}\max(|\alpha
|,k)^{|\alpha|}\qquad\forall\alpha\in\mathbb{N}^{n} \tag{%
\ref{eq:A4}%
+2/3}\label{eq:A4b}%
\end{align}
}
for some $k\geq1$ for some positive constants $C_{\mathbf{f}}$, $\lambda
_{\mathbf{f}}$, $C_{\mathbf{G}}$, $\lambda_{\mathbf{G}}$, {%
\color{red}
$C_{{\mathbf{g}}_{D}}$, $\lambda_{{\mathbf{g}}_{D}}$, $C_{{\mathbf{g}}_{B}}$,
$\lambda_{{\mathbf{g}}_{B}}$. }
Let ${\mathbf{u}}\in{\mathbf{H}}^{2}(B_{R}^{+})$ be a solution of
\begin{equation}
\left\{
\begin{array}
[c]{rcll}%
L({\mathbf{u}}) & = & {\mathbf{f}}+k^{2}{\mathbf{u}} & \text{ in $B_{R}^{+}$%
},\\
D({\mathbf{u}}) & = & {%
\color{red}%
{\mathbf{g}_{D}}} & \text{ on $\Gamma_{R}$},\\
B({\mathbf{u}}) & = & k{\mathbf{G}}{\mathbf{u}{%
\color{red}%
+{\mathbf{g}}_{B}}} & \text{ on $\Gamma_{R}$,}%
\end{array}
\right.  \label{eq:A5}%
\end{equation}
where $(L,D,B)$ is an elliptic system with analytic coefficients (in the sense
above), with $D$ (resp.~$B$) an operator of order $0$ (resp.~$1$). For further
purposes, we define a few norms\footnote{cf.\ \cite[Notation~{2.6.5}]{CoDaNi}
and \cite[Notation~{2.5.1}]{CoDaNi}} \label{eq:nicaise-seminorms}
\begin{align*}
|{\mathbf{u}}|_{p,q,B_{R}^{+}}  &  :=\max_{\substack{|\alpha|=p\\\alpha
_{n}\leq q}}\Vert\partial^{\alpha}{\mathbf{u}}\Vert_{B_{R}^{+}},\\
{[\![{\mathbf{u}}]\!]}_{p,q,B_{R}^{+}}  &  :=\max_{0\leq\rho\leq R/(2p)}%
\rho^{p}|{\mathbf{u}}|_{p,q,B_{R-p\rho}^{+}},\qquad\mbox{ for all $p > 0$},\\
{[\![{\mathbf{u}}]\!]}_{0,0,B_{R}^{+}}  &  :=\Vert{\mathbf{u}}\Vert_{B_{R}%
^{+}},\\
\rho_{\ast}^{2}{[\![{\mathbf{u}}]\!]}_{p,q,B_{R}^{+}}  &  :=\max_{0\leq
\rho\leq R/(2(p+1))}\rho^{p+2}|{\mathbf{u}}|_{p,q,B_{R-(p+1)\rho}^{+}},\\
|{\mathbf{u}}|_{p,\frac{1}{2},\Gamma_{R}}  &  :=\max_{\alpha^{\prime}%
\in\mathbb{N}^{n-1}\colon|\alpha^{\prime}|=p}\Vert\partial^{\alpha^{\prime}%
}{\mathbf{u}}\Vert_{\frac{1}{2},\Gamma_{R}},\\
\rho_{\ast}^{\frac{3}{2}}{[\![{\mathbf{u}}]\!]}_{p,\frac{1}{2},\Gamma_{R}}  &
:=\max_{0\leq\rho\leq R/(2(p+1))}\rho^{p+\frac{3}{2}}|{\mathbf{u}}%
|_{p,\frac{1}{2},\Gamma_{R-(p+1)\rho}},\\
{%
\color{red}%
|{\mathbf{u}}|_{p,\frac{3}{2},\Gamma_{R}}}  &  {%
\color{red}%
:=\max_{\alpha^{\prime}\in{\mathbb{N}}^{n-1}\colon|\alpha^{\prime}|=p}%
\Vert\partial^{\alpha^{\prime}}{\mathbf{u}}\Vert_{\frac{3}{2},\Gamma_{R}},}\\
{%
\color{red}%
\rho_{\ast}^{\frac{1}{2}}{[\![{\mathbf{u}}]\!]}_{p,\frac{3}{2},\Gamma_{R}}}
&  {%
\color{red}%
:=\max_{0\leq\rho\leq R/(2(p+1))}\rho^{\frac{1}{2}+p}|{\mathbf{u}}%
|_{p,\frac{3}{2},\Gamma_{R-(p+1)\rho)}},}%
\end{align*}
for all $p$, $q\in\mathbb{N}$ with $q\leq p$.

We will first estimate the norm of the tangential derivatives (and the normal
derivative up to 2) by using the standard analytic regularity of elliptic
systems. Then, we will be able to estimate the complete norm ${[\![
{\mathbf{u}}]\!]}_{p,q,B_{R}^{+}}$. So we start with an estimation of the norm
of tangential derivatives ${[\![ {\mathbf{u}}]\!]}_{p,2,B_{R}^{+}}$. Before,
let us prove the next technical results that allow to pass from a sum on the
multi-indices to a sum on their lengths.

\begin{lemma}
\label{lemma:A4} Let $h$ be a mapping from $\mathbb{N}$ into $[0,\infty)$ and
a multi-index $\alpha^{\prime}\in\mathbb{N}^{n-1}$, for $n = 2$ or $n = 3$.
Then we have
\begin{equation}
\sum_{\beta^{\prime}\in\mathbb{N}^{n-1}\colon\beta^{\prime}\leq\alpha^{\prime
}} h(|\beta^{\prime}|) \leq2 \sum_{p=0}^{|\alpha^{\prime}|} h(p)
e^{|\alpha^{\prime}| - p}.
\end{equation}

\end{lemma}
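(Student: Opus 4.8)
The plan is to prove the combinatorial estimate
\[
\sum_{\beta'\in\mathbb{N}^{n-1}\colon\beta'\leq\alpha'} h(|\beta'|)\leq 2\sum_{p=0}^{|\alpha'|} h(p)e^{|\alpha'|-p}
\]
by grouping the multi-indices $\beta'$ according to their length $|\beta'|=p$ and counting how many such $\beta'$ satisfy $\beta'\le\alpha'$. Concretely, for fixed $p\in\{0,1,\dots,|\alpha'|\}$ let $N_p(\alpha'):=\#\{\beta'\in\mathbb{N}^{n-1}\colon \beta'\le\alpha',\ |\beta'|=p\}$. Then the left-hand side equals $\sum_{p=0}^{|\alpha'|} N_p(\alpha')\,h(p)$, so it suffices to show $N_p(\alpha')\le 2e^{|\alpha'|-p}$ for each such $p$. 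This reduces the lemma to a clean counting bound with no reference to $h$ at all.

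First I would bound $N_p(\alpha')$ crudely by the total number of multi-indices of length $p$ in $n-1$ variables, namely $N_p(\alpha')\le\binom{p+n-2}{n-2}$, which for $n\in\{2,3\}$ means $N_p\le 1$ (when $n=2$) or $N_p\le p+1$ (when $n=3$). The case $n=2$ is then immediate since $1\le 2\le 2e^{|\alpha'|-p}$. For $n=3$ I would need $p+1\le 2e^{|\alpha'|-p}$. Writing $q:=|\alpha'|-p\ge 0$, and using $p\le|\alpha'|$, the sharpest case is $p=|\alpha'|$, $q=0$, where the inequality reads $|\alpha'|+1\le 2$; this fails for $|\alpha'|\ge 2$, so the crude bound $N_p\le p+1$ is too lossy and one must instead use that $\beta'\le\alpha'$ genuinely constrains $\beta'$.

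The better route for $n=3$ is to note that if $\alpha'=(\alpha_1,\alpha_2)$ then $N_p(\alpha')=\#\{(\beta_1,\beta_2)\colon 0\le\beta_1\le\alpha_1,\ 0\le\beta_2\le\alpha_2,\ \beta_1+\beta_2=p\}\le\min(\alpha_1,\alpha_2)+1$. Combined with $\min(\alpha_1,\alpha_2)\le|\alpha'|/2$ and, more usefully, with the elementary fact that $\min(\alpha_1,\alpha_2)\le|\alpha'|-p$ whenever $p\ge\max(\alpha_1,\alpha_2)$ while $N_p\le\min(\alpha_1,\alpha_2)+1\le|\alpha'|-p+1$ in general (since $p\le\max(\alpha_1,\alpha_2)$ forces $\min(\alpha_1,\alpha_2)\ge p-\max\le\dots$), one reduces to showing $m+1\le 2e^{m}$ for all integers $m\ge 0$ where $m=|\alpha'|-p\ge N_p-1$. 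That last inequality $m+1\le 2e^m$ holds for every $m\ge 0$: it is $1\le 2$ at $m=0$ and, since $\frac{d}{dm}(2e^m)=2e^m\ge 2e^0=2>1=\frac{d}{dm}(m+1)$, it persists for all $m>0$. I would spell this monotonicity argument out, or simply invoke $e^m\ge 1+m$. Summing over $p$ then gives the claim.

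The main obstacle is purely bookkeeping: making precise the bound $N_p(\alpha')\le|\alpha'|-p+1$ (or some constant times it) in the cases $n=2$ and $n=3$ uniformly, so that the subsequent analytic step $N_p\le 2e^{|\alpha'|-p}$ goes through with the stated constant $2$ rather than a larger one. One has to be slightly careful that the constant is exactly $2$ and not, say, $e$ or $3$; the inequality $m+1\le 2e^m$ is what makes $2$ work, and checking it at $m=0$ (where it is tight up to the factor $2$) and invoking $e^m\ge 1+m$ for $m\ge 1$ settles it. Everything else is elementary.
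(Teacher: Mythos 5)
Your overall strategy is the right one (and, as far as one can tell, the same as the cited source \cite{nicaise-tomezyk19}, since the paper's own ``proof'' is only a pointer there): group the $\beta'\le\alpha'$ by length $p=|\beta'|$, reduce the lemma to the counting bound $N_p(\alpha'):=\#\{\beta'\le\alpha'\colon|\beta'|=p\}\le 2e^{|\alpha'|-p}$, establish $N_p(\alpha')\le|\alpha'|-p+1$, and finish with $m+1\le e^m\le 2e^m$. The case $n=2$ is trivial ($N_p=1$), and all the target inequalities you name are true.

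The one place your write-up does not actually prove what it claims is the justification of $N_p(\alpha')\le|\alpha'|-p+1$ for $n=3$. You assert ``$\min(\alpha_1,\alpha_2)\le|\alpha'|-p$ whenever $p\ge\max(\alpha_1,\alpha_2)$,'' but this inequality is backwards: if $p\ge\max(\alpha_1,\alpha_2)$ then $|\alpha'|-p\le\min(\alpha_1,\alpha_2)$. The subsequent parenthetical trails off and does not close the argument. Two clean ways to repair it. (a) Direct count: $N_p(\alpha')=\min(\alpha_1,p)-\max(0,p-\alpha_2)+1$ whenever this is nonnegative, and checking the four sign cases gives $N_p\le|\alpha'|-p+1$ in each (in the regime $p\ge\max(\alpha_1,\alpha_2)$ one in fact has equality, which is why your claimed intermediate inequality cannot be right but the final bound still is). (b) Symmetry: the bijection $\beta'\mapsto\alpha'-\beta'$ on $\{\beta'\le\alpha'\}$ shows $N_p(\alpha')=N_{|\alpha'|-p}(\alpha')$, and the trivial bound $N_q\le q+1$ (at most $q+1$ pairs with $\beta_1+\beta_2=q$) applied with $q=|\alpha'|-p$ gives $N_p\le|\alpha'|-p+1$ in one line. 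With either repair, $|\alpha'|-p+1\le e^{|\alpha'|-p}\le 2e^{|\alpha'|-p}$ and summation over $p$ finishes the proof; the factor $2$ is not even needed for this lemma (it is needed for the corollary with $\alpha_n\le1$).
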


\begin{proof}
See \cite[Lemma~{A.4}]{nicaise-tomezyk19}.
\end{proof}

\begin{corollary}
Let $h$ be a mapping from $\mathbb{N}$ into $[0,\infty)$ and a multi-index
$\alpha\in\mathbb{N}^{n}$ with $\alpha_{n}\leq1$. Then we have
\setcounter{equation}{7}
\begin{equation}
\sum_{\beta\in\mathbb{N}^{n}\colon\beta\leq\alpha}h(|\beta|)\leq2(1+\frac
{1}{e})\sum_{p=0}^{|\alpha|}h(p)e^{|\alpha|-p}.
\end{equation}

\end{corollary}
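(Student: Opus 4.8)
The statement to prove is the Corollary right after Lemma~\ref{lemma:A4}: for $h:\mathbb{N}\to[0,\infty)$ and $\alpha\in\mathbb{N}^n$ with $\alpha_n\leq 1$, one has $\sum_{\beta\leq\alpha}h(|\beta|)\leq 2(1+\tfrac1e)\sum_{p=0}^{|\alpha|}h(p)e^{|\alpha|-p}$. The plan is to reduce the $n$-dimensional sum to the $(n-1)$-dimensional sum already handled by Lemma~\ref{lemma:A4}, exploiting that the last index $\alpha_n$ takes only the values $0$ or $1$.

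First I would split the multi-index $\beta\leq\alpha$ according to $\beta_n\in\{0,1\}$, writing $\beta=(\beta',\beta_n)$ with $\beta'\in\mathbb{N}^{n-1}$, $\beta'\leq\alpha'$, where $\alpha=(\alpha',\alpha_n)$. This gives
\[
\sum_{\beta\leq\alpha}h(|\beta|)=\sum_{\beta'\leq\alpha'}h(|\beta'|)+\mathbf{1}_{\{\alpha_n\geq1\}}\sum_{\beta'\leq\alpha'}h(|\beta'|+1).
\]
To each of the two inner sums I would apply Lemma~\ref{lemma:A4} (the second one with the shifted function $p\mapsto h(p+1)$, or equivalently after re-indexing the summation). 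This yields a bound of the form $2\sum_{p=0}^{|\alpha'|}h(p)e^{|\alpha'|-p}+2\sum_{p=0}^{|\alpha'|}h(p+1)e^{|\alpha'|-p}$. In the second sum I substitute $q=p+1$ so that it becomes $2\sum_{q=1}^{|\alpha'|+1}h(q)e^{|\alpha'|+1-q}\leq 2\sum_{q=0}^{|\alpha|}h(q)e^{|\alpha|-q}$, using $|\alpha'|+1=|\alpha|$ when $\alpha_n=1$ and noting $|\alpha'|\leq|\alpha|$ in general. The first sum is $\leq 2\sum_{p=0}^{|\alpha|}h(p)e^{|\alpha|-p}$ directly (again $|\alpha'|\le|\alpha|$, so each exponent only grows). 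Combining gives a bound $4\sum_{p=0}^{|\alpha|}h(p)e^{|\alpha|-p}$, which is weaker than the claimed constant $2(1+\tfrac1e)$.

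To recover the sharper constant $2(1+\tfrac1e)$ I would be more careful with the powers of $e$: in the first application of Lemma~\ref{lemma:A4} the exponents are $e^{|\alpha'|-p}=e^{|\alpha|-\alpha_n-p}$, so when $\alpha_n=1$ we gain a factor $e^{-1}$ relative to $e^{|\alpha|-p}$; in the second (shifted) sum, after re-indexing, the exponent is exactly $e^{|\alpha|-q}$. Thus when $\alpha_n=1$ the total is $\leq 2e^{-1}\sum_{p}h(p)e^{|\alpha|-p}+2\sum_{q}h(q)e^{|\alpha|-q}=2(1+\tfrac1e)\sum_{p=0}^{|\alpha|}h(p)e^{|\alpha|-p}$, and when $\alpha_n=0$ the second sum is absent and the first gives $2\sum_p h(p)e^{|\alpha|-p}\le 2(1+\tfrac1e)\sum_p h(p)e^{|\alpha|-p}$. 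This case distinction, and the bookkeeping of which exponent gets the $e^{-1}$ saving, is the only delicate point; the main obstacle is simply being careful that the index shift $q=p+1$ is aligned with $|\alpha'|+1=|\alpha|$ so no exponent is overcounted. Since this is essentially \cite[Corollary after Lemma~A.4]{nicaise-tomezyk19}, I would also just cite that reference as the authors do elsewhere in the appendix, and present the short computation above as the proof.
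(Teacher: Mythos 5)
Your argument is correct: splitting $\beta=(\beta',\beta_n)$ with $\beta_n\in\{0,1\}$, applying Lemma~\ref{lemma:A4} once to $h$ and once to the shifted map $p\mapsto h(p+1)$, and tracking that the unshifted sum carries the extra factor $e^{-1}$ when $\alpha_n=1$ (resp.\ is trivially bounded when $\alpha_n=0$) yields exactly the constant $2(1+\tfrac1e)$. The paper itself gives no proof but simply cites \cite[Cor.~{A.5}]{nicaise-tomezyk19}, and your short computation is precisely the expected argument behind that citation, so either presenting it or citing as you suggest is fine.
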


\begin{proof}
See \cite[Cor.~{A.5}]{nicaise-tomezyk19}.
\end{proof}

\begin{lemma}
\label{lemma:A6} There exists a positive constant $C$ (depending on $n$), a
positive constant $C_{tr,R}$ (depending only on $R\leq1$), and a positive
constant $\lambda_{{\mathbf{G}}}^{\prime}\geq\lambda_{{\mathbf{G}}}$ such that
for all $l\in\mathbb{N}$ and any ${\mathbf{u}}\in{\mathbf{H}}^{l+1}(B_{R}%
^{+})$, we have
\begin{equation}
\rho_{\ast}^{\frac{3}{2}}{[\![{\mathbf{G}}{\mathbf{u}}]\!]}_{l,\frac{1}%
{2},\Gamma_{R}}\leq CC_{tr,R}C_{{\mathbf{G}}}\sum_{p=0}^{l+1}(\lambda
_{{\mathbf{G}}}^{\prime})^{l+1-p}\max(l+1,k)^{l+1-p}{[\![{\mathbf{u}}%
]\!]}_{p,2,B_{R}^{+}}. \label{eq:A9}%
\end{equation}
{%
\color{red}
Additionally, the constant $C_{tr,R}$ may be assumed to satisfy the trace
estimate
\[
\Vert{\mathbf{v}}\Vert_{\frac{1}{2},\Gamma_{R}}\leq C_{tr,R}\Vert{\mathbf{v}%
}\Vert_{1,B_{R}^{+}}\qquad\forall{\mathbf{v}}\in{\mathbf{H}}^{1}(B_{R}^{+}).
\]
}

\end{lemma}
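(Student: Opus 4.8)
\textbf{Proof plan for Lemma~\ref{lemma:A6}.}
The plan is to reduce the weighted seminorm $\rho_\ast^{3/2}\llbracket{\mathbf G}{\mathbf u}\rrbracket_{l,1/2,\Gamma_R}$ to a product-rule expansion of tangential derivatives of ${\mathbf G}{\mathbf u}$, applied on the shrinking half-balls $B_{R-(l+1)\rho}^+$, and then to use a trace inequality to pass from $H^{1/2}(\Gamma_{R-(l+1)\rho})$ to $H^1(B_{R-(l+1)\rho}^+)$. First I would fix a tangential multi-index $\alpha'\in\mathbb{N}^{n-1}$ with $|\alpha'|=l$ and apply the Leibniz rule to write $\partial^{\alpha'}({\mathbf G}{\mathbf u})=\sum_{\beta'\le\alpha'}\binom{\alpha'}{\beta'}\partial^{\alpha'-\beta'}{\mathbf G}\,\partial^{\beta'}{\mathbf u}$. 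Then for each term I would bound $\|\partial^{\alpha'-\beta'}{\mathbf G}\,\partial^{\beta'}{\mathbf u}\|_{1/2,\Gamma_{R'}}$ (with $R'=R-(l+1)\rho$) by a product of an $L^\infty$-type bound on derivatives of ${\mathbf G}$ — using (\ref{eq:A4neu}) — and a trace-and-$H^1$ bound on $\partial^{\beta'}{\mathbf u}$; here I would use the trace estimate $\|{\mathbf v}\|_{1/2,\Gamma_{R'}}\le C_{tr,R}\|{\mathbf v}\|_{1,B_{R'}^+}$ (noting $R'\le R\le1$ keeps the trace constant uniformly bounded, which is exactly why $C_{tr,R}$ appears) applied to ${\mathbf v}=\partial^{\alpha'-\beta'}{\mathbf G}\,\partial^{\beta'}{\mathbf u}$, and expand $\|{\mathbf v}\|_{1,B_{R'}^+}$ once more by Leibniz to land on seminorms $|{\mathbf u}|_{p,2,B_{R'}^+}$ with $p\le|\beta'|+1\le l+1$ and at most two normal derivatives.

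Next I would account for the factors of $\rho$. The weight $\rho^{l+3/2}$ has to be distributed: $\rho^{|\beta'|+1}$ (or $\rho^{p}$ with $p\le |\beta'|+1$) goes with the ${\mathbf u}$-seminorm to form, after taking the max over $\rho\in[0,R/(2(p+1))]$ admissible for that seminorm, the quantity $\llbracket{\mathbf u}\rrbracket_{p,2,B_R^+}$; a bounded factor $\rho^{l+3/2-p}\le C$ (again using $\rho\le R/(2(l+1))\le 1$) is absorbed; and the remaining factor $\max(l+1,k)^{l+1-p}$ comes from the growth bound (\ref{eq:A4neu}) on the $\partial^{\alpha'-\beta'}{\mathbf G}$ term, with the convention $\max(|\alpha'-\beta'|,k)\le\max(l+1,k)$. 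One subtlety is that the half-ball on which $|{\mathbf u}|_{p,2,\cdot}$ is evaluated is $B_{R-(l+1)\rho}^+\subset B_{R-p\rho}^+$ (since $p\le l+1$ — or rather $p\le l$ for the purely tangential part and one extra for the normal derivative, so one must be slightly careful, but $p+1\le l+2$ is harmless after adjusting $\lambda_{\mathbf G}'$), so the admissible range of $\rho$ is a subinterval of the one in the definition of $\llbracket\cdot\rrbracket_{p,2,\cdot}$, and the max over the smaller range is dominated by the max over the larger one; this is the standard bookkeeping that makes the shifted-half-ball seminorms work.

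The last step is the combinatorics: summing $\sum_{\beta'\le\alpha'}\binom{\alpha'}{\beta'}(\cdots)$ over all $\beta'\le\alpha'$ and converting the sum over multi-indices into a sum over lengths $p=|\beta'|$ via Lemma~\ref{lemma:A4}. The binomial coefficients, together with the factor-of-$2$ and the geometric factor $e^{|\alpha'|-p}$ produced by Lemma~\ref{lemma:A4}, are absorbed by enlarging $\lambda_{\mathbf G}$ to some $\lambda_{\mathbf G}'\ge\lambda_{\mathbf G}$ (one picks $\lambda_{\mathbf G}'$ so that $e\cdot\lambda_{\mathbf G}\le\lambda_{\mathbf G}'$, say, which also handles the $2^{|\alpha'|}$ from the binomial sum since $\|\cdot\|_{1,\cdot}$ Leibniz expansion only contributes a bounded number of terms per derivative order $\le 1$). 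Taking the max over all $\alpha'$ with $|\alpha'|=l$ then yields (\ref{eq:A9}) with a constant $C$ depending only on $n$ (number of Leibniz terms) and $C_{tr,R}$, $C_{\mathbf G}$ pulled out as stated. I expect the main obstacle to be the careful matching of the weight powers and the shrinking radii so that each ${\mathbf u}$-term genuinely assembles into $\llbracket{\mathbf u}\rrbracket_{p,2,B_R^+}$ rather than into a seminorm on a radius that is too small — in other words, verifying that the $\rho$-ranges nest correctly when $p$ ranges up to $l+1$ while the ${\mathbf G}$-derivative order ranges complementarily; everything else is routine Leibniz-plus-geometric-series manipulation of the kind already carried out in \cite{nicaise-tomezyk19,CoDaNi}.
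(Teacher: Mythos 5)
Your overall strategy --- Leibniz expansion of $\partial^{\alpha'}({\mathbf G}{\mathbf u})$, the trace inequality $\Vert\cdot\Vert_{1/2,\Gamma_{R'}}\le C_{tr,R}\Vert\cdot\Vert_{1,B_{R'}^{+}}$ on the shrunken half-balls, redistribution of the weight $\rho^{l+3/2}$, nesting of the admissible $\rho$-ranges, and Lemma~\ref{lemma:A4} to pass from multi-indices to lengths --- is the right one (the paper itself only cites \cite[Lem.~{A.6}]{nicaise-tomezyk19}, whose proof follows exactly this pattern). However, there is one genuine gap, and it sits precisely at the point you dismiss as routine: the absorption of the Leibniz binomial coefficients. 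You propose to absorb $\binom{\alpha'}{\beta'}$ (and the total $2^{|\alpha'|}=2^{l}$) into the enlarged constant $(\lambda_{{\mathbf G}}')^{l+1-p}$. This cannot work: a factor can only be hidden in $(\lambda_{{\mathbf G}}'/\lambda_{{\mathbf G}})^{l+1-p}$ if it is at most exponential in $l-p$ with base independent of $l$, whereas $\sum_{|\beta'|=p,\ \beta'\le\alpha'}\binom{\alpha'}{\beta'}=\binom{l}{p}$ grows like $l^{l-p}$ (already $\binom{l}{l-1}=l$, not a constant), and $2^{l}$ is exponential in $l$ itself, hence not controllable even for $p=l$, where the right-hand side of (\ref{eq:A9}) allows only a coefficient of order $\lambda_{{\mathbf G}}'\max(l+1,k)$. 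Combined with the factor $\max(l-p,k)^{l-p}\le\max(l+1,k)^{l-p}$ coming from (\ref{eq:A4neu}), the crude bound $\binom{l}{p}\le l^{l-p}\le\max(l+1,k)^{l-p}$ would produce $\max(l+1,k)^{2(l-p)}$, which exceeds the admissible $\max(l+1,k)^{l+1-p}$ as soon as $l-p\ge 2$.

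The missing ingredient is that the leftover weight $\rho^{l+3/2-p}$, which you discard as ``a bounded factor $\le C$'', is exactly what kills the binomial coefficients: since $\rho\le R/(2(l+1))$ and $R\le1$, one has $\rho^{l+3/2-p}\le\bigl(2(l+1)\bigr)^{-(l-p)}$, so that $\binom{l}{p}\,\rho^{l+3/2-p}\le l^{l-p}\bigl(2(l+1)\bigr)^{-(l-p)}\le 2^{-(l-p)}\le 1$. This quantitative decay of the residual weight in powers of $(l+1)^{-1}$ --- not mere boundedness --- is the raison d'\^etre of the shrinking-half-ball seminorms and is the single non-routine step of the proof. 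Once it is in place, the rest of your bookkeeping is sound: the extra unit in the exponent $l+1-p$ accommodates the gradient falling on ${\mathbf G}$ in the $H^{1}$-norm, $\max(l-p,k)\le\max(l+1,k)$, the inclusion $B_{R-(l+1)\rho}^{+}\subset B_{R-p\rho}^{+}$ together with $R/(2(l+1))\le R/(2p)$ justifies passing to ${[\![{\mathbf u}]\!]}_{p,2,B_{R}^{+}}$, and the factor $e^{l-p}$ from Lemma~\ref{lemma:A4} genuinely is absorbed by taking $\lambda_{{\mathbf G}}'\ge e\lambda_{{\mathbf G}}$.
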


\begin{proof}
See \cite[Lemma~{A.6}]{nicaise-tomezyk19}.
\end{proof}

{%
\color{red}
Noting the trace estimate (cf.\ \cite[(A.10)]{nicaise-tomezyk19}), we obtain
from (\ref{eq:A4a}), (\ref{eq:A4b}) for the traces ${\mathbf{g}}_{D}$ and
${\mathbf{g}}_{B}$ for suitable $C_{tr,R}^{\prime}$ \setcounter{equation}{12}%
\begin{subequations}
\label{eq:A13}
\end{subequations}%
\begin{align}
\rho_{\ast}^{\frac{1}{2}}{[\![{\mathbf{g}}_{D}]\!]}_{l,\frac{3}{2},\Gamma
_{R}}  &  \leq C_{tr,R}^{\prime}\left[  {[\![{\mathbf{G}}_{D}]\!]}%
_{l,0,B_{R}^{+}}+{[\![{\mathbf{G}}_{D}]\!]}_{l+2,2,B_{R}^{+}}\right] \tag{%
\ref{eq:A13}%
+1/4}\label{eq:A13a}\\
&  \overset{(\ref{eq:A4a})}{\leq}2C_{tr,R}^{\prime}C_{{\mathbf{g}}_{D}}%
\lambda_{{\mathbf{g}}_{D}}^{l+2}\max(l+2,k)^{l+2},\nonumber\\
\rho_{\ast}^{\frac{3}{2}}{[\![{\mathbf{g}}_{B}]\!]}_{l,\frac{1}{2},\Gamma
_{R}}  &  \leq C_{tr,R}\left[  {[\![{\mathbf{G}}_{B}]\!]}_{l,0,B_{R}^{+}%
}+{[\![{\mathbf{G}}_{B}]\!]}_{l+1,1,B_{R}^{+}}\right] \tag{%
\ref{eq:A13}%
+1/2}\label{eq:A13b}\\
&  \overset{(\ref{eq:A4b})}{\leq}2C_{tr,R}C_{{\mathbf{g}}_{B}}\lambda
_{{\mathbf{g}}_{B}}^{l+1}\max(l+2,k)^{l+1},\nonumber\\
\Vert{\mathbf{v}}\Vert_{\frac{3}{2},\Gamma_{R}}  &  \leq C_{tr,R}^{\prime
}\Vert{\mathbf{v}}\Vert_{2,B_{R}^{+}}\qquad\forall{\mathbf{v}}\in{\mathbf{H}%
}^{2}(B_{R}^{+}). \tag{%
\ref{eq:A13}%
+3/4}\label{eq:A13c}%
\end{align}
Here, we have implicitly assumed $\lambda_{{\mathbf{g}}_{D}}\geq1$ and
$\lambda_{{\mathbf{g}}_{B}}\geq1$. We also emphasize that the constants
$C_{tr,R}$ and $C_{tr,R}^{\prime}$ depend on $R$, which could be tracked by a
more careful application of the trace estimates. }

\begin{lemma}
\label{lemma:A7} Let ${\mathbf{u}}\in{\mathbf{H}}^{2}(B_{R}^{+})$ be a
solution of (\ref{eq:A5}) with ${\mathbf{f}}$, {%
\color{red}
${\mathbf{g}}_{D}$, ${\mathbf{g}}_{B}$, }
${\mathbf{G}}$ analytic and satisfying (\ref{eq:A3})---(\ref{eq:A4b}). Then
there exist $K>1$ and $C_{R}>1$ such that for all $p\geq2$
\[
{[\![{\mathbf{u}}]\!]}_{p,2,B_{R}^{+}}\leq C_{{\mathbf{u}}}(B_{R}^{+}%
)K^{p}\max(p,k)^{p}%
\]
with {%
\color{red}
$C_{{\mathbf{u}}}(B_{R}^{+})=C_{R}(C_{\mathbf{f}} {%
\color{red}%
k^{-2} }
+C_{{\mathbf{g}}_{D}}+k^{-1}C_{{\mathbf{g}}_{B}}+\Vert{\mathbf{u}}\Vert
_{B_{R}^{+}}+k^{-1}\Vert{\mathbf{u}}\Vert_{1,B_{R}^{+}})$. }

\end{lemma}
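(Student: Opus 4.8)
The plan is to adapt the proof of \cite[Lemma~A.7]{nicaise-tomezyk19}, the only genuinely new feature being the inhomogeneous boundary data ${\mathbf g}_D$, ${\mathbf g}_B$, which must be carried through the same recursion. Since the system $(L,D,B)$ is elliptic in the sense of \cite{CoDaNi} (the covering condition was checked in the example following Theorem~\ref{thm:A1}), its coefficients are analytic, and $L$ has order $2$ while $D$, $B$ have orders $0$, $1$, we have at our disposal the local a priori estimate on half-balls \cite[(2.47)]{CoDaNi}, \cite[Cor.~{2.2.16}]{CoDaNi} together with its iterated weighted-norm version \cite[Prop.~{2.6.6}, Prop.~{2.6.7}]{CoDaNi}. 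The starting observation is that, differentiating (\ref{eq:A5}) tangentially, i.e.\ applying $\partial^{\alpha'}$ with $\alpha'\in\mathbb{N}^{n-1}$, the function $\partial^{\alpha'}{\mathbf u}$ solves an elliptic system of the same type on a slightly smaller half-ball, with volume right-hand side $\partial^{\alpha'}({\mathbf f}+k^2{\mathbf u})$ plus commutators $[\partial^{\alpha'},L]{\mathbf u}$ of lower differential order coming from the analytic (but non-constant) coefficients of $L$, and boundary data $\partial^{\alpha'}{\mathbf g}_D + [\partial^{\alpha'},D]{\mathbf u}$, respectively $\partial^{\alpha'}(k{\mathbf G}{\mathbf u}+{\mathbf g}_B) + [\partial^{\alpha'},B]{\mathbf u}$.

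First I would set up an induction on $p$ for the weighted tangential seminorm ${[\![{\mathbf u}]\!]}_{p,2,B_R^+}$. The base case $p=2$ is the hypothesis ${\mathbf u}\in{\mathbf H}^2(B_R^+)$ combined with the local a priori estimate, which yields the claim for a suitable $C_R$. For the inductive step one applies the a priori estimate at scale $\rho\sim R/(2p)$ on the half-ball $B_{R-p\rho}^+$ and bounds each contribution separately: the analytic volume datum ${\mathbf f}$ via (\ref{eq:A3a}); the boundary data ${\mathbf g}_D$, ${\mathbf g}_B$ via the trace estimates (\ref{eq:A13a}), (\ref{eq:A13b}); the bilinear boundary term $k{\mathbf G}{\mathbf u}$ via Lemma~\ref{lemma:A6}; and the commutator/coefficient terms by expanding with the Leibniz rule and turning the resulting sums over multi-indices into sums over their lengths via Lemma~\ref{lemma:A4}, the corollary following it, and the geometric-sum bound $\sum_p h(p)e^{|\alpha'|-p}$. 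The powers $\rho^p$, $\rho^{p+2}$, $\rho^{p+3/2}$, $\rho^{1/2+p}$ built into the norms ${[\![\cdot]\!]}_{p,2}$, $\rho_*^2{[\![\cdot]\!]}_{p,2}$, $\rho_*^{3/2}{[\![\cdot]\!]}_{p,1/2,\Gamma_R}$, $\rho_*^{1/2}{[\![\cdot]\!]}_{p,3/2,\Gamma_R}$ are exactly what absorbs the combinatorial $p!$-type factors and, at scale $\rho\sim R/(2p)$, leaves a spare power $\rho^2\sim R^2/p^2$ (respectively $\rho^{3/2}$, $\rho^{1/2}$) to be used for the $k$-bookkeeping below. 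One then fixes $\lambda_{\mathbf G}'\geq\lambda_{\mathbf G}$ and the remaining auxiliary constants, and finally chooses $K>1$ large enough (depending on $\lambda_{\mathbf f}$, $\lambda_{\mathbf G}$, $\lambda_{{\mathbf g}_D}$, $\lambda_{{\mathbf g}_B}$, the ellipticity constants, $n$, and $R$) so that all geometric series converge; the zeroth-order terms $\|{\mathbf u}\|_{B_R^+}$, $k^{-1}\|{\mathbf u}\|_{1,B_R^+}$ from the a priori estimate, together with $C_{\mathbf f}k^{-2}$, $C_{{\mathbf g}_D}$, $k^{-1}C_{{\mathbf g}_B}$, are collected into $C_{\mathbf u}(B_R^+)$ as stated.

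The main obstacle is the $k$-explicit bookkeeping. Each step of the recursion produces, from the volume right-hand side, a term $k^2\rho_*^2{[\![{\mathbf u}]\!]}_{p,2}\lesssim (kR/p)^2\,{[\![{\mathbf u}]\!]}_{p,2}$ at a strictly lower differential level, and, from the boundary, a term $k\,\rho_*^{3/2}{[\![{\mathbf G}{\mathbf u}]\!]}_{p,1/2,\Gamma_R}$, which by Lemma~\ref{lemma:A6} is $\lesssim k\,(R/p)^{3/2}$ times a sum of ${[\![{\mathbf u}]\!]}_{q,2}$ with $q\le p+1$. The target growth $\max(p,k)^p$ is tuned precisely to these factors: when $k\le p$ the spare powers of $R/p$ keep $(kR/p)^2$ and $k(R/p)^{3/2}$ bounded, while when $k>p$ one has $\max(p,k)=k$, so the extra powers of $k$ are absorbed into the increment of $\max(\cdot,k)^{\cdot}$ between consecutive levels, with spare negative powers of $p$ remaining. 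Verifying this for every term — in particular for the commutators $[\partial^{\alpha'},L]{\mathbf u}$, where the analytic coefficients of $L$ contribute an extra $\max(|\alpha'|,k)^{|\alpha'|-p}$-type factor through (\ref{eq:A4neu}) — while keeping the new data-dependent constants $C_{{\mathbf g}_D}$, $k^{-1}C_{{\mathbf g}_B}$ visible throughout, is the bulk of the work. (Once ${[\![{\mathbf u}]\!]}_{p,2,B_R^+}$ is controlled, the full seminorm ${[\![{\mathbf u}]\!]}_{p,q,B_R^+}$ for all $q\le p$ would follow, as in \cite{nicaise-tomezyk19,CoDaNi}, by using $L({\mathbf u})={\mathbf f}+k^2{\mathbf u}$ to trade each normal derivative beyond the second for tangential ones, but that is a separate statement and not part of the present lemma.)
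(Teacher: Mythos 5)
Your proposal is correct and follows essentially the same route as the paper: an induction on $p$ driven by the weighted a priori estimate of \cite[Prop.~{2.6.6}]{CoDaNi} (the paper's (\ref{eq:A14})), with Lemma~\ref{lemma:A6} handling the term $k\mathbf{G}\mathbf{u}$, the trace bounds (\ref{eq:A13a})--(\ref{eq:A13b}) handling $\mathbf{g}_D$ and $\mathbf{g}_B$, the absorption of $k^2$ via $l+2\le p+1$ in $\max(\cdot,k)^{\cdot}$, and a geometric-series choice of $K$. The only presentational difference is that the paper invokes the Costabel--Dauge--Nicaise estimate as a black box, so the tangential-differentiation/commutator bookkeeping you anticipate is already absorbed into the constant $A$ of (\ref{eq:A14}) and need not be redone (and, incidentally, the coefficients of $L$ carry $k$-independent analytic bounds, not the $k$-dependent bound (\ref{eq:A4neu}), which concerns only $\mathbf{G}$).
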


\begin{proof}
We will prove this result by induction, by applying a standard analytic
regularity result (i.e., \cite[Prop.~{2.6.6}]{CoDaNi}), which gives us a real
number $A\geq1$ such that for all $p\geq2$ \setcounter{equation}{13}
\begin{equation}
{[\![{\mathbf{u}}]\!]}_{p,2,B_{R}^{+}}\leq\sum_{l=0}^{p-2}A^{p-1-l}\left(
\rho_{\ast}^{2}{[\![L({\mathbf{u}})]\!]}_{l,0,B_{R}^{+}}+\rho_{\ast}^{\frac
{3}{2}}{[\![B({\mathbf{u}})]\!]}_{l,\frac{1}{2},\Gamma_{R}}+{%
\color{red}%
\rho_{\ast}^{\frac{1}{2}}{[\![D({\mathbf{u}})]\!]}_{l,\frac{3}{2},\Gamma_{R}}%
}\right)  +A^{p-1}\sum_{l=0}^{1}{[\![{\mathbf{u}}]\!]}_{l,l,B_{R}^{+}}.
\label{eq:A14}%
\end{equation}
\textbf{Initialization:} For $p=2$, by (\ref{eq:A14}) we have
\begin{align*}
&  {[\![{\mathbf{u}}]\!]}_{2,2,B_{R}^{+}}\leq A\left(  \rho_{\ast}%
^{2}{[\![L({\mathbf{u}})]\!]}_{0,0,B_{R}^{+}}+\rho_{\ast}^{\frac{3}{2}%
}{[\![B({\mathbf{u}})]\!]}_{0,\frac{1}{2},\Gamma_{R}}{%
\color{red}%
+\rho_{\ast}^{\frac{1}{2}}{[\![D({\mathbf{u}})]\!]}_{0,\frac{3}{2},\Gamma_{R}%
}}\right)  +A\sum_{l=0}^{1}{[\![{\mathbf{u}}]\!]}_{l,l,B_{R}^{+}}\\
&  \leq A\left(  \rho_{\ast}^{2}{[\![{\mathbf{f}}+k^{2}{\mathbf{u}}%
]\!]}_{0,0,B_{R}^{+}}+\rho_{\ast}^{\frac{3}{2}}{[\![k{\mathbf{G}}{\mathbf{u}}{%
\color{red}%
+{\mathbf{g}}_{B}}]\!]}_{0,\frac{1}{2},\Gamma_{R}}{%
\color{red}%
+\rho_{\ast}^{\frac{1}{2}}{[\![{\mathbf{g}}_{D}]\!]}_{0,\frac{3}{2},\Gamma
_{R}}}\right)  +A\sum_{l=0}^{1}{[\![{\mathbf{u}}]\!]}_{l,l,B_{R}^{+}}\\
&  \overset{R\leq1}{\leq}A\left(  \Vert{\mathbf{f}}\Vert_{B_{R}^{+}}%
+k^{2}\Vert{\mathbf{u}}\Vert_{B_{R}^{+}}+k\Vert{\mathbf{G}}{\mathbf{u}}%
\Vert_{\frac{1}{2},\Gamma_{R}}{%
\color{red}%
+\Vert{\mathbf{g}}_{B}\Vert_{\frac{1}{2},\Gamma_{R}}}{%
\color{red}%
+\Vert{\mathbf{g}}_{D}\Vert_{\frac{3}{2},\Gamma_{R}}}\right)  +A\sum_{l=0}%
^{1}{[\![{\mathbf{u}}]\!]}_{l,l,B_{R}^{+}}\\
&  \overset{(\ref{eq:A13a}),(\ref{eq:A13b})}{\leq}A\left(  \Vert{\mathbf{f}%
}\Vert_{B_{R}^{+}}+(k^{2}+1)\Vert{\mathbf{u}}\Vert_{B_{R}^{+}}+kC_{tr,R}%
\Vert{\mathbf{G}}{\mathbf{u}}\Vert_{1,B_{R}^{+}}{%
\color{red}%
+C_{tr,R}\Vert{\mathbf{G}}_{B}\Vert_{1,B_{R}^{+}}}\right. \\
&  \quad\left.  {%
\color{red}%
+C_{tr,R}^{\prime}\Vert{\mathbf{G}}_{D}\Vert_{2,B_{R}^{+}}}+\Vert{\mathbf{u}%
}\Vert_{1,B_{R}^{+}}\right)
\end{align*}
with the constants $C_{tr,R}$, {%
\color{red}%
$C_{tr,R}^{\prime}$ }
introduced before. By noticing that
\begin{align*}
kC_{tr,R}\Vert{\mathbf{G}}{\mathbf{u}}\Vert_{1,B_{R}^{+}}  &  \leq{%
\color{red}%
CkC_{tr,R}C_{\mathbf{G}}}\left(  \Vert{\mathbf{u}}\Vert_{1,B_{R}^{+}}%
+\lambda_{\mathbf{G}}\Vert{\mathbf{u}}\Vert_{B_{R}^{+}}\right)  ,\\
{%
\color{red}%
\Vert{\mathbf{G}}_{B}\Vert_{1,B_{R}^{+}}}  &  {%
\color{red}%
\leq CC_{{\mathbf{g}}_{B}}\lambda_{{\mathbf{g}}_{B}}k},\\
{%
\color{red}%
\Vert{\mathbf{G}}_{D}\Vert_{2,B_{R}^{+}}}  &  {\color{red}\leq CC_{{\mathbf{g}%
}_{D}}\lambda_{{\mathbf{g}}_{D}}^{2}k^{2}}%
\end{align*}
we then have
\begin{align*}
{[\![{\mathbf{u}}]\!]}_{2,2,B_{R}^{+}}  &  \leq A\bigl(\Vert{\mathbf{f}}%
\Vert_{B_{R}^{+}}+(k^{2}+1+CC_{tr,R}C_{\mathbf{G}}\lambda_{\mathbf{G}}%
k)\Vert{\mathbf{u}}\Vert_{B_{R}^{+}}+(CC_{tr,R}C_{\mathbf{G}}k+1)\Vert
{\mathbf{u}}\Vert_{1,B_{R}^{+}}\bigr)\\
&  \qquad{%
\color{red}%
+A\bigl(CC_{tr,R}C_{{\mathbf{g}}_{B}}k\lambda_{{\mathbf{g}}_{B}}%
+CC_{tr,R}^{\prime}C_{{\mathbf{g}}_{D}}k^{2}\lambda_{{\mathbf{g}}_{D}}%
^{2}\bigr)}\\
&  \leq Ak^{2}\Bigl[{%
\color{red}%
k^{-2}}C_{\mathbf{f}}{%
\color{red}%
+k^{-1}CC_{tr,R}C_{{\mathbf{g}}_{B}}\lambda_{{\mathbf{g}}_{B}}+CC_{tr,R}%
^{\prime}C_{{\mathbf{g}}_{D}}\lambda_{{\mathbf{g}}_{D}}^{2}+}\\
&  \qquad(2+CC_{tr,R}C_{\mathbf{G}}\lambda_{\mathbf{G}}{%
\color{red}%
k^{-1}})\Vert{\mathbf{u}}\Vert_{B_{R}^{+}}+(CC_{tr,R}C_{\mathbf{G}}+{%
\color{red}%
k^{-1}})k^{-1}\Vert{\mathbf{u}}\Vert_{1,B_{R}^{+}}\Bigr]\\
&  \leq Ak^{2}\max(2+CC_{tr,R}C_{\mathbf{G}}{%
\color{red}%
k^{-1}}\lambda_{\mathbf{G}},CC_{tr,R}C_{\mathbf{G}}+{%
\color{red}%
k^{-1}},{%
\color{red}%
CC_{tr,R}\lambda_{{\mathbf{g}}_{B}},CC_{tr,R}^{\prime}\lambda_{{\mathbf{g}%
}_{D}}^{2}})\times\\
&  \qquad\left(  C_{\mathbf{f}}{%
\color{red}%
k^{-2}}+{%
\color{red}%
k^{-1}C_{{\mathbf{g}}_{B}}+C_{{\mathbf{g}}_{D}}}+\Vert{\mathbf{u}}\Vert
_{B_{R}^{+}}+k^{-1}\Vert{\mathbf{u}}\Vert_{1,B_{R}^{+}}\right) \\
&  \leq{%
\color{red}%
C_{R}}C_{\mathbf{u}}(B_{R}^{+})\max(2,k)^{2}\leq C_{\mathbf{u}}(B_{R}%
^{+})K^{2}\max(2,k)^{2},
\end{align*}%
\color{black}%
with $C_{R}\geq A\max(2+CC_{tr,R}C_{\mathbf{G}}{%
\color{red}%
k^{-1}}
\lambda_{\mathbf{G}},CC_{tr,R}C_{\mathbf{G}}+{%
\color{red}%
k^{-1}}
,{%
\color{red}%
CC_{tr,R}\lambda_{{\mathbf{g}}_{B}},CC_{tr,R}^{\prime}\lambda_{{\mathbf{g}%
}_{D}}^{2}}
)\geq1$ and since $K\geq1$.

\textbf{Induction hypothesis:} For all $2\leq p^{\prime}\leq p$, we have
\begin{equation}
{[\![{\mathbf{u}}]\!]}_{p^{\prime},2,B_{R}^{+}}\leq C_{\mathbf{u}}(B_{R}%
^{+})K^{p^{\prime}}\max(p^{\prime},k)^{p^{\prime}}. \label{eq:A15}%
\end{equation}
We will show this estimate for $p+1$: Using (\ref{eq:A14}), we may write
\begin{equation}
{[\![{\mathbf{u}}]\!]}_{p+1,2,B_{R}^{+}}\leq\sum_{l=0}^{p-1}A^{p-l}\left(
\rho_{\ast}^{2}{[\![L({\mathbf{u}})]\!]}_{l,0,B_{R}^{+}}+\rho_{\ast}^{\frac
{3}{2}}{[\![B({\mathbf{u}})]\!]}_{l,\frac{1}{2},\Gamma_{R}}+{%
\color{red}%
\rho_{\ast}^{\frac{1}{2}}{[\![D({\mathbf{u}})]\!]}_{l,\frac{3}{2},\Gamma_{R}}%
}\right)  +A^{p}\sum_{l=0}^{1}{[\![\mathbf{u}]\!]}_{l,l,B_{R}^{+}}.
\label{eq:A16}%
\end{equation}
We now need to estimate each term of this right-hand side. We start by
estimating $\rho_{\ast}^{2}{[\![L({\mathbf{u}})]\!]}_{l,0,B_{R}^{+}}$ for
$l\leq p-1$: First we notice that
\[
\rho_{\ast}^{2}{[\![L({\mathbf{u}})]\!]}_{l,0,B_{R}^{+}}\overset{{%
\color{red}%
R\leq1}}{\leq}{[\![{\mathbf{f}}+k^{2}{\mathbf{u}}]\!]}_{l,0,B_{R}^{+}}%
\leq{[\![{\mathbf{f}}]\!]}_{l,0,B_{R}^{+}}+k^{2}{[\![{\mathbf{u}}%
]\!]}_{l,2,B_{R}^{+}}.
\]
By the induction hypothesis (\ref{eq:A15}), we then have
\begin{align*}
\rho_{\ast}^{2}{[\![L({\mathbf{u}})]\!]}_{l,0,B_{R}^{+}}  &  \leq
C_{\mathbf{f}}\lambda_{\mathbf{f}}^{l}\max(l,k)^{l}+k^{2}C_{\mathbf{u}}%
(B_{R}^{+})K^{l}\max(l,k)^{l}\\
&  \leq k^{2}\max(l,k)^{l}C_{\mathbf{u}}(B_{R}^{+})K^{l}\left(  {%
\color{red}%
\frac{\lambda_{\mathbf{f}}^{l}}{K^{l}}}+1\right)  .
\end{align*}
As $l+2\leq p+1$, this estimate directly implies that
\begin{align*}
\rho_{\ast}^{2}{[\![L({\mathbf{u}})]\!]}_{l,0,B_{R}^{+}}  &  \leq
\max(p+1,k)^{p+1}C_{\mathbf{u}}(B_{R}^{+})K^{l}\left(  \left(  \frac
{\lambda_{\mathbf{f}}^{l}}{K}\right)  ^{l}+1\right) \\
&  \leq2\max(p+1,k)^{p+1}C_{\mathbf{u}}(B_{R}^{+})K^{l},
\end{align*}
for $K>\lambda_{\mathbf{f}}$. Multiplying this estimate by $A^{p-l}$ and
summing on $l$, one gets
\begin{align*}
\sum_{l=0}^{p-1}A^{p-l}\rho_{\ast}^{2}{[\![L({\mathbf{u}})]\!]}_{l,0,B_{R}%
^{+}}  &  \leq C_{\mathbf{u}}(B_{R}^{+})K^{p+1}\max(p+1,k)^{p+1}\frac{2}%
{K}\sum_{l=0}^{p-1}A^{p-l}K^{l-p}\\
&  \leq C_{\mathbf{u}}(B_{R}^{+})K^{p+1}\max(p+1,k)^{p+1}\frac{2}{K}\sum
_{l=0}^{p-1}\left(  \frac{A}{K}\right)  ^{p-l}.
\end{align*}
If $K\geq2A$, then $\sum_{l=0}^{p-1}(\frac{A}{K})^{p-l}\leq\sum_{l=1}^{\infty
}(\frac{A}{K})^{l}\leq1$, which yields
\begin{equation}
\sum_{l=0}^{p-1}A^{p-l}\rho_{\ast}^{2}{[\![L({\mathbf{u}})]\!]}_{l,0,B_{R}%
^{+}}\leq C_{\mathbf{u}}(B_{R}^{+})K^{p+1}\max(p+1,k)^{p+1}\frac{2}{K}.
\label{eq:A17}%
\end{equation}
\emph{Estimation of $\rho_{\ast}^{\frac{3}{2}}{[\![B({\mathbf{u}}%
)]\!]}_{l,\frac{1}{2},\Gamma_{R}}$:} By the boundary condition on
${\mathbf{u}}$, we have
\[
\rho_{\ast}^{\frac{3}{2}}{[\![B({\mathbf{u}})]\!]}_{l,\frac{1}{2},\Gamma_{R}%
}\leq k\rho_{\ast}^{\frac{3}{2}}{[\![{\mathbf{G}}{\mathbf{u}}]\!]}_{l,\frac
{1}{2},\Gamma_{R}}{%
\color{red}%
+\rho_{\ast}^{\frac{3}{2}}{[\![{\mathbf{g}}_{B}]\!]}_{l,\frac{1}{2},\Gamma
_{R}}}
\]%
\color{black}%
and by the estimate (\ref{eq:A9}) {
\color{red}%
and (\ref{eq:A13b}) }
, we get
\begin{align}
\rho_{\ast}^{\frac{3}{2}}{[\![B({\mathbf{u}})]\!]}_{l,\frac{1}{2},\Gamma_{R}}
&  \leq kCC_{tr,R}C_{\mathbf{G}}\sum_{p^{\prime}=0}^{l+1}(\lambda_{\mathbf{G}%
}^{\prime})^{l+1-p^{\prime}}\max(l+1,k)^{l+1-p^{\prime}}{[\![{\mathbf{u}}%
]\!]}_{p^{\prime},2,B_{R}^{+}}\label{eq:A18}\\
&  \qquad{%
\color{red}%
+2C_{tr,R}C_{{\mathbf{g}}_{B}}\lambda_{{\mathbf{g}}_{B}}^{l+1}\max
(l+2,k)^{l+1}=:I+II.}\nonumber
\end{align}
{
\color{red}%
For the first term, $I$, }the induction hypothesis (\ref{eq:A15}) then leads
to
\begin{align*}
{%
\color{red}%
I}  &  \leq kCC_{tr,R}C_{\mathbf{G}}C_{\mathbf{u}}(B_{R}^{+})\sum_{p^{\prime
}=0}^{l+1}(\lambda_{\mathbf{G}}^{\prime})^{l+1-p^{\prime}}K^{p^{\prime}}%
\max(l+1,k)^{l+1-p^{\prime}}\max(p^{\prime},k)^{p^{\prime}}\\
&  \leq kCC_{tr,R}C_{\mathbf{G}}C_{\mathbf{u}}(B_{R}^{+})\max(l+1,k)^{l+1}%
K^{l+1}\sum_{p^{\prime}=0}^{l+1}\left(  \frac{\lambda_{\mathbf{G}}^{\prime}%
}{K}\right)  ^{l+1-p^{\prime}}.
\end{align*}
Hence, for $K\geq2\lambda_{\mathbf{G}}^{\prime}$ {%
\color{red}%
and $K\geq\lambda_{{\mathbf{g}}_{B}}$ }
(recalling that $l+2\leq p+1$ and that $\sum_{p^{\prime}=0}^{l+1}\left(
\frac{\lambda_{\mathbf{G}}^{\prime}}{K}\right)  ^{l+1-p^{\prime}}\leq2$), we
deduce%
\begin{align*}
\rho_{\ast}^{\frac{3}{2}}{[\![B({\mathbf{u}})]\!]}_{l,\frac{1}{2},\Gamma_{R}}
&  \leq{%
\color{red}%
I+II}\leq2CC_{tr}C_{\mathbf{G}}C_{\mathbf{u}}(B_{R}^{+})K^{l+1}\max
(p+1,k)^{p+1}\\
&  \quad{%
\color{red}%
+2C_{tr,R}k^{-1}C_{{\mathbf{g}}_{B}}\lambda_{{\mathbf{g}}_{B}}^{l+1}%
\max(l+2,k)^{l+1%
\color{red}%
}}\\
&  \leq2C_{tr,R}(CC_{\mathbf{G}}%
\color{red}%
{+1)}%
\color{black}%
C_{\mathbf{u}}(B_{R}^{+})K^{l+1}\max(p+1,k)^{p+1}.
\end{align*}
Multiplying this estimate by$A^{p-l}$ and summing on $l$, we get
\begin{align*}
\sum_{l=0}^{p-1}A^{p-l}\rho_{\ast}^{\frac{3}{2}}{[\![B({\mathbf{u}}%
)]\!]}_{l,\frac{1}{2},\Gamma_{R}}  &  \leq C_{\mathbf{u}}(B_{R}^{+}%
)K^{p+1}\max(p+1,k)^{p+1}2C_{tr,R}(CC_{\mathbf{G}}%
\color{red}%
{+1)}%
\color{black}%
\sum_{l=0}^{p-1}A^{p-l}K^{l-p}\\
&  \leq C_{\mathbf{u}}(B_{R}^{+})K^{p+1}\max(p+1,k)^{p+1}2C_{tr,R}%
(CC_{\mathbf{G}}{%
\color{red}%
+1})\frac{A}{K}\sum_{l=0}^{\infty}\left(  \frac{A}{K}\right)  ^{l}.
\end{align*}
Again, for $K\geq2A$, we arrive at%
\begin{subequations}
\label{eq:A19}
\end{subequations}%
\begin{equation}
\sum_{l=0}^{p-1}A^{p-l}\rho_{\ast}^{\frac{3}{2}}{[\![B({\mathbf{u}}%
)]\!]}_{l,\frac{1}{2},\Gamma_{R}}\leq C_{\mathbf{u}}(B_{R}^{+})K^{p+1}%
\max(p+1,k)^{p+1}\frac{4C_{tr,R}(CC_{\mathbf{G}}{%
\color{red}%
+1})A}{K}. \tag{%
\ref{eq:A19}%
}\label{eq:A19neu}%
\end{equation}
{
\color{red}%
Estimation of $\rho_{\ast}^{\frac{1}{2}}{[\![D({\mathbf{u}})]\!]}_{l,\frac
{3}{2},\Gamma_{R}}$:
\begin{align*}
\rho_{\ast}^{\frac{1}{2}}{[\![D({\mathbf{u}})]\!]}_{l,\frac{3}{2},\Gamma_{R}}
&  =\rho_{\ast}^{\frac{1}{2}}{[\![{\mathbf{g}}_{D}]\!]}_{l,\frac{3}{2}%
,\Gamma_{R}}\overset{(\ref{eq:A13a})}{\leq}2C_{tr,R}^{\prime}C_{{\mathbf{g}%
}_{D}}\lambda_{{\mathbf{g}}_{D}}^{l+2}\max(l+2,k)^{l+2}\\
&  \overset{l+2\leq p+1}{\leq}2C_{tr,R}^{\prime}C_{{\mathbf{g}}_{D}}%
\lambda_{{\mathbf{g}}_{D}}^{l+2}\max(p+1,k)^{p+1}.
\end{align*}%
\color{red}%
Multiplying this estimate by $A^{p-l}$ and summing on $l$, we get for
$K\geq\lambda_{{\mathbf{g}}_{D}}$ and $K\geq2A$
\begin{subequations}
\begin{align}
\sum_{l=0}^{p-1}A^{p-l}\rho_{\ast}^{\frac{1}{2}}{[\![D({\mathbf{u}}%
)]\!]}_{l,\frac{3}{2},\Gamma_{R}}  &  \leq C_{{\mathbf{g}}_{D}}\max
(p+1,k)^{p+1}2C_{tr,R}^{\prime}\lambda_{{\mathbf{g}}_{D}}^{2}\sum_{l=0}%
^{p-1}A^{p-l}\lambda_{{\mathbf{g}}_{D}}^{l}\nonumber\\
&  \overset{\lambda_{{\mathbf{g}}_{D}}\leq K}{\leq}C_{{\mathbf{g}}_{D}}%
\max(p+1,k)^{p+1}2C_{tr,R}^{\prime}\lambda_{{\mathbf{g}}_{D}}^{2}\sum
_{l=0}^{p-1}A^{p-l}K^{l}\nonumber\\
&  \leq C_{{\mathbf{g}}_{D}}\max(p+1,k)^{p+1}2C_{tr,R}^{\prime}\lambda
_{{\mathbf{g}}_{D}}^{2}K^{p}\sum_{l=0}^{p-1}A^{p-l}K^{l-p}\nonumber\\
&  \leq C_{{\mathbf{g}}_{D}}\max(p+1,k)^{p+1}2C_{tr,R}^{\prime}\lambda
_{{\mathbf{g}}_{D}}^{2}K^{p}\frac{A}{K}\sum_{l=0}^{\infty}\left(  \frac{A}%
{K}\right)  ^{l}\nonumber\\
&  \leq C_{{\mathbf{g}}_{D}}\max(p+1,k)^{p+1}K^{p+1}\frac{4C_{tr,R}^{\prime
}A\lambda_{{\mathbf{g}}_{D}}^{2}}{K^{2}}\nonumber\\
&  \leq C_{{\mathbf{u}}}(B_{R}^{+})\max(p+1,k)^{p+1}K^{p+1}\frac
{4C_{tr,R}^{\prime}A\lambda_{{\mathbf{g}}_{D}}^{2}}{K^{2}}. \tag{%
\ref{eq:A19}%
+1/3}\label{eq:A19a}%
\end{align}
\color{black}%
Finally using the definition $C_{\mathbf{u}}(B_{R}^{+})$, we directly check
that
\end{subequations}
\begin{equation}%
\color{black}%
\sum_{l=0}^{1}{[\![{\mathbf{u}}]\!]}_{l,l,B_{R}^{+}}\leq\frac{k}{C_{R}%
}C_{\mathbf{u}}(B_{R}^{+}) \tag{%
\ref{eq:A19}%
+2/3}\label{eq:A20}%
\end{equation}%
\color{black}%
and therefore (since we assume that $K\geq2A$)
\[
A^{p}\sum_{l=0}^{1}{[\![{\mathbf{u}}]\!]}_{l,l,B_{R}^{+}}\leq\frac{1}{C_{R}%
}C_{\mathbf{u}}(B_{R}^{+})K^{p}\max(p+1,k)^{p+1}.
\]
In summary, using this estimate (\ref{eq:A17}), (\ref{eq:A19neu}), and {
\color{red}%
(\ref{eq:A19a})}%
\color{black}%
, in (\ref{eq:A16}) we have obtained
\[
{[\![{\mathbf{u}}]\!]}_{p+1,2,B_{R}^{+}}\leq C_{\mathbf{u}}(B_{R}^{+}%
)K^{p+1}\max(p+1,k)^{p+1}\left(  \frac{2+4C_{tr,R}(CC_{\mathbf{G}}{%
\color{red}%
+1})A+\frac{1}{C_{R}}}{K}{%
\color{red}%
+\frac{4C_{tr,R}^{\prime}A\lambda_{{\mathbf{g}}_{D}}^{2}}{K^{2}}}
\right)  .
\]
This yields (\ref{eq:A15}) for $p+1$ if
\[
{%
\color{red}%
K\geq\max\left(  \lambda_{\mathbf{f}},\lambda_{{\mathbf{g}}_{D}}%
,2A,\lambda_{{\mathbf{G}}}^{\prime},2+4C_{tr,R}(CC_{\mathbf{G}}+1)A+\frac
{1}{C_{R}}+4C_{tr,R}^{\prime}\lambda_{{\mathbf{g}}_{D}}^{2}A\right)  .}
\]
}
\end{proof}

Now, we will show an equivalent lemma but which also estimates the norm of the
normal derivatives of higher order.

\begin{lemma}
\label{lemma:A8} Let ${\mathbf{u}}\in{\mathbf{H}}^{2}(B_{R}^{+})$ be a
solution of (\ref{eq:A5}) with ${\mathbf{f}}$, ${\mathbf{G}}$, {%
\color{red}
${\mathbf{G}}_{D}$, and ${\mathbf{G}}_{B}$ }
analytic and satisfying (\ref{eq:A3}), (\ref{eq:A4neu}), (\ref{eq:A4a}),
(\ref{eq:A4b}). Then there exist $K_{1}$, $K_{2}\geq1$ such that for all $p$,
$q\geq2$ with $q\leq p$, we have
\[
{[\![{\mathbf{u}}]\!]}_{p,q,B_{R}^{+}}\leq C_{\mathbf{u}}(B_{R}^{+})K_{1}%
^{p}K_{2}^{q}\max(p,k)^{p},
\]
with $C_{\mathbf{u}}(B_{R}^{+})=C_{R}\left(  C_{\mathbf{f}} {%
\color{red}%
k^{-2} +k^{-1}C_{{\mathbf{g}}_{B}}+C_{{\mathbf{g}}_{D}} }
+\Vert{\mathbf{u}}\Vert_{B_{R}^{+}}+k^{-1}\Vert{\mathbf{u}}\Vert_{1,B_{R}^{+}%
}\right)  $.
\end{lemma}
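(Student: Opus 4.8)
The argument proceeds by induction on the normal differentiation order $q$, the base case $q=2$ being exactly Lemma~\ref{lemma:A7} (with $K_1=K$ and $K_2=1$, after enlarging $K$ if necessary). For the inductive step, assume the asserted bound holds for every $q'$ with $2\le q'\le q\le p-1$; we derive it for $q+1$. The mechanism is the interior equation $L(\mathbf u)=\mathbf f+k^2\mathbf u$ in $B_R^+$. Writing $L=L^{\mathrm{pr}}+(\text{terms of order}\le 1)$ and isolating in $L^{\mathrm{pr}}$ the coefficient matrix $\mathbf A_{nn}(\mathbf x)$ of $\partial_n^2$, the ellipticity of the system (this is condition \cite[(2.50)]{CoDaNi}, which holds here as recorded in the remark following Theorem~\ref{thm:A1}) guarantees that $\mathbf A_{nn}(\mathbf x)$ is invertible with analytic inverse on $\overline{B_R^+}$. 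Hence $\partial_n^2\mathbf u=\mathbf A_{nn}^{-1}\bigl(\mathbf f+k^2\mathbf u-\mathbf r\bigr)$, where $\mathbf r$ is a linear combination, with analytic coefficients, of derivatives $\partial^\beta\mathbf u$ with $\beta_n\le 1$ and $|\beta|\le 2$; in particular the right-hand side has normal order at most $1$.

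Next I would apply $\partial^{\alpha'}\partial_n^{q-1}$ (with $|\alpha'|=p-q-1$, so that the total order is $p$) to this identity and expand by the Leibniz rule, using the analytic bounds (\ref{eq:A3a}) and (\ref{eq:A4neu}) on $\mathbf f$ and the coefficients. This expresses $\partial^{\alpha'}\partial_n^{q+1}\mathbf u$ through: (i) derivatives of $\mathbf f$ of total order $p-2$ and normal order $\le q-1$, controlled by (\ref{eq:A3a}); (ii) terms $k^2\,\partial^{\alpha'}\partial_n^{q-1}\mathbf u$ of total order $p-2$ and normal order $q-1\le q$, controlled by the induction hypothesis; and (iii) products of derivatives of the analytic coefficients $\mathbf A_{nn}^{-1}$ and those appearing in $\mathbf r$ with derivatives of $\mathbf u$ of normal order at most $q-1+1=q$, again controlled by the induction hypothesis. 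The $\rho_\ast$-weighted seminorms are designed precisely to absorb the shift from order $p$ to order $p-2$ while keeping the combinatorial factor in the form $\max(p,k)^p$; the passage from sums over multi-indices to sums over their lengths is handled by the discrete–convolution estimates of Lemma~\ref{lemma:A4} and its corollary, exactly as in the proof of Lemma~\ref{lemma:A7}. Collecting the resulting geometric series in the differentiation order yields a bound of the form $C_{\mathbf u}(B_R^+)\,K_1^{p}K_2^{q+1}\max(p,k)^p$, provided $K_1$ is chosen large compared with $\lambda_{\mathbf f}$, $\lambda_{\mathbf G}$, $\lambda_{{\mathbf g}_D}$, $\lambda_{{\mathbf g}_B}$, the interior elliptic constant of \cite[Prop.~2.6.7]{CoDaNi}, and the analytic bounds for $\mathbf A_{nn}^{-1}$, and $K_2$ is then taken large enough (depending on these quantities and on $K_1$) to dominate the multiplicative constant lost in each normal-order bootstrap step.

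No new power of $k$ is created in this step: the wavenumber enters only through the explicit factor $k^2\mathbf u$ on the right-hand side and through the $k$-dependence already built into $C_{\mathbf u}(B_R^+)=C_R\bigl(C_{\mathbf f}k^{-2}+k^{-1}C_{{\mathbf g}_B}+C_{{\mathbf g}_D}+\|\mathbf u\|_{B_R^+}+k^{-1}\|\mathbf u\|_{1,B_R^+}\bigr)$, which is why the bound of Lemma~\ref{lemma:A7} and of this lemma carry the same constant. The boundary conditions in (\ref{eq:A5}) are not used here beyond having already served to establish the base case.

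The main obstacle I expect is purely combinatorial: organizing the Leibniz expansion of $\mathbf A_{nn}^{-1}\bigl(\mathbf f+k^2\mathbf u-\mathbf r\bigr)$ so that, after weighting by the appropriate power of $\rho$, \emph{every} term carries exactly the factor $\max(p,k)^p$ (and not $\max(p+c,k)^{p+c}$), and verifying that the geometric series in the normal order closes with the chosen $K_2$ — i.e. that the constant multiplying $K_1^pK_2^q\max(p,k)^p$ in the estimate at order $q+1$ is at most $K_2$ times the analogous constant at order $q$. Both points are handled in the same way as the corresponding steps of \cite[§2.6]{CoDaNi} and as in the proof of Lemma~\ref{lemma:A7} above.
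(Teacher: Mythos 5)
Your proposal is correct in outline and shares the paper's top-level structure --- induction on the normal order $q$ with Lemma~\ref{lemma:A7} as the base case and the same constant $C_{\mathbf{u}}(B_R^+)$ --- but the inductive step is executed by a different mechanism. The paper never solves the system for $\partial_n^2{\mathbf{u}}$ by hand: it invokes \cite[Prop.~{2.6.7}]{CoDaNi} as a black box, which supplies the a priori inequality (\ref{eq:A21}) bounding ${[\![{\mathbf{u}}]\!]}_{p,q,B_R^+}$ by weighted sums of $\rho_\ast^2{[\![L({\mathbf{u}})]\!]}_{l,\nu,B_R^+}$, $\rho_\ast^{3/2}{[\![B({\mathbf{u}})]\!]}_{l,\frac12,\Gamma_R}$ and $\rho_\ast^{1/2}{[\![D({\mathbf{u}})]\!]}_{l,\frac32,\Gamma_R}$; the inductive step then reduces to estimating these data terms via the induction hypothesis together with (\ref{eq:A9}), (\ref{eq:A13a}), (\ref{eq:A13b}), and summing geometric series in $A/K_1$ and $B/K_2$. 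Your route instead re-derives the content of that proposition: inverting the $\partial_n^2$-coefficient (legitimate by condition (2.50) of \cite{CoDaNi}, i.e.\ proper ellipticity; for the Maxwell system $L$ is the block Laplacian, so this is trivial) and bootstrapping the normal order through the interior equation. This is the classical Morrey--Nirenberg mechanism and it does work. What it buys you is the observation --- correct, and invisible in the paper's version --- that the boundary conditions play no role beyond the base case; in the paper's proof $B({\mathbf{u}})$ and $D({\mathbf{u}})$ reappear in (\ref{eq:A21}) with weight $B^{q-1}$ and must be re-estimated at every step, leading to (\ref{eq:A25neu}) and (\ref{eq:A25a}). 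What it costs you is exactly the point you flag as the main obstacle: matching the shrinking domains $B^+_{R-p\rho}$ across different total orders and absorbing the Leibniz factors into $\max(p,k)^p$ with only a factor $K_2$ lost per normal-order step --- which is precisely what the $\rho_\ast$-weighted seminorms and \cite[Prop.~{2.6.7}]{CoDaNi} are designed to package, so your deferral of that work back to \cite[\S 2.6]{CoDaNi} amounts to re-proving the result the paper simply cites. Neither route is wrong; the paper's is shorter because the citation carries the combinatorial load.
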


\begin{proof}
Again, we will show this lemma by induction and by using a standard analytic
regularity result for elliptic problems (i.e., \cite[Prop.~{2.6.7}]{CoDaNi}),
which gives us \setcounter{equation}{20}%
\begin{align}
{[\![{\mathbf{u}}]\!]}_{p,q,B_{R}^{+}}  &  \leq\sum_{l=0}^{p-2}A^{p-1-l}%
\left\{  \sum_{\nu=0}^{\min(l,q-2)}B^{q-1-\nu}\rho_{\ast}^{2}%
{[\![L({\mathbf{u}})]\!]}_{l,\nu,B_{R}^{+}}+B^{q-1}\left(  \rho_{\ast}%
^{\frac{3}{2}}{[\![B({\mathbf{u}})]\!]}_{l,\frac{1}{2},\Gamma_{R}}{%
\color{red}%
+\rho_{\ast}^{\frac{1}{2}}{[\![D({\mathbf{u}})]\!]}_{l,\frac{3}{2},\Gamma_{R}%
}}\right)  \right\} \nonumber\label{eq:A21}\\
&  \qquad+A^{p-1}B^{q-1}\sum_{l=0}^{1}{[\![{\mathbf{u}}]\!]}_{l,l,B_{R}^{+}}%
\end{align}
for some positive constants $A$ and $B\geq1$. The induction is done on $q$,
the initialization step $q=2$ is obtained from Lemma~\ref{lemma:A7} by taking
$K_{1}\geq K$ and $K_{2}\geq1$. The induction hypothesis is: For all $p\geq3$,
$2\leq q^{\prime}\leq p-1$, it holds \setcounter{equation}{21}
\begin{equation}
{[\![{\mathbf{u}}]\!]}_{p,q^{\prime},B_{R}^{+}}\leq C_{{\mathbf{u}}}(B_{R}%
^{+})K_{1}^{p}K_{2}^{q^{\prime}}\max(p,k)^{p}. \label{eq:A22}%
\end{equation}
We use the estimate (\ref{eq:A21}) to get \setcounter{equation}{22}
\begin{align}
{[\![{\mathbf{u}}]\!]}_{p,q+1,B_{R}^{+}}  &  \leq\sum_{l=0}^{p-2}%
A^{p-1-l}\left\{  \sum_{\nu=0}^{\min(l,q-1)}B^{q-\nu}\rho_{\ast}%
^{2}{[\![L({\mathbf{u}})]\!]}_{l,\nu,B_{R}^{+}}+B^{q}\rho_{\ast}^{\frac{3}{2}%
}{[\![B({\mathbf{u}})]\!]}_{l,\frac{1}{2},\Gamma_{R}}+{%
\color{red}%
B^{q}\rho_{\ast}^{\frac{1}{2}}{[\![D({\mathbf{u}})]\!]}_{l,\frac{3}{2}%
,\Gamma_{R}}}\right\} \nonumber\label{eq:A23}\\
&  +A^{p-1}B^{q}\sum_{l=0}^{1}{[\![{\mathbf{u}}]\!]}_{l,l,B_{R}^{+}}.
\end{align}
\emph{Estimate of $\rho_{\ast}^{2}{[\![L({\mathbf{u}})]\!]}_{l,\nu,B_{R}^{+}}%
$}. By the induction hypothesis (\ref{eq:A22}) we may write
\begin{align*}
\rho_{\ast}^{2}{[\![L({\mathbf{u}})]\!]}_{l,\nu,B_{R}^{+}}  &  \leq
{[\![{\mathbf{f}}]\!]}_{l,\nu,B_{R}^{+}}+k^{2}{[\![{\mathbf{u}}]\!]}%
_{l,\nu,B_{R}^{+}}\\
&  \leq C_{\mathbf{f}}\lambda_{\mathbf{f}}^{l}\max(l,k)^{l}+k^{2}%
C_{{\mathbf{u}}}(B_{R}^{+})K_{1}^{l}K_{2}^{\nu}\max(l,k)^{l}\\
&  \leq C_{{\mathbf{u}}}(B_{R}^{+})K_{1}^{l}K_{2}^{\nu}k^{2}\max
(l,k)^{l}\left(  \left(  \frac{\lambda_{\mathbf{f}}}{K_{1}}\right)  ^{l}{%
\color{red}%
\frac{1}{K_{2}^{\nu}}}+1\right) \\
&  \overset{l\leq p-2}{\leq}C_{\mathbf{u}}(B_{R}^{+})K_{1}^{p}K_{2}^{q+1}%
\max(p,k)^{p}\frac{2}{K_{1}K_{2}}K_{1}^{l-p+1}K_{2}^{\nu-q}%
\end{align*}
if $K_{1}\geq\lambda_{\mathbf{f}}$. Multiplying this estimate by
$A^{p-1-l}B^{q-\nu}$ and summing on $\nu$ and $l$, one gets
\begin{align*}
&  \sum_{l=0}^{p-2}A^{p-1-l}\sum_{\nu=0}^{\min(l,q-1)}B^{q-\nu}\rho_{\ast}%
^{2}{[\![L({\mathbf{u}})]\!]}_{l,\nu,B_{R}^{+}}\\
&  \quad\leq C_{\mathbf{u}}(B_{R}^{+})K_{1}^{p}K_{2}^{q+1}\max(p,k)^{p}%
\frac{2}{K_{1}K_{2}}\sum_{l=0}^{p-2}A^{p-1-l}\sum_{\nu=0}^{\min(l,q-1)}%
B^{q-\nu}K_{1}^{l-p+1}K_{2}^{\nu-q}\\
&  \leq C_{\mathbf{u}}(B_{R}^{+})K_{1}^{p}K_{2}^{q+1}\max(p,k)^{p}\frac
{2}{K_{1}K_{2}}\sum_{l=0}^{p-2}\left(  \frac{A}{K_{1}}\right)  ^{p-1-l}%
\sum_{\nu=0}^{\min(l,q-1)}\left(  \frac{B}{K_{2}}\right)  ^{q-\nu}.
\end{align*}
Choosing $K_{1}\geq2A$ and $K_{2}\geq2B$, we conclude that%
\begin{equation}
\sum_{l=0}^{p-2}A^{p-1-l}\sum_{\nu=0}^{\min(l,q-1)}B^{q-\nu}\rho_{\ast}%
^{2}{[\![L({\mathbf{u}})]\!]}_{l,\nu,B_{R}^{+}}\leq C_{\mathbf{u}}(B_{R}%
^{+})K_{1}^{p}K_{2}^{q+1}\max(p,k)^{p}\frac{8}{K_{1}K_{2}}. \label{eq:A24}%
\end{equation}
\emph{Estimation of $\rho_{\ast}^{\frac{3}{2}}{[\![B({\mathbf{u}}%
)]\!]}_{l,\frac{1}{2},\Gamma_{R}}$ for $l\leq p-2$:} We use the estimate
(\ref{eq:A9}) and the induction hypothesis (\ref{eq:A22}) {%
\color{red}%
and (\ref{eq:A13b}) }to get
\begin{align*}
\rho_{\ast}^{\frac{3}{2}}{[\![B({\mathbf{u}})]\!]}_{l,\frac{1}{2},\Gamma_{R}}
&  {%
\color{red}%
\leq\rho_{\ast}^{\frac{3}{2}}{[\![k{\mathbf{G}}{\mathbf{u}}]\!]}_{l,\frac
{1}{2},\Gamma_{R}}+\rho_{\ast}^{\frac{3}{2}}{[\![{\mathbf{g}}_{B}%
]\!]}_{l,\frac{1}{2},\Gamma_{R}}}\\
& \\
&  \leq kCC_{tr,R}C_{\mathbf{G}}C_{\mathbf{u}}(B_{R}^{+})K_{2}^{2}%
\sum_{p^{\prime}=0}^{l+1}(\lambda_{{\mathbf{G}}}^{\prime})^{l+1-p^{\prime}%
}K_{1}^{p^{\prime}}\max(l+1,k)^{l+1-p^{\prime}}\max(p^{\prime},k)^{p^{\prime}%
}\\
&  \quad{%
\color{red}%
+2C_{tr,R}C_{{\mathbf{g}}_{B}}\lambda_{{\mathbf{g}}_{B}}^{l+1}\max
(l+2,k)^{l+1}.}%
\end{align*}
In the above right-hand side as $l+2\leq p$ and $p^{\prime}\leq p-1$, we
obtain
\begin{align*}
\rho_{\ast}^{\frac{3}{2}}{[\![B({\mathbf{u}})]\!]}_{l,\frac{1}{2},\Gamma_{R}}
&  \leq CC_{tr,R}C_{\mathbf{G}}C_{\mathbf{u}}(B_{R}^{+})K_{2}^{2}\max
(p,k)^{p}K_{1}^{l+1}\sum_{p^{\prime}=0}^{l+1}\left(  \frac{\lambda
_{\mathbf{G}}^{\prime}}{K_{1}}\right)  ^{l+1-p^{\prime}}\\
&  \quad{%
\color{red}%
+2C_{tr,R}k^{-1}C_{{\mathbf{g}}_{B}}\lambda_{{\mathbf{g}}_{B}}^{l+1}%
\max(p,k)^{p}.}%
\end{align*}
For $K_{1}\geq2\lambda_{\mathbf{G}}^{\prime}$ {%
\color{red}%
and $K_{1}\geq\lambda_{{\mathbf{g}}_{B}}$ }
we deduce that%
\begin{subequations}
\label{eq:A25}
\end{subequations}%
\begin{equation}
\rho_{\ast}^{\frac{3}{2}}{[\![B({\mathbf{u}})]\!]}_{l,\frac{1}{2},\Gamma_{R}%
}\leq2C_{tr,R}C_{\mathbf{u}}(B_{R}^{+}){%
\color{red}%
(CC_{{\mathbf{G}}}K_{2}^{2}+1)}\max(p,k)^{p}K_{1}^{l+1}. \tag{%
\ref{eq:A25}%
}\label{eq:A25neu}%
\end{equation}
Multiplying this estimate by $A^{p-1-l}B^{q}$ and summing on $l$, as before
one gets (since $K_{1}\geq2A$ and {%
\color{red}%
$K_{2}\geq B$}
)
\begin{align*}
\sum_{l=0}^{p-2}A^{p-1-l}B^{q}\rho_{\ast}^{\frac{3}{2}}{[\![B({\mathbf{u}%
})]\!]}_{l,\frac{1}{2},\Gamma_{R}}  &  {%
\color{red}%
\leq C_{\mathbf{u}}(B_{R}^{+})K_{1}^{p}B^{q}\max(p,k)^{p}2C_{tr,R}%
(CC_{{\mathbf{G}}}K_{2}^{2}+1)\sum_{l=0}^{p-2}A^{p-1-l}K_{1}^{l+1}K_{1}^{-p}%
}\\
&  \overset{{%
\color{red}%
K_{2}\geq B}}{\leq}C_{\mathbf{u}}(B_{R}^{+})K_{1}^{p}K_{2}^{q+1}\max(p,k)^{p}{%
\color{red}%
\left(  \frac{4C_{tr,R}A(CC_{\mathbf{G}}K_{2}^{2}+1)}{K_{1}K_{2}}\right)  .}%
\end{align*}
{%
\color{red}%
Estimation of $\rho_{\ast}^{\frac{1}{2}}{[\![D({\mathbf{u}})]\!]}_{l,\frac
{3}{2},\Gamma_{R}}$: From (\ref{eq:A13a}) and $l+2\leq p$ we have%
\begin{align}
\rho_{\ast}^{\frac{1}{2}}{[\![D({\mathbf{u}})]\!]}_{l,\frac{3}{2},\Gamma_{R}}
&  \leq2C_{tr,R}^{\prime}C_{{\mathbf{g}}_{D}}\lambda_{{\mathbf{g}}_{D}}%
^{l+2}\max(l+2,k)^{l+2}\nonumber\\
&  \leq C_{\mathbf{u}}(B_{R}^{+})2C_{tr,R}^{\prime}\lambda_{{\mathbf{g}}_{D}%
}^{p}\max(p,k)^{p}. \tag{%
\ref{eq:A25}%
+1/2}\label{eq:A25a}%
\end{align}
}
Finally, using (\ref{eq:A20}), one has
\[
A^{p-1}B^{q}\sum_{l=0}^{1}{[\![\mathbf{u}]\!]}_{l,l,B_{R}^{+}}\leq
C_{\mathbf{u}}(B_{R}^{+})K_{1}^{p}K_{2}^{q+1}\max(p,k)^{p}\frac{1}{C_{R}%
K_{1}K_{2}}.
\]
Inserting this estimate and the estimates (\ref{eq:A24}), (\ref{eq:A25neu}), {%
\color{red}%
(\ref{eq:A25a}) }
into (\ref{eq:A16}), we can conclude that
\begin{align*}
{[\![\mathbf{u}]\!]}_{p,q+1,B_{R}^{+}}  &  \leq C_{\mathbf{u}}(B_{R}^{+}%
)K_{1}^{p}K_{2}^{q+1}{%
\color{red}%
\max(p,k)^{p}}\left(  \frac{8}{K_{1}K_{2}}+{%
\color{red}%
\frac{4C_{tr,R}A(CC_{\mathbf{G}}K_{2}^{2}+1)}{K_{1}K_{2}}}+\frac{1}{C_{R}%
K_{1}K_{2}}\right. \\
&  \quad\left.  +{%
\color{red}%
\frac{2C_{tr,R}^{\prime}}{K_{2}^{q+1}}\left(  \frac{\lambda_{{\mathbf{g}}_{D}%
}}{K_{1}}\right)  ^{p}}\right) \\
&  \leq C_{\mathbf{u}}(B_{R}^{+})K_{1}^{p}K_{2}^{q+1}\max(p,k)^{p}%
\end{align*}
for $K_{1}$ and $K_{2}$ large enough.
\end{proof}

\begin{remark}
\label{rem:A9} In Lemma~\ref{lemma:A8}, if we take $p=q$, we obtain
\[
{[\![{\mathbf{u}}]\!]}_{p,p,B_{R}^{+}}\leq C_{\mathbf{u}}(B_{R}^{+})K^{p}%
\max(p,k)^{p}%
\]
with $K=K_{1}K_{2}$. \hbox{}\hfill%
\endproof

\end{remark}



\subsection{Interior analytic regularity}


Let $B_{R} = B(0,R)$, $L$ an elliptic system of order $2$ defined in $B_{R}$,
and $k > 1$. Here, we consider a solution ${\mathbf{u}}$ of
\setcounter{equation}{25}
\begin{align}
\label{eq:A26}L({\mathbf{u}}) = {\mathbf{f}} + k^{2} {\mathbf{u}}
\quad\mbox{ in $B_R$}.
\end{align}
We now define the following semi-norms
\begin{align*}
{[\![ \mathbf{u}]\!]}_{p,B_{R}}  &  := \max_{0 < \rho< \frac{R}{2 p} }%
\max_{|\alpha| = p} \rho^{p} \|\partial^{\alpha}{\mathbf{u}}\|_{B_{R - p \rho
}},\\
\rho_{\ast}^{2} {[\![ \mathbf{u}]\!]}_{p,B_{R}}  &  := \max_{0 < \rho<
\frac{R}{2 p} }\max_{|\alpha| = p} \rho^{p+2} \|\partial^{\alpha}{\mathbf{u}%
}\|_{B_{R - p \rho}}.
\end{align*}
We suppose that ${\mathbf{f}}$ is analytic with \setcounter{equation}{26}
\begin{align}
\label{eq:A27}\|\partial^{\alpha}{\mathbf{f}}\|_{B_{R}}  &  \leq
C_{\mathbf{f}} \lambda^{p}_{\mathbf{f}} \max(|\alpha|,k)^{|\alpha|},
\quad\forall\alpha\in{\mathbb{N}}^{n},
\end{align}
for some positive constants $C_{\mathbf{f}}$ and $\lambda_{\mathbf{f}}$
independent of $k$.

\begin{lemma}
\label{lemma:A10} Let ${\mathbf{u}}\in{\mathbf{H}}^{2}(B_{R})$ be a solution
of (\ref{eq:A26}) with ${\mathbf{f}}$ satisfying (\ref{eq:A27}). Then there
exists $K\geq1$ such that
\[
{[\![\mathbf{u}]\!]}_{p,B_{R}}\leq C{\mathbf{u}}(B_{R}^{+})K^{p}\max(p,k)^{p}%
\]
with $C_{\mathbf{u}}(B_{R}^{+})=C_{R}(C_{\mathbf{f}} {%
\color{red}%
k^{-2} }
+\Vert\mathbf{u}\Vert_{B_{R}}+k^{-1}\Vert\mathbf{u}\Vert_{1,B_{R}})$ for {%
\color{red}
suitable }
$C_{R}\geq1$.
\end{lemma}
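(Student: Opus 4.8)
The plan is to run the same induction as in the proof of Lemma~\ref{lemma:A7}, but in the simpler situation where no boundary operators are present. The starting point is the interior analytic \emph{a priori} estimate for elliptic systems, i.e.\ the interior counterpart of \cite[Prop.~{2.6.6}]{CoDaNi} (which rests on the interior $H^2$-regularity furnished by the ellipticity of $L$): there is a constant $A\geq 1$, depending only on $L$, $n$, and $R$, such that for all $p\geq 2$
\[
{[\![{\mathbf{u}}]\!]}_{p,B_R}\leq \sum_{l=0}^{p-2}A^{p-1-l}\,\rho_\ast^2{[\![L({\mathbf{u}})]\!]}_{l,B_R}+A^{p-1}\,\|{\mathbf{u}}\|_{1,B_R}.
\]
Combining this with the equation $L({\mathbf{u}})={\mathbf{f}}+k^2{\mathbf{u}}$ and $R\leq 1$ gives, for every $l\leq p-2$,
\[
\rho_\ast^2{[\![L({\mathbf{u}})]\!]}_{l,B_R}\leq {[\![{\mathbf{f}}]\!]}_{l,B_R}+k^2{[\![{\mathbf{u}}]\!]}_{l,B_R}.
\]

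Next I would carry out the induction on $p$. For the base case $p=2$ one uses the estimate above at level $2$ together with (\ref{eq:A27}) at $l=0$ to obtain ${[\![{\mathbf{u}}]\!]}_{2,B_R}\leq A\bigl(\|{\mathbf{f}}\|_{B_R}+k^2\|{\mathbf{u}}\|_{B_R}+\|{\mathbf{u}}\|_{1,B_R}\bigr)\leq A k^2\bigl(C_{\mathbf{f}}k^{-2}+\|{\mathbf{u}}\|_{B_R}+k^{-1}\|{\mathbf{u}}\|_{1,B_R}\bigr)$, which is bounded by $C_{\mathbf{u}}(B_R^+)K^2\max(2,k)^2$ as soon as $C_R\geq A$ and $K\geq 1$. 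For the induction step from level $\leq p$ to level $p+1$, I would insert the induction hypothesis ${[\![{\mathbf{u}}]\!]}_{l,B_R}\leq C_{\mathbf{u}}(B_R^+)K^l\max(l,k)^l$ for $l\leq p-1$ and the analyticity bound ${[\![{\mathbf{f}}]\!]}_{l,B_R}\leq C_{\mathbf{f}}\lambda_{\mathbf{f}}^l\max(l,k)^l$ into $\rho_\ast^2{[\![L({\mathbf{u}})]\!]}_{l,B_R}$; using $l+2\leq p+1$ and $K\geq\lambda_{\mathbf{f}}$ this yields $\rho_\ast^2{[\![L({\mathbf{u}})]\!]}_{l,B_R}\leq 2\,C_{\mathbf{u}}(B_R^+)\,K^l\max(p+1,k)^{p+1}$. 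Multiplying by $A^{p-l}$ and summing the geometric series $\sum_{l}(A/K)^{p-l}\leq 1$ (valid for $K\geq 2A$) controls the first term; the term $A^{p}\|{\mathbf{u}}\|_{1,B_R}$ is handled using $\|{\mathbf{u}}\|_{1,B_R}\leq (k/C_R)\,C_{\mathbf{u}}(B_R^+)$ and again $K\geq 2A$. Collecting the contributions and choosing $K\geq\max(\lambda_{\mathbf{f}},2A)$ with $C_R$ large enough closes the induction.

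The argument is strictly simpler than the boundary case already treated in Lemmas~\ref{lemma:A7} and \ref{lemma:A8}, so I do not expect a genuine obstacle; the only point requiring care is, as there, the explicit tracking of the wavenumber $k$ through the lower-order term $k^2{\mathbf{u}}$ on the right-hand side of (\ref{eq:A26}) and the bookkeeping of the radii in the geometric-series estimates. I would also note that although both seminorms ${[\![\cdot]\!]}_{p,B_R}$ and $\rho_\ast^2{[\![\cdot]\!]}_{p,B_R}$ were introduced, only ${[\![\cdot]\!]}_{p,B_R}$ appears in the statement, the second one entering merely through the right-hand side $L({\mathbf{u}})$ in the \emph{a priori} estimate.
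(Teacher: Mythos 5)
Your proposal is correct and follows exactly the route the paper takes: the paper's proof of Lemma~\ref{lemma:A10} consists of the single remark that one repeats the induction of Lemma~\ref{lemma:A7} verbatim, replacing the boundary a priori estimate \cite[Prop.~{2.6.6}]{CoDaNi} by the standard interior regularity result \cite[Prop.~{1.6.3}]{CoDaNi}, which is precisely the starting estimate you write down. The details you supply (base case $p=2$, induction step with $K\geq\max(\lambda_{\mathbf{f}},2A)$ and the geometric-series bookkeeping) are the obvious specialization of the Lemma~\ref{lemma:A7} argument to the case without boundary terms and match the paper's intent.
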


\begin{proof}
The proof is exactly the same as the one of Lemma~\ref{lemma:A7} when we use
\cite[Prop.~{1.6.3}]{CoDaNi} (a standard interior regularity result) instead
of \cite[Prop.~{2.6.6}]{CoDaNi}.
\end{proof}



\subsection{Proof of Theorem~\ref{thm:A1}}


According to a standard procedure, see for instance \cite[p.~{105}]{CoDaNi},
the first step of the proof is to consider a covering of $\Omega$ by some open
sets, which verifies
\begin{align*}
\Omega\subset\cup_{j=1}^{N} \hat B_{j} \subset\cup_{j=1}^{N} B_{j},
\end{align*}
where

\begin{enumerate}
\item $B_{j}=B(\mathbf{x}_{j},\xi_{j})$ and $\hat{B}_{j}=B(\mathbf{x}%
_{j},\frac{\xi_{j}}{2})$, with $\xi_{j}>0$ small enough such that
$\overline{B}(\mathbf{x}_{j},\xi_{j})\subset\Omega$, if $\mathbf{x}_{j}%
\in\Omega$,

\item while in the case $\mathbf{x}_{j}\in\partial\Omega$, $B_{j}$ is a
sufficiently small neighborhood of $\mathbf{x}_{j}$ such that there exists an
analytic map $\phi_{j}$ from $B_{j}$ onto the ball $B(0,\xi_{j})$ (for some
$\xi_{j}>0$ such that $\phi_{j}(B_{j}\cap\Omega)=B_{j}^{+}$, and $\phi
_{j}(B_{j}\cap\partial\Omega)=\Gamma_{\xi_{j}}$. In that case, we set $\hat
{B}_{j}=\phi_{j}^{-1}(B_{\xi_{j}/2}^{+})$.
\end{enumerate}

This yields
\[
|\mathbf{u}|_{p,\Omega}\lesssim\sum_{i=1}^{N}|\mathbf{u}|_{p,\hat{B}_{j}%
\cap\Omega}\lesssim\sum_{1\leq i\leq N\colon\mathbf{x}_{i}\in\Omega
}|\mathbf{u}|_{p,\hat{B}_{j}}+\sum_{1\leq i\leq N\colon\mathbf{x}_{i}%
\in\partial\Omega}|\mathbf{u}|_{p,\hat{B}_{j}\cap\Omega}.
\]
In the case of an interior ball, namely, for $i$ such that $\mathbf{x}_{i}%
\in\Omega$, we simply perform a translation to apply Lemma~\ref{lemma:A10}.
Hence, the operator $L$ does not change, and we directly
have\footnote{{\color{red}We employ the multinomial formula and the definition
of the norm $|\cdot|_{p,\hat{B}_{i}}$ given in (\ref{eq:A0}) so that the
formula deviates from \cite{nicaise-tomezyk19}}}
\[
|\mathbf{u}|_{p,\hat{B}_{i}}\lesssim{%
\color{red}%
n^{p/2}}
{[\![\mathbf{u}]\!]}_{p,{%
\color{red}%
p},B_{i}}\lesssim{%
\color{red}%
n^{p/2}}
C_{\mathbf{u}}(B_{i})K^{p}\max(p,k)^{p}.
\]
By the definition of $C_{\mathbf{u}}(B_{i})$, we then arrive at%
\begin{equation}
|\mathbf{u}|_{p,\hat{B}_{i}}\lesssim\left(  C_{\mathbf{f}}{%
\color{red}%
k^{-2}}+\Vert\mathbf{u}\Vert_{B_{i}}+k^{-1}\Vert\mathbf{u}\Vert_{1,B_{i}%
}\right)  ({%
\color{red}%
n^{1/2}}K)^{p}\max(p,k)^{p}. \label{eq:A28}%
\end{equation}
In the case when the open set intersects the boundary of $\Omega$, namely, for
each $i$ such that $\mathbf{x}_{i}\in\partial\Omega$, we apply the change of
variables $\mathbf{\hat{x}}=\phi_{i}(\mathbf{x})$, which allows us to pass
from $B_{i}\cap\Omega$ to $B_{\xi_{i}}^{+}$ and transforms the system
(\ref{eq:A2}) restricted to $B_{i}\cap\Omega$ into an elliptic system with
analytic coefficients of the form (\ref{eq:A5}) with other operators $\hat{L}%
$, $\hat{D}$, $\hat{B}$, which $\hat{D}$ (resp.\ $\hat{B}$) an operator of
order $0$ (resp.\ $1$). First thanks to a Fa\`{a}-di-Bruno formula, we obtain
(see \cite[(1.b)]{CoDaNi})
\[
|\mathbf{u}|_{p,\hat{B}_{i}\cap\Omega}\lesssim c_{i}^{p+1}\sum_{l=0}^{p}%
\frac{{%
\color{red}%
p!}}{l!}|\hat{\mathbf{u}}|_{l,B_{\xi_{i}/2}^{+}},
\]
with a positive constant $c_{i}$ which depends only on the transformation that
allows us to pass from $B_{i}\cap\Omega$ to $B_{\xi_{i}}^{+}$. Then, we can
apply Lemma~\ref{lemma:A8} (see Remark~\ref{rem:A9}) and get
\[
|\mathbf{u}|_{p,\hat{B}_{i}\cap\Omega}\lesssim{%
\color{red}%
n^{p/2}}
c_{i}^{p+1}C_{\hat{\mathbf{u}}}(B_{\xi_{i}}^{+})\sum_{l=0}^{p}\frac{{%
\color{red}%
p!}}{l!}K^{l}\max(l,k)^{l}.
\]
Using \cite[(A.11)]{nicaise-tomezyk19}, {%
\color{red}
i.e., \setcounter{equation}{10}
\[
\frac{p!}{q!}\leq p^{p-q},\quad\forall p,q,\in{\mathbb{N}}\colon q\leq p,
\]
}
and a change of variables (in $C_{\hat{\mathbf{u}}}(B_{\xi_{i}}^{+})$ and
again Fa\`{a}-di-Bruno formula) we obtain
\[
|\mathbf{u}|_{p,\hat{B}_{i}\cap\Omega}\lesssim{%
\color{red}%
n^{p/2}}
c_{i}^{p+1}\left(  C_{\mathbf{f}}{%
\color{red}%
k^{-2}}
+\Vert\mathbf{u}\Vert_{B_{i}\cap\Omega}+k^{-1}\Vert\mathbf{u}\Vert
_{1,B_{i}\cap\Omega}\right)  \max(p,k)^{p}\sum_{l=0}^{p}K^{l}.
\]
This yields
\[
|\mathbf{u}|_{p,\hat{B}_{i}\cap\Omega}\lesssim\frac{c_{i}K}{K-1}\left(
C_{\mathbf{f}}{%
\color{red}%
k^{-2}}
+\Vert\mathbf{u}\Vert_{B_{i}\cap\Omega}+k^{-1}\Vert\mathbf{u}\Vert
_{1,B_{i}\cap\Omega}\right)  (c_{i}{%
\color{red}%
n^{1/2}}
K)^{p}\max(p,k)^{p}.
\]
The combination of this estimate with (\ref{eq:A28}) yields the result.

\endproof

\section{Details of the proof of
Lemma~\ref{lemma:element-by-element-approximation}}

In this section, we give some details of the proof of
Lemma~\ref{lemma:element-by-element-approximation}. The estimates of the
volume terms $\left\Vert \cdot\right\Vert _{L^{2}(\widehat{K})}$, $\left\Vert
\cdot\right\Vert _{H^{1}(\widehat{K})}$ in
(\ref{eq:lemma:element-by-element-approximation}) are proved in
\cite[Thm.~{B.4}]{MelenkSauterMathComp} and it remains to estimate the
boundary norms in (\ref{eq:lemma:element-by-element-approximation}). For this,
we first reproduce the arguments given in \cite[Appendix~B]%
{MelenkSauterMathComp} to make our proof for the boundary norms
(Theorem~\ref{thm:finite-regularity-constrained-approximation}) self-contained.%

\color{black}%
Before proceeding we recall the definition of the Sobolev space $H_{00}%
^{1/2}(\Omega)$. If $\Omega$ is an edge or a face of a triangle or a
tetrahedron, then the Sobolev norm $\Vert\cdot\Vert_{H_{00}^{1/2}(\Omega)}$ is
defined by
\begin{equation}
\Vert u\Vert_{H_{00}^{1/2}(\Omega)}^{2}:=\Vert u\Vert_{H^{1/2}(\Omega)}%
^{2}+\left\Vert \frac{u}{\sqrt{\operatorname*{dist}(\cdot,\partial\Omega)}%
}\right\Vert _{L^{2}(\Omega)}^{2}, \label{eq:H1200}%
\end{equation}
and the space $H_{00}^{1/2}(\Omega)$ is the completion of $C_{0}^{\infty
}(\Omega)$ under this norm.

\begin{definition}
[{element-by-element construction, {\cite[Def.~{5.3}]{MelenkSauterMathComp}}}%
]\label{def:element-by-element} Let $\widehat{K}$ be the reference simplex in
${\mathbb{R}}^{d}$, $d\in\{2,3\}$. A polynomial $\pi$ is said to permit
\emph{an element-by-element} construction of polynomial degree $p$ for $u\in
H^{s}(\widehat{K})$, $s>d/2$, if:

\begin{enumerate}
[(i)]

\item \label{item:vertex} $\pi(V)=u(V)$ for all $d+1$ vertices $V$ of
$\widehat{K},$

\item \label{item:edge} for every edge $e$ of $\widehat{K}$, the restriction
$\pi|_{e}\in{\mathcal{P}}_{p}$ is the unique minimizer
of
\begin{equation}
\pi\mapsto p^{1/2}\Vert u-\pi\Vert_{L^{2}(e)}+\Vert u-\pi\Vert_{H_{00}%
^{1/2}(e)} \label{eq:edge-minimizer}%
\end{equation}
under the constraint that $\pi$ satisfies (\ref{item:vertex}); here the
Sobolev norm $H_{00}^{1/2}$ is defined in (\ref{eq:H1200}).

\item \label{item:face} (for $d=3$) for every face $f$ of $\widehat{K}$, the
restriction $\pi|_{f}\in{\mathcal{P}}_{p}$ is the unique minimizer of
\begin{equation}
\pi\mapsto p\Vert u-\pi\Vert_{L^{2}(f)}+\Vert u-\pi\Vert_{H^{1}(f)}
\label{eq:face-minimizer}%
\end{equation}
under the constraint that $\pi$ satisfies (\ref{item:vertex}),
(\ref{item:edge}) for all vertices and edges of the face $f$.
\end{enumerate}
\end{definition}

\begin{lemma}
[{{\cite[Lemma~{B.1}]{MelenkSauterMathComp}}}]\label{lemma:lifting-2d} Let
$\widehat{K}^{2D}$ be the reference triangle in 2D. Vertex and edge lifting
operators can be constructed with the following properties:

\begin{enumerate}
\item \label{item:vertex-lifting-2D} For each vertex $V$ of $\widehat{K}^{2D}$
there exists a polynomial $L_{V,p}\in{\mathcal{P}}_{p}$ that attains the value
$1$ at the vertex $V$ and vanishes on the edge of $\widehat{K}^{2D}$ opposite
to $V$. Additionally, for every $s\geq0$, there exists $C_{s}>0$ such that
$\Vert L_{V,p}\Vert_{H^{s}(\widehat{K}^{2D})}\leq C_{s}p^{-1+s}$.

\item \label{item:edge-lifting-2D} For every edge $e$ of $\widehat{K}^{2D}$
there exists a bounded linear operator $\pi_{e}: H^{1/2}_{00}(e) \rightarrow
H^{1}(\widehat{K}^{2D})$ with the following properties:

\begin{enumerate}
\item $\forall u\in{\mathcal{P}}_{p}\cap H_{00}^{1/2}(e):\quad\pi_{e}%
u\in{\mathcal{P}}_{p},$

\item $\forall u\in H_{00}^{1/2}\left(  e\right)  :\quad\pi_{e}u|_{\partial
\widehat{K}^{2D}\setminus e}=0,$

\item $\forall u\in H_{00}^{1/2}\left(  e\right)  :\quad p\Vert\pi_{e}%
u\Vert_{L^{2}(\widehat{K}^{2D})}+\Vert\pi_{e}u\Vert_{H^{1}(\widehat{K}^{2D}%
)}\leq C\left(  \Vert u\Vert_{H_{00}^{1/2}(e)}+p^{1/2}\Vert u\Vert_{L^{2}%
(e)}\right)  $.
\end{enumerate}
\end{enumerate}
\end{lemma}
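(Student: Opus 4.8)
The plan is to construct the vertex and the edge liftings by separate explicit recipes and, in both cases, to reduce the two-dimensional estimates to one-dimensional \emph{weighted} estimates by foliating $\widehat{K}^{2D}$ into level sets of a barycentric coordinate.

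\textbf{Vertex liftings.} I would simply take $L_{V,p}:=\lambda_V^{\,p}$, where $\lambda_V$ is the barycentric coordinate with $\lambda_V(V)=1$ which vanishes on the edge opposite $V$. Then $L_{V,p}\in\mathcal{P}_p$, $L_{V,p}(V)=1$, and $L_{V,p}$ vanishes on the opposite edge (and at the other vertices). For the norm bound, since $\lambda_V$ is affine with $|\nabla\lambda_V|$ constant, one has $\partial^\alpha L_{V,p}=\tfrac{p!}{(p-|\alpha|)!}\,b^\alpha\,\lambda_V^{\,p-|\alpha|}$ with $|b^\alpha|\lesssim C^{|\alpha|}$, while the coarea formula, using that the segment $\{\lambda_V=t\}\cap\widehat{K}^{2D}$ has length $\propto(1-t)$, gives $\int_{\widehat{K}^{2D}}\lambda_V^{\,2\ell}\,dx\,dy\sim\int_0^1 t^{2\ell}(1-t)\,dt$. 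For fixed $\ell$ this is $\sim p^{-2}$ when $\ell$ is replaced by $p-|\alpha|$, so $\|\partial^\alpha L_{V,p}\|^2_{L^2(\widehat{K}^{2D})}\lesssim C^{|\alpha|}p^{2|\alpha|-2}$; hence $\|L_{V,p}\|_{H^m(\widehat{K}^{2D})}\lesssim p^{m-1}$ for all $m\in\mathbb{N}_0$, and the bound $\|L_{V,p}\|_{H^s(\widehat{K}^{2D})}\lesssim p^{s-1}$ for noninteger $s$ follows by interpolation between $m=\lfloor s\rfloor$ and $m=\lceil s\rceil$.

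\textbf{Edge liftings.} After an affine change of variables I may assume $\widehat{K}^{2D}=\{(x,y):x,y\ge0,\ x+y\le1\}$ and $e=\{(x,0):0\le x\le1\}$. The building block is the explicit collapsed-coordinate polynomial lifting $\pi^{\mathrm{pol}}_e v(x,y):=(1-y)^p\, v\!\big(\tfrac{x}{1-y}\big)$: writing $v=\sum_k a_k\xi^k$, this equals $\sum_k a_k x^k(1-y)^{p-k}$, hence lies in $\mathcal{P}_p(\widehat{K}^{2D})$, restricts to $v$ on $e$, and — if $v$ vanishes at the endpoints of $e$ — vanishes on $\{x=0\}$ and on $\{x+y=1\}$. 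Changing variables to $(\xi,y)$ with $x=\xi(1-y)$ (Jacobian $1-y$) turns all relevant integrals into one-dimensional ones weighted by powers of $(1-y)$, yielding $\|\pi^{\mathrm{pol}}_e v\|_{L^2(\widehat{K}^{2D})}\lesssim p^{-1/2}\|v\|_{L^2(e)}$ and $\|\pi^{\mathrm{pol}}_e v\|^2_{H^1(\widehat{K}^{2D})}\lesssim p^{-1}\|v'\|^2_{L^2(e)}+p\,\|v\|^2_{L^2(e)}\lesssim p\,\|v\|^2_{L^2(e)}$, the last step using the inverse estimate $\|v'\|_{L^2(e)}\lesssim p\|v\|_{L^2(e)}$ for $v\in\mathcal{P}_p$. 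To handle general data I would set $\pi_e u:=\pi^{\mathrm{pol}}_e(P_p u)+\mathcal{E}_0\big((I-P_p)u\big)$, where $P_p$ is a (standard $hp$) projection onto the degree-$\le p$ polynomials vanishing at the endpoints of $e$ that is simultaneously $L^2(e)$- and $H^{1/2}_{00}(e)$-stable, reproduces that space, and satisfies $\|(I-P_p)u\|_{H^{-1/2}(e)}\lesssim p^{-1}\|u\|_{H^{1/2}_{00}(e)}$, and $\mathcal{E}_0$ is a fixed ($p$-independent) continuous lifting $H^{1/2}_{00}(e)\to H^1(\widehat{K}^{2D})$ into functions vanishing on $\partial\widehat{K}^{2D}\setminus e$ which is also bounded $H^{-1/2}(e)\to L^2(\widehat{K}^{2D})$ (classical, e.g.\ via flattening, reflection and a fixed cutoff). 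Property (a) holds since $P_p u=u$ on $\mathcal{P}_p\cap H^{1/2}_{00}(e)$; (b) since both summands vanish on $\partial\widehat{K}^{2D}\setminus e$; and (c) follows by combining the estimates for $\pi^{\mathrm{pol}}_e$ above (with $v=P_pu$, $\|P_pu\|_{L^2(e)}\le C\|u\|_{L^2(e)}$) with $\|\mathcal{E}_0 w\|_{H^1}\lesssim\|w\|_{H^{1/2}_{00}(e)}$, $\|\mathcal{E}_0 w\|_{L^2(\widehat{K}^{2D})}\lesssim\|w\|_{H^{-1/2}(e)}$, the stability of $P_p$, and the negative-norm estimate for $I-P_p$.

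\textbf{Main obstacle.} The work is all in the edge lifting, specifically the bookkeeping of the $p$-explicit constants: one must choose $P_p$ so that it is simultaneously $L^2(e)$- and $H^{1/2}_{00}(e)$-stable, reproduces $\mathcal{P}_p\cap H^{1/2}_{00}(e)$, and gains a factor $p^{-1}$ when passing from $H^{1/2}_{00}$ to $H^{-1/2}$ on its complement; and one must carry out the (routine but somewhat lengthy) weighted one-dimensional integral computations for $\pi^{\mathrm{pol}}_e$, invoking the polynomial inverse estimates in $L^2$ and $H^{1/2}_{00}$, together with the mapping properties of the fixed lifting $\mathcal{E}_0$ across the Sobolev scale from $H^{-1/2}(e)$ to $H^{1/2}_{00}(e)$. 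The vertex part, by contrast, is essentially immediate once the coarea reduction is set up.
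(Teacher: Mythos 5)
Your vertex lifting $L_{V,p}=\lambda_V^{\,p}$ and its $H^s$ bound (integer case by the coarea computation, non-integer case by interpolation) are correct and are the standard construction. Note that the paper does not prove this lemma at all --- it is quoted verbatim from \cite[Lemma~{B.1}]{MelenkSauterMathComp} --- so only the internal soundness of your argument is at issue, and the edge lifting is where it breaks down.

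The gap is in the $H^1$ bound for $\pi_e^{\mathrm{pol}}v=(1-y)^pv\bigl(x/(1-y)\bigr)$. Your computation correctly gives $\Vert\partial_x\pi_e^{\mathrm{pol}}v\Vert_{L^2(\widehat{K}^{2D})}^2=\tfrac{1}{2p}\Vert v'\Vert_{L^2(e)}^2$, but you then invoke the inverse estimate $\Vert v'\Vert_{L^2(e)}\lesssim p\Vert v\Vert_{L^2(e)}$, which is false: the $L^2$ Markov-type inverse inequality on an interval carries $p^2$, not $p$. This is not a repairable bookkeeping slip --- the operator itself fails property (c). Take $v\in\mathcal{P}_p\cap H^1_0(e)$ extremal for the inverse inequality, so that $\Vert v'\Vert_{L^2(e)}\sim p^2\Vert v\Vert_{L^2(e)}$. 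Then $\Vert\pi_e^{\mathrm{pol}}v\Vert_{H^1}^2\geq\tfrac{1}{2p}\Vert v'\Vert_{L^2(e)}^2\sim p^3\Vert v\Vert_{L^2(e)}^2$, whereas the admissible right-hand side obeys $\bigl(\Vert v\Vert_{H^{1/2}_{00}(e)}+p^{1/2}\Vert v\Vert_{L^2(e)}\bigr)^2\lesssim\Vert v\Vert_{L^2(e)}\Vert v\Vert_{H^1(e)}+p\Vert v\Vert_{L^2(e)}^2\lesssim p^2\Vert v\Vert_{L^2(e)}^2$ by the interpolation inequality for $H^{1/2}_{00}=[L^2,H^1_0]_{1/2}$; the two sides differ by a factor of $p$. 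In other words, the collapsed-coordinate extension is an $L^2$-based lifting, not an $H^{1/2}_{00}$-based one, and cannot yield (c). The standard remedy (and the construction behind the cited source, going back to Babu\v{s}ka--Suri) is the averaging extension $u\mapsto\frac{1}{y}\int_x^{x+y}\tilde u(t)\,dt$ with $\tilde u$ the zero extension of $u$: it maps $\mathcal{P}_p(e)$ into $\mathcal{P}_p(\widehat{K}^{2D})$, restricts to $u$ on $e$, and is bounded $H^{1/2}_{00}(e)\rightarrow H^1(\widehat{K}^{2D})$ uniformly in $p$, the $p^{1/2}\Vert u\Vert_{L^2(e)}$ terms arising only from the corrections that zero out the traces on the other two edges. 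Since that operator accepts arbitrary $u\in H^{1/2}_{00}(e)$ directly, your splitting $u=P_pu+(I-P_p)u$ --- whose projector $P_p$, with simultaneous $L^2$- and $H^{1/2}_{00}$-stability, polynomial reproduction, and the $p^{-1}$ duality gain in $H^{-1/2}$, is itself an unproven package --- also becomes unnecessary.
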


\begin{lemma}
[{{\cite[Lemma~{B.2}]{MelenkSauterMathComp}}}]\label{lemma:lifting-3d} Let
$\widehat{K}^{3D}$ be the reference tetrahedron in 3D. Vertex, edge, and face
lifting operators can be constructed with the following properties:

\begin{enumerate}
[(i)]

\item \label{item:vertex-lifting} For each vertex $V$ of $\widehat{K}^{3D}$
there exists a polynomial $L_{V,p} \in{\mathcal{P}}_{p}$ that attains the
value $1$ at the vertex $V$ and vanishes on the face opposite $V$.
Additionally, for every $s \ge0$ there exists $C_{s} > 0$ such that
$\|L_{V,p}\|_{H^{s}(\widehat{K}^{3D})} \leq C_{s} p^{-3/2+s}$.

\item \label{item:edge-lifting} For every edge $e$ of $\widehat{K}^{3D}$ there
exists a bounded linear operator $\pi_{e}: H^{1/2}_{00}(e) \rightarrow
H^{1}(\widehat{K}^{3D})$ with the following properties:

\begin{enumerate}
\item $\pi_{e} u \in{\mathcal{P}}_{p}$ if $u \in{\mathcal{P}}_{p} \cap
H^{1/2}_{00}(e)$

\item $(\pi_{e}u)|_{f}=0$ for the two faces $f$ with $\overline{f}\cap
e=\emptyset$

\item for the two faces $f$ adjacent to $e$ (i.e., $\overline{f}\cap e=e$)
\begin{align*}
p\Vert\pi_{e}u\Vert_{L^{2}(f)}+\Vert\pi_{e}u\Vert_{H^{1}(f)}  &  \leq C\Vert
u\Vert_{H_{00}^{1/2}(e)}+p^{1/2}\Vert u\Vert_{L^{2}(e)},\\
\Vert\pi_{e}u\Vert_{H^{1/2}(\partial\widehat{K}^{3D})}  &  \leq C\left(
p^{-1/2}\Vert u\Vert_{H_{00}^{1/2}(e)}+\Vert u\Vert_{L^{2}(e)}\right)  ,\\
p\Vert\pi_{e}u\Vert_{L^{2}(\widehat{K}^{3D})}+\Vert\pi_{e}u\Vert
_{H^{1}(\widehat{K}^{3D})}  &  \leq C\left(  p^{-1/2}\Vert u\Vert
_{H_{00}^{1/2}(e)}+\Vert u\Vert_{L^{2}(e)}\right)  .
\end{align*}

\end{enumerate}

\item \label{item:face-lifting} For every face $f$ of $\widehat{K}^{3D}$ there
exists a bounded linear operator $\pi_{f}:H^{1/2}_{00}(f) \rightarrow
H^{1}(\widehat{K}^{3D})$ with the following properties:

\begin{enumerate}
\item $\pi_{f} u \in{\mathcal{P}}_{p}$ if $u \in{\mathcal{P}}_{p} \cap
H^{1/2}_{00}(f)$

\item $(\pi_{e} u)|_{f^{\prime}} = 0$ for the faces $f^{\prime}\ne f$
\begin{align*}
p \|\pi_{f} u\|_{L^{2}(\widehat{K}^{3D})} + \|\pi_{f} u\|_{H^{1}%
(\widehat{K}^{3D})}  &  \leq C \left(  \|u\|_{H^{1/2}_{00}(f)} + p^{1/2}
\|u\|_{L^{2}(f)}\right)  .
\end{align*}

\end{enumerate}
\end{enumerate}
\end{lemma}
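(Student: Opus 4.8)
The plan is to construct the three families of lifting operators explicitly on the reference tetrahedron $\widehat{K}^{3D}$, working from lower- to higher-dimensional faces and using collapsed (Duffy-type) coordinates to keep the polynomial degree under control. For the vertex lifting, fix a vertex $V$ and work in the collapsed coordinates emanating from $V$. The first step is to isolate a one-dimensional model: on $[0,1]$ there is a polynomial $\ell_p\in\mathcal{P}_p$ with $\ell_p(0)=1$, $\ell_p$ vanishing at $x=1$, and $\|\ell_p\|_{L^2(0,1)}\lesssim p^{-1/2}$, $\|\ell_p\|_{H^1(0,1)}\lesssim p^{1/2}$ (e.g.\ a normalized Jacobi-type polynomial, or the $L^2$-minimal polynomial subject to the endpoint constraint). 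Pulling back a product of three such factors along the collapsed directions gives a polynomial of degree $p$ equal to $1$ at $V$ and vanishing on the opposite face; the bound $\|L_{V,p}\|_{H^s(\widehat{K}^{3D})}\lesssim p^{-3/2+s}$ follows for integer $s$ from the change of variables and the one-dimensional estimates, and for general $s\ge 0$ by interpolation.

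For the edge lifting I would extend $u\in H^{1/2}_{00}(e)$ in two stages. First, apply the two-dimensional edge lifting $\pi_e$ of Lemma~\ref{lemma:lifting-2d} on each of the two faces of $\widehat{K}^{3D}$ adjacent to $e$; since that operator vanishes on the remaining two edges of a triangular face, the two pieces together with zero on the two non-adjacent faces assemble into a continuous function $g$ on $\partial\widehat{K}^{3D}$ that equals $u$ on $e$, vanishes on the non-adjacent faces, and carries the $p$-explicit bounds of the 2D operator. Second, lift $g$ into $\widehat{K}^{3D}$ by a polynomial-preserving, $H^1$-stable extension from the boundary (the face lifting of part (iii) applied facewise, or directly a Duffy-type extension). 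Tracking degrees through both stages gives $\pi_e u\in\mathcal{P}_p$ whenever $u\in\mathcal{P}_p$, and composing the 2D bounds with the stability of the boundary extension yields the three displayed estimates; the mixed bound $\|\pi_e u\|_{H^{1/2}(\partial\widehat{K}^{3D})}\lesssim p^{-1/2}\|u\|_{H^{1/2}_{00}(e)}+\|u\|_{L^2(e)}$ is obtained by interpolating between the $L^2$- and $H^1$-level estimates of the 2D operator \emph{before} applying the extension.

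For the face lifting, given $u\in H^{1/2}_{00}(f)$ I would use a polynomial-preserving, $H^1$-stable extension from the face $f$ into $\widehat{K}^{3D}$ of Muñoz-Sola type (as in \cite{MelenkSauterMathComp}): map $\widehat{K}^{3D}$ onto the prism $f\times[0,1]$ by a collapsed-coordinate transformation fixing $f$, extend $u$ into the prism with a suitable profile in the collapsed direction, and pull back. Under this transformation polynomials of degree $p$ on the triangle extend to polynomials of degree $p$ on the tetrahedron, the extension vanishes on the other three faces, and the Jacobi weight coming from the Jacobian produces the $p^{-1}$ gain in $p\|\pi_f u\|_{L^2(\widehat{K}^{3D})}+\|\pi_f u\|_{H^1(\widehat{K}^{3D})}\lesssim \|u\|_{H^{1/2}_{00}(f)}+p^{1/2}\|u\|_{L^2(f)}$; the weighted one-dimensional integrals in the collapsed variable are controlled by standard Hardy/Jacobi inequalities.

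The main obstacle is the $p$-explicit bookkeeping: each gluing step must simultaneously (a) preserve the polynomial degree, (b) respect all prescribed vanishing conditions on lower-dimensional faces so the pieces assemble into an $H^1(\widehat{K}^{3D})$ function, and (c) not lose any power of $p$. The two delicate points are the mixed $H^{1/2}$-bound for $\pi_e$ on $\partial\widehat{K}^{3D}$, which forces one to interpolate between the two a priori estimates of the 2D operator rather than use either separately, and the sharp $p^{1/2}\|u\|_{L^2(f)}$-term in the face lifting, which hinges on the precise weighted structure of the collapsed-coordinate extension.
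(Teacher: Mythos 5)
First, a point of reference: the paper does not prove this lemma at all --- it is quoted verbatim from \cite[Lemma~{B.2}]{MelenkSauterMathComp}, so there is no in-paper argument to compare yours against. Judged on its own, your sketch follows the standard route (1D Jacobi-type polynomials for the vertex function, Mu\~noz-Sola-type polynomial-preserving extensions for faces and edges), which is indeed the machinery behind the cited result. The face lifting (iii) and the overall $p$-bookkeeping strategy are described correctly.

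There are, however, two concrete gaps. The more serious one is in the edge lifting (ii): your second stage proposes to extend the assembled boundary datum $g$ into the volume by ``the face lifting of part (iii) applied facewise,'' but on the two faces $f$ adjacent to $e$ the function $g=\pi_e^{2D}u$ does \emph{not} vanish on $e\subset\partial f$, so $g|_f\notin H^{1/2}_{00}(f)$ and $\pi_f$ is simply not defined on it; the fallback of ``a polynomial-preserving $H^1$-stable extension from the boundary'' gives the $H^1(\widehat K^{3D})$ bound via your (correct) interpolation argument for $\|g\|_{H^{1/2}(\partial\widehat K^{3D})}$, but it does \emph{not} yield the third displayed estimate, which additionally requires $\|\pi_e u\|_{L^2(\widehat K^{3D})}\leq Cp^{-1}\bigl(p^{-1/2}\|u\|_{H^{1/2}_{00}(e)}+\|u\|_{L^2(e)}\bigr)$ --- an extra factor $p^{-1}$ in $L^2$ that no unqualified $H^{1/2}\to H^1$ stability delivers. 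The construction in the cited source avoids this by building the edge lifting directly through a two-fold averaging (Mu\~noz-Sola) formula transversal to $e$, from which the $L^2$, $H^1$ and boundary bounds are read off simultaneously; you would need to either do that or exhibit an extension operator with the stated $p$-explicit $L^2$/$H^1$ pair of bounds. The second, smaller issue is the vertex function: a ``product of three 1D factors along the collapsed directions'' pulled back through a Duffy map is in general a rational function, not an element of $\mathcal{P}_p(\widehat K^{3D})$. The clean construction is $L_{V,p}=\ell_p(1-\lambda_V)$ (e.g.\ $\lambda_V^p$), a polynomial in the single barycentric coordinate of $V$; the exponent $-3/2$ then comes from the weight $r^{2}$ carried by the level sets of $\lambda_V$ in the integral $\int_0^1|\ell_p|^2r^2\,dr$, i.e.\ from one weighted 1D estimate rather than from multiplying three unweighted ones.
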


\begin{lemma}
[{{\cite[Lemma~{B.3}]{MelenkSauterMathComp}}}]%
\label{lemma:finite-regularity-approximation} Let $\widehat{K}$ be the
reference triangle or the reference tetrahedron. Let $s>d/2$. Then there
exists for every $p$ a bounded linear operator $\pi_{p}:H^{s}(\widehat{K}%
)\rightarrow{\mathcal{P}}_{p}$ and for each $t\in\lbrack0,s]$ a constant $C>0$
(depending only on $s$ and $t$) such that
\begin{equation}
\Vert u-\pi_{p}u\Vert_{H^{t}(\widehat{K})}\leq Cp^{-(s-t)}|u|_{H^{s}%
(\widehat{K})},\qquad p\geq s-1.
\label{eq:lemma:finite-regularity-approximation-1000}%
\end{equation}
Additionally, we have $\Vert u-\pi_{p}u\Vert_{L^{\infty}(\widehat{K})}\leq
Cp^{-(s-d/2)}|u|_{H^{s}(\widehat{K})}$. For the case $d=2$ we furthermore have
$\Vert u-\pi_{p}u\Vert_{H^{t}(e)}\leq Cp^{-(s-1/2-t)}|u|_{H^{s}(\widehat{K})}$
for $0\leq t\leq s-1/2$ for every edge. For the case $d=3$ we have $\Vert
u-\pi_{p}u\Vert_{H^{t}(f)}\leq Cp^{-(s-1/2-t)}|u|_{H^{s}(\widehat{K})}$ for
$0\leq t\leq s-1/2$ for every face $f$ and $\Vert u-\pi_{p}u\Vert_{H^{t}%
(e)}\leq Cp^{-(s-1-t)}|u|_{H^{s}(\widehat{K})}$ for $0\leq t\leq s-1$ for
every edge.
\end{lemma}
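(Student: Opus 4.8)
Lemma~\ref{lemma:finite-regularity-approximation} (the final displayed lemma): existence of a bounded linear operator $\pi_p : H^s(\widehat K)\to\mathcal P_p$ with $p$-optimal approximation rates in volume norms $H^t(\widehat K)$, in $L^\infty(\widehat K)$, and on faces/edges of $\widehat K$.

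\textbf{Strategy.} The plan is to produce a \emph{single} linear operator $\pi_p:H^s(\widehat K)\to\mathcal P_p$ that is (a) bounded $H^t(\widehat K)\to H^t(\widehat K)$ uniformly in $p$ for $0\le t\le s$, and (b) a polynomial projection, i.e.\ $\pi_p q=q$ for every $q\in\mathcal P_p$; from such an operator all the stated estimates follow by standard interpolation and trace arguments. For $\pi_p$ I would take the truncation to total degree $p$ of the expansion of $u$ in the $L^2(\widehat K)$-orthogonal Koornwinder/Dubiner polynomials on the simplex (the collapsed-coordinate construction); the $p$-uniform stability in $H^t(\widehat K)$, $0\le t\le s$, and the volume approximation bound
\[
\|u-\pi_p u\|_{H^t(\widehat K)}\le C\,p^{-(s-t)}\,\|u\|_{H^s(\widehat K)},\qquad 0\le t\le s,\ p\ge s-1,
\]
are the classical $p$-version estimates on a simplex (Babu\v ska--Suri; Mu\~noz-Sola; Schwab), and could alternatively be re-derived here from the vertex/edge/face liftings of Lemmas~\ref{lemma:lifting-2d}, \ref{lemma:lifting-3d}. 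Since $p\ge s-1$ (hence $\mathcal P_{\lceil s\rceil-1}\subset\mathcal P_p$) and $\pi_p$ preserves $\mathcal P_p$, writing $u-\pi_p u=(u-q)-\pi_p(u-q)$ for $q\in\mathcal P_{\lceil s\rceil-1}$, taking the infimum over $q$, and invoking the Deny--Lions inequality $\inf_{q\in\mathcal P_{\lceil s\rceil-1}}\|u-q\|_{H^s(\widehat K)}\le C|u|_{H^s(\widehat K)}$ upgrades the right-hand side above to the seminorm $|u|_{H^s(\widehat K)}$; this gives (\ref{eq:lemma:finite-regularity-approximation-1000}).

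\textbf{The $L^\infty$ bound.} Apply the Gagliardo--Nirenberg interpolation inequality on the fixed reference simplex,
\[
\|v\|_{L^\infty(\widehat K)}\le C\,\|v\|_{L^2(\widehat K)}^{\,1-\frac{d}{2s}}\,\|v\|_{H^s(\widehat K)}^{\,\frac{d}{2s}},
\]
valid because $s>d/2$, to $v:=u-\pi_p u$. Combining $\|v\|_{L^2(\widehat K)}\le Cp^{-s}|u|_{H^s(\widehat K)}$ with $\|v\|_{H^s(\widehat K)}\le C|u|_{H^s(\widehat K)}$ yields $\|u-\pi_p u\|_{L^\infty(\widehat K)}\le Cp^{-s(1-\frac{d}{2s})}|u|_{H^s(\widehat K)}=Cp^{-(s-d/2)}|u|_{H^s(\widehat K)}$, as claimed.

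\textbf{The face and edge bounds.} For a codimension-one sub-simplex $\omega$ of $\widehat K$ (an edge when $d=2$, a face when $d=3$) I would use the multiplicative trace inequality $\|v\|_{H^t(\omega)}^2\le C\|v\|_{H^t(\widehat K)}\|v\|_{H^{t+1}(\widehat K)}$, legitimate for $0\le t\le s-1$, applied to $v=u-\pi_p u$; with the volume rates this gives $\|u-\pi_p u\|_{H^t(\omega)}\le Cp^{-(s-t-1/2)}|u|_{H^s(\widehat K)}$. For the remaining range $s-1< t\le s-1/2$ I would instead invoke the sharp trace theorem $\|v\|_{H^{s-1/2}(\partial\widehat K)}\le C\|v\|_{H^s(\widehat K)}$ (yielding rate $p^{0}$ at $t=s-1/2$) and interpolate in $t$ between this and the $t=0$ estimate to cover all $0\le t\le s-1/2$. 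Finally, for an \emph{edge} $e$ of the tetrahedron (codimension two) I would iterate: trace from $\widehat K$ to an adjacent face $f$, losing a half-derivative and one factor in the rate, and then from $f$ to $e$, losing another half, which accumulates to $\|u-\pi_p u\|_{H^t(e)}\le Cp^{-(s-1-t)}|u|_{H^s(\widehat K)}$ for $0\le t\le s-1$. All constants depend only on $s$, $t$, $d$, and the fixed reference simplex, since no scaling is involved.

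\textbf{Main obstacle.} Everything after the first paragraph is routine; the only genuinely technical point is establishing, for the reference \emph{simplex}, a $p$-uniformly stable, polynomial-preserving linear operator onto the full-degree space $\mathcal P_p$ together with the $H^t$-rates for the whole range $0\le t\le s$ — tensor-product truncation after a Stein extension would land in $\mathcal P_{dp}$, not $\mathcal P_p$, so one must work with the simplex-adapted orthogonal expansion (or, equivalently, rebuild $\pi_p$ from the liftings of Lemmas~\ref{lemma:lifting-2d}--\ref{lemma:lifting-3d} in an element-by-element fashion, which is the more delicate route needed for the constrained operator of Theorem~\ref{thm:finite-regularity-constrained-approximation} and is not required for the present unconstrained statement).
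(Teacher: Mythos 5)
The paper does not prove this lemma at all: it is quoted verbatim from \cite[Lemma~{B.3}]{MelenkSauterMathComp}, so there is no internal proof to compare against. Judged on its own terms, your reduction is sound in architecture: once a linear, polynomial-preserving operator $\pi_p$ with the simultaneous volume rates $\Vert u-\pi_p u\Vert_{H^t(\widehat K)}\leq Cp^{-(s-t)}\Vert u\Vert_{H^s(\widehat K)}$ for all $0\le t\le s$ is in hand, your Deny--Lions upgrade to the seminorm, the Gagliardo--Nirenberg step for $L^\infty$ (the borderline exponent $a=d/(2s)<1$ is admissible since $s>d/2$), and the multiplicative-trace/endpoint-interpolation arguments for faces and the iterated version for edges all deliver exactly the stated exponents.

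The genuine gap is in the one step you identify as the crux and then dispose of too quickly: the truncation of the $L^2(\widehat K)$-orthogonal Koornwinder/Dubiner expansion does \emph{not} have the required properties. The $L^2$-orthogonal projection $P_p$ onto ${\mathcal P}_p$ is not $H^1$-stable uniformly in $p$; the sharp classical bound is of the form $\Vert u-P_pu\Vert_{H^1}\leq Cp^{3/2-s}\Vert u\Vert_{H^s}$, i.e.\ one loses a factor $p^{1/2}$ per derivative order, so your claimed volume estimate fails already at $t=1$ for this operator, and with it every subsequent estimate. Since the entire lemma has been reduced to this single assertion, the proof as written does not close; falling back on Babu\v{s}ka--Suri/Mu\~noz-Sola (a different, non-orthogonal construction) is legitimate but then your argument is, like the paper's, essentially a citation plus corollaries. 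Separately, the ``main obstacle'' you raise about tensor-product truncation landing in ${\mathcal P}_{dp}$ is not an obstacle: replacing $p$ by $\lfloor p/d\rfloor$ only changes the constant $C$ by a factor $d^{\,s-t}$, which is one standard way the core operator is actually built.
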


The following theorem is a slight modification of \cite[Thm.~{B.4}%
]{MelenkSauterMathComp} and expresses the statement of
Lemma~\ref{lemma:element-by-element-approximation}.

\begin{theorem}
[{{Lemma~\ref{lemma:element-by-element-approximation}/\cite[Thm.~{B.4}%
]{MelenkSauterMathComp}}}]%
\label{thm:finite-regularity-constrained-approximation} Let $\widehat{K}%
\subset{\mathbb{R}}^{3}$ be the reference tetrahedron. Let $s>3/2$. Then there
exists $C>0$ (depending only on $s$) and for every $p$ a linear operator
$\pi:H^{s}(\widehat{K})\rightarrow{\mathcal{P}}_{p}$ that permits an
element-by-element construction in the sense of
Definition~\ref{def:element-by-element} such that
\begin{align}
p\Vert u-\pi u\Vert_{L^{2}(\widehat{K})}+\Vert u-\pi u\Vert_{H^{1}%
(\widehat{K})}  &  \leq Cp^{-(s-1)}|u|_{H^{s}(\widehat{K})}\qquad\forall p\geq
s-1,\label{eq:lemma:finite-regularity-constrained-approximation-100}\\
p^{1/2} \|u - \pi u\|_{L^{2}(\partial\widehat{K})} + p^{-1/2} \|u - \pi
u\|_{H^{1}(\partial\widehat{K})}  &  \leq C p^{-(s-1)} |u|_{H^{s}%
(\widehat{K})} \qquad\forall p \ge s-1.
\label{eq:lemma:finite-regularity-constrained-approximation-110}%
\end{align}

\end{theorem}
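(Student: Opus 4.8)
The plan is to construct the operator $\pi$ precisely as the element-by-element construction of Definition~\ref{def:element-by-element} applied via the vertex, edge, and face lifting operators of Lemma~\ref{lemma:lifting-3d}, starting from the unconstrained approximant $\pi_p$ of Lemma~\ref{lemma:finite-regularity-approximation}. Concretely, I would set $\pi u := \pi_p u + $ (vertex corrections) $+$ (edge corrections) $+$ (face corrections), where each correction layer uses the liftings $L_{V,p}$, $\pi_e$, $\pi_f$ to fix up the trace mismatch of the previous layer on the lower-dimensional skeleton, so that $\pi u$ matches $u$ at vertices, is the constrained $L^2$/$H^{1/2}_{00}$-minimizer on edges, and the constrained $L^2$/$H^1$-minimizer on faces. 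The volume estimate \eqref{eq:lemma:finite-regularity-constrained-approximation-100} is exactly \cite[Thm.~{B.4}]{MelenkSauterMathComp}, so the only new work is the boundary estimate \eqref{eq:lemma:finite-regularity-constrained-approximation-110}.

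For the boundary estimate I would proceed layer by layer, tracking the error $u - \pi u$ on $\partial\widehat K$. First, Lemma~\ref{lemma:finite-regularity-approximation} gives, for each face $f$, the bounds $\|u - \pi_p u\|_{H^t(f)} \lesssim p^{-(s-1/2-t)}|u|_{H^s(\widehat K)}$ for $t \in \{0,1\}$ (using $s > 3/2$ so that $s - 1/2 - 1 > 0$), and for each edge $e$ the bound $\|u - \pi_p u\|_{H^t(e)} \lesssim p^{-(s-1-t)}|u|_{H^s(\widehat K)}$; it also controls the pointwise vertex error by $\|u - \pi_p u\|_{L^\infty(\widehat K)} \lesssim p^{-(s-3/2)}|u|_{H^s(\widehat K)}$. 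The vertex lifting $L_{V,p}$ scales like $p^{-3/2+t}$ in $H^t$, so the vertex correction layer contributes a term bounded by $p^{-(s-3/2)} \cdot p^{-3/2} = p^{-(s)}$ to $\|\cdot\|_{L^2(\widehat K)}$ and its trace on $\partial\widehat K$ is similarly small; one checks these are of the required order. The edge correction uses item~(\ref{item:edge-lifting}) of Lemma~\ref{lemma:lifting-3d}: with the corrected edge data $w_e := (u - \pi_p u - \text{vertex corrections})|_e$, which lies in $H^{1/2}_{00}(e)$ after vertex matching, one has $p^{1/2}\|\pi_e w_e\|_{L^2(f)} + \|\pi_e w_e\|_{H^1(f)} \lesssim \|w_e\|_{H^{1/2}_{00}(e)} + p^{1/2}\|w_e\|_{L^2(e)}$; using interpolation $\|w_e\|_{H^{1/2}_{00}(e)}^2 \lesssim \|w_e\|_{L^2(e)}\|w_e\|_{H^1(e)}$ together with the edge estimates above one bounds this by $p^{-(s-1)}|u|_{H^s(\widehat K)}$ on faces, hence $p^{1/2}\|\cdot\|_{L^2(\partial\widehat K)} + p^{-1/2}\|\cdot\|_{H^1(\partial\widehat K)} \lesssim p^{-(s-1)}|u|_{H^s(\widehat K)}$. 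Finally, the face correction uses item~(\ref{item:face-lifting}): the face data $w_f$ after vertex and edge matching lies in $H^{1/2}_{00}(f)$, and the minimization property \eqref{eq:face-minimizer} together with the face estimates from Lemma~\ref{lemma:finite-regularity-approximation} give the $H^1$- and $L^2$-face control, while $\pi_f w_f$ vanishes on all other faces; the scaled trace norm on $\partial\widehat K$ then also has the right order.

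The main obstacle I anticipate is bookkeeping the propagation of errors through the three correction layers while keeping the powers of $p$ sharp — in particular showing that (a) after subtracting the vertex corrections the residual edge data genuinely belongs to $H^{1/2}_{00}(e)$ with the claimed norm bound, and likewise that after the edge corrections the residual face data belongs to $H^{1/2}_{00}(f)$; and (b) that the constrained minimizers in \eqref{eq:edge-minimizer}, \eqref{eq:face-minimizer} that \emph{define} $\pi|_e$, $\pi|_f$ in Definition~\ref{def:element-by-element} coincide, up to lower-dimensional corrections, with what the lifting construction produces, so that the minimization characterization can be invoked to bound $\|u - \pi u\|$ on faces and edges by $\|u - \pi_p u\|$ there (this is the standard "best approximation in the constrained space is comparable to the unconstrained approximation plus skeleton lift" argument of \cite[Appendix~B]{MelenkSauterMathComp}). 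Once these points are in place the estimate \eqref{eq:lemma:finite-regularity-constrained-approximation-110} follows by a triangle inequality summing the $\pi_p$-error on $\partial\widehat K$ and the three correction-layer contributions, each shown to be $\lesssim p^{-(s-1)}|u|_{H^s(\widehat K)}$ in the scaled trace norm $p^{1/2}\|\cdot\|_{L^2(\partial\widehat K)} + p^{-1/2}\|\cdot\|_{H^1(\partial\widehat K)}$.
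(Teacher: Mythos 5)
Your proposal follows essentially the same route as the paper's proof: start from the unconstrained approximant of Lemma~\ref{lemma:finite-regularity-approximation}, then correct vertices, edges, and faces in turn with the liftings of Lemma~\ref{lemma:lifting-3d}, using the minimization characterizations \eqref{eq:edge-minimizer}, \eqref{eq:face-minimizer} and triangle inequalities to bound each correction, and finally read off \eqref{eq:lemma:finite-regularity-constrained-approximation-110} from the facewise bounds. The one step that needs adjustment is your interpolation bound $\Vert w_e\Vert_{H^{1/2}_{00}(e)}^2\lesssim \Vert w_e\Vert_{L^2(e)}\Vert w_e\Vert_{H^1(e)}$: for $3/2<s<2$ the edge trace lies only in $H^{s-1}(e)$ with $s-1<1$, so the $H^1(e)$ endpoint is not available; the paper instead fixes $t\in(1/2,s-1)$ and uses $\Vert w_e\Vert_{H^{1/2}_{00}(e)}\lesssim \Vert w_e\Vert_{L^2(e)}^{1-1/(2t)}\Vert w_e\Vert_{H^t(e)}^{1/(2t)}$, which still yields the required rate $p^{-(s-3/2)}$.
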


\begin{proof}
We also mention that
(\ref{eq:lemma:finite-regularity-constrained-approximation-100}) is shown in
\cite[Thm.~{B.4}]{MelenkSauterMathComp} so that our reproducing the proof
focuses on ensuring that
(\ref{eq:lemma:finite-regularity-constrained-approximation-110}) holds.

We will construct $\pi u$ for a given $u$; inspection of the construction
shows that $u\mapsto\pi u$ is in fact a linear operator.

Let $\pi^{1}\in{\mathcal{P}}_{p}$ be given by
Lemma~\ref{lemma:finite-regularity-approximation}. Then, for $p\geq s-1$ there
holds
\begin{align}
\Vert u-\pi^{1}\Vert_{H^{t}(\widehat{K})}  &  \leq Cp^{-(s-t)}|u|_{H^{s}%
(\widehat{K})},\qquad0\leq t\leq s\label{lmsc1}\\
\Vert u-\pi^{1}\Vert_{H^{t}(f)}  &  \leq Cp^{-(s-t-1/2)}|u|_{H^{s}%
(\widehat{K})},\qquad\forall\mbox{ faces $f$},\quad0\leq t\leq
s-1/2\label{lmsc2}\\
\Vert u-\pi^{1}\Vert_{H^{t}(e)}  &  \leq Cp^{-(s-t-1)}|u|_{H^{s}(\widehat{K}%
)},\qquad\forall\mbox{ edges $e$},\quad0\leq t\leq s-1\label{lmsc3}\\
\Vert u-\pi^{1}\Vert_{L^{\infty}(\widehat{K})}  &  \leq Cp^{-(s-3/2)}%
|u|_{H^{s}(\widehat{K})}. \label{lmsc4}%
\end{align}
From (\ref{lmsc4}) and the vertex-lifting properties given in
Lemma~\ref{lemma:lifting-3d}, we may adjust $\pi^{1}$ by vertex liftings to
obtain a polynomial $\pi^{2}$ satisfying (\ref{lmsc1})--(\ref{lmsc4}) and
additionally the condition (\ref{item:vertex}) of
Definition~\ref{def:element-by-element}. We next adjust the edge values. The
polynomial $\pi^{2}$ coincides with $u$ in the vertices and satisfies
(\ref{lmsc3}). By fixing a $t\in(1/2,s-1)$, we get from an interpolation
inequality:
\begin{align*}
p^{1/2}\Vert u-\pi^{2}\Vert_{L^{2}(e)}+\Vert u-\pi^{2}\Vert_{H_{00}^{1/2}(e)}
&  \leq p^{1/2}\Vert u-\pi^{2}\Vert_{L^{2}(e)}+C\Vert u-\pi^{2}\Vert
_{L^{2}(e)}^{1-1/(2t)}\Vert u-\pi^{2}\Vert_{H^{t}(e)}^{1/(2t)}\\
&  \leq Cp^{-(s-3/2)}|u|_{H^{s}(\widehat{K})}.
\end{align*}
Hence, for an edge $e$, the minimizer $\pi^{e}$ of the functional
(\ref{eq:edge-minimizer}) satisfies $p^{1/2}\Vert u-\pi^{e}\Vert_{L^{2}%
(e)}+\Vert u-\pi^{e}\Vert_{H_{00}^{1/2}(e)}\leq Cp^{-(s-3/2)}|u|_{H^{s}%
(\widehat{K})}$; the triangle inequality therefore gives that the correction
$\pi^{e}-\pi^{2}$ needed to obtain condition (\ref{item:edge}) of
Def.~\ref{def:element-by-element} likewise satisfies $p^{1/2}\Vert\pi^{e}%
-\pi^{2}\Vert_{L^{2}(e)}+\Vert\pi_{e}-\pi^{2}\Vert_{H_{00}^{1/2}(e)}\leq
Cp^{-(s-3/2)}|u|_{H^{s}(\widehat{K})}$. We conclude that the edge lifting of
Lemma~\ref{lemma:lifting-3d} allows us to adjust $\pi^{2}$ to get a polynomial
$\pi^{3}\in{\mathcal{P}}_{p}$ that satisfies the conditions (\ref{item:vertex}%
) and (\ref{item:edge}) of Def.~\ref{def:element-by-element}. Additionally, we
have
\begin{align*}
p\Vert u-\pi^{3}\Vert_{L^{2}(\widehat{K})}+\Vert u-\pi^{3}\Vert_{H^{1}%
(\widehat{K})}  &  \leq Cp^{-(s-1)}|u|_{H^{s}(\widehat{K})},\\
p\Vert u-\pi^{3}\Vert_{L^{2}(f)}+\Vert u-\pi^{3}\Vert_{H^{1}(f)}  &  \leq
Cp^{-(s-3/2)}|u|_{H^{s}(\widehat{K})}\qquad\mbox{ for all faces $f$}.
\end{align*}
Since $\pi^{3}|_{e}=\pi^{e}$ for the edges, the minimizer $\pi^{f}$ of the
functional (\ref{eq:face-minimizer}) for each face $f$ has to satisfy $p\Vert
u-\pi^{f}\Vert_{L^{2}(f)}+\Vert u-\pi^{f}\Vert_{H^{1}(f)}\leq p\Vert u-\pi
^{3}\Vert_{L^{2}(f)}+\Vert u-\pi^{3}\Vert_{H^{1}(f)}\leq Cp^{-(s-3/2)}%
|u|_{H^{s}(\widehat{K})}$. From the triangle inequality, we conclude
\[
p\Vert\pi^{3}-\pi^{f}\Vert_{L^{2}(f)}+\Vert\pi^{3}-\pi^{f}\Vert_{H^{1}(f)}\leq
Cp^{-(s-3/2)}|u|_{H^{s}(\widehat{K})},\quad\mbox{ together with }\pi^{3}%
-\pi^{f}\in H_{0}^{1}(f).
\]
Hence, the face lifting of Lemma~\ref{lemma:lifting-3d} allows us to correct
the face values to achieve also condition (\ref{item:face}) of
Definition~\ref{def:element-by-element}. Lemma~\ref{lemma:lifting-3d} also
implies that the correction is such that
(\ref{eq:lemma:finite-regularity-constrained-approximation-100}) is true.
\end{proof}


\fi


\subsection*{Acknowledgements}

\addcontentsline{toc}{section}{Acknowledgements}
We cordially thank Claudio Rojik (TU Wien) for assistance with the numerical
computations in Section~\ref{sec:numerics}. Financial support by the Austrian
Science Fund FWF (through grants P 28367-N35 and F65) is gratefully
acknowledged.
\addcontentsline{toc}{section}{References}
\bibliographystyle{alpha}
\bibliography{maxwell}

\end{document}